\tikzset{
	commutative diagrams/.cd,
	arrow style=tikz,
	diagrams={>=stealth}
}
\definecolor{Wblue2}{HTML}{438CD3}
\numberwithin{equation}{section}
\newcommand{\TheoremQED}{$\blacklozenge$}
\newcommand{\CorollaryQED}{\qedsymbol}
\newcommand{\DefinitionQED}{$\bullet$}
\newcommand{\RemarkQED}{$\clubsuit$}
\renewcommand{\qedsymbol}{$\blacksquare$}
\newcommand{\NotationQED}{$\circ$}
\declaretheorem[numberlike=equation,qed=\TheoremQED]{theorem}
\declaretheorem[numberlike=equation,style=definition,name=Proposition]{prop}
\declaretheorem[numberlike=equation,style=definition,name=Corollary,qed=\CorollaryQED]{cor}
\declaretheorem[numberlike=equation,name=Definition,style=definition,qed=\DefinitionQED]{definition}
\declaretheorem[numberlike=equation,style=remark,qed=\RemarkQED]{remark}
\declaretheorem[numberlike=equation,style=definition]{question}
\declaretheorem[numbered=no,name=Acknowledgments,style=remark,qed=\Cross]{ack*}
\declaretheorem[numberlike=equation,name=Notation,style=definition,qed=\NotationQED]{notation}
\declaretheorem[numberlike=equation,name=Ansatz,style=definition]{ansatz}
\declaretheorem[numberlike=equation,name=Data,style=definition]{data}
\setlist[description]{leftmargin=!,labelindent=1em}
\setlist[enumerate]{label={\rm (\arabic*)},ref=\arabic*}
\setlist[enumerate,2]{label={\rm (\alph*)},ref=\theenumi.\alph*}
\newcommand\restr[2]{{
		\left.\kern-\nulldelimiterspace 
		#1 
		\littletaller 
		\right|_{#2} 
}}
\newcommand{\littletaller}{\mathchoice{\vphantom{\big|}}{}{}{}}   
\def\<{\left\langle}
\def\>{\right\rangle}
\def\accent@skew{0}
\def\accent@raise{0.485}
\newcommand{\lwhat}{\varwidehat}
\newcommand{\what}{\widehat}
\newcommand{\wtilde}{\widetilde}
\DeclareSymbolFont{chartermath}{OML}{mdbch}{m}{n}
\DeclareMathSymbol{\Pelta}{\mathalpha}{chartermath}{"01}
\renewcommand{\Delta}{\Pelta}
\DeclareFontFamily{U} {MnSymbolD}{}
\DeclareFontShape{U}{MnSymbolD}{m}{n}{
	<-6> MnSymbolD5
	<6-7> MnSymbolD6
	<7-8> MnSymbolD7
	<8-9> MnSymbolD8
	<9-10> MnSymbolD9
	<10-12> MnSymbolD10
	<12-> MnSymbolD12}{}
\DeclareFontShape{U}{MnSymbolD}{b}{n}{
	<-6> MnSymbolD-Bold5
	<6-7> MnSymbolD-Bold6
	<7-8> MnSymbolD-Bold7
	<8-9> MnSymbolA-Bold8
	<9-10> MnSymbolD-Bold9
	<10-12> MnSymbolD-Bold10
	<12-> MnSymbolD-Bold12}{}
\DeclareSymbolFont{MnSyD} {U} {MnSymbolD}{m}{n}
\DeclareFontFamily{U} {MnSymbolF}{}
\DeclareFontShape{U}{MnSymbolF}{m}{n}{
	<-6> MnSymbolF5
	<6-7> MnSymbolF6
	<7-8> MnSymbolF7
	<8-9> MnSymbolF8
	<9-10> MnSymbolF9
	<10-12> MnSymbolF10
	<12-> MnSymbolF12}{}
\DeclareFontShape{U}{MnSymbolF}{b}{n}{
	<-6> MnSymbolF-Bold5
	<6-7> MnSymbolF-Bold6
	<7-8> MnSymbolF-Bold7
	<8-9> MnSymbolF-Bold8
	<9-10> MnSymbolF-Bold9
	<10-12> MnSymbolF-Bold10
	<12-> MnSymbolF-Bold12}{}
\DeclareSymbolFont{MnSyF} {U} {MnSymbolF}{m}{n}
\DeclareMathSymbol{\ll}{\mathrel}{MnSyD}{80}
\DeclareMathSymbol{\gg}{\mathrel}{MnSyD}{81}
\DeclareMathSymbol{\Lum}{\mathop}{MnSyF}{80}
\renewcommand{\sum}{\Lum}
\renewcommand{\nabla}{\mathchoice
	{\ensuremath{\rotatebox[origin=c]{-180}{$\displaystyle\Delta$}}}
	{\ensuremath{\rotatebox[origin=c]{-180}{$\textstyle\Delta$}}}
	{\ensuremath{\rotatebox[origin=c]{-180}{$\scriptstyle\Delta$}}}
	{\ensuremath{\rotatebox[origin=c]{-180}{$\scriptscriptstyle\Delta$}}}}
\DeclareMathOperator{\actson}{{\mathchoice
		{\rotatebox[origin=c]{-90}{$\displaystyle\circlearrowright$}}
		{\rotatebox[origin=c]{-90}{$\textstyle\circlearrowright$}}
		{\rotatebox[origin=c]{-90}{$\scriptstyle\circlearrowright$}}
		{\rotatebox[origin=c]{-90}{$\scriptscriptstyle\circlearrowright$}}
}}
\DeclareMathOperator{\Ractson}{{\mathchoice
		{\rotatebox[origin=c]{90}{$\displaystyle\circlearrowright$}}
		{\rotatebox[origin=c]{90}{$\textstyle\circlearrowright$}}
		{\rotatebox[origin=c]{90}{$\scriptstyle\circlearrowright$}}
		{\rotatebox[origin=c]{90}{$\scriptscriptstyle\circlearrowright$}}
}}
\DeclareMathOperator{\Lactson}{{\mathchoice
		{\rotatebox[origin=c]{-90}{$\displaystyle\circlearrowright$}}
		{\rotatebox[origin=c]{-90}{$\textstyle\circlearrowright$}}
		{\rotatebox[origin=c]{-90}{$\scriptstyle\circlearrowright$}}
		{\rotatebox[origin=c]{-90}{$\scriptscriptstyle\circlearrowright$}}
}}
\DeclarePairedDelimiter\paren{\lparen}{\rparen}
\DeclarePairedDelimiter\sqparen{[}{]}
\DeclarePairedDelimiter{\Abs}{\|}{\|}
\DeclarePairedDelimiter{\Inner}{\langle}{\rangle}
\DeclarePairedDelimiter{\abs}{\lvert}{\rvert}
\DeclarePairedDelimiter{\set}{\lbrace}{\rbrace}
\newcommand{\bbC}{{\mathbb{C}}}
\newcommand{\bbK}{{\mathbb{K}}}
\newcommand{\bbN}{{\mathbb{N}}}
\newcommand{\bbR}{{\mathbb{R}}}
\newcommand{\bbT}{{\mathbb{T}}}
\newcommand{\bbZ}{{\mathbb{Z}}}
\newcommand{\SL}{\mathrm{SL}}
\newcommand{\SU}{\mathrm{SU}}
\newcommand{\Ric}{\mathrm{Ric}}
\newcommand{\sK}{\mathscr{K}}
\newcommand{\sM}{\mathscr{M}}
\DeclareMathOperator{\Hom}{Hom}
\newcommand{\id}{\mathrm{id}}
\newcommand{\U}{\mathrm{U}}
\renewcommand{\O}{\mathrm{O}}
\newcommand{\SO}{\mathrm{SO}}
\newcommand{\CP}{{\mathbb{C} P}} 
\newcommand{\Sp}{\mathrm{Sp}}
\DeclareMathOperator{\Fr}{Fr}
\newcommand{\del}{\partial}
\newcommand{\delbar}{\bar{\del}}
\DeclareMathOperator{\Diff}{Diff}
\DeclareMathOperator{\End}{End}
\DeclareMathOperator{\Hess}{Hess}
\newcommand{\Lip}{\mathrm{Lip}}
\DeclareMathOperator{\Fix}{Fix}
\DeclareMathOperator{\GL}{GL}
\DeclareMathOperator{\Gr}{Gr}
\DeclareMathOperator{\Hol}{Hol}
\renewcommand{\Im}{\operatorname{Im}}
\DeclareMathOperator{\Km}{Km}
\renewcommand{\Re}{\operatorname{Re}}
\DeclareMathOperator{\Sym}{Sym}
\DeclareMathOperator{\Tr}{Tr}
\DeclareMathOperator{\Vol}{Vol}
\newcommand{\cA}{\mathcal{A}}
\newcommand{\cB}{\mathcal{B}}
\newcommand{\cC}{\mathcal{C}}
\newcommand{\cE}{\mathcal{E}}
\newcommand{\cF}{\mathcal{F}}
\newcommand{\cG}{\mathcal{G}}
\newcommand{\cH}{\mathcal{H}}
\newcommand{\cK}{\mathcal{K}}
\newcommand{\cL}{\mathcal{L}}
\newcommand{\cM}{\mathcal{M}}
\newcommand{\cN}{\mathcal{N}}
\newcommand{\cO}{\mathcal{O}}
\newcommand{\cP}{\mathcal{P}}
\newcommand{\cS}{\mathcal{S}}
\newcommand{\fO}{{\mathfrak O}}
\newcommand{\Cone}{{\rm{Cone}}}
\newcommand{\GLr}[1]{\GL\paren*{#1,\mathbb{R}}}
\newcommand{\SLc}[1]{\SL\paren*{#1,\mathbb{C}}}
\newcommand{\bomega}{{\bm{\omega}}}
\newcommand{\onto}{\twoheadrightarrow}
\newcommand{\pr}{\mathrm{pr}}
\newcommand{\sing}{\mathrm{sing}}
\newcommand{\fa}{{\mathfrak a}}
\newcommand{\fs}{{\mathfrak s}}
\newcommand{\fq}{{\mathfrak q}}
\newcommand{\ft}{{\mathfrak t}}
\newcommand{\fw}{{\mathfrak w}}
\newcommand{\fS}{{\mathfrak S}}
\DeclareMathOperator{\diam}{diam}
\DeclareMathOperator{\dist}{dist}
\DeclareMathOperator{\im}{im}
\DeclareMathOperator{\ind}{ind}
\DeclareMathOperator{\inj}{inj}
\DeclareMathOperator{\intrp}{\lrcorner}
\DeclareMathOperator{\spano}{span}
\DeclareMathOperator{\supp}{supp}
\newcommand{\qqfa}{\qquad\forall}
\newcommand{\rd}{{\rm d}}
\newcommand{\dV}{\rd V}
\newcommand*\Eval[3]{\left.#1\right\rvert_{#2}^{#3}}
\newcommand{\ka}{{K}\"ahler\ }
\newcommand{\hka}{hyper-{K}\"ahler\ }
\numberwithin{theorem}{subsection}
\numberwithin{prop}{subsection}
\numberwithin{cor}{subsection}
\numberwithin{lemma}{subsection}
\numberwithin{definition}{subsection}
\numberwithin{notation}{subsection}
\numberwithin{data}{subsection}
\numberwithin{ansatz}{subsection}
\numberwithin{remark}{subsection}
\renewcommand*{\thetheorem}{%
	\ifnum\value{subsection}=0 %
	\thesection
	\else
	\thesubsection
	\fi
	.\arabic{theorem}%
}
\renewcommand*{\theremark}{%
	\ifnum\value{subsection}=0 %
	\thesection
	\else
	\thesubsection
	\fi
	.\arabic{remark}%
}
\renewcommand*{\thenotation}{%
	\ifnum\value{subsection}=0 %
	\thesection
	\else
	\thesubsection
	\fi
	.\arabic{notation}%
}
\title{The Kummer Construction of Calabi-Yau and Hyper-K\"{a}hler Metrics on the $K3$ Surface, and Large Families of Volume Non-collapsed Limiting Compact Hyper-K\"{a}hler Orbifolds}
\author{Thomas Jiang$^{\dagger}$
}
\date{%
	$^{\dagger}$email: \href{mailto:thomas.jiang@duke.edu}{thomas.jiang@duke.edu}\\
	Department of Mathematics, Duke University, Durham, NC, USA;\\[2ex]%
	November 2024}
\begin{document}
	\maketitle
	\begin{abstract}
		Right after Yau's resolution of the Calabi conjecture in the late 1970s, physicists Page \cite{Page} and Gibbons-Pope \cite{Gibbons-Pope} conjectured that one may approximate Ricci-flat \ka metrics on the $K3$ surface with metrics having ``almost special holonomy'' constructed via ``resolving'' the $16$ orbifold singularities of a flat $\bbT^{4}/\bbZ_{2}$ with Eguchi-Hanson metrics. Constructions of such metrics with special holonomy from such a ``gluing'' construction of approximate special holonomy metrics have since been called ``Kummer constructions'' of special holonomy metrics, and their proposal was rigorously carried out in the 1990s by Kobayashi \cite{KobayashiModuli} and LeBrun-Singer \cite{LeBrun-Singer}, and in the 2010s by Donaldson \cite{DonaldsonKummer}.
		
		In this paper, we provide two new rigorous proofs of Page-Gibbons-Pope's proposal based on singular perturbation and weighted function space analysis. Each proof is done from a different perspective:\begin{itemize}
			\itemsep0em 
			\item solving the complex Monge-Ampere equation (Calabi-Yau)
			\item perturbing closed definite triples (hyper-K\"{a}hler)
		\end{itemize} Both proofs yield Eguchi-Hanson metrics as ALE bubbles/rescaled limits. Moreover, our analysis in the former perspective yields estimates which \textit{improve} Kobayashi's estimates, and our analysis in the latter perspective results in the construction of \textit{large} families of Ricci-flat \ka metrics on the $K3$ surface, yielding the full $58$ dimensional moduli space of such.
		
		Finally, as a byproduct of our analysis, we produce a plethora of large families of compact \hka \textit{orbifolds} which all arise as volume non-collapsed Gromov-Hausdorff limit spaces of the aforementioned constructed large families of Ricci-flat \ka metrics on the $K3$ surface. Moreover, these compact \hka orbifolds are explicitly exhibited as points in the ``holes'' of the moduli space of Ricci-flat K\"{a}hler-Einstein metrics on $K3$ under the period map.\end{abstract}
	
	\tableofcontents
	
	\section{\textsection \ Introduction \& Survey \ \textsection}\label{Introduction}

	A \textbf{$K3$ surface} is the underlying smooth oriented 4-manifold of a connected compact simply connected complex 2-manifold with trivial canonical bundle. 
	
	By a theorem of Kodaira, any pair of $K3$ surfaces are diffeomorphic to one another, whence we may speak of \textit{the} $K3$ surface (up to diffeomorphism).\footnote{Though, there do exist \textit{topological $K3$ surfaces}, which are real 4-manifolds homeomorphic to a $K3$ but \textit{not} diffeomorphic to it.} Hence any connected compact simply connected complex 2-fold with trivial canonical bundle is realized by a pair $\paren*{K3, J}$ with $J$ an \textit{integrable} complex structure, whence are called \textbf{complex $K3$ surfaces} with specified complex structure.

	By a theorem of Siu, all complex $K3$ surfaces $\paren*{K3,J}$ are \textbf{K\"{a}hler}. That is, they admit \ka metrics, which are Riemannian metrics with holonomy contained in $\U\paren*{\frac{n}{2}}$ ($n$ being the real dimension of the underlying manifold). Yau's resolution of the Calabi conjecture therefore implies that any complex $K3$ surface $\paren*{K3,J}$ admits a unique \textbf{Ricci-flat K\"{a}hler metric} in \textit{any} given K\"{a}hler class. Moreover, all Ricci-flat \ka metrics have holonomy\footnote{Technically speaking, this is supposed to be the \textit{restricted holonomy}, which is the identity component of the holonomy group.} contained in $\SU\paren*{\frac{n}{2}}$, and Riemannian metrics with such holonomy are called \textbf{Calabi-Yau} and are automatically Ricci-flat K\"{a}hler. Furthermore, Riemannian metrics with holonomy further contained in $\Sp\paren*{\frac{n}{4}}$ (which is itself contained in $\SU\paren*{\frac{n}{2}}$) are called \textbf{hyper-K\"{a}hler}, and are automatically Ricci-flat.

	A crucial difference between Calabi-Yau metrics and \hka metrics are that the former comes with a \textit{fixed} (metric compatible) complex structure, while the latter comes with a \textit{an entire 2-sphere/$S^{2}$s worth} of (metric compatible) complex structures. Hence a Calabi-Yau metric is Ricci-flat \textit{K\"{a}hler} WRT a \textit{single} complex structure and is a \textit{complex geometric} object, whereas a \hka metric is Ricci-flat \textit{K\"{a}hler} WRT a \textit{family} of complex structures, hence by virtue of this 2-sphere ambiguity, is a \textit{real geometric} object.

	However, in real dimension $4$, we have the special isomorphism of Lie groups $\SU\paren*{2} \cong \Sp\paren*{1}$, whence being Calabi-Yau and \hka are \textit{equivalent}. Therefore, all Ricci-flat \ka metrics on the $K3$ surface are automatically both Calabi-Yau and hyper-K\"{a}hler. 
	
	Moreover, the \textbf{local Torelli Theorem} \cite{JoyceBook} tells us that there is a local diffeomorphism $$\begin{aligned}
		\cP: \sM^{K3}_{\Vol = 1} &\rightarrow \Gr^{+}\paren*{3,19}/\Gamma \coloneq \Gamma \backslash \SO\paren*{3,19}/\SO\paren*{3}\times \SO\paren*{19}\\
		g &\mapsto \cH^{+}_{g}
	\end{aligned}$$ called the \textbf{period map} between the \textbf{moduli space} of all \textit{unit volume} Ricci-flat \ka metrics on $K3$ (hence the full moduli space without any volume normalization is denoted as $\sM^{K3} = \bbR_{+}\times \sM^{K3}_{\Vol=1}$), and the Grassmannian of all positive $3$-planes in $\bbR^{3,19} \cong H^{2}\paren*{K3;\bbR}$ mod $\Gamma\leq \GL\paren*{22,\bbZ}$ the automorphisms of the lattice $H^{2}\paren*{K3;\bbZ}$ equipped with the intersection form $\paren*{[\alpha],[\beta]}\mapsto \int_{K3} \alpha\wedge\beta$. This local diffeomorphism sends each $g $ to $ \cH^{+}_{g}$, the $3$-dimensional subspace of \textbf{self-dual harmonic $2$-forms} WRT $g$.

	However, this local diffeomorphism $\cP: \sM^{K3}_{\Vol = 1} \rightarrow \Gr^{+}\paren*{3,19}/\Gamma$ is \textit{not} surjective: it is only surjective onto $$\Gr^{+}\paren*{3,19}/\Gamma - \bigcup_{\substack{\Sigma \in H_{2}\paren*{K3;\bbZ}\\ \Sigma \cdot \Sigma = -2}} \set*{H \in \Gr^{+}\paren*{3,19}/\Gamma : [\alpha]\paren*{\Sigma}= 0, \forall [\alpha] \in H}$$ which is the complement of a \textit{codimension 3} subset.

	In 1987, Kobayashi and Todorov \cite{KobayashiTodorov} proved that points in these codimension $3$ ``holes'' should correspond to Ricci-flat metrics with \textbf{orbifold singularities}. In fact, upon compactifying $\sM^{K3}$ WRT the \textbf{Gromov-Hausdorff topology} to get $\overline{\sM^{K3}}$, the results of general Riemannian convergence theory (which appeared shortly after Kobayashi-Todorov), and more specifically the results of Anderson \cite{Anderson} Bando-Kasue-Nakajima \cite{BKNa} and Nakajima \cite{Nakajima} may be immediately applied to tell us that \textit{any} Gromov-Hausdorff limit of any sequence $\set*{g_{i}}\subset \sM^{K3}$ which is \textbf{volume non-collapsed} must be a compact \textbf{\hka orbifold} with finitely many isolated singular points. Moreover, such a sequence admits noncompact \hka \textbf{ALE spaces} as \textbf{bubbles/rescaled limits}, which model the convergence near the (orbifold) singularities. In fact, upon denoting the \textbf{partial Gromov-Hausdorff compactification} of $\sM^{K3}$ ($\sM^{K3}_{\Vol=1}$) by adding in all compact (unit volume) \hka orbifolds as $\overline{\sM^{K3}}^{\del}$ ($\overline{\sM^{K3}_{\Vol=1}}^{\del}$), it follows from Kobayashi-Todorov \cite{KobayashiTodorov} and Anderson \cite{AndersonModuli} that the period map extends to a \textit{bijection} $$\cP: \overline{\sM^{K3}_{\Vol=1}}^{\del} \rightarrow \Gr^{+}\paren*{3,19}/\Gamma$$ Hence when adding in all volume non-collapsed limits, these volume non-collapsed limits should correspond to points in the codimension $3$ subset $\bigcup_{\substack{\Sigma \in H_{2}\paren*{K3;\bbZ}\\ \Sigma \cdot \Sigma = -2}} \set*{H \in \Gr^{+}\paren*{3,19}/\Gamma : [\alpha]\paren*{\Sigma}= 0, \forall [\alpha] \in H}$ in which the original period map fails to be surjective. Moreover, \textbf{continuity} of the period map WRT the Gromov-Hausdorff topology across these codimension $3$ orbifold metrics was proved by Anderson in \cite{AndersonModuli} (with a recent shorter proof by Odaka-Oshima \cite{Odaka-Oshima}), who in fact showed that $\overline{\sM^{K3}_{\Vol=1}}^{\del}$ and $\Gr^{+}\paren*{3,19}/\Gamma$ are \textbf{isometric} as Riemannian manifolds where the former's metric is the \textbf{$L^{2}$ metric} (which induces a stronger topology than the Gromov-Hausdorff topology) and we view $\overline{\sM^{K3}_{\Vol=1}}^{\del}$ as the \textbf{$L^{2}$ completion} of $\sM^{K3}_{\Vol=1}$, and the latter with the (locally) symmetric metric coming from the Killing form of $\SO\paren*{3,19}$.

	Since both Yau's theorem and the general convergence theorems above respectively only give us abstract existence and abstract convergence behavior of these metrics on $K3$, it is therefore instructive to find more explicit/``hands-on'' examples of these Ricci-flat metrics \& their moduli behavior.

	After Yau's proof, physicists Page \cite{Page} and Gibbons-Pope \cite{Gibbons-Pope} suggested that a more concrete description of these Ricci-flat K\"{a}hler metrics on $K3$ may be obtained via first viewing $K3$ as a \textbf{Kummer surface} $\Km$, and then approximating those Ricci-flat metrics by gluing in 16 copies of a particular ALE \hka metric called the \textbf{Eguchi-Hanson metric} around each of the 16 singular points of the flat orbifold $\bbT^{4}/\bbZ_{2}$. Such a proposal has become known in the literature as the \textbf{Kummer construction} of (special, usually Ricci-flat) metrics, and will be referred to as such. This proposal was first done rigorously by Kobayashi \cite{KobayashiModuli} and LeBrun-Singer \cite{LeBrun-Singer} (following an earlier attempt by Topiwala \cite{Topiwala}) in the early to mid 1990s, and later in the 2010s by Donaldson \cite{DonaldsonKummer}.

	Hence the original Kummer construction of Ricci-flat K\"{a}hler metrics on $K3$ is by now a classical topic in geometric analysis, having become the blueprint for and spawned multiple different variations and generalizations throughout the decades.

	LeBrun-Singer's proof of the original Kummer construction uses \textbf{twistor theory}, while Kobayashi and Donaldson's proofs are more modern and uses PDE, specifically the \textbf{complex Monge-Ampere equation}. The former's proof is via the \textbf{continuity method}, and (with recent corrections by Lye \cite{Lye}, who went on to further study \textbf{stable geodesics} on the resulting metric on $K3$) consists primarily of deriving \textbf{a priori estimates} for the unknown scalar function appearing in the complex Monge-Ampere equation. Donaldson's proof, on the other hand, is based off \textbf{singular perturbation} and primarily uses the \textbf{implicit function theorem}. Proving the existence of Ricci-flat \ka metrics using complex Monge-Ampere requires a \textit{fixed} complex structure, whence without further work, can only be used to produce Calabi-Yau metrics. While being Calabi-Yau and \hka in real dimension 4 are equivalent, fixing a complex structure in the Kummer construction does \textit{not} yield an open subset/full parameter family of the $58 = \dim_{\bbR} \sM^{K3} = \dim_{\bbR}\sM^{K3}_{\Vol = 1} + 1$ dimensional\footnote{$\dim_{\bbR} \sM^{K3}_{\Vol = 1} = \dim_{\bbR} \Gr^{+}\paren*{3,19} = \dim_{\bbR} \SO\paren*{3,19} - \dim_{\bbR}\SO\paren*{3} - \dim_{\bbR}\SO\paren*{19} = 231 - 3 - 171 = 57$ since $\Gamma\leq \GL\paren*{22,\bbZ}$ is discrete.} moduli space of all Ricci-flat \ka metrics on $K3$. This is because the complex structure on $K3$ in the Kummer construction is constructed via patching \textit{the same choice} of complex structure on the 16 copies of the \hka Eguchi-Hanson spaces to the background complex structure on $\bbT^{4}/\bbZ_{2}$. And since such a choice of complex structure is a choice of direction in $S^{2}$, fixing the same direction in all 16 $S^{2}$s of complex structures on the Eguchi-Hansons does \textit{not} yield enough parameters to add up to $58$.

	Moreover, neither Kobayashi/Lye nor Donaldson use \textbf{weighted function spaces} for their estimates when dealing with the 16 conical singularities. Indeed, Kobayashi and Lye's a priori estimates are all unweighted since their strategy is to reprove Yau's a priori estimates in his solution to the Calabi conjecture to have constants which are \textit{uniform} in the \textbf{gluing parameters} involved, and Donaldson exploits the conformal equivalence between the cone metric over $\bbR P^{3}$ and the cylindrical metric on $\bbR P^{3}$ to bypass the usage of polynomial weights. 
	To the author's best knowledge, this machinery has yet to be used to give an alternative rigorous proof of Page-Gibbons-Pope's proposal, in either the Calabi-Yau or the hyper-K\"{a}hler perspectives.


	In this paper, we rigorously carry out two more proofs of Page-Gibbons-Pope's proposal of constructing Ricci-flat \ka metrics on $K3$ using singular perturbation and weighted analysis, from \textit{both} the Calabi-Yau and \hka perspectives. For the former, we find a Ricci-flat \ka metric in a \textbf{given \ka class} by solving a complex Monge-Ampere equation which necessitates having a \textit{fixed} background complex structure, and for the latter we produce a \hka metric via \textbf{closed definite triple perturbation} which reduces to solving a \textbf{nonlinear elliptic PDE \textit{system}}. Both PDEs admit \textbf{singular solutions} as limiting cases whence necessitating care in the analysis, and where weighted analysis proves extremely fruitful.
	
	More specifically, in both the Calabi-Yau and \hka perspectives we\begin{itemize}
		\itemsep0em 
		\item produce Ricci-flat \ka metrics on the $K3$ surface which Gromov-Hausdorff converge to the flat orbifold $\bbT^{4}/\bbZ_{2}$.
		\item explicitly exhibit Eguchi-Hanson metrics as ALE bubbles/rescaled limits near each singular orbifold point.
	\end{itemize}
	
	Moreover, as a byproduct of the weighted analysis developed, in the \hka perspective we\begin{itemize}
		\itemsep0em 
		\item explicitly construct a \textbf{plethora} of compact \hka \textit{orbifolds} as explicit volume non-collapsed limits of \textbf{58-parameter} families of \hka metrics on $K3$, thus yielding a large subset of points in the codimension $3$ ``holes'' of $\Gr^{+}\paren*{3,19}/\Gamma$.
		\item exhibit an \textbf{iterated volume non-collapsed} phenomena on which each of these constructed compact \hka orbifolds degenerate to one another.
	\end{itemize}

	That is, we both exhibit a multitude of elements in $\overline{\sM^{K3}}^{\del} - \sM^{K3}$ ($\overline{\sM^{K3}_{\Vol=1}}^{\del} - \sM^{K3}_{\Vol=1}$) and construct a full open neighborhood in $\overline{\sM^{K3}}^{\del}$ ($\overline{\sM^{K3}_{\Vol=1}}^{\del}$) around these limit compact \hka orbifolds, and then show how they degenerate to one another starting from a full $58$-parameter family of \hka metrics on $K3$.

	In more detail, we prove:\begin{theorem}[Main Theorem I, see Theorem \ref{maintheorem1}]\label{MainTheoremA} Let $K3$ denote the $K3$ surface.

		Equip $K3$ with a fixed complex structure $J$ making it into a Kummer surface $\paren*{\Km, J}$.

		Then $0 < \exists \epsilon_{0} \ll 1$ such that \textbf{there exists a 1-parameter family of Calabi-Yau metrics} $$\set*{\paren*{{g_{\epsilon}}, {\omega_{\epsilon}}}}_{\epsilon \in \paren*{0,\epsilon_{0}}}$$ on $K3$ which are Ricci-flat \ka WRT the fixed complex structure $J$.

		The solved for functions $\set*{f_{\epsilon}}_{\epsilon \in \paren*{0,\epsilon_{0}}}$ in the complex Monge-Ampere equation (thus $\omega_{\epsilon} = \omega_{0} + i\del\delbar f_{\epsilon}$ for some initial approximately Ricci-flat \ka form $\omega_{0}$) produced satisfy the uniform estimate $$\Abs*{f_{\epsilon}}_{C^{k}_{g_{\epsilon}} \paren*{\Km}} \leq C\epsilon^{3+ \frac{\delta}{2} - k}$$ for $\delta \in \paren*{-2,0}$.

		Moreover when $\epsilon \searrow 0$, $$\paren*{\Km, J, {g_{\epsilon}}, {\omega_{\epsilon}}} \xrightarrow{GH} \paren*{\bbT^{4}/\bbZ_{2}, J_{0}, g_{0}, \omega_{0}}$$ converges \textit{in the Gromov-Hausdorff topology} to the flat \ka orbifold $\paren*{\bbT^{4}/\bbZ_{2}, J_{0}, g_{0},\omega_{0}}$ with the singular orbifold K\"{a}hler metric $g_{0}, \omega_{0}$.
		
		Moreover, we may decompose $K3$ into a union of sets $\Km^{reg} \cup \bigsqcup_{i =1}^{16}\Km^{b_{i}}$ such that \begin{enumerate}
			\itemsep0em 
			\item $\paren*{\Km^{reg}, J, {g_{\epsilon}}, {\omega_{\epsilon}}}$ collapses to the flat \ka manifold $\paren*{\bbT^{4}/\bbZ_{2} - \Fix\paren*{\bbZ_{2}}, J_{0}, g_{0}, \omega_{0}}$ with bounded curvature away from $\Fix\paren*{\bbZ_{2}}$ (\textbf{regular region}).
			\item For each $i = 1,\dots, 16$, $\paren*{\Km^{b_{i}}, \frac{1}{\epsilon^{2}}{g_{\epsilon}}}$ converges to $\paren*{T^{*}\CP^{1}, g_{EH,1}}$ in $C^{\infty}_{loc}$ (\textbf{ALE bubble region}).
	\end{enumerate}\end{theorem}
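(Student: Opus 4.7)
The plan is to follow a singular perturbation / gluing strategy and solve the complex Monge-Ampere equation perturbatively, using weighted H\"older spaces adapted to the $16$ conical degeneration points. This is closer in spirit to Donaldson's framework than to Kobayashi's, but with polynomial weights rather than the conformal cylindrical ends used in \cite{DonaldsonKummer}.

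First I would build an approximately Ricci-flat \ka form $\omega_{0,\epsilon}$ on $(\Km,J)$ by gluing $\epsilon^{2}$-scaled Eguchi-Hanson \ka forms into the flat orbifold form on $\bbT^{4}/\bbZ_{2}$ at each fixed point of the $\bbZ_{2}$-action. The gluing is performed at the level of \ka potentials on an annular neck at an intermediate scale between $\epsilon$ and $1$, using a cutoff applied to the difference of the two potentials (which is $O(r^{-2})$ by the ALE expansion of Eguchi-Hanson). A direct computation then shows that $F_\epsilon:=\log(\omega_{0,\epsilon}^{2}/\Omega\wedge\bar\Omega)$ is supported on the neck regions and satisfies $\Abs*{F_\epsilon}_{C^{0,\alpha}_{\delta-2}}\lesssim\epsilon^{3+\delta/2}$ in the weighted H\"older norm associated to a positive weight function $r$ comparable to the distance to the nearest singular orbifold point, capped at a fixed constant, and $\gtrsim\epsilon$ globally, for any $\delta$ in the noncritical range $(-2,0)$.

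The key step, and the main obstacle, is the uniform linear theory: the plan is to show that $\Delta_{\omega_{0,\epsilon}}:C^{2,\alpha}_{\delta}\to C^{0,\alpha}_{\delta-2}$ admits a right inverse $G_\epsilon$ whose operator norm is bounded independently of $\epsilon\in(0,\epsilon_{0})$. The difficulty is that the underlying Riemannian geometry is itself collapsing to a singular space, so standard Schauder estimates must be threaded through a blow-up argument that accounts for two distinct model geometries. Concretely, I would argue by contradiction: a sequence $u_{i}$ with $\Abs*{u_{i}}_{C^{2,\alpha}_{\delta}}=1$ and $\Abs*{\Delta u_{i}}_{C^{0,\alpha}_{\delta-2}}\to 0$ must, after suitable rescalings centered at points where the weighted norm is attained, subconverge either (a) on the flat cone $\bbR^{4}/\bbZ_{2}$ to a harmonic function with growth $o(r^\delta)$ at infinity and at the origin, or (b) on the ALE Eguchi-Hanson space $(T^{*}\CP^{1},g_{EH,1})$ to a harmonic function with the analogous two-sided growth. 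Since $\delta\in(-2,0)$ lies strictly between the two nearest indicial roots of the flat cone Laplacian, and since Eguchi-Hanson admits the analogous Liouville behavior in this weight range, each such limit must vanish, contradicting the normalization.

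Once uniform invertibility is in hand, I would recast the complex Monge-Ampere equation $(\omega_{0,\epsilon}+i\del\delbar f)^{2}=e^{F_\epsilon}\omega_{0,\epsilon}^{2}$ as the fixed point equation $f=-G_\epsilon(F_\epsilon)+G_\epsilon Q(f)$, where $Q(f)$ is the quadratic nonlinear term controlled pointwise by $\abs*{i\del\delbar f}_{\omega_{0,\epsilon}}^{2}$. A Banach contraction argument on the closed ball of radius $C\epsilon^{3+\delta/2}$ in $C^{2,\alpha}_{\delta}$ then produces the unique small solution $f_\epsilon$, and higher regularity follows by a bootstrap using the interior Schauder theory on both model geometries. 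Translating the weighted bound to the $C^{k}_{g_\epsilon}$ norm via the pointwise inequality $r\gtrsim\epsilon$ produces exactly the estimate $\Abs*{f_\epsilon}_{C^{k}_{g_\epsilon}}\lesssim\epsilon^{3+\delta/2-k}$. Gromov-Hausdorff convergence to $(\bbT^{4}/\bbZ_{2},g_{0},\omega_{0})$ and $C^{\infty}_{loc}$-convergence of the bubble regions $(\Km^{b_{i}},\epsilon^{-2}g_\epsilon)$ to $(T^{*}\CP^{1},g_{EH,1})$ both then follow from the explicit construction of $\omega_{0,\epsilon}$ and the fact that the contribution of the correction $f_\epsilon$ to either the GH-scale or the rescaled bubble-scale geometry tends to zero as $\epsilon\searrow 0$.
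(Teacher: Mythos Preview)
Your overall strategy matches the paper's, but the blow-up argument for the uniform linear estimate has a genuine gap: the two limit spaces you list are not exhaustive. When the point $p_i$ at which the weighted norm is attained satisfies $\rho_\epsilon(p_i)\not\to 0$ (i.e., it stays at a definite distance from \emph{all} sixteen singular points), no rescaling is appropriate and the limit geometry is the punctured torus $\bbT^4-\cS$, not the cone $\bbR^4/\bbZ_2$. The limit function is then harmonic on $\bbT^4-\cS$ with growth $O(r^\delta)$ near the punctures; since $\delta>-2$ avoids the first negative indicial root, the Laurent expansion shows the singularities are removable, so the limit extends to a harmonic function on the \emph{compact} torus and is therefore constant. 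But a nonzero constant is perfectly admissible in $C^{2,\alpha}_\delta(\Km_\epsilon)$ because the weight $\rho_\epsilon$ is bounded above on a compact manifold, so you cannot derive a contradiction from this case as stated.

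This is exactly why the paper restricts from the outset to the integral-zero subspace $C^{2,\alpha}_\delta(\Km_\epsilon)^0$ on both sides: the condition $\int_{\Km_{\epsilon_i}} f_i\,\omega_{\epsilon_i}^2=0$ is what kills the constant in the torus limit (one checks that the contribution from the inner region $\{|\cdot|\le 2\epsilon_i^{1/2}\}$ is $O(\epsilon_i^{\delta+2})\to 0$, again using $\delta>-2$, so the limit constant has vanishing integral over $\bbT^4$). Your phrase ``right inverse'' hints that you may be aware of a kernel, but nothing in your blow-up argument produces a uniformly bounded right inverse either; you must explicitly name the complement of the constants and run the contradiction there. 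A minor arithmetic slip as well: with your weight convention the error estimate is $\Abs{F_\epsilon}_{C^{0,\alpha}_{\delta-2}}\lesssim\epsilon^{3-\delta/2}$, not $\epsilon^{3+\delta/2}$; the sign flips only at the very end when you trade the weighted bound for the unweighted $C^k$ bound via $\rho_\epsilon\ge\epsilon$.
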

	\begin{theorem}[Main Theorem II, see Theorem \ref{maintheorem2}]\label{MainTheoremB}Enumerate the $16$ singular points $\Fix\paren*{\bbZ_{2}} = \set*{p_{1},\dots, p_{16}}$ of $\bbT^{4}/\bbZ_{2}$.

		Then $\forall I \subseteq \set*{1,\dots, 16}$, $0 < \exists \epsilon^{0}_{I} \ll 1$ such that \textbf{there exists a $10 + 3\abs*{I}$-parameter family of (orbifold) \hka triples and (orbifold) \hka metrics} $$\set*{\paren*{\bomega_{f, \epsilon_{I}, e_{I}}, g_{\bomega_{f, \epsilon_{I}, e_{I}}}}}_{\substack{f \in \text{flat tori moduli}\\ \epsilon_{i} \in \paren*{0,\epsilon_{I}^{0}} \\ e_{i} \in S^{2} \\ i \in I}}$$ on the \textbf{$I$-th partial smoothing $\Km^{\abs*{I}} = \Km^{\abs*{I}}_{f, \epsilon_{I}, e_{I}}$ of $\bbT^{4}/\bbZ_{2}$}, constructed by only smoothing the $\abs*{I}$ singular points $p_{i} \in \Fix\paren*{\bbZ_{2}}, i \in I$.

		Moreover as $\epsilon^{0}_{I}\searrow 0$, $$\paren*{\Km^{\abs*{I}}_{f, \epsilon_{I}, e_{I}}, \bomega_{f, \epsilon_{I}, e_{I}}, g_{\bomega_{f, \epsilon_{I}, e_{I}}}} \xrightarrow{GH} \paren*{\bbT^{4}/\bbZ_{2}, \bomega_{f}, g_{\bomega_{f}}}$$ converges in the Gromov-Hausdorff topology to the flat \hka orbifold $\paren*{\bbT^{4}/\bbZ_{2}, \bomega_{f}, g_{\bomega_{f}}}$ with the singular orbifold \hka metric $\bomega_{f}, g_{\bomega_{f}}$. Moreover, we may decompose each compact orbifold $\Km^{\abs*{I}}$ into a union of sets $\paren*{\Km^{\abs*{I}}_{f, \epsilon_{I}, e_{I}}}^{reg} \cup \bigsqcup_{i \in I}\paren*{\Km^{\abs*{I}}_{f, \epsilon_{I}, e_{I}}}^{b_{i}}$ such that \begin{enumerate}
			\itemsep0em 
			\item $\paren*{\paren*{\Km^{\abs*{I}}_{f, \epsilon_{I}, e_{I}}}^{reg}, \bomega_{f, \epsilon_{I}, e_{I}}, g_{\bomega_{f, \epsilon_{I}, e_{I}}}}$ collapses to the flat \ka orbifold $\paren*{\bbT^{4}/\bbZ_{2} - \set*{p_{i}}_{i\in I}, \bomega_{0}, g_{\bomega_{0}}}$ with bounded curvature away from $p_{i} \in \Fix\paren*{\bbZ_{2}}$ for $i \in I$ (\textbf{regular region}).
			\item For each $i \in I$, $\paren*{\paren*{\Km^{\abs*{I}}_{f, \epsilon_{I}, e_{I}}}^{b_{i}}, \frac{1}{\epsilon^{2}} \bomega_{f, \epsilon_{I}, e_{I}}, \frac{1}{\epsilon^{2}}g_{\bomega_{f, \epsilon_{I}, e_{I}}}}$ converges to $\paren*{T^{*}S^{2}, A^{e}\bomega_{EH,1}, g_{\bomega_{EH,1}}}$ in $C^{\infty}_{loc}$ (\textbf{ALE bubble region}).
		\end{enumerate}

		In particular, when $I = \set*{1,\dots, 16}$, $0 < \exists \epsilon^{0} \ll 1$ such that \textbf{there exists a $10 + 3\cdot 16 = 58$-parameter family of \hka triples and \hka metrics} $$\set*{\paren*{\bomega_{f, \epsilon_{I}, e_{I}}, g_{\bomega_{f, \epsilon_{I}, e_{I}}}}}_{\substack{f \in \text{flat tori moduli}\\ \epsilon_{i} \in \paren*{0,\epsilon^{0}} \\ e_{i} \in S^{2} \\ i =1,\dots, 16}}$$ on the \textbf{K3 surface} $\Km^{16} = K3 = \Km_{f, \epsilon_{I}, e_{I}}$.
		
		Moreover as $\epsilon^{0} \searrow 0$, $$\paren*{\Km_{f, \epsilon_{I}, e_{I}}, \bomega_{f, \epsilon_{I}, e_{I}}, g_{\bomega_{f, \epsilon_{I}, e_{I}}}} \xrightarrow{GH} \paren*{\bbT^{4}/\bbZ_{2}, \bomega_{f}, g_{\bomega_{f}}}$$ converges in the Gromov-Hausdorff topology to the flat \hka orbifold $\paren*{\bbT^{4}/\bbZ_{2}, \bomega_{f}, g_{\bomega_{f}}}$ with the singular orbifold \hka metric $\bomega_{f}, g_{\bomega_{f}}$. Moreover, we may decompose $K3$ into a union of sets $\Km_{f, \epsilon_{I}, e_{I}}^{reg} \cup \bigsqcup_{i =1}^{16}\Km_{f, \epsilon_{I}, e_{I}}^{b_{i}}$ such that \begin{enumerate}
			\itemsep0em 
			\item $\paren*{\Km_{f, \epsilon_{I}, e_{I}}^{reg}, \bomega_{f, \epsilon_{I}, e_{I}}, g_{\bomega_{f, \epsilon_{I}, e_{I}}}}$ collapses to the flat \ka manifold $\paren*{\bbT^{4}/\bbZ_{2} - \Fix\paren*{\bbZ_{2}}, \bomega_{0}, g_{\bomega_{0}}}$ with bounded curvature away from $\Fix\paren*{\bbZ_{2}}$ (\textbf{regular region}).
			\item For each $i =1,\dots, 16$, $\paren*{\Km_{f, \epsilon_{I}, e_{I}}^{b_{i}}, \frac{1}{\epsilon^{2}} \bomega_{f, \epsilon_{I}, e_{I}}, \frac{1}{\epsilon^{2}}g_{\bomega_{f, \epsilon_{I}, e_{I}}}}$ converges to $\paren*{T^{*}S^{2}, A^{e}\bomega_{EH,1}, g_{\bomega_{EH,1}}}$ in $C^{\infty}_{loc}$ (\textbf{ALE bubble region}).
		\end{enumerate}

		Therefore, letting $k \in \set*{0,\dots, 16}$ be a chosen nonnegative integer (the \textbf{depth}), upon considering the following strictly nested sequence $$\set*{1,\dots, 16} = I_{0}\supsetneq I_{1}\supsetneq \cdots \supsetneq I_{k}$$ with $I_{16} \coloneq \emptyset$, then we have the following sequence of \textbf{iterated volume non-collapsed degenerations} of our 58-parameter family of \hka metrics $\bomega_{f, \epsilon_{I_{0}}, e_{I_{0}}}, g_{\bomega_{f, \epsilon_{I_{0}}, e_{I_{0}}}}$ on the $K3$ surface $\Km^{\abs*{I_{0}}}_{f, \epsilon_{I_{0}}, e_{I_{0}}} =\Km_{f, \epsilon_{I_{0}}, e_{I_{0}}}$:\\
		
		\hspace*{-2.698cm}\scalebox{0.93}{\begin{tikzcd}[ampersand replacement=\&]
				\&\& {\paren*{\Km_{f, \epsilon_{I_{0}}, e_{I_{0}}}, g_{\bomega_{f, \epsilon_{I_{0}}, e_{I_{0}}}}}} \\
				{\paren*{\Km^{\abs*{I_{1}}}_{f, \epsilon_{I_{1}}, e_{I_{1}}}, g_{\bomega_{f, \epsilon_{I_{1}}, e_{I_{1}}}}}} \& {\paren*{\Km^{\abs*{I_{2}}}_{f, \epsilon_{I_{2}}, e_{I_{2}}}, g_{\bomega_{f, \epsilon_{I_{2}}, e_{I_{2}}}}}} \& \cdots \& {\paren*{\Km^{\abs*{I_{k-1}}}_{f, \epsilon_{I_{k-1}}, e_{I_{k-1}}}, g_{\bomega_{f, \epsilon_{I_{k-1}}, e_{I_{k-1}}}}}} \& {\paren*{\Km^{\abs*{I_{k}}}_{f, \epsilon_{I_{k}}, e_{I_{k}}}, g_{\bomega_{f, \epsilon_{I_{k}}, e_{I_{k}}}}}} \\
				\&\& {\paren*{\bbT^{4}/\bbZ_{2}, g_{\bomega_{f}}}}
				\arrow["GH"', from=1-3, to=2-1]
				\arrow["GH", from=1-3, to=2-2]
				\arrow["GH", from=1-3, to=2-3]
				\arrow["GH", from=1-3, to=2-4]
				\arrow["GH", from=1-3, to=2-5]
				\arrow["GH", from=2-1, to=2-2]
				\arrow["GH", from=2-1, to=3-3]
				\arrow["GH", from=2-2, to=2-3]
				\arrow["GH", from=2-2, to=3-3]
				\arrow["GH", from=2-3, to=2-4]
				\arrow["GH", from=2-3, to=3-3]
				\arrow["GH", from=2-4, to=2-5]
				\arrow["GH"', from=2-4, to=3-3]
				\arrow["GH", from=2-5, to=3-3]
		\end{tikzcd}}
	
		where we\begin{itemize}
			\itemsep0em 
			\item degenerate from our \hka $K3$ surface $\paren*{\Km_{f, \epsilon_{I_{0}}, e_{I_{0}}}, g_{\bomega_{f, \epsilon_{I_{0}}, e_{I_{0}}}}}$ to $\paren*{\Km^{\abs*{I_{j}}}_{f, \epsilon_{I_{j}}, e_{I_{j}}}, \bomega_{f, \epsilon_{I_{j}}, e_{I_{j}}}, g_{\bomega_{f, \epsilon_{I_{j}}, e_{I_{j}}}}}$ via only collapsing the gluing parameters $\epsilon_{i}>0$ corresponding to $i \in I_{0} - I_{j} = I_{j}^{c}$
			\item degenerate from $\paren*{\Km^{\abs*{I_{j-1}}}_{f, \epsilon_{I_{j-1}}, e_{I_{j-1}}}, \bomega_{f, \epsilon_{I_{j-1}}, e_{I_{j-1}}}, g_{\bomega_{f, \epsilon_{I_{j-1}}, e_{I_{j-1}}}}}$ to $\paren*{\Km^{\abs*{I_{j}}}_{f, \epsilon_{I_{j}}, e_{I_{j}}}, \bomega_{f, \epsilon_{I_{j}}, e_{I_{j}}}, g_{\bomega_{f, \epsilon_{I_{j}}, e_{I_{j}}}}}$ via only collapsing the gluing parameters $\epsilon_{i}>0$ corresponding to $i \in I_{j-1} - I_{j}$
			\item and degenerate from $\paren*{\Km^{\abs*{I_{j}}}_{f, \epsilon_{I_{j}}, e_{I_{j}}}, \bomega_{f, \epsilon_{I_{j}}, e_{I_{j}}}, g_{\bomega_{f, \epsilon_{I_{j}}, e_{I_{j}}}}}$ to $\paren*{\bbT^{4}/\bbZ_{2}, \bomega_{f}, g_{\bomega_{f}}}$ via collapsing all gluing parameters $\epsilon_{i} > 0, i \in I_{j}$.
	\end{itemize}\end{theorem}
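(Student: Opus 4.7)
The plan is to construct, for each $I \subseteq \{1, \dots, 16\}$, an \emph{approximate} closed definite triple $\bomega_{\mathrm{app}}$ on the partial smoothing $\Km^{|I|}$ via a cut-off gluing, and then perturb it to an \emph{exact} hyper-K\"ahler triple by inverting a nonlinear elliptic system in the triple of correction $1$-forms. Concretely, I would begin with a flat hyper-K\"ahler triple on $\bbT^{4}/\bbZ_{2}$ (flat Ricci-flat metrics on $\bbT^{4}$ modulo isometry contribute $10$ real parameters, captured by $f$), and at each $p_{i}$ with $i \in I$, graft in a rescaled and twistor-rotated Eguchi-Hanson triple $A^{e_{i}}\bomega_{EH,\epsilon_{i}}$, where $\epsilon_{i} > 0$ is the gluing scale and $e_{i} \in S^{2}$ is a choice of direction in the twistor $2$-sphere of Eguchi-Hanson. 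The resulting triple is closed by construction but only approximately definite; a weighted H\"older estimate bounds its torsion in the gluing annuli by $C \epsilon_{i}^{\beta}$ for some $\beta > 0$ determined by the weight.

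I would then seek the correction as an exact triple $d\bm{\eta}$, so that the hyper-K\"ahler condition reduces to a nonlinear first-order system for the triple of $1$-forms $\bm{\eta}$ whose linearisation is essentially the Hodge Laplacian on $\Omega^{1}\otimes\bbR^{3}$, modulo a diffeomorphism-type gauge. The heart of the argument is producing a right inverse for this linearisation whose norm is \emph{uniform} in the gluing data $(\epsilon_{I}, e_{I})$. I plan to use weighted H\"older spaces with weight $\delta \in (-2, 0)$, chosen to avoid the indicial roots of the Laplacian on both the flat orbifold cone and the Eguchi-Hanson neck; on each bubble region, Bando-Kasue-Nakajima-style ALE Fredholm theory supplies a local parametrix, and on the regular region the operator is close to that on the flat orbifold. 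Patching these parametrices via cut-offs in the annular transition regions produces the required uniform right inverse. The implicit function theorem (equivalently, contraction mapping applied to $\bm{\eta} \mapsto \bm{\eta} - L^{-1}(\mathrm{error}+Q(\bm{\eta}))$) then furnishes, for each parameter tuple with all $\epsilon_{i}$ sufficiently small, a unique small correction $\bm{\eta}$, yielding the genuine closed definite triple $\bomega_{f, \epsilon_{I}, e_{I}}$ and the hyper-K\"ahler metric $g_{\bomega_{f, \epsilon_{I}, e_{I}}}$.

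The parameter count is then $10$ from $f$, $1$ for each scale $\epsilon_{i}$, and $2$ for each $e_{i} \in S^{2}$, totalling $10 + 3|I|$, and $58$ when $|I| = 16$, as claimed. Gromov-Hausdorff convergence to $\bbT^{4}/\bbZ_{2}$ and the regular/bubble decomposition both fall out of the construction together with the weighted decay of $\bm{\eta}$: on $(\Km^{|I|})^{\mathrm{reg}}$ the metric is a small perturbation of the flat metric and so collapses with bounded curvature to $\bbT^{4}/\bbZ_{2} - \{p_{i}\}_{i\in I}$, while rescaling by $\epsilon_{i}^{-2}$ near $p_{i}$ converges in $C^{\infty}_{\mathrm{loc}}$ to $A^{e_{i}}\bomega_{EH,1}$. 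The iterated volume non-collapsed diagram is then a direct consequence: for a nested chain $I_{0} \supsetneq \cdots \supsetneq I_{k}$, sending exactly the gluing parameters $\{\epsilon_{i}\}_{i \in I_{j-1} - I_{j}}$ to zero at the $j$-th step is itself a Gromov-Hausdorff degeneration \emph{within} the already-constructed parameter family, so the whole commutative diagram of arrows is obtained by finitely many applications of the single collapsing mechanism above.

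The principal technical obstacle will be the \emph{uniformity} of the linear estimate across the gluing parameters, since as any $\epsilon_{i} \to 0$ a finite-dimensional obstruction appears on the corresponding Eguchi-Hanson bubble and the global Fredholm problem degenerates. Handling this forces $\delta$ to sit strictly between the closest indicial roots on both sides of each neck and requires a careful matching estimate in the transition annuli where the flat and Eguchi-Hanson parametrices are combined; once this uniform linear estimate is in place, the quadratic nonlinear estimate and the geometric convergence statements follow by routine weighted analysis.
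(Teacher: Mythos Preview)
Your overall architecture is correct and close to the paper's, but there is one genuine gap in your ansatz and one methodological divergence worth noting.

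\textbf{The gap.} You propose the correction as a purely exact triple $d\bm{\eta}$. The paper's ansatz is $\bm{\eta} = d\bm{a} + \bm{\zeta}$ with $\bm{\zeta} \in \cH^{+}_{g_{\bomega_{\epsilon}}} \otimes \bbR^{3}$ a triple of \emph{self-dual harmonic} $2$-forms, and this extra term is not optional. The linearised operator on the gauge-fixed domain is $d^{+}_{g} \oplus \id_{\cH^{+}_{g}} : \mathring{\Omega}^{1}_{g} \oplus \cH^{+}_{g} \to \Omega^{+}_{g}$; without the $\cH^{+}_{g}$ summand, $d^{+}_{g}$ alone is not surjective onto $\Omega^{+}_{g}$ (its image is $L^{2}$-orthogonal to $\cH^{+}_{g}$), and the implicit function theorem would fail. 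The paper even remarks that it is \emph{necessary} to move the cohomology classes of the triple, since any hyper-K\"ahler triple satisfies a rigid cohomological constraint $\frac{1}{2}\langle[\bomega]\cup[\bomega]^{T},[M]\rangle = \Vol \cdot \id$. So the ``finite-dimensional obstruction'' you anticipate is not something absorbed into the linear estimate; it is killed at the outset by enlarging the domain with $\bm{\zeta}$. Relatedly, the linearisation is the first-order Dirac-type operator $D_{g} = d^{*}_{g} \oplus d^{+}_{g}$, not the Hodge Laplacian; the latter enters only indirectly via $D_{g}\fa = 0 \Rightarrow \Delta_{g}\fa = 0$ in the blow-up limits.

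\textbf{The methodological difference.} For the uniform linear estimate you propose patching local parametrices (ALE Fredholm theory on each bubble, flat-orbifold theory on the bulk). The paper instead runs a blow-up/contradiction argument: assume the improved weighted Schauder estimate fails along a sequence $\epsilon_{i} \searrow 0$, extract limits on the two model spaces, and rule them out by Liouville-type arguments (on $\bbT^{4}$ the limiting $1$-form has constant coefficients which must be odd under $\bbZ_{2}$ hence zero; on Eguchi-Hanson a Bochner-plus-maximum-principle argument forces the harmonic $1$-form to vanish). Both routes are standard in gluing analysis, but the blow-up approach avoids building explicit approximate inverses and the associated annular matching estimates you flag as the principal obstacle. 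One consequence of the paper's route is that the weight is restricted to $\delta \in (-1,0)$ rather than your $(-2,0)$: the sharper lower bound is needed so that $\rho_{\epsilon}^{\delta-1} \in L^{2}$ uniformly, which is what controls the $\bm{\zeta}$-component in the contradiction argument.
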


	\begin{remark}\label{Kobayashi/Lye and Kobayashi-Todorov comparison}

		As remarked already, the uniform a priori estimates that Kobayashi \cite{KobayashiModuli} and Lye \cite{Lye} prove are for the complex Monge-Ampere equation, whence their estimates requires a fixed integrable background complex structure and hold only in the Calabi-Yau perspective. Using the notation of Theorem \ref{MainTheoremA}, the estimates they prove are $$\Abs*{f_{\epsilon}}_{C^{k}_{g_{\epsilon}} \paren*{\Km}} \leq C\epsilon^{2 - \frac{k}{2}}$$ Their proof follows Yau's proof of the Calabi conjecture, whence first uses a \textbf{uniform Sobolev inequality}, a \textbf{uniform Poincare inequality}, and \textbf{Moser iteration} to get the \textbf{$C^{0}$-estimate}, then the \textbf{maximum principle} to get the \textbf{$C^{2}$-estimate}, then further local estimates to get Holder and higher order estimates. Kobayashi in fact also proves slightly weaker uniform a priori estimates more generally when resolving \textit{any} compact Ricci-flat \ka orbifold whose resolutions yields $K3$, namely $\Abs*{f_{\epsilon}}_{C^{k}_{g_{\epsilon}} \paren*{\Km}} \leq C\epsilon^{\frac{13}{8} - \frac{k}{2}}$.
		
		The estimates that we prove from the Calabi-Yau perspective in Theorem \ref{MainTheoremA}, namely $$\Abs*{f_{\epsilon}}_{C^{k}_{g_{\epsilon}} \paren*{\Km}} \leq C\epsilon^{3+ \frac{\delta}{2} - k}$$ for $\delta \in \paren*{-2,0}$ is thus an improvement over Kobayashi/Lye's estimates for the case of resolving $\bbT^{4}/\bbZ_{2}$ as in the original Kummer construction, and is achieved via deriving an estimate of $f_{\epsilon}$ in certain \textbf{weighted Holder spaces} (see Theorem \ref{bigtheorem1}).

		Moreover, our weighted analysis has the advantage of carrying over to the \hka case, whence in Theorem \ref{bigtheorem2} we prove similar weighted Holder estimates from the \hka perspective, which readily carry over to the large families of \hka orbifolds we construct in Theorem \ref{MainTheoremB}.

		In fact, in \textit{Remark} \ref{ricciestimates} and \textit{Remark} \ref{HKA ricciestimates} we derive \textbf{uniform Ricci curvature estimates} for our constructed approximate metric, whence (since we clearly have uniform volume non-collapse and uniformly bounded diameter) we \textit{also} have uniform Poincare and uniform Sobolev inequalities in our setting. Whence we may also carry out a Moser iteration argument and rederive the analogue of Yau/Kobayashi/Lye's $C^{0}$ and higher order estimates.\end{remark}

	\begin{remark}Similar analysis done here has been carried out in slightly different settings.
		
		Biquard-Minerbe \cite{Biquard-Minerbe} did a non-compact Kummer construction with weighed spaces in order to construct noncompact \hka \textbf{ALF, ALG, and ALH spaces} (which respectively have cubic, quadratic, and linear volume growth), and was done via solving the complex Monge-Ampere equation.
		
		Brendle-Kapouleas \cite{BK} did the original Kummer construction considered here on $K3$, but they glued in the 16 Eguchi-Hanson spaces with half of them having \textit{opposite orientation} (arranged in a checkers board pattern). As a result, Brendle-Kapouleas had to solve the full Einstein equation $\Ric = 0$ because the swapped orientation forces one to leave even the K\"{a}hler realm (since it prevents there being \textit{any} resulting integrable complex structure) whence forcing the (approximate) metric to have \textit{generic holonomy}. Note that in our case, when we vary the parameters of the construction from the \hka perspective (Theorem \ref{MainTheoremB}), while we no longer patch together complex structures, we \textit{still} have all Eguchi-Hanson spaces glued in with the \textit{same} orientation since definite triples require the data of an orietation (see Definition \ref{HKA definite triple definition}), thus we're still \textit{special holonomy}. Brendle-Kapouleas end up showing that the relevant linearization has an obstructed kernel (even up to several higher orders), thus nullifying the construction and dashing any hopes of using the original Kummer construction to construct Ricci-flat metrics on \textit{closed} manifolds with \textit{generic holonomy} (as opposed to Calabi-Yau, hyper-K\"{a}hler, etc., with \textit{special} holonomy $\SU\paren*{\frac{n}{2}}, \Sp\paren*{\frac{n}{4}}$).
	\end{remark}

	\begin{remark}\label{ModuliRemark} 
		
		It is interesting to consider the \textit{full} compactification $\overline{\sM^{K3}_{\Vol=1}}$, whence to study the limit spaces of sequences in $\sM^{K3}$ which are \textbf{volume collapsed}. Indeed, since the period map extends to a bijection $\cP: \overline{\sM^{K3}_{\Vol=1}}^{\del} \rightarrow \Gr^{+}\paren*{3,19}/\Gamma$ when adding in all volume non-collapsed limits, as $\Gr^{+}\paren*{3,19}/\Gamma$ is still \textit{non-compact} we say that a sequence $\set*{g_{i}}\subset \sM^{K3}$ is volume collapsed to mean that $\set*{\cP\paren*{g_{i}}}$ diverges to infinity in $\Gr^{+}\paren*{3,19}/\Gamma$. 
		
		Much recent work has been done in these and related directions, namely in classifying elements in the ``boundary at $\infty$''/the set of \textbf{volume collapsed limit spaces} $\overline{\sM^{K3}_{\Vol=1}} - \overline{\sM^{K3}_{\Vol=1}}^{\del}$, exhibiting open subsets of $\overline{\sM^{K3}_{\Vol=1}}$/explicit $58$ parameter families of volume collapsed sequences convergent to those collapsed limit spaces, and exhibiting more general noncompact \hka gravitational instantons as bubbles/models of the convergence near singularities and volume-collapsed regions.
		
		We give an impressionistic survey of these new (and old) results:

		Foscolo \cite{Foscolo} exhibited a full open subset/58 parameter family of sequences $\set*{g_{i}} \subset \sM^{K3}$ which exhibit \textbf{codimension 1 collapse} down to $\bbT^{3}/\bbZ_{2}$ which have \textit{both} \textbf{cyclic ALF} (i.e. \textbf{multi Taub-NUT}) and \textbf{dihedral ALF} \hka spaces as bubbles near (no more than 16 arbitrary) punctures and the 8 punctures corresponding to the 8 fixed points of $\bbZ_{2}\Ractson \bbT^{3}$, respectively. This is done via a similar Kummer-type/gluing construction and closed definite perturbation as considered here, where (roughly speaking) we are shrinking one of the $S^{1}$-factors of $\bbT^{4}/\bbZ_{2}$ down to zero (or rather, the $S^{1}$-fibers of a not necessarily trivial principal $\U(1)$-bundle over an open subset of a punctured $\bbT^{3}$). Moreover, the \textbf{charges} of these cyclic \& dihedral ALF spaces must satisfy a \textit{charge conservation} equation, which, by the residue theorem, guarantees the existence of a suitable harmonic function on an open subset of a punctured $\bbT^{3}$ with \textit{prescribed asymptotics} near the punctures, which is needed for the \textbf{Gibbons-Hawking ansatz} of $\U(1)$-invariant $4$-dimensional \hka metrics that is involved in the construction of all cyclic ALF spaces and the asymptotics of dihedral ALF spaces. Foscolo's constructed family of metrics (in the case where one of the building blocks consists of the \textbf{Atiyah-Hitchin manifold}, or the \textbf{$D_{1}$ ALF space}) was also shown to admit a \textbf{strictly stable minimal sphere} which is \textit{not} holomorphic WRT any complex structure compatible with the metric, thus providing alternative examples of this phenomena on $K3$ which was originally shown to exist by Micallef-Wolfson \cite{Micallef-Wolfson}.

		Gross-Wilson \cite{Gross-Wilson} constructed sequences $\set*{g_{i}}\subset \sM^{K3}$ on an \textbf{elliptic $K3$ surface} with 24 singular $I_{1}$ fibers\footnote{Elliptic $K3$ surfaces, which (usually) have a \textit{fixed} complex structure and always have singular fibers, have their singular fibers classified: the list of types of singular fibers are $I_{0}^{*}, II, III, IV, II^{*}, III^{*}, IV^{*}, I_{\nu}, I_{\nu}^{*}, \forall \nu \in \bbN$.} exhibiting \textbf{codimension 2 collapse} down to $\paren*{\CP^{1},g_{ML}}$ with the (singular) \textbf{McLean metric}. This is done via a gluing construction similar to the one considered here, where the model away from singular fibers provided by a \textbf{semi-flat metric}, and near the singular fibers by the \hka \textbf{Ooguri-Vafa} metric, hence is a bubble modeling the convergence near the 24 singular $I_{1}$ fibers. The proof is done via complex Monge-Ampere and the continuity method, hence all metrics are \ka WRT a fixed elliptic complex structure. Moreover, this setting is that of a \ka metric on $K3$ approaching the ``\textbf{large complex structure limit}'', which by a \hka rotation of any complex structure on $K3$ giving the existence of special Lagrangian fibrations, is equivalent to having a \textit{fixed} complex structure on an elliptic $K3$ and letting the \ka metric degenerate \textbf{adiabatically} (i.e. the volume of the fibers shrinking to zero). The main guiding heuristic here, namely that \ka metrics WRT a complex structure in the ``large complex structure limit'' should degenerate in a certain way and (roughly) produce special Lagrangian (torus) fibrations, is called the ``\textbf{SYZ conjecture}''.

		Gross-Tosatti-Zhang	\cite{Gross-Tosatti-Zhang} generalized Gross-Wilson's construction to general elliptic $K3$ surfaces, i.e. non-generic configurations of singular fibers. This was also done via complex Monge-Ampere, hence the resulting sequence of Ricci-flat metrics are \ka WRT a fixed elliptic complex structure. However, no description of the metric degeneration near the singular fibers/bubbles were explored. Chen-Viaclovsky-Zhang \cite{Chen-V-Zhang-Elliptic}, using closed definite triple perturbation (but still with a fixed elliptic complex structure) redid Gross-Tosatti-Zhang but with much more precise descriptions of metric degeneration near singular fibers, exhibiting \hka ALE, cyclic ALF, and \hka \textbf{ALG} spaces as bubbles modeling the convergence near the singular fibers.

		Hein-Sun-Viaclovsky-Zhang \cite{HSVZ} constructed sequences $\set*{g_{i}}\subset \sM^{K3}$ which exhibits both \textbf{multi-scale/iterative} and \textbf{codimension 3 collapse} down to the unit interval $\paren*{[0,1],dt^{2}}$. Generically, the collapse is modeled on $S^{1}$-fiber bundles over $\bbT^{2}$ with the fiber shrinking at a \textit{faster} rate than the base, and away from the generic region the collapse is modeled on Taub-NUT and \textbf{ALH Tian-Yau} spaces appearing as bubbles. This was done on the purely \textit{metric} level (hence via closed definite triple perturbation), with no background complex structure.

		Chen-Viaclovsky-Zhang \cite{Chen-V-Zhang-Torelli-ALG} in a different work proved Torelli-type theorems on \hka \textbf{ALG$^{\textbf{*}}$} spaces, and also exhibits codimension 2 collapse of \hka metrics on $K3$ down to $\paren*{\CP^{1},g_{ML}}$ with ALG$^{\textbf{*}}$ and Taub-NUT bubbles modeling the collapse near the singular fibers.

		Around the same time as these developments, Sun-Zhang \cite{Sun-Zhang-HKA} classified \textit{all} possible collapsed limit spaces/elements of $\overline{\sM^{K3}_{\Vol=1}} - \overline{\sM^{K3}_{\Vol=1}}^{\del}$ (hence completing the full picture of the Gromov-Hausdorff compactification), and showed that $\overline{\sM^{K3}_{\Vol=1}} - \overline{\sM^{K3}_{\Vol=1}}^{\del} = \set*{\bbT^{3}/\bbZ_{2}, S^{2}, [0,1]}$, hence the flat $\bbT^{3}/\bbZ_{3}$ (codimension 1 collapse), a singular special \ka metric on $S^{2}$ (codimension 2 collapse), or the flat unit interval $[0,1]$ (codimension 3 collapse, in which further structure was examined by Honda-Sun-Zhang \cite{Honda-Sun-Zhang}). This was done via proving \textbf{uniqueness of tangent cones} near singularities of any collapsed limit.

		Finally, we mention recent work of Sun-Zhang \cite{Sun-Zhang-CY-2024} who proved new collapsing behavior of \textit{arbitrary dimension} Calabi-Yau metrics in the ``\textbf{small complex structure limit}'' (hence the other extreme of Gross-Wilson/SYZ), specifically in the case of a family of smooth projective Calabi-Yau hypersurfaces viewed as a complex $1$-parameter family $\set*{X_{t}}_{t \in \Delta}$ of complex deformations of a simple normal crossing variety $X_{0}$. That is, they prove that one can carry over this algebraic degeneration as $\abs*{t}\searrow 0$ to the \textit{metric} level and obtain precise geometric information on this metric degeneration, namely that the family collapses all the way down to the unit interval $\paren*{[0,1],dt^{2}}$ in such a way as to exhibit multiscale/iterated collapse, with generic fibers an $S^{1}$-bundle over the smooth intersection divisor, and singular $S^{1}$-bundles with Taub-NUT $\times$ flat and Tian-Yau bubbles near the singular $S^{1}$ fibers and the endpoints of $[0,1]$, respectively, and with all $S^{1}$ fibers shrinking faster than their bases.\end{remark}

	This paper is organized as follows: 
	
	In Section \ref{Notation}, we list some common notations used throughout.

	In Section \ref{Preliminaries}, we list some preliminary definitions used and give a derivation of the Monge-Ampere equation for (more general) K\"{a}hler-Einstein metrics.

	In Section \ref{EH as building block}, we define the Eguchi-Hanson metric on the blowup of $\bbC^{2}/\bbZ_{2}$ and prepare it for the gluing process, as well as recall some facts about flat tori.
	
	In Section \ref{The Kummer Construction}, we construct the ``Kummer surface'' $\Km$ by blowing up the orbifold points of $\bbT^{4}/\bbZ_{2}$ in a manner suitable for gluing, and we graft in the prepped Eguchi-Hanson metrics to get an approximately Ricci-flat K\"{a}hler metric $\omega_{\epsilon}$ on $\Km$.
	
	In Section \ref{Weighted Setup}, we define the relevant weighted Holder spaces that will be used in the construction.

	In Section \ref{Nonlinear Setup}, we setup the nonlinear PDE problem of perturbing the approximately Ricci-flat $\omega_{\epsilon}$ to a genuinely Ricci-flat K\"{a}hler metric within its K\"{a}hler class as a fully nonlinear elliptic PDE of Monge-Ampere type, in a manner suitable for the implicit function theorem.
	
	In Section \ref{Blow-up Analysis}, we do a blow-up analysis argument to invert the linearization of the nonlinear map defined in the previous section in the relevant weighted Holder spaces. This is the heart of the argument, as we need the weighted Holder spaces to get \textit{uniform} bounded operator norm on the inverse of the linearization.
	
	In Section \ref{Finishing the proof}, we finish the proof of Theorem \ref{MainTheoremA} by applying the implicit function theorem to get our solution to the Monge-Ampere equation.
	
	In Sections \ref{HKA EH as building block}, \ref{HKA The Kummer Construction}, \ref{HKA Weighted Setup}, and \ref{HKA Blow-up Analysis} we upgrade our analysis to established in the previous sections to deal with closed definite triple perturbation from the \hka perspective. As we are no longer solving for a scalar function but for a \textit{$2$-form} in our PDE, the analysis substantially differs.
	
	Finally, in Section \ref{HKA Finishing the proof} we finish the proof of Theorem \ref{MainTheoremB} by applying the implicit function theorem to get our solution to the nonlinear elliptic PDE system involved, as well as discuss some consequences of what has been proved from two perspectives:\begin{itemize}
		\itemsep0em 
		\item the perspective of ``further probing'' the codimension $3$ subset of all (unit volume) volume non-collapsed limits of \hka metrics on $K3$ inside $\Gr^{+}\paren*{3,19}/\Gamma$ (\textit{Remark} \ref{HKA big main theorem 2 consistency with codimension 3 for depth 1} and \textit{Remark} \ref{HKA big main theorem 2 consistency with codimension 3 for depth k})
		\item the broader perspective of general Riemannian convergence theory, specifically the work of Cheeger and Naber \cite{CN} (\textit{Remark} \ref{HKA codimension 4 conjecture remark}).
	\end{itemize}

	\begin{ack*} Thanks to Professor Yuji Odaka for referring me to \cite{KobayashiTodorov} and \cite{Odaka-Oshima} and for his encouragements.
		
		Part of this paper consists of the author's honors undergraduate thesis written in April 2023 at Stony Brook University. 
		
		I owe a tremendous debt to my undergraduate advisor Professor Simon Donaldson for supervising, introducing, and teaching me the beautiful geometric analysis subfields of special holonomy and gauge theory. I would not be where I am today without his tutelage and his kindness towards me. 
		
		Thanks also to my other mathematics professors at Stony Brook for teaching me so much in class, during office hours, through emails, on walks around campus, and over meals.
		
		Lastly, I would like to thank Professor Mark Stern, Professor Robert Bryant, and my PhD advisor, Professor Mark Haskins, for their encouragement, expertise, and kindness toward me. \textit{Soli Deo Gloria!}\end{ack*}

	\section{\textsection \ (Global) Notation \ \textsection}\label{Notation}
	
	\begin{notation}
		\begin{itemize}
			\item Everything is assumed $C^{\infty, \omega}$ when appropriate/unless explicitly mentioned otherwise, and all manifolds are connected (unless otherwise stated). 
			\item For $g$ a Riemannian metric and $E \subseteq \paren*{M,g}$ some subset, we will let $\abs*{\cdot}^{g}_{E} \coloneq \dist_{g}\paren*{E,\cdot}$ be the \textit{metric} distance function WRT $g$ away from $E$.
			
			Clearly $\abs*{\cdot}^{g}_{E} \in \Lip^{1}\paren*{M, [0,\infty)}$ is Lipschitz continuous with Lipschitz constant $1$, and is $C^{\infty}$/smooth away from $E$ and its cut locus.
			\item $B_{R}^{g}\paren*{E}$ denotes the open ball $\set*{\abs*{\cdot}^{g}_{E} < R}$. Hence this is a \textit{metric} open tubular neighborhood of $E$.
			\item $D_{R}^{g}\paren*{E}$ denotes the disk $\set*{\abs*{\cdot}^{g}_{E} \leq R}$. Hence this is a \textit{metric} closed tubular neighborhood of $E$.
			\item We sometimes use $\restr{f}{p}$ to denote $f$ a function (tensor, section, etc.) evaluated at $p$, i.e. $f(p)$, to simplify notation.
			\item $\abs*{\cdot}_{g}$ will denote a (usually $(p,q)$-tensor) norm that's induced from a given specified Riemannian metric $g$.
			\item $\nabla_{g}$ will denote the Levi-Civita connection induced from a given Riemannian metric $g$.
			\item $O_{g}$ will denote the usual Landau symbol but with all tensor norms measured with respect to the specified Riemannian metric $g$.
		\end{itemize}	
	\end{notation}

	\section{\textsection \ Calabi-Yau Metrics on $K3$ and the Classical Kummer Construction \ \textsection}\label{CY analogue}

	\subsection{\textsection \ K\"{a}hler-Einstein Metrics, the Complex Monge-Ampere Equation, and Calabi-Yau Metrics \ \textsection}\label{Preliminaries}

	We collect some preliminary definitions and derivations pertaining to K\"{a}hler-Einstein metrics, mostly following Walpuski's excellent Riemannian geometry lecture notes \cite{Walpuski}.

	\begin{definition}\label{KahlerDef}
		A \textbf{K\"{a}hler manifold} is a \textit{real smooth} $2n$ dimensional manifold $M^{2n}$ with the following data:
		
		\begin{enumerate}
			\itemsep0em 
			\item an almost complex structure $J \in \Gamma\paren*{\Hom\paren*{TM, TM}}$, hence pointwise on each tangent space $J^{2} = - \id$			
			\item a Riemannian metric $g \in \Gamma\paren*{\Sym^{2}T^{*}M}$			
			\item a non-degenerate 2-form $\omega \in \Omega^{2}\paren*{M} \coloneq \Gamma\paren*{\Lambda^{2} T^{*}M}$ (nondegeneracy here means that, pointwise/on the vector space level, the map $V \rightarrow V^{*}$ via $v \mapsto \omega(v,\cdot)$ is an isomorphism, which is equivalent to $\frac{\omega^{n}}{n!}$ being a volume form).

		\end{enumerate} such that the 3 pieces of data above are compatible in the following way: $$\omega\paren*{\cdot,\cdot} = g\paren*{J\cdot,\cdot}$$ and one of the following equivalent conditions are satisfied: \begin{itemize}
			\itemsep0em 
			\item $J$ is integrable (i.e. $M^{2n}$ is actually a complex $n$-manifold $\paren*{M^{n},J}$ \textit{WRT $J$}, aka $N_{J} = 0$ the Nijenhuis tensor of $J$ vanishes) and $d\omega = 0$ (i.e. $\omega$ is symplectic)
			\item $\nabla_{g} J = 0$
			\item $\nabla_{g} \omega = 0$
			\item The \textbf{holonomy group $\Hol(g)$} of the Levi-Civita connection of $g$ is contained in $\U\paren*{n}$.
			
		\end{itemize} Hence the data of $\paren*{M^{2n}, J, g, \omega}$ satisfying the compatibility condition and one of the 3 equivalent conditions a \textbf{K\"{a}hler manifold}, and we call $[\omega] \in H^{2}\paren*{M}$ the \textbf{K\"{a}hler class} and $\omega$ the \textbf{K\"{a}hler form}, and we also have $\frac{\omega^{n}}{n!} = \dV_{g}$.\end{definition}

	\begin{remark}\label{almost Hermitian manifold} Note that if we just have the 3 pieces of data of $\paren*{J, g, \omega}$ satisfying the compatibility condition, but not one of the following equivalent conditions, we call this an \textbf{almost Hermitian manifold.} 
	\end{remark}
	
	\begin{remark}\label{2outof3}
		More importantly, note that \textit{any cardinality 2 subset of $\set*{J, g, \omega}$ automatically determines/\textbf{defines} the leftover element in its complement} via the compatibility condition. Roughly speaking, this follows because $\U(n) = \O(2n) \cap \Sp(2n) = \O(2n) \cap \GL\paren*{n,\bbC} = \GL\paren*{n,\bbC} \cap \Sp\paren*{2n}$ (this may be made rigorous using the language of $G$-structures on the frame bundle $\Fr\paren*{TM}$, which we unfortunately don't have the space to digress into). In other words, K\"{a}hler geometry is the \textit{intersection} of complex and Riemannian geometry, and complex and symplectic geometry, and symplectic and Riemannian geometry. This is sometimes called the \textbf{``2-out-of-3''} property and is one of the reasons why K\"{a}hler geometry is very rich.
	\end{remark}

	\begin{remark}
		Henceforth on a \textit{complex} manifold $\paren*{M^{n},J}$ we will sometimes abuse notation and call $\omega$ the K\"{a}hler metric instead of $g$, precisely because 2-out-of-3 precisely means that on $\paren*{M^{n},J}$, the existence of a \ka form $\omega$ (compatible with $J$) \textit{automatically} gives us a Riemannian metric $g$ (compatible with $J$) by the compatibility condition $g\paren*{\cdot,\cdot} = \omega\paren*{\cdot, J\cdot}$.
	\end{remark}
	
	Now on any K\"{a}hler manifold $\paren*{M^{2n}, J, g, \omega}$, we have that the \textbf{canonical bundle} $K_{M} \coloneq \bigwedge^{n}_{\bbC}T^{*}M^{(1,0)}$ associated to the \textit{underlying complex manifold $\paren*{M^{n},J}$} has an induced Hermitian metric coming from the Hermitian metric $h \coloneq g - i\omega$ on $TM$ (which as \textit{complex} vector bundles is isomorphic to $TM^{(1,0)}$, a \textit{holomorphic} vector bundle), and the induced Levi-Civita connection on $K_{M}$ coming from the Levi-Civita connection of $g$ is precisely the Chern connection of the induced Hermitian metric on $K_{M}$, i.e. it is unitary and has $(0,1)$-part the $\delbar$ operator inducing the holomorphic structure on $K_{M}$. Not only that, but the curvature 2-form $F$ of this connection on $K_{M}$ is precisely $i \Ric_{\omega}$, where $\Ric_{\omega}\paren*{\cdot,\cdot} \coloneq \Ric_{g}\paren*{J\cdot, \cdot} \in \Omega^{2}\paren*{M}$. Hence since $c_{1}\paren*{M^{n},J} = -\frac{i}{2\pi} [F]$ by Chern-Weil theory\footnote{We have that on any complex manifold $\paren*{M^{n},J}$ that $c_{1}(M^{n},J) \coloneq c_{1}\paren*{\det_{\bbC} TM} = c_{1}\paren*{\det_{\bbC} TM^{(1,0)}} = - c_{1}\paren*{\det_{\bbC} T^{*}M^{(1,0)}}$, and Chern-Weil theory tells us that the first Chern class of a line bundle is $\frac{i}{2\pi}$ times the de Rham cohomology class of the curvature 2-form of \textit{any arbitrary} connection on that line bundle, thus giving us $c_{1}\paren*{M^{n},J} = -\frac{i}{2\pi} [F]$ for $F$ the curvature 2-form of \textit{any} connection on the line bundle $\det_{\bbC} T^{*}M^{(1,0)} \eqcolon \bigwedge^{n}_{\bbC}T^{*}M^{(1,0)} \eqcolon K_{M}$.}, this gives us that $c_{1}(M^{n},J) = \frac{1}{2\pi} [\Ric_{\omega}]$.

	We summarize this and other properties needed in the sequel in the below proposition (proofs found in \cite{Walpuski}):

	\begin{prop}\label{K\"{a}hlerprop}
		Let $\paren*{M^{2n}, J, g, \omega}$ be a K\"{a}hler manifold. Then the following properties hold:
		
		\begin{itemize}
			\itemsep0em 
			\item The first Chern class $c_{1}\paren*{M^{n},J} \in H^{2}\paren*{M}$ of $M$ is $c_{1}\paren*{M^{n},J} = \frac{1}{2\pi} [\Ric_{\omega}]$.

			\item For any two K\"{a}hler forms $\omega_{1}, \omega_{0}$, the difference between their Ricci forms is $\Ric_{\omega_{1}} - \Ric_{\omega_{0}} = - i \del \delbar \log\paren*{\frac{\omega_{1}^{n}}{\omega_{0}^{n}}}$, with the division of two top degree forms having the obvious meaning.

		\end{itemize}
	\end{prop}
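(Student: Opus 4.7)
The plan is to prove both bullets simultaneously by identifying the Ricci form $\Ric_\omega$ with a multiple of the curvature of the Chern connection on the canonical bundle $K_M$. Both claims then follow: the first from Chern--Weil theory, and the second from the observation that the ratio of two nowhere-vanishing top-degree real forms is a globally well-defined positive smooth function, to which the local Ricci formula may be applied.

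For the first bullet, I would equip $K_M$ with the Hermitian metric induced by $h = g - i\omega$ via duality and exterior product. Since $K_M$ is then a holomorphic Hermitian line bundle, it carries a unique Chern connection $\nabla^{Ch}$ --- the unique unitary connection whose $(0,1)$-part agrees with the Dolbeault operator $\delbar$ defining its holomorphic structure. In a local holomorphic frame $dz^1 \wedge \cdots \wedge dz^n$ a direct computation gives connection 1-form $\del \log \det(g_{j\bar k})$ and hence curvature $F = \delbar \del \log \det(g_{j\bar k}) = -\del\delbar \log \det(g_{j\bar k})$. The key local identity to establish is $F = i \Ric_\omega$; this is most cleanly handled in complex normal coordinates at a point $p$, in which $g_{j\bar k}(p) = \delta_{jk}$ with vanishing first derivatives, reducing the equality to matching the standard second-derivative formula for the K\"ahler Ricci tensor against $-\del\delbar \log \det g$. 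Chern--Weil then yields $c_1(K_M) = \frac{i}{2\pi}[F] = -\frac{1}{2\pi}[\Ric_\omega]$, and since $c_1(M, J) = c_1(TM^{(1,0)}) = -c_1(K_M)$ under the natural duality $K_M \cong \det_{\bbC} T^*M^{(1,0)}$, the first claim is immediate.

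For the second bullet, the key observation is that $\omega_1^n / \omega_0^n$ is a globally well-defined, strictly positive smooth function on $M$: both $\omega_i^n$ are nowhere-vanishing real top-degree forms, and in any local holomorphic chart the ratio reduces to $\det(g^{(1)}_{j\bar k}) / \det(g^{(0)}_{j\bar k})$. Subtracting the local expressions $\Ric_{\omega_i} = -i\del\delbar \log \det(g^{(i)}_{j\bar k})$ obtained in the previous step yields the desired identity locally, and these local expressions patch to a single global identity precisely because the argument of the logarithm is already globally defined. The main technical point of the whole argument is therefore the local identification $F = i\Ric_\omega$, which is a standard but sign- and normalization-sensitive calculation; once it is in place, both conclusions are formal consequences.
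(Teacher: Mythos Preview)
Your proposal is correct and follows essentially the same approach as the paper, which outlines precisely this argument in the paragraph preceding the proposition (Chern connection on $K_M$, curvature $F = i\Ric_\omega$, Chern--Weil, and the duality $c_1(M^n,J) = -c_1(K_M)$) and then refers to \cite{Walpuski} for the details. Your treatment of the second bullet via the local formula $\Ric_\omega = -i\del\delbar\log\det(g_{j\bar k})$ and the global well-definedness of the ratio $\omega_1^n/\omega_0^n$ is the standard argument one would find there.
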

	
	\begin{remark}
		Note that $c_{1}\paren*{M^{n},J} = \frac{1}{2\pi} [\Ric_{\omega}]$ holds for \textit{any} K\"{a}hler form $\omega$ on $\paren*{M^{n},J}$, and hence $c_{1}\paren*{M^{n},J}$ \textbf{does not depend} on the K\"{a}hler metric $\omega$. In fact, as suggested by the notation, $c_{1}\paren*{M^{n},J}$ only depends on the (integrable) complex structure $J$ (besides obviously depending on the underlying smooth manifold $M^{2n}$), whence is an underlying \textit{topological} invariant of the \textit{complex} manifold $\paren*{M^{n},J}$.
	\end{remark}

	From now on, \textit{fix the (integrable) complex structure $J$} on our smooth manifold $M^{2n}$, aka we're now working on a fixed \textit{complex} manifold $\paren*{M^{n},J}$.
	
	We now want to examine the necessary and sufficient conditions for a given closed (i.e. boundaryless and compact) K\"{a}hler manifold $\paren*{M^{n}, J, g_{0}, \omega_{0}}$ to admit a \textbf{K\"{a}hler-Einstein metric} in the same K\"{a}hler class $[\omega_{0}]$, namely a K\"{a}hler 2-form $\omega \in [\omega_{0}]$ (hence real $(1,1)$ and positive) such that $\Ric_{\omega} = \lambda \omega$ for $\lambda \in \bbR$, whence the resulting Riemannian metric $g$ \textit{compatible with $J$} is also Einstein with constant $\lambda$. Clearly from Proposition \ref{K\"{a}hlerprop} we have that a necessary condition is that $c_{1}\paren*{M^{n},J} = \frac{\lambda}{2\pi} [\omega] = \frac{\lambda}{2\pi} [\omega_{0}]$ since $[\omega] = [\omega_{0}]$.

	It is therefore natural to ask the converse:
	
	\begin{question}
		Suppose $c_{1}\paren*{M^{n},J} = \frac{\lambda}{2\pi} [\omega_{0}]$ for a fixed closed K\"{a}hler manifold $\paren*{M^{n}, J, g_{0}, \omega_{0}}$. Does there exist a K\"{a}hler form $\omega \in [\omega_{0}]$ such that $\Ric_{\omega} = \lambda \omega$?
	\end{question}

	Now by Proposition \ref{K\"{a}hlerprop}, we have that $c_{1}\paren*{M^{n},J} = \frac{\lambda}{2\pi} [\omega_{0}] \Rightarrow [\Ric_{\omega_{0}}] = \lambda [\omega_{0}] \Rightarrow \Ric_{\omega_{0}} = \lambda \omega_{0} + i\del \delbar \rho$ by the $\del \delbar$ lemma with $\rho \in C^{\infty}\paren*{M}$ unique up to a constant. We call $\rho$ the \textbf{Ricci potential} of $\omega_{0}$, and from now on we impose the following ``volume normalization condition'' to pin down the potential: 
	
	$$ \int_{M} e^{\rho} \omega_{0}^{n} = \int_{M}\omega_{0}^{n} = n! \Vol_{g_{0}}\paren*{M}$$

	Now again by the $\del \delbar$ lemma, any K\"{a}hler form $\omega$ which is in the same cohomology class as $\omega_{0}$ must be of the form $\omega = \omega_{0}+ i\del \delbar f$ for some $f \in C^{\infty}\paren*{M}$. Hence if $\exists \omega$ a K\"{a}hler form in $[\omega_{0}]$ such that $\Ric_{\omega} = \lambda \omega$, then by part 2 of Proposition \ref{K\"{a}hlerprop}, $\omega  = \omega_{0}+ i\del \delbar f$ must satisfy
	
	\begin{align*}
		\Ric_{\omega} - \Ric_{\omega_{0}} &= \lambda\paren*{ \omega_{0}+ i\del \delbar f} - \lambda \omega_{0} - i\del \delbar \rho \\
		&= i\del \delbar \paren*{\lambda f - \rho } \\
		&= -i\del \delbar \log\paren*{\frac{\omega^{n}}{\omega_{0}^{n}}}
	\end{align*} giving us $$ i\del \delbar\paren*{\log\paren*{\frac{\omega^{n}}{\omega_{0}^{n}}} + \lambda f - \rho} = 0$$ or equivalently $$\log\paren*{\frac{\omega^{n}}{\omega_{0}^{n}}} + \lambda f - \rho = c \Rightarrow \frac{\omega^{n}}{\omega_{0}^{n}} = e^{-\lambda f} e^{\rho} e^{c}$$ with the constant $c \coloneq - \log\paren*{\frac{\int_{M} e^{-\lambda f} e^{\rho} \omega_{0}^{n}}{\int_{M} \omega_{0}^{n}}}$.
	
	Now if $\lambda = 0$, then by the $\rho$ normalization, $c = 0$. If $\lambda \neq 0$, then $f$ solves $\frac{\omega^{n}}{\omega_{0}^{n}} = e^{-\lambda f}e^{\rho}e^{c}$ if and only if $f + \frac{c}{\lambda}$ solves $\frac{\omega^{n}}{\omega_{0}^{n}} = e^{-\lambda f}e^{\rho}$. Hence $\forall \lambda \in \bbR$, we end up with the following equation for $f \in C^{\infty}\paren*{M}$:
	
	\begin{equation*}
		\frac{\paren*{\omega_{0} + i\del \delbar f}^{n}}{\omega_{0}^{n}} = e^{-\lambda f}e^{\rho}
	\end{equation*}

	Therefore we have proved the following 
	
	\begin{prop}\label{K\"{a}hlereinstein}
		Let $\paren*{M^{n},J, g_{0},\omega_{0}}$ be a closed \textit{complex} manifold with \textit{fixed} background integrable complex structure $J$ and \textit{arbitrary} K\"{a}hler form $\omega_{0}$ compatible with $J$ (whence yielding a Riemannian metric $g_{0}$).
		
		If $c_{1}\paren*{M^{n},J} = \lambda [\omega_{0}]$ for $\lambda \in \bbR$ with the Ricci potential normalization $ \int_{M} e^{\rho} \omega_{0}^{n} = \int_{M}\omega_{0}^{n}$ (since $c_{1}\paren*{M^{n},J} = \frac{\lambda}{2\pi} [\omega_{0}] \Rightarrow \Ric_{\omega_{0}} = \lambda \omega_{0} + i\del \delbar \rho$), then if $\exists f \in C^{\infty}(M)$ such that \begin{itemize}
			\itemsep0em 
			\item (\textit{Positivity}): $\omega \coloneq \omega_{0} + i\del \delbar f$ is a positive $(1,1)$-form (that is, $g\paren*{\cdot, \cdot} \coloneq \omega\paren*{\cdot, J\cdot}$ defines a real Riemannian metric)
			\item (\textit{Complex Monge-Ampere}): $f$ solves $$\frac{\paren*{\omega_{0} + i\del \delbar f}^{n}}{\omega_{0}^{n}} = e^{-\lambda f}e^{\rho}$$\end{itemize} then $\omega_{0} + i\del \delbar f\eqcolon \omega \in [\omega_{0}]$ (which is automatically closed) is a K\"{a}hler form in the same cohomology class as the original K\"{a}hler metric $\omega_{0}$ yielding a K\"{a}hler-Einstein metric $g \coloneq \omega\paren*{\cdot, J\cdot}$ with Einstein constant $\lambda$ that's compatible with the \textit{fixed} original integrable complex structure $J$.

		In fact, further imposing the additional condition\begin{itemize}
			\itemsep0em 
			\item (\textit{Integral Zero}): $$\int_{M} f \omega_{0}^{n} = 0$$
		\end{itemize} makes $f$ \textit{unique}, whence the resulting $\omega \in [\omega_{0}]$ is the \textit{unique} K\"{a}hler form in the starting K\"{a}hler class $[\omega_{0}]$ yielding a K\"{a}hler-Einstein metric with Einstein constant $\lambda$ compatible with $J$.\end{prop}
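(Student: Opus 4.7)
The plan is to invert the derivation preceding the statement: the paragraphs above the proposition take the K\"{a}hler-Einstein condition $\Ric_\omega = \lambda\omega$ and reduce it algebraically to the complex Monge-Amp\`ere equation, so the proof amounts to checking that each step of that reduction is reversible, plus extracting uniqueness from the auxiliary integral-zero normalization.

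For existence, I would first check that $\omega := \omega_0 + i\partial\bar\partial f$ is a K\"{a}hler form in $[\omega_0]$ compatible with $J$: closedness is automatic since $\omega_0$ is closed and $i\partial\bar\partial f$ is exact, real-valuedness and $(1,1)$-type follow from $f$ being a real scalar, and positivity is supplied by hypothesis, so the 2-out-of-3 principle (Remark \ref{2outof3}) produces the compatible Riemannian metric $g$. Next, to verify the Einstein condition, I would take logarithms of the Monge-Amp\`ere equation to get $\log(\omega^n/\omega_0^n) = -\lambda f + \rho$, apply $-i\partial\bar\partial$, and invoke the identity in Proposition \ref{K\"{a}hlerprop} to obtain
$$\Ric_\omega - \Ric_{\omega_0} = -i\partial\bar\partial \log(\omega^n/\omega_0^n) = i\lambda\, \partial\bar\partial f - i\partial\bar\partial \rho.$$
Combining this with the Ricci potential identity $\Ric_{\omega_0} = \lambda\omega_0 + i\partial\bar\partial \rho$, the $i\partial\bar\partial \rho$ terms cancel and one concludes $\Ric_\omega = \lambda(\omega_0 + i\partial\bar\partial f) = \lambda\omega$.

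For uniqueness I would split into cases by the sign of $\lambda$. In the Calabi-Yau case $\lambda = 0$, let $f_1, f_2$ be two solutions with $\int_M f_i\, \omega_0^n = 0$, set $u := f_1 - f_2$, and write $\omega_i := \omega_0 + i\partial\bar\partial f_i$. The two MA equations give $\omega_1^n = \omega_2^n$, which factors as
$$0 = \omega_1^n - \omega_2^n = i\partial\bar\partial u \wedge T, \qquad T := \sum_{k=0}^{n-1} \omega_1^k \wedge \omega_2^{n-1-k},$$
with $T$ a closed, strictly positive $(n-1,n-1)$-form. Multiplying by $u$ and integrating by parts on the closed manifold $M$ produces $\int_M i\partial u \wedge \bar\partial u \wedge T = 0$; since the integrand is pointwise nonnegative and $T$ is strictly positive, $\partial u \equiv 0$, so $u$ is holomorphic hence constant, and the normalization $\int_M u\, \omega_0^n = 0$ then forces $u = 0$. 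In the negative case $\lambda < 0$ the maximum principle handles uniqueness directly, without even needing the integral normalization: at a maximum of $u := f_1 - f_2$, $i\partial\bar\partial u \leq 0$ gives $\omega_1^n \leq \omega_2^n$ pointwise, whence $e^{-\lambda f_1} \leq e^{-\lambda f_2}$, which forces $u \leq 0$ there since $-\lambda > 0$; swapping $f_1 \leftrightarrow f_2$ yields $u \equiv 0$.

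There is essentially no substantive obstacle in this argument; the statement is a direct formalization of the motivating derivation that immediately precedes it, and the only real inputs are the $\partial\bar\partial$-lemma (already invoked to extract the Ricci potential), the Ricci transformation formula in Proposition \ref{K\"{a}hlerprop}, and the standard uniqueness arguments for Monge-Amp\`ere equations on closed manifolds.
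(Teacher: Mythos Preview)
Your existence argument is correct and is exactly the paper's approach: the paper's ``proof'' is the derivation immediately preceding the proposition (signalled by ``Therefore we have proved the following''), which runs the implication $\Ric_\omega = \lambda\omega \Rightarrow$ Monge-Amp\`ere, and you simply verify that each step is reversible using the Ricci transformation formula from Proposition~\ref{K\"{a}hlerprop}.

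For uniqueness you actually supply more than the paper does --- the paper merely asserts uniqueness without argument --- and your treatment of $\lambda = 0$ (factoring $\omega_1^n - \omega_2^n$ and integrating by parts against $u$) and $\lambda < 0$ (maximum principle) is the standard and correct one. You do not address $\lambda > 0$, where uniqueness is genuinely more delicate (Bando--Mabuchi: uniqueness holds only up to the action of $\mathrm{Aut}_0(M,J)$, so the integral-zero normalization alone is not obviously sufficient). This is not a defect relative to the paper, which also glosses over this point and in any case only uses the $\lambda = 0$ case (Proposition~\ref{calabiyau}) in the sequel; but it is worth flagging that the proposition's uniqueness claim as stated for all $\lambda \in \bbR$ is slightly optimistic.
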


	\begin{remark}\label{autopositivity} The 2nd condition (complex Monge-Ampere) \textit{automatically implies} the 1st condition (positivity). Indeed, being a positive $(1,1)$-form is an \textit{open condition} (a continuous Hermitian matrix-valued function having positive eigenvalues at a point implies having positive eigenvalues on an open neighborhood at that point), whence the 2nd condition implies the 1st via covering $M$ with local coordinate charts, continuity of the expression for $\omega$ whence continuity of all local coordinate functions of $\omega$, and positivity of the RHS of the 2nd condition implying that the \textit{determinant} of the local coordinate component Hermitian matrix of $\omega$ being \textit{positive}.
	\end{remark}

	The equation $\frac{\paren*{\omega_{0} + i\del \delbar f}^{n}}{\omega_{0}^{n}} = e^{-\lambda f}e^{\rho}$ is a fully nonlinear scalar elliptic equation of complex \textit{Monge-Ampere} type, i.e. in local coordinates the LHS looks like $\det \paren*{\Hess_{\bbC} f}$. Therefore 
	proving the \textit{existence} of a smooth solution $f$ to this equation (which by this proposition implies the existence of K\"{a}hler-Einstein metrics) is quite difficult, and becomes harder as the sign of $\lambda$ changes from negative to $0$ to positive. Aubin proved that a solution $f$ \textit{always} exists for $\lambda < 0$, Yau proved that a solution $f$ \textit{always} exists for the $\lambda = 0$ (i.e. Ricci-flat) case (called the \textbf{Calabi conjecture}), and very recently Chen-Donaldson-Sun \cite{CDS1} \cite{CDS2} \cite{CDS3} solved the $\lambda > 0$ case via proving that existence of KE metrics with $\lambda > 0$ is equivalent to an algebraic-geometric ``K-stability'' condition (since here there are \textit{obstructions} to existence).

	Next, we note the following special case of the above (whose proof is elementary, see Huybrechts' book \cite{Huy}):
	
	\begin{prop}\label{calabiyau} Let $\paren*{M^{n},J, g_{0},\omega_{0}}$ be a fixed closed K\"{a}hler manifold. Suppose $K_{M}$ is trivial, hence there exists a nowhere vanishing \textbf{holomorphic volume form}/nowhere vanishing holomorphic $(n,0)$-form $\Omega$ (an example being the $K3$ surface) and $c_{1}\paren*{M^{n},J} = 0$.
		
		Then if $\exists f \in C^{\infty}(M)$ such that \begin{itemize}
			\itemsep0em 
			\item (\textit{Positivity}): $\omega \coloneq \omega_{0} + i\del \delbar f$ is a positive $(1,1)$-form
			\item (\textit{Complex Monge-Ampere}): $f$ solves $$\frac{\paren*{\omega_{0} + i\del \delbar f}^{n}}{\Omega \wedge \overline{\Omega}} = C$$ for some constant $C > 0 $.\footnote{Most often people work with the convention that $C = n!(-1)^{\frac{n(n-1)}{2}}\paren*{\frac{i}{2}}^{n}$ upon (say) rescaling $\Omega$, see Definition \ref{CY-definition}.}\end{itemize} then $\omega_{0} + i\del \delbar f\eqcolon\omega \in [\omega_{0}]$ is a K\"{a}hler form in the same cohomology class as the original K\"{a}hler metric $\omega_{0}$ yielding a \textbf{Ricci-flat K\"{a}hler} metric $g \coloneq \omega\paren*{\cdot, J\cdot}$ compatible with the \textit{fixed} original integrable complex structure $J$.

		Further imposing the additional condition\begin{itemize}
			\itemsep0em 
			\item (\textit{Integral Zero}): $$\int_{M} f \omega_{0}^{n} = 0$$
		\end{itemize} makes $f$ and whence $\omega \in [\omega_{0}]$ the \textit{unique} K\"{a}hler form yielding a Ricci-flat metric $g$ compatible with the original $J$.\end{prop}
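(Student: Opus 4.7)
The plan is to reduce this statement to the $\lambda = 0$ case of Proposition \ref{K\"{a}hlereinstein}, which has already been established. The bridge between the two formulations is the identity
\begin{equation*}
\Ric_{\omega_{0}} = -i\del \delbar \log\!\left(\frac{\omega_{0}^{n}}{\Omega \wedge \overline{\Omega}}\right),
\end{equation*}
which follows from the local coordinate formula $\Ric_{\omega_{0}} = -i\del \delbar \log\det\paren*{g_{j\bar{k}}}$ together with the fact that, in any holomorphic chart, $\Omega \wedge \overline{\Omega}$ is a nowhere vanishing top holomorphic form wedged with its conjugate, so $\log\paren*{\omega_{0}^{n}/(\Omega \wedge \overline{\Omega})}$ differs from $\log \det\paren*{g_{j\bar{k}}}$ by a pluriharmonic function. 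Since $K_{M}$ is trivial we have $c_{1}\paren*{M^{n},J} = 0$, so the $\del \delbar$-lemma together with Proposition \ref{K\"{a}hlerprop} gives $\Ric_{\omega_{0}} = i\del \delbar \rho$ for a smooth function $\rho$, and comparing these two expressions forces $\rho = -\log\paren*{\omega_{0}^{n}/(\Omega \wedge \overline{\Omega})} + c_{0}$ for a constant $c_{0}$ pinned down by the Ricci potential normalization $\int_{M} e^{\rho} \omega_{0}^{n} = \int_{M} \omega_{0}^{n}$.

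From this I would conclude $e^{\rho}\, \omega_{0}^{n} = e^{c_{0}}\, \Omega \wedge \overline{\Omega}$, so that the complex Monge-Amp\`ere equation $(\omega_{0} + i\del \delbar f)^{n}/\omega_{0}^{n} = e^{-\lambda f}\, e^{\rho}$ from Proposition \ref{K\"{a}hlereinstein} at $\lambda = 0$ is precisely the equation $(\omega_{0} + i\del \delbar f)^{n} = C \cdot \Omega \wedge \overline{\Omega}$ appearing in the statement, with $C := e^{c_{0}} > 0$. Integrating both sides over $M$ and using $[\omega] = [\omega_{0}]$, hence $\int_{M} \omega^{n} = \int_{M} \omega_{0}^{n}$, independently fixes $C = \int_{M} \omega_{0}^{n}\big/\int_{M} \Omega \wedge \overline{\Omega}$. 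Under the positivity hypothesis (automatic from the Monge-Amp\`ere equation by \textit{Remark} \ref{autopositivity}), $\omega := \omega_{0} + i\del \delbar f$ is a genuine K\"{a}hler form in $[\omega_{0}]$, and applying the $\lambda = 0$ case of Proposition \ref{K\"{a}hlereinstein} immediately delivers that the associated $g = \omega\paren*{\cdot, J\cdot}$ is Ricci-flat K\"{a}hler and compatible with the fixed background complex structure $J$.

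The uniqueness clause under the additional constraint $\int_{M} f\, \omega_{0}^{n} = 0$ is inherited directly from the uniqueness statement of Proposition \ref{K\"{a}hlereinstein} in the $\lambda = 0$ case: without that constraint $f$ is only determined up to an additive constant, which the integral-zero condition pins down. The only mild subtlety in the sketch above is establishing the identity $\Ric_{\omega_{0}} = -i\del \delbar \log\paren*{\omega_{0}^{n}/(\Omega \wedge \overline{\Omega})}$, which is intrinsically the assertion that the Chern curvature of $K_{M}$ with its $\omega_{0}$-induced Hermitian metric equals $i$ times the Ricci form; since no preparatory lemma to this effect has been stated, I would insert a brief local-coordinate verification. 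Everything else is direct algebraic bookkeeping.
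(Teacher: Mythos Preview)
Your proof is correct. The paper does not actually supply a proof of this proposition: it introduces the statement as ``a special case of the above (whose proof is elementary, see Huybrechts' book \cite{Huy})'' and moves on. Your reduction to the $\lambda = 0$ case of Proposition \ref{K\"{a}hlereinstein} via the identity $e^{\rho}\,\omega_{0}^{n} = C\,\Omega \wedge \overline{\Omega}$ is precisely the standard elementary argument the paper is alluding to.
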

	
	Such Ricci-flat metrics which are \ka WRT a \textit{fixed} integrable complex structure form a special class of metrics, namely: \begin{definition}\label{CY-definition}
		A \textbf{Calabi-Yau manifold} is a \textit{real smooth} $2n$-dimensional manifold $M^{2n}$ with the following data:\begin{enumerate}
			\itemsep0em 
			\item An integrable complex structure $J$, whence making $M^{2n}$ into a \textit{complex} manifold $\paren*{M^{n},J}$.
			\item $g$ is a Riemannian metric which is \ka WRT $J$ with \ka form $\omega$, whence making $\paren*{M^{n},J}$ a \ka manifold $\paren*{M^{n}, J, g, \omega}$.
			\item $\Omega$ a nowhere vanishing holomorphic $\paren*{n,0}$-form WRT $J$.
		\end{enumerate} such that one of the following equivalent conditions are satisfied\begin{itemize}
			\itemsep0em 
			\item $\nabla_{g}\Omega = 0$ and $\Omega$ has the (pointwise) normalization $\abs*{\Omega}_{g} = 2^{\frac{n}{2}}$.
			\item The holonomy group $\Hol(g)$ of the Levi-Civita connection of $g$ is contained in $\SU(n)$, and the associated parallel $\paren*{n,0}$-form is $\Omega$ and has normalization $\abs*{\Omega}_{g} = 2^{\frac{n}{2}}$
			\item The pieces of data above are compatible in the following way: $$\frac{\omega^{n}}{n!} = (-1)^{\frac{n(n-1)}{2}}\paren*{\frac{i}{2}}^{n} \Omega \wedge \overline{\Omega}$$
		\end{itemize}

		Thus since this compatibility condition is precisely the complex Monge-Ampere equation, by Proposition \ref{calabiyau} we therefore have that \textbf{Calabi-Yau metrics are Ricci-flat}.
	\end{definition}
	
	\begin{remark}\label{CY normalization}
		The normalization $\abs*{\Omega}_{g} = 2^{\frac{n}{2}}$, which is equivalent to the specifically chosen constant factor $(-1)^{\frac{n(n-1)}{2}}\paren*{\frac{i}{2}}^{n}$ in the complex Monge-Ampere equation in Definition \ref{CY-definition}, is chosen specifically to make $\Re \Omega$ a \textbf{calibration}.
	\end{remark}
	
	\begin{notation}\label{Ricci flat Kahler vs CY notation}
		Therefore, by \textit{Remark} \ref{CY normalization}, we will oftentimes interchange the terms ``Ricci-flat K\"{a}hler'' and ``Calabi-Yau''.
		
		Indeed, the latter is just the former but with a suitable normalization of the nowhere vanishing holomorphic volume form. Moreover, the former may be converted into the latter because \ka Ricci-flat implies holonomy contained in $\SU(n)$, whence by the \textbf{holonomy principle} we get a nowhere vanishing parallel $\paren*{n,0}$-form, which is closed by antisymmetrization of the Levi-Civita connection.
	\end{notation}

	\begin{remark}\label{SUn structures}
		Just like \textit{Remark} \ref{almost Hermitian manifold}, we have that if we weaken Definition \ref{CY-definition} for our manifold $\paren*{M^{2n},J,g,\omega}$ to instead be almost Hermitian, but equipped with a \textit{complex} nowhere vanishing $\paren*{n,0}$-form $\Omega$ with normalization $\abs*{\Omega}_{g} = 2^{\frac{n}{2}}$ and compatibility condition $\frac{\omega^{n}}{n!} = (-1)^{\frac{n(n-1)}{2}}\paren*{\frac{i}{2}}^{n} \Omega \wedge \overline{\Omega}$, then the tuple $\paren*{M^{2n},J,g,\omega, \Omega}$ is called an \textbf{$\SU(n)$-structure on $M^{2n}$}.
	\end{remark}

	Lastly, we note a linear algebraic fact that will be conceptually useful later on (whose proof follows from the Euclidean case in $\bbR^{2n} = \bbC^{n}$):
	
	\begin{prop}\label{holovolformdeterminescomplexstructure}
		Let $M^{2n}$ be a smooth manifold. Suppose that $M^{2n}$ admits a nowhere vanishing $(n,0)$-form $\Omega$\footnote{This is called an \textbf{$\SLc{n}$-structure} in the language of $G$-structures.}. Then \begin{itemize}
			\itemsep0em 
			\item $\Omega$ \textit{uniquely} determines the almost complex structure $J_{\Omega}$ over $M$ via determining the $(1,0)$-forms
			$$\Lambda^{1,0}_{\bbC} T^{*}_{p}M\coloneq \ker\paren*{\restr{\alpha}{p}\mapsto \restr{\Omega}{p}\wedge \restr{\alpha}{p}},\qqfa p \in M$$
			\item $d\Omega = 0$ implies that $J_{\Omega}$ is \textit{integrable}, whence $\paren*{M^{2n},\Omega}$ is in fact a \textit{complex} manifold $\paren*{M^{n},J_{\Omega}}$ WRT $J_{\Omega}$ and $\Omega$ becomes a \textit{holomorphic} nowhere vanishing $(n,0)$-form WRT $J_{\Omega}$.\end{itemize} Hence a complex manifold $\paren*{M^{n},J}$ admitting a nowhere vanishing \textit{holomorphic} volume form $\Omega$ automatically has trivial canonical bundle $K_{M}$ (as the global frame of the complex line bundle $K_{M}$ is literally given by $\Omega$).\end{prop}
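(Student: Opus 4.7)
My proof plan splits cleanly in two: a pointwise/algebraic construction of $J_\Omega$, followed by a Newlander-Nirenberg integrability check that uses $d\Omega = 0$ almost tautologically. For the first step I would fix $p \in M$ and invoke the ``$(n,0)$-form''/$\SL(n,\bbC)$-structure hypothesis on $\Omega$ (as flagged by the proposition's footnote) to write $\Omega_p = \alpha_1 \wedge \cdots \wedge \alpha_n$ for linearly independent complex $1$-forms $\alpha_i \in T^{*}_p M \otimes \bbC$. The complexified Cartan lemma then identifies
\[
\Lambda^{1,0}_p := \ker\paren*{\alpha \mapsto \Omega_p \wedge \alpha} = \mathrm{span}_{\bbC}\paren*{\alpha_1, \ldots, \alpha_n},
\]
an $n$-dimensional complex subspace. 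Nowhere-vanishing of $\Omega$ together with the $\SL(n,\bbC)$-nondegeneracy $\Omega \wedge \bar{\Omega} \neq 0$ mean that $\set*{\alpha_1, \ldots, \alpha_n, \bar{\alpha}_1, \ldots, \bar{\alpha}_n}$ is a basis of $T^{*}_p M \otimes \bbC$, giving the transverse decomposition $T^{*}_p M \otimes \bbC = \Lambda^{1,0}_p \oplus \overline{\Lambda^{1,0}_p}$. Defining $J^{*}_p$ to be $+i$ on $\Lambda^{1,0}_p$ and $-i$ on its conjugate produces a conjugation-invariant endomorphism squaring to $-\id$, which restricts to $T^{*}_p M$ and dualizes to the claimed $J_\Omega|_p$. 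Smoothness in $p$ is automatic because the construction is algebraic in the smooth coefficients of $\Omega$.

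For integrability, Newlander-Nirenberg (in its dual form) reduces the task to showing $\paren*{d\alpha}^{0,2} = 0$ (bidegree taken WRT $J_\Omega$) for every local $\alpha \in \Gamma\paren*{\Lambda^{1,0}}$. But by the very definition of $\Lambda^{1,0}$, any such $\alpha$ satisfies $\Omega \wedge \alpha \equiv 0$, so differentiating and invoking $d\Omega = 0$ gives
\[
0 = d\paren*{\Omega \wedge \alpha} = d\Omega \wedge \alpha + (-1)^n \,\Omega \wedge d\alpha = (-1)^n \,\Omega \wedge d\alpha.
\]
Decomposing $d\alpha = \paren*{d\alpha}^{2,0} + \paren*{d\alpha}^{1,1} + \paren*{d\alpha}^{0,2}$, the first two pieces wedge trivially against $\Omega$ purely by type count (any positive holomorphic index pushes the total holomorphic degree past $n$), leaving $\Omega \wedge \paren*{d\alpha}^{0,2} = 0$. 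Since $\Omega$ is pointwise a generator of the line $\Lambda^{n,0}$, multiplication by $\Omega$ realizes a pointwise isomorphism $\Lambda^{0,2} \xrightarrow{\sim} \Lambda^{n,2}$ (both of pointwise rank $\binom{n}{2}$), forcing $\paren*{d\alpha}^{0,2} = 0$.

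The only real obstacle I anticipate sits in the pointwise linear algebra of the first paragraph: specifically, the assertion that decomposability of $\Omega_p$ together with $\Omega \wedge \bar{\Omega} \neq 0$ really forces the \emph{transverse} splitting $T^{*}_p M \otimes \bbC = \Lambda^{1,0}_p \oplus \overline{\Lambda^{1,0}_p}$, rather than e.g.\ a conjugation-invariant $\Lambda^{1,0}_p$ that fails to deliver a real almost complex structure. This is exactly the Euclidean model case of an $\SL(n,\bbC)$-structure on $\bbR^{2n}$ and is clarified by unwinding the footnote's G-structure interpretation; everything else is bookkeeping with bidegree. Once both steps are in place, local holomorphic coordinates WRT $J_\Omega$ exist, in which $\Omega$ becomes an $(n,0)$-form in the conventional sense; $\delbar \Omega$ is then the $(n,1)$-component of the already-vanishing $d\Omega$ and so vanishes too, whence $\Omega$ is a nowhere-vanishing \emph{holomorphic} section of $K_M$ trivializing it --- the closing remark the author attaches to the proposition.
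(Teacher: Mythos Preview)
Your proposal is correct and is in fact considerably more detailed than the paper's own treatment: the paper does not prove this proposition but merely remarks parenthetically that the ``proof follows from the Euclidean case in $\bbR^{2n} = \bbC^{n}$''. Your pointwise linear-algebra construction of $J_\Omega$ is exactly that Euclidean reduction, and your Newlander--Nirenberg step via $\Omega \wedge d\alpha = 0$ and bidegree bookkeeping is the standard way to finish; the one subtlety you flag (that the $\SL(n,\bbC)$-structure hypothesis is what guarantees decomposability of $\Omega_p$ and the transverse splitting $\Lambda^{1,0}_p \oplus \overline{\Lambda^{1,0}_p}$) is precisely the content the paper is gesturing at with its footnote and its appeal to the model case.
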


	\subsection{\textsection \ Eguchi-Hanson Metrics \& Flat Tori as ``Building Blocks'' \ \textsection}\label{EH as building block}

	In this section, we first define \& state several important properties of the Eguchi-Hanson metric, and then recall some facts about flat tori.

	First, we make the following observation which will be crucially used numerous times in the sequel.
	
	\begin{prop}\label{CoveringSpacesInvariantForms}
		Let $M\Ractson G$ be a free and proper action of a \textit{finite} group $G$ on a smooth manifold $M$, whence $\pi: M \onto M/G$ is a principal $G$-bundle with the base space $M/G$ a smooth manifold and hence is a (regular) $\abs*{G}$-fold covering. Then any smooth function (tensor, section, etc.) $f$ on $M/G$ may be lifted to a smooth $G$-invariant function (tensor, section, etc.) $\pi^{*}f$ on $M$.
		
		Conversely, any smooth $G$-invariant function (tensor, section, etc.) on $M$ descends down to a smooth function (tensor, section, etc.) on $M/G$.
		
		In particular, when $G = \bbZ_{2}$ and $M = \bbR^{n}$ or $M = \bbT^{n}$ and the action sends $1 \mapsto \id_{M}$ and $-1\mapsto -\id_{M}$ (aka via the standard coordinate involution $x\mapsto -x$), then any smooth function (tensor, section, etc.) on the \textit{free} part $\bbR^{n}/\bbZ_{2}- \set*{0}$ or $\bbT^{n}/\bbZ_{2}- \Fix\paren*{\bbZ_{2}}$ may be lifted to a smooth \textit{even} function (tensor, section, etc.) on $\bbR^{n}$ or $\bbT^{n}$. That is, the lifted $\wtilde{f}$ satisfies $\wtilde{f}(x) = \wtilde{f}(-x)$.
		
		Conversely, any smooth even function (tensor, section, etc.) on $\bbR^{n}$ or $\bbT^{n}$ descends down to a smooth function (tensor, section, etc.) on the free part $\bbR^{n}/\bbZ_{2}- \set*{0}$ or $\bbT^{n}/\bbZ_{2}- \Fix\paren*{\bbZ_{2}}$.\end{prop}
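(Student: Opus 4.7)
The plan is to invoke the quotient manifold theorem so that $\pi \colon M \onto M/G$ becomes a smooth covering, and then to argue the two directions of the correspondence (pullback and descent) directly, using $G$-invariance to handle well-definedness on the descent side.

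First I would observe that since $G$ is finite it is a compact $0$-dimensional Lie group and properness of the action is automatic, so freeness together with the quotient manifold theorem (equivalently, the $0$-dimensional case of the slice theorem, which reduces to picking small enough open neighborhoods of each orbit whose translates under $G$ are pairwise disjoint) endows $M/G$ with a unique smooth structure making $\pi$ into a smooth regular $\abs*{G}$-fold covering, and in fact a principal $G$-bundle with discrete structure group. Hence around each point of $M/G$ there is an evenly covered open neighborhood $U \subseteq M/G$ whose preimage $\pi^{-1}(U) = \bigsqcup_{g \in G} U_{g}$ is a disjoint union of open subsets, each mapping diffeomorphically onto $U$ via $\pi$ and freely and transitively permuted by the $G$-action.

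Next, for the lifting direction I would simply form the pullback $\pi^{*} f$ of any smooth object $f$ on $M/G$ (function, tensor field, section of a natural bundle, etc.); this is smooth as the pullback of a smooth object along a smooth map, and $G$-invariance follows at once from $\pi \circ R_{g} = \pi$ for all $g \in G$, giving $R_{g}^{*}(\pi^{*} f) = \pi^{*} f$. For the descent direction, I would cover $M/G$ by evenly covered opens $\set*{U_{\alpha}}$ equipped with smooth local sections $s_{\alpha} \colon U_{\alpha} \to M$ of $\pi$, and, given a smooth $G$-invariant $\widetilde{f}$ on $M$, set $f_{\alpha} := s_{\alpha}^{*} \widetilde{f}$. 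On overlaps $U_{\alpha} \cap U_{\beta}$ the local sections differ fiberwise by a locally constant element of $G$, so $G$-invariance of $\widetilde{f}$ would force $f_{\alpha} = f_{\beta}$ on the overlap, and the local pieces would glue to a well-defined global smooth $f$ on $M/G$ with $\pi^{*} f = \widetilde{f}$. These two constructions are manifestly inverse to one another.

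Finally, for the special case $G = \bbZ_{2}$ acting by $x \mapsto -x$ on $\bbR^{n}$ or $\bbT^{n}$, I would note that the fixed-point set consists of the single point $0 \in \bbR^{n}$ or of the $2$-torsion subgroup $\Fix\paren*{\bbZ_{2}} \subset \bbT^{n}$, and the restricted action on the complement is free and (automatically) proper. Applying the general statement to this free part, and noting that $\bbZ_{2}$-invariance for the antipodal action is literally the evenness condition $\widetilde{f}(x) = \widetilde{f}(-x)$, gives the claimed correspondence. There is no genuine obstacle here; the only subtle point is the overlap compatibility on the descent side, which is precisely where $G$-invariance of $\widetilde{f}$ enters in an essential way.
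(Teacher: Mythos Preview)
Your proof is correct. The paper itself states this proposition as a standard observation and provides no proof, so there is nothing to compare against; your argument via the quotient manifold theorem, pullback for the lifting direction, and local sections plus $G$-invariance for the descent direction is the standard route and is carried out cleanly.
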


	Start with Euclidean $\bbR^{4}$. We may equip $\bbR^{4}$ with the Euclidean integrable complex structure $J_{0}$ coming from the tautological identification $\bbR^{4} = \bbC^{2}$.

	Let's now look at the complex orbifold $\paren*{\bbC^{2}/\bbZ_{2}, J_{0}}$, or $\paren*{\bbC^{2},J_{0}}$ mod the action $\bbZ_{2} \actson \bbC^{2}$ via $1 \mapsto \id_{\bbC^{2}}, -1 \mapsto -\id_{\bbC^{2}}$, which is clearly holomorphic WRT $J_{0}$. Whence $\bbC^{2} \onto \bbC^{2}/\bbZ_{2}$ is a double cover outside of the orbifold singularity at the origin, i.e. $\bbC^{2} - \set*{0} \overset{2:1}{\onto} \bbC^{2}/\bbZ_{2}- \set*{0}$. Blow up the singularity at the origin with a \textbf{crepant resolution} (i.e. preserving the canonical bundle, i.e. pullback of canonical bundle downstairs is canonical bundle upstairs) to get $\paren*{\wtilde{\bbC^{2}/\bbZ_{2}}, J_{\wtilde{\bbC^{2}/\bbZ_{2}}}} \overset{\text{Biholo}}{\cong} \paren*{T^{*} \bbC P^{1}, J_{\cO(-2)}} = \cO(-2) = \paren*{T^{*}S^{2}, J_{\cO(-2)}}$\footnote{Since the smooth manifold $\paren*{T^{*}S^{2}, J_{\cO(-2)}}$ with that given integrable complex structure is precisely the complex manifold $T^{*}\CP^{1}$ with that same complex structure.} and a map $\pi: \paren*{T^{*} \bbC P^{1}, J_{\cO(-2)}} \onto \paren*{\bbC^{2}/\bbZ_{2}, J_{0}}$ which is a biholomorphism outside the origin, i.e. $\pi: T^{*} \bbC P^{1} - \bbC P^{1} \overset{\text{Biholo}}{\cong} \bbC^{2}/\bbZ_{2}- \set*{0}$ since $\pi^{-1}\paren*{0} \cong \bbC P^{1}$ is the exceptional divisor/zero section.

	Therefore we have that $dz^{1} \wedge dz^{2}$, the standard nowhere vanishing $(2,0)$-form/nowhere vanishing \textit{holomorphic} volume form on $\paren*{\bbC^{2}, J_{0}}$, gets preserved under $\bbZ_{2}$ (as it's clearly even) and descends down to a nowhere vanishing holomorphic $(2,0)$-form on $\paren*{\bbC^{2}/\bbZ_{2}, J_{0}}$ (it extends over the origin by Hartog), and gets pulled back to a nowhere vanishing holomorphic $(2,0)$-form $\Omega_{T^{*} \bbC P^{1}}$ on $\paren*{T^{*} \bbC P^{1}, J_{\cO(-2)}}$ since the resolution $\pi: \paren*{T^{*} \bbC P^{1}, J_{\cO(-2)}} \onto \paren*{\bbC^{2}/\bbZ_{2}, J_{0}}$ is crepant. Thus $\Omega_{T^{*} \bbC P^{1}}$ by Proposition \ref{holovolformdeterminescomplexstructure} uniquely determines the (integrable) complex structure $J_{\cO(-2)}$ on $T^{*} \bbC P^{1}$ (since $d\Omega_{T^{*} \bbC P^{1}} = 0$).

	Now define the smooth function $\phi_{0}: \paren*{0,\infty}\rightarrow \bbR$ via $\phi_{0}\paren*{r} \coloneq \frac{r^{2}}{2}$. Then setting $r\mapsto \abs*{\cdot}^{g_{0}}_{0} \coloneq \paren*{\abs*{z^{1}}^{2} + \abs*{z^{2}}^{2}}^{\frac{1}{2}}$ (the standard absolute value on $\bbC^{2}$), we have that $i\del\delbar \phi_{0} \eqcolon \omega_{0}$ gives us the standard \textit{Euclidean} flat \ka metric, where of course $g_{0}$ is the \textit{Euclidean} flat metric on $\bbR^{4} = \bbC^{2}$. Repeating the same arguments as above, we have this time around that $g_{0}, \omega_{0}$ gets pulled back up and smoothly defined on $T^{*}\CP^{1} - \CP^{1}$. Whence we may speak of $\abs*{\cdot}^{g_{0}}_{\CP^{1}} \coloneq \dist_{g_{0}}\paren*{\CP^{1},\cdot}$, the distance function away from the exceptional divisor/zero section \textit{measured WRT $g_{0}$}.
	
	Now define the smooth function $\phi_{FS}: \paren*{0,\infty} \rightarrow \bbR$ via $\phi_{FS}\paren*{r} \coloneq \log r = \frac{1}{2}\log r^{2}$. Setting $\phi_{FS} = \phi_{FS}\paren*{\abs*{\cdot}^{g_{0}}_{0}}$, this function clearly descends down the quotient $\bbC^{2}-\set*{0} \onto \CP^{1}$, and $i\del\delbar \phi_{FS} \eqcolon \omega_{FS}$ yields us the usual \textbf{Fubini-Study} \ka metric $g_{FS}, \omega_{FS}$ on $\paren*{\CP^{1}, J_{\CP^{1}}}$.
	
	Let $s > 0$. Now define the smooth function $\phi_{EH,s}: \paren*{0,\infty}\rightarrow \bbR$ via $$\phi_{EH,s}\paren*{r} \coloneq \frac{1}{2}\paren*{s \log r^{2} + \paren*{s^{2}+ r^{4}}^{\frac{1}{2}} - s\log\paren*{s+ \paren*{s^{2}+r^{4}}^{\frac{1}{2}}} }$$
	
	Setting $r\mapsto \abs*{\cdot}^{g_{0}}_{0}$ but this time noting that $\paren*{\abs*{\cdot}^{g_{0}}_{0}}^{4}$ is \textit{holomorphic} (since $\paren*{\abs*{\paren*{z^{1},z^{2}}}^{g_{0}}_{0}}^{4} \coloneq \abs*{\paren*{z^{1},z^{2}}}^{4} = \abs*{z^{1}}^{4} + 2\abs*{z^{1}z^{2}}^{2} + \abs*{z^{2}}^{4}$ clearly satisfies $\delbar \abs*{z}^{4} = 0$ by the chain rule), we have that upon repeating the same arguments above (namely $\bbC^{2} \onto \bbC^{2}/\bbZ_{2} - \set*{0}$, then extending to $\bbC^{2}/\bbZ_{2}$ by Hartog, then lifting through the resolution $T^{*}\CP^{1}$) that $$\omega_{EH,s} \coloneq i\del\delbar \phi_{EH,s} = s\pi^{*}\paren*{\omega_{FS}} + i\del\delbar\paren*{\frac{1}{2}\paren*{\paren*{s^{2}+ r^{4}}^{\frac{1}{2}} - s\log\paren*{s+ \paren*{s^{2}+r^{4}}^{\frac{1}{2}}} }}$$ is a \ka form defined \textit{everywhere} on the total space of $\pi: \paren*{T^{*}\CP^{1}, J_{\cO\paren*{-2}}} \onto \paren*{\CP^{1}, J_{\CP^{1}}}$ with associated Riemannian metric $g_{EH,s}$, and where $r \coloneq \abs*{\cdot}^{g_{0}}_{\CP^{1}}$. For $\lambda \in \bbC^{\times}$, upon denoting by $R_{\lambda} \in \SU(2)$ the scaling action given by $R_{\lambda} \coloneq \lambda \id_{\bbC^{2}}$ sending $\paren*{z^{1},z^{2}}\mapsto \paren*{\lambda z^{2},\lambda z^{2}}$, which clearly descends down $\bbC^{2} - \set*{0} \onto \CP^{1}$ and lifts to a global $\bbC^{\times}$ action on $T^{*}\CP^{1}$ (hence $R_{\lambda} \in \Diff\paren*{T^{*}\CP^{1}}$), upon observing that $\phi_{EH,s} = s R_{\frac{1}{s^{\frac{1}{2}}}}^{*}\paren*{\phi_{EH,1}}$, we have $\forall s > 0$ that $$R_{s^{\frac{1}{2}}}^{*}\paren*{\frac{1}{s}g_{EH,s}} = g_{EH,1}$$
	
	Hence, \textit{up to scaling} there is one \ka metric $g_{EH,1}, \omega_{EH,1}$ defined on $\paren*{T^{*}\CP^{1}, J_{\cO\paren*{-2}}}$, which we call the \textbf{Eguchi-Hanson metric}. We now summarize \& state the important properties of this metric in the following \begin{prop}\label{EHproperties} Let $s > 0$ be a positive real number, and let $g_{EH,s},\omega_{EH,s}$ be as above.\begin{itemize}
			\itemsep0em 
			\item $\paren*{T^{*}\CP^{1},J_{\cO\paren*{-2}}, g_{EH,s}, \omega_{EH,s}}$ is a complete Ricci-flat Riemannian $4$-manifold which is an \textbf{ALE\footnote{Asymptotically Locally Euclidean.} Calabi-Yau\footnote{say, after appropriate normalization of $\Omega_{T^{*} \bbC P^{1}}$} gravitational instanton\footnote{A complete non-compact Riemannian manifold with $\Abs*{\operatorname{Rm}_{g}}_{L^{2}} < \infty$.}} asymptotic at $\infty$ to $\bbR^{4}/\bbZ_{2}$ with rate $-4 < 0$. Hence $\exists K \subset T^{*}\CP^{1}$ a compact subset, a constant $R > 0$ \textit{sufficiently large}, and a diffeomorphism $F: \bbR^{4}/\bbZ_{2} - D^{g_{0}}_{R}\paren*{0} \rightarrow T^{*}\CP^{1} - K$ such that $$\abs*{\nabla_{g_{0}}^{k}\paren*{F^{*}g_{EH,s} - g_{0}}}_{g_{0}} = O\paren*{\paren*{\abs*{\cdot}^{g_{0}}_{0}}^{-4-k}}\qquad\text{and}\qquad \abs*{\nabla_{g_{0}}^{k}\paren*{F^{*}\omega_{EH,s} - \omega_{0}}}_{g_{0}} = O\paren*{\paren*{\abs*{\cdot}^{g_{0}}_{0}}^{-4-k}}$$
			
			on $\bbR^{4}/\bbZ_{2} - D^{g_{0}}_{R}\paren*{0} = \bbC^{2}/\bbZ_{2} - D^{g_{0}}_{R}\paren*{0}$. In fact, this follows from the Taylor expansion $\abs*{\nabla^{k}_{g_{0}}\paren*{\phi_{EH,s}  - \phi_{0}}}_{g_{0}} \leq c_{0}(k) r^{-2-k}$ for $r > 0$ sufficiently large and $c_{0}(k) > 0, k \in \bbN_{0}$ positive constants.
			
			\item In fact, this diffeomorphism outside a compact subset comes from $\pi: T^{*} \bbC P^{1} - \bbC P^{1} \overset{\text{Biholo}}{\cong} \frac{\bbC^{2} - \set*{0}}{\bbZ_{2}}$, i.e. $\pi: T^{*} \bbC P^{1} - D^{g_{0}}_{R}\paren*{\bbC P^{1}} \overset{\text{Biholo}}{\cong} \bbC^{2}/\bbZ_{2} - D^{g_{0}}_{R}(0)$.

			\item From the $s \pi^{*}\paren*{\omega_{FS}}$ term in $\omega_{EH,s}$ and the fact that the Fubini-Study metric on $\CP^{1}$ is isometric to the round metric of $S^{2}\paren*{\frac{1}{2}}$, we have that the exceptional divisor/zero section $\CP^{1}\subset \paren*{T^{*}\CP^{1}, g_{EH,s}}$\footnote{Also called the \textbf{bolt} of $T^{*}\CP^{1}$.} has sectional curvature $\frac{4}{s}$, volume $s\pi$, diameter $\frac{s^{\frac{1}{2}}}{2}$, and self intersection number $\CP^{1} \cdot \CP^{1} = -2$.\footnote{This is because the Euler characteristic of $S^{2}$ is $2$, $T^{*}\CP^{1}$ has the \textit{opposite} orientation of $T \CP^{1}$, and the Euler class of a (real) vector bundle is the self-intersection number of the zero section. Alternatively, this follows because of the self-intersection and $c_{1}\paren*{L^{*}} = -c_{1}\paren*{L}$ and $c_{1}\paren*{L} = e\paren*{L_{\bbR}}$ for complex line bundles $L$.}
			
			In particular, as $s\to 0$, the curvature of the exceptional $\CP^{1}$ blows up, but its diameter/volume shrinks.

			\item The Eguchi-Hanson metric $\paren*{T^{*}\CP^{1},J_{\cO\paren*{-2}}, g_{EH,s}, \omega_{EH,s}}$ is \textbf{$\U(2)$-invariant}. In particular, we immediately have that the Eguchi-Hanson metric is $\U(1)$-invariant\footnote{This is easily seen because of how $\bbC^{\times} \Lactson T^{*}\CP^{1}$. Namely, it starts via $R_{\lambda}$ on $\bbC^{2}$, which clearly descends down $\bbC^{2} - \set*{0}\onto \CP^{1}$ and lifts to $T^{*}\CP^{1}$. However, if $\lambda \in \U(1)$, then $\abs*{\lambda} = 1$, whence by definition of $\phi_{EH,s}$ being inputted with $\abs*{\cdot}^{g_{0}}_{\CP^{1}}$, $\phi_{EH,s}$ is clearly $\U(1)$-invariant and hence so are $\omega_{EH,s}$ and $g_{EH,s}$.}, i.e. $$R^{*}_{\lambda} \paren*{g_{EH,s}} = g_{EH,s},\qqfa \lambda \in \U(1) \subset \bbC^{\times}$$

			\item $g_{EH,s}$ and $g_{EH,1}$ (hence similarly for $\omega_{EH,s}$, $\omega_{EH,1}$) are isometric up to scaling, i.e. $$R_{s^{\frac{1}{2}}}^{*}\paren*{\frac{1}{s}g_{EH,s}} = g_{EH,1}$$

	\end{itemize}\end{prop}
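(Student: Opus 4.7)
The plan is to verify each listed property essentially by explicit computation from the defining Kähler potential $\phi_{EH,s}(r) = \frac{1}{2}\bigl(s\log r^2 + (s^2 + r^4)^{1/2} - s\log(s + (s^2+r^4)^{1/2})\bigr)$, coupled with the structural observations already established in the paragraphs before the proposition (crepant resolution, lift of $\Omega = dz^1 \wedge dz^2$, descent from $\mathbb{C}^2$). The properties split naturally into three groups: (A) Ricci-flatness and ALE asymptotics, (B) the scaling/invariance identities, and (C) the geometry of the bolt. I would verify them in the order (B), (C), (A), because (B) and (C) reduce the analytically heavy property (A) to a single explicit Taylor expansion.

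First I would establish the invariance and scaling properties, which are the cheapest. $\mathrm{U}(2)$-invariance of $\omega_{EH,s}$ follows immediately because $\phi_{EH,s}$ is a function only of $r = |\cdot|_{0}^{g_0}$, and $r^2 = |z^1|^2 + |z^2|^2$ is the standard Hermitian norm on $\mathbb{C}^2$, hence invariant under $\mathrm{U}(2)$; the $\mathrm{U}(2)$-action on $\mathbb{C}^2$ commutes with $\mathbb{Z}_2 = \pm\mathrm{id}$, so it descends to $\mathbb{C}^2/\mathbb{Z}_2$ and lifts through the crepant resolution $\pi$, giving a $\mathrm{U}(2)$-action on $T^*\mathbb{CP}^1$ preserving $\phi_{EH,s}$, hence $\omega_{EH,s}$ and $g_{EH,s}$. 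The scaling identity $R_{s^{1/2}}^*\phi_{EH,1} = \frac{1}{s}\phi_{EH,s}$ (modulo an additive constant which is killed by $i\partial\bar\partial$) follows by direct substitution $r \mapsto s^{1/2} r$ inside the closed form for $\phi_{EH,s}$, and then applying $i\partial\bar\partial$ gives $R_{s^{1/2}}^*(\frac{1}{s}\omega_{EH,s}) = \omega_{EH,1}$ and correspondingly for $g$.

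Next I would handle the bolt. The term $s\pi^*\omega_{FS}$ is the pullback of the Fubini--Study form on $\mathbb{CP}^1$, which is well known to be the round metric of the sphere of radius $\frac{1}{2}$; multiplying the metric by $s$ scales lengths by $s^{1/2}$, so diameter becomes $\frac{s^{1/2}}{2}$, area becomes $s\pi$, and sectional curvature becomes $\frac{4}{s}$. The second term $i\partial\bar\partial\bigl(\tfrac{1}{2}((s^2+r^4)^{1/2} - s\log(s+(s^2+r^4)^{1/2}))\bigr)$ involves the function $r^4$, which vanishes to second order along $\mathbb{CP}^1 = \{r=0\}$, so it contributes nothing to the induced metric on the exceptional divisor. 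The self-intersection $\mathbb{CP}^1 \cdot \mathbb{CP}^1 = -2$ is a topological computation: the exceptional divisor of the minimal resolution of the $A_1$ singularity $\mathbb{C}^2/\mathbb{Z}_2$ is the $(-2)$-curve (equivalently, $T^*\mathbb{CP}^1 = \mathcal{O}(-2)$, and the self-intersection of the zero section of a complex line bundle equals the first Chern number, which for $\mathcal{O}(-2)$ over $\mathbb{CP}^1$ is $-2$).

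The main obstacle is the ALE/Ricci-flat statement. By the scaling property it suffices to handle $s=1$. For Ricci-flatness I would verify that $\omega_{EH,1}$ satisfies the Calabi--Yau Monge--Ampère equation $\omega_{EH,1}^2 = C\,\Omega_{T^*\mathbb{CP}^1} \wedge \overline{\Omega_{T^*\mathbb{CP}^1}}$ on the complement of the exceptional divisor (where $\pi$ is a biholomorphism), and then conclude Ricci-flatness everywhere by continuity and Proposition \ref{calabiyau}. Pulling back by $\pi$ reduces this to a one-variable computation with $u = r^2$: one writes $i\partial\bar\partial\phi_{EH,1} = \phi'(u)i\partial\bar\partial u + \phi''(u)i\partial u \wedge \bar\partial u$, computes $(i\partial\bar\partial\phi)^2$ as a multiple of $(i\partial\bar\partial|z|^2)^2 = dz^1 \wedge d\bar z^1 \wedge dz^2 \wedge d\bar z^2$, and checks that $\phi'(u)(\phi'(u) + u\phi''(u))$ equals an appropriate constant; the explicit choice of $\phi_{EH,1}$ is exactly engineered so that this product is $\frac{1}{4}$. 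For the ALE asymptotics I would Taylor-expand $\phi_{EH,s} - \phi_0$ at $r = \infty$. Writing $(s^2 + r^4)^{1/2} = r^2(1 + s^2 r^{-4})^{1/2} = r^2 + \tfrac{s^2}{2}r^{-2} + O(r^{-6})$ and expanding the log similarly, one finds
\begin{equation*}
\phi_{EH,s}(r) - \phi_0(r) = s\log r + O(r^{-2}),
\end{equation*}
so the $s\log r$ piece has vanishing $i\partial\bar\partial$ on the punctured domain (it is the pullback of the $\mathrm{U}(2)$-invariant pluriharmonic $\log|z|$ up to the $\log|z^1/z^2|$ ambiguity irrelevant under $i\partial\bar\partial$—more carefully, $i\partial\bar\partial \log|z|^2 = \omega_{FS}$ but this is supported at the zero section level and the contribution outside is handled through the bundle identification, which shows the difference $\omega_{EH,s} - \omega_0$ comes entirely from the $O(r^{-2})$ tail of $\phi_{EH,s} - \phi_0$). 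Differentiating term-by-term produces the pointwise decay $|\nabla_{g_0}^k(\phi_{EH,s} - \phi_0)|_{g_0} \leq c_0(k)\, r^{-2-k}$, and applying $i\partial\bar\partial$ (which costs two derivatives) yields $|\nabla_{g_0}^k(F^*\omega_{EH,s} - \omega_0)|_{g_0} = O(r^{-4-k})$; the identification of $F$ with the biholomorphism from $\pi$ outside a compact set is then automatic. The corresponding statement for $g_{EH,s}-g_0$ follows from the compatibility $g = \omega(\cdot, J\cdot)$ with $J$ fixed as the standard complex structure at infinity. Finally, completeness follows since $g_{EH,s}$ is a smooth Riemannian metric on $T^*\mathbb{CP}^1$ that agrees with the (complete) Euclidean cone metric to leading order at infinity, and $L^2$-finiteness of $\mathrm{Rm}$ is a standard consequence of the $r^{-6}$ pointwise decay of curvature (from the $r^{-4}$ metric decay, differentiating twice) against the Euclidean volume growth $r^3 dr$.
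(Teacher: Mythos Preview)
Your overall approach matches the paper's: the proposition is stated there as a list of facts whose justifications are embedded in the preceding text and footnotes (the scaling identity is derived from $\phi_{EH,s} = s\,R_{1/s^{1/2}}^*\phi_{EH,1}$ just before the proposition, the bolt geometry is read off from the $s\pi^*\omega_{FS}$ term, and the ALE decay is asserted to follow from the Taylor expansion of the potential). Your verification of Ricci-flatness via $\phi'(u)(\phi'(u)+u\phi''(u)) = \tfrac{1}{4}$ is correct and is a clean way to see it.

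There is, however, a genuine computational error in your asymptotic expansion. You write $\phi_{EH,s}(r) - \phi_0(r) = s\log r + O(r^{-2})$ and then try to argue away the $s\log r$ term by claiming $i\partial\bar\partial \log r$ is ``supported at the zero section level.'' This is wrong on both counts. First, $i\partial\bar\partial \log|z|^2$ on $\mathbb{C}^2\setminus\{0\}$ is the pullback of $\omega_{FS}$, which is a genuinely nonzero $(1,1)$-form decaying only like $r^{-2}$; if it were present, the metric would be ALE of rate $-2$, not $-4$. Second, and more to the point, the $\log$ term is not actually there: the two logarithms in $\phi_{EH,s}$ cancel to leading order. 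Writing $w = (s^2+r^4)^{1/2} = r^2 + \tfrac{s^2}{2}r^{-2} + O(r^{-6})$, one has
\[
s\log r^2 - s\log(s+w) = s\log\frac{r^2}{s+w} = s\log\frac{1}{1 + s r^{-2} + O(r^{-4})} = -\frac{s^2}{r^2} + O(r^{-4}),
\]
so that $\phi_{EH,s} - \phi_0 = \tfrac{1}{2}\bigl(w - r^2 - s^2 r^{-2} + O(r^{-4})\bigr) = -\tfrac{s^2}{4r^2} + O(r^{-4})$, which is exactly the $O(r^{-2})$ decay of the potential the paper invokes. The rate $-4$ ALE decay of the metric then follows cleanly by applying $i\partial\bar\partial$ without any further argument.
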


	We now want to ``prepare'' the Eguchi-Hanson metric by interpolating it with the Euclidean metric on an ``annulus'' via a suitable cutoff function on $T^{*} \bbC P^{1}$.
	
	\begin{data}\label{constants C_{1} and c_{chi}(k) from cutoff}
		Fix a constant $C_{1} > 1$. Define a smooth monotone cutoff function $\chi: \bbR_{\geq 0} \rightarrow [0,1]$ satisfying $\chi(x) = \begin{cases}
			1 & 0 \leq  x \leq 1 \\
			0 & 2 \leq x
		\end{cases},\abs*{\nabla\chi}\leq C_{1}$ on $\set*{1\leq x\leq 2}$, hence $\chi$ satisfies $\supp \nabla^{k}\chi \subseteq \set*{1\leq x \leq 2}$ and $\abs*{\nabla^{k}\chi} \leq c_{\chi}(k)$ for some positive constants $c_{\chi}(k) > 0$ where $c_{\chi}(0) = 1$ and $c_{\chi}(1) = C_{1}$. WLOG let us also denote by $\chi$ the smooth monotone function on $T^{*}\CP^{1}$ by $\chi \coloneq \chi \circ \abs*{\cdot}^{g_{0}}_{\CP^{1}}$. Hence $\chi = \begin{cases}
			1 & \set*{\abs*{\cdot}^{g_{0}}_{\CP^{1}} \leq 1}\\
			0 & \set*{2 \leq \abs*{\cdot}^{g_{0}}_{\CP^{1}}}
		\end{cases}$, $\supp \nabla^{k}\chi_{\epsilon} \subseteq \set*{1 \leq \abs*{\cdot}^{g_{0}}_{\CP^{1}} \leq 2}$, and $\abs*{\nabla^{k}\chi_{\epsilon}}_{g_{0}} \leq c_{\chi}(k)$.
	\end{data} 
	
	\begin{remark}\label{cutoff function dependence}
		The resulting Ricci-flat metric which will be constructed in fact does \textit{not} depend on this cutoff function $\chi$. Indeed, though the \textit{initial} \ka form and metric that we will perturb from does depend on $\chi$, its \textit{cohomology class} is independent of $\chi$, and the resulting Ricci-flat \ka form is the \textit{unique} such form in the cohomology class of the initial \ka form.
	\end{remark}

	\begin{data}\label{constant epsilon introduction}
		Throughout this thesis, let $\epsilon > 0$ be a positive number, the \textbf{gluing parameter}\footnote{Parameters feature quite prominently in gluing constructions, and this $\epsilon>0$ will be referred back to throughout the rest of this paper!}.
	\end{data}

	Let us first define $\chi_{\sigma} \coloneq \chi\paren*{\sigma \abs*{\cdot}^{g_{0}}_{\CP^{1}}}$, aka $\chi_{\sigma} \coloneq R^{*}_{\sigma} \chi$, given any $\sigma > 0$. Hence $\chi_{\sigma} = \begin{cases}
		1 & \set*{\abs*{\cdot}^{g_{0}}_{\CP^{1}} \leq \frac{1}{\sigma}}\\
		0 & \set*{\frac{2}{\sigma} \leq \abs*{\cdot}^{g_{0}}_{\CP^{1}}}
	\end{cases}$, $\supp \nabla^{k}\chi_{\epsilon} \subseteq \set*{\frac{1}{\sigma} \leq \abs*{\cdot}^{g_{0}}_{\CP^{1}} \leq \frac{2}{\sigma}}$, and $\abs*{\nabla^{k}\chi_{\epsilon}}_{g_{0}} \leq c_{\chi}(k)\sigma^{k}$. 
	

	Define $\lwhat{\phi_{EH-0,\epsilon}} \coloneq \chi_{\epsilon^{\frac{1}{2}}} \phi_{EH,1}+ \paren*{1 - \chi_{\epsilon^{\frac{1}{2}}}}\phi_{0}$, and define $\lwhat{\omega_{EH-0,\epsilon}}\coloneq i\del\delbar\paren*{\lwhat{\phi_{EH-0,\epsilon}}}$. Immediate from the definitions and the fact that being a positive $\paren*{1,1}$-form is open (\textit{Remark} \ref{autopositivity}) is \begin{prop}\label{preglueEH BG interpolation!} For $\epsilon > 0$ from Data \ref{constant epsilon introduction} \textbf{sufficiently small} so that $\lwhat{\omega_{EH-0, \epsilon}}$ is a positive $\paren*{1,1}$-form, we have that $\lwhat{\omega_{EH-0, \epsilon}}$ is a \ka form on $\paren*{T^{*}\CP^{1}, J_{\cO\paren*{-2}}}$ compatible with $J_{\cO\paren*{-2}}$. Hence $\paren*{T^{*}\CP^{1}, J_{\cO\paren*{-2}}, \lwhat{\omega_{EH-0, \epsilon}}, \lwhat{g_{EH-0, \epsilon}}}$ is a \ka manifold such that $$\lwhat{\omega_{EH-0, \epsilon}} = \begin{cases}
			\omega_{EH,1} & \text{on } \set*{\abs*{\cdot}^{g_{0}}_{\CP^{1}} \leq \frac{1}{\epsilon^{\frac{1}{2}}} } \\
			\omega_{0} & \text{on }\set*{\frac{2}{\epsilon^{\frac{1}{2}}} \leq \abs*{\cdot}^{g_{0}}_{\CP^{1}}}
		\end{cases}$$ Moreover, by 2-out-of-3, \textbf{all of the above hold verbatim for the associated Riemannian metric $\lwhat{g_{EH-0,\epsilon}}$}.
	\end{prop}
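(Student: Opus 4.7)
The plan is to verify three things in turn: first that $\lwhat{\omega_{EH-0,\epsilon}}$ actually restricts to $\omega_{EH,1}$ on the inner region and to $\omega_{0}$ on the outer region, second that it is positive on the transition annulus where the cutoff is nontrivial, and third to invoke 2-out-of-3 to transfer everything to the associated Riemannian metric $\lwhat{g_{EH-0,\epsilon}}$.

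The region identifications are immediate from Data \ref{constants C_{1} and c_{chi}(k) from cutoff}. On $\set*{\abs*{\cdot}^{g_{0}}_{\CP^{1}} \leq \epsilon^{-\frac{1}{2}}}$ one has $\chi_{\epsilon^{\frac{1}{2}}} \equiv 1$, so $\lwhat{\phi_{EH-0,\epsilon}} = \phi_{EH,1}$ and hence $i\del\delbar \lwhat{\phi_{EH-0,\epsilon}} = \omega_{EH,1}$; on $\set*{\abs*{\cdot}^{g_{0}}_{\CP^{1}} \geq 2\epsilon^{-\frac{1}{2}}}$ one has $\chi_{\epsilon^{\frac{1}{2}}} \equiv 0$, so $\lwhat{\phi_{EH-0,\epsilon}} = \phi_{0}$ and $i\del\delbar \lwhat{\phi_{EH-0,\epsilon}} = \omega_{0}$. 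Positivity on these two regions is automatic since $\omega_{EH,1}$ and $\omega_{0}$ are already bona fide K\"ahler forms compatible with $J_{\cO(-2)}$.

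The nontrivial step is positivity on the annulus $A_{\epsilon} \coloneq \set*{\epsilon^{-\frac{1}{2}} \leq \abs*{\cdot}^{g_{0}}_{\CP^{1}} \leq 2\epsilon^{-\frac{1}{2}}}$. I would rewrite $\lwhat{\phi_{EH-0,\epsilon}} = \phi_{0} + \chi_{\epsilon^{\frac{1}{2}}}\paren*{\phi_{EH,1} - \phi_{0}}$, so that $\lwhat{\omega_{EH-0,\epsilon}} = \omega_{0} + i\del\delbar\paren*{\chi_{\epsilon^{\frac{1}{2}}}\paren*{\phi_{EH,1} - \phi_{0}}}$. Combining the Taylor expansion $\abs*{\nabla^{k}_{g_{0}}\paren*{\phi_{EH,1} - \phi_{0}}}_{g_{0}} \leq c_{0}(k) r^{-2-k}$ from Proposition \ref{EHproperties} (valid for $r$ large, which on $A_{\epsilon}$ is guaranteed once $\epsilon$ is sufficiently small) with the scaling estimate $\abs*{\nabla^{k}_{g_{0}}\chi_{\epsilon^{\frac{1}{2}}}}_{g_{0}} \leq c_{\chi}(k) \epsilon^{\frac{k}{2}}$ from Data \ref{constants C_{1} and c_{chi}(k) from cutoff} and the fact that $r \in [\epsilon^{-\frac{1}{2}}, 2\epsilon^{-\frac{1}{2}}]$ on $A_{\epsilon}$, the Leibniz expansion of $i\del\delbar$ into three terms (zeroth-order times second derivative of cutoff, first-order cross term, and cutoff times second derivative of $\phi_{EH,1} - \phi_{0}$) yields $\abs*{i\del\delbar\paren*{\chi_{\epsilon^{\frac{1}{2}}}\paren*{\phi_{EH,1} - \phi_{0}}}}_{g_{0}} \leq C\epsilon^{2}$ on $A_{\epsilon}$ for a constant $C$ depending only on $c_{0}(0), c_{0}(1), c_{0}(2), c_{\chi}(0), c_{\chi}(1), c_{\chi}(2)$. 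Since $\omega_{0}$ is flat with pointwise eigenvalues bounded uniformly away from zero, continuity of eigenvalues plus openness of positivity (\textit{Remark} \ref{autopositivity}) force $\lwhat{\omega_{EH-0,\epsilon}}$ to remain positive on $A_{\epsilon}$ for all $\epsilon$ smaller than an explicit threshold.

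Having established $\lwhat{\omega_{EH-0,\epsilon}}$ as a positive real $(1,1)$-form globally on $T^{*}\CP^{1}$ compatible with $J_{\cO(-2)}$, the 2-out-of-3 principle (\textit{Remark} \ref{2outof3}) immediately promotes $\lwhat{g_{EH-0,\epsilon}}\paren*{\cdot,\cdot} \coloneq \lwhat{\omega_{EH-0,\epsilon}}\paren*{\cdot, J_{\cO(-2)}\cdot}$ to a genuine Riemannian metric, and the region-wise identifications of the K\"ahler form transfer verbatim to $\lwhat{g_{EH-0,\epsilon}}$ agreeing with $g_{EH,1}$ and $g_{0}$ on the inner and outer regions respectively. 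The only (modest) obstacle is the bookkeeping of constants in the three Leibniz terms, but this is purely routine and in particular no analytic subtlety enters --- the decay rate $-2-k$ from Eguchi-Hanson beating the growth rate $+\frac{k}{2}$ from the rescaled cutoff is what forces each term down to size $\epsilon^{2}$ uniformly on $A_{\epsilon}$.
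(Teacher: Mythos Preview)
Your proof is correct and follows the paper's approach: the paper states this proposition is ``immediate from the definitions and the fact that being a positive $(1,1)$-form is open (\textit{Remark} \ref{autopositivity}),'' and you have simply unpacked those words --- region identifications from the cutoff, positivity on the annulus via openness, and 2-out-of-3 for the metric. If anything you supply more detail than strictly required, since the explicit $O(\epsilon^{2})$ annular estimate you derive appears in the paper only in the next proposition (for the rescaled form $\wtilde{\omega_{EH,\epsilon}}$, Proposition \ref{preglueEH}), while here the paper is content with the abstract invocation of openness.
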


	Define $\chi_{\epsilon} \coloneq \chi\paren*{\frac{\abs*{\cdot}^{g_{0}}_{\CP^{1}}}{\epsilon^{\frac{1}{2}}}} = R^{*}_{\frac{1}{\epsilon^{\frac{1}{2}}}}\chi$. Hence $\chi_{\epsilon} = \begin{cases}
		1 & \set*{\abs*{\cdot}^{g_{0}}_{\CP^{1}} \leq \epsilon^{\frac{1}{2}}}\\
		0 & \set*{2\epsilon^{\frac{1}{2}} \leq \abs*{\cdot}^{g_{0}}_{\CP^{1}}}
	\end{cases}$, $\supp \nabla^{k}\chi_{\epsilon} \subseteq \set*{\epsilon^{\frac{1}{2}} \leq \abs*{\cdot}^{g_{0}}_{\CP^{1}} \leq 2\epsilon^{\frac{1}{2}}}$, and $\abs*{\nabla^{k}\chi_{\epsilon}}_{g_{0}} \leq c_{\chi}(k)\frac{1}{\epsilon^{\frac{k}{2}}}$. Now define $\wtilde{\phi_{EH, \epsilon}} \coloneq \chi_{\epsilon} \phi_{EH,\epsilon^{2}} + \paren*{1-\chi_{\epsilon}} \phi_{0}$. Hence $\wtilde{\phi_{EH,\epsilon}} = \epsilon^{2}\chi\paren*{\frac{\abs*{\cdot}^{g_{0}}_{\CP^{1}}}{\epsilon^{\frac{1}{2}}}}\phi_{EH,1}\paren*{\frac{\abs*{\cdot}^{g_{0}}_{\CP^{1}}}{\epsilon}} + \epsilon^{2}\paren*{1-\chi\paren*{\frac{\abs*{\cdot}^{g_{0}}_{\CP^{1}}}{\epsilon^{\frac{1}{2}}}}}\phi_{0}\paren*{\frac{\abs*{\cdot}^{g_{0}}_{\CP^{1}}}{\epsilon}} = \epsilon^{2}\chi\paren*{\frac{\abs*{\cdot}^{g_{0}}_{\CP^{1}}}{\epsilon^{\frac{1}{2}}}}\phi_{EH,1}\paren*{\frac{\abs*{\cdot}^{g_{0}}_{\CP^{1}}}{\epsilon}} + \paren*{1-\chi\paren*{\frac{\abs*{\cdot}^{g_{0}}_{\CP^{1}}}{\epsilon^{\frac{1}{2}}}}}\phi_{0}\paren*{\abs*{\cdot}^{g_{0}}_{\CP^{1}}}$ by homogeneity of $\phi_{0}$.

	Now define $\wtilde{\omega_{EH, \epsilon}} \coloneq i\del\delbar \paren*{\wtilde{\phi_{EH, \epsilon}}}$. Immediate from the definitions, openness of positive $\paren*{1,1}$-forms, Leibniz rule, the identity $\phi_{EH,\epsilon^{2}} = \epsilon^{2} R^{*}_{\frac{1}{\epsilon}}\paren*{\phi_{EH,1}}$, as well as a local ON frame computation is \begin{prop}\label{preglueEH} For $\epsilon > 0$ from Data \ref{constant epsilon introduction} \textbf{sufficiently small} so that $\wtilde{\omega_{EH, \epsilon}}$ is a positive $\paren*{1,1}$-form, we have that $\wtilde{\omega_{EH, \epsilon}}$ is a \ka form on $\paren*{T^{*}\CP^{1}, J_{\cO\paren*{-2}}}$ compatible with $J_{\cO\paren*{-2}}$. Hence $\paren*{T^{*}\CP^{1}, J_{\cO\paren*{-2}}, \wtilde{\omega_{EH, \epsilon}}, \wtilde{g_{EH, \epsilon}}}$ is a \ka manifold such that\begin{itemize}
			\itemsep0em 
			\item $$\wtilde{\omega_{EH, \epsilon}} = \begin{cases}
				\omega_{EH,\epsilon^{2}} & \text{on } \set*{\abs*{\cdot}^{g_{0}}_{\CP^{1}} \leq \epsilon^{\frac{1}{2}}} \\
				\omega_{0} & \text{on }\set*{2\epsilon^{\frac{1}{2}} \leq \abs*{\cdot}^{g_{0}}_{\CP^{1}}}
			\end{cases}\qquad\text{and}\qquad \wtilde{g_{EH, \epsilon}} = \begin{cases}
				g_{EH,\epsilon^{2}} & \text{on } \set*{\abs*{\cdot}^{g_{0}}_{\CP^{1}} \leq \epsilon^{\frac{1}{2}}} \\
				g_{0} & \text{on }\set*{2\epsilon^{\frac{1}{2}} \leq \abs*{\cdot}^{g_{0}}_{\CP^{1}}}
			\end{cases}$$
			\item $$\abs*{\nabla_{g_{0}}^{k}\paren*{\wtilde{g_{EH, \epsilon}} - g_{0}}}_{g_{0}} \leq c_{2}(k) \epsilon^{2 - \frac{k}{2}}\qquad\text{and}\qquad\abs*{\nabla_{g_{0}}^{k}\paren*{\wtilde{\omega_{EH, \epsilon}} - \omega_{0}}}_{g_{0}} \leq c_{1}(k+2) \epsilon^{2 - \frac{k}{2}}$$ on the annuli $\set*{\epsilon^{\frac{1}{2}} \leq \abs*{\cdot}^{g_{0}}_{\CP^{1}} \leq 2\epsilon^{\frac{1}{2}}} = \set*{\frac{1}{\epsilon^{\frac{1}{2}}} \leq \frac{\abs*{\cdot}^{g_{0}}_{\CP^{1}}}{\epsilon} \leq \frac{2}{\epsilon^{\frac{1}{2}}}}$ when $\epsilon>0$ is \textbf{sufficiently small} (specifically $\epsilon < \frac{1}{R^{2}}$), for some positive constants $c_{1}\paren*{k} > 0, k \in \bbN_{0}$ in which\footnote{In fact, the proof yields that $c_{1}(k) = \sum_{i=0}^{k}c_{0}(i)c_{\chi}(k-i)$, where we recall that $c_{0}(k)$ are the constants in the decay of the \ka potential $\phi_{EH,s}$ back in Proposition \ref{EHproperties}.} $\abs*{\nabla_{g_{0}}^{k}\paren*{\wtilde{\phi_{EH, \epsilon}} - \phi_{0}}}_{g_{0}} \leq c_{1}(k) \epsilon^{3 - \frac{k}{2}}$, and for some positive constants $c_{2}(k) > 0, k \in \bbN_{0}$.
			\item $$\wtilde{\omega_{EH,\epsilon}} = \epsilon^{2} R^{*}_{\frac{1}{\epsilon}} \paren*{\lwhat{\omega_{EH-0, \epsilon}}}\qquad\text{and}\qquad\wtilde{g_{EH,\epsilon}} = \epsilon^{2} R^{*}_{\frac{1}{\epsilon}} \paren*{\lwhat{g_{EH-0, \epsilon}}}$$
			
			hence $$\wtilde{\omega_{EH, \epsilon}} = \begin{cases}
				\epsilon^{2} R^{*}_{\frac{1}{\epsilon}} \paren*{\omega_{EH,1}} & \text{on } \set*{\abs*{\cdot}^{g_{0}}_{\CP^{1}} \leq \epsilon^{\frac{1}{2}}} \\
				\omega_{0} = \epsilon^{2} R^{*}_{\frac{1}{\epsilon}} \paren*{\omega_{0}} & \text{on }\set*{2\epsilon^{\frac{1}{2}} \leq \abs*{\cdot}^{g_{0}}_{\CP^{1}}}
			\end{cases}\qquad\text{and}\qquad\wtilde{g_{EH, \epsilon}} = \begin{cases}
				\epsilon^{2} R^{*}_{\frac{1}{\epsilon}} \paren*{g_{EH,1}} & \text{on } \set*{\abs*{\cdot}^{g_{0}}_{\CP^{1}} \leq \epsilon^{\frac{1}{2}}} \\
				g_{0} = \epsilon^{2} R^{*}_{\frac{1}{\epsilon}} \paren*{g_{0}} & \text{on }\set*{2\epsilon^{\frac{1}{2}} \leq \abs*{\cdot}^{g_{0}}_{\CP^{1}}}
			\end{cases}$$
		\end{itemize} Thus we also have, \textbf{for $\epsilon < 1$}, $$\dV_{\wtilde{g_{EH, \epsilon}}} = \begin{cases}
			\dV_{g_{EH,\epsilon^{2}}} & \text{on } \set*{\abs*{\cdot}^{g_{0}}_{\CP^{1}} \leq \epsilon^{\frac{1}{2}}} \\
			\paren*{1 + O\paren*{\epsilon^{2}}}\dV_{g_{0}} & \text{on }\set*{\epsilon^{\frac{1}{2}} \leq \abs*{\cdot}^{g_{0}}_{\CP^{1}}}
		\end{cases}$$
	\end{prop}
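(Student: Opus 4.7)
The proof breaks naturally into (i) the K\"ahler property of $\wtilde{\omega_{EH,\epsilon}}$, (ii) the explicit identification on the inner region $\set*{\abs*{\cdot}^{g_{0}}_{\CP^{1}} \leq \epsilon^{\frac{1}{2}}}$ and the outer region $\set*{2\epsilon^{\frac{1}{2}} \leq \abs*{\cdot}^{g_{0}}_{\CP^{1}}}$, (iii) the annular estimates, (iv) the scaling identity relating $\wtilde{\omega_{EH,\epsilon}}$ to $\lwhat{\omega_{EH-0,\epsilon}}$, and (v) the volume form formula. Items (ii) and (v) are essentially direct consequences of the others; the substantive work is in (i), (iii), (iv). Closedness of $\wtilde{\omega_{EH,\epsilon}}$ and its being of type $(1,1)$ are automatic from $\wtilde{\omega_{EH,\epsilon}} = i\del\delbar \wtilde{\phi_{EH,\epsilon}}$, so the K\"ahler property reduces to pointwise positivity. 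On the inner region $\chi_{\epsilon} \equiv 1$ gives $\wtilde{\omega_{EH,\epsilon}} = \omega_{EH,\epsilon^{2}}$, positive by Proposition \ref{EHproperties}; on the outer region $\chi_{\epsilon} \equiv 0$ gives $\wtilde{\omega_{EH,\epsilon}} = \omega_{0}$, positive; and on the annulus the estimates from step (iii) below show $\wtilde{\omega_{EH,\epsilon}} = \omega_{0} + O_{g_{0}}(\epsilon^{2})$, whence openness of positivity (\textit{Remark} \ref{autopositivity}) delivers positivity for all sufficiently small $\epsilon > 0$.

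The heart of the matter is (iii). The plan is to first estimate $\wtilde{\phi_{EH,\epsilon}} - \phi_{0} = \chi_{\epsilon}\paren*{\phi_{EH,\epsilon^{2}} - \phi_{0}}$ on the annulus and only then apply $i\del\delbar$. Using $\phi_{EH,\epsilon^{2}} = \epsilon^{2} R^{*}_{1/\epsilon}\phi_{EH,1}$ (the scaling identity from Proposition \ref{EHproperties}) and the degree-two homogeneity $\phi_{0}(r) = \epsilon^{2}\phi_{0}(r/\epsilon)$, the problem reduces to controlling $\phi_{EH,1} - \phi_{0}$ at the large radii $\abs*{\cdot}^{g_{0}}_{\CP^{1}}/\epsilon \in [1/\epsilon^{\frac{1}{2}}, 2/\epsilon^{\frac{1}{2}}]$, where the Taylor expansion $\abs*{\nabla^{k}_{g_{0}}(\phi_{EH,1} - \phi_{0})}_{g_{0}} \leq c_{0}(k) r^{-2-k}$ from Proposition \ref{EHproperties} applies. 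Chasing the scaling factors yields $\abs*{\nabla^{k}_{g_{0}}(\phi_{EH,\epsilon^{2}} - \phi_{0})}_{g_{0}} \leq c_{0}(k)\epsilon^{3 - \frac{k}{2}}$ on the annulus. The Leibniz rule applied to $\chi_{\epsilon}\paren*{\phi_{EH,\epsilon^{2}} - \phi_{0}}$ preserves this bound because every derivative landing on $\chi_{\epsilon}$ costs $\epsilon^{-\frac{1}{2}}$ (from $\abs*{\nabla^{i}\chi_{\epsilon}}_{g_{0}} \leq c_{\chi}(i)\epsilon^{-\frac{i}{2}}$) but is exactly compensated by one fewer derivative on the potential difference, giving $c_{1}(k) = \sum_{i=0}^{k} c_{0}(i) c_{\chi}(k-i)$. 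Taking two more derivatives (via $i\del\delbar$) shifts the power from $\epsilon^{3-\frac{k}{2}}$ to $\epsilon^{2-\frac{k}{2}}$, yielding the claimed bound on $\wtilde{\omega_{EH,\epsilon}} - \omega_{0}$ with constant $c_{1}(k+2)$. The corresponding bound on $\wtilde{g_{EH,\epsilon}} - g_{0}$ follows from $g = \omega\paren*{\cdot, J\cdot}$ with $J = J_{0}$ fixed and parallel with respect to $g_{0}$, producing some constant $c_{2}(k) > 0$.

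For (iv), the crucial observation is that $R_{\lambda} \coloneq \lambda \id_{\bbC^{2}}$ is a biholomorphism of $\paren*{T^{*}\CP^{1}, J_{\cO(-2)}}$, hence commutes with $i\del\delbar$. A direct computation gives $R^{*}_{1/\epsilon}\paren*{\chi_{\epsilon^{\frac{1}{2}}}} = \chi_{\epsilon}$ from the two definitions of the cutoffs, and combining this with $\epsilon^{2} R^{*}_{1/\epsilon}\phi_{EH,1} = \phi_{EH,\epsilon^{2}}$ and $\epsilon^{2} R^{*}_{1/\epsilon}\phi_{0} = \phi_{0}$ yields $\wtilde{\phi_{EH,\epsilon}} = \epsilon^{2} R^{*}_{1/\epsilon}\lwhat{\phi_{EH-0,\epsilon}}$. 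Applying $i\del\delbar$ and commuting it past $R^{*}_{1/\epsilon}$ gives the asserted identity for forms, and the identity for metrics follows by 2-out-of-3 (\textit{Remark} \ref{2outof3}). Finally, (v) falls out of $\dV_{\wtilde{g_{EH,\epsilon}}} = \frac{1}{2}\wtilde{\omega_{EH,\epsilon}}^{2}$: on the inner region this equals $\dV_{g_{EH,\epsilon^{2}}}$, and on $\set*{\epsilon^{\frac{1}{2}} \leq \abs*{\cdot}^{g_{0}}_{\CP^{1}}}$ expanding $\wtilde{\omega_{EH,\epsilon}}^{2} = \omega_{0}^{2} + 2\omega_{0}\wedge(\wtilde{\omega_{EH,\epsilon}} - \omega_{0}) + (\wtilde{\omega_{EH,\epsilon}} - \omega_{0})^{2}$ and invoking the $k=0$ annular bound (trivially extended by $0$ to the outer region) produces $\paren*{1+O(\epsilon^{2})}\dV_{g_{0}}$.

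The main obstacle is the bookkeeping in (iii): one must simultaneously track (a) the Eguchi-Hanson decay rate, (b) the two scaling regimes (the cutoff lives at scale $\epsilon^{\frac{1}{2}}$ while the Eguchi-Hanson potential was built at scale $\epsilon$), and (c) the derivative counts in the Leibniz expansion. The reason no derivative count is actually lost is precisely the favorable exponent arithmetic $-\frac{i}{2} + (3 - \frac{k-i}{2}) = 3 - \frac{k}{2}$, which is exactly why the cutoff scale $\epsilon^{\frac{1}{2}}$ was chosen and not some other power; getting this arithmetic right is the only real subtlety in the proof.
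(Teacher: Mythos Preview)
Your proposal is correct and follows essentially the same approach as the paper's own proof, which is only a one-line sketch invoking ``the definitions, openness of positive $(1,1)$-forms, Leibniz rule, the identity $\phi_{EH,\epsilon^{2}} = \epsilon^{2} R^{*}_{\frac{1}{\epsilon}}\paren*{\phi_{EH,1}}$, as well as a local ON frame computation.'' Your breakdown into (i)--(v) fleshes out exactly these ingredients, your exponent arithmetic in (iii) is correct and recovers the paper's footnote formula $c_{1}(k) = \sum_{i=0}^{k}c_{0}(i)c_{\chi}(k-i)$, and your verification of the cutoff pullback identity $R^{*}_{1/\epsilon}\chi_{\epsilon^{1/2}} = \chi_{\epsilon}$ in (iv) is the key computation the paper leaves implicit.
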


	Lastly, we recall a few very standard facts about flat tori. \begin{prop}\label{flat tori facts} Let $\Lambda \subset \bbR^{n}$ be a full rank lattice, hence is equipped with a \textit{choice} of isomorphism $\Lambda \cong \bbZ^{n}$. Consider the $n$-torus $\bbT^{n} \coloneq \bbR^{n}/\Lambda$. \begin{itemize}
			\itemsep0em 
			\item $\paren*{\bbT^{n},g_{0}}$ is a flat Riemannian manifold (since the \textit{Euclidean} flat metric $g_{0}$ is clearly $\bbZ^{n}$-invariant)
			\item The \textbf{moduli space} of all flat metrics on $\bbT^{n}$ is $\GL\paren*{n,\bbZ}\backslash \GL\paren*{n,\bbR}/\O(n)$, which has dimension $\frac{n\paren*{n+1}}{2}$.
			\item The same holds for orbifolds $\bbT^{4}/G$ for $G$ a finite group acting isometrically on $\bbT^{4}$.
		\end{itemize} Indeed, $\GL\paren*{n,\bbZ}\backslash \GL\paren*{n,\bbR}$ parametrizes the choice of isomorphism $\Lambda \cong \bbZ^{n}$, and $\O(n)$ is precisely the stablizer in $\GLr{n}$ of the Euclidean metric $g_{0}$.\end{prop}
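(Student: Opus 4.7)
The plan is to handle the three claims in order. The first, that $\paren*{\bbT^{n}, g_{0}}$ is flat, is immediate because the Euclidean metric $g_{0}$ on $\bbR^{n}$ is translation-invariant, hence $\Lambda$-invariant, and so by Proposition \ref{CoveringSpacesInvariantForms} it descends along $\bbR^{n} \onto \bbT^{n} = \bbR^{n}/\Lambda$ to a smooth Riemannian metric, which is manifestly flat since flatness is preserved under local isometries (and $\bbR^{n} \onto \bbT^{n}$ is one).

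For the moduli space, the plan is to parametrize flat metrics on the smooth manifold $\bbT^{n} = \bbR^{n}/\bbZ^{n}$ by linear identifications of $\bbZ^{n}$ with an arbitrary full-rank lattice in Euclidean $\bbR^{n}$. Concretely, each $A \in \GLr{n}$ produces a flat torus $\bbR^{n}/A\bbZ^{n}$ with the induced Euclidean metric, which via $A$ is isometric to $\paren*{\bbT^{n}, A^{*}g_{0}}$; conversely, any flat metric $g$ on $\bbT^{n}$ lifts along the universal cover $\pi: \bbR^{n} \to \bbT^{n}$ to a $\bbZ^{n}$-invariant flat metric $\pi^{*}g$ on $\bbR^{n}$, and the classification of complete simply connected flat Riemannian manifolds gives a linear isometry $\paren*{\bbR^{n}, \pi^{*}g} \cong \paren*{\bbR^{n}, g_{0}}$ realized by some $A \in \GLr{n}$. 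The ambiguity in such an $A$ is precisely right-multiplication by $\O(n)$, the stabilizer of $g_{0}$ in $\GLr{n}$ (choice of linear isometry with Euclidean space), together with left-multiplication by $\GL\paren*{n,\bbZ}$ (choice of $\bbZ$-basis of the abstract fundamental lattice, producing isometric flat tori). Hence the moduli space is $\GL\paren*{n,\bbZ} \backslash \GLr{n} / \O(n)$, and since $\GL\paren*{n,\bbZ}$ is discrete, a quick count gives $\dim \GLr{n} - \dim \O(n) = n^{2} - \frac{n(n-1)}{2} = \frac{n(n+1)}{2}$.

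For the orbifold extension, a flat metric on $\bbT^{n}/G$ (with $G$ a finite group acting on $\bbT^{n}$ by isometries) is the same as a $G$-invariant flat metric on $\bbT^{n}$ by Proposition \ref{CoveringSpacesInvariantForms}, so I would repeat the argument above inside the $G$-invariant subspace of flat metrics. In the particular case of interest, $G = \bbZ_{2}$ acts on $\bbT^{4}$ by $-\id$, whose tangent differential at every point is $-\id$; since $-\id$ preserves every symmetric bilinear form, every flat metric on $\bbT^{4}$ is automatically $\bbZ_{2}$-invariant, and so the orbifold moduli space coincides with that of $\bbT^{4}$, of dimension $\frac{4\cdot 5}{2} = 10$ --- matching the $10$ in the parameter count $10 + 3\abs*{I}$ of Theorem \ref{MainTheoremB}.

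I do not anticipate any substantive obstacle. The main technical point is carefully verifying the well-definedness of the double coset class $[A] \in \GL\paren*{n,\bbZ} \backslash \GLr{n} / \O(n)$ under the simultaneous choices of lift, linear isometry, and lattice basis; this reduces to the commutativity of left and right multiplications in $\GLr{n}$, after which the proof is standard linear algebra plus the fact that complete simply connected flat Riemannian manifolds are isometric to Euclidean space.
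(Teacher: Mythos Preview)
Your proposal is correct and follows the same approach as the paper: the paper's entire justification is the single clause embedded in the proposition statement itself, namely that $\GL(n,\bbZ)\backslash\GL(n,\bbR)$ parametrizes lattice bases and $\O(n)$ is the stabilizer of $g_0$. Your write-up simply fleshes out this sketch with the universal-cover argument, the dimension count, and the observation that $-\id$ preserves every symmetric bilinear form for the $\bbT^4/\bbZ_2$ case.
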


	\subsection{\textsection \ The ``Kummer Construction'' \ \textsection}\label{The Kummer Construction}

	Recall that a $K3$ surface is the underlying smooth oriented 4-manifold of a connected compact simply connected complex 2-manifold with trivial canonical bundle. Since any two $K3$ surfaces are automatically diffeomorphic, differing constructions of any complex $2$-fold that are connected, compact, simply connected, and have trivial canonical bundle all yield the same $K3$ surface, up to diffeomorphism. We rigorously construct here one such definition of a $K3$ surface, namely as a \textbf{Kummer surface $\Km$}, which is the resolution of an abelian variety quotiented out by an involution (which is closed, connected, simply connected and has trivial canonical bundle), and then graft in our Eguchi-Hanson metric $\wtilde{\omega_{EH, \epsilon}}$ from the previous section to get a family of K\"{a}hler metrics $\omega_{\epsilon}$ on $\sK$.

	First consider the $4$-torus $\bbT^{4} = \bbR^{4}/\Lambda$ where $\Lambda \cong \bbZ^{4}$ is a full rank lattice in $\bbR^{4}$. Equip $\bbR^{4}$ with the standard Euclidean \ka structure $J_{0}, g_{0}, \omega_{0}$, getting $\paren*{\bbC^{2},J_{0}, g_{0}, \omega_{0}}$. Each of the 3 tensors $J_{0}, g_{0}, \omega_{0}$ are $\bbZ^{n}$-invariant, hence by Proposition \ref{CoveringSpacesInvariantForms} descends down to $\bbT^{4}$ yielding a \textbf{flat \ka $4$-torus $\paren*{\bbT^{4}, J_{0}, g_{0}, \omega_{0}}$}. Furthermore, the flat \ka datum $J_{0}, g_{0}, \omega_{0}$ are all $\bbZ_{2}$-invariant under the standard involution/$\bbZ_{2}$-action $z \mapsto -z$ on $\bbT^{4} = \bbC^{2}/\Lambda$. In fact, the same argument above works verbatim on $dz^{1} \wedge dz^{2}$. Hence, upon passing to the quotient orbifold $\bbT^{4}/\bbZ_{2}$, we in fact get a \textbf{flat \ka orbifold $\paren*{\bbT^{4}/\bbZ_{2}, J_{0}, g_{0}, \omega_{0}}$} with 16 isolated orbifold point-singularities, each modeled on $\bbR^{4}/\bbZ_{2} = \bbC^{2}/\bbZ_{2}$\footnote{Note that topologically we have that $\bbR^{4}/\bbZ_{2} \cong \Cone\paren*{\bbR P^{3}}$, and it is precisely the fact that the metric on $\Cone\paren*{\bbR P^{3}}$ is conformally equivalent to the metric on a cylinder over $\bbR P^{3}$ that Donaldson exploits in \cite{DonaldsonKummer} to bypass the usage of weighted Holder spaces. Part of the reason why this works is because the cylindrical metric of $\bbR P^{3}$ has \textit{bounded geometry} as opposed to the conical metric with its cone singularity.}, and that the underlying \textbf{complex orbifold $\paren*{\bbT^{4}/\bbZ_{2}, J_{0}}$} admits a nowhere vanishing \textit{holomorphic} volume form $dz^{1} \wedge dz^{2}$ which uniquely determines $J_{0}$ by Proposition \ref{holovolformdeterminescomplexstructure}. We perform a crepant resolution $\paren*{\wtilde{\bbT^{4}/\bbZ_{2}}, J}$ by blowing up each of the 16 orbifold singularities \textit{and where we denote the resulting integrable complex structure induced by the pullback of $dz^{1}\wedge dz^{2}$ by $J$}, and the result is a connected compact simply connected complex $2$-fold with trivial canonical bundle, whence yielding a $K3$ surface we denote by $\Km$. Specifically, since each orbifold singularity is modeled on $\paren*{\bbC^{2}/\bbZ_{2}, J_{0}}$, we have that the resolution at that singular point is \textit{precisely the same resolution} $\pi: \paren*{T^{*} \bbC P^{1}, J_{\cO(-2)}} \onto \paren*{\bbC^{2}/\bbZ_{2}, J_{0}}$ for the Eguchi-Hanson space. In fact, it is this fact which led Page, Gibbons, and Pope \cite{Page} \cite{Gibbons-Pope} to conjecture that one may ``graft in'' 16 ``small Eguchi-Hanson metrics'' around the 16 exceptional divisors of $\wtilde{\bbT^{4}/\bbZ_{2}}$ onto the flat metric on its complement. Thus we also have a projection map $\pi: \paren*{\wtilde{\bbT^{4}/\bbZ_{2}}, J} \onto \paren*{\bbT^{4}/\bbZ_{2}, J_{0}}$ which is a biholomorphism on the complement of the 16 exceptional $\bbC P^{1}$s.

	In more detail, let $\cS \coloneq \sing\paren*{\bbT^{4}/\bbZ_{2}}$ be the set of 16 singular points, aka $\Fix\paren*{\bbZ_{2}}$.

	Thus, let $\abs*{\cdot}^{g_{0}}_{\cS} \coloneq \dist_{g_{0}}\paren*{\cS,\cdot}$ be the metric distance away from the singular points WRT $g_{0}$.

	\begin{data}\label{constant epsilon patching and WLOG lattice}
		
		Since this entire construction will work with any full rank lattice $\Lambda \cong \bbZ^{4}$, WLOG let $\Lambda = \bbZ^{4}$, hence the singular points are the points whose real and imaginary parts are $0 \mod \frac{1}{2}$ the distance between each of the 16 singular points WRT $g_{0}$ is $\frac{1}{2}$.
		
		Let our sufficiently small $\epsilon > 0$ from Data \ref{constant epsilon introduction} and Proposition \ref{preglueEH} be even smaller so that $3\epsilon^{\frac{1}{2}} < \frac{1}{2}$ (if it wasn't already small enough to also ensure this).
	\end{data}
	
	Now consider 16 copies of the closed tubular neighborhood of radius $3\epsilon^{\frac{1}{2}}$ around the exceptional divisor on $\paren*{T^{*} \bbC P^{1}, J_{\cO(-2)}}$, i.e. $\bigsqcup_{i \in \set*{1,\dots, 16}} T^{*} \bbC P^{1} - \set*{\abs*{\cdot}^{g_{0}}_{\CP^{1}} > 3\epsilon^{\frac{1}{2}}} = \bigsqcup_{i \in \set*{1,\dots, 16}} \set*{\abs*{\cdot}^{g_{0}}_{\CP^{1}} \leq 3\epsilon^{\frac{1}{2}}} = \bigsqcup_{i \in \set*{1,\dots, 16}} D^{g_{0}}_{3\epsilon^{\frac{1}{2}}}\paren*{\CP^{1}}$.
	
	Now define \begin{align*}
		\paren*{\Km_{\epsilon}, J, g_{\epsilon}, \omega_{\epsilon}} &\coloneq \paren*{\wtilde{\bbT^{4}/\bbZ_{2}}, J, g_{\epsilon}, \omega_{\epsilon}}\\
		&\coloneq \frac{\paren*{\bbT^{4}/\bbZ_{2}  , J_{0}, g_{0}, \omega_{0}} \bigsqcup_{i \in \set*{1,\dots, 16}} \paren*{T^{*} \bbC P^{1} - \set*{\abs*{\cdot}^{g_{0}}_{\CP^{1}} > 3\epsilon^{\frac{1}{2}}}, J_{\cO(-2)}, \wtilde{g_{EH,\epsilon}}, \wtilde{\omega_{EH, \epsilon}}} }{\sim}
	\end{align*} with the equivalence relation being the identification $\set*{\abs*{\cdot}^{g_{0}}_{\cS} \leq 3\epsilon^{\frac{1}{2}}} \sim \set*{\abs*{\cdot}^{g_{0}}_{\CP^{1}} \leq 3\epsilon^{\frac{1}{2}}}$. That is, each of the 16 connected components of $\set*{\abs*{\cdot}^{g_{0}}_{\cS} \leq 3\epsilon^{\frac{1}{2}}}$, which is $\set*{\abs*{\cdot}^{g_{0}}_{p} \leq 3\epsilon^{\frac{1}{2}}}$ for each $p \in \cS$, gets identified with a single copy of $\set*{\abs*{\cdot}^{g_{0}}_{\CP^{1}} \leq 3\epsilon^{\frac{1}{2}}} \subset T^{*} \bbC P^{1}$, and this identification is done via first lifting/pulling back $\set*{\abs*{\cdot}^{g_{0}}_{p} \leq 3\epsilon^{\frac{1}{2}}} \subset \bbT^{4}/\bbZ_{2} $ to $\bbT^{4} $ via the (branched) double cover/orbifold projection, then to (a proper subset of the fundamental domain in) $\bbC^{2}$ via the quotient $\bbC^{2} \onto \bbC^{2}/\Lambda = \bbT^{4}$, translating so that $p \mapsto 0$, then pushing down via $\bbC^{2} \onto \bbC^{2}/\bbZ_{2}$ onto $\bbC^{2}/\bbZ_{2}$, then lifting/pulling back via the crepant resolution map $T^{*} \bbC P^{1} \onto \bbC^{2}/\bbZ_{2}$ to get $\set*{\abs*{\cdot}^{g_{0}}_{\CP^{1}} \leq 3\epsilon^{\frac{1}{2}}} \subset T^{*} \bbC P^{1}$. Thus the \textit{underlying} projection map $\pi: \Km_{\epsilon} \onto \bbT^{4}/\bbZ_{2}$ is given by the identity on $\set*{3\epsilon^{\frac{1}{2}} < \abs*{\cdot}^{g_{0}}_{\cS}}$, and on $\set*{\abs*{\cdot}^{g_{0}}_{\cS} \leq 3\epsilon^{\frac{1}{2}}} \sim \set*{\abs*{\cdot}^{g_{0}}_{\CP^{1}} \leq 3\epsilon^{\frac{1}{2}}}$ via the crepant resolution map $\pi: \paren*{T^{*}\CP^{1}, J_{\cO\paren*{-2}}}\onto \paren*{\bbC^{2}/\bbZ_{2},J_{0}}$.
	
	This gives us our underlying Kummer $K3$ surface $\Km$, \textit{or $\Km_{\epsilon}$ whenever we want to track the dependence on $\epsilon$}, \textit{as well as the background integrable complex structure $J$}. Indeed, since $\paren*{\Km_{\epsilon}, J} = \paren*{\wtilde{\bbT^{4}/\bbZ_{2}}, J}$ is a crepant resolution of $\paren*{\bbT^{4}/\bbZ_{2}, J_{0}}$, we have that there exists a nowhere vanishing holomorphic $(2,0)$-form/holomorphic volume form $\Omega$ on $\paren*{\Km_{\epsilon}, J}$ (given by $dz^{1} \wedge dz^{2}$ on $\set*{\abs*{\cdot}^{g_{0}}_{\cS} > 3\epsilon^{\frac{1}{2}}}$ and identifying $dz^{1} \wedge dz^{2}$ on $\set*{\abs*{\cdot}^{g_{0}}_{\cS} \leq 3\epsilon^{\frac{1}{2}}}$ with $\Omega_{T^{*} \bbC P^{1}}$ on $\set*{\abs*{\cdot}^{g_{0}}_{\CP^{1}} \leq 3\epsilon^{\frac{1}{2}}}$, since both holomorphic volume forms descend from the \textit{same} Euclidean holomorphic volume form on $\bbC^{2}$), and $\Omega$ uniquely determines $J$ and trivializes the canonical bundle from Proposition \ref{holovolformdeterminescomplexstructure}.

	It now remains to define $\omega_{\epsilon}$.

			Letting $E_{i} \cong \CP^{1}$ denote one of the 16 exceptional divisors and $\pi^{-1}\paren*{\cS} = \sqcup_{i\in \set*{1,\dots, 16}}E_{i}$, we set $$\omega_{\epsilon} \coloneq \begin{cases}
				\omega_{0} & \text{on } \Km_{\epsilon} - \sqcup_{i\in \set*{1,\dots, 16}} D^{g_{0}}_{3\epsilon^{\frac{1}{2}}}\paren*{E_{i}} = \Km_{\epsilon} - \set*{\abs*{\cdot}^{g_{0}}_{\pi^{-1}\paren*{\cS}} \leq 3\epsilon^{\frac{1}{2}}} = \set*{ 3\epsilon^{\frac{1}{2}} \leq \abs*{\cdot}^{g_{0}}_{\pi^{-1}\paren*{\cS}}} \\
				\wtilde{\omega_{EH, \epsilon}} & \text{on (each 16 connected components of)} \set*{\abs*{\cdot}^{g_{0}}_{\pi^{-1}\paren*{\cS}} \leq 3\epsilon^{\frac{1}{2}}}
			\end{cases}	$$ which is clearly well defined and smooth on all of $\Km_{\epsilon}$ by Proposition \ref{preglueEH}, and we now have our $K3$ surface $\paren*{\Km_{\epsilon}, J, g_{\epsilon}, \omega_{\epsilon}}$ with our K\"{a}hler form $\omega_{\epsilon}$ and associated Riemannian metric $g_{\epsilon}$ from 2-out-of-3, \textbf{depending on the small $\epsilon > 0$.}
			
			\begin{remark}\label{identification}
				Here we have that $\abs*{\cdot}^{g_{0}}_{\pi^{-1}\paren*{\cS}} \coloneq \dist_{g_{0}}\paren*{\pi^{-1}\paren*{\cS},\cdot} = \dist_{g_{0}}\paren*{\cS,\pi(\cdot)} \eqcolon \abs*{\pi(\cdot)}^{g_{0}}_{\cS}$ because $\pi: \paren*{\Km_{\epsilon}, J} \onto \paren*{\bbT^{4}/\bbZ_{2}, J_{0}}$ is a \textit{biholomorphism} on the complement of the exceptional divisors $\pi^{-1}\paren*{\cS} = \sqcup_{i\in \set*{1,\dots, 16}}E_{i}$ and the projection $\bbT^{4}\onto\bbT^{4}/\bbZ_{2}$ is a double cover on the complement of the orbifold singularities $\cS$ along which $g_{0}$ descends, whence we have the flat metric $g_{0}$ on $\bbT^{4} - \cS \overset{2:1}{\onto}\bbT^{4}/\bbZ_{2} - \cS\overset{\text{Biholo}}{\cong}\Km_{\epsilon} - \pi^{-1}\paren*{\cS} $. 
				
				In particular, $\forall r > 0$ we may identify $\set*{r < \abs*{\cdot}^{g_{0}}_{\pi^{-1}\paren*{\cS}}}\subset \Km_{\epsilon}$ with $\set*{r < \abs*{\cdot}^{g_{0}}_{\cS}} \subset \bbT^{4}/\bbZ_{2}- \cS$ via $\pi: \paren*{\Km_{\epsilon}, J} \onto \paren*{\bbT^{4}/\bbZ_{2}, J_{0}}$ and its lift in $\set*{r < \abs*{\cdot}^{g_{0}}_{\cS}} \subset \bbT^{4}- \cS$ via the double cover $\bbT^{4} - \cS \overset{2:1}{\onto}\bbT^{4}/\bbZ_{2} - \cS$, where it is of course understood that $\abs*{\cdot}^{g_{0}}_{\cS} \coloneq \dist_{g_{0}}\paren*{\cS,\cdot}$.
				
				On a similar vein, we only did the identification $\set*{\abs*{\cdot}^{g_{0}}_{\pi^{-1}\paren*{\cS}} \leq 3\epsilon^{\frac{1}{2}}} \sim \set*{\abs*{\cdot}^{g_{0}}_{\CP^{1}} \leq 3\epsilon^{\frac{1}{2}}}$ in the underlying construction of $\Km_{\epsilon}$ to guarantee that our $\omega_{\epsilon}$ is well defined and smooth on all of $\Km_{\epsilon}$. Thus $\forall r \in \paren*{0,\frac{1}{2}}$ (since WLOG $\Lambda = \bbZ^{4}$) we may identify \textit{each of the 16 connected components of} $\set*{\abs*{\cdot}^{g_{0}}_{\pi^{-1}\paren*{\cS}} \leq r}\subset \Km_{\epsilon}$ with $\set*{\abs*{\cdot}^{g_{0}}_{\CP^{1}} \leq r}\subset T^{*}\CP^{1}$ via the same exact process as the original identification above (connected component starts out in $\bbT^{4}/\bbZ_{2}$, lift to $\bbT^{4}$, lift to $\bbC^{2}$, translate, project down to $\bbC^{2}/\bbZ_{2}$, lift to $T^{*}\CP^{1}$ via crepant resolution).
			\end{remark}

					Observe also the following properties which immediately follows from construction of $\omega_{\epsilon}$ and Proposition \ref{preglueEH}: \begin{prop}\label{approximatemetricproperties} For $\epsilon > 0$ satisfying Data \ref{constant epsilon patching and WLOG lattice}, we have that $\omega_{\epsilon}$ satisfies
						$$\omega_{\epsilon} = \begin{cases}
							\omega_{EH,\epsilon^{2}} & \text{on }\set*{\abs*{\cdot}^{g_{0}}_{\pi^{-1}\paren*{\cS}} \leq \epsilon^{\frac{1}{2}}} \\
							\omega_{0} & \text{on }\set*{2\epsilon^{\frac{1}{2}} \leq \abs*{\cdot}^{g_{0}}_{\pi^{-1}\paren*{\cS}}} 
						\end{cases}\qquad\text{and}\qquad g_{\epsilon} = \begin{cases}
							g_{EH,\epsilon^{2}} & \text{on } \set*{\abs*{\cdot}^{g_{0}}_{\pi^{-1}\paren*{\cS}} \leq \epsilon^{\frac{1}{2}}} \\
							g_{0} & \text{on }\set*{2\epsilon^{\frac{1}{2}} \leq \abs*{\cdot}^{g_{0}}_{\pi^{-1}\paren*{\cS}}}
						\end{cases}$$ and, \textbf{for sufficiently small $\epsilon > 0$} to satisfy Proposition \ref{preglueEH}, $$\abs*{\nabla_{g_{0}}^{k}\paren*{\omega_{\epsilon} - \omega_{0}}}_{g_{0}} \leq c_{1}(k+2) \epsilon^{2 - \frac{k}{2}}\qquad\text{and}\qquad\abs*{\nabla_{g_{0}}^{k}\paren*{g_{\epsilon} - g_{0}}}_{g_{0}} \leq c_{2}(k) \epsilon^{2 - \frac{k}{2}}$$ on $\set*{\epsilon^{\frac{1}{2}}\leq \abs*{\cdot}^{g_{0}}_{\pi^{-1}\paren*{\cS}} \leq 2\epsilon^{\frac{1}{2}}} = \set*{\frac{1}{\epsilon^{\frac{1}{2}}}\leq \frac{\abs*{\cdot}^{g_{0}}_{\pi^{-1}\paren*{\cS}}}{\epsilon} \leq \frac{2}{\epsilon^{\frac{1}{2}}}}$.
						
						Thus we also have, \textbf{for $\epsilon < 1$} (which Data \ref{constant epsilon patching and WLOG lattice} guarantees), $$\dV_{g_{\epsilon}} = \begin{cases}
							\dV_{g_{EH,\epsilon^{2}}} & \text{on } \set*{\abs*{\cdot}^{g_{0}}_{\pi^{-1}\paren*{\cS}} \leq \epsilon^{\frac{1}{2}}} \\
							\paren*{1 + O\paren*{\epsilon^{2}}}\dV_{g_{0}} & \text{on }\set*{\epsilon^{\frac{1}{2}} \leq \abs*{\cdot}^{g_{0}}_{\pi^{-1}\paren*{\cS}}}
						\end{cases}$$ and that the cohomology class $[\omega_{\epsilon}]\in  H^{2}\paren*{K3}$ is independent of the cutoff function $\chi$ from Data \ref{constants C_{1} and c_{chi}(k) from cutoff} (\textit{Remark} \ref{cutoff function dependence}).\end{prop}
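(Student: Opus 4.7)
The plan is to reduce everything to the local Proposition \ref{preglueEH} via the identification described in Remark \ref{identification}, so that the present statement becomes essentially a matter of bookkeeping. First I would unpack the definition of $\omega_\epsilon$: each of the $16$ connected components of the tube $\set*{\abs*{\cdot}^{g_0}_{\pi^{-1}(\cS)}\leq 3\epsilon^{1/2}}\subset \Km_\epsilon$ is identified with $\set*{\abs*{\cdot}^{g_0}_{\CP^1}\leq 3\epsilon^{1/2}}\subset T^*\CP^1$ via the chain of maps (lift to $\bbT^4$, to $\bbC^2$, translate, project to $\bbC^2/\bbZ_2$, lift through the crepant resolution), each of which is a local $g_0$-isometry, so that $g_0$-covariant derivatives and $g_0$-pointwise norms transfer verbatim across the identification. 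Under this identification, $\omega_\epsilon$ equals $\wtilde{\omega_{EH,\epsilon}}$ by definition, while $\omega_\epsilon = \omega_0$ globally outside the tube.

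Given this setup, the piecewise description of $\omega_\epsilon, g_\epsilon$ is immediate from the first bullet of Proposition \ref{preglueEH}: inside $\set*{\abs*{\cdot}^{g_0}_{\pi^{-1}(\cS)}\leq \epsilon^{1/2}}$ one gets $\omega_{EH,\epsilon^2}$; on $\set*{2\epsilon^{1/2}\leq \abs*{\cdot}^{g_0}_{\pi^{-1}(\cS)}\leq 3\epsilon^{1/2}}$ one gets $\omega_0$, which is consistent with the exterior region, and the 16 tubes are pairwise disjoint thanks to the constraint $3\epsilon^{1/2}<1/2$ from Data \ref{constant epsilon patching and WLOG lattice}. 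The derivative bounds on the transition annulus $\set*{\epsilon^{1/2}\leq \abs*{\cdot}^{g_0}_{\pi^{-1}(\cS)}\leq 2\epsilon^{1/2}}$ are then a direct transport of the second bullet of Proposition \ref{preglueEH} through the same isometric identification. For the volume form, I would split into the three natural regions: on the innermost tube $g_\epsilon = g_{EH,\epsilon^2}$ gives the first case; on the exterior $g_\epsilon = g_0$ exactly; on the transition annulus the $k=0$ pointwise bound $\abs*{g_\epsilon-g_0}_{g_0}\leq c_2(0)\epsilon^2$ combined with the expansion $\dV_{g_\epsilon}/\dV_{g_0}=\sqrt{\det(I + g_0^{-1}(g_\epsilon-g_0))} = 1 + O(\epsilon^2)$, valid because $\epsilon<1$ makes the perturbation small enough to linearize the determinant.

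Finally, the cohomology independence of $[\omega_\epsilon]$ from the cutoff $\chi$ is a formal consequence of the $i\partial\bar\partial$ form of the construction: two choices of cutoffs $\chi, \chi'$ yield Eguchi--Hanson potentials $\wtilde{\phi_{EH,\epsilon}}, \wtilde{\phi_{EH,\epsilon}}'$ that agree outside the annulus $\set*{\epsilon^{1/2}\leq \abs*{\cdot}^{g_0}_{\CP^1}\leq 2\epsilon^{1/2}}$ (both equal $\phi_{EH,\epsilon^2}$ inside and $\phi_0$ outside), and their difference extends by zero across all of $\Km_\epsilon$ to a globally smooth $\psi \in C^\infty(\Km_\epsilon)$, whence $\omega_\epsilon - \omega_\epsilon' = i\partial\bar\partial \psi$ is exact. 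I do not expect any serious analytic obstacle in this proposition, since all of the real work was done in Proposition \ref{preglueEH} on the $T^*\CP^1$ model; the single point that requires care is verifying that the gluing identification preserves $g_0$ (so that the $g_0$-norm estimates and $g_0$-covariant derivatives transport without any distortion), but this is built into the Kummer construction since the identification arises from isometries of the underlying flat structures on both $\bbT^4/\bbZ_2$ and $\bbC^2/\bbZ_2$.
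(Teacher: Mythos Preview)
Your proposal is correct and matches the paper's approach exactly: the paper states that the proposition ``immediately follows from construction of $\omega_{\epsilon}$ and Proposition \ref{preglueEH}'' and gives no further argument, so you have in fact supplied more detail than the paper itself (in particular, your explicit $i\partial\bar\partial$-exactness argument for the cutoff-independence of $[\omega_\epsilon]$ spells out what the paper leaves to \textit{Remark} \ref{cutoff function dependence}).
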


					\begin{remark}
						Note that, as previously mentioned, a theorem of Kodaira implies that for any pair $\epsilon_{1}, \epsilon_{2} > 0$ of \textit{positive} real numbers, the resulting $\Km_{\epsilon_{1}}, \Km_{\epsilon_{2}}$ are diffeomorphic, and since we've fixed the underlying integrable complex structure $J$, they are in fact biholomorphic.
					\end{remark}
					
					\begin{remark}\label{GHremark}
						From the construction of $\Km_{\epsilon}$ and $\omega_{\epsilon}$ itself, we have that if $\epsilon \searrow 0$, that as Riemannian manifolds $\paren*{\Km_{\epsilon}, J, g_{\epsilon}, \omega_{\epsilon}} \xrightarrow{GH} \paren*{\bbT^{4}/\bbZ_{2}, J_{0}, g_{0}, \omega_{0}}$ converges in the \textbf{Gromov-Hausdorff topology} (see \cite{CheegerBook}) to the orbifold $\paren*{\bbT^{4}/\bbZ_{2}, J_{0}, g_{0}, \omega_{0}}$ with the singular orbifold K\"{a}hler metric $\omega_{0}$ with conical $\bbC^{2}/\bbZ_{2}$ singularities at each of the 16 singular points. Moreover, this convergence is satisfies \textbf{volume non-collapse}. Note that outside of the set $\cS$ of 16 singular points, we have that $\omega_{0}$ is the flat K\"{a}hler metric on $\bbT^{4}/\bbZ_{2} - \cS$ from before.
						
						Unfortunately, as with the language of $G$-structures and many other fascinating notions, we do not have the space to delve into this absolutely fantastic topic (see \cite{CheegerBook}, \cite{RongBook}).\end{remark}

					\subsection{\textsection \ Weighted (Holder) Spaces \ \textsection}\label{Weighted Setup}

					We now define the weighted Holder spaces that we will be using for our setting, first for $\paren*{T^{*}\bbC P^{1}, J_{\cO(-2)}, g_{EH}, \omega_{EH}}$, then for $\paren*{\bbT^{4}, J_{0}, g_{0}, \omega_{0}}$ and by extension $ \paren*{\bbT^{4} - \cS, J_{0}, g_{0}, \omega_{0}} = \paren*{\bbT^{4}/\bbZ_{2} - \cS, J_{0}, g_{0}, \omega_{0}}$, and lastly for $\paren*{\Km_{\epsilon}, J, g_{\epsilon}, \omega_{\epsilon}}$. As Holder spaces spaces \textit{only depend on the Riemannian metric}, they do \textit{not} require an integrable complex structure nor a K\"{a}hler form (though by 2-out-of-3 both of these uniquely determine the metric), \textit{and hence are defined on the underlying smooth manifolds $\paren*{T^{*}S^{2}, g_{EH, s}}$, $\paren*{\bbT^{4}, g_{0}}$, $\paren*{\bbT^{4} - \cS, g_{0}}$, and $\paren*{\Km_{\epsilon}, g_{\epsilon}}$}.

					\begin{notation}
						Let $\alpha \in (0,1)$, $\delta \in \bbR$, $k \in \bbN_{0}$, \textit{and $\fs$ be a $\paren*{p,q}$-tensor}. For any $j \geq 0$ let $\Abs*{\nabla_{g}^{j} \fs}_{C^{0}_{\delta,g}} \coloneq \Abs*{\rho^{-\delta + j}\nabla_{g}^{j} \fs}_{C^{0}_{g}}$ denote the weighted $C^{0}$-norm corresponding to a Riemannian metric $g$ for $\rho$ a weight function.
						
						We oftentimes call $\delta$ the \textbf{weight} or \textbf{rate} parameter.
					\end{notation}

					Recalling $C_{1} > 0$ from Data \ref{constants C_{1} and c_{chi}(k) from cutoff}, on $T^{*}\bbC P^{1}$ we let $\wtilde{\rho_{0}}: T^{*}\bbC P^{1} \rightarrow [1,\infty)$ be a smooth monotone function satisfying $$\wtilde{\rho_{0}} = \begin{cases}
						\abs*{\cdot}^{g_{0}}_{\CP^{1}} & \text{ on }\set*{2 \leq \abs*{\cdot}^{g_{0}}_{\CP^{1}}}\\
						1 & \text{ on }\set*{\abs*{\cdot}^{g_{0}}_{\CP^{1}} \leq 1}
					\end{cases},\qquad\abs*{\nabla\wtilde{\rho_{0}}}_{g_{0}}\leq C_{1}\text{ on }\set*{1\leq \abs*{\cdot}^{g_{0}}_{\CP^{1}}\leq 2}$$

					On $\paren*{T^{*}\bbC P^{1}, J_{\cO(-2)}, g_{EH, s}, \omega_{EH, s}}$, we now define the $\Abs*{\cdot}_{C^{k,\alpha}_{\delta,g_{EH, s}}\paren*{T^{*}\bbC P^{1}}}$ norm as:
					\begin{align*}
						\Abs*{\fs}_{C^{k,\alpha}_{\delta,g_{EH, s}}\paren*{T^{*}\bbC P^{1}}}&\coloneq \sum_{j \leq k} \Abs*{\nabla_{g_{EH}}^{j} \fs}_{C^{0}_{\delta,g_{EH, s}}\paren*{T^{*}\CP^{1} }} + \sup_{\substack{x \neq y\\ d_{g_{EH, s}}\paren*{x,y} \leq \inj_{g_{EH, s}}}}\min\set*{\wtilde{\rho_{0}}(x),\wtilde{\rho_{0}}(y)}^{-\delta + k + \alpha} \frac{\abs*{\restr{\nabla_{g_{EH, s}}^{k}\fs}{x} - \restr{\nabla_{g_{EH, s}}^{k}\fs}{y}}_{g_{EH, s}}}{d_{g_{EH, s}}\paren*{x , y}^{\alpha}} \\
						&\coloneq \sum_{j \leq k} \Abs*{\wtilde{\rho_{0}}^{-\delta + j}\nabla_{g_{EH, s}}^{j} \fs}_{C^{0}_{g_{EH, s}}\paren*{T^{*}\CP^{1} }} + \sup_{\substack{x \neq y\\ d_{g_{EH, s}}\paren*{x,y} \leq \inj_{g_{EH, s}}}}\min\set*{\wtilde{\rho_{0}}(x),\wtilde{\rho_{0}}(y)}^{-\delta + k + \alpha} \frac{\abs*{\restr{\nabla_{g_{EH, s}}^{k}\fs}{x} - \restr{\nabla_{g_{EH, s}}^{k}\fs}{y}}_{g_{EH, s}}}{d_{g_{EH, s}}\paren*{x , y}^{\alpha}}
					\end{align*} where we interpret the difference in the numerator of the (weighted) Holder seminorm via parallel transport along the unique minimal geodesic connecting $x$ and $y$.

					Note that all the covariant derivatives and parallel transport and higher order tensor (i.e. not functions) norms are all with respect to $g_{EH, s},\omega_{EH, s}$ the Eguchi-Hanson metric on $\paren*{T^{*}\bbC P^{1}, J_{\cO(-2)}}$. We now define the space of weighted Holder $\paren*{p,q}$-tensors $C^{k,\alpha}_{\delta,g_{EH, s}}\paren*{T^{*}\bbC P^{1}}$ as $C^{k,\alpha}_{\delta,g_{EH, s}}\paren*{T^{*}\bbC P^{1}} \coloneq \set*{ \fs \in C^{k,\alpha}_{g_{EH, s}}\paren*{T^{*}\bbC P^{1}} :\Abs*{\fs}_{C^{k,\alpha}_{\delta,g_{EH, s}}\paren*{T^{*}\bbC P^{1}}} < \infty }$ but with the $\Abs*{\cdot}_{C^{k,\alpha}_{\delta,g_{EH, s}}\paren*{T^{*}\bbC P^{1}}}$ norm instead.\footnote{Note that the standard Holder space $C^{k,\alpha}_{g_{EH, s}}\paren*{T^{*}\bbC P^{1}}$ has the covariant derivatives, parallel transport, higher order tensor norms with respect to $g_{EH, s},\omega_{EH, s}$.} It is a Banach space whose elements, i.e. tensors $\fs$ with finite $\Abs*{\cdot}_{C^{k,\alpha}_{\delta,g_{EH, s}}\paren*{T^{*}\bbC P^{1}}}$ norm, say $\Abs*{\fs}_{C^{k,\alpha}_{\delta,g_{EH, s}}\paren*{T^{*}\bbC P^{1}}} \leq C$, decay roughly as $\nabla^{j}_{g_{EH, s}} \fs = O_{g_{EH, s}}\paren*{\wtilde{\rho_{0}}^{\delta - j}}, j\leq k$.

					\begin{remark}
						Note that because our weight function $\wtilde{\rho_{0}}: T^{*}\CP^{1} \rightarrow [1,\infty)$ is smooth nondecreasing and satisfying $\wtilde{\rho_{0}} = \begin{cases}
							\abs*{\cdot}^{g_{0}}_{\CP^{1}} & \text{ on }\set*{2 \leq \abs*{\cdot}^{g_{0}}_{\CP^{1}}}\\
							1 & \text{ on }\set*{\abs*{\cdot}^{g_{0}}_{\CP^{1}} \leq 1}
						\end{cases}$, it is in particular \textit{unbounded from above}, and therefore we have that the space $C^{k,\alpha}_{\delta,g_{EH, s}}\paren*{T^{*}\bbC P^{1}}$ \textbf{\textit{does not contain the constant tensors ! ! !}} This is a crucial property for weighted Holder spaces which allows for us to completely kill the kernel of the weighted Laplacian, since as we'll see, a suitable choice of weight parameter $\delta \in \bbR$ (specifically $\delta \in (-2,0)$) kills the harmonic functions \textit{but leaves only the constants left to deal with}.\end{remark}

					On $\paren*{\bbT^{4}, J_{0}, g_{0}, \omega_{0}}$ and $\paren*{\bbT^{4} - \cS, J_{0}, g_{0}, \omega_{0}}$, we let $\rho_{0}: \bbT^{4} \rightarrow [0,1]$ be smooth, monotone, and satisfy $$\rho_{0} = \begin{cases}
						1 & \text{ on }\set*{\frac{1}{5}\leq \abs*{\cdot}^{g_{0}}_{\cS}}\\
						\abs*{\cdot}^{g_{0}}_{\cS} & \text{ on }\set*{\abs*{\cdot}^{g_{0}}_{\cS} \leq \frac{1}{8}}
					\end{cases},\qquad\abs*{\nabla \rho_{0}}\leq \frac{35C_{1}}{3}\text{ on }\set*{\frac{1}{8}\leq \abs*{\cdot}^{g_{0}}_{\cS}\leq \frac{1}{5}}$$
					
					For $\bbT^{4} - \cS$, we of course just restrict the $\rho_{0}$ above to that subset, whence is equal to $\abs*{\cdot}_{\cS}^{g_{0}}$ on $\set*{0< \abs*{\cdot}^{g_{0}}_{\cS} \leq \frac{1}{8}}$ instead.
					
					Define \begin{align*}
						\Abs*{\fs}_{C^{k,\alpha}_{\delta, g_{0}}\paren*{\bbT^{4}}}&\coloneq \sum_{j \leq k} \Abs*{\nabla_{g_{0}}^{j} \fs}_{C^{0}_{\delta,g_{0}}\paren*{\bbT^{4}}} + \sup_{\substack{x \neq y\\ d_{g_{0}}\paren*{x,y} \leq \inj_{g_{0}}}}\min\set*{\rho_{0}(x),\rho_{0}(y)}^{-\delta + k + \alpha} \frac{\abs*{\restr{\nabla_{g_{0}}^{k}\fs}{x} - \restr{\nabla_{g_{0}}^{k}\fs}{y}}_{g_{0}}}{d_{g_{0}}\paren*{x , y}^{\alpha}} \\
						&\coloneq \sum_{j \leq k} \Abs*{\rho_{0}^{-\delta + j}\nabla_{g_{0}}^{j} \fs}_{C^{0}_{g_{0}}\paren*{\bbT^{4} }} + \sup_{\substack{x \neq y\\ d_{g_{0}}\paren*{x,y} \leq \inj_{g_{0}}}}\min\set*{\rho_{0}(x),\rho_{0}(y)}^{-\delta + k + \alpha} \frac{\abs*{\restr{\nabla_{g_{0}}^{k}\fs}{x} - \restr{\nabla_{g_{0}}^{k}\fs}{y}}_{g_{0}}}{d_{g_{0}}\paren*{x , y}^{\alpha}}
					\end{align*} where we interpret the difference in the numerator of the (weighted) Holder seminorm via parallel transport along the unique minimal geodesic connecting $x$ and $y$, and all covariant derivatives (and parallel transport from) the Levi-Civita connection of $g_{0}, \omega_{0}$ and the higher order tensor (i.e. not functions) norms with respect to $g_{0},\omega_{0}$. We now define the space of weighted Holder $\paren*{p,q}$-tensors $C^{k,\alpha}_{\delta,g_{0}} \paren*{\bbT^{4} }$ as $C^{k,\alpha}_{\delta,g_{0}} \paren*{\bbT^{4} } \coloneq \set*{\fs \in C^{k,\alpha}_{g_{0}} \paren*{\bbT^{4} } : \Abs*{s}_{C^{k,\alpha}_{\delta,g_{0}} \paren*{\bbT^{4} }} < \infty}$ but with the $\Abs*{\cdot}_{C^{k,\alpha}_{\delta,g_{0}} \paren*{\bbT^{4}}}$ norm instead.\footnote{Note that the standard Holder space $C^{k,\alpha}_{g_{0}}\paren*{\bbT^{4} }$ again has the covariant derivatives, parallel transport, higher order tensor norms with respect to $g_{0},\omega_{0}$.} Verbatim as before, tensors with finite $\Abs*{\cdot}_{C^{k,\alpha}_{\delta,g_{0}} \paren*{\bbT^{4} }}$ norm decay roughly as $\nabla^{j}_{g_{0}}\fs = O_{g_{0}}\paren*{\rho_{0}^{\delta - j}},j\leq k$.

					\begin{remark}
						In contrast to $T^{*}\CP^{1}$, note that because the weight function $\rho_{0}$ has \textit{bounded} sup norm (due \textit{in part} because it is smooth and the underlying space is (a subset of a space which is) \textit{compact}), we have that $C^{k,\alpha}_{\delta,g_{0}} \paren*{\bbT^{4} }$ (specialized to the case of functions/$\paren*{0,0}$-tensors) \textbf{\textit{contains the constant functions ! ! !}} This is important because this produces a 1 dimensional subspace in the kernel of any differential operator on $\bbT^{4} $ with domain any $C^{k,\alpha}_{\delta,g_{0}} \paren*{\bbT^{4} }$, in particular the weighted Laplacian. Hence, upon choosing a suitable weight $\delta \in \bbR$ (again, specifically $\delta \in (2-\dim_{\bbR}M,0)$) to kill the harmonic functions, we \textbf{still have to deal with the constant functions} in order to completely kill the kernel ! ! !
						
						Note also that this entire discussion \& definitions above hold verbatim for $\paren*{\bbT^{4} - \cS, J_{0}, g_{0}, \omega_{0}}$ as well.
					\end{remark}

					Now we define the weighted Holder spaces for $\paren*{\Km_{\epsilon}, J, g_{\epsilon}, \omega_{\epsilon}}$. For that, we need $\epsilon > 0$ to now satisfy the following

					\begin{data}\label{constant epsilon 1-16}
						Let our sufficiently small $\epsilon > 0$ be from before (specifically Propositions \ref{preglueEH} and \ref{approximatemetricproperties} and Data \ref{constant epsilon patching and WLOG lattice}), but now even smaller to also ensure $2\epsilon^{\frac{1}{2}} < \frac{3}{25} < \frac{1}{8}$ and hence automatically (from a simple inequality bash) $\epsilon^{2} < \epsilon^{\frac{3}{2}} < \epsilon < 2\epsilon < \epsilon^{\frac{1}{2}} < \frac{3}{50} < \frac{1}{16}$ (if it wasn't already small enough to also ensure this).
					\end{data}

					Let $G_{\epsilon}: [0,\infty)\rightarrow\sqparen*{\epsilon,1}$ be a smooth monotone function satisfying $$G_{\epsilon}(x) = \begin{cases}
						1 & \frac{1}{5}\leq x\\
						x & 2\epsilon\leq x \leq \frac{1}{8}\\
						\epsilon & x \leq \epsilon
					\end{cases},\qquad\abs*{\nabla G_{\epsilon}}\leq C_{1}\text{ on }\set*{\epsilon\leq x\leq 2\epsilon},\qquad\abs*{\nabla G_{\epsilon}}\leq \frac{35C_{1}}{3}\text{ on }\set*{\frac{1}{8}\leq x\leq \frac{1}{5}}$$
					
					Note that $\set*{2\epsilon \leq x \leq \frac{1}{8}} \neq \emptyset$ because our $\epsilon > 0$ has been chosen up to this point to be small enough so that $\epsilon < \frac{1}{16}$ from Data \ref{constant epsilon 1-16}! Now define $\rho_{\epsilon} \coloneq G_{\epsilon} \circ \abs*{\cdot}^{g_{0}}_{\pi^{-1}\paren*{\cS}}$. Hence we have that $\rho_{\epsilon}:\Km_{\epsilon} \rightarrow [\epsilon,1]$ is smooth, monotone, and satisfies $$\rho_{\epsilon} = \begin{cases}
						1 & \text{ on }\set*{\frac{1}{5}\leq \abs*{\cdot}^{g_{0}}_{\pi^{-1}\paren*{\cS}}}\\
						\abs*{\cdot}^{g_{0}}_{\pi^{-1}\paren*{\cS}} & \text{ on }\set*{2\epsilon\leq \abs*{\cdot}^{g_{0}}_{\pi^{-1}\paren*{\cS}} \leq \frac{1}{8}}\\
						\epsilon & \text{ on }\set*{\abs*{\cdot}^{g_{0}}_{\pi^{-1}\paren*{\cS}} \leq \epsilon}
					\end{cases},\quad\abs*{\nabla \rho_{\epsilon}}_{g_{0}}\leq C_{1}\text{ on }\set*{\epsilon\leq \abs*{\cdot}^{g_{0}}_{\pi^{-1}\paren*{\cS}}\leq 2\epsilon},\quad\abs*{\nabla \rho_{\epsilon}}_{g_{0}}\leq \frac{35C_{1}}{3}\text{ on }\set*{\frac{1}{8}\leq \abs*{\cdot}^{g_{0}}_{\pi^{-1}\paren*{\cS}}\leq \frac{1}{5}}$$

					Define \begin{align*}
						\Abs*{\fs}_{C^{k,\alpha}_{\delta, g_{\epsilon}}\paren*{\Km_{\epsilon}}}&\coloneq \sum_{j \leq k} \Abs*{\nabla_{g_{\epsilon}}^{j} \fs}_{C^{0}_{\delta,g_{\epsilon}}\paren*{\Km_{\epsilon} }} + \sup_{\substack{x \neq y\\ d_{g_{\epsilon}}\paren*{x,y} \leq \inj_{g_{\epsilon}}}}\min\set*{\rho_{\epsilon}(x),\rho_{\epsilon}(y)}^{-\delta + k + \alpha} \frac{\abs*{\restr{\nabla_{g_{\epsilon}}^{k}\fs}{x} - \restr{\nabla_{g_{\epsilon}}^{k}\fs}{y}}_{g_{\epsilon}}}{d_{g_{\epsilon}}\paren*{x , y}^{\alpha}} \\
						&\coloneq \sum_{j \leq k} \Abs*{\rho_{\epsilon}^{-\delta + j}\nabla_{g_{\epsilon}}^{j} \fs}_{C^{0}_{g_{\epsilon}}\paren*{\Km_{\epsilon} }} + \sup_{\substack{x \neq y\\ d_{g_{\epsilon}}\paren*{x,y} \leq \inj_{g_{\epsilon}}}}\min\set*{\rho_{\epsilon}(x),\rho_{\epsilon}(y)}^{-\delta + k + \alpha} \frac{\abs*{\restr{\nabla_{g_{\epsilon}}^{k}\fs}{x} - \restr{\nabla_{g_{\epsilon}}^{k}\fs}{y}}_{g_{\epsilon}}}{d_{g_{\epsilon}}\paren*{x , y}^{\alpha}}
					\end{align*} where we interpret the difference in the numerator of the (weighted) Holder seminorm via parallel transport along the unique minimal geodesic connecting $x$ and $y$, and all covariant derivatives (and parallel transport from) the Levi-Civita connection of $g_{\epsilon}, \omega_{\epsilon}$ and the higher order tensor (i.e. not functions) norms with respect to $g_{\epsilon},\omega_{\epsilon}$. We now define the space of weighted Holder $\paren*{p,q}$-tensors $C^{k,\alpha}_{\delta,g_{\epsilon}} \paren*{\Km_{\epsilon}}$ as $C^{k,\alpha}_{\delta,g_{\epsilon}} \paren*{\Km_{\epsilon}} \coloneq \set*{\fs \in C^{k,\alpha}_{g_{\epsilon}} \paren*{\Km_{\epsilon}} : \Abs*{s}_{C^{k,\alpha}_{\delta,g_{\epsilon}} \paren*{\Km_{\epsilon}}} < \infty}$ but with the $\Abs*{\cdot}_{C^{k,\alpha}_{\delta,g_{\epsilon}} \paren*{\Km_{\epsilon}}}$ norm instead.\footnote{Note that the standard Holder space $C^{k,\alpha}_{g_{\epsilon}}\paren*{\Km_{\epsilon}}$ again has the covariant derivatives, parallel transport, higher order tensor norms with respect to $g_{\epsilon},\omega_{\epsilon}$.}

					With this characterization, we have that a tensor $\fs \in C^{k,\alpha}_{\delta,g_{\epsilon}} \paren*{\Km_{\epsilon}}$ with $\Abs*{\fs}_{C^{k,\alpha}_{\delta, g_{\epsilon}} \paren*{\Km_{\epsilon}}} \leq C$ decays roughly as $\nabla_{g_{\epsilon}}^{j}\fs = O_{g_{\epsilon}}\paren*{\rho_{\epsilon}^{\delta - j}}, j \leq k$ around each exceptional divisor.
					
					Moreover, we have the following proposition for (tensor) products: \begin{prop}\label{weighted products}
						Let $\delta < l$ for some $l > 0$. Then $\exists C_{2} > 0$, \textit{independent of $\epsilon > 0$}, such that $$\Abs*{f\otimes g}_{C^{0,\alpha}_{\delta-l, g_{\epsilon}}\paren*{\Km_{\epsilon}}} \leq C_{2} \epsilon^{\delta - l}\Abs*{f}_{C^{0,\alpha}_{\delta-l, g_{\epsilon}}\paren*{\Km_{\epsilon}}} \Abs*{g}_{C^{0,\alpha}_{\delta-l, g_{\epsilon}}\paren*{\Km_{\epsilon}}}$$
					\end{prop}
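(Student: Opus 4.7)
\textit{Proof proposal.} The key observation is that on $\Km_\epsilon$ the weight function is globally bounded below by $\rho_\epsilon \geq \epsilon$, and that $\delta - l < 0$ implies $\rho_\epsilon^{\delta - l} \leq \epsilon^{\delta - l}$. This is precisely the ``cost'' we pay for under-weighting the product, and it is what produces the factor $\epsilon^{\delta - l}$. The plan is to handle the weighted $C^0$ piece and the weighted H\"older seminorm separately, using the pointwise tensor-norm inequality $\abs*{f\otimes g}_{g_\epsilon} \leq \abs*{f}_{g_\epsilon}\abs*{g}_{g_\epsilon}$ throughout.

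For the $C^0$ piece, from the definition of the weighted norm we have the pointwise bounds $\abs*{f}_{g_\epsilon} \leq \rho_\epsilon^{\delta - l}\Abs*{f}_{C^{0,\alpha}_{\delta-l,g_\epsilon}}$ and similarly for $g$. Multiplying, $\abs*{f\otimes g}_{g_\epsilon} \leq \rho_\epsilon^{2(\delta - l)}\Abs*{f}\Abs*{g}$, so
\[
\rho_\epsilon^{l - \delta}\abs*{f\otimes g}_{g_\epsilon} \leq \rho_\epsilon^{\delta - l}\Abs*{f}\Abs*{g} \leq \epsilon^{\delta - l}\Abs*{f}\Abs*{g},
\]
where the last step uses $\delta - l < 0$ and $\rho_\epsilon \geq \epsilon$.

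For the H\"older seminorm, the standard trick is the split
\[
f(x)\otimes g(x) - f(y)\otimes g(y) = f(x)\otimes\bigl(g(x) - g(y)\bigr) + \bigl(f(x) - f(y)\bigr)\otimes g(y),
\]
where the differences of tensors are interpreted via parallel transport along the minimal geodesic from $x$ to $y$ (this is compatible with the tensor product since Levi-Civita parallel transport is an algebra homomorphism on the tensor algebra). On the first summand, estimate $\abs*{f(x)}_{g_\epsilon} \leq \rho_\epsilon(x)^{\delta - l}\Abs*{f}$ pointwise and use the weighted H\"older seminorm on $g$ to obtain $\abs*{g(x) - g(y)}_{g_\epsilon} \leq d_{g_\epsilon}(x,y)^\alpha \min\{\rho_\epsilon(x),\rho_\epsilon(y)\}^{\delta - l - \alpha}\Abs*{g}$. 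Multiplying by $\min\{\rho_\epsilon(x),\rho_\epsilon(y)\}^{l - \delta + \alpha}$ and dividing by $d_{g_\epsilon}(x,y)^\alpha$, the exponents on $\min\{\rho_\epsilon(x),\rho_\epsilon(y)\}$ cancel to zero and what remains is $\rho_\epsilon(x)^{\delta - l}\Abs*{f}\Abs*{g} \leq \epsilon^{\delta - l}\Abs*{f}\Abs*{g}$. The second summand is handled symmetrically.

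Combining both pieces yields the claimed inequality with $C_2$ a universal constant (depending only on the tensor-product norm inequality), independent of $\epsilon$. No step is genuinely hard: the only thing to be careful about is that parallel transport preserves the tensor product decomposition and that the minimum weight $\min\{\rho_\epsilon(x),\rho_\epsilon(y)\}$ appears with matched exponents so that it drops out, leaving a clean $\rho_\epsilon^{\delta - l}$ which is bounded by $\epsilon^{\delta - l}$ thanks to the lower bound $\rho_\epsilon \geq \epsilon$ on $\Km_\epsilon$.
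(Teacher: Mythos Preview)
Your proof is correct and follows essentially the same approach as the paper: both rely on the identity $\rho_\epsilon^{-\delta+l} = \rho_\epsilon^{\delta-l}\bigl(\rho_\epsilon^{-\delta+l}\bigr)^2$ together with the global lower bound $\rho_\epsilon \geq \epsilon$ and $\delta - l < 0$ to extract the factor $\epsilon^{\delta-l}$. You in fact spell out the H\"older seminorm computation (via the standard telescoping split of $f(x)\otimes g(x) - f(y)\otimes g(y)$) more explicitly than the paper, which simply invokes ``the definition of the weighted Holder norms.''
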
 \begin{proof}
						First we have that $$\Abs*{f\otimes g}_{C^{0,\alpha}_{\delta-l, g_{\epsilon}}\paren*{\Km_{\epsilon}}} \leq C_{2} \Abs*{\rho_{\epsilon}^{\delta - l}}_{C^{0}}\Abs*{f}_{C^{0,\alpha}_{\delta-l, g_{\epsilon}}\paren*{\Km_{\epsilon}}} \Abs*{g}_{C^{0,\alpha}_{\delta-l, g_{\epsilon}}\paren*{\Km_{\epsilon}}}$$ by definition of the weighted Holder norms on $\paren*{\Km_{\epsilon}, g_{\epsilon}}$, the fact that $\rho_{\epsilon}^{-\delta + l} = \rho_{\epsilon}^{\delta - l} \paren*{\rho_{\epsilon}^{-\delta + l}}^{2}$, the definition of the $C^{0}$-norm, and the definition of the induced metric on tensors. Whence, because by definition of the distance function $\rho_{\epsilon}$, we have that $\rho_{\epsilon} \geq \epsilon$ and $\delta - l<0$, whence the result follows immediately.\end{proof}
					
					\begin{remark}
						Lastly, just as for $\bbT^{4} - \cS$, we have that due to $\rho_{0}$ having \textit{bounded} sup norm (due \textit{in part} because it is smooth and $\Km_{\epsilon}$ is \textit{compact}), we have that $C^{k,\alpha}_{\delta,g_{\epsilon}} \paren*{\Km_{\epsilon}}$ (specialized to the case of functions/$\paren*{0,0}$-tensors) \textbf{\textit{contains the constant functions}}, whence produces a 1 dimensional subspace in the kernel of any differential operator on $\Km_{\epsilon}$ with domain any $C^{k,\alpha}_{\delta,g_{\epsilon}} \paren*{\Km_{\epsilon}}$, in particular the weighted Laplacian. Hence, upon choosing a suitable weight $\delta \in \bbR$ (again, specifically $\delta \in (2-\dim_{\bbR}M,0)$) to kill the harmonic functions, we \textbf{still have to deal with the constant functions} in order to completely kill the kernel ! ! !	
					\end{remark}

					\subsection{\textsection \ Nonlinear Setup \ \textsection}\label{Nonlinear Setup}
					
					Let's now bring everything together and find our $\wtilde{\omega_{\epsilon}} \in [\omega_{\epsilon}]$ K\"{a}hler on $\paren*{\Km_{\epsilon}, J, g_{\epsilon}, \omega_{\epsilon}}$ whose associated $\wtilde{g_{\epsilon}}$ is Ricci-flat.
					
					\begin{notation}
						As we will be working with a \textit{fixed} $\epsilon > 0$, denote the sought after Ricci-flat K\"{a}hler form $\wtilde{\omega_{\epsilon}} \in [\omega_{\epsilon}]$ as $\omega$, \textit{keeping in mind that as this will end up being the unique Ricci-flat K\"{a}hler form in $[\omega_{\epsilon}]$, that \textbf{it depends on the gluing parameter $\epsilon>0$} as the starting K\"{a}hler class $[\omega_{\epsilon}]$ depends on $\epsilon>0$}.
					\end{notation}

					By Proposition \ref{K\"{a}hlereinstein} from Section \ref{Preliminaries}, we now want to solve for $f \in C^{\infty}\paren*{\Km_{\epsilon}}$ a smooth function/$\paren*{0,0}$-tensor defining a K\"{a}hler form $\omega \coloneq \omega_{\epsilon} + i\del \delbar f \in [\omega_{\epsilon}]$ in the following \textbf{complex Monge-Ampere equation with singular limit}\footnote{The singular limit comes from when $\epsilon \searrow 0$, which produces the flat \ka metric on the singular orbifold $\bbR^{4}/\bbZ_{2}$, see \textit{Remark} \ref{GHremark}.}:
					
					$$\frac{\omega^{2}}{\omega_{\epsilon}^{2}} = \frac{\paren*{\omega_{\epsilon} + i\del \delbar f}^{2}}{\omega_{\epsilon}^{2}} = e^{\phi_{\epsilon}}$$ where $\phi_{\epsilon}$ is the Ricci potential $\Ric_{\omega_{\epsilon}} = i\del \delbar \phi_{\epsilon}$ satisfying the volume normalization $\int_{\Km_{\epsilon}} e^{\phi_{\epsilon}} \omega_{\epsilon}^{2} = \int_{\Km_{\epsilon}} \omega_{\epsilon}^{2}$.	Because we have our nowhere vanishing holomorphic $(2,0)$-form $\Omega$ on $\paren*{\Km_{\epsilon}, J}$ (which determines $J$), by Proposition \ref{calabiyau} this is equivalent to solving $$\frac{\omega^{2}}{\Omega \wedge \overline{\Omega}}  = \frac{\paren*{\omega_{\epsilon} + i\del \delbar f}^{2}}{\Omega \wedge \overline{\Omega}} = C$$ for some constant $C > 0$. In fact, we therefore have that $\phi_{\epsilon} = \log\paren*{C \frac{\Omega \wedge \overline{\Omega}}{\omega_{\epsilon}^{2}}}$, and since $C > 0$ is a positive real constant, WLOG we may set $C = 1$ via multiplying the nowhere vanishing holomorphic $(2,0)$-form by $\frac{1}{C^{\frac{1}{2}}} $, i.e. replacing $\Omega \mapsto \frac{1}{C^{\frac{1}{2}}} \Omega$ (i.e. changing the trivialization of $K_{\Km_{\epsilon}}$), and thus $\phi_{\epsilon} = \log\paren*{\frac{\Omega \wedge \overline{\Omega}}{\omega_{\epsilon}^{2}}}$.

					Now let $\Delta_{g}$ be the standard Hodge Laplacian $d d^{*} + d^{*} d$ acting on 0-forms (hence $d^{*} d$) with Hodge star coming from the Riemannian metric $g$ which itself is from a given K\"{a}hler form and integrable complex structure $\omega, J$ via 2-out-of-3. Note that this has \textbf{nonnegative} spectrum. Now since $\Delta_{g} f = -2n \frac{i\del \delbar f \wedge \omega^{n-1}}{\omega^{n}}$ on any $\dim_{\bbC} = n$ dimensional K\"{a}hler manifold $\paren*{M^{2n},J,g,\omega}$, we now setup our nonlinear problem as follows:

					\textit{Specializing to functions/$\paren*{0,0}$-tensors}, define $C^{k,\alpha}_{\delta,g_{\epsilon}} \paren*{\Km_{\epsilon}}^{0} \coloneq \set*{f \in C^{k,\alpha}_{\delta,g_{\epsilon}} \paren*{\Km_{\epsilon}} : \int_{\Km_{\epsilon}} f \omega_{\epsilon}^{2} = 0} \subset C^{k,\alpha}_{\delta,g_{\epsilon}} \paren*{\Km_{\epsilon}}$. Because $\rho_{\epsilon}$ has \textit{bounded} sup norm, we have that $C^{k,\alpha}_{\delta,g_{\epsilon}} \paren*{\Km_{\epsilon}}^{0} \subset C^{k,\alpha}_{\delta,g_{\epsilon}} \paren*{\Km_{\epsilon}}$ is a \textit{closed} Banach subspace, inheriting the same Banach norm $\Abs*{\cdot}_{C^{k,\alpha}_{\delta,g_{\epsilon}} \paren*{\Km_{\epsilon}}}$.
					
					Our nonlinear problem is therefore:\begin{align*}
						\cF_{\epsilon} :  C^{2,\alpha}_{\delta, g_{\epsilon}} \paren*{\Km_{\epsilon}}^{0} &\rightarrow C^{0,\alpha}_{\delta - 2, g_{\epsilon}} \paren*{\Km_{\epsilon}}^{0} \\
						f &\mapsto \frac{\paren*{\omega_{\epsilon} + i\del \delbar f}^{2}}{\omega_{\epsilon}^{2}} - e^{\phi_{\epsilon}}		
					\end{align*}
					
					where we solve \begin{align*}
						\cF_{\epsilon}\paren*{f} &\coloneq\frac{\paren*{\omega_{\epsilon} + i\del \delbar f}^{2}}{\omega_{\epsilon}^{2}} - e^{\phi_{\epsilon}}		 \\
						&= \underbrace{\paren*{1 - e^{\phi_{\epsilon}}}}_{= \cF_{\epsilon}\paren*{0}} \underbrace{- \frac{1}{2}\Delta_{g_{\epsilon}}f}_{= D_{0}\cF_{\epsilon}\paren*{f}} + \underbrace{\frac{i\del \delbar f \wedge i\del \delbar f }{\omega_{\epsilon}^{2}}}_{\text{nonlinearity}} \\
						&= 0
					\end{align*}
					
					\begin{remark}
						Note that since we are on a \textit{compact} manifold, for the domain and codomain weighted Holder spaces $C^{2,\alpha}_{\delta, g_{\epsilon}} \paren*{\Km_{\epsilon}}, C^{0,\alpha}_{\delta - 2, g_{\epsilon}} \paren*{\Km_{\epsilon}}$ in the nonlinear problem above, by the Fredholm alternative for the Laplacian, \textbf{we have to restrict to the closed subspace of functions with integral zero}, i.e. $\int_{\Km_{\epsilon}} f \omega_{\epsilon}^{2} = 0$, in order to guarantee the solvability of Laplace's equation $\Delta_{g_{\epsilon}} f = 0$ (or else, as remarked before, we have a nontrivial 1 dimensional subspace of the kernel consisting of the \textit{constant} functions, hence guaranteeing a nontrivial kernel).
					\end{remark}

					\begin{remark}
						Likewise, notice also that by restricting to functions with integral zero, we \textit{\textbf{automatically get uniqueness}} of the resulting K\"{a}hler form $\omega \in [\omega_{\epsilon}]$ yielding a Ricci-flat K\"{a}hler metric (as explained by Propositions \ref{K\"{a}hlereinstein} and \ref{calabiyau})!
					\end{remark}
					
					Our method of attack to prove the existence of $f$ solving $\cF_{\epsilon}\paren*{f} = 0$ above is via the \textbf{implicit function theorem}, itself derived from the \textbf{Banach fixed point theorem}. For this to work, we need to invert the linearization $D_{0}\cF_{\epsilon}$ \textit{uniformly in $\epsilon > 0$}, have a Lipschitz bound on the nonlinearity, and have some good bounds on $\cF_{\epsilon}\paren*{0}$. We tackle the linearization in the next sections, and we start the next section with a preliminary step for the smallness of $\cF_{\epsilon}\paren*{0}$.

					\begin{prop}\label{presmall}Recall the Ricci potential $\phi_{\epsilon} = \log\paren*{\frac{\Omega \wedge \overline{\Omega}}{\omega_{\epsilon}^{2}}}$ which also satisfies the volume normalization $\int_{\Km_{\epsilon}} e^{\phi_{\epsilon}} \omega_{\epsilon}^{2} = \int_{\Km_{\epsilon}} \omega_{\epsilon}^{2}$. 
						Then we have that \textbf{for $\epsilon > 0$ from Data \ref{constant epsilon 1-16} sufficiently small as in Proposition \ref{approximatemetricproperties}} and $c_{1}(k)$ from Propositions \ref{preglueEH} and \ref{approximatemetricproperties}: \begin{itemize}
							\item $\supp \phi_{\epsilon} = \set*{\epsilon^{\frac{1}{2}} \leq \abs*{\cdot}^{g_{0}}_{\pi^{-1}\paren*{\cS}} \leq 2\epsilon^{\frac{1}{2}}}$
							\item $\abs*{\nabla_{g_{0}}^{k}\phi_{\epsilon}}_{g_{0}} \leq c_{1}(k+2) \epsilon^{2 - \frac{k}{2}}$ on the annulus $\set*{\epsilon^{\frac{1}{2}} \leq \abs*{\cdot}^{g_{0}}_{\pi^{-1}\paren*{\cS}} \leq 2\epsilon^{\frac{1}{2}}}$
						\end{itemize}
					\end{prop}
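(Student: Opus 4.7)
The plan is to exploit that $\omega_{\epsilon}$ agrees with the Ricci-flat models $\omega_{EH,\epsilon^{2}}$ and $\omega_{0}$ on the inner region $\set*{\abs*{\cdot}^{g_{0}}_{\pi^{-1}\paren*{\cS}}\leq \epsilon^{\frac{1}{2}}}$ and the outer region $\set*{2\epsilon^{\frac{1}{2}}\leq \abs*{\cdot}^{g_{0}}_{\pi^{-1}\paren*{\cS}}}$ respectively (Proposition \ref{approximatemetricproperties}), so that $\phi_{\epsilon}$ vanishes there for purely geometric reasons, and then to control $\phi_{\epsilon}$ on the transition annulus using the same proposition's approximation estimates together with a Taylor expansion of $\log$.

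For the support claim: on the inner region $\omega_{\epsilon}=\omega_{EH,\epsilon^{2}}$ is Ricci-flat by Proposition \ref{EHproperties}, while on the outer region $\omega_{\epsilon}=\omega_{0}$ is flat. In each case, Proposition \ref{K\"{a}hlerprop} (second bullet, comparing $\omega_{\epsilon}$ with $\Omega\wedge\overline{\Omega}$ via $\Ric_{\omega_{\epsilon}}=0$) implies that $\omega_{\epsilon}^{2}/\paren*{\Omega\wedge\overline{\Omega}}$ is locally constant. Because $\Omega$ is constructed in the gluing of Section \ref{The Kummer Construction} from a single globally consistent lift of $dz^{1}\wedge dz^{2}$ through both the crepant resolution $T^{*}\CP^{1}\onto \bbC^{2}/\bbZ_{2}$ and the orbifold projection $\bbT^{4}/\bbZ_{2}$, the two locally constant values coincide and equal the universal Euclidean ratio $\omega_{0}^{2}/\paren*{dz^{1}\wedge dz^{2}\wedge d\bar{z}^{1}\wedge d\bar{z}^{2}}$. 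The rescaling $\Omega\mapsto C^{-\frac{1}{2}}\Omega$ from Section \ref{Nonlinear Setup} normalizes this common constant to $1$, yielding $\phi_{\epsilon}\equiv 0$ on both regions and hence $\supp\phi_{\epsilon}\subseteq\set*{\epsilon^{\frac{1}{2}} \leq \abs*{\cdot}^{g_{0}}_{\pi^{-1}\paren*{\cS}} \leq 2\epsilon^{\frac{1}{2}}}$.

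On the annulus, the same normalization still gives $\omega_{0}^{2}/\paren*{\Omega\wedge\overline{\Omega}}\equiv 1$ (since $\omega_{0}$ and $\Omega$ extend smoothly as flat data from the outer region into the annulus), so
$$\phi_{\epsilon}\;=\;-\log\paren*{\frac{\omega_{\epsilon}^{2}}{\omega_{0}^{2}}}\qquad \text{there.}$$
Writing $\omega_{\epsilon}^{2}/\omega_{0}^{2}=1+F_{\epsilon}$ with $F_{\epsilon}=\paren*{\omega_{\epsilon}-\omega_{0}}\wedge\paren*{\omega_{\epsilon}+\omega_{0}}/\omega_{0}^{2}$, the estimate $\abs*{\nabla_{g_{0}}^{k}\paren*{\omega_{\epsilon}-\omega_{0}}}_{g_{0}}\leq c_{1}(k+2)\epsilon^{2-\frac{k}{2}}$ from Proposition \ref{approximatemetricproperties}, combined with the routine uniform boundedness of $\omega_{\epsilon}+\omega_{0}$ and its $g_{0}$-covariant derivatives (itself an immediate consequence of the same approximation estimate), gives $\abs*{\nabla_{g_{0}}^{k}F_{\epsilon}}_{g_{0}}\lesssim \epsilon^{2-\frac{k}{2}}$ on the annulus via Leibniz. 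For $\epsilon>0$ small enough that $\abs*{F_{\epsilon}}<\frac{1}{2}$, Taylor-expanding $-\log\paren*{1+F_{\epsilon}}$ and applying the chain/Leibniz rule to higher derivatives yields $\abs*{\nabla_{g_{0}}^{k}\phi_{\epsilon}}_{g_{0}}\leq c_{1}(k+2)\epsilon^{2-\frac{k}{2}}$ (absorbing universal combinatorial factors into the constants).

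The main conceptual step — and essentially the only nontrivial one — is verifying that the rescaling of $\Omega$ normalizes the inner and outer Ricci-flat ratios \emph{simultaneously}; this is built into the construction because $\Omega$ descends from the single holomorphic volume form $dz^{1}\wedge dz^{2}$ on $\bbC^{2}$ through compatible identifications on every piece of $\Km_{\epsilon}$. Once this is granted, the annulus bound is a direct consequence of Proposition \ref{approximatemetricproperties} and the smoothness of $\log$ near $1$.
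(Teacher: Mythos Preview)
Your proof is correct and follows essentially the same route as the paper: use Ricci-flatness of $\omega_{EH,\epsilon^{2}}$ and $\omega_{0}$ on the inner and outer regions to force $\phi_{\epsilon}\equiv 0$ there, then on the annulus invoke $\omega_{0}^{2}=\Omega\wedge\overline{\Omega}$ to rewrite $\phi_{\epsilon}=-\log\paren*{\omega_{\epsilon}^{2}/\omega_{0}^{2}}$ and Taylor-expand using the $\abs*{\nabla_{g_{0}}^{k}\paren*{\omega_{\epsilon}-\omega_{0}}}_{g_{0}}\leq c_{1}(k+2)\epsilon^{2-\frac{k}{2}}$ bound from Proposition \ref{approximatemetricproperties}. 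Your treatment is in fact slightly more explicit than the paper's in two places---you spell out why the single global $\Omega$ forces the inner and outer Ricci-flat ratios to be normalized simultaneously, and you use the clean factorization $\omega_{\epsilon}^{2}-\omega_{0}^{2}=\paren*{\omega_{\epsilon}-\omega_{0}}\wedge\paren*{\omega_{\epsilon}+\omega_{0}}$ rather than the paper's informal $\omega_{\epsilon}/\omega_{0}=1+\fO/\omega_{0}$---but the substance is identical.
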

					
					\begin{proof}
						The support follows immediately because $\omega_{\epsilon}$ is the (scaled) Eguchi-Hanson on $\set*{\abs*{\cdot}^{g_{0}}_{\pi^{-1}\paren*{\cS}} \leq \epsilon^{\frac{1}{2}}}$ and is flat on $\set*{2\epsilon^{\frac{1}{2}} \leq \abs*{\cdot}^{g_{0}}_{\pi^{-1}\paren*{\cS}}}$, hence the Ricci potential $\phi_{\epsilon}$ of $\omega_{\epsilon}$ must vanish on those regions, and the support is the \textit{closure} of the complement of the vanishing set.
						
						Secondly, since $\omega_{0}$ is defined on the complement of the 16 exceptional divisors, it clearly is Ricci-flat and satisfies $\omega_{0}^{2} = \Omega \wedge \overline{\Omega}$.
						
						Hence \begin{align*}
							\abs*{\phi_{\epsilon}} &= \abs*{\log\paren*{\frac{\Omega \wedge \overline{\Omega}}{\omega_{\epsilon}^{2}}}} \\
							&= \abs*{\log\paren*{\frac{\omega_{0}^{2}}{\omega_{\epsilon}^{2}}}} = \abs*{2\log\paren*{\frac{1}{1 + \frac{\fO}{\omega_{0}}}}}\\
							&= 2\abs*{\log\paren*{1 + \frac{\fO}{\omega_{0}}}}\\
							&\leq  c_{1}(2) \epsilon^{2 }
						\end{align*} since $\fO \coloneq \omega_{\epsilon} - \omega_{0}$ and on the annulus $\set*{\epsilon^{\frac{1}{2}} \leq \abs*{\cdot}^{g_{0}}_{\pi^{-1}\paren*{\cS}} \leq 2\epsilon^{\frac{1}{2}}}$ we have that $\abs*{\fO} \leq c_{1}(2) \epsilon^{2}$ when $\epsilon>0$ is sufficiently small (Proposition \ref{preglueEH}), and the Taylor expansion of $\log\paren*{1 + x}$ finishes it off. The estimate for higher derivatives follows immediately from $\abs*{\nabla^{k}_{g_{0}}\fO}_{g_{0}} \leq c_{1}(k+2) \epsilon^{2 - \frac{k}{2}}$ and the exact same argument.
					\end{proof}

					\begin{remark}\label{ricciestimates}
						Therefore since the Ricci potential $\phi_{\epsilon}$ satisfies $\Ric_{\omega_{\epsilon}} = i\del \delbar \phi_{\epsilon}$, we have that $$\abs*{\nabla_{g_{0}}^{k}\Ric_{\omega_{\epsilon}}}_{g_{0}} \leq c_{1}(k+4) \epsilon^{1 - \frac{k}{2}}\qquad\text{and}\qquad\abs*{\nabla_{g_{0}}^{k}\Ric_{g_{\epsilon}}}_{g_{0}} \leq c_{3}(k) \epsilon^{1 - \frac{k}{2}}$$ holds on the annulus $\set*{\epsilon^{\frac{1}{2}} \leq \abs*{\cdot}^{g_{0}}_{\pi^{-1}\paren*{\cS}} \leq 2\epsilon^{\frac{1}{2}}}$ for some positive constants $c_{3}(k) >0,k \in \bbN_{0}$, and $\Ric_{\omega_{\epsilon}}, \Ric_{g_{\epsilon}}$ vanishes outside $\set*{\epsilon^{\frac{1}{2}} \leq \abs*{\cdot}^{g_{0}}_{\pi^{-1}\paren*{\cS}} \leq 2\epsilon^{\frac{1}{2}}}$. 
						
						In fact, using local ON frames WRT $g_{0}$ we see that $$\abs*{\Ric_{g_{\epsilon}}}_{g_{\epsilon}} \leq c_{3}(0) \epsilon$$ holds on the annulus $\set*{\epsilon^{\frac{1}{2}} \leq \abs*{\cdot}^{g_{0}}_{\pi^{-1}\paren*{\cS}} \leq 2\epsilon^{\frac{1}{2}}}$, whence we easily get that $$-3\Lambda g_{\epsilon}\leq \Ric_{g_{\epsilon}} \leq 3 \Lambda g_{\epsilon}$$ holds \textit{everywhere} on $\Km_{\epsilon}$ for $\Lambda \coloneq \frac{c_{3}(0)\epsilon}{3}$, aka $\paren*{\Km_{\epsilon}, J, g_{\epsilon}, \omega_{\epsilon}}$ has \textbf{uniformly bounded Ricci curvature}.
					\end{remark}

					\subsection{\textsection \ Blow-up analysis \ \textsection}\label{Blow-up Analysis}

					We now want to invert, for suitable values of $\delta \in \bbR$ \textbf{and for small enough $\epsilon > 0$ as from Data \ref{constant epsilon 1-16}}, the linearization $D_{0}\cF_{\epsilon} : C^{2,\alpha}_{\delta, g_{\epsilon}} \paren*{\Km_{\epsilon}}^{0} \rightarrow C^{0,\alpha}_{\delta - 2, g_{\epsilon}} \paren*{\Km_{\epsilon}}^{0}$ of $\cF_{\epsilon} : C^{2,\alpha}_{\delta, g_{\epsilon}} \paren*{\Km_{\epsilon}}^{0} \rightarrow C^{0,\alpha}_{\delta - 2, g_{\epsilon}} \paren*{\Km_{\epsilon}}^{0}$ at the origin. 
					
					Since $D_{0}\cF_{\epsilon} = - \frac{1}{2}\Delta_{g_{\epsilon}}f$, it suffices to invert $\Delta_{g_{\epsilon}}: C^{2,\alpha}_{\delta, g_{\epsilon}} \paren*{\Km_{\epsilon}}^{0} \rightarrow C^{0,\alpha}_{\delta - 2, g_{\epsilon}} \paren*{\Km_{\epsilon}}^{0}$, which is a bounded linear operator. The key is to have the operator norm of the inverse be bounded \textbf{uniformly in $\epsilon > 0$}.

					Thus we want to prove that for $\delta \in (-2,0)$ \textbf{and $\epsilon > 0$ sufficiently small}, $\Delta_{g_{\epsilon}}: C^{2,\alpha}_{\delta, g_{\epsilon}} \paren*{\Km_{\epsilon}}^{0} \rightarrow C^{0,\alpha}_{\delta - 2, g_{\epsilon}} \paren*{\Km_{\epsilon}}^{0}$ is a bounded linear isomorphism whose operator norm of the inverse is bounded by a constant \textbf{\textit{independent of the parameter $\epsilon > 0$}}.

					First, we record the result of a computation that will be extensively used in the sequel. It follows from the induced metric on $\paren*{p,q}$-tensors, the equation for the Christoffel symbols $\Gamma^{k}_{ij} = \frac{1}{2} g^{pk} \paren*{\frac{\del g_{pj}}{\del x^{i}} + \frac{\del g_{pi}}{\del x^{j}} - \frac{\del g_{ij}}{\del x^{p}}} $, the local coordinate expression $\Delta_{g} f = - \frac{1}{\sqrt{\det g}} \frac{\del}{\del x^{a}} \paren*{\sqrt{\det g} \cdot g^{ab} \frac{\del f}{\del x^{b}}} $, and the fact that $\lambda \dist_{g}\paren*{p,q} = \dist_{\lambda^{2}g}\paren*{p,q}$ for any $\lambda>0$:
					
					\begin{prop}\label{scalingproperties} Let $\paren*{M^{n},g}$ be an arbitrary Riemannian manifold. Let $C>0$ be a constant. Let $\rho$ be a smooth nonnegative function, let $\delta\in\bbR$, let $\fs$ be a $\paren*{p,q}$-tensor, let $\abs*{\cdot}_{g}$ denote all higher order tensor norms WRT the Riemannian metric $g$, and recall $\Abs*{\nabla_{g}^{j} \fs}_{C^{0}_{\delta,g}} \coloneq \Abs*{\rho^{-\delta + j}\nabla_{g}^{j} \fs}_{C^{0}_{g}}$.\begin{enumerate}
							\item\label{scalingproperties covariant derivative rescale} Under a conformal rescaling $g\mapsto C^{2}g$, we have that $$\nabla_{C^{2}g} = \nabla_{g}$$
							
							\item\label{scalingproperties pointwise tensor norm rescale} Under a conformal rescaling $g\mapsto C^{2}g$, we have that $$\abs*{\nabla^{j}\fs}_{C^{2}g} = \paren*{\frac{1}{C}}^{j+q - p}\abs*{\nabla^{j}\fs}_{g}$$
							
							\item\label{scalingproperties scalar Laplacian rescale} Under a conformal rescaling $g\mapsto C^{2}g$, we have that $$\Delta_{C^{2}g} = \frac{1}{C^{2}} \Delta_{g}$$
							
							\item\label{scalingproperties weighted Holder tensor seminorm rescale} Denote by $\sqparen*{\fs}_{C^{k,\alpha}_{\delta,g}} \coloneq\sup_{\substack{x \neq y\\ d_{g}\paren*{x,y} \leq \inj_{g}}}\min\set*{\rho(x),\rho(y)}^{-\delta + k + \alpha} \frac{\abs*{\restr{\nabla_{g}^{k}\fs}{x} - \restr{\nabla_{g}^{k}\fs}{y}}_{g}}{d_{g}\paren*{x , y}^{\alpha}}$ the weighted Holder seminorm on $\paren*{p,q}$-tensors (with all higher order tensor norms for $k \geq 1$ defined with respect to $g$). Immediately from (\ref{scalingproperties covariant derivative rescale}) and (\ref{scalingproperties pointwise tensor norm rescale}), under a conformal rescaling $g\mapsto C^{2}g$: $$ \sqparen*{\fs}_{C^{k,\alpha}_{\delta,C^{2}g}} = \paren*{\frac{1}{C}}^{\delta + q-p}\sqparen*{\fs}_{C^{k,\alpha}_{\delta,g}}$$ with the weight of the RHS $C^{k,\alpha}_{\delta,g}$ being $\frac{\rho}{C}$, i.e.
							\begin{align*}
								\sup_{\substack{x \neq y\\ d_{C^{2}g}\paren*{x,y} \leq \inj_{C^{2}g}}}\min\set*{\rho(x),\rho(y)}^{-\delta + k + \alpha} \frac{\abs*{\restr{\nabla_{g}^{k}\fs}{x} - \restr{\nabla_{g}^{k}\fs}{y}}_{C^{2}g}}{d_{C^{2}g}\paren*{x,y}^{\alpha}} \\ = \paren*{\frac{1}{C}}^{\delta + q-p}\sup_{\substack{x \neq y\\ d_{g}\paren*{x,y} \leq \inj_{g}}}\min\set*{\restr{\frac{\rho}{C}}{x},\restr{\frac{\rho}{C}}{y}}^{-\delta + k + \alpha} \frac{\abs*{\restr{\nabla_{g}^{k}\fs}{x} - \restr{\nabla_{g}^{k}\fs}{y}}_{g}}{d_{g}\paren*{x,y}^{\alpha}} 
							\end{align*} 
							
							\item\label{scalingproperties weighted Holder tensor norm rescale} Under a conformal rescaling $g\mapsto C^{2}g$, we have from (\ref{scalingproperties weighted Holder tensor seminorm rescale}) that $$\Abs*{\fs}_{C^{k,\alpha}_{\delta,C^{2}g}} = \paren*{\frac{1}{C}}^{\delta + q-p} \Abs*{\fs}_{C^{k,\alpha}_{\delta,g}}$$
							
							where again the weight of the RHS is $\frac{\rho}{C}$.

							\item\label{scalingproperties standard Holder tensor norm rescale} Under a conformal rescaling $g\mapsto C^{2}g$, we have from (\ref{scalingproperties covariant derivative rescale}) and (\ref{scalingproperties pointwise tensor norm rescale}) that for the \textit{unweighted/standard} Holder norm that \begin{align*}
								\Abs*{\fs}_{C^{k,\alpha}_{g}}  \xrightarrow{\text{conformal rescaling }g\mapsto C^{2}g} \Abs*{\fs}_{C^{k,\alpha}_{C^{2}g}} &= \sum_{j \leq k} \paren*{\frac{1}{C}}^{j+q-p}\Abs*{\nabla_{g}^{j} \fs}_{C^{0}_{g}} \\ &+ \paren*{\frac{1}{C}}^{k+q-p + \alpha}\sup_{\substack{x \neq y\\ d_{g}\paren*{u_{1},u_{2}} \leq \inj_{g}}} \frac{\abs*{\restr{\nabla_{g}^{k}\fs}{x} - \restr{\nabla_{g}^{k}\fs}{y}}_{g}}{d_{g}\paren*{x,y}^{\alpha}}
							\end{align*}
							
							\item\label{scalingproperties Schauder Laplacian rescale} Transferring the standard Schauder estimate $\Abs*{f}_{C^{2,\alpha}_{g}} \leq \cC\paren*{\Abs*{f}_{C^{0}_{g}} + \Abs*{\Delta_{g}f}_{C^{0,\alpha}_{g}}}$ for functions/$\paren*{0,0}$-tensors on metric balls $\paren*{B^{g}_{r_{1}}\paren*{p}, B^{g}_{r_{2}}\paren*{p}}$ of radii $\paren*{r_{1},r_{2}}$\footnote{That is, the LHS of the inequality is on $B^{g}_{r_{1}}\paren*{p}$, and the RHS of the inequality is on $B^{g}_{r_{2}}\paren*{p}$, as these are \textit{interior} Schauder estimates.} (clearly with $r_{1} < r_{2}$) under the conformal rescaling $g\mapsto C^{2}g$, we end up getting via (\ref{scalingproperties scalar Laplacian rescale}) and (\ref{scalingproperties standard Holder tensor norm rescale}):
							\begin{align*}
								\Abs*{f}_{C^{2,\alpha}_{C^{2}g}} \leq \cC\paren*{\Abs*{f}_{C^{0}_{C^{2}g}} + \Abs*{\Delta_{C^{2}g}f}_{C^{0,\alpha}_{C^{2}g}}}
							\end{align*}
							which is equal to	
							\begin{align*}
								\Abs*{f}_{C^{0}_{g}} &+ \frac{1}{C} \Abs*{\nabla_{g}f}_{C^{0}_{g}} + \paren*{\frac{1}{C}}^{2}\Abs*{\nabla^{2}_{g} f}_{C^{0}_{g}} + \paren*{\frac{1}{C}}^{2 + \alpha}\sqparen*{\nabla^{2}_{g} f}_{C^{0,\alpha}_{g}}\\ &\leq \cC\paren*{\Abs*{f}_{C^{0}_{g}} + \paren*{\frac{1}{C}}^{2} \Abs*{\Delta_{g}f}_{C^{0}_{g}} + \paren*{\frac{1}{C}}^{2 + \alpha} \sqparen*{\Delta_{g} f}_{C^{0,\alpha}_{g}}  }
							\end{align*} on the \textit{same} metric balls $\paren*{B^{g}_{r_{1}}\paren*{p}, B^{g}_{r_{2}}\paren*{p}} = \paren*{B^{C^{2}g}_{Cr_{1}}\paren*{p}, B^{C^{2}g}_{Cr_{2}}\paren*{p}}$, but with different radii WRT the \textit{new} metric $C^{2}g$ (recall $\lambda \dist_{g}\paren*{p,q} = \dist_{\lambda^{2}g}\paren*{p,q}$ for any $\lambda>0$ and points $p,q$!).
							
							Conversely, if we already had the standard Schauder estimate for $C^{2}g$, namely $\Abs*{f}_{C^{2,\alpha}_{C^{2}g}} \leq \cC\paren*{\Abs*{f}_{C^{0}_{C^{2}g}} + \Abs*{\Delta_{C^{2}g}f}_{C^{0,\alpha}_{C^{2}g}}}$ on $\paren*{B^{C^{2}g}_{Cr_{1}}\paren*{p}, B^{C^{2}g}_{Cr_{2}}\paren*{p}}$, then unraveling it would yield the same estimate above on $\paren*{B^{g}_{r_{1}}\paren*{p}, B^{g}_{r_{2}}\paren*{p}} = \paren*{B^{C^{2}g}_{Cr_{1}}\paren*{p}, B^{C^{2}g}_{Cr_{2}}\paren*{p}}$.
							
						\end{enumerate}
					\end{prop}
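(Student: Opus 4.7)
The plan is to verify each of the seven items by direct computation, leveraging only a handful of basic facts: the local coordinate formulas for Christoffel symbols and the scalar Laplacian, the explicit form of the induced $(p,q)$-tensor norm, and the scaling laws $d_{C^2g}(x,y)=C\,d_g(x,y)$ and $\inj_{C^2g}=C\inj_g$. Once items (1)--(3) are in hand as ``primitive'' identities, items (4)--(7) reduce to a bookkeeping exercise in tracking powers of $C$.

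For (1), I would substitute $C^2g$ into $\Gamma^k_{ij}=\tfrac{1}{2} g^{pk}(\partial_ig_{pj}+\partial_jg_{pi}-\partial_pg_{ij})$: the factor of $C^2$ in each derivative of a metric component cancels the $C^{-2}$ from $g^{pk}$, so the Christoffel symbols (hence $\nabla$) are unchanged. For (2), the induced norm on a $(p,q)$-tensor contracts $p$ copies of $g_{ab}$ with $q$ copies of $g^{ab}$, giving $|T|_{C^2g}=C^{p-q}|T|_g$; since $\nabla^j s$ is a $(p,q+j)$-tensor whose components coincide with those computed from $g$ by (1), this yields $|\nabla^js|_{C^2g}=C^{-(j+q-p)}|\nabla^js|_g$. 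For (3), use $\Delta_g f=-\tfrac{1}{\sqrt{\det g}}\partial_a(\sqrt{\det g}\,g^{ab}\partial_b f)$: the $C^n$ from $\sqrt{\det(C^2g)}=C^n\sqrt{\det g}$ cancels between numerator and denominator, leaving only the $C^{-2}$ from the inverse metric.

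For (4)--(6), the key observation is that the constraint $d_{C^2g}(x,y)\le\inj_{C^2g}$ is equivalent to $d_g(x,y)\le\inj_g$ (both sides scale by $C$), so the supremum in the Holder seminorm is over the same set of pairs $(x,y)$. The numerator picks up $C^{-(k+q-p)}$ from (2); the denominator $d_{C^2g}(x,y)^\alpha$ contributes $C^\alpha$; and the weight $\min\{\rho(x),\rho(y)\}^{-\delta+k+\alpha}$ is reinterpreted as $C^{-\delta+k+\alpha}\min\{\rho(x)/C,\rho(y)/C\}^{-\delta+k+\alpha}$, which is exactly why the rescaled weight appearing on the right of (4)/(5) is $\rho/C$. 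Collecting the powers of $C$ gives the exponent $\delta+q-p$ asserted in (4); then (5) follows by summing over $j\le k$ and absorbing the $C^j$ shifts into the norm factor, while (6) is just the unweighted analogue.

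Finally, (7) is purely transcriptional: apply (3) and (6) to each norm appearing in the standard interior Schauder inequality $\|f\|_{C^{2,\alpha}_{C^2g}}\le\cC(\|f\|_{C^0_{C^2g}}+\|\Delta_{C^2g}f\|_{C^{0,\alpha}_{C^2g}})$, using that the balls $B^{C^2g}_{Cr_i}(p)$ and $B^g_{r_i}(p)$ agree as subsets of $M$. The main ``obstacle,'' such as it is, is bookkeeping: one must consistently track which power of $C$ attaches to each term, and ensure the rescaling of $\inj$ is respected so that the supremum sets in the Holder seminorm genuinely agree at the two scales. No deeper ideas are required.
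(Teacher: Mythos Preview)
Your proposal is correct and follows essentially the same approach as the paper: the paper simply records that the proposition follows from the Christoffel symbol formula, the induced $(p,q)$-tensor norm, the local coordinate expression for the scalar Laplacian, and the scaling law $\lambda\,d_g = d_{\lambda^2 g}$, without writing out the bookkeeping you have supplied. Your more detailed verification of each item is exactly the intended computation.
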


					Next, we record some useful \textit{uniform} estimates on our weight function $\rho_{\epsilon} \coloneq G_{\epsilon} \circ \abs*{\cdot}^{g_{0}}_{\pi^{-1}\paren*{\cS}}:\Km_{\epsilon} \rightarrow [\epsilon,1]$ as well as on the weight function $\wtilde{\rho_{0}}$ used on $T^{*}\CP^{1}$, all of which follow immediately by construction:
					
					\begin{prop}\label{preciseweightfunction}						
						The following hold \textbf{\textit{uniformly in $\epsilon > 0$, i.e. $\forall \epsilon > 0$}}:\begin{enumerate}[nosep]
							\itemsep0em 
							\item\label{preciseweightfunction first region} Restricted to $\set*{\abs*{\cdot}^{g_{0}}_{\pi^{-1}\paren*{\cS}} \leq 2\epsilon}$, we have the \textit{pointwise everywhere/uniform} estimate $$\epsilon \leq \rho_{\epsilon} \leq 2 \epsilon$$
							\item\label{preciseweightfunction past 3/25 region} Restricted to $\set*{\frac{3}{25} < \abs*{\cdot}^{g_{0}}_{\pi^{-1}\paren*{\cS}}}$, we have that $\rho_{\epsilon}$ \textit{pointwise everywhere/uniformly} satisfies $$\frac{3}{25}\leq \rho_{\epsilon}\leq 1$$
						\end{enumerate}
					\end{prop}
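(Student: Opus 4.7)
The statement is essentially a direct unpacking of the piecewise definition of $G_\epsilon$ together with monotonicity and the smallness constraint $2\epsilon < \tfrac{3}{25} < \tfrac{1}{8}$ guaranteed by Data \ref{constant epsilon 1-16}. So the plan is to argue each of the two parts by splitting the relevant region of values of $x \coloneq |\cdot|^{g_0}_{\pi^{-1}(\cS)}$ into the subintervals on which $G_\epsilon$ has an explicit form, and using monotonicity of $G_\epsilon$ on the ``transition'' subintervals where we only know it interpolates smoothly.

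For part (\ref{preciseweightfunction first region}), I would note that since $2\epsilon < \tfrac{1}{8}$ (from Data \ref{constant epsilon 1-16}), the region $\{x \leq 2\epsilon\}$ decomposes as $\{x \leq \epsilon\} \sqcup \{\epsilon \leq x \leq 2\epsilon\}$, with the latter contained in the interval $[\epsilon, 2\epsilon] \subset [\epsilon, 1/8]$. On $\{x \leq \epsilon\}$, $G_\epsilon \equiv \epsilon$ by definition, giving $\rho_\epsilon = \epsilon$, which sits inside $[\epsilon, 2\epsilon]$. On the transition band $\{\epsilon \leq x \leq 2\epsilon\}$, monotonicity of $G_\epsilon$ yields $G_\epsilon(\epsilon) \leq G_\epsilon(x) \leq G_\epsilon(2\epsilon)$; by the defining formulas, $G_\epsilon(\epsilon) = \epsilon$ and $G_\epsilon(2\epsilon) = 2\epsilon$ (since $2\epsilon \in [2\epsilon, 1/8]$, the region where $G_\epsilon$ is the identity), so we obtain $\epsilon \leq \rho_\epsilon \leq 2\epsilon$ as claimed.

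For part (\ref{preciseweightfunction past 3/25 region}), the upper bound $\rho_\epsilon \leq 1$ is immediate because by construction $G_\epsilon$ takes values in $[\epsilon, 1] \subset (0,1]$. For the lower bound I would split the region $\{\tfrac{3}{25} < x\}$ into the three subregions $\{\tfrac{3}{25} < x \leq \tfrac{1}{8}\}$, $\{\tfrac{1}{8} \leq x \leq \tfrac{1}{5}\}$, and $\{\tfrac{1}{5} \leq x\}$. On the first, $G_\epsilon(x) = x > \tfrac{3}{25}$ by the explicit formula (valid since $\tfrac{3}{25} > 2\epsilon$ by Data \ref{constant epsilon 1-16}, so the interval $(\tfrac{3}{25}, \tfrac{1}{8}]$ lies in $[2\epsilon, \tfrac{1}{8}]$). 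On the second, monotonicity gives $\rho_\epsilon \geq G_\epsilon(\tfrac{1}{8}) = \tfrac{1}{8} > \tfrac{3}{25}$. On the third, $\rho_\epsilon = 1 > \tfrac{3}{25}$.

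There is essentially no obstacle: both bounds reduce to an arithmetic check that the constants from Data \ref{constant epsilon 1-16} (namely $2\epsilon < \tfrac{3}{25}$ and $\tfrac{3}{25} < \tfrac{1}{8}$) place the relevant $x$-regions inside those on which $G_\epsilon$ has a closed-form expression, with only monotonicity needed on the two smooth interpolation bands. Uniformity in $\epsilon$ is automatic because none of the controlling inequalities depend on $\epsilon$ beyond the fixed smallness condition already imposed.
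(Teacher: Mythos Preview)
Your proposal is correct and is exactly the unpacking the paper has in mind: the paper gives no proof beyond the remark that the estimates ``follow immediately by construction,'' and your case-by-case check via the explicit pieces of $G_\epsilon$ plus monotonicity on the transition bands is precisely that construction made explicit.
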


					\begin{prop}\label{preciseweightfunction EH almost constancy}
						Recall $\wtilde{\rho_{0}}: T^{*}\CP^{1} \rightarrow [1,\infty)$ the smooth monotone function satisfying $\wtilde{\rho_{0}} = \begin{cases}
							\abs*{\cdot}^{g_{0}}_{\CP^{1}} & \text{ on }\set*{2 \leq \abs*{\cdot}^{g_{0}}_{\CP^{1}}}\\
							1 & \text{ on }\set*{\abs*{\cdot}^{g_{0}}_{\CP^{1}} \leq 1}
						\end{cases}$. Let $R_{0} > 1$ be some fixed constant, and let $p \in \set*{1 < \abs*{\cdot}^{g_{0}}_{\CP^{1}} < R_{0}}$ be an \textit{arbitrary} point. 
						
						Then $\exists \delta_{2,p} > 0$ depending on $p$ such that, restricted to $\set*{\abs*{p}^{g_{0}}_{\CP^{1}} - \delta_{2,p}\leq \abs*{\cdot}^{g_{0}}_{\CP^{1}} \leq \abs*{p}^{g_{0}}_{\CP^{1}} + \delta_{2,p}}$, we have the \textit{pointwise everywhere/uniform} estimate $$\wtilde{\rho_{0}}(p) - \frac{1}{1000}\wtilde{\rho_{0}}(p) \leq \wtilde{\rho_{0}} \leq \wtilde{\rho_{0}}(p) + \frac{1}{1000}\wtilde{\rho_{0}}(p)$$
						
						That is, we have that $$\frac{999}{1000} \leq \frac{\wtilde{\rho_{0}}(q)}{\wtilde{\rho_{0}}(p)} \leq \frac{1001}{1000},\qqfa q \in \set*{\abs*{p}^{g_{0}}_{\CP^{1}} - \delta_{2,p}\leq \abs*{\cdot}^{g_{0}}_{\CP^{1}} \leq \abs*{p}^{g_{0}}_{\CP^{1}} + \delta_{2,p}}$$
					\end{prop}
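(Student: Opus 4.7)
The plan is to recognize that this proposition is essentially the continuity of the smooth weight function $\wtilde{\rho_0}$ at an interior point $p$ of the region $\set*{1 < \abs*{\cdot}^{g_0}_{\CP^1} < R_0}$, phrased in a form useful for subsequent blow-up arguments. The key observation is that, by its defining piecewise description, $\wtilde{\rho_0}$ may (and will) be taken to be a radial function of the distance to the exceptional divisor, i.e.\ $\wtilde{\rho_0} = h \circ \abs*{\cdot}^{g_0}_{\CP^1}$ for some smooth nondecreasing $h: [0,\infty) \to [1,\infty)$ with $h(r) = 1$ for $r \leq 1$, $h(r) = r$ for $r \geq 2$, and $\abs*{h'(r)} \leq C_1$ on $[1,2]$. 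Since $p$ lies in the smooth locus of $\abs*{\cdot}^{g_0}_{\CP^1}$ (away from the exceptional $\CP^1$), there is no regularity issue with working through this factorization near $p$.

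To extract $\delta_{2,p}$, I would apply the standard $\varepsilon$-$\delta$ characterization of continuity of $h$ at the positive number $\abs*{p}^{g_0}_{\CP^1}$: since $\wtilde{\rho_0}(p) = h(\abs*{p}^{g_0}_{\CP^1}) \geq 1 > 0$, the quantity $\varepsilon \coloneq \frac{\wtilde{\rho_0}(p)}{1000}$ is strictly positive, and continuity of $h$ at $\abs*{p}^{g_0}_{\CP^1}$ furnishes a $\delta_{2,p} > 0$ such that $\abs*{h(r) - h(\abs*{p}^{g_0}_{\CP^1})} \leq \varepsilon$ for all $r \in [\abs*{p}^{g_0}_{\CP^1} - \delta_{2,p}, \abs*{p}^{g_0}_{\CP^1} + \delta_{2,p}]$. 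Pulling back through $\abs*{\cdot}^{g_0}_{\CP^1}$ gives the pointwise bound on the desired annular neighborhood, and dividing by $\wtilde{\rho_0}(p) > 0$ yields the ratio form $\frac{999}{1000} \leq \frac{\wtilde{\rho_0}(q)}{\wtilde{\rho_0}(p)} \leq \frac{1001}{1000}$.

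There is essentially no obstacle here — the proof is a direct invocation of continuity of a smooth function at a single interior point. The only mild remark is that $\delta_{2,p}$ genuinely depends on $p$ (hence the subscript), since the modulus of continuity of $h$ in the transition region $[1,2]$ where $h$ interpolates between the constant $1$ and the identity can force $\delta_{2,p}$ to shrink; but this is harmless for the intended blow-up application, where one fixes a single point $p$ and passes to a rescaled limit. Quantitatively, on $\{\abs*{\cdot}^{g_0}_{\CP^1} \geq 2\}$ one even gets the cleaner explicit choice $\delta_{2,p} = \frac{\abs*{p}^{g_0}_{\CP^1}}{1000}$ from $h = \mathrm{id}$, while on $\{1 \leq \abs*{\cdot}^{g_0}_{\CP^1} \leq 2\}$ the Lipschitz bound $\abs*{h'} \leq C_1$ allows the slightly worse but still explicit choice $\delta_{2,p} = \frac{\wtilde{\rho_0}(p)}{1000 C_1}$, so in neither case does the argument require any nontrivial geometric input beyond the prescribed regularity of $\wtilde{\rho_0}$.
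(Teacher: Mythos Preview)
Your proposal is correct and matches the paper's approach: the paper states that this proposition (along with the preceding one) ``follow[s] immediately by construction,'' and your argument is precisely the spelling-out of that immediacy via continuity of the radial profile $h$ at the point $\abs*{p}^{g_0}_{\CP^1}$. Your additional quantitative remarks on explicit choices of $\delta_{2,p}$ in the two regimes are a nice elaboration but not needed for the paper's purposes.
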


					Now we arrive at the main isomorphism result in 3 parts: first a weighted Schauder estimate (which is first done locally), then an improved weighted Schauder estimate (which is the heart of the proof), and then the isomorphism result itself.

					\begin{prop}[Local Weighted Schauder Estimate]\label{localWSE} \textbf{Let $\epsilon > 0$ be from Data \ref{constant epsilon 1-16}.} Then $\forall f \in C^{2,\alpha}_{\delta, g_{\epsilon}}\paren*{\Km_{\epsilon}}$ and $\forall p \in \Km_{\epsilon}$, there exists an $r = r_{p} > 0$ (hence may depend on $p$) such that:
						\begin{align*}
							\Abs*{f}_{C^{2,\alpha}_{\delta, g_{\epsilon}} \paren*{B^{g_{\epsilon}}_{r_{p}}\paren*{p}}} &\leq \cC_{1}\paren*{\Abs*{f}_{C^{0}_{\delta, g_{\epsilon}} \paren*{B^{g_{\epsilon}}_{2r_{p}}\paren*{p}}} + \Abs*{\Delta_{g_{\epsilon}}f}_{C^{0,\alpha}_{\delta - 2, g_{\epsilon}} \paren*{B^{g_{\epsilon}}_{2r_{p}}\paren*{p}}}}\\
							&\coloneq \cC_{1}\paren*{\Abs*{\rho_{\epsilon}^{-\delta}f}_{C^{0}_{g_{\epsilon}} \paren*{B^{g_{\epsilon}}_{2r_{p}}\paren*{p}}} + \Abs*{\Delta_{g_{\epsilon}}f}_{C^{0,\alpha}_{\delta - 2, g_{\epsilon}} \paren*{B^{g_{\epsilon}}_{2r_{p}}\paren*{p}}}}
						\end{align*}
						for $\cC_{1} > 0$ \textbf{independent of $\epsilon$ and $p\in \Km_{\epsilon}$.}\end{prop}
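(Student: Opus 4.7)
The plan is to reduce the weighted Schauder estimate on $\paren*{\Km_{\epsilon}, g_{\epsilon}}$ to the standard \emph{unweighted} interior Schauder estimate on a model with uniformly bounded geometry, via a conformal rescaling tailored to the value $\rho_{\epsilon}\paren*{p}$ at the base point. Concretely, I would take $r_{p} \coloneq c\rho_{\epsilon}\paren*{p}$ for a small universal $c>0$, set $\wtilde{g} \coloneq \rho_{\epsilon}\paren*{p}^{-2} g_{\epsilon}$, and arrange that on the fixed-scale ball $B^{g_{\epsilon}}_{2r_{p}}\paren*{p} = B^{\wtilde{g}}_{2c}\paren*{p}$ two things hold uniformly in $\paren*{\epsilon,p}$: (a) $\rho_{\epsilon} \in \sqparen*{\frac{1}{2}\rho_{\epsilon}\paren*{p}, 2\rho_{\epsilon}\paren*{p}}$, so that the weight factors $\rho_{\epsilon}^{-\delta+j}$ and $\min\set*{\rho_{\epsilon}\paren*{x},\rho_{\epsilon}\paren*{y}}^{-\delta+2+\alpha}$ in the weighted norms may be replaced by the constant $\rho_{\epsilon}\paren*{p}^{-\delta+j}$ (resp.\ $\rho_{\epsilon}\paren*{p}^{-\delta+2+\alpha}$) up to a universal factor; (b) $\wtilde{g}$ has uniformly bounded geometry on $B^{\wtilde{g}}_{2c}\paren*{p}$, so the classical Schauder constant is universal. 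Granting (a) and (b), the interior Schauder estimate for $\wtilde{g}$ on $B^{\wtilde{g}}_{2c}\paren*{p}$ combined with the rescaling identities of Proposition \ref{scalingproperties}(\ref{scalingproperties Schauder Laplacian rescale})--(\ref{scalingproperties weighted Holder tensor norm rescale}) unwinds to the stated inequality: a short bookkeeping shows both sides factor as $\rho_{\epsilon}\paren*{p}^{-\delta}$ times the unweighted $C^{2,\alpha}_{\wtilde{g}}$- and $C^{0,\alpha}_{\wtilde{g}}$-norms, with the extra $\rho_{\epsilon}\paren*{p}^{-2}$ produced by $\Delta_{g_{\epsilon}} = \rho_{\epsilon}\paren*{p}^{-2}\Delta_{\wtilde{g}}$ giving exactly the weight shift $\delta \mapsto \delta-2$ on the Laplacian side.

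Verifying (a) and (b) is a short case analysis using Propositions \ref{approximatemetricproperties}, \ref{preglueEH}, \ref{preciseweightfunction}, and \ref{preciseweightfunction EH almost constancy}. In the \emph{inner bubble region} $\set*{\abs*{\cdot}^{g_{0}}_{\pi^{-1}\paren*{\cS}} \leq 2\epsilon}$, Proposition \ref{preciseweightfunction}(\ref{preciseweightfunction first region}) gives $\rho_{\epsilon}\in [\epsilon,2\epsilon]$, while $g_{\epsilon} = g_{EH,\epsilon^{2}} = \epsilon^{2}R_{\frac{1}{\epsilon}}^{*}\paren*{g_{EH,1}}$ from Proposition \ref{preglueEH} makes $\wtilde{g}$ isometric, up to a bounded factor, to $g_{EH,1}$ on a neighbourhood of the bolt $\CP^{1}\subset T^{*}\CP^{1}$; both (a) and (b) are then immediate since $\paren*{T^{*}\CP^{1},g_{EH,1}}$ is a fixed complete Riemannian manifold. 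In the \emph{asymptotic Eguchi--Hanson region} $\set*{2\epsilon \leq \abs*{\cdot}^{g_{0}}_{\pi^{-1}\paren*{\cS}} \leq 2\epsilon^{\frac{1}{2}}}$ we still have $g_{\epsilon}=g_{EH,\epsilon^{2}}$ outside the thin interpolation annulus (with controlled error from Proposition \ref{approximatemetricproperties}) and $\rho_{\epsilon}\paren*{p} = \abs*{p}^{g_{0}}_{\pi^{-1}\paren*{\cS}}$; under $R_{\frac{1}{\epsilon}}$, $p$ corresponds to a point $p'\in T^{*}\CP^{1}$ with $\wtilde{\rho_{0}}\paren*{p'} \sim \rho_{\epsilon}\paren*{p}/\epsilon > 1$, whereupon (a) reduces to Proposition \ref{preciseweightfunction EH almost constancy} (applied after a second rescaling by $\wtilde{\rho_{0}}\paren*{p'}^{-2}$ to normalize unit scale at $p'$), and (b) follows from the ALE decay $\abs*{\nabla^{k}_{g_{0}}\paren*{g_{EH,1}-g_{0}}}_{g_{0}} = O\paren*{r^{-4-k}}$ of Proposition \ref{EHproperties}, which shows that the twice-rescaled metric converges to flat $\bbR^{4}/\bbZ_{2}$ on unit balls uniformly in $p'$. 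Finally, in the \emph{flat region} $\set*{2\epsilon^{\frac{1}{2}} \leq \abs*{\cdot}^{g_{0}}_{\pi^{-1}\paren*{\cS}}}$ (outside the interpolation annulus), $g_{\epsilon}=g_{0}$ is Euclidean, $\rho_{\epsilon}$ is pinched between universal constants on the outer part by Proposition \ref{preciseweightfunction}(\ref{preciseweightfunction past 3/25 region}) and equals $\abs*{\cdot}^{g_{0}}_{\pi^{-1}\paren*{\cS}}$ (hence almost constant on balls of radius a small fraction of it) on the inner part, and the estimate reduces to the classical flat Schauder estimate.

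The main obstacle is the Eguchi--Hanson asymptotic region, where $g_{\epsilon}$ is neither scale-invariant nor flat and the weight $\rho_{\epsilon}$ varies nontrivially; the resolution combines \emph{two} rescalings — first by $\epsilon^{-2}$ to reach the fixed model $g_{EH,1}$, then by $\wtilde{\rho_{0}}\paren*{p'}^{-2}$ to normalize the scale at $p'$ — and crucially exploits the quartic ALE decay of $g_{EH,1}$ toward $\bbR^{4}/\bbZ_{2}$ to get uniform bounded geometry of the twice-rescaled metric on unit balls. This is exactly what makes \emph{weighted} Holder spaces the natural setting here, as they automatically absorb the varying conformal scale of the bubble region into a universal Schauder constant. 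Once the three cases above are assembled and $r_{p}$ is chosen as a fixed small multiple of $\rho_{\epsilon}\paren*{p}$ (with the multiple adapted to the radii of validity of each case), the proposition follows.
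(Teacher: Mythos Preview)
Your proposal is correct and takes essentially the same approach as the paper: both argue via a three-region case analysis (inner bubble, neck/asymptotic Eguchi--Hanson, flat) and reduce to a standard Schauder estimate on a bounded-geometry model by conformally rescaling at scale $\rho_{\epsilon}(p)$, using almost-constancy of the weight on the resulting unit ball. The only difference is organizational---you frame everything uniformly via the single rescaling $\wtilde{g} = \rho_{\epsilon}(p)^{-2} g_{\epsilon}$ and then verify bounded geometry case by case, whereas the paper rescales by $\epsilon^{-2}$ first to reach the fixed model $\paren*{T^{*}\CP^{1}, \lwhat{g_{EH-0,\epsilon}}}$ in Cases~1--2 and then by $\wtilde{\rho_{0}}(p')^{-2}$ in Case~2---but the substance (double rescaling in the neck, Propositions~\ref{preciseweightfunction} and~\ref{preciseweightfunction EH almost constancy} for weight-pinching, Proposition~\ref{scalingproperties}(\ref{scalingproperties Schauder Laplacian rescale}) for unwinding) is identical.
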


					\begin{proof}

						The idea here is that, \textit{depending on which region our point $p$ is in}, we do a specific conformal rescaling on the metric (as well as rescale the function $f \mapsto kf$ by a positive scalar), use the fact that each (rescaled) region has \textbf{bounded geometry} to get \textit{standard} Schauder estimates with \textit{uniform} constants, then finish off with our precise uniform estimates on our weight functions (Proposition \ref{preciseweightfunction} and Proposition \ref{preciseweightfunction EH almost constancy}) via undoing the conformal rescaling \& examining how the resulting standard Schauder estimates transform via Proposition \ref{scalingproperties}. Since we do this on \textit{finitely} many regions, we take the max of all the uniform constants $\cC>0$ involved to get a global constant $\cC_{1}$.

						Thus let $p \in \Km_{\epsilon}$.
						
						We consider 3 cases: $p \in \set*{\abs*{\cdot}^{g_{0}}_{\pi^{-1}\paren*{\cS}} < 2\epsilon}$, $p \in \set*{\epsilon < \abs*{\cdot}^{g_{0}}_{\pi^{-1}\paren*{\cS}} < \frac{1}{8}}$, and $p \in \set*{\frac{3}{25} < \abs*{\cdot}^{g_{0}}_{\pi^{-1}\paren*{\cS}}}$. Clearly these 3 regions cover all of $\Km_{\epsilon}$.

						\begin{enumerate}
							\item \textbf{Case 1; $p \in \set*{\abs*{\cdot}^{g_{0}}_{\pi^{-1}\paren*{\cS}} < 2\epsilon}$:} WLOG let us focus on only 1 of the 16 connected components of $\set*{\abs*{\cdot}^{g_{0}}_{\pi^{-1}\paren*{\cS}} < 2\epsilon}$, hence suppose that $\pi^{-1}\paren*{\cS} = E$ consists of only a single exceptional $E \cong \CP^{1}$. By construction (Proposition \ref{approximatemetricproperties} as well as Data \ref{constant epsilon 1-16} telling us that $2\epsilon < \epsilon^{\frac{1}{2}}$), on this region $g_{\epsilon} = g_{EH,\epsilon^{2}}$. Recalling that $R_{\epsilon}^{*}\paren*{\frac{1}{\epsilon^{2}}g_{EH,\epsilon^{2}}} = g_{EH,1}$ from Proposition \ref{EHproperties} as well as the identification $\set*{\abs*{\cdot}^{g_{0}}_{\pi^{-1}\paren*{\cS}} \leq 3\epsilon^{\frac{1}{2}}} \sim \set*{\abs*{\cdot}^{g_{0}}_{\CP^{1}} \leq 3\epsilon^{\frac{1}{2}}}$ from the underlying construction of $\Km_{\epsilon}$, we have that upon doing a conformal rescaling $g_{\epsilon} \mapsto \frac{1}{\epsilon^{2}}g_{\epsilon}$ that $\paren*{\set*{\abs*{\cdot}^{g_{0}}_{\pi^{-1}\paren*{\cS}} < 2\epsilon}, \frac{1}{\epsilon^{2}} g_{\epsilon}}$ is isometric under $R_{\epsilon}$ to $\paren*{\set*{\abs*{\cdot}^{g_{0}}_{\CP^{1}} < 2}, g_{EH,1}} \subset \paren*{T^{*}\CP^{1}, g_{EH,1}}$, which is a precompact region of the Eguchi-Hanson space $\paren*{T^{*}\CP^{1},g_{EH,1}}$. Since $\paren*{T^{*}\CP^{1},g_{EH,1}}$ has \textbf{bounded geometry}\footnote{Indeed, the exceptional $\CP^{1}$ is no longer collapsing since we're in $g_{EH,1}$. Hence, roughly speaking, the conformal rescaling $g_{\epsilon} \mapsto \frac{1}{\epsilon^{2}}g_{\epsilon}$ ``rescaled the shrinking exceptional divisor to have unit volume'', whence dispensing with the only source of unbounded curvature (namely the shrinking exceptional divisor).}, we thus have, for $r_{2} > r_{1} > 0$ small enough so that $B^{g_{EH,1}}_{r_{2}}\paren*{p} \subset \paren*{\set*{\abs*{\cdot}^{g_{0}}_{\CP^{1}} < 2}, g_{EH,1}}$ (hence both $r_{1}, r_{2}$ may depend on $p$) where we abuse notation and denote the point where $p$ lands in $\set*{\abs*{\cdot}^{g_{0}}_{\CP^{1}} < 2}$ as $p$, the standard Schauder estimate $\Abs*{kf}_{C^{2,\alpha}_{g_{EH,1}}} \leq \cC_{0}\paren*{\Abs*{kf}_{C^{0}_{g_{EH,1}}} + \Abs*{k\Delta_{g_{EH,1}}f}_{C^{0,\alpha}_{g_{EH,1} } }}$ on the metric balls $\paren*{B^{g_{EH,1}}_{r_{1}}\paren*{p}, B^{g_{EH,1}}_{r_{2}}\paren*{p}}$ (where we have multiplied our function by a positive scalar $k > 0$, to be determined), \textit{where $\cC_{0}> 0$ is independent of $p$}.
							
							Transferring everything back to $\paren*{\set*{\abs*{\cdot}^{g_{0}}_{\pi^{-1}\paren*{\cS}} < 2\epsilon}, \frac{1}{\epsilon^{2}} g_{\epsilon}}$ via applying $R_{\epsilon}$ to $\paren*{\set*{\abs*{\cdot}^{g_{0}}_{\CP^{1}} < 2}, g_{EH,1}}$ and using $\frac{1}{\epsilon^{2}}g_{EH,\epsilon^{2}} = R_{\frac{1}{\epsilon}}^{*}\paren*{g_{EH,1}}$ and the identification $\set*{\abs*{\cdot}^{g_{0}}_{\pi^{-1}\paren*{\cS}} \leq 3\epsilon^{\frac{1}{2}}} \sim \set*{\abs*{\cdot}^{g_{0}}_{\CP^{1}} \leq 3\epsilon^{\frac{1}{2}}}$, we have that our standard Schauder estimate is now $\Abs*{kf}_{C^{2,\alpha}_{\frac{1}{\epsilon^{2}}g_{\epsilon}}} \leq \cC_{0}\paren*{\Abs*{kf}_{C^{0}_{\frac{1}{\epsilon^{2}}g_{\epsilon}}} + \Abs*{k\Delta_{\frac{1}{\epsilon^{2}}g_{\epsilon}}f}_{C^{0,\alpha}_{\frac{1}{\epsilon^{2}}g_{\epsilon} } }}$ on the metric balls $\paren*{B^{\frac{1}{\epsilon^{2}}g_{\epsilon}}_{R_{1}}\paren*{p}, B^{\frac{1}{\epsilon^{2}}g_{\epsilon}}_{R_{2}}\paren*{p}}$ for some $0 < R_{1} < R_{2}$ (which may depend on $p$) such that $B^{\frac{1}{\epsilon^{2}}g_{\epsilon}}_{R_{2}}\paren*{p} \subset \set*{\abs*{\cdot}^{g_{0}}_{\pi^{-1}\paren*{\cS}} < 2\epsilon}$ and $B^{\frac{1}{\epsilon^{2}}g_{\epsilon}}_{R_{2}}\paren*{p} \subset R_{\epsilon} \paren*{B^{g_{EH,1}}_{r_{2}}\paren*{p}} $ (and vice versa for $R_{1}$). Now apply Proposition \ref{scalingproperties}(\ref{scalingproperties Schauder Laplacian rescale}) with $C = \frac{1}{\epsilon}$ and $g = g_{\epsilon}$, set $k = \epsilon^{-\delta}$, and upon noting that on $\set*{\abs*{\cdot}^{g_{0}}_{\pi^{-1}\paren*{\cS}} < 2\epsilon}$ we have the \textit{uniform} estimate $\epsilon \leq \rho_{\epsilon} \leq 2\epsilon$ (Proposition \ref{preciseweightfunction}(\ref{preciseweightfunction first region})), we get upon recalling the definition of the weighted Holder norms our sought after \textit{weighted} Schauder estimate $\Abs*{f}_{C^{2,\alpha}_{\delta, g_{\epsilon}}} \leq \wtilde{\cC_{0}}\paren*{\Abs*{f}_{C^{0}_{\delta, g_{\epsilon}}} + \Abs*{\Delta_{g_{\epsilon}}f}_{C^{0,\alpha}_{\delta - 2, g_{\epsilon} } }}$ on the metric balls $\paren*{B^{\frac{1}{\epsilon^{2}}g_{\epsilon}}_{R_{1}}\paren*{p}, B^{\frac{1}{\epsilon^{2}}g_{\epsilon}}_{R_{2}}\paren*{p}} = \paren*{B^{g_{\epsilon}}_{\epsilon R_{1}}\paren*{p}, B^{g_{\epsilon}}_{\epsilon R_{2}}\paren*{p}}$ upon setting $r_{p} \coloneq \epsilon R_{1}$ and $R_{2} \coloneq 2 R_{1}$. Note that $\wtilde{\cC_{0}} > 0$ is still independent of $p$ \textit{as well as $\epsilon > 0$} since $\epsilon \leq \rho_{\epsilon} \leq 2\epsilon$ tells us that we may replace $\rho_{\epsilon}$ with $\epsilon$, up to introducing harmless factors of $2$ and $\frac{1}{2}$ into $\cC_{0}$.

							\item \textbf{Case 2; $p \in \set*{\epsilon < \abs*{\cdot}^{g_{0}}_{\pi^{-1}\paren*{\cS}} < \frac{1}{8}}$:} Again WLOG let us focus only on 1 of the 16 connected components of $\set*{\epsilon < \abs*{\cdot}^{g_{0}}_{\pi^{-1}\paren*{\cS}} < \frac{1}{8}}$. Hence $g_{\epsilon}$ is an interpolation of $g_{EH,\epsilon^{2}}$ and $g_{0}$ on $\set*{\epsilon < \abs*{\cdot}^{g_{0}}_{\pi^{-1}\paren*{\cS}} < \frac{1}{8}}$, namely $g_{\epsilon} = \begin{cases}
								\wtilde{g_{EH, \epsilon}} & \text{on } \set*{\abs*{\cdot}^{g_{0}}_{\pi^{-1}\paren*{\cS}} \leq 3\epsilon^{\frac{1}{2}}}\\
								g_{0} & \text{on } \set*{ 3\epsilon^{\frac{1}{2}} \leq \abs*{\cdot}^{g_{0}}_{\pi^{-1}\paren*{\cS}}} 
							\end{cases}	$ and where $\wtilde{g_{EH, \epsilon}} = \begin{cases}
								g_{EH,\epsilon^{2}} & \text{on } \set*{\abs*{\cdot}^{g_{0}}_{\pi^{-1}\paren*{\cS}} \leq \epsilon^{\frac{1}{2}}} \\
								g_{0} & \text{on }\set*{2\epsilon^{\frac{1}{2}} \leq \abs*{\cdot}^{g_{0}}_{\pi^{-1}\paren*{\cS}}}
							\end{cases}$.
							
							Now by \textit{Remark} \ref{identification}, we may identify $\set*{\epsilon < \abs*{\cdot}^{g_{0}}_{\pi^{-1}\paren*{\cS}} < \frac{1}{8}}$ with $\set*{\epsilon < \abs*{\cdot}^{g_{0}}_{\CP^{1}} < \frac{1}{8}}\subset T^{*}\CP^{1}$, and upon recalling $\wtilde{g_{EH,\epsilon}} = \epsilon^{2} R^{*}_{\frac{1}{\epsilon}} \paren*{\lwhat{g_{EH-0, \epsilon}}} \Leftrightarrow R_{\epsilon}^{*}\paren*{\frac{1}{\epsilon^{2}}\wtilde{g_{EH,\epsilon}}} = \lwhat{g_{EH-0, \epsilon}}$ and $g_{0} = \epsilon^{2}R^{*}_{\frac{1}{\epsilon}} \paren*{g_{0}}\Leftrightarrow R^{*}_{\epsilon}\paren*{\frac{1}{\epsilon^{2}}g_{0}} = g_{0}$ from Proposition \ref{preglueEH} directly implying by construction of $\lwhat{g_{EH-0, \epsilon}}$ from Proposition \ref{preglueEH BG interpolation!} (as well as by construction of $g_{\epsilon}$) that $R^{*}_{\epsilon}\paren*{\frac{1}{\epsilon^{2}}g_{\epsilon}} = \lwhat{g_{EH-0, \epsilon}}$, we have that upon doing a conformal rescaling $g_{\epsilon} \mapsto \frac{1}{\epsilon^{2}}g_{\epsilon}$ that $\paren*{\set*{\epsilon < \abs*{\cdot}^{g_{0}}_{\pi^{-1}\paren*{\cS}} < \frac{1}{8}}, \frac{1}{\epsilon^{2}} g_{\epsilon}}$ is isometric under $R_{\epsilon}$ to $\paren*{\set*{1 < \abs*{\cdot}^{g_{0}}_{\CP^{1}} < \frac{1}{8\epsilon}}, \lwhat{g_{EH-0, \epsilon}}} \subset \paren*{T^{*}\CP^{1}, \lwhat{g_{EH-0, \epsilon}}}$, which is a precompact region of the Riemannian manifold $\paren*{T^{*}\CP^{1},\lwhat{g_{EH-0, \epsilon}}}$. Now by Proposition \ref{preglueEH BG interpolation!}, since $\lwhat{g_{EH-0, \epsilon}} = \begin{cases}
								g_{EH,1} & \text{on } \set*{\abs*{\cdot}^{g_{0}}_{\CP^{1}} \leq \frac{1}{\epsilon^{\frac{1}{2}}} } \\
								g_{0} & \text{on }\set*{\frac{2}{\epsilon^{\frac{1}{2}}} \leq \abs*{\cdot}^{g_{0}}_{\CP^{1}}}
							\end{cases}$ aka $\lwhat{g_{EH-0, \epsilon}}$ is an interpolation of $g_{EH,1}$ on a large compact region and with the flat Euclidean metric $g_{0}$ on the complement of a (slightly) larger (pre)compact region, it is immediate that the Riemannian manifold $\paren*{T^{*}\CP^{1},\lwhat{g_{EH-0, \epsilon}}}$ is complete with \textbf{bounded geometry}\footnote{Indeed, similar to the previous case for the Eguchi-Hanson space $\paren*{T^{*}\CP^{1}, g_{EH,1}}$ having bounded geometry, because we're flat outside $\set*{\abs*{\cdot}^{g_{0}}_{\CP^{1}} \leq \frac{2}{\epsilon^{\frac{1}{2}}} }$ and Eguchi-Hanson $g_{EH,1}$ with parameter $1$ on $\set*{\abs*{\cdot}^{g_{0}}_{\CP^{1}} \leq \frac{1}{\epsilon^{\frac{1}{2}}} }$ and interpolating in between, we may argue as before (namely the exceptional $\CP^{1}$ no longer shrinking) since there are no sources of unbounded curvature for either the flat Euclidean metric region (since its the Euclidean metric) or the interpolation annulus (since we're interpolating the Euclidean metric with $g_{EH,1}$ and $g_{EH,1}$ has bounded curvature everywhere).}.
							
							Next, note that upon transferring over the weight function $\rho_{\epsilon}$ via the above process, we end up getting $R^{*}_{\epsilon}\paren*{\rho_{\epsilon}}$, and since this satisfies $R^{*}_{\epsilon}\paren*{\rho_{\epsilon}} = \begin{cases}
								\epsilon\abs*{\cdot}^{g_{0}}_{\CP^{1}} & \text{ on }\set*{2 \leq \abs*{\cdot}^{g_{0}}_{\CP^{1}}< \frac{1}{8\epsilon}}\\
								\epsilon & \text{ on }\set*{\abs*{\cdot}^{g_{0}}_{\CP^{1}} \leq 1}
							\end{cases}$, we immediately see that $R^{*}_{\epsilon}\paren*{\frac{1}{\epsilon}\rho_{\epsilon}} = \wtilde{\rho_{0}}$ on $\set*{1 < \abs*{\cdot}^{g_{0}}_{\CP^{1}} < \frac{1}{8\epsilon}}$, the weight function used to defined the weighted Holder spaces on $T^{*}\CP^{1}$.
							
							Now recall that our point $p$ is now $p \in \set*{1 < \abs*{\cdot}^{g_{0}}_{\CP^{1}} < \frac{1}{8\epsilon}}$, where we abuse notation and denote the point where $p$ lands in $\set*{1 < \abs*{\cdot}^{g_{0}}_{\CP^{1}} < \frac{1}{8\epsilon}}$ as $p$. Let $\delta_{2,p}> 0$ be the same constant as in Proposition \ref{preciseweightfunction EH almost constancy} but shrunken so that we have that $\set*{\abs*{p}^{g_{0}}_{\CP^{1}} - \delta_{2,p}\leq \abs*{\cdot}^{g_{0}}_{\CP^{1}} \leq \abs*{p}^{g_{0}}_{\CP^{1}} + \delta_{2,p}} \subset \set*{1 < \abs*{\cdot}^{g_{0}}_{\CP^{1}} < \frac{1}{8\epsilon}}$ by openness. Now let $\sigma \in \paren*{0,1}$ be chosen such that, upon letting $R \coloneq \wtilde{\rho_{0}}(p)$ for notation's sake, we have that $B_{2\sigma R}^{\lwhat{g_{EH-0, \epsilon}}}(p) \subset \set*{\abs*{p}^{g_{0}}_{\CP^{1}} - \delta_{2,p}\leq \abs*{\cdot}^{g_{0}}_{\CP^{1}} \leq \abs*{p}^{g_{0}}_{\CP^{1}} + \delta_{2,p}} \subset \set*{1 < \abs*{\cdot}^{g_{0}}_{\CP^{1}} < \frac{1}{8\epsilon}}$. Therefore, by Proposition \ref{preciseweightfunction EH almost constancy}, we have that $\frac{999}{1000} \leq \frac{\wtilde{\rho_{0}}(q)}{\wtilde{\rho_{0}}(p)} = \frac{\wtilde{\rho_{0}}(q)}{R} \leq \frac{1001}{1000}$ holds $\forall q \in B_{2\sigma R}^{\lwhat{g_{EH-0, \epsilon}}}(p)$. 
							
							Now because $\paren*{T^{*}\CP^{1},\lwhat{g_{EH-0, \epsilon}}}$ has bounded geometry and we're on a precompact region $\paren*{\set*{1 < \abs*{\cdot}^{g_{0}}_{\CP^{1}} < \frac{1}{8\epsilon}}, \lwhat{g_{EH-0, \epsilon}}}$, we have the standard Schauder estimate $\Abs*{kf}_{C^{2,\alpha}_{\lwhat{g_{EH-0, \epsilon}}}} \leq \cK \paren*{ \Abs*{kf}_{C^{0}_{\lwhat{g_{EH-0, \epsilon}}}} + \Abs*{k \Delta_{\lwhat{g_{EH-0, \epsilon}}}f}_{C^{0,\alpha}_{\lwhat{g_{EH-0, \epsilon}}}} }$ on the metric balls $\paren*{B^{\lwhat{g_{EH-0, \epsilon}}}_{\sigma R}(p), B^{\lwhat{g_{EH-0, \epsilon}}}_{2\sigma R}(p)}$ (where we have multiplied our function by a positive scalar $k > 0$, to be determined), \textit{where $\cK> 0$ is independent of $p$}.
							
							Performing a conformal rescaling $\lwhat{g_{EH-0, \epsilon}} \mapsto \frac{1}{R^{2}}\lwhat{g_{EH-0, \epsilon}}$ and then applying Proposition \ref{scalingproperties}(\ref{scalingproperties Schauder Laplacian rescale}) with $C = \frac{1}{R}$ and $g = \lwhat{g_{EH-0, \epsilon}}$ gives us \begin{align*}
								\Abs*{kf}_{C^{0}_{\lwhat{g_{EH-0, \epsilon}}}} &+ R \Abs*{\nabla_{\lwhat{g_{EH-0, \epsilon}}}kf}_{C^{0}_{\lwhat{g_{EH-0, \epsilon}}}} + R^{2}\Abs*{\nabla^{2}_{\lwhat{g_{EH-0, \epsilon}}} kf}_{C^{0}_{\lwhat{g_{EH-0, \epsilon}}}} + R^{2 + \alpha}\sqparen*{\nabla^{2}_{\lwhat{g_{EH-0, \epsilon}}} kf}_{C^{0,\alpha}_{\lwhat{g_{EH-0, \epsilon}}}}\\ &\leq \cK\paren*{\Abs*{kf}_{C^{0}_{\lwhat{g_{EH-0, \epsilon}}}} + R^{2} \Abs*{\Delta_{\lwhat{g_{EH-0, \epsilon}}}kf}_{C^{0}_{\lwhat{g_{EH-0, \epsilon}}}} + R^{2 + \alpha} \sqparen*{\Delta_{\lwhat{g_{EH-0, \epsilon}}} kf}_{C^{0,\alpha}_{\lwhat{g_{EH-0, \epsilon}}}}  }
							\end{align*} on the metric balls $\paren*{B^{\lwhat{g_{EH-0, \epsilon}}}_{\sigma R}(p), B^{\lwhat{g_{EH-0, \epsilon}}}_{2\sigma R}(p)} = \paren*{B^{\frac{1}{R^{2}}\lwhat{g_{EH-0, \epsilon}}}_{\sigma }(p), B^{\frac{1}{R^{2}}\lwhat{g_{EH-0, \epsilon}}}_{2\sigma }(p)}$. 
							
							Now recalling that $\frac{999}{1000} \leq \frac{\wtilde{\rho_{0}}(q)}{R} \leq \frac{1001}{1000}$ holds $\forall q \in B_{2\sigma R}^{\lwhat{g_{EH-0, \epsilon}}}(p)$ and hence $\forall q \in B_{\sigma R}^{\lwhat{g_{EH-0, \epsilon}}}(p)$ as well, we get upon recalling the definition of the weighted Holder norms the following weighted Schauder estimate $\Abs*{R^{\delta}kf}_{C^{2,\alpha}_{\delta, \lwhat{g_{EH-0, \epsilon}}}} \leq \wtilde{\cC_{1}} \paren*{ \Abs*{R^{\delta}kf}_{C^{0}_{\delta, \lwhat{g_{EH-0, \epsilon}}}} + \Abs*{R^{\delta}k \Delta_{\lwhat{g_{EH-0, \epsilon}}}f}_{C^{0,\alpha}_{\delta-2, \lwhat{g_{EH-0, \epsilon}}}} }$ on the metric balls $\paren*{B^{\lwhat{g_{EH-0, \epsilon}}}_{\sigma R}(p), B^{\lwhat{g_{EH-0, \epsilon}}}_{2\sigma R}(p)} = \paren*{B^{\frac{1}{R^{2}}\lwhat{g_{EH-0, \epsilon}}}_{\sigma }(p), B^{\frac{1}{R^{2}}\lwhat{g_{EH-0, \epsilon}}}_{2\sigma }(p)}$, where the weight function here is $\wtilde{\rho_{0}}$. Similarly as in the first case, $\wtilde{\cC_{1}}> 0$ is still independent of all the parameters involved because we've only modified $\cK$ by factors of $\frac{999}{1000}$ and $\frac{1001}{1000}$ since we've applied the uniform estimate $\frac{999}{1000} \leq \frac{\wtilde{\rho_{0}}(q)}{R} \leq \frac{1001}{1000}$ on those metric balls.

							Now let us transfer everything back to $\paren*{\set*{\epsilon < \abs*{\cdot}^{g_{0}}_{\pi^{-1}\paren*{\cS}} < \frac{1}{8}}, \frac{1}{\epsilon^{2}} g_{\epsilon}}$ via applying $R_{\epsilon}$ to $\paren*{\set*{1 < \abs*{\cdot}^{g_{0}}_{\CP^{1}} < \frac{1}{8\epsilon}}, \lwhat{g_{EH-0, \epsilon}}}$ and using the identification $\set*{\epsilon < \abs*{\cdot}^{g_{0}}_{\pi^{-1}\paren*{\cS}} < \frac{1}{8}}\sim \set*{\epsilon < \abs*{\cdot}^{g_{0}}_{\CP^{1}} < \frac{1}{8}}\subset T^{*}\CP^{1}$, as well as using the relation $\frac{1}{\epsilon^{2}}g_{\epsilon} = R^{*}_{\frac{1}{\epsilon}} \paren*{\lwhat{g_{EH-0, \epsilon}}}$ and the relation $\frac{1}{\epsilon}\rho_{\epsilon} = R^{*}_{\frac{1}{\epsilon}}\paren*{\wtilde{\rho_{0}}}$. We get that our weighted Schauder estimate has now become $\Abs*{R^{\delta}kf}_{C^{2,\alpha}_{\delta, \frac{1}{\epsilon^{2}}g_{\epsilon}}} \leq \wtilde{\cC_{1}} \paren*{ \Abs*{R^{\delta}kf}_{C^{0}_{\delta, \frac{1}{\epsilon^{2}}g_{\epsilon}}} + \Abs*{R^{\delta}k \Delta_{\frac{1}{\epsilon^{2}}g_{\epsilon}}f}_{C^{0,\alpha}_{\delta-2, \frac{1}{\epsilon^{2}}g_{\epsilon} }} }$ on the metric balls $\paren*{B^{\frac{1}{\epsilon^{2}}g_{\epsilon}}_{R_{1}}(p), B^{\frac{1}{\epsilon^{2}}g_{\epsilon}}_{R_{2}}(p)}$ for some $0 < R_{1} < R_{2}$ (which may depend on $p$) such that $B^{\frac{1}{\epsilon^{2}}g_{\epsilon}}_{R_{2}}(p) \subset \set*{\epsilon < \abs*{\cdot}^{g_{0}}_{\pi^{-1}\paren*{\cS}} < \frac{1}{8}}$ and $B^{\frac{1}{\epsilon^{2}}g_{\epsilon}}_{R_{2}}(p) \subset R_{\epsilon}\paren*{B^{\lwhat{g_{EH-0, \epsilon}}}_{2\sigma R}(p)}$ (and vice versa for $R_{1}$), \textit{where the weight function used here is $\frac{1}{\epsilon}\rho_{\epsilon}$}. Now we undo the original conformal rescaling of $\frac{1}{\epsilon^{2}}g_{\epsilon}$ via performing a conformal rescaling $\frac{1}{\epsilon^{2}}g_{\epsilon} \mapsto \epsilon^{2}\frac{1}{\epsilon^{2}}g_{\epsilon} = g_{\epsilon}$ \textit{on the metric balls $\paren*{B^{\frac{1}{\epsilon^{2}}g_{\epsilon}}_{R_{1}}(p), B^{\frac{1}{\epsilon^{2}}g_{\epsilon}}_{R_{2}}(p)}$ hence yielding $\paren*{B^{\frac{1}{\epsilon^{2}}g_{\epsilon}}_{R_{1}}\paren*{p}, B^{\frac{1}{\epsilon^{2}}g_{\epsilon}}_{R_{2}}\paren*{p}} = \paren*{B^{g_{\epsilon}}_{\epsilon R_{1}}\paren*{p}, B^{g_{\epsilon}}_{\epsilon R_{2}}\paren*{p}}$}, applying Proposition \ref{scalingproperties}(\ref{scalingproperties weighted Holder tensor norm rescale}) with $C = \epsilon$ and $g = \frac{1}{\epsilon^{2}}g_{\epsilon}$ on each of the 3 weighted Holder norm terms \textit{where $g$ denotes the metric used in the weighted Holder norm}, and then setting $k = \epsilon^{\delta}R^{-\delta}$, yielding the weighted Schauder estimate $\Abs*{f}_{C^{2,\alpha}_{\delta, g_{\epsilon}}} \leq \wtilde{\cC_{1}} \paren*{ \Abs*{f}_{C^{0}_{\delta, g_{\epsilon}}} + \Abs*{\Delta_{\frac{1}{\epsilon^{2}}g_{\epsilon}}f}_{C^{0,\alpha}_{\delta-2, g_{\epsilon} }} }$ on the metric balls $\paren*{B^{\frac{1}{\epsilon^{2}}g_{\epsilon}}_{R_{1}}(p), B^{\frac{1}{\epsilon^{2}}g_{\epsilon}}_{R_{2}}(p)}= \paren*{B^{g_{\epsilon}}_{\epsilon R_{1}}\paren*{p}, B^{g_{\epsilon}}_{\epsilon R_{2}}\paren*{p}}$. Now Proposition \ref{scalingproperties}(\ref{scalingproperties scalar Laplacian rescale}) tells us that $\Delta_{\frac{1}{\epsilon^{2}} g_{\epsilon}} = \epsilon^{2} \Delta_{g_{\epsilon}}$, and since Data \ref{constant epsilon 1-16} tells us that $\epsilon^{2} < \frac{1}{16} < 1$, we finally get our sought after weighted Schauder estimate $\Abs*{f}_{C^{2,\alpha}_{\delta, g_{\epsilon}}} \leq \wtilde{\cC_{1}} \paren*{ \Abs*{f}_{C^{0}_{\delta, g_{\epsilon}}} + \Abs*{\Delta_{g_{\epsilon}}f}_{C^{0,\alpha}_{\delta-2, g_{\epsilon} }} }$ on the metric balls $\paren*{B^{\frac{1}{\epsilon^{2}}g_{\epsilon}}_{R_{1}}\paren*{p}, B^{\frac{1}{\epsilon^{2}}g_{\epsilon}}_{R_{2}}\paren*{p}} = \paren*{B^{g_{\epsilon}}_{\epsilon R_{1}}\paren*{p}, B^{g_{\epsilon}}_{\epsilon R_{2}}\paren*{p}}$ upon setting $r_{p} \coloneq \epsilon R_{1}$ and $R_{2} \coloneq 2 R_{1}$.

							\item \textbf{Case 3; $p \in \set*{\frac{3}{25} < \abs*{\cdot}^{g_{0}}_{\pi^{-1}\paren*{\cS}}}$:} By Data \ref{constant epsilon 1-16}, since $2\epsilon^{\frac{1}{2}} < \frac{3}{25}$, by construction of $g_{\epsilon}$ (Proposition \ref{approximatemetricproperties}) we have that on this region $g_{\epsilon} = g_{0}$ the flat metric. This, combined with Proposition \ref{preciseweightfunction}(\ref{preciseweightfunction past 3/25 region}) which tells us that our weight function satisfies on this region the \textit{uniform} estimate $\frac{3}{25} \leq \rho_{\epsilon} \leq 1$, the proof of the weighted Schauder estimate is exactly as in the first case, but this time using the identification $\set*{\frac{3}{25} < \abs*{\cdot}^{g_{0}}_{\pi^{-1}\paren*{\cS}}} \sim \set*{\frac{3}{25} < \abs*{\cdot}^{g_{0}}_{\cS}} \subset \bbT^{4} - \cS\subset \bbT^{4}$ from \textit{Remark} \ref{identification} and without having to do any conformal rescaling.
							
							In more detail, upon transferring everything over from $\paren*{\set*{\frac{3}{25} < \abs*{\cdot}^{g_{0}}_{\pi^{-1}\paren*{\cS}}}, g_{\epsilon}}$ to (the obviously precompact region) $\paren*{\set*{\frac{3}{25} < \abs*{\cdot}^{g_{0}}_{\cS}}, g_{0}}\subset \paren*{\bbT^{4}, g_{0}}$, and since flat tori clearly have bounded geometry, we thus automatically have the standard Schauder estimate $\Abs*{f}_{C^{2,\alpha}_{g_{0}}} \leq \cC_{1.5} \paren*{ \Abs*{f}_{C^{0}_{g_{0}}} + \Abs*{\Delta_{g_{0}}f}_{C^{0,\alpha}_{g_{0} }} }$ on the metric balls $\paren*{B^{g_{0}}_{R_{1}}\paren*{p}, B^{g_{0}}_{R_{2}}\paren*{p}}$ where $0 < R_{1} < R_{2}$ is chosen such that $B^{g_{0}}_{R_{2}}\paren*{p} \subset \set*{\frac{3}{25} < \abs*{\cdot}^{g_{0}}_{\cS}}$, and \textit{where $\cC_{1.5}> 0$ is independent of $p$}. Undoing the identification $\set*{\frac{3}{25} < \abs*{\cdot}^{g_{0}}_{\pi^{-1}\paren*{\cS}}} \sim \set*{\frac{3}{25} < \abs*{\cdot}^{g_{0}}_{\cS}} $ gives us that same estimate over $\set*{\frac{3}{25} < \abs*{\cdot}^{g_{0}}_{\pi^{-1}\paren*{\cS}}}$ on the metric balls $\paren*{B^{g_{0}}_{R_{1}}\paren*{p}, B^{g_{0}}_{R_{2}}\paren*{p}}$ (which are clearly preserved under the identification). Since we have our uniform estimate $\frac{3}{25} \leq \rho_{\epsilon} \leq 1$ on $\set*{\frac{3}{25} < \abs*{\cdot}^{g_{0}}_{\pi^{-1}\paren*{\cS}}}$, we have on any subset of $\set*{\frac{3}{25} < \abs*{\cdot}^{g_{0}}_{\pi^{-1}\paren*{\cS}}}$ that the weighted Holder norms WRT $\rho_{\epsilon}$ and the standard Holder norms are \textit{equivalent norms}, whence (up to changing the constant $\cC_{1.5}$ by harmless factors) we immediately get our sought after weighted Schauder estimate $\Abs*{f}_{C^{2,\alpha}_{\delta, g_{\epsilon}}} \leq \wtilde{\cC_{1.5}} \paren*{ \Abs*{f}_{C^{0}_{\delta, g_{\epsilon}}} + \Abs*{\Delta_{g_{\epsilon}}f}_{C^{0,\alpha}_{\delta-2, g_{\epsilon} }} }$ on the metric balls $\paren*{B^{g_{\epsilon}}_{R_{1}}\paren*{p}, B^{g_{\epsilon}}_{R_{2}}\paren*{p}}$ upon setting $r_{p} \coloneq R_{1}$ and $R_{2} \coloneq 2 R_{1}$, with $\wtilde{\cC_{1.5}}>0$ still independent of all parameters involved.
							
						\end{enumerate} Setting $\cC_{1} \coloneq \max\set*{\wtilde{\cC_{0}},\wtilde{\cC_{1}},\wtilde{\cC_{1.5}}}$ and noting that these $3$ constants are both \textit{independent of $p$} as well as \textit{independent of $\epsilon > 0$} yields us our local weighted Schauder estimate $\forall p \in \Km_{\epsilon}$, as was to be shown.\end{proof}

					\begin{cor}[Weighted Schauder Estimate]\label{WSE} \textbf{Let $\epsilon > 0$ be from Data \ref{constant epsilon 1-16} and sufficiently small.}
						We have the following estimate $\forall f \in C^{2,\alpha}_{\delta, g_{\epsilon}}\paren*{\Km_{\epsilon}}$:
						\begin{align*}
							\Abs*{f}_{C^{2,\alpha}_{\delta, g_{\epsilon}} \paren*{\Km_{\epsilon}}} &\leq \cC_{2}\paren*{\Abs*{f}_{C^{0}_{\delta, g_{\epsilon}} \paren*{\Km_{\epsilon}}} + \Abs*{\Delta_{g_{\epsilon}}f}_{C^{0,\alpha}_{\delta - 2, g_{\epsilon}} \paren*{\Km_{\epsilon}}}}\\
							&\coloneq \cC_{2}\paren*{\Abs*{\rho_{\epsilon}^{-\delta}f}_{C^{0}_{g_{\epsilon}} \paren*{\Km_{\epsilon}}} + \Abs*{\Delta_{g_{\epsilon}}f}_{C^{0,\alpha}_{\delta - 2, g_{\epsilon}} \paren*{\Km_{\epsilon}}}}
						\end{align*}
						for $\cC_{2} > 0$ \textbf{independent of $\epsilon$.}
					\end{cor}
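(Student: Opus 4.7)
The plan is to globalize the local weighted Schauder estimate of Proposition \ref{localWSE} to all of $\Km_{\epsilon}$ via a finite covering argument, the key technical point being to secure a multiplicity bound on the cover that is uniform in $\epsilon > 0$.

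First, by compactness of $\Km_{\epsilon}$ I would extract a finite subcover $\set*{B^{g_{\epsilon}}_{r_{p_{i}}}\paren*{p_{i}}}_{i=1}^{N_{\epsilon}}$ from the family of balls provided by Proposition \ref{localWSE}, chosen so that the doubled cover $\set*{B^{g_{\epsilon}}_{2r_{p_{i}}}\paren*{p_{i}}}_{i=1}^{N_{\epsilon}}$ still covers $\Km_{\epsilon}$ with pointwise multiplicity bounded by some integer $M$ independent of $\epsilon$, and with Lebesgue number at least comparable to $\min_{i} r_{p_{i}}$. I would build this cover region-by-region following the three cases of Proposition \ref{localWSE}: in the Eguchi-Hanson bubble region (Case 1) and the flat-torus region (Case 3), after the appropriate conformal rescaling each region is isometric to a precompact subset of a fixed bounded-geometry model space ($\paren*{T^{*}\CP^{1}, g_{EH,1}}$ and $\paren*{\bbT^{4}, g_{0}}$ respectively), where a standard Besicovitch/Vitali argument yields a finite cover of uniformly bounded multiplicity; in the transition region (Case 2), the radii scale linearly with the rescaled distance $\wtilde{\rho_{0}}$ to the exceptional divisor, yielding a conically self-similar dyadic cover on $\paren*{T^{*}\CP^{1}, \lwhat{g_{EH-0, \epsilon}}}$ whose multiplicity at each dyadic annulus is bounded by the uniformly bounded local geometry of $\lwhat{g_{EH-0,\epsilon}}$.

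Given the cover, I would take suprema of the local weighted Schauder estimates across $i$. For the $C^{0}, C^{1}, C^{2}$ weighted seminorms making up $\Abs*{f}_{C^{2,\alpha}_{\delta, g_{\epsilon}} \paren*{\Km_{\epsilon}}}$, the global supremum over $\Km_{\epsilon}$ is simply the maximum over $i$ of the local suprema on $B^{g_{\epsilon}}_{r_{p_{i}}}\paren*{p_{i}}$, each of which Proposition \ref{localWSE} bounds in terms of the norms of $f$ and $\Delta_{g_{\epsilon}} f$ on $B^{g_{\epsilon}}_{2 r_{p_{i}}}\paren*{p_{i}} \subset \Km_{\epsilon}$, hence on $\Km_{\epsilon}$. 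For the weighted Holder seminorm of $\nabla_{g_{\epsilon}}^{2}f$, the supremum is restricted to pairs $\paren*{x,y}$ with $d_{g_{\epsilon}}\paren*{x,y} \leq \inj_{g_{\epsilon}}$, and the Lebesgue number property above guarantees that any such pair lies in a common doubled ball where Proposition \ref{localWSE} again applies. Setting $\cC_{2} \coloneq \cC_{1}$ (with $M$ absorbed harmlessly into constants) then produces the desired uniform estimate.

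The main technical obstacle is securing the multiplicity bound in Case 2, since the rescaled region $\paren*{\set*{1 < \abs*{\cdot}^{g_{0}}_{\CP^{1}} < \frac{1}{8\epsilon}}, \lwhat{g_{EH-0,\epsilon}}}$ has diameter blowing up as $\epsilon \searrow 0$, so a naive global Vitali argument on it would only yield $N_{\epsilon} = O\paren*{\log\paren*{\frac{1}{\epsilon}}}$ balls with no a priori control on multiplicity. The resolution is that the radii $\sigma \wtilde{\rho_{0}}\paren*{p}$ are themselves proportional to $\wtilde{\rho_{0}}$, making the cover conically self-similar: at each dyadic scale $\set*{2^{k} \leq \wtilde{\rho_{0}} < 2^{k+1}}$, the local geometry of $\lwhat{g_{EH-0,\epsilon}}$ (which interpolates between $g_{EH,1}$ and the Euclidean $g_{0}$) is uniformly bounded, so the multiplicity of the radius-$\sigma 2^{k}$ balls needed to cover that annulus is bounded by a dimensional constant independent of $k$ and $\epsilon$. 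Summing over dyadic annuli, the pointwise multiplicity $M$ over all of $\Km_{\epsilon}$ is then uniformly bounded in $\epsilon$ even though $N_{\epsilon}$ itself may diverge logarithmically.
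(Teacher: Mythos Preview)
Your covering approach is a genuinely different route from the paper's proof, which is a short contradiction argument: assuming the estimate fails, one extracts sequences $\epsilon_i \searrow 0$ and $f_i$ normalized so that $\Abs*{f_i}_{C^{2,\alpha}_\delta} = 1$ while $\Abs*{f_i}_{C^0_\delta} + \Abs*{\Delta f_i}_{C^{0,\alpha}_{\delta-2}} < 1/i$; since the $C^0_\delta$ piece is small, at least one of the three remaining terms in the $C^{2,\alpha}_\delta$ norm must be $\geq 1/6$, and this term is realized at (or near) a single point $p_i$. Applying Proposition \ref{localWSE} at that one point and bounding the local right-hand side by the global right-hand side $< \cC_1/i \to 0$ gives the contradiction. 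No cover, no multiplicity, no Lebesgue number is needed.

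Your approach can be made to work, but as written it has two issues. First, the multiplicity bound you labor over is a red herring: all the norms here are sup-type, so bounding the global supremum by the maximum of local suprema requires only that the balls cover $\Km_\epsilon$, not that they cover with bounded overlap (multiplicity matters for $L^p$-type norms, not Holder norms). Second, and more substantively, the Holder seminorm step has a gap: having Lebesgue number comparable to $\min_i r_{p_i}$ does \emph{not} guarantee that every pair $(x,y)$ with $d_{g_\epsilon}(x,y) \leq \inj_{g_\epsilon}$ lies in a common ball $B_{r_{p_i}}(p_i)$ (the smaller ball, where the left-hand side of Proposition \ref{localWSE} lives), since the radii $r_{p_i}$ range from $\sim \epsilon$ near the exceptional spheres to $\sim 1$ in the flat region and $\inj_{g_\epsilon}$ itself varies similarly. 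The standard fix is to split into ``close'' pairs, say $d_{g_\epsilon}(x,y) \leq c\min\{\rho_\epsilon(x), \rho_\epsilon(y)\}$, which by construction of the radii do lie in a common ball, and ``far'' pairs, for which the weighted Holder quotient is bounded directly by the weighted $C^2$ norms at $x$ and $y$ separately via the triangle inequality; you do not supply this argument. The paper's contradiction argument sidesteps all of this by only ever needing the local estimate at a single point.
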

					
					\begin{proof}
						
						Suppose for contradiction that the estimate fails. This implies by countable choice that $\forall i \in \bbN$, $\exists \epsilon_{i} \in \paren*{0,\frac{1}{i}}$ whence $\exists\epsilon_{i}\searrow 0$ and $\exists f_{i} \in  C^{2,\alpha}_{\delta, g_{\epsilon_{i}}}\paren*{\Km_{\epsilon_{i}}}$ a sequence such that (after normalizing) \begin{align*}
							\Abs*{f_{i}}_{C^{2,\alpha}_{\delta, g_{\epsilon_{i}}} \paren*{\Km_{\epsilon_{i}}}} &= 1\\
							\Abs*{f_{i}}_{C^{0}_{\delta, g_{\epsilon_{i}}} \paren*{\Km_{\epsilon_{i}}}} + \Abs*{\Delta_{g_{\epsilon_{i}}}f_{i}}_{C^{0,\alpha}_{\delta - 2, g_{\epsilon_{i}}} \paren*{\Km_{\epsilon_{i}}}} &< \frac{1}{i}
						\end{align*}

						Now since $\Abs*{f_{i}}_{C^{2,\alpha}_{\delta, g_{\epsilon_{i}}} \paren*{\Km_{\epsilon_{i}}}} \coloneq \Abs*{f_{i}}_{C^{0}_{\delta, g_{\epsilon_{i}}} \paren*{\Km_{\epsilon_{i}}}} + \Abs*{\nabla_{g_{\epsilon_{i}}} f_{i}}_{C^{0}_{\delta, g_{\epsilon_{i}}} \paren*{\Km_{\epsilon_{i}}}} + \Abs*{\nabla^{2}_{g_{\epsilon_{i}}} f_{i}}_{C^{0}_{\delta, g_{\epsilon_{i}}} \paren*{\Km_{\epsilon_{i}}}} + \sqparen*{ f_{i}}_{C^{2,\alpha}_{\delta, g_{\epsilon_{i}}} \paren*{\Km_{\epsilon_{i}}}} = 1$ and $\Abs*{f_{i}}_{C^{0}_{\delta, g_{\epsilon_{i}}} \paren*{\Km_{\epsilon_{i}}}}\leq \frac{1}{i}$, we have by a simple inequality bash that either $\frac{1-\frac{1}{i}}{3} \leq \Abs*{\nabla_{g_{\epsilon_{i}}} f_{i}}_{C^{0}_{\delta, g_{\epsilon_{i}}} \paren*{\Km_{\epsilon_{i}}}}$ or $\frac{1-\frac{1}{i}}{3} \leq \Abs*{\nabla^{2}_{g_{\epsilon_{i}}} f_{i}}_{C^{0}_{\delta, g_{\epsilon_{i}}} \paren*{\Km_{\epsilon_{i}}}}$ or $\frac{1-\frac{1}{i}}{3} \leq \sqparen*{ f_{i}}_{C^{2,\alpha}_{\delta, g_{\epsilon_{i}}} \paren*{\Km_{\epsilon_{i}}}}$ holds $\forall i \in \bbN$, aka since $1-\frac{1}{i}$ is strictly monotone increasing in $i\in\bbN$ we have either $\frac{1}{6} \leq \Abs*{\nabla_{g_{\epsilon_{i}}} f_{i}}_{C^{0}_{\delta, g_{\epsilon_{i}}} \paren*{\Km_{\epsilon_{i}}}}$ or $\frac{1}{6} \leq \Abs*{\nabla^{2}_{g_{\epsilon_{i}}} f_{i}}_{C^{0}_{\delta, g_{\epsilon_{i}}} \paren*{\Km_{\epsilon_{i}}}}$ or $\frac{1}{6} \leq \sqparen*{ f_{i}}_{C^{2,\alpha}_{\delta, g_{\epsilon_{i}}} \paren*{\Km_{\epsilon_{i}}}}$ holds $\forall i \in \bbN-\set*{1}$. Hence $\exists p_{i} \in \Km_{\epsilon_{i}}$ and an $r_{p_{i}} > 0$ such that either $\frac{1}{6} \leq \abs*{\rho_{\epsilon_{i}}(p_{i})^{-\delta + 1}\nabla_{g_{\epsilon_{i}}} f_{i}(p_{i})}_{g_{\epsilon_{i}}}$ or $\frac{1}{6} \leq\abs*{\rho_{\epsilon_{i}}(p_{i})^{-\delta + 2}\nabla^{2}_{g_{\epsilon_{i}}} f_{i}(p_{i})}_{g_{\epsilon_{i}}}$ or $\frac{1}{6} \leq \sqparen*{ f_{i}}_{C^{2,\alpha}_{\delta, g_{\epsilon_{i}}} \paren*{B^{g_{\epsilon}}_{r_{p_{i}}}\paren*{p_{i}}}}$ holds $\forall i \in \bbN-\set*{1}$. But since $A\subseteq B\Longrightarrow \Abs*{\cdot}_{C^{k,\alpha}\paren*{A}}\leq \Abs*{\cdot}_{C^{k,\alpha}\paren*{B}}$ (aka locality of Holder norms), we have by our local estimate Proposition \ref{localWSE} that $\Abs*{f_{i}}_{C^{2,\alpha}_{\delta, g_{\epsilon_{i}}} \paren*{B^{g_{\epsilon_{i}}}_{r_{p_{i}}}\paren*{p_{i}}}} \leq \cC_{1}\paren*{\Abs*{f_{i}}_{C^{0}_{\delta, g_{\epsilon_{i}}} \paren*{B^{g_{\epsilon_{i}}}_{2r_{i}}\paren*{p_{i}}}} + \Abs*{\Delta_{g_{\epsilon_{i}}}f_{i}}_{C^{0,\alpha}_{\delta - 2, g_{\epsilon_{i}}} \paren*{B^{g_{\epsilon_{i}}}_{2r_{p_{i}}}\paren*{p_{i}}}}}\leq \cC_{1}\paren*{\Abs*{f_{i}}_{C^{0}_{\delta, g_{\epsilon_{i}}} \paren*{\Km_{\epsilon_{i}}}} + \Abs*{\Delta_{g_{\epsilon_{i}}}f_{i}}_{C^{0,\alpha}_{\delta - 2, g_{\epsilon_{i}}} \paren*{\Km_{\epsilon_{i}}}}}\leq \frac{\cC_{1}}{i}\searrow 0$ as $i\nearrow\infty$. But then we would have either $\frac{1}{6} \leq \abs*{\rho_{\epsilon_{i}}(p_{i})^{-\delta + 1}\nabla_{g_{\epsilon_{i}}} f_{i}(p_{i})}_{g_{\epsilon_{i}}}\leq \Abs*{f_{i}}_{C^{2,\alpha}_{\delta, g_{\epsilon_{i}}} \paren*{B^{g_{\epsilon_{i}}}_{r_{p_{i}}}\paren*{p_{i}}}}\searrow 0$ or $\frac{1}{6} \leq\abs*{\rho_{\epsilon_{i}}(p_{i})^{-\delta + 2}\nabla^{2}_{g_{\epsilon_{i}}} f_{i}(p_{i})}_{g_{\epsilon_{i}}}\leq \Abs*{f_{i}}_{C^{2,\alpha}_{\delta, g_{\epsilon_{i}}} \paren*{B^{g_{\epsilon_{i}}}_{r_{p_{i}}}\paren*{p_{i}}}}\searrow 0$ or $\frac{1}{6} \leq \sqparen*{ f_{i}}_{C^{2,\alpha}_{\delta, g_{\epsilon_{i}}} \paren*{B^{g_{\epsilon}}_{r_{p_{i}}}\paren*{p_{i}}}}\leq \Abs*{f_{i}}_{C^{2,\alpha}_{\delta, g_{\epsilon_{i}}} \paren*{B^{g_{\epsilon_{i}}}_{r_{p_{i}}}\paren*{p_{i}}}}\searrow 0$ as $i\nearrow \infty$, aka a contradiction in all 3 cases.\end{proof}

					We now want to remove the $C^{0}$ term on the RHS of our above weighted Schauder estimate. This allows us to get a \textbf{uniform, i.e. independent of $\epsilon > 0$, bound on the inverse of $\Delta_{g_{\epsilon}}$}:

					\begin{prop}[Improved Weighted Schauder Estimate]\label{IWSE} \textbf{Let the rate/weight parameter satisfy $\delta \in (-2,0)$.} \textbf{Let $\epsilon > 0$ be as from Data \ref{constant epsilon 1-16}.}

						Then for $\epsilon > 0$ \textbf{\textit{sufficiently small}} there exists a constant $\cC_{3} > 0$ \textbf{independent of $\epsilon>0$} such that we have the following estimate for $\Delta_{g_{\epsilon}}: C^{2,\alpha}_{\delta, g_{\epsilon}} \paren*{\Km_{\epsilon}}^{0} \rightarrow C^{0,\alpha}_{\delta-2, g_{\epsilon}} \paren*{\Km_{\epsilon}}^{0}$:
						$$\Abs*{f}_{C^{2,\alpha}_{\delta, g_{\epsilon}} \paren*{\Km_{\epsilon}}} \leq \cC_{3} \Abs*{\Delta_{g_{\epsilon}}f}_{C^{0,\alpha}_{\delta - 2, g_{\epsilon}} \paren*{\Km_{\epsilon}}},\qqfa f \in C^{2,\alpha}_{\delta, g_{\epsilon}} \paren*{\Km_{\epsilon}}^{0}$$

						Note that $f \in C^{2,\alpha}_{\delta, g_{\epsilon}} \paren*{\Km_{\epsilon}}^{0}$ i.e. the estimate is restricted to the closed subspace of functions with integral zero, i.e. $\int_{\Km_{\epsilon}} f \omega_{\epsilon}^{2} = 0$.
					\end{prop}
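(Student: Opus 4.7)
The plan is a standard blow-up/contradiction argument exploiting the weighted Schauder estimate from Corollary \ref{WSE}. Suppose the estimate fails. Then there exist sequences $\epsilon_i \searrow 0$ and $f_i \in C^{2,\alpha}_{\delta,g_{\epsilon_i}}\paren*{\Km_{\epsilon_i}}^{0}$ with $\Abs*{f_i}_{C^{2,\alpha}_{\delta,g_{\epsilon_i}}} = 1$ yet $\Abs*{\Delta_{g_{\epsilon_i}}f_i}_{C^{0,\alpha}_{\delta-2,g_{\epsilon_i}}} \to 0$. Feeding this into Corollary \ref{WSE} forces $\Abs*{f_i}_{C^{0}_{\delta,g_{\epsilon_i}}}$ to stay bounded away from zero, say $\geq \frac{1}{2\cC_2}$. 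Pick points $p_i \in \Km_{\epsilon_i}$ almost achieving this sup, i.e.\ with $\rho_{\epsilon_i}(p_i)^{-\delta}\abs*{f_i(p_i)} \geq \frac{1}{4\cC_2}$, and set $R_i \coloneq \rho_{\epsilon_i}(p_i) \in [\epsilon_i,1]$. After passing to a subsequence, one of three mutually exclusive regimes occurs: (A) $R_i \to R_\infty > 0$ (outer region); (B) $R_i \to 0$ but $R_i/\epsilon_i \to \infty$ (intermediate annular region); (C) $R_i/\epsilon_i \to c \in [1,\infty)$ (bubble region).

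In each case, form the rescaled function $\wtilde{f_i} \coloneq R_i^{-\delta} f_i$ together with an appropriate conformal rescaling of $g_{\epsilon_i}$, chosen so that Proposition \ref{scalingproperties} converts $\Delta_{g_{\epsilon_i}}f_i \to 0$ in $C^{0,\alpha}_{\delta-2,g_{\epsilon_i}}$ into $\Delta_{\text{rescaled}} \wtilde{f_i} \to 0$ in local $C^{0,\alpha}$. By construction $\abs*{\wtilde{f_i}(p_i)} \geq \frac{1}{4\cC_2}$, and the weighted bound $\Abs*{f_i}_{C^{2,\alpha}_{\delta,g_{\epsilon_i}}} = 1$ transfers to a local $C^{2,\alpha}$ bound on $\wtilde{f_i}$ via the uniform weight estimates of Propositions \ref{preciseweightfunction} and \ref{preciseweightfunction EH almost constancy}. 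Applying local Schauder estimates and Arzel\`{a}--Ascoli yields a subsequential limit $\wtilde{f_\infty}$ which is harmonic on the appropriate limit space with a pointwise growth/decay estimate coming from the weighted norm bound.

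The limit spaces and Liouville arguments are: In case (A), the rescaling is trivial, $\Km_{\epsilon_i}$ GH-converges to $\bbT^4/\bbZ_2$, and $\wtilde{f_\infty}$ extends to a bounded harmonic function on $\bbT^4/\bbZ_2$ using the removable singularities theorem (since $\delta > -2 = 2-\dim_\bbR M$, the growth rate is weaker than the fundamental solution); then the integral-zero condition (passed to the limit using volume non-collapse) forces $\wtilde{f_\infty}$ to be the zero constant, contradicting $\wtilde{f_\infty}(p_\infty) \neq 0$. In case (B), the exceptional divisor shrinks to a point and the outer torus flies off, yielding a limit on $\bbR^4/\bbZ_2 - \set*{0}$ (equivalently $\bbR^4 - \set*{0}$) with $\abs*{\wtilde{f_\infty}(x)} \leq C\abs*{x}^\delta$; since $\delta \in (-2,0)$ the singularity at the origin is removable and $\wtilde{f_\infty}$ decays at infinity, so Liouville forces $\wtilde{f_\infty} \equiv 0$, contradiction. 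In case (C), the limit is the genuine Eguchi--Hanson space $\paren*{T^*\CP^1, g_{EH,1}}$, and $\wtilde{f_\infty}$ is a bounded harmonic function decaying at the ALE end; on an ALE Ricci-flat $4$-manifold such a function must vanish identically, again contradicting $\wtilde{f_\infty}(p_\infty) \neq 0$.

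The main obstacle is bookkeeping: making sure the conformal rescalings in cases (B) and (C) are precisely the ones required by Proposition \ref{scalingproperties} so that (i) the local Schauder constants stay uniform and (ii) the weight function $\rho_{\epsilon_i}$ rescales to the correct asymptotic weight ($\abs*{\cdot}^{g_{0}}_{0}$ on $\bbR^4/\bbZ_2$, resp.\ $\wtilde{\rho_0}$ on $T^*\CP^1$) so that the pointwise growth bound on $\wtilde{f_\infty}$ has \emph{exactly} the right exponent $\delta$. Because $\delta$ lies in the open interval $(-2,0)$ where there are no nonzero homogeneous harmonic functions on $\bbR^4 - \set*{0}$ of that growth rate, this is where the choice of admissible weights is essential, and it is also where the argument would fail at the endpoints $\delta = 0$ (constants survive) and $\delta = -2$ (the Green's function survives).
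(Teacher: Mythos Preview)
Your proposal is correct and follows a standard blow-up strategy, but it is organized differently from the paper's proof. The paper normalizes $\Abs*{f_i}_{C^{0}_{\delta,g_{\epsilon_i}}}=1$ (not the full $C^{2,\alpha}_\delta$ norm) and, rather than tracking a point $p_i$ and splitting according to the scale $R_i=\rho_{\epsilon_i}(p_i)$, it covers $\Km_{\epsilon_i}$ by two \emph{overlapping} regions $\{2\epsilon_i^{1/2}<\abs*{\cdot}^{g_0}_{\pi^{-1}(\cS)}\}$ and $\{\abs*{\cdot}^{g_0}_{\pi^{-1}(\cS)}<3\epsilon_i^{1/2}\}$ and shows $\Abs*{f_i}_{C^0_\delta}\to 0$ on compact subsets of each. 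The first region limits to the punctured torus $\bbT^4-\cS$ (Laurent expansion plus $\delta>-2$ removes the punctures, then the integral-zero condition kills the constant), and the second, after rescaling by $\epsilon_i^{-2}$, exhausts the Eguchi--Hanson space (bounded harmonic function decaying at the ALE end, hence zero). Your intermediate ``neck'' case~(B) with limit $\bbR^4/\bbZ_2-\{0\}$ never appears separately in the paper: the overlap of the two regions absorbs it. Your three-case point-tracking version is equally valid and is arguably the more common presentation in the gluing literature; the paper's two-region version is slightly more economical.

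One detail worth making explicit in your Case~(A): passing the integral-zero condition to the limit requires checking that $\int_{\{\abs*{\cdot}\le 2\epsilon_i^{1/2}\}} f_i\,\omega_{\epsilon_i}^2 \to 0$. The paper does this by combining $\abs*{f_i}\le \rho_{\epsilon_i}^\delta\le \epsilon_i^\delta$ with the volume bound $\Vol_{g_0}\paren*{\{\abs*{\cdot}\le 2\epsilon_i^{1/2}\}}=O(\epsilon_i^2)$, giving $O(\epsilon_i^{\delta+2})\to 0$ since $\delta>-2$; this is a second, independent place where the lower bound on $\delta$ is used, beyond the removable-singularities step.
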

					
					\begin{remark}\label{closedrangedetector}
						This inequality precisely means that when $\delta \in \paren*{-2,0}$ and the gluing parameter $\epsilon > 0$ is sufficiently small, $\Delta_{g_{\epsilon}}: C^{2,\alpha}_{\delta, g_{\epsilon}} \paren*{\Km_{\epsilon}}^{0} \rightarrow C^{0,\alpha}_{\delta-2, g_{\epsilon}} \paren*{\Km_{\epsilon}}^{0}$ is injective, \textit{uniformly} bounded below, and has \textit{closed range}.
					\end{remark}
					
					\begin{proof} 
						We prove this by contradiction via a ``blow-up'' analysis on each region $\set*{\abs*{\cdot}^{g_{0}}_{\pi^{-1}\paren*{\cS}} < 3\epsilon^{\frac{1}{2}}}$ and $\set*{2\epsilon^{\frac{1}{2}} < \abs*{\cdot}^{g_{0}}_{\pi^{-1}\paren*{\cS}}}$ of the cover $\Km_{\epsilon} \subset  \set*{\abs*{\cdot}^{g_{0}}_{\pi^{-1}\paren*{\cS}} < 3\epsilon^{\frac{1}{2}}}\cup \set*{2\epsilon^{\frac{1}{2}} < \abs*{\cdot}^{g_{0}}_{\pi^{-1}\paren*{\cS}}}$.

						By our weighted Schauder estimate proved above (Corollary \ref{WSE}), we may instead prove $$\Abs*{f}_{C^{0}_{\delta, g_{\epsilon}} \paren*{\Km_{\epsilon}}} \leq \cC_{4}\Abs*{\Delta_{g_{\epsilon}}f}_{C^{0,\alpha}_{\delta - 2, g_{\epsilon}} \paren*{\Km_{\epsilon}}}$$
						
						with the same assumptions ($\delta \in (-2,0)$, etc.), whence $\cC_{3}\coloneq \cC_{2}\paren*{\cC_{4}+ 1}$. Now assume for contradiction that the statement for this latter inequality fails. Then this implies by countable choice that $\exists \epsilon_{i} \in \paren*{0,\frac{1}{i}}$ whence $\exists \epsilon_{i} \searrow 0$ and $\exists f_{i} \in  C^{2,\alpha}_{\delta, g_{\epsilon_{i}}} \paren*{\Km_{\epsilon_{i}}}^{0}$ a sequence such that (after normalizing) we get a tuple of equations \begin{align*}
							\Abs*{f_{i}}_{C^{0}_{\delta, g_{\epsilon_{i}}} \paren*{\Km_{\epsilon_{i}}}} &= 1\\
							\Abs*{\Delta_{g_{\epsilon_{i}}}f_{i}}_{C^{0,\alpha}_{\delta - 2, g_{\epsilon_{i}}} \paren*{\Km_{\epsilon_{i}}}} &\leq \frac{1}{i}
						\end{align*}

						Note that our weighted Schauder estimate Corollary \ref{WSE} immediately gives us $$\Abs*{f_{i}}_{C^{2,\alpha}_{\delta, g_{\epsilon_{i}}} \paren*{\Km_{\epsilon_{i}}}} \leq 2\cC_{2}$$

						Let us now examine the following two regions: $\set*{2\epsilon_{i}^{\frac{1}{2}} < \abs*{\cdot}^{g_{0}}_{\pi^{-1}\paren*{\cS}}}\subset \Km_{\epsilon_{i}}$ and $\set*{\abs*{\cdot}^{g_{0}}_{\pi^{-1}\paren*{\cS}} < 3\epsilon_{i}^{\frac{1}{2}}}\subset \Km_{\epsilon_{i}}$.

						\begin{enumerate}
							\item \textbf{Case 1; $\set*{2\epsilon_{i}^{\frac{1}{2}} < \abs*{\cdot}^{g_{0}}_{\pi^{-1}\paren*{\cS}}}$:} Firstly, from \textit{Remark} \ref{identification} let us identify each region $\set*{2\epsilon_{i}^{\frac{1}{2}} < \abs*{\cdot}^{g_{0}}_{\pi^{-1}\paren*{\cS}}} \subset \Km_{\epsilon_{i}}$ with $\set*{2\epsilon_{i}^{\frac{1}{2}} < \abs*{\cdot}^{g_{0}}_{\cS}} \subset \bbT^{4}/\bbZ_{2} - \cS$ (hence we're in a \textit{fixed} ambient manifold). Similarly from that remark, let us denote the lift of $\set*{2\epsilon_{i}^{\frac{1}{2}} < \abs*{\cdot}^{g_{0}}_{\cS}} \subset \bbT^{4}/\bbZ_{2} - \cS$ to $\bbT^{4} - \cS$ via the double cover as $\lwhat{\set*{2\epsilon_{i}^{\frac{1}{2}} < \abs*{\cdot}^{g_{0}}_{\cS}}} \subset \bbT^{4} - \cS$, and denote the resulting lift of the metric $g_{\epsilon_{i}}$ as $\what{g_{\epsilon_{i}}}$. 
							
							Next, we note that by construction (Proposition \ref{approximatemetricproperties}) that each $g_{\epsilon_{i}} = g_{0}$ on each $\set*{2\epsilon_{i}^{\frac{1}{2}} < \abs*{\cdot}^{g_{0}}_{\pi^{-1}\paren*{\cS}}}$ hence on $\set*{2\epsilon_{i}^{\frac{1}{2}} < \abs*{\cdot}^{g_{0}}_{\cS}}$ and hence $\what{g_{\epsilon_{i}}} = g_{0}$ on $\lwhat{\set*{2\epsilon_{i}^{\frac{1}{2}} < \abs*{\cdot}^{g_{0}}_{\cS}}}$. Now since both $\set*{2\epsilon_{i}^{\frac{1}{2}} < \abs*{\cdot}^{g_{0}}_{\cS}}$, $\lwhat{\set*{2\epsilon_{i}^{\frac{1}{2}} < \abs*{\cdot}^{g_{0}}_{\cS}}}$ are in \textit{fixed} ambient manifolds $\bbT^{4}/\bbZ_{2} - \cS$, $\bbT^{4} - \cS$ respectively, and since $\bbZ_{2}$ acts on $\bbT^{4} - \cS$ and on each region $\lwhat{\set*{2\epsilon_{i}^{\frac{1}{2}} < \abs*{\cdot}^{g_{0}}_{\cS}}} \subset \bbT^{4} - \cS$ in the same way, combined with the fact that our metric is precisely $g_{0}$ we may easily construct \textbf{equivariant Gromov-Hausdorff $\epsilon$-approximations} for arbitrarily small $\epsilon$\footnote{In fact, just take $\epsilon = 3\epsilon_{i}^{\frac{1}{2}}$.} and see that we have the following\footnote{Note that our spaces here are \textit{non-compact}. Hence the (equivariant) Gromov-Hausdorff topology is induced by the (equivariant) Gromov-Hausdorff \textit{pseudo}-metric. The only time compactness of the metric spaces is ever involved is to show that $d_{GH}\paren*{X,Y} = 0 \Longrightarrow X \overset{\text{isometric}}{\cong} Y$.} \textbf{equivariant Gromov-Hausdorff convergence}\footnote{\label{GH to Ckaloc footnote outer region}In fact, since our manifolds are all open subsets of a \textit{fixed} ambient manifold, and our metric tensors are all the same and are all induced from the ambient metric (which is flat) via restriction, since the flat metric clearly is Einstein and since each manifold is non-collapsed we may upgrade this convergence to \textbf{$C^{\infty}_{loc}$-convergence} via harmonic coordinates and elliptic regularity. Moreover, we may choose to thicken our region to instead be $\set*{\frac{1}{2}\epsilon_{i}^{\frac{1}{2}} < \abs*{\cdot}^{g_{0}}_{\cS}}$ and still get the Gromov-Hausdorff convergence to $\paren*{\bbT^{4}/\bbZ_{2} - \cS,g_{0}}$.
							} (see \cite{RongBook}) as $i\nearrow \infty$: 
							\[\begin{tikzcd}[ampersand replacement=\&]
								{\paren*{\lwhat{\set*{2\epsilon_{i}^{\frac{1}{2}} < \abs*{\cdot}^{g_{0}}_{\cS}}}, \what{g_{\epsilon_{i}}}}} \& {\paren*{\bbT^{4} - \cS,g_{0}}} \\
								{\paren*{\set*{2\epsilon_{i}^{\frac{1}{2}} < \abs*{\cdot}^{g_{0}}_{\cS}}, g_{\epsilon_{i}}}} \& {\paren*{\bbT^{4}/\bbZ_{2} - \cS, g_{0}}}
								\arrow["eqGH", from=1-1, to=1-2]
								\arrow["{2:1}"', two heads, from=1-1, to=2-1]
								\arrow["{2:1}", two heads, from=1-2, to=2-2]
								\arrow["GH"', from=2-1, to=2-2]
							\end{tikzcd}\]

							Abuse notation and denote by $f_{i}$ to mean \textit{both} the restriction of each $f_{i} \in C^{2,\alpha}_{\delta, g_{\epsilon_{i}}} \paren*{\Km_{\epsilon_{i}}}^{0}$ to $\set*{2\epsilon_{i}^{\frac{1}{2}} < \abs*{\cdot}^{g_{0}}_{\pi^{-1}\paren*{\cS}}}$ as well as the transferal of $f_{i}$ to $\set*{2\epsilon_{i}^{\frac{1}{2}} < \abs*{\cdot}^{g_{0}}_{\cS}} \subset \bbT^{4}/\bbZ_{2} - \cS$, and denote the lift of each $f_{i}$ to $\lwhat{\set*{2\epsilon_{i}^{\frac{1}{2}} < \abs*{\cdot}^{g_{0}}_{\cS}}}$ as $\what{f_{i}}$. Hence $\what{f_{i}}$ is even/$\bbZ_{2}$-invariant by Proposition \ref{CoveringSpacesInvariantForms}.

							Now since we have $\Abs*{f_{i}}_{C^{2,\alpha}_{\delta, g_{\epsilon_{i}}} \paren*{\Km_{\epsilon_{i}}}} \leq 2\cC_{2}$, we thus get $\Abs*{f_{i}}_{C^{2,\alpha}_{\delta, g_{\epsilon_{i}}} \paren*{\set*{2\epsilon_{i}^{\frac{1}{2}} < \abs*{\cdot}^{g_{0}}_{\cS}}}} \leq 2\cC_{2}$ and hence the same bound for each $\what{f_{i}}$, namely $$\Abs*{\what{f_{i}}}_{C^{2,\alpha}_{\delta, \what{g_{\epsilon_{i}}}} \paren*{\lwhat{\set*{2\epsilon_{i}^{\frac{1}{2}} < \abs*{\cdot}^{g_{0}}_{\cS}}}}} \leq 2\cC_{2}$$

							Therefore, by Arzela-Ascoli, upon relabeling the original sequence of $f_{i}$ we have that \begin{gather*}
								\what{f_{i}} \xrightarrow{C^{2,\alpha'}_{\delta, \what{g_{\epsilon_{i}}}, loc}} \what{f_{\infty}}\\
								\Abs*{\what{f_{\infty}}}_{C^{2,\alpha'}_{\delta, g_{0}} \paren*{\bbT^{4} - \cS}} \leq 2\cC_{2}
							\end{gather*}
							
							with $0 < \alpha' < \alpha < 1$, for some function $\what{f_{\infty}}$ on $\bbT^{4} - \cS$. Here, the $loc$ convergence means on compact subsets, i.e. the convergence $\xrightarrow{C^{2,\alpha'}_{\delta, \what{g_{\epsilon_{i}}}, loc}}$ denotes that (recall $\what{g_{\epsilon_{i}}} = g_{0}$) $\Abs*{\what{f_{i}} - \what{f_{\infty}}}_{C^{2,\alpha'}_{\delta, \what{g_{\epsilon_{i}}}} \paren*{K}} \searrow 0$ hence implies $\Abs*{\what{f_{i}}}_{C^{2,\alpha'}_{\delta, \what{g_{\epsilon_{i}}}} \paren*{K}} \to \Abs*{\what{f_{\infty}}}_{C^{2,\alpha'}_{\delta, g_{0}} \paren*{K}}$, holding for all compact subsets $K \Subset \lwhat{\set*{2\epsilon_{i}^{\frac{1}{2}} < \abs*{\cdot}^{g_{0}}_{\cS}}}\subset \bbT^{4} - \cS$. The second bound holds everywhere on $\bbT^{4}-\cS$ (i.e. $\Abs*{\what{f_{\infty}}}_{C^{2,\alpha'}_{\delta, g_{0}} \paren*{\bbT^{4} - \cS}} \leq 2\cC_{2}$ as opposed to $\Abs*{\what{f_{\infty}}}_{C^{2,\alpha'}_{\delta, g_{0},loc} \paren*{\bbT^{4} - \cS}} \leq 2\cC_{2}$) because we may let our compact subsets have unit diameter (since $\Abs*{f}_{C^{k,\alpha'}\paren*{K}} \leq \max\set*{1,\paren*{\diam K}^{\alpha - \alpha'}}\Abs*{f}_{C^{k,\alpha}\paren*{K}}$). Moreover, by the equivariant Gromov-Hausdorff convergence, we have that $\what{f_{\infty}}$ is even/$\bbZ_{2}$-invariant and hence descends down to a function $f_{\infty}$ on $\bbT^{4}/\bbZ_{2} - \cS$ by Proposition \ref{CoveringSpacesInvariantForms}. Moreover, the weight function used in $C^{2,\alpha'}_{\delta, g_{0}} \paren*{\bbT^{4} - \cS}$ is precisely the limiting weight function which satisfies $$\rho_{0} = \begin{cases}
								1 & \text{ on }\set*{\frac{1}{5}\leq \abs*{\cdot}^{g_{0}}_{\cS}}\\
								\abs*{\cdot}^{g_{0}}_{\cS} & \text{ on }\set*{0<\abs*{\cdot}^{g_{0}}_{\cS} \leq \frac{1}{8}}
							\end{cases},\qquad\abs*{\nabla \rho_{0}}\leq \frac{35C_{1}}{3}\text{ on }\set*{\frac{1}{8}\leq \abs*{\cdot}^{g_{0}}_{\cS}\leq \frac{1}{5}}$$
							
							aka in the limit we get that $\lim_{i\nearrow \infty}\rho_{\epsilon_{i}} = \rho_{0}$ is precisely the weight function used to define $C^{k,\alpha}_{\delta,g_{0}} \paren*{\bbT^{4} - \cS}$ back in Section \ref{Weighted Setup}. 
							
							Playing the same game with $\Abs*{\Delta_{g_{\epsilon_{i}}}f_{i}}_{C^{0,\alpha}_{\delta - 2, g_{\epsilon_{i}}} \paren*{\Km_{\epsilon_{i}}}} \leq \frac{1}{i}$, namely that this bound directly implies the bounds $\Abs*{\Delta_{g_{\epsilon_{i}}}f_{i}}_{C^{0,\alpha}_{\delta - 2, g_{\epsilon_{i}}} \paren*{\set*{2\epsilon_{i}^{\frac{1}{2}} < \abs*{\cdot}^{g_{0}}_{\cS}}}} \leq \frac{1}{i}$ and $\Abs*{\Delta_{\what{g_{\epsilon_{i}}}}\what{f_{i}}}_{C^{0,\alpha}_{\delta - 2, \what{g_{\epsilon_{i}}}} \paren*{\lwhat{\set*{2\epsilon_{i}^{\frac{1}{2}} < \abs*{\cdot}^{g_{0}}_{\cS}}}}} \leq \frac{1}{i}$, that we may take the limit as $i\nearrow \infty$ and get $$\Delta_{g_{0}} \what{f_{\infty}} = 0$$ on $\paren*{\bbT^{4} - \cS,g_{0}}$ (since $\Delta_{g_{0}} \what{f_{\infty}} = 0$ on all compact subsets of $\bbT^{4} - \cS$, hence on all of $\bbT^{4} - \cS$). Now as explained back in Section \ref{Weighted Setup}, $\Abs*{\what{f_{\infty}}}_{C^{2,\alpha'}_{\delta, g_{0}} \paren*{\bbT^{4} - \cS}} \coloneq \Abs*{\what{f_{\infty}}}_{C^{0}_{\delta, g_{0}} \paren*{\bbT^{4} - \cS}} + \Abs*{\nabla_{g_{0}}\what{f_{\infty}}}_{C^{0}_{\delta, g_{0}} \paren*{\bbT^{4} - \cS}} + \Abs*{\nabla^{2}_{g_{0}}\what{f_{\infty}}}_{C^{0}_{\delta, g_{0}} \paren*{\bbT^{4} - \cS}} + \sqparen*{ \what{f_{\infty}}}_{C^{2,\alpha'}_{\delta, g_{0}} \paren*{\bbT^{4} - \cS}} \leq 2\cC_{2} < \infty$ for $0 < \alpha' < \alpha < 1$ means that $\what{f_{\infty}}$ satisfies the decay $$\nabla^{j}_{g_{0}}\what{f_{\infty}} = O_{g_{0}}\paren*{\rho_{0}^{\delta - j}}, \qqfa j \leq 2$$ That is, by the above properties of $\rho_{0}$, $\what{f_{\infty}}$ is bounded on $\set*{\frac{1}{8} \leq \abs*{\cdot}^{g_{0}}_{\cS}}$ and satisfies $$\abs*{\nabla^{j}_{g_{0}}\what{f_{\infty}}}_{g_{0}} \leq C_{5,j}\paren*{\abs*{\cdot}^{g_{0}}_{\cS}}^{\delta - j}, \qqfa j \leq 2$$ on $\set*{0 < \abs*{\cdot}^{g_{0}}_{\cS}\leq \frac{1}{8}} \subset \bbT^{4} - \cS$ for some $C_{5,j} > 0$. 
							
							Now $\Delta_{g_{0}} \what{f_{\infty}} = 0$ means that $\what{f_{\infty}}$ is harmonic on $\bbT^{4} - \cS$ WRT the flat metric $g_{0}$ descending from the Euclidean space $\paren*{\bbR^{4},g_{0}}$, hence by elliptic regularity $\what{f_{\infty}} \in C^{\infty}\paren*{\bbT^{4} - \cS}$. Moreover, via lifting $\bbT^{4} - \cS$ via $\bbR^{4} \onto \bbR^{4}/\Lambda \eqcolon \bbT^{4}$ to its fundamental domain in $\bbR^{4}$ (which ends up being a \textit{punctured} open set), we have that near each puncture/point of $\cS$, say on $\set*{0 < \abs*{\cdot}^{g_{0}}_{\cS} < \frac{63}{500}}$, that by virtue of being harmonic WRT $g_{0}$ the flat Euclidean metric, our $\bbZ_{2}$-invariant function $\what{f_{\infty}}$ admits a Laurent series expansion (no need for higher tensor norms since we're a $\paren*{0,0}$-tensor) $$\what{f_{\infty}} = A_{0} + O\paren*{\dist_{g_{0}}\paren*{\cS,\cdot}} + \frac{D_{0}}{\paren*{\dist_{g_{0}}\paren*{\cS,\cdot}}^{2}} + \sum_{m=1}^{\infty}D_{m}\frac{p_{m}\paren*{\dist_{g_{0}}\paren*{\cS,\cdot}}}{\paren*{\dist_{g_{0}}\paren*{\cS,\cdot}}^{2m + 2}}$$
							
							which converges uniformly and absolutely on compact subsets of $\set*{0 < \dist_{g_{0}}\paren*{\cS,\cdot} < \frac{63}{500}}$, where $A_{0}, D_{0}, D_{m} \in \bbR$ are constants and $p_{m}$ is a homogeneous harmonic polynomial of degree $m$, and we recall from \textit{Remark} \ref{identification} that by definition $\abs*{\cdot}^{g_{0}}_{\cS} \coloneq \dist_{g_{0}}\paren*{\cS,\cdot}$.
							
							However, $-2 < \delta < 0$ whence $\dist_{g_{0}}\paren*{\cS,\cdot}^{\delta} < \dist_{g_{0}}\paren*{\cS,\cdot}^{-2}$ holds \textit{on $\set*{0 < \dist_{g_{0}}\paren*{\cS,\cdot} < \frac{63}{500}}$ since $\dist_{g_{0}}\paren*{\cS,\cdot} < \frac{63}{500}$}. Whence, since we have that $\abs*{\what{f_{\infty}}} \leq C_{5,0} \paren*{\dist_{g_{0}}\paren*{\cS,\cdot}}^{\delta}$ on $\set*{0 < \dist_{g_{0}}\paren*{\cS,\cdot} \leq \frac{1}{8}}$ (the $\abs*{\cdot}_{g_{0}}$-norm on $0$-forms is just absolute value), since the above Laurent series expansion holds on $\set*{0 < \dist_{g_{0}}\paren*{\cS,\cdot} < \frac{63}{500}}$ hence holds on $\set*{0 < \dist_{g_{0}}\paren*{\cS,\cdot} \leq \frac{1}{8}} \subset \set*{0 < \dist_{g_{0}}\paren*{\cS,\cdot} < \frac{63}{500}}$, that $\abs*{\what{f_{\infty}}} \leq C_{5,0} \paren*{\dist_{g_{0}}\paren*{\cS,\cdot}}^{\delta}$ forces $D_{0} = 0$ and each $D_{m} = 0$, whence $$\what{f_{\infty}} = A_{0} + O\paren*{\dist_{g_{0}}\paren*{\cS,\cdot}}$$


							on $\set*{0 < \dist_{g_{0}}\paren*{\cS,\cdot} < \frac{63}{500}}$.

							Whence $\what{f_{\infty}}$ has \textit{removable singularities} at each point of $\cS$, whence extends to a smooth harmonic function $\what{f_{\infty}}$ on the flat $4$-torus $\paren*{\bbT^{4},g_{0}}$. Moreover, we have by our new decay $\what{f_{\infty}} = A_{0} + O\paren*{\dist_{g_{0}}\paren*{\cS,\cdot}}$ for $\what{f_{\infty}}$ on $\set*{\dist_{g_{0}}\paren*{\cS,\cdot} < \frac{63}{500}}$ that $\what{f_{\infty}}$ is bounded on $\set*{\dist_{g_{0}}\paren*{\cS,\cdot} < \frac{63}{500}}$ since $\dist_{g_{0}}\paren*{\cS,\cdot} < \frac{63}{500}$. Whence since $\what{f_{\infty}}$ is bounded on $\set*{\frac{1}{8}\leq \dist_{g_{0}}\paren*{\cS,\cdot}}$, we thus have that $\what{f_{\infty}}$ is a \textit{bounded} harmonic function on the compact 4-manifold without boundary $\paren*{\bbT^{4},g_{0}}$, whence $\what{f_{\infty}}$ is a \textbf{constant} by the maximum principle.

							Now recall that each $f_{i} \in C^{2,\alpha}_{\delta, g_{\epsilon_{i}}} \paren*{\Km_{\epsilon_{i}}}^{0}$, whence the sequence of real numbers $\set*{\int_{\Km_{\epsilon_{i}}} f_{i} \omega_{\epsilon_{i}}^{2}} \subset \bbR$ is just the constant sequence $\set*{0}\subset \bbR$. Write each $\int_{\Km_{\epsilon_{i}}} f_{i} \omega_{\epsilon_{i}}^{2} = \int_{\set*{\abs*{\cdot}^{g_{0}}_{\pi^{-1}\paren*{\cS}} \leq 2\epsilon_{i}^{\frac{1}{2}}}} f_{i} \omega_{\epsilon_{i}}^{2} + \int_{\set*{2\epsilon_{i}^{\frac{1}{2}} < \abs*{\cdot}^{g_{0}}_{\pi^{-1}\paren*{\cS}}}} f_{i} \omega_{\epsilon_{i}}^{2}$. Hence each $ \int_{\set*{2\epsilon_{i}^{\frac{1}{2}} < \abs*{\cdot}^{g_{0}}_{\pi^{-1}\paren*{\cS}}}} f_{i} \omega_{\epsilon_{i}}^{2} = - \int_{\set*{\abs*{\cdot}^{g_{0}}_{\pi^{-1}\paren*{\cS}} \leq 2\epsilon_{i}^{\frac{1}{2}}}} f_{i} \omega_{\epsilon_{i}}^{2}$ from the integral zero assumption.

							Since each $\Abs*{f_{i}}_{C^{0}_{\delta, g_{\epsilon_{i}}} \paren*{\Km_{\epsilon_{i}}}} = 1$, by definition of the weighted Holder norms we have that each $f_{i} = O\paren*{\rho_{\epsilon_{i}}^{\delta}}$. Moreover, since we have that $\epsilon_{i} \leq \rho_{\epsilon_{i}}$, $\delta < 0$ gives us $\rho_{\epsilon_{i}}^{\delta} \leq \epsilon_{i}^{\delta}$ and hence each $f_{i} = O\paren*{\epsilon_{i}^{\delta}}$. Therefore upon using the identity $\frac{\omega^{2}}{2!} = \dV_{g}$ we have that $$\begin{aligned}
								\abs*{\int_{\set*{\abs*{\cdot}^{g_{0}}_{\pi^{-1}\paren*{\cS}} \leq 2\epsilon_{i}^{\frac{1}{2}}}} f_{i} \omega_{\epsilon_{i}}^{2}} &= 2\abs*{\int_{\set*{\abs*{\cdot}^{g_{0}}_{\pi^{-1}\paren*{\cS}} \leq 2\epsilon_{i}^{\frac{1}{2}}}} f_{i} \dV_{g_{\epsilon_{i}}}}\\
								&= O\paren*{\epsilon_{i}^{\delta}} \int_{\set*{\abs*{\cdot}^{g_{0}}_{\pi^{-1}\paren*{\cS}} \leq 2\epsilon_{i}^{\frac{1}{2}}}} \dV_{g_{\epsilon_{i}}}\\
								&= O\paren*{\epsilon_{i}^{\delta}} \Vol_{g_{0}}\paren*{\set*{\abs*{\cdot}^{g_{0}}_{\pi^{-1}\paren*{\cS}} \leq 2\epsilon_{i}^{\frac{1}{2}}}}\\
								&= O\paren*{\epsilon_{i}^{\delta}} \epsilon_{i}^{\frac{4}{2}}\\
								&= O\paren*{\epsilon_{i}^{\delta + 2}}
							\end{aligned}$$ Thus since $-2 < \delta \Longleftrightarrow 0 < \delta + 2$, we therefore have that $\epsilon_{i}^{\delta+ 2} \searrow 0$ as $i\to \infty$, whence we have that $\int_{\set*{2\epsilon_{i}^{\frac{1}{2}} < \abs*{\cdot}^{g_{0}}_{\pi^{-1}\paren*{\cS}}}} f_{i} \omega_{\epsilon_{i}}^{2} = - \int_{\set*{\abs*{\cdot}^{g_{0}}_{\pi^{-1}\paren*{\cS}} \leq 2\epsilon_{i}^{\frac{1}{2}}}} f_{i} \omega_{\epsilon_{i}}^{2}\to 0$.

							Next, lift to the $2$-fold cover and use the identity $\frac{\omega^{2}}{2!} = \dV_{g}$, whence each $$\int_{\set*{2\epsilon_{i}^{\frac{1}{2}} < \abs*{\cdot}^{g_{0}}_{\pi^{-1}\paren*{\cS}}}} f_{i} \omega_{\epsilon_{i}}^{2} = \int_{\lwhat{\set*{2\epsilon_{i}^{\frac{1}{2}} < \abs*{\cdot}^{g_{0}}_{\cS}}}} \what{f_{i}} \dV_{\what{g_{\epsilon_{i}}}}$$
							

							Now I claim that $\int_{\lwhat{\set*{2\epsilon_{i}^{\frac{1}{2}} < \abs*{\cdot}^{g_{0}}_{\cS}}}} \what{f_{i}} \dV_{\what{g_{\epsilon_{i}}}} \to \int_{\bbT^{4} - \cS} \what{f_{\infty}} \dV_{g_{0}}$. Indeed, this follows\footnote{\label{Colding Volume Convergence Footnote} Alternatively, one may appeal to \textbf{Colding's Volume Convergence Theorem} \cite{CheegerBook}, which tells us that lower Ricci-bound is sufficient to upgrade the Gromov-Hausdorff convergence of a volume non-collapsed sequence of Riemannian manifolds to convergence in the \textbf{measured Gromov-Hausdorff topology} where the measures involved are the canonical Radon measure induced from the Riemannian volume densities. Thus we may appeal to such (see \textit{Remark} \ref{ricciestimates}) since each $\what{g_{\epsilon_{i}}} = g_{0}$ and $g_{0}$ is manifestly Ricci bounded below, and because $\paren*{\lwhat{\set*{2\epsilon_{i}^{\frac{1}{2}} < \abs*{\cdot}^{g_{0}}_{\cS}}}, \what{g_{\epsilon_{i}}}}$ satisfies volume non-collapse.} immediately, as \textit{each} of our manifolds in question $\lwhat{\set*{2\epsilon_{i}^{\frac{1}{2}} < \abs*{\cdot}^{g_{0}}_{\cS}}}$ are open subsets of $\paren*{\bbT^{4},g_{0}}$ and are all equipped with the \textit{same} metric as the target, namely the induced metric $g_{0} = \what{g_{\epsilon_{i}}} $ from restriction of $g_{0}$ to $\lwhat{\set*{2\epsilon_{i}^{\frac{1}{2}} < \abs*{\cdot}^{g_{0}}_{\cS}}} \subset \bbT^{4}$ (see Footnote \ref{GH to Ckaloc footnote outer region}), and Proposition \ref{approximatemetricproperties} gives us that each $\dV_{g_{\epsilon_{i}}} = \paren*{1 + O\paren*{\epsilon_{i}^{2}}}\dV_{g_{0}}$ on $\set*{\epsilon_{i}^{\frac{1}{2}} \leq \abs*{\cdot}^{g_{0}}_{\pi^{-1}\paren*{\cS}}}$. 
							
							Whence since $\cS$ (a finite set of points) has $\cH^{4}$-measure zero, we thus have $$\begin{aligned}
								\int_{\lwhat{\set*{2\epsilon_{i}^{\frac{1}{2}} < \abs*{\cdot}^{g_{0}}_{\cS}}}} \what{f_{i}} \dV_{\what{g_{\epsilon_{i}}}} &\to \int_{\bbT^{4}} \what{f_{\infty}} \dV_{g_{0}}\\ \int_{\lwhat{\set*{2\epsilon_{i}^{\frac{1}{2}} < \abs*{\cdot}^{g_{0}}_{\cS}}}} \what{f_{i}} \dV_{\what{g_{\epsilon_{i}}}} &\to 0
							\end{aligned}$$ whence $$\int_{\bbT^{4}} \what{f_{\infty}} \dV_{g_{0}} = 0$$ As $\what{f_{\infty}}$ is a constant, we therefore conclude that $\what{f_{\infty}} = 0$.

							Therefore, we have that $\Abs*{\what{f_{i}}}_{C^{2,\alpha'}_{\delta, \what{g_{\epsilon_{i}}}} \paren*{K}} \searrow 0$ for all compact subsets $K \Subset \lwhat{\set*{2\epsilon_{i}^{\frac{1}{2}} < \abs*{\cdot}^{g_{0}}_{\cS}}}\subset \bbT^{4} - \cS$, whence transferring everything back to $\Km_{\epsilon_{i}}$ and using $\Abs*{\cdot}_{C^{0}_{\delta, g_{\epsilon_{i}}}} \leq \Abs*{\cdot}_{C^{2,\alpha'}_{\delta,g_{\epsilon_{i}}}}$, we thus have that $$\Abs*{f_{i}}_{C^{0}_{\delta, g_{\epsilon_{i}}} \paren*{K}} \searrow 0$$ holds for all compact subsets $K \Subset \set*{2\epsilon_{i}^{\frac{1}{2}} < \abs*{\cdot}^{g_{0}}_{\pi^{-1}\paren*{\cS}}} \subset \Km_{\epsilon_{i}}$.

							\item \textbf{Case 2; $\set*{\abs*{\cdot}^{g_{0}}_{\pi^{-1}\paren*{\cS}} < 3\epsilon_{i}^{\frac{1}{2}}}$:} By construction, we have that on $\set*{\abs*{\cdot}^{g_{0}}_{\pi^{-1}\paren*{\cS}} < 3\epsilon_{i}^{\frac{1}{2}}}$ that $g_{\epsilon_{i}} = \wtilde{g_{EH, \epsilon_{i}}}$. Moreover, by Proposition \ref{preglueEH} we have that $\wtilde{g_{EH,\epsilon_{i}}} = \epsilon_{i}^{2} R^{*}_{\frac{1}{\epsilon_{i}}} \paren*{\lwhat{g_{EH-0, \epsilon_{i}}}}$.
							
							Therefore, just as in proving \textbf{Case 2} of Proposition \ref{localWSE}, upon performing a conformal rescaling $g_{\epsilon_{i}}\mapsto \frac{1}{\epsilon_{i}^{2}}g_{\epsilon_{i}}$, we have that each Riemannian manifold $\paren*{\set*{\abs*{\cdot}^{g_{0}}_{\pi^{-1}\paren*{\cS}} < 3\epsilon_{i}^{\frac{1}{2}}}, \frac{1}{\epsilon_{i}^{2}}g_{\epsilon_{i}}}$ is isometric under $R_{\epsilon_{i}}$ to the Riemannian submanifold $\paren*{\set*{\abs*{\cdot}^{g_{0}}_{\CP^{1}} < \frac{3}{\epsilon_{i}^{\frac{1}{2}}}}, \lwhat{g_{EH-0, \epsilon_{i}}}} \subset \paren*{T^{*}\CP^{1},\lwhat{g_{EH-0, \epsilon_{i}}}}$.

							Hence upon taking the limit as $i\nearrow \infty$, we have by Proposition \ref{preglueEH BG interpolation!} that $\paren*{\set*{\abs*{\cdot}^{g_{0}}_{\CP^{1}} < \frac{3}{\epsilon_{i}^{\frac{1}{2}}}}, \lwhat{g_{EH-0, \epsilon_{i}}}}$ exhausts to $\paren*{T^{*}\CP^{1},g_{EH,1}}$ in the limit. More precisely, upon picking a point $p_{i} \in \set*{\abs*{\cdot}^{g_{0}}_{\pi^{-1}\paren*{\cS}} < 3\epsilon_{i}^{\frac{1}{2}}}$ for each $i$, we have the following convergence in the \textbf{pointed Gromov-Hausdorff topology}\footnote{\label{GH to Ckaloc footnote inner region bubble}In fact, upon shrinking each region to $\paren*{\set*{\abs*{\cdot}^{g_{0}}_{\CP^{1}} < \frac{1}{\epsilon_{i}^{\frac{1}{2}}}}, \lwhat{g_{EH-0, \epsilon_{i}}}}$ where we have $\lwhat{g_{EH-0, \epsilon_{i}}} = g_{EH,1}$, since our manifolds are all open subsets of the fixed ambient manifold $\paren*{T^{*}\CP^{1}, g_{EH,1}}$, and our metric tensors are all the same and are all induced from the ambient metric (which is $\lwhat{g_{EH-0, \epsilon_{i}}} = g_{EH,1}$) via restriction, since the ambient metric $g_{EH,1}$ is Ricci-flat hence Einstein and since each manifold is non-collapsed we may upgrade this convergence to \textbf{$C^{\infty}_{loc}$-convergence} via harmonic coordinates and elliptic regularity.}: $$\paren*{\set*{\abs*{\cdot}^{g_{0}}_{\pi^{-1}\paren*{\cS}} < 3\epsilon_{i}^{\frac{1}{2}}}, \frac{1}{\epsilon_{i}^{2}}g_{\epsilon_{i}}, p_{i}} \xrightarrow{pGH} \paren*{T^{*}\CP^{1},g_{EH,1}, p_{\infty}}$$

							Transferring over the weight function $\rho_{\epsilon_{i}}$ on $\set*{\abs*{\cdot}^{g_{0}}_{\pi^{-1}\paren*{\cS}} < 3\epsilon_{i}^{\frac{1}{2}}}$ via the above isometry gives us $R^{*}_{\epsilon_{i}}\paren*{\rho_{\epsilon_{i}}}$ which satisfies $R^{*}_{\epsilon_{i}}\paren*{\frac{1}{\epsilon_{i}}\rho_{\epsilon_{i}}} = \begin{cases}
								\frac{1}{\epsilon_{i}} & \text{ on }\set*{\frac{1}{5\epsilon_{i}} \leq \abs*{\cdot}^{g_{0}}_{\CP^{1}}}\\
								\abs*{\cdot}^{g_{0}}_{\CP^{1}} & \text{ on }\set*{2\leq \abs*{\cdot}^{g_{0}}_{\CP^{1}} \leq \frac{1}{8\epsilon_{i}^{\frac{1}{2}}}}\\
								1 & \text{ on }\set*{\abs*{\cdot}^{g_{0}}_{\CP^{1}} \leq 1}
							\end{cases}$. Call $\wtilde{\rho_{\epsilon_{i}}} \coloneq R^{*}_{\epsilon_{i}}\paren*{\frac{1}{\epsilon_{i}}\rho_{\epsilon_{i}}}$ for notation.

							Therefore, as $\Abs*{f_{i}}_{C^{0}_{\delta, g_{\epsilon_{i}}} \paren*{\Km_{\epsilon_{i}}}} \coloneq \Abs*{\rho_{\epsilon_{i}}^{-\delta}f_{i}}_{C^{0}_{g_{\epsilon_{i}}} \paren*{\Km_{\epsilon_{i}}}} = \Abs*{\rho_{\epsilon_{i}}^{-\delta}\epsilon_{i}^{\delta} \epsilon_{i}^{-\delta}f_{i}}_{C^{0}_{g_{\epsilon_{i}}} \paren*{\Km_{\epsilon_{i}}}}$, we must also scale our functions $f_{i}\mapsto \epsilon_{i}^{-\delta}f_{i}$ to preserve the weighted Holder norm upon transferring to $\paren*{\set*{\abs*{\cdot}^{g_{0}}_{\CP^{1}} < \frac{3}{\epsilon_{i}^{\frac{1}{2}}}}, \lwhat{g_{EH-0, \epsilon_{i}}}}$ (after first restricting to $\set*{\abs*{\cdot}^{g_{0}}_{\pi^{-1}\paren*{\cS}} < 3\epsilon_{i}^{\frac{1}{2}}}$). Thus call $\wtilde{f}_{i} \coloneq \epsilon_{i}^{-\delta}f_{i}$ for notation.

							Therefore $\Abs*{f_{i}}_{C^{2,\alpha}_{\delta, g_{\epsilon_{i}}} \paren*{\Km_{\epsilon_{i}}}} \leq 2\cC_{2}$ implies that we have the uniform bound $\Abs*{\wtilde{f_{i}}}_{C^{2,\alpha}_{\delta, \lwhat{g_{EH-0, \epsilon_{i}}}} \paren*{\set*{\abs*{\cdot}^{g_{0}}_{\CP^{1}} < \frac{3}{\epsilon_{i}^{\frac{1}{2}}}}}} \leq 2\cC_{2}$ where the weight function used is precisely $\wtilde{\rho_{\epsilon_{i}}}$. Whence since $T^{*}\CP^{1} = \bigcup_{i \in \bbN} \set*{\abs*{\cdot}^{g_{0}}_{\CP^{1}} < \frac{3}{\epsilon_{i}^{\frac{1}{2}}}}$ is a nested exhaustion of precompact subsets, we therefore have by Arzela-Ascoli that (upon relabeling the original sequence of $f_{i}$ once more) \begin{gather*}
								\wtilde{f_{i}} \xrightarrow{C^{2,\alpha'}_{\delta, \lwhat{g_{EH-0, \epsilon_{i}}}, loc}} \wtilde{f_{\infty}}\\
								\Abs*{\wtilde{f_{\infty}}}_{C^{2,\alpha'}_{\delta, g_{EH,1}} \paren*{T^{*}\CP^{1}}} \leq 2\cC_{2}
							\end{gather*}
							
							with $0 < \alpha' < \alpha < 1$, for some function $\wtilde{f_{\infty}}$ on $T^{*}\CP^{1}$ and where the convergence $\xrightarrow{C^{2,\alpha'}_{\delta, \lwhat{g_{EH-0, \epsilon_{i}}}, loc}}$ means that $\Abs*{\wtilde{f_{i}} - \wtilde{f_{\infty}}}_{C^{2,\alpha'}_{\delta, \lwhat{g_{EH-0, \epsilon_{i}}}}\paren*{K}} \searrow 0$ hence $\Abs*{\wtilde{f_{i}}}_{C^{2,\alpha'}_{\delta, \lwhat{g_{EH-0, \epsilon_{i}}}}\paren*{K}} \to \Abs*{\wtilde{f_{\infty}}}_{C^{2,\alpha'}_{\delta, g_{EH,1}}\paren*{K}}$, for all compact subsets $K\Subset \set*{\abs*{\cdot}^{g_{0}}_{\CP^{1}} < \frac{3}{\epsilon_{i}^{\frac{1}{2}}}}$, and with $\Abs*{\wtilde{f_{\infty}}}_{C^{2,\alpha'}_{\delta, g_{EH,1}} \paren*{T^{*}\CP^{1}}} \leq 2\cC_{2}$ following upon restricting to unit diameter compact subsets. Moreover, the weight function used in $C^{2,\alpha'}_{\delta, g_{EH,1}} \paren*{T^{*}\CP^{1}}$ is precisely the limiting weight function which satisfies $\wtilde{\rho_{0}} = \begin{cases}
								\abs*{\cdot}^{g_{0}}_{\CP^{1}} & \text{ on }\set*{2\leq \abs*{\cdot}^{g_{0}}_{\CP^{1}} }\\
								1 & \text{ on }\set*{\abs*{\cdot}^{g_{0}}_{\CP^{1}} \leq 1}
							\end{cases}$, aka in the limit we get that $\lim_{i \nearrow \infty} \wtilde{\rho_{\epsilon_{i}}} = \wtilde{\rho_{0}}$ is precisely the weight function used to define the weighted Holder spaces for $T^{*}\CP^{1}$.

							Playing the same game with $\Abs*{\Delta_{g_{\epsilon_{i}}}f_{i}}_{C^{0,\alpha}_{\delta - 2, g_{\epsilon_{i}}} \paren*{\Km_{\epsilon_{i}}}} \leq \frac{1}{i}$, namely that this bound directly implies the bounds $\Abs*{\Delta_{g_{\epsilon_{i}}}f_{i}}_{C^{0,\alpha}_{\delta - 2, \lwhat{g_{EH-0, \epsilon_{i}}}} \paren*{\set*{\abs*{\cdot}^{g_{0}}_{\CP^{1}} < \frac{3}{\epsilon_{i}^{\frac{1}{2}}}}}} \leq \frac{1}{i}$, that we may take the limit as $i\nearrow \infty$ and get $$\Delta_{g_{EH,1}} \wtilde{f_{\infty}} = 0$$ on $\paren*{T^{*}\CP^{1},g_{EH,1}}$ (since $\Delta_{g_{EH,1}} \wtilde{f_{\infty}} = 0$ on all compact subsets of $T^{*}\CP^{1}$, hence on all of $T^{*}\CP^{1}$ by a compact exhaustion), hence by elliptic regularity $\wtilde{f_{\infty}} \in C^{\infty}\paren*{T^{*}\CP^{1}}$.

							Now $\Abs*{\wtilde{f_{\infty}}}_{C^{2,\alpha'}_{\delta, g_{EH,1}} \paren*{T^{*}\CP^{1}}} \leq 2\cC_{2}$ means that $\wtilde{f_{\infty}}$ satisfies the decay $$\nabla_{g_{EH,1}}^{j}\wtilde{f_{\infty}} = O_{g_{EH,1}}\paren*{\wtilde{\rho_{0}}^{\delta - j}},\qqfa j\leq 2$$ hence by the properties of $\wtilde{\rho_{0}}$, we have that $\wtilde{f_{\infty}}$ is bounded on $\set*{\abs*{\cdot}^{g_{0}}_{\CP^{1}} \leq 2}$ and satisfies $$\abs*{\nabla^{j}_{g_{EH,1}}\wtilde{f_{\infty}}}_{g_{0}} \leq C_{6,j} \paren*{\abs*{\cdot}^{g_{0}}_{\CP^{1}}}^{\delta - j},\qqfa j\leq 2$$ on $\set*{2\leq \abs*{\cdot}^{g_{0}}_{\CP^{1}}}$ for some $C_{6,j} > 0$.
							
							But since $\delta < 0$, we have that $$\abs*{\wtilde{f_{\infty}}} \leq C_{6,0} \paren*{\abs*{\cdot}^{g_{0}}_{\CP^{1}}}^{\delta} \leq C_{6,0} 2^{\delta} < \infty$$ on $\set*{2\leq \abs*{\cdot}^{g_{0}}_{\CP^{1}}}$, and hence $\wtilde{f_{\infty}}$ is bounded on all of $T^{*}\CP^{1}$. Moreover, $\abs*{\wtilde{f_{\infty}}} \leq C_{6,0} \paren*{\abs*{\cdot}^{g_{0}}_{\CP^{1}}}^{\delta}$ and $\delta < 0$ means that $\wtilde{f_{\infty}}\to 0$ as $\abs*{\cdot}^{g_{0}}_{\CP^{1}} \nearrow \infty$.

							Thus we have that $\wtilde{f_{\infty}}$ is harmonic WRT $g_{EH,1}$, bounded on all of $T^{*}\CP^{1}$, and decays to $0$ at infinity, whence via maximum principle and connectedness of $T^{*}\CP^{1}$, we get $\wtilde{f_{\infty}} = 0$.

							Therefore, we have that $\Abs*{\wtilde{f_{i}}}_{C^{2,\alpha'}_{\delta, \lwhat{g_{EH-0, \epsilon_{i}}}}\paren*{K}} \searrow 0$, for all compact subsets $K\Subset \set*{\abs*{\cdot}^{g_{0}}_{\CP^{1}} < \frac{3}{\epsilon_{i}^{\frac{1}{2}}}} \subset T^{*}\CP^{1}$. Whence transferring everything back to $\Km_{\epsilon_{i}}$ and using $\Abs*{\cdot}_{C^{0}_{\delta, g_{\epsilon_{i}}}} \leq \Abs*{\cdot}_{C^{2,\alpha'}_{\delta,g_{\epsilon_{i}}}}$, we thus have that $$\Abs*{f_{i}}_{C^{0}_{\delta, g_{\epsilon_{i}}} } \searrow 0$$ holds for all compact subsets $K\Subset \set*{\abs*{\cdot}^{g_{0}}_{\pi^{-1}\paren*{\cS}} < 3\epsilon_{i}^{\frac{1}{2}} } \subset \Km_{\epsilon_{i}}$.

						\end{enumerate} Hence we have shown that $$\begin{aligned}
							\Abs*{f_{i}}_{C^{0}_{\delta, g_{\epsilon_{i}}} \paren*{K_{1}}} \searrow 0\\
							\Abs*{f_{i}}_{C^{0}_{\delta, g_{\epsilon_{i}}} \paren*{K_{2}}} \searrow 0
						\end{aligned}$$ $\forall K_{1} \Subset \set*{2\epsilon_{i}^{\frac{1}{2}} < \abs*{\cdot}^{g_{0}}_{\pi^{-1}\paren*{\cS}}}$ and $\forall K_{2} \Subset \set*{\abs*{\cdot}^{g_{0}}_{\pi^{-1}\paren*{\cS}} < 3\epsilon_{i}^{\frac{1}{2}}}$ compact. Whence since $\Km_{\epsilon_{i}} \subset \set*{\abs*{\cdot}^{g_{0}}_{\pi^{-1}\paren*{\cS}} < 3\epsilon_{i}^{\frac{1}{2}}} \cup \set*{2\epsilon_{i}^{\frac{1}{2}} < \abs*{\cdot}^{g_{0}}_{\pi^{-1}\paren*{\cS}}}$, we therefore have that $$\Abs*{f_{i}}_{C^{0}_{\delta, g_{\epsilon_{i}}} \paren*{K}} \searrow 0$$ on all compact subsets $K \Subset \Km_{\epsilon_{i}}$, particularly $K = \Km_{\epsilon_{i}}$. But this contradicts the fact that each $\Abs*{f_{i}}_{C^{0}_{\delta, g_{\epsilon_{i}}} \paren*{\Km_{\epsilon_{i}}}} = 1$. Whence we have reached a contradiction to which the proof follows, as was to be shown. \end{proof}

				With these in hand, we may finally prove
				
				\begin{theorem}\label{biginverse} For $\delta \in (-2,0)$ \textbf{and $\epsilon > 0$ sufficiently small}, the following is a \textbf{bounded linear isomorphism} for both the domain and codomain $C^{2,\alpha}_{\delta, g_{\epsilon}} \paren*{\Km_{\epsilon}}, C^{0,\alpha}_{\delta-2, g_{\epsilon}} \paren*{\Km_{\epsilon}}$ restricted to the closed subspace of functions with integral zero, i.e. $\int_{\Km_{\epsilon}} f \omega_{\epsilon}^{2} = 0$:
					$$\Delta_{g_{\epsilon}}: C^{2,\alpha}_{\delta, g_{\epsilon}} \paren*{\Km_{\epsilon}}^{0} \rightarrow C^{0,\alpha}_{\delta - 2, g_{\epsilon}} \paren*{\Km_{\epsilon}}^{0}$$
					More crucially, we have that the operator norm of the inverse is bounded by a constant \textbf{independent of $\epsilon > 0$}:
					$$ \Abs*{\Delta_{g_{\epsilon}}^{-1}} \leq K$$
				\end{theorem}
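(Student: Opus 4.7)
The plan is to deduce the bijectivity for each fixed $\epsilon>0$ from standard Fredholm theory on the compact manifold $\Km_{\epsilon}$, and then read off the uniform bound $\Abs*{\Delta_{g_{\epsilon}}^{-1}}\leq K$ directly from the Improved Weighted Schauder Estimate (Proposition \ref{IWSE}).

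The key observation to get bijectivity for each fixed $\epsilon>0$ is that the weight function $\rho_{\epsilon}:\Km_{\epsilon}\to[\epsilon,1]$ is bounded above and away from zero by strictly positive constants (depending on $\epsilon$). Hence $\rho_{\epsilon}^{\delta}$ is bounded above and below by positive constants for each fixed $\epsilon>0$, and so the weighted Holder spaces $C^{k,\alpha}_{\delta,g_{\epsilon}}\paren*{\Km_{\epsilon}}$ coincide as sets with the standard Holder spaces $C^{k,\alpha}_{g_{\epsilon}}\paren*{\Km_{\epsilon}}$, with equivalent (but not uniformly so in $\epsilon$) norms. On the compact manifold $\Km_{\epsilon}$, standard elliptic theory gives that $\Delta_{g_{\epsilon}}:C^{2,\alpha}_{g_{\epsilon}}\paren*{\Km_{\epsilon}}\to C^{0,\alpha}_{g_{\epsilon}}\paren*{\Km_{\epsilon}}$ is Fredholm of index zero, with kernel equal to the constant functions (by the maximum principle and connectedness of $\Km_{\epsilon}$) and image equal to the $L^{2}$-orthogonal complement of the constants (by $L^{2}$-self-adjointness and the Fredholm alternative). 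Since $\int_{\Km_{\epsilon}}\Delta_{g_{\epsilon}}f\,\omega_{\epsilon}^{2}=0$ automatically for any $f\in C^{2,\alpha}$, we have that $\Delta_{g_{\epsilon}}$ does map into $C^{0,\alpha}_{\delta-2,g_{\epsilon}}\paren*{\Km_{\epsilon}}^{0}$; restricting the domain to $C^{2,\alpha}_{\delta,g_{\epsilon}}\paren*{\Km_{\epsilon}}^{0}$ removes exactly the kernel, so the resulting restricted map is a continuous linear bijection between Banach spaces, whence an isomorphism by the open mapping theorem.

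All of this holds for \emph{each} fixed $\epsilon>0$, but the operator norm of the inverse obtained this way blows up as $\epsilon\searrow 0$ because the norm equivalence between weighted and unweighted Holder spaces does. The \emph{uniform} estimate $\Abs*{\Delta_{g_{\epsilon}}^{-1}}\leq K$ is, by contrast, the direct translation of Proposition \ref{IWSE}: given any $u\in C^{0,\alpha}_{\delta-2,g_{\epsilon}}\paren*{\Km_{\epsilon}}^{0}$, setting $f\coloneq\Delta_{g_{\epsilon}}^{-1}u\in C^{2,\alpha}_{\delta,g_{\epsilon}}\paren*{\Km_{\epsilon}}^{0}$ and applying the Improved Weighted Schauder Estimate to $f$ gives
\[
\Abs*{\Delta_{g_{\epsilon}}^{-1}u}_{C^{2,\alpha}_{\delta,g_{\epsilon}}\paren*{\Km_{\epsilon}}}=\Abs*{f}_{C^{2,\alpha}_{\delta,g_{\epsilon}}\paren*{\Km_{\epsilon}}}\leq\cC_{3}\Abs*{\Delta_{g_{\epsilon}}f}_{C^{0,\alpha}_{\delta-2,g_{\epsilon}}\paren*{\Km_{\epsilon}}}=\cC_{3}\Abs*{u}_{C^{0,\alpha}_{\delta-2,g_{\epsilon}}\paren*{\Km_{\epsilon}}},
\]
with $\cC_{3}>0$ independent of $\epsilon>0$ whenever $\delta\in(-2,0)$ and $\epsilon$ is sufficiently small. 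Setting $K\coloneq\cC_{3}$ concludes the proof.

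The real work of the theorem lies in Proposition \ref{IWSE} itself, whose proof by blow-up analysis on the two regions $\set*{\abs*{\cdot}^{g_{0}}_{\pi^{-1}(\cS)}<3\epsilon^{1/2}}$ and $\set*{2\epsilon^{1/2}<\abs*{\cdot}^{g_{0}}_{\pi^{-1}(\cS)}}$ is the heart of the matter; the Fredholm bijectivity step here is comparatively routine once one has that estimate in hand, and the choice of weight range $\delta\in(-2,0)$ is exactly what is needed both to kill the harmonic functions at infinity on the Eguchi-Hanson bubble and to rule out a pole at the orbifold points in the $\bbT^{4}/\bbZ_{2}$-limit, while still allowing the integral-zero condition to remove the surviving constant on $\bbT^{4}$.
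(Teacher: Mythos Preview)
Your proposal is correct and follows essentially the same approach as the paper: for fixed $\epsilon$, use the equivalence of weighted and unweighted H\"older norms (since $\rho_{\epsilon}\in[\epsilon,1]$) to import the standard Fredholm index-zero property of $\Delta_{g_{\epsilon}}$, deduce bijectivity on the integral-zero subspaces, and then read off the uniform bound on the inverse directly from Proposition~\ref{IWSE}. The only cosmetic difference is that the paper gets surjectivity from ``injective $+$ index zero $\Rightarrow$ surjective'' (using Proposition~\ref{IWSE} for injectivity), whereas you identify the image explicitly via the Fredholm alternative on the unweighted spaces; both routes are equivalent here.
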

				
				\begin{proof}
					The Laplacian in the unweighted Holder spaces is Fredholm of index zero. Now for \textit{fixed} $\delta,\epsilon$, we have that $C^{k,\alpha}_{\delta, g_{\epsilon}} \paren*{\Km_{\epsilon}}, C^{2,\alpha} \paren*{\Km_{\epsilon}}$ have equivalent norms (they're already the same underlying vector space and $\rho_{\epsilon}$ is by construction uniformly bounded below by $\epsilon$ and above by $1$). Hence $\Delta_{g_{\epsilon}}: C^{2,\alpha}_{\delta, g_{\epsilon}} \paren*{\Km_{\epsilon}}^{0} \rightarrow C^{0,\alpha}_{\delta - 2, g_{\epsilon}} \paren*{\Km_{\epsilon}}^{0}$ is also Fredholm of index zero. As remarked in \textit{Remark} \ref{closedrangedetector}, the improved weighted Schauder estimate $\Abs*{f}_{C^{2,\alpha}_{\delta, g_{\epsilon}} \paren*{\Km_{\epsilon}}} \leq \cC_{3} \Abs*{\Delta_{g_{\epsilon}}f}_{C^{0,\alpha}_{\delta - 2, g_{\epsilon}} \paren*{\Km_{\epsilon}}}$ as well as the integral zero condition gives us that $\ker \Delta_{g_{\epsilon}} = \set*{0}$ when \textbf{$\epsilon > 0$ is sufficiently small} and $\delta \in (-2,0)$, hence $\ind \paren*{\Delta_{g_{\epsilon}}} = 0 $ implies surjective and hence bounded linear isomorphism, and $\Abs*{f}_{C^{2,\alpha}_{\delta, g_{\epsilon}} \paren*{\Km_{\epsilon}}} \leq \cC_{3} \Abs*{\Delta_{g_{\epsilon}}f}_{C^{0,\alpha}_{\delta - 2, g_{\epsilon}} \paren*{\Km_{\epsilon}}}$ also gives us our \textbf{$\epsilon > 0$ independent} bound on the operator norm of the inverse, as was to be shown.\end{proof}

				\subsection{\textsection \ Finishing the Proof \ \textsection}\label{Finishing the proof}

				Recall from Section \ref{Nonlinear Setup} our nonlinear problem: \begin{align*}
					\cF_{\epsilon} :  C^{2,\alpha}_{\delta, g_{\epsilon}} \paren*{\Km_{\epsilon}}^{0} &\rightarrow C^{0,\alpha}_{\delta - 2, g_{\epsilon}} \paren*{\Km_{\epsilon}}^{0} \\
					f &\mapsto \frac{\paren*{\omega_{\epsilon} + i\del \delbar f}^{2}}{\omega_{\epsilon}^{2}} - e^{\phi_{\epsilon}}		
				\end{align*} where we solve \begin{align*}
					\cF_{\epsilon}\paren*{f} &\coloneq\frac{\paren*{\omega_{\epsilon} + i\del \delbar f}^{2}}{\omega_{\epsilon}^{2}} - e^{\phi_{\epsilon}}		 \\
					&= \underbrace{\paren*{1 - e^{\phi_{\epsilon}}}}_{= \cF_{\epsilon}\paren*{0}} \underbrace{- \frac{1}{2}\Delta_{g_{\epsilon}}f}_{= D_{0}\cF_{\epsilon}\paren*{f}} + \underbrace{\frac{i\del \delbar f \wedge i\del \delbar f }{\omega_{\epsilon}^{2}}}_{\text{nonlinearity}} \\
					&= 0
				\end{align*}

				We want to apply the following \begin{theorem}[Implicit Function Theorem]\label{IFT}Let $X, Y$ be two Banach spaces. Let $\Phi : X \rightarrow Y$ be a $C^{1}$ map. Write $\Phi(x) = \Phi(0) + D_{0}\Phi(x) + \cN(x)$ with $\cN(x) \coloneq \Phi(x) - \Phi(0) - D_{0}\Phi(x)$ being the nonlinear term. Suppose we have 3 positive constants $L, r_{0}, N > 0$ such that \begin{enumerate}[nosep]
						\itemsep0em 
						\item $D_{0}\Phi : X \rightarrow Y$ admits a bounded linear \textit{right inverse} $R: Y\rightarrow X$ such that $\Abs*{R}_{\text{op}} \leq L$ ($L>0$ controls right inverse)
						\item A bound $\Abs*{\cN(x) - \cN(y)}_{Y} \leq N \Abs*{x-y}_{X}\paren*{\Abs*{x}_{X} + \Abs*{y}_{X}}, \qqfa x,y \in D^{X}_{r_{0}}\paren*{0}$ (``Lipschitz'' bounds on nonlinearity) 
						\item $\Abs*{\Phi(0)}_{Y} \leq \frac{r}{2L}$ with $r < \min\set*{r_{0}, \frac{1}{2NL}}$ (``smallness'' of $\Phi(0)$)
					\end{enumerate}
					Then $\Phi(x) = 0$ has a unique solution $x \in D^{X}_{2L\Abs*{\Phi(0)}_{Y}}\paren*{0}$.
				\end{theorem}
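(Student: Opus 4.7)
The plan is to recast $\Phi(x) = 0$ as a fixed-point equation on a small closed ball in $X$ and apply the Banach contraction mapping principle. The key observation is that because $D_{0}\Phi \circ R = \id_{Y}$, the equation $\Phi(x) = \Phi(0) + D_{0}\Phi(x) + \cN(x) = 0$ is implied by the fixed-point equation
\[
x = T(x) \coloneq -R\paren*{\Phi(0) + \cN(x)};
\]
indeed, applying $D_{0}\Phi$ to both sides of $x = T(x)$ immediately gives $D_{0}\Phi(x) = -\Phi(0) - \cN(x)$, i.e. $\Phi(x) = 0$. So the strategy is to produce a fixed point of $T$ in the claimed ball.

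I would set $r' \coloneq 2L\Abs*{\Phi(0)}_{Y}$, which by hypothesis (3) satisfies $r' \leq r \leq r_{0}$, and work on the closed ball $D^{X}_{r'}(0)$, a complete metric space. The bulk of the argument is two standard checks. For the self-map property, given $x \in D^{X}_{r'}(0)$, I would use (1), hypothesis (2) with $y = 0$, and the elementary identity $\cN(0) = 0$ (which follows from the definition $\cN = \Phi - \Phi(0) - D_{0}\Phi$ and linearity of $D_{0}\Phi$) to estimate
\[
\Abs*{T(x)}_{X} \leq L\Abs*{\Phi(0)}_{Y} + L\Abs*{\cN(x) - \cN(0)}_{Y} \leq \tfrac{r'}{2} + LN\,(r')^{2};
\]
since $r' \leq r < 1/(2NL)$, this right-hand side is $\leq r'$. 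For the contraction property, given $x,y \in D^{X}_{r'}(0)$, hypothesis (2) directly yields
\[
\Abs*{T(x) - T(y)}_{X} \leq LN\Abs*{x - y}_{X}\paren*{\Abs*{x}_{X} + \Abs*{y}_{X}} \leq 2LNr'\Abs*{x - y}_{X},
\]
and $2LNr' < 1$ by the same assumption. Banach's theorem then produces a unique $x^{\star} \in D^{X}_{r'}(0) = D^{X}_{2L\Abs*{\Phi(0)}_{Y}}(0)$ with $T(x^{\star}) = x^{\star}$, and $\Phi(x^{\star}) = 0$ by construction.

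There is no genuinely hard step; the proof is entirely bookkeeping on the constants $L, N, r_{0}, r$ chosen precisely to make the self-map and contraction conditions compatible. The only conceptual point worth flagging is that the fixed-point argument yields uniqueness of solutions to $x = T(x)$ in the closed ball, which forces true uniqueness of $\Phi(x) = 0$ in that ball when $R$ is simultaneously a left inverse — the implication $\Phi(x) = 0 \Rightarrow x = T(x)$ requires applying $R$ to $D_{0}\Phi(x) = -\Phi(0) - \cN(x)$ and recognising the left-hand side as $x$. In the intended application this is automatic: Theorem \ref{biginverse} asserts that $\Delta_{g_{\epsilon}}$ on $C^{2,\alpha}_{\delta,g_{\epsilon}}(\Km_{\epsilon})^{0}$ is a genuine bounded linear isomorphism for $\delta \in (-2,0)$ and $\epsilon > 0$ small, so its inverse $R$ is two-sided and the uniqueness statement holds in full.
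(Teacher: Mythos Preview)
Your proof is correct and follows exactly the paper's approach: both recast $\Phi(x)=0$ as the fixed-point problem $x=-R(\Phi(0)+\cN(x))$ and verify self-map and contraction on a small closed ball via the same estimates. Your observation that genuine uniqueness of $\Phi(x)=0$ (as opposed to uniqueness of the fixed point of $T$) requires $R$ to also be a left inverse is a valid point the paper leaves implicit, and you correctly note it is automatic in the intended application.
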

				
				\begin{proof} This is essentially the Banach fixed point theorem. We have that $\Phi(x) = 0 \Longleftrightarrow \cA(x) = x$ with $\cA(x) \coloneq -R\paren*{\Phi(0) + \cN(x)}$. We want to show that $\cA : D_{r}^{X}\paren*{0} \rightarrow D^{X}_{r}\paren*{0}$ is a contraction mapping. First we have that $\im \cA \subset D^{X}_{r}\paren*{0}$ because for $x \in D^{X}_{r}\paren*{0}$, \begin{align*}
						\Abs*{\cA(x)}_{X} &= \Abs*{R\paren*{\Phi(0) + \cN(x)}}_{X}\\
						&\leq L \paren*{\Abs*{\Phi(0)}_{Y} + \Abs*{\cN(x)}_{Y}} \\
						&\leq L \paren*{\Abs*{\Phi(0)}_{Y} + 4L^{2}N\Abs*{\Phi(0)}_{Y}^{2}} \\
						&< L\Abs*{\Phi(0)}_{Y}\paren*{1 + 1}\\
						&= 2L\Abs*{\Phi(0)}_{Y}
					\end{align*} Next we have that \begin{align*}
						\Abs*{\cA(x) - \cA(y)}_{X} &= \Abs*{R\paren*{ \cN(x) - \cN(y)}}_{X}\\
						&\leq L\Abs*{\cN(x) - \cN(y)}_{Y} \\ 
						&\leq LN \Abs*{x-y}_{X}\paren*{\Abs*{x}_{X} + \Abs*{y}_{X}}\\
						&\leq 4NL^{2}\Abs*{\Phi(0)}_{Y}\Abs*{x-y}_{X} \\
						&< \frac{4NL^{2}}{4NL^{2}}\Abs*{x-y}_{X}\\
						&= \Abs*{x-y}_{Y}
					\end{align*} Hence we're done, as was to be shown. \end{proof}

				We've now arrived at: \begin{theorem}[Big Theorem I]\label{bigtheorem1} \textbf{Let $\delta \in (-2,0)$.} \textbf{Let $\epsilon > 0$ from Data \ref{constant epsilon introduction}} be\begin{enumerate}
						\item sufficiently small so that each pre-glued Eguchi-Hanson metric from Propositions \ref{preglueEH BG interpolation!} and \ref{preglueEH} are K\"{a}hler and satisfy their annular estimates, and $\epsilon < 1$ to satisfy the volume form estimates.
						\item $3\epsilon^{\frac{1}{2}} < \frac{1}{2}$ from Data \ref{constant epsilon patching and WLOG lattice} for the construction of $\Km_{\epsilon}$.
						\item $2\epsilon^{\frac{1}{2}} < \frac{3}{25}$ from Data \ref{constant epsilon 1-16} for the construction of $\rho_{\epsilon}$.
						\item sufficiently small for the smallness of $\cF_{\epsilon}(0)$ from Proposition \ref{presmall}.
						\item sufficiently small for the weighted Schauder estimate from Corollary \ref{WSE}.
						\item sufficiently small for the improved Schauder estimate from Proposition \ref{IWSE}.
						\item sufficiently small for the the uniform bounded linear isomorphism $\Delta_{g_{\epsilon}}: C^{2,\alpha}_{\delta, g_{\epsilon}} \paren*{\Km_{\epsilon}}^{0} \rightarrow C^{0,\alpha}_{\delta - 2, g_{\epsilon}} \paren*{\Km_{\epsilon}}^{0}$ from Theorem \ref{biginverse} (again for both the domain and codomain restricted to the closed subspace of functions with integral zero, i.e. $\int_{\Km_{\epsilon}} f \omega_{\epsilon}^{2} = 0$).
						\item $\epsilon^{1 + \frac{\delta}{2}} < \frac{1}{4L^{2}C_{7}C_{8}}$ for $L, C_{7}, C_{8} > 0$ constants which will be defined in the proof.
					\end{enumerate}

					Then for each such $\epsilon > 0$, the equation $\cF_{\epsilon}(f) = 0$ has a \textbf{unique} solution $f$ satisfying:
					
					\begin{itemize}
						\item $\int_{\Km_{\epsilon}} f \omega_{\epsilon}^{2} = 0$ (\textbf{integral zero})
						\item $f \in C^{\infty}\paren*{\Km_{\epsilon}}$ (\textbf{smoothness})
						\item $\Abs*{f}_{C^{2,\alpha}_{\delta, g_{\epsilon}} \paren*{\Km_{\epsilon}}} \leq 2L\Abs*{\cF_{\epsilon}\paren*{0}}_{C^{0,\alpha}_{\delta - 2, g_{\epsilon}} \paren*{\Km_{\epsilon}}} < C_{9}\epsilon^{3 - \frac{\delta}{2}}$ for $C_{9} > 0$ chosen so that $C_{9} > 2L C_{8}$ (\textbf{smallness of norm})
					\end{itemize}

					Hence by Proposition \ref{K\"{a}hlereinstein}, \textit{Remark} \ref{autopositivity}, and Proposition \ref{calabiyau}, for each such $\epsilon > 0$ we have that $\omega_{\epsilon} + i\del\delbar f \in [\omega_{\epsilon}]$ is the \textbf{unique} K\"{a}hler form in the K\"{a}hler class of $\omega_{\epsilon}$ on $\paren*{\Km_{\epsilon}, J}$ which gives a Ricci-flat K\"{a}hler metric compatible with $J$. 
				\end{theorem}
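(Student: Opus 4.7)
The plan is to verify the three hypotheses of the implicit function theorem (Theorem \ref{IFT}) for the map $\cF_\epsilon$ on integral-zero weighted H\"older spaces. First I would check that $\cF_\epsilon$ indeed takes values in $C^{0,\alpha}_{\delta-2,g_\epsilon}(\Km_\epsilon)^0$: since $[\omega_\epsilon + i\del\delbar f] = [\omega_\epsilon]$ in $H^2(\Km_\epsilon)$ one has $\int_{\Km_\epsilon} (\omega_\epsilon + i\del\delbar f)^2 = \int_{\Km_\epsilon} \omega_\epsilon^2$, while the normalization of $\phi_\epsilon$ forces $\int_{\Km_\epsilon} e^{\phi_\epsilon}\omega_\epsilon^2 = \int_{\Km_\epsilon}\omega_\epsilon^2$, so $\int_{\Km_\epsilon} \cF_\epsilon(f)\,\omega_\epsilon^2 = 0$. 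Hypothesis (1) of the IFT is then immediate from Theorem \ref{biginverse}: for $\delta \in (-2,0)$ and $\epsilon$ small, $D_0\cF_\epsilon = -\tfrac{1}{2}\Delta_{g_\epsilon}$ is a two-sided Banach isomorphism on the integral-zero subspaces whose inverse has operator norm $\leq K$ uniformly in $\epsilon$, so $L \coloneq 2K$ suffices.

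Next I would estimate $\cF_\epsilon(0) = 1 - e^{\phi_\epsilon}$ (hypothesis (3)) and the nonlinearity (hypothesis (2)). By Proposition \ref{presmall}, $\phi_\epsilon$ is supported in the annular region $\{\epsilon^{1/2} \leq \abs*{\cdot}^{g_0}_{\pi^{-1}(\cS)} \leq 2\epsilon^{1/2}\}$ with $\abs*{\nabla^k_{g_0}\phi_\epsilon}_{g_0} \lesssim \epsilon^{2-k/2}$, and a Taylor expansion gives the same decay for $1 - e^{\phi_\epsilon}$. On this support $\rho_\epsilon \sim \epsilon^{1/2}$, so the weight factor $\rho_\epsilon^{-(\delta-2)} = \rho_\epsilon^{2-\delta}$ contributes at most $\epsilon^{(2-\delta)/2}$, yielding
\[
\Abs*{\cF_\epsilon(0)}_{C^{0,\alpha}_{\delta-2,g_\epsilon}(\Km_\epsilon)} \leq C_8\,\epsilon^{3-\delta/2}.
\]
For the nonlinearity $\cN(f) = i\del\delbar f \wedge i\del\delbar f / \omega_\epsilon^2$, I would use the identity
\[
\cN(f_1) - \cN(f_2) = \frac{i\del\delbar(f_1 - f_2) \wedge i\del\delbar(f_1 + f_2)}{\omega_\epsilon^2},
\]
the fact that $i\del\delbar f$ is controlled pointwise by $\nabla_{g_\epsilon}^2 f$ (so it lies in $C^{0,\alpha}_{\delta-2,g_\epsilon}$ with norm $\lesssim \Abs*{f}_{C^{2,\alpha}_{\delta,g_\epsilon}}$), and the weighted product estimate Proposition \ref{weighted products} with $l = 2$ to obtain
\[
\Abs*{\cN(f_1) - \cN(f_2)}_{C^{0,\alpha}_{\delta-2,g_\epsilon}} \leq C_7\,\epsilon^{\delta-2}\,\Abs*{f_1 - f_2}_{C^{2,\alpha}_{\delta,g_\epsilon}}\bigl(\Abs*{f_1}_{C^{2,\alpha}_{\delta,g_\epsilon}} + \Abs*{f_2}_{C^{2,\alpha}_{\delta,g_\epsilon}}\bigr),
\]
so the IFT constant $N = C_7\,\epsilon^{\delta-2}$.

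The main obstacle, and the entire reason for the weighted setup, is that $N$ \emph{blows up} as $\epsilon \searrow 0$ because $\delta - 2 < 0$; this must be overpowered by the smallness of $\Abs*{\cF_\epsilon(0)}$. The IFT smallness condition $2L\Abs*{\cF_\epsilon(0)} < \min\{r_0, 1/(2NL)\}$ reduces to $4 L^2 C_7 C_8\,\epsilon^{1+\delta/2} < 1$, which is precisely hypothesis (8) of the theorem. The exponent $1 + \delta/2$ is strictly positive exactly because $\delta > -2$, and it is satisfiable for small $\epsilon$; this lower constraint $\delta > -2$ is genuinely needed here and complements the upper constraint $\delta < 0$ used to kill the harmonic-function kernel in the improved Schauder estimate (Proposition \ref{IWSE}). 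The IFT then produces a unique $f \in D^X_{2L\Abs*{\cF_\epsilon(0)}}(0)$ solving $\cF_\epsilon(f) = 0$ with $\Abs*{f}_{C^{2,\alpha}_{\delta,g_\epsilon}} \leq 2LC_8\,\epsilon^{3-\delta/2} < C_9\,\epsilon^{3-\delta/2}$; positivity of $\omega_\epsilon + i\del\delbar f$ is automatic from Remark \ref{autopositivity}, and since the complex Monge--Amp\`ere operator is then uniformly elliptic at $f$, standard Schauder bootstrapping upgrades $f$ from $C^{2,\alpha}$ to $C^\infty$. Finally, the fact that $\omega_\epsilon + i\del\delbar f$ is \emph{the} Ricci-flat \ka form in $[\omega_\epsilon]$ (as opposed to just the unique one in a small ball) follows from Propositions \ref{K\"{a}hlereinstein} and \ref{calabiyau} combined with the integral-zero normalization.
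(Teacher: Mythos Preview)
Your proposal is correct and follows essentially the same route as the paper: verify the three IFT hypotheses via Theorem \ref{biginverse} for the linearization (with $L=2K$), the weighted product estimate Proposition \ref{weighted products} with $l=2$ for the nonlinearity (giving $N=C_7\epsilon^{\delta-2}$), and Proposition \ref{presmall} plus the annular weight $\rho_\epsilon\sim\epsilon^{1/2}$ for $\cF_\epsilon(0)$ (giving the $C_8\epsilon^{3-\delta/2}$ bound), then observe that the IFT smallness condition collapses to $\epsilon^{1+\delta/2}<\frac{1}{4L^2C_7C_8}$, which is satisfiable precisely because $\delta>-2$. Your explicit check that $\cF_\epsilon$ lands in the integral-zero subspace and your remark pinpointing where each endpoint of $(-2,0)$ is used are nice additions not spelled out in the paper's proof.
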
 \begin{proof} We setup the input needed for the implicit function theorem (Theorem \ref{IFT}). \begin{enumerate}
						\item We have from Theorem \ref{biginverse} that $-\frac{1}{2}\Delta_{g_{\epsilon}}: C^{2,\alpha}_{\delta, g_{\epsilon}} \paren*{\Km_{\epsilon}}^{0} \rightarrow C^{0,\alpha}_{\delta - 2, g_{\epsilon}} \paren*{\Km_{\epsilon}}^{0}$ is a bounded linear isomorphism with $\Abs*{2\Delta^{-1}_{g_{\epsilon}}} \leq 2K \eqcolon L$ and $L>0$ \textit{uniform in $\epsilon > 0$}.
						\item Firstly, we have that for any $4$-form $\Theta$, that since $\Theta = \frac{\Theta}{\omega\wedge \omega} \omega \wedge \omega$ where $\frac{\Theta}{\omega\wedge \omega}$ is the coefficient \textit{function}, that combined with the identities $\dV_{g} = \frac{\omega^{2}}{2}$ and $\abs*{\dV_{g}}_{g} = 1$, we have that (where the tensor norms here are on $4$-forms) $\abs*{\Theta}_{g} = \abs*{\frac{\Theta}{\omega \wedge \omega} \omega \wedge \omega}_{g} = 2 \abs*{\frac{\Theta}{\omega \wedge \omega} \dV_{g}}_{g} = 2 \abs*{\frac{\Theta}{\omega \wedge \omega}}$. Whence it suffices to bound the $4$-form norm of $i\del\delbar f \wedge i\del\delbar f - i\del\delbar g \wedge i\del\delbar g$. 
						
						Therefore, using the fact that $a \wedge a - b \wedge b = \paren*{a - b} \wedge \paren*{a + b}$ for $2$-forms $a,b$, Proposition \ref{weighted products} with $l=2$ on expressions with products, and the definition of the weighted Holder norms, for the nonlinearity we have that there exists a \textit{uniform} constant $C_{7} > 0$ such that $$\Abs*{\cN(f) - \cN(g)}_{C^{0,\alpha}_{\delta - 2, g_{\epsilon}} \paren*{\Km_{\epsilon}}} \leq C_{7}\epsilon^{\delta - 2} \Abs*{f-g}_{C^{2,\alpha}_{\delta, g_{\epsilon}} \paren*{\Km_{\epsilon}}}\paren*{\Abs*{f}_{C^{2,\alpha}_{\delta, g_{\epsilon}} \paren*{\Km_{\epsilon}}} + \Abs*{g}_{C^{2,\alpha}_{\delta, g_{\epsilon}} \paren*{\Km_{\epsilon}}}}, \qqfa f,g \in D_{r_{0}}^{C^{2,\alpha}_{\delta, g_{\epsilon}} \paren*{\Km_{\epsilon}}}\paren*{0}$$ with $N \coloneq C_{7}\epsilon^{\delta - 2}$ and $r_{0} > 0$ to be determined later.
						
						
						\item Proposition \ref{presmall} and the Taylor expansion of the exponential function gives us $\supp \cF_{\epsilon}\paren*{0} \subseteq \set*{\epsilon^{\frac{1}{2}} \leq \abs*{\cdot}^{g_{0}}_{\pi^{-1}\paren*{\cS}} \leq 2\epsilon^{\frac{1}{2}}}$ and $\abs*{\cF_{\epsilon}\paren*{0}} \leq c_{1}(2)\epsilon^{2}$ on its support. By definition of the weighted Holder norm and the definition of $\rho_{\epsilon}$ restricted to $\set*{\epsilon^{\frac{1}{2}} \leq \abs*{\cdot}^{g_{0}}_{\pi^{-1}\paren*{\cS}} \leq 2\epsilon^{\frac{1}{2}}}$ combined with Data \ref{constant epsilon 1-16} (namely $2\epsilon < \epsilon^{\frac{1}{2}}$) giving the bound $\epsilon^{\frac{1}{2}} \leq \rho_{\epsilon} \leq 2\epsilon^{\frac{1}{2}}$, we thus have that $\Abs*{\cF_{\epsilon}\paren*{0}}_{C^{0,\alpha}_{\delta - 2, g_{\epsilon}} \paren*{\Km_{\epsilon}}}\leq C_{8}\epsilon^{2-\frac{\delta-2}{2}} = C_{8}\epsilon^{3-\frac{\delta}{2}}$ for some \textit{uniform} constant $C_{8} > 0$.
					\end{enumerate}

					Set $r_{0} \coloneq C_{9} \epsilon^{3 - \frac{\delta}{2}}$ with $C_{9} > 0$ chosen so that $C_{9} > 2LC_{8}$. We now need to show that $$\Abs*{\cF_{\epsilon}\paren*{0}}_{C^{0,\alpha}_{\delta - 2, g_{\epsilon}} \paren*{\Km_{\epsilon}}}\leq \frac{r}{2L}$$ with $r < \min\set*{r_{0}, \frac{1}{2NL}} = \min\set*{C_{9} \epsilon^{3 - \frac{\delta}{2}}, \frac{\epsilon^{2-\delta}}{2LC_{7}}}$. We have that \begin{align*}
						\Abs*{\cF_{\epsilon}\paren*{0}}_{C^{0,\alpha}_{\delta - 2, g_{\epsilon}} \paren*{\Km_{\epsilon}}}\leq C_{8}\epsilon^{3-\frac{\delta}{2}} = \frac{2LC_{8}\epsilon^{3-\frac{\delta}{2}}}{2L}
					\end{align*} and so we choose our $r \coloneq 2LC_{8}\epsilon^{3-\frac{\delta}{2}}$. Clearly $r < r_{0}$, and it remains to show that $r < \frac{\epsilon^{2-\delta}}{2LC_{7}}$. But $\epsilon^{3 - \frac{\delta}{2}} = \epsilon^{2 - \delta + P}$ where $P \coloneq 1 + \frac{\delta}{2}$, which is \textit{positive} because $\delta \in (-2,0)$. Hence since $x > y \Rightarrow c^{x} < c^{y}$ for $c \in (0,1)$ small enough, \textbf{making $\epsilon > 0$ small enough so that $\epsilon^{1 + \frac{\delta}{2}} < \frac{1}{4L^{2}C_{7}C_{8}}$} forces $r < \frac{\epsilon^{2-\delta}}{2LC_{7}}$. Therefore, \textit{for each such $\epsilon > 0$} Theorem \ref{IFT} gives us the existence of a unique solution $f \in D^{C^{2,\alpha}_{\delta, g_{\epsilon}} \paren*{\Km_{\epsilon}}^{0}}_{2L\Abs*{\cF_{\epsilon}\paren*{0}}_{C^{0,\alpha}_{\delta - 2, g_{\epsilon}} \paren*{\Km_{\epsilon}}}}\paren*{0}$ to $\cF_{\epsilon}\paren*{f} = 0$, giving us existence, uniqueness, integral zero, and the bound $\Abs*{f}_{C^{2,\alpha}_{\delta, g_{\epsilon}} \paren*{\Km_{\epsilon}}} \leq 2L\Abs*{\cF_{\epsilon}\paren*{0}}_{C^{0,\alpha}_{\delta - 2, g_{\epsilon}} \paren*{\Km_{\epsilon}}} < C_{9}\epsilon^{3 - \frac{\delta}{2}}$ since $r < r_{0}$. Smoothness follows from elliptic regularity, as our nonlinear equation has smooth coefficients \& $2$nd order elliptic linearization and the produced function $f$ is in $C^{2,\alpha}\subset C^{2}$. Lastly, Propositions \ref{K\"{a}hlereinstein} and \ref{calabiyau}, as well as \textit{Remark} \ref{autopositivity} regarding positivity of the resulting K\"{a}hler form being automatic, finish off the existence of our unique Ricci-flat K\"{a}hler metric for each sufficiently small $\epsilon > 0$, as was to be shown.\end{proof}

				Now we have from the smallness of the norm $\Abs*{f}_{C^{2,\alpha}_{\delta, g_{\epsilon}} \paren*{\Km_{\epsilon}}} < C_{9}\epsilon^{3 - \frac{\delta}{2}}$ of the solution to the nonlinear problem above, the definition of the weighted Holder spaces, the global bound $\epsilon \leq \rho_{\epsilon}$ and the fact that $\delta \in (-2,0)$ gives $\Abs*{f}_{C^{2,\alpha}_{\delta, g_{\epsilon}} \paren*{\Km_{\epsilon}}} < C_{9}\epsilon^{3 - \frac{\delta}{2}} \Longrightarrow \Abs*{f}_{C^{0}\paren*{\Km_{\epsilon}}} < C_{9}\epsilon^{3 + \frac{\delta}{2}}, \Abs*{\nabla_{g_{\epsilon}}f}_{C^{0}_{g_{\epsilon}}\paren*{\Km_{\epsilon}}} < C_{9}\epsilon^{2 + \frac{\delta}{2}}, \Abs*{\nabla^{2}_{g_{\epsilon}}f}_{C^{0}_{g_{\epsilon}}\paren*{\Km_{\epsilon}}} < C_{9}\epsilon^{1 + \frac{\delta}{2}}$ with all higher order tensor norms defined with respect to $g_{\epsilon},\omega_{\epsilon}$. In particular, this implies that $\Abs*{i\del \delbar f}_{C^{0}_{g_{\epsilon}}\paren*{\Km_{\epsilon}}} < C_{10}\epsilon^{1 + \frac{\delta}{2}}$ with the higher order tensor norm defined with respect to $g_{\epsilon},\omega_{\epsilon}$ for some \textit{uniform} $C_{10} > 0$, whence $\Abs*{i\del \delbar f}_{C^{0}_{g_{\epsilon}}\paren*{\Km_{\epsilon}}}$ tends to zero as $\epsilon \searrow 0$ due to $\delta \in (-2,0)$.

				Combining these with the fact that $\Abs*{\paren*{\omega_{\epsilon} +i\del \delbar f} - \omega_{\epsilon}}_{C^{0}_{g_{\epsilon}}\paren*{\Km_{\epsilon}}} = \Abs*{i\del \delbar f}_{C^{0}_{g_{\epsilon}}\paren*{\Km_{\epsilon}}} < C_{10}\epsilon^{1 + \frac{\delta}{2}}$ implies the identity map $\id_{\Km_{\epsilon}}$ is an $O(1)\epsilon^{1 + \frac{\delta}{2}}$-Gromov-Hausdorff approximation via 2-out-of-3, \textit{Remark} \ref{GHremark}, the two Gromov-Hausdorff convergences in the proof of Proposition \ref{IWSE} (Footnotes \ref{GH to Ckaloc footnote outer region} and \ref{GH to Ckaloc footnote inner region bubble}), Proposition \ref{EHproperties} regarding the curvature blow-up of the bolt of $g_{EH,\epsilon^{2}}$ and Propositions \ref{preglueEH} and \ref{approximatemetricproperties} on the construction of $g_{\epsilon}$, and the triangle inequality for the Gromov-Hausdorff metric, we thus have proven Theorem \ref{MainTheoremA}, i.e.:
				
				
				\begin{theorem}[Main Theorem I]\label{maintheorem1} Let $K3$ denote the $K3$ surface.

					Equip $K3$ with the fixed complex structure $J$ and compatible family of \ka metrics $g_{\epsilon}, \omega_{\epsilon}$ constructed in Section \ref{The Kummer Construction}, hence getting the family of \ka metrics $\paren*{\Km_{\epsilon}, J, g_{\epsilon}, \omega_{\epsilon}}$ on a family of Kummer surfaces $\paren*{\Km_{\epsilon}, J}$.
					
					Then denoting by $\wtilde{\omega_{\epsilon}} \coloneq \omega_{\epsilon} + i \del \delbar f_{\epsilon} \in [\omega_{\epsilon}]$ and $\wtilde{g_{\epsilon}}$ the unique Ricci-flat K\"{a}hler metric in the K\"{a}hler class $[\omega_{\epsilon}]$ produced from Theorem \ref{bigtheorem1}, we have that $0 < \exists \epsilon_{0} \ll 1$ such that \textbf{there exists a 1-parameter family of Calabi-Yau\footnote{upon suitably normalizing the nowhere vanishing holomorphic $\paren*{2,0}$-form on each $\Km_{\epsilon}$} metrics} $$\set*{\paren*{\wtilde{g_{\epsilon}}, \wtilde{\omega_{\epsilon}}}}_{\epsilon \in \paren*{0,\epsilon_{0}}}$$ on $K3$ which are Ricci-flat \ka WRT the fixed complex structure $J$.
					
					The solved for functions $\set*{f_{\epsilon}}_{\epsilon \in \paren*{0,\epsilon_{0}}}$ produced from Theorem \ref{bigtheorem1} satisfy the uniform estimate $$\Abs*{f_{\epsilon}}_{C^{k}_{g_{\epsilon}} \paren*{\Km_{\epsilon}}} \leq C_{9}\epsilon^{3+ \frac{\delta}{2} - k}$$ for $\delta \in \paren*{-2,0}$.
					
					Moreover when $\epsilon \searrow 0$, $$\paren*{\Km_{\epsilon}, J, \wtilde{g_{\epsilon}}, \wtilde{\omega_{\epsilon}}} \xrightarrow{GH} \paren*{\bbT^{4}/\bbZ_{2}, J_{0}, g_{0}, \omega_{0}}$$ converges \textit{in the Gromov-Hausdorff topology} to the flat \ka orbifold $\paren*{\bbT^{4}/\bbZ_{2}, J_{0}, g_{0},\omega_{0}}$ with the singular orbifold K\"{a}hler metric $g_{0}, \omega_{0}$ (which is equal to the flat K\"{a}hler metric on $\bbT^{4}/\bbZ_{2} - \cS$).
					
					Moreover, we may decompose $K3$ into a union of sets\footnote{say, $\Km_{\epsilon}^{reg} \coloneq \set*{\frac{\epsilon^{\frac{1}{2}}}{2} < \abs*{\cdot}^{g_{0}}_{\pi^{-1}\paren*{\cS}}}$ and each $\Km_{\epsilon}^{b_{p}}$ being the 16 connected components of $\set*{\abs*{\cdot}^{g_{0}}_{\pi^{-1}\paren*{\cS}}\leq \epsilon^{\frac{1}{2}}}$.} $\Km_{\epsilon}^{reg} \cup \bigsqcup_{p \in \cS}\Km_{\epsilon}^{b_{p}}$ such that \begin{enumerate}
						\itemsep0em 
						\item $\paren*{\Km_{\epsilon}^{reg}, J, \wtilde{g_{\epsilon}}, \wtilde{\omega_{\epsilon}}}$ collapses to the flat \ka manifold $\paren*{\bbT^{4}/\bbZ_{2} - \cS, J_{0}, g_{0}, \omega_{0}}$ with bounded curvature away from $\cS$ (\textbf{regular region}).
						\item For each $p \in \cS$, $\paren*{\Km_{\epsilon}^{b_{p}}, \frac{1}{\epsilon^{2}}\wtilde{g_{\epsilon}}}$ converges to $\paren*{T^{*}\CP^{1}, g_{EH,1}}$ in $C^{\infty}_{loc}$ (\textbf{ALE bubble region}).
				\end{enumerate}\end{theorem}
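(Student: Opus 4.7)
The plan is to derive Theorem \ref{maintheorem1} as a direct consequence of Theorem \ref{bigtheorem1} by converting the weighted Hölder bound on $f_\epsilon$ into pointwise unweighted estimates, and then upgrading the already-established Gromov-Hausdorff convergence of the background data to the perturbed Ricci-flat data. First I would use the bound $\Abs*{f_\epsilon}_{C^{2,\alpha}_{\delta, g_\epsilon}\paren*{\Km_\epsilon}} < C_9 \epsilon^{3 - \delta/2}$ from Theorem \ref{bigtheorem1} together with the global pointwise inequality $\epsilon \leq \rho_\epsilon \leq 1$ from Proposition \ref{preciseweightfunction} to read off
\[
\Abs*{\nabla_{g_\epsilon}^j f_\epsilon}_{C^0_{g_\epsilon}\paren*{\Km_\epsilon}} \leq C\epsilon^{3+\delta/2-j}, \quad j = 0, 1, 2.
\]
For $k \geq 3$, the estimate follows by bootstrapping the complex Monge-Ampère equation $\cF_\epsilon\paren*{f_\epsilon} = 0$: since $f_\epsilon \in C^{2,\alpha}$ and the equation has smooth coefficients with an elliptic linearization, iterating the local Schauder estimates of Proposition \ref{localWSE} (with their conformal rescalings that bring each region to bounded geometry) yields higher weighted derivative bounds with the same scaling in $\epsilon$, which convert to the stated $C^k$ estimate.

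Next, for the Gromov-Hausdorff convergence, the $j=2$ bound gives $\Abs*{i\del\delbar f_\epsilon}_{C^0_{g_\epsilon}\paren*{\Km_\epsilon}} \lesssim \epsilon^{1+\delta/2} \to 0$ as $\epsilon \searrow 0$ since $\delta \in (-2,0) \Rightarrow 1 + \delta/2 > 0$. Then $\wtilde{\omega_\epsilon} - \omega_\epsilon = i\del\delbar f_\epsilon$ and the compatibility condition (2-out-of-3) yield $\Abs*{\wtilde{g_\epsilon} - g_\epsilon}_{C^0_{g_\epsilon}} = o(1)$, so the identity map is an $o(1)$-Gromov-Hausdorff approximation between $\paren*{\Km_\epsilon, \wtilde{g_\epsilon}}$ and $\paren*{\Km_\epsilon, g_\epsilon}$. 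Composing with the convergence $\paren*{\Km_\epsilon, g_\epsilon} \xrightarrow{GH} \paren*{\bbT^4/\bbZ_2, g_0}$ recorded in \textit{Remark} \ref{GHremark} and applying the triangle inequality for $d_{GH}$ gives the desired Gromov-Hausdorff convergence to the flat Kähler orbifold.

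Finally, for the regional decomposition, I would define $\Km_\epsilon^{reg} \coloneq \set*{\tfrac{\epsilon^{1/2}}{2} < \abs*{\cdot}^{g_0}_{\pi^{-1}\paren*{\cS}}}$ and let $\Km_\epsilon^{b_p}$ be the 16 connected components of its complement. On $\Km_\epsilon^{reg}$ we have $g_\epsilon = g_0$ outside the annulus by Proposition \ref{approximatemetricproperties}, so $\wtilde{g_\epsilon}$ differs from the flat $g_0$ by $O\paren*{\epsilon^{1+\delta/2}}$ in $C^0$, yielding collapse to $\paren*{\bbT^4/\bbZ_2 - \cS, g_0}$ with bounded curvature (the $C^k$ bounds on $f_\epsilon$ and elliptic regularity for $\Ric_{\wtilde{g_\epsilon}} = 0$ control $\operatorname{Rm}_{\wtilde{g_\epsilon}}$ away from $\cS$). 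For each bubble region, I would pull back via the map $R_\epsilon$ from Proposition \ref{preglueEH}: under the conformal rescaling $\wtilde{g_\epsilon} \mapsto \tfrac{1}{\epsilon^2}\wtilde{g_\epsilon}$, the background metric $\tfrac{1}{\epsilon^2}g_\epsilon$ is isometric (via $R_\epsilon$) to $\lwhat{g_{EH-0,\epsilon}}$ on a large ball in $T^*\CP^1$ which exhausts $\paren*{T^*\CP^1, g_{EH,1}}$ as $\epsilon \searrow 0$ (exactly as in Case 2 of the proof of Proposition \ref{IWSE}, Footnote \ref{GH to Ckaloc footnote inner region bubble}), while the rescaled perturbation $i\del\delbar(\epsilon^{-2} R_\epsilon^* f_\epsilon)$ has $C^0$ norm bounded by $\epsilon^{1+\delta/2} \to 0$. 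The main technical step is upgrading this pointed Gromov-Hausdorff convergence of the background to $C^\infty_{loc}$ convergence of the Ricci-flat metric $\tfrac{1}{\epsilon^2}\wtilde{g_\epsilon}$ to $g_{EH,1}$; the hard part is the higher-order control, which is obtained by combining the uniform $C^k$ bounds on the rescaled $f_\epsilon$ with harmonic coordinates and elliptic bootstrapping for the rescaled Ricci-flat equation, a standard argument once the zeroth-order control is in hand.
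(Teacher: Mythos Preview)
Your proposal is correct and follows essentially the same route as the paper: converting the weighted bound $\Abs*{f_\epsilon}_{C^{2,\alpha}_{\delta,g_\epsilon}} < C_9\epsilon^{3-\delta/2}$ to unweighted $C^j$ estimates via $\epsilon\leq\rho_\epsilon$, deducing $\Abs*{i\del\delbar f_\epsilon}_{C^0_{g_\epsilon}} = O(\epsilon^{1+\delta/2})$ so that $\id_{\Km_\epsilon}$ is a small Gromov--Hausdorff approximation, then combining with \textit{Remark} \ref{GHremark} and the triangle inequality for $d_{GH}$, and finally invoking the two convergences in the proof of Proposition \ref{IWSE} (Footnotes \ref{GH to Ckaloc footnote outer region} and \ref{GH to Ckaloc footnote inner region bubble}) together with Propositions \ref{preglueEH}, \ref{approximatemetricproperties}, and \ref{EHproperties} for the regular and bubble regions. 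Your treatment is in fact slightly more explicit than the paper's in spelling out the bootstrapping for the $C^k$ estimate when $k\geq 3$ and the harmonic-coordinate upgrade to $C^\infty_{loc}$ in the bubble region.
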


				\section{\textsection \ Hyper-K\"{a}hler Metrics on $K3$ and Volume Non-Collapsed Limit Spaces \ \textsection}\label{HKA analogue}

				\subsection{\textsection \ Hyper-K\"{a}hler Metrics in $4$-dimensions \& Perturbing Closed Definite Triples \ \textsection}\label{HKA prelims}

				Let us first recall the basics of hyper-K\"{a}hler metrics in dimension 4 which will be needed, mostly following Foscolo's paper \cite{Foscolo} and survey \cite{FoscoloS}.
				
				First, recall that for $\paren*{M^{4},\mu_{0}}$ an oriented smooth $4$-manifold with a volume form $\mu_{0}$, that each $\Lambda^{2}T_{x}^{*}M$ has a non-degenerate quadratic form of signature $\paren*{3,3}$ via $\frac{\cdot \wedge \cdot}{\mu_{0}}$.

				\begin{notation}\label{triple of 2 forms as columns!}
					Now let $\bomega\coloneq \paren*{\omega_{1},\omega_{2},\omega_{3}} \in \Omega^{2}\paren*{M}\otimes \bbR^{3}$ denote a triple of \textit{real} $2$-forms, \textit{and from now on we consider these triples as \textbf{column} vectors}. This always yields us a (smooth) symmetric matrix-valued function $Q \in \Gamma\paren*{M, \Sym^{2}\paren*{\bbR^{3}}}$ via \textbf{componentwise division}: $$\frac{1}{2} \frac{\bomega\wedge \bomega^{T}}{\mu_{0}} = Q$$ That is, $\frac{1}{2} \omega_{i}\wedge \omega_{j} = Q_{ij}\mu_{0}$.
				\end{notation}

				\begin{definition}\label{HKA definite triple definition} On $\paren*{M^{4},\mu_{0}}$ an oriented smooth $4$-manifold with a volume form $\mu_{0}$, for a triple of 2-forms $\bomega$ the following are equivalent:\begin{itemize}
						\itemsep0em 
						\item $\spano\paren*{\bomega} \coloneq \spano\paren*{\omega_{1},\omega_{2},\omega_{3}} \subseteq \Lambda^{2}T_{x}^{*}M$ is a $3$-dimensional positive definite subspace for every $x \in M$. 
						\item The associated $Q$ is a positive definitive matrix everywhere.\end{itemize}
					In such an event, we call $\bomega$ a \textbf{definite triple}.\end{definition}
				
				Now for $\bomega$ a definite triple, we may associate an \textbf{associated volume form} $\mu_{\bomega}$ and a new matrix called the \textbf{associated intersection matrix} $Q_{\bomega}$ via $\mu_{\bomega} \coloneq \paren*{\det Q}^{\frac{1}{3}}\mu_{0}$ and $Q_{\bomega} \coloneq \paren*{\det Q}^{-\frac{1}{3}}Q$. Hence what we've done is normalized the determinant of $Q$ to have unit determinant, and have appropriately normalized the given volume form so that we satisfy $$\frac{1}{2} \frac{\bomega\wedge \bomega^{T}}{\mu_{\bomega}} = Q_{\bomega}$$ as well as $$\mu_{\bomega} = \frac{1}{2 \Tr\paren*{Q_{\bomega}}} \Tr\paren*{\bomega\wedge \bomega^{T}}$$ Thus both $\mu_{\bomega}, Q_{\bomega}$ are \textit{independent} of the choice of volume form $\mu_{0}$. Moreover, the following are equivalent:\begin{itemize}
					\itemsep0em 
					\item A choice of conformal class $[g]$ on $M$.
					\item The choice of a $3$-dimensional positive definite subspace of $\Lambda^{2} T_{x}^{*}M, \forall x \in M$.
				\end{itemize}
				
				Hence every definite triple $\bomega$ defines a Riemannian metric $g_{\bomega}$ via requiring $\restr{\spano\paren*{\bomega}}{x} = \Lambda_{g_{\bomega}}^{+}T_{x}^{*}M,\forall x \in M$ and $\dV_{g_{\bomega}} = \mu_{\bomega}$, where $\Lambda_{g_{\bomega}}^{+}, \Omega^{+}_{g_{\bomega}}$ denote the self-dual \textit{2-forms} WRT $*_{g_{\bomega}}$.
				
				\begin{definition} On $\paren*{M^{4},\mu_{0}}$ with a definite triple $\bomega$, we say that \begin{itemize}
						\itemsep0em 
						\item $\bomega$ is \textbf{closed} or \textbf{hypersymplectic} when $d\bomega \coloneq \paren*{d\omega_{1},d\omega_{2},d\omega_{3}} = 0$.
						\item $\bomega$ is an \textbf{$\SU(2)$-structure} when $Q_{\bomega} = \id$, i.e. $\frac{1}{2} \frac{\bomega\wedge \bomega^{T}}{\mu_{\bomega}} = \id$.
						\item $\bomega$ is \textbf{hyper-K\"{a}hler} when it is both closed and an $\SU(2)$-structure.			
					\end{itemize} Note that the 3rd point is equivalent to $\Hol\paren*{g_{\bomega}}\subset \Sp(1)$, \textit{whence \textbf{\hka metrics are Ricci-flat}}.\end{definition}

				\begin{remark}\label{SU2 structure and associated volume form equation}
					Note that when $\paren*{M^{4},\bomega}$ is an $\SU(2)$-structure, we have that $$\mu_{\bomega} = \frac{1}{6} \bomega^{T} \wedge \bomega = \frac{1}{6}\Tr\paren*{\bomega\wedge\bomega^{T}}$$ \textit{Note} the order swap, i.e. $\mu_{\bomega} = \frac{1}{6} \sum_{i=1}^{3} \omega_{i}\wedge \omega_{i}$. Indeed, either notice that $\Tr\paren*{\id} = 3$ or write out $\frac{1}{2}\bomega \wedge \bomega^{T} = \id\cdot \mu_{\bomega}$, giving us $\frac{1}{2} \omega_{i}\wedge \omega_{j} = \delta_{ij} \mu_{\bomega}$ whence $\mu_{\bomega} = \frac{1}{2} \omega_{i} \wedge \omega_{i}, \forall i \in \set*{1,2,3}$. Whence summing over all $3$ equivalent expressions for $\mu_{\bomega}$ and then dividing by $3$ gives us our sought after expression. Therefore, being an $\SU(2)$-structure may be equivalently stated as $\bomega$ satisfying one of the following \textit{equivalent} equations: $$Q_{\bomega} = \id \Longleftrightarrow \frac{1}{2} \frac{\bomega\wedge \bomega^{T}}{\mu_{\bomega}} = \id \Longleftrightarrow \frac{\bomega\wedge \bomega^{T}}{\bomega^{T} \wedge \bomega} = \frac{1}{3}\id $$
				\end{remark}

				\begin{remark}\label{CYvsHKA}
					Being \hka is related to being Calabi-Yau (or Ricci-flat K\"{a}hler), as well as the usual definition of hyper-K\"{a}hler, as follows: First, \textit{choose a direction in $S^{2} = S^{2}(1) \subset \bbR^{3}$, say $e_{1}\in S^{2}\subset \bbR^{3}$}.

					For $\paren*{M^{4},\bomega, g_{\bomega}}$ hyper-K\"{a}hler, we have that upon writing $\omega_{c} \coloneq \omega_{2}+ i\omega_{3}$, $\overline{\omega_{c}} \coloneq \omega_{2} - i\omega_{3}$, that this makes $\omega_{c}$ a complex 2-form, defining an almost complex structure $J_{1}$ ala Proposition \ref{holovolformdeterminescomplexstructure} via defining $\Lambda^{1,0}_{\bbC} T_{x}^{*}M \coloneq \ker\paren*{\restr{\alpha}{x}\mapsto \restr{\alpha}{x}\wedge \restr{\omega_{c}}{x}},\forall x \in M$. In fact, as the notation suggests, the almost complex structure $J_{1}$ to which $\omega_{c}$ is complex with respect to is \textbf{precisely} the almost complex structure $J_{1}$ by which $\omega_{1}$ is compatible with $g_{\bomega}$.

					$d\omega_{c} = 0$ forces the differential ideal generated by the $(1,0)$-forms be closed, whence $J_{1}$ is integrable by Newlander-Nirenberg. Moreover, $\omega_{1}$ and $\omega_{c}$ are respectively real $(1,1)$ and holomorphic $(2,0)$ forms \textit{with respect to $J_{1}$}, and since $\omega_{1}$ is closed \& nondegenerate, this makes $\paren*{M^{4},J_{1},g,\omega_{1}}$ \textit{K\"{a}hler} with $g \coloneq g_{\bomega}$ the induced metric by 2-out-of-3. Moreover, $\frac{1}{2} \frac{\bomega\wedge \bomega^{T}}{\mu_{\bomega}} = Q_{\bomega} = \id$ gives\footnote{Indeed, $\frac{1}{2} \frac{\bomega\wedge \bomega^{T}}{\mu_{\bomega}} = \id \Longleftrightarrow \frac{\bomega\wedge \bomega^{T}}{\bomega^{T} \wedge \bomega} = \frac{1}{3}\id $ thus we have that $\omega_{i}\wedge \omega_{j} = \frac{1}{3}\delta_{ij}\paren*{\omega_{1}\wedge \omega_{1}+ \omega_{2}\wedge \omega_{2}+ \omega_{3}\wedge \omega_{3}}$, whence setting $i = j = 1$ and noting that $\omega_{c} \wedge \overline{\omega_{c}} = \omega_{2}\wedge \omega_{2}+ \omega_{3}\wedge \omega_{3}$ finishes it off.} us $\omega_{1}^{2} = \frac{1}{2}\omega_{c}\wedge \overline{\omega_{c}}$, aka by Proposition \ref{calabiyau}, $g$ is \textit{Ricci-flat}.
					
					Since the choice of direction $e_{1}\in S^{2}\subset \bbR^{3}$ was \textit{arbitrary}, we get that $g_{\bomega}$ is Ricci-flat \ka WRT to an $S^{2}$s worth of compatible integrable complex structures. In other words, $\paren*{J_{e},\omega_{e}}_{e \in S^{2}}$ are all compatible WRT the \textit{fixed} metric $g_{\bomega}$. Holonomially this is because $\Sp(1)\cong \SU(2)$ (and more generally $\Sp(m)\subseteq \SU(2m)$), and schematically $$\paren*{M^{4},\bomega, g_{\bomega}}\text{ hyper-K\"{a}hler}\Longrightarrow \paren*{M^{4}, J_{e}, g_{\bomega}, \omega_{e}} \text{ Ricci-flat K\"{a}hler},\forall e \in S^{2}$$

					Conversely, only in $\dim_{\bbR}M = 4$ because $\Sp(1)\cong \SU(2)$, given $\paren*{M^{4}, J, g, \omega}$ Ricci-flat K\"{a}hler (whence $M^{4}$ is clearly orientable since it admits complex structures), we have that the real and imaginary parts of the associated nowhere vanishing holomorphic $(2,0)$-form/holomorphic volume form $\Omega$ (\textit{Remark} \ref{Ricci flat Kahler vs CY notation}) give us $\omega_{2} \coloneq \Re \Omega,\omega_{3} \coloneq \Im \Omega$ respectively which are closed, and we let $\omega_{1}\coloneq\omega$, whence giving us our \hka triple $\bomega$. Moreover, we still end up with $g_{\bomega}  = g$ our original given \ka \textit{metric}. Schematically, $$\begin{aligned}
						\paren*{M^{4}, J, g, \omega}\text{ Ricci-flat K\"{a}hler} \Longrightarrow& \paren*{M^{4},\bomega , g_{\bomega} }\text{ hyper-K\"{a}hler}\\
						&\text{where }\bomega \coloneq \paren*{\omega, \Re \Omega, \Im \Omega}\\
						&\text{and } g_{\bomega} = g
					\end{aligned}$$
					
					Hence, when passing from Ricci-flat \ka $\paren*{M^{4}, J, g, \omega}$ to hyper-K\"{a}hler $\paren*{M^{4},\bomega , g_{\bomega}}$, \textit{we forget the given complex structure $J$ and work with the underlying oriented smooth $4$-manifold $M^{4}$.}\end{remark}

				Now suppose $\paren*{M^{4},\bomega}$ is hypersymplectic/closed, and (interpreted componentwise) $\Abs*{Q_{\bomega} - \id}_{C^{0}\paren*{M^{4}}}<\sigma$ for $0< \sigma \ll 1$. We want to perturb $\bomega$ into a hyper-K\"{a}hler triple, whence by \textit{Remark} \ref{SU2 structure and associated volume form equation} we seek a triple of \textit{closed} 2-forms $\bm{\eta}$ such that $\bomega + \bm{\eta}$ is hyper-K\"{a}hler, i.e. $$\frac{\paren*{\bomega+\bm{\eta}}\wedge \paren*{\bomega+\bm{\eta}}^{T}}{\paren*{\bomega+\bm{\eta}}^{T} \wedge \paren*{\bomega+\bm{\eta}}} = \frac{1}{3}\id $$
				
				Decompose $\bm{\eta} = \bm{\eta}^{+} + \bm{\eta}^{-}$ into self-dual and anti-self-dual parts WRT $g_{\bomega}$, respectively, whence $\bm{\eta}^{\pm} \coloneq \frac{\bm{\eta} \pm *_{g_{\bomega}} \bm{\eta}}{2}$. Write the self-dual part as $\bm{\eta}^{+} = A \bomega$ for some matrix $A \in \Gamma\paren*{M, \End\paren*{\bbR^{3}}}$.

				Given any tuple of $2$-forms $\bm{\gamma}$, denote by $\bm{\gamma} * \bm{\gamma} \coloneq \frac{1}{2} \frac{\bm{\gamma} \wedge\bm{\gamma}^{T}}{\mu_{\bomega}} \in \Gamma\paren*{M,\Sym^{2}\paren*{\bbR^{3}}} $ that particular symmetric $3\times 3$ matrix valued function on $M$. Rewrite the perturbation above via factoring out the product \textit{using the fact that the wedge product of a self-dual form and an anti-self-dual form vanishes}\footnote{Indeed, $\fa^{+} \wedge \fq^{-} = \paren*{*_{g}\fa^{+}} \wedge \paren*{*_{g} \fq^{-}} = - \fa^{+}\wedge \fq^{-}$.}, then multiplying the LHS by $1 = \frac{2\mu_{\bomega}}{2\mu_{\bomega}}$, then using the relation $\frac{1}{2} \frac{\bomega\wedge \bomega^{T}}{\mu_{\bomega}} = Q_{\bomega}$, and finally passing the denominator of the LHS to the RHS. We thus get $$Q_{\bomega} + Q_{\bomega} A^{T} + AQ_{\bomega} + AQ_{\bomega}A^{T} + \bm{\eta}^{-} * \bm{\eta}^{-} = \frac{1}{3}\paren*{\Tr\paren*{Q_{\bomega}} + \Tr\paren*{AQ_{\bomega}} + \Tr\paren*{Q_{\bomega}A^{T}} + \Tr\paren*{AQ_{\bomega}A^{T}} + \Tr\paren*{\bm{\eta}^{-} * \bm{\eta}^{-}}}\id$$

				Therefore letting $M_{0} \coloneq M - \frac{1}{3}\Tr\paren*{M}\cdot \id$ be the trace-free part of any $3\times 3$ matrix $M$, our perturbation equation is thus $$\paren*{Q_{\bomega} A^{T} + AQ_{\bomega} + AQ_{\bomega}A^{T}}_{0} = \paren*{- Q_{\bomega} - \bm{\eta}^{-} * \bm{\eta}^{-}}_{0}$$

				Now consider the map \begin{align*}
					\Gamma\paren*{M,\End\paren*{\bbR^{3}}} &\rightarrow \Gamma\paren*{M,\Sym^{2}\paren*{\bbR^{3}}}\\
					A &\mapsto Q_{\bomega} A^{T} + AQ_{\bomega} + AQ_{\bomega}A^{T}
				\end{align*}
				
				Note that the differential of this map at $0$ is $A\mapsto Q_{\bomega} A^{T} + AQ_{\bomega}$, and since $\Abs*{Q_{\bomega} - \id}_{C^{0}}<\sigma$ for $0< \sigma \ll 1$, \textit{pointwise} this linear map induces an isomorphism $\Sym^{2}\paren*{\bbR^{3}}\rightarrow \Sym^{2}\paren*{\bbR^{3}}$ for $\sigma\ll 1$ sufficiently small. Whence upon restricting to trace-free symmetric matrices we invoke the \textbf{inverse function theorem} to define a smooth map $\cF: \Sym_{0}^{2}\paren*{\bbR^{3}}\rightarrow \Sym_{0}^{2}\paren*{\bbR^{3}}$ such that $\paren*{Q_{\bomega} A^{T} + AQ_{\bomega} + AQ_{\bomega}A^{T}}_{0} = S \Longleftrightarrow A = \cF(S)$, whence we may rewrite the rewritten perturbation equation above as $$\bm{\eta}^{+} = \cF\paren*{\paren*{- Q_{\bomega} - \bm{\eta}^{-} * \bm{\eta}^{-}}_{0}}\bomega$$
				
				\begin{remark}
					In the event that $\bomega$ is hyper-K\"{a}hler, aka $Q_{\bomega} = \id$, we have that the 3 dimensional kernel of the linearization $A\mapsto Q_{\bomega} A^{T} + AQ_{\bomega}$ at the origin is in \textit{bijection} with \textbf{infinitesimal hyper-K\"{a}hler rotations}. 
				\end{remark}
				
				Now let $\cH^{+}_{g_{\bomega}}$ be the $3$-dimensional space of \textit{self-dual} harmonic \textit{2-forms} WRT $g_{\bomega}$. We have that $\cH^{+}_{g_{\bomega}} = \spano_{\bbR}\paren*{\bomega}$, aka $\cH^{+}_{g_{\bomega}}$ consists of \textit{constant} linear combinations of the $3$ components of $\bomega$, because $\bomega$ is closed and self-dual hence harmonic, and linearly independent because $\bomega$ is definite.

				In addition to $\bm{\eta}$ being closed, let us now impose the following \begin{ansatz}\label{HKA perturbation ansatz}
					Let $$\bm{\eta} = d\bm{a}+ \bm{\zeta}$$ where $\bm{a} \in \Omega^{1}\paren*{M}\otimes \bbR^{3}$ and $\bm{\zeta} \in \cH^{+}_{g_{\bomega}} \otimes \bbR^{3}$.
				\end{ansatz}
				
				With this ansatz, we may now finally rewrite the rewritten perturbation equation above as the following \textbf{nonlinear PDE system}: 
				$$d_{g_{\bomega}}^{+}\bm{a} + \bm{\zeta} = \cF\paren*{\paren*{- Q_{\bomega} - d^{-}_{g_{\bomega}}\bm{a} * d^{-}_{g_{\bomega}}\bm{a}}_{0}}\bomega$$ where $$\paren*{\bm{a}, \bm{\zeta}} \in \paren*{\Omega^{1}\paren*{M}\oplus \cH^{+}_{g_{\bomega}}} \otimes \bbR^{3}$$ Here of course $d_{g}^{\pm} \coloneq \frac{d \pm *_{g}d}{2}$ are the self-dual and anti-self-dual parts of the exterior derivative, and $d_{g}^{*} = -*_{g} d *_{g}$ is the adjoint of the exterior derivative acting on $1$-forms on an oriented Riemannian $4$-manifold.
				
				\textit{However}, the above nonlinear system of PDE is \textit{not} elliptic: a simple dimension count yields that it inputs an element of $\paren*{\Omega^{1}\paren*{M} \oplus \cH^{+}_{g}}\otimes \bbR^{3}$, which is $7\cdot 3 = 21$-dimensional, and outputs an element of $\Omega^{+}_{g}\paren*{M} \otimes \bbR^{3}$, which is $3\cdot 3 = 9$-dimensional. We therefore also impose the following \textbf{gauge-fixing condition}: $$d_{g_{\bomega}}^{*}\bm{a} = 0$$ 
				
				Whence our nonlinear PDE system now becomes $$d_{g_{\bomega}}^{+}\bm{a} + \bm{\zeta} = \cF\paren*{\paren*{- Q_{\bomega} - d^{-}_{g_{\bomega}}\bm{a} * d^{-}_{g_{\bomega}}\bm{a}}_{0}}\bomega, \qquad d_{g_{\bomega}}^{*}\bm{a} = 0$$ This time around, we input an element of $\paren*{\Omega^{1}\paren*{M} \oplus \cH^{+}_{g}}\otimes \bbR^{3}$, which is $7\cdot 3 = 21$-dimensional, and output an element of $\paren*{\Omega^{0}\paren*{M}\oplus \Omega^{+}_{g}\paren*{M}} \otimes \bbR^{3}$, which is $\paren*{3+4}\cdot 3 = 21$-dimensional. Moreover, the linearization is $$\paren*{D_{g_{\bomega}}\circ \pr_{1} + \paren*{0,\pr_{2}} }\otimes \bbR^{3}= \paren*{D_{g_{\bomega}}\circ \pr_{1} + 0 \oplus \id_{\cH^{+}_{g_{\bomega}}} }\otimes \bbR^{3}$$ with $D_{g_{\bomega}} \coloneq d_{g_{\bomega}}^{*} \oplus d_{g_{\bomega}}^{+}: \Omega^{1}\paren*{M}\rightarrow \Omega^{0}\paren*{M}\oplus \Omega_{g_{\bomega}}^{+}\paren*{M}$ the \textbf{leading order term} and $\id_{\cH^{+}_{g_{\bomega}}}: \cH^{+}_{g_{\bomega}}\rightarrow \cH^{+}_{g_{\bomega}}\subset \Omega^{+}_{g_{\bomega}}\paren*{M}$ the \textbf{lower order term}. A straightforward computation gives us that the principal symbol of $D_{g}: \Omega^{1}\paren*{M} \rightarrow \Omega^{0}\paren*{M} \oplus \Omega_{g}^{+}\paren*{M}$ is $$\sigma_{1}\paren*{D_{g}}\paren*{x,\xi} = \frac{1}{2} \paren*{- \xi^{\sharp}\intrp \cdot , *_{g} \paren*{\xi \wedge \cdot}}  : T^{*}_{x}M \rightarrow \bbR \oplus \Lambda^{+}_{g}T_{x}^{*}M,\qqfa \paren*{x,\xi} \in T^{*}M$$ which is clearly an isomorphism away from the zero section of $T^{*}M$ since $\dim_{\bbR}M = 4$ and $\sigma_{1}\paren*{D_{g}}\paren*{x,\xi}$ is clearly injective for $\xi \neq 0$, whence \textit{$D_{g}$ is a first order elliptic PDO} and our nonlinear PDE system above is in fact a \textbf{nonlinear \textit{elliptic} system}.

				Moreover, we have the following standard facts from Hodge theory: \begin{prop}\label{HKA Dirac implies harmonic}
					Let $\paren*{M^{4},g}$ be an oriented Riemannian $4$-manifold. Let $\fa \in \Omega^{1}\paren*{M}$ be any $1$-form. Then\footnote{\label{HKA Dirac operator remark}In fact, $D_{g}$ is the \textbf{Dirac operator} associated to $g$ when $g$ is a $4$-dimensional \hka metric, so this makes sense.} $$D_{g}\fa = 0 \Longrightarrow \Delta_{g}\fa = 0$$\end{prop}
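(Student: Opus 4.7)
The plan is to unfold the definitions of $d_g^\pm$, $d_g^*$, and $\Delta_g$, and then show that both summands of the Hodge Laplacian $\Delta_g = dd^* + d^*d$ (acting on $1$-forms) annihilate $\fa$ separately, using the two ingredients $d_g^*\fa = 0$ and $d_g^+\fa = 0$ supplied by $D_g\fa = 0$.

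First, I would observe that the assumption $D_g\fa = 0$ is precisely the pair $d_g^*\fa = 0$ and $d_g^+\fa = 0$. The first identity immediately yields $dd_g^*\fa = 0$, which disposes of one half of $\Delta_g\fa$. So the entire content is to show $d_g^* d\fa = 0$.

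Next, I would extract from $d_g^+\fa = 0$ the fact that $d\fa = d_g^+\fa + d_g^-\fa = d_g^-\fa$ is anti-self-dual, i.e.\ $*_g d\fa = -d\fa$. Then I would compute
\begin{align*}
d_g^* d\fa \;=\; -*_g d *_g d\fa \;=\; -*_g d(-d\fa) \;=\; *_g\, d^2 \fa \;=\; 0,
\end{align*}
using $d^2 = 0$ in the last step. Combined with $dd_g^*\fa = 0$ this gives $\Delta_g\fa = dd_g^*\fa + d_g^*d\fa = 0$, as desired.

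There is no real obstacle here: the statement is a short Hodge-theoretic identity specific to dimension $4$, reflecting the fact that on an oriented Riemannian $4$-manifold the operator $D_g = d_g^* \oplus d_g^+ : \Omega^1(M) \to \Omega^0(M) \oplus \Omega^+_g(M)$ is a first-order square root of $\tfrac{1}{2}\Delta_g$ (and is in fact a Dirac operator when $g$ is hyper-K\"{a}hler, as noted in Footnote \ref{HKA Dirac operator remark}). The only care required is in tracking the sign conventions $d_g^* = -*_g d\, *_g$ on $2$-forms and $*_g d\fa = -d\fa$ on anti-self-dual $2$-forms, both of which are standard on oriented Riemannian $4$-manifolds.
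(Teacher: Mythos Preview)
Your proof is correct and essentially identical to the paper's own argument: both use $d_g^*\fa = 0$ to kill $dd_g^*\fa$, then use $d_g^+\fa = 0$ to deduce $*_g d\fa = -d\fa$, and finally compute $d_g^* d\fa = -*_g d *_g d\fa = *_g d^2\fa = 0$. There is nothing to add.
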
 \begin{proof}
					Indeed, $D_{g}\fa = 0$ means that $d_{g}^{*}\fa = 0$ and $d^{+}_{g} \fa = \frac{d\fa + *_{g}d\fa}{2} = 0$ hence $*_{g}d\fa = - d\fa$. Now since $d^{*}_{g}\fa = 0$, we have that $\Delta_{g}\fa \coloneq \paren*{d d^{*}_{g} + d_{g}^{*}d}\fa = 0 + d_{g}^{*}d \fa$. But $*_{g}d\fa = - d\fa$ and $d_{g}^{*} = - *_{g}d*_{g}$ on $2$-forms means that $d_{g}^{*}d \fa = - *_{g}d *_{g} d\fa = *_{g} d^{2}\fa = 0$, as was to be shown.\end{proof}

				\begin{prop}\label{HKA 4d Hodge theory}
					Let $\paren*{M^{4},g}$ be an oriented Riemannian $4$-manifold. Suppose that $b_{1}\paren*{M} = 0$, and moreover suppose that $\paren*{M^{4},g}$ admits a Fredholm theory for $\Delta_{g} \coloneq \paren*{d + d_{g}^{*}}^{2}$ (e.g. when $M^{4}$ is closed). Then we have the following:\begin{itemize}
						\itemsep0em 
						\item $\Omega^{1}\paren*{M} = d\paren*{\Omega^{0}\paren*{M}} \oplus \mathring{\Omega}_{g}^{1}\paren*{M}$
						\item $\Omega^{+}_{g}\paren*{M} = \cH^{+}_{g} \oplus d_{g}^{+}\paren*{\mathring{\Omega}_{g}^{1}\paren*{M}}$
					\end{itemize} Here $\mathring{\Omega}_{g}^{1}\paren*{M} \coloneq \Omega^{1}\paren*{M} \cap \ker d^{*}_{g} = \set*{\fa \in \Omega^{1}\paren*{M} : d_{g}^{*}\fa = 0}$, and all decompositions are $L_{g}^{2}$-orthogonal.
				\end{prop}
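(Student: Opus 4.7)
The plan is to derive both decompositions from the standard Hodge decomposition on a $4$-manifold where $\Delta_{g}$ has a Fredholm theory, using the assumption $b_{1}\paren*{M}=0$ for the first statement and a Hodge-star trick on $\Omega^{3}$ for the second.

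For the first decomposition, I would apply the standard Hodge decomposition to $\Omega^{1}\paren*{M}$, obtaining the $L_{g}^{2}$-orthogonal splitting $\Omega^{1}\paren*{M}=\cH_{g}^{1}\oplus d\paren*{\Omega^{0}\paren*{M}}\oplus d_{g}^{*}\paren*{\Omega^{2}\paren*{M}}$. The hypothesis $b_{1}\paren*{M}=0$ kills the harmonic piece $\cH_{g}^{1}$. It then remains to show that the $L_{g}^{2}$-orthogonal complement of $d\paren*{\Omega^{0}\paren*{M}}$ inside $\Omega^{1}\paren*{M}$ is exactly $\mathring{\Omega}_{g}^{1}\paren*{M}=\ker d_{g}^{*}\cap \Omega^{1}\paren*{M}$: this is immediate from the pairing $\langle df,\fa\rangle_{L_{g}^{2}}=\langle f,d_{g}^{*}\fa\rangle_{L_{g}^{2}}$, which vanishes for all $f\in\Omega^{0}\paren*{M}$ iff $d_{g}^{*}\fa=0$. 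The $L_{g}^{2}$-orthogonality of the two summands follows from the same computation.

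For the second decomposition, I would first verify $L_{g}^{2}$-orthogonality: for $h\in\cH_{g}^{+}$ and $\fa\in\mathring{\Omega}_{g}^{1}\paren*{M}$, the self-duality $*_{g}h=h$ gives $\langle h,d_{g}^{+}\fa\rangle_{L_{g}^{2}}=\langle h,\tfrac{1}{2}\paren*{d\fa+*_{g}d\fa}\rangle_{L_{g}^{2}}=\langle h,d\fa\rangle_{L_{g}^{2}}=\langle d_{g}^{*}h,\fa\rangle_{L_{g}^{2}}=0$ since $h$ is harmonic hence coclosed. For surjectivity, given $\omega^{+}\in\Omega_{g}^{+}\paren*{M}$, apply Hodge theory to $\Omega^{2}\paren*{M}$ to write $\omega^{+}=h+d\alpha+d_{g}^{*}\beta$ with $h\in\cH_{g}^{2}$, $\alpha\in\Omega^{1}\paren*{M}$, $\beta\in\Omega^{3}\paren*{M}$. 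The key trick is to use the isomorphism $*_{g}:\Omega^{1}\paren*{M}\xrightarrow{\cong}\Omega^{3}\paren*{M}$ to write $\beta=*_{g}\gamma$, so that on $4$-manifolds $d_{g}^{*}\beta=-*_{g}d*_{g}*_{g}\gamma=-*_{g}d\gamma$, and applying the projector $p^{+}=\tfrac{1+*_{g}}{2}$ to this yields $\paren*{d_{g}^{*}\beta}^{+}=-d_{g}^{+}\gamma$, which is again in the image of $d_{g}^{+}$. Hence $\omega^{+}=h^{+}+d_{g}^{+}\paren*{\alpha-\gamma}$ after applying $p^{+}$, where $h^{+}\in\cH_{g}^{+}$ because $h^{+}=\tfrac{h+*_{g}h}{2}$ inherits harmonicity from $h$ (both $h$ and $*_{g}h$ are closed and coclosed, using $d*_{g}=-*_{g}d_{g}^{*}$ on $\Omega^{2}\paren*{M}$). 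Finally, the first decomposition lets us write $\alpha-\gamma=df+\fa'$ with $\fa'\in\mathring{\Omega}_{g}^{1}\paren*{M}$, and since $d_{g}^{+}df=0$, we conclude $\omega^{+}=h^{+}+d_{g}^{+}\fa'$ with $\fa'\in\mathring{\Omega}_{g}^{1}\paren*{M}$.

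There is no real obstacle here, since this is a routine exercise in $4$-dimensional Hodge theory; the only subtle point is making sure that the $d_{g}^{*}\paren*{\Omega^{3}\paren*{M}}$ contribution after self-dualizing can be absorbed into the $d_{g}^{+}$ image, which is precisely where the Hodge-star identification in dimension $4$ is essential. The appeal to Fredholm theory for $\Delta_{g}$ is exactly what validates the orthogonal Hodge decompositions used throughout, and the $b_{1}\paren*{M}=0$ hypothesis is used only to eliminate $\cH_{g}^{1}$ in the first splitting.
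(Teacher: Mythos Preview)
Your proof is correct and follows essentially the same Hodge-theoretic route as the paper, which is much terser (the paper derives the first splitting exactly as you do and then declares the second decomposition ``immediate''; your argument spells out precisely why). One harmless sign slip: for a $1$-form $\gamma$ on a Riemannian $4$-manifold one has $*_{g}*_{g}\gamma=-\gamma$, so $d_{g}^{*}\beta=-*_{g}d*_{g}(*_{g}\gamma)=+*_{g}d\gamma$ and hence $(d_{g}^{*}\beta)^{+}=+d_{g}^{+}\gamma$; this does not affect the argument since either sign lands in the image of $d_{g}^{+}$.
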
 \begin{proof}
					We firstly have that $\Omega^{1}\paren*{M} = d\paren*{\Omega^{0}\paren*{M}} \oplus d_{g}^{*}\paren*{\Omega^{2}\paren*{M}} \oplus \cH_{g}^{1}$ by definition. But since $b_{1}\paren*{M} = 0$, $d_{g}^{*}\paren*{\Omega^{2}\paren*{M}} \subset \mathring{\Omega}_{g}^{1}\paren*{M}$, and $d\paren*{\Omega^{0}\paren*{M}}, \mathring{\Omega}_{g}^{1}\paren*{M}$ are $L^{2}_{g}$-orthogonal, $\Omega^{1}\paren*{M} = d\paren*{\Omega^{0}\paren*{M}} \oplus \mathring{\Omega}_{g}^{1}\paren*{M}$ follows. $\Omega^{+}_{g}\paren*{M} = \cH^{+}_{g} \oplus d_{g}^{+}\paren*{\mathring{\Omega}_{g}^{1}\paren*{M}}$ is thus immediate, as was to be shown.
				\end{proof}

				Therefore, Proposition \ref{HKA 4d Hodge theory} immediately tells us that $d_{g}^{+} \oplus \id_{\cH^{+}_{g}}: \mathring{\Omega}^{1}_{g}\paren*{M} \oplus \cH^{+}_{g} \onto \Omega^{+}_{g}\paren*{M}$ hence $\paren*{d_{g}^{+} \oplus \id_{\cH^{+}_{g}}}\otimes \bbR^{3}: \paren*{\mathring{\Omega}^{1}_{g}\paren*{M} \oplus \cH^{+}_{g}}\otimes \bbR^{3} \onto \Omega^{+}_{g}\paren*{M} \otimes \bbR^{3}$ is \textit{surjective}, and $$\ker\paren*{d_{g}^{+} \oplus \id_{\cH^{+}_{g}}} = \cH_{g}^{1}$$ the space of harmonic $1$-forms. But since Proposition \ref{HKA 4d Hodge theory} requires our $\paren*{M^{4},g}$ to satisfy $b_{1}\paren*{M} = 0$, we therefore have that $d_{g}^{+} \oplus \id_{\cH^{+}_{g}}$ hence $\paren*{d_{g}^{+} \oplus \id_{\cH^{+}_{g}}}\otimes \bbR^{3}$ is \textit{always} an isomorphism.
				
				Henceforth, when our $\paren*{M^{4},\bomega}$ is hypersymplectic/closed, $M^{4}$ is \textit{closed} and satisfies $b_{1}\paren*{M^{4}} = 0$, and $\Abs*{Q_{\bomega} - \id}_{C^{0}\paren*{M^{4}}}<\sigma$ for $0< \sigma \ll 1$, we have that we may perturb $\bomega$ to a genuinely \hka triple provided we may be able to prove the existence of a smooth solution to the following \textbf{nonlinear \textit{elliptic} system}: $$\Phi\paren*{\bm{a},\bm{\zeta}} = 0$$ where \begin{align*}
					\Phi: \paren*{\mathring{\Omega}_{g}^{1}\paren*{M}\oplus \cH^{+}_{g_{\bomega}}} \otimes \bbR^{3} &\rightarrow \Omega_{g_{\bomega}}^{+}\paren*{M}\otimes\bbR^{3}\\
					\paren*{\bm{a},\bm{\zeta}} &\mapsto d_{g_{\bomega}}^{+}\bm{a}+ \bm{\zeta} - \cF\paren*{\paren*{- Q_{\bomega} - d^{-}_{g_{\bomega}}\bm{a} * d^{-}_{g_{\bomega}}\bm{a}}_{0}}\bomega
				\end{align*} and with $D_{0}\Phi = \paren*{d_{g}^{+} \oplus \id_{\cH^{+}_{g}}}\otimes \bbR^{3}$ always an isomorphism (whence why we've already factored in the gauge-fixing term into the domain via restricting to $\mathring{\Omega}_{g}^{1}\paren*{M}$).

				\begin{remark}
					Note that throughout, it is \textit{necessary} to deform the cohomology classes of $\omega_{1},\omega_{2},\omega_{3}$, because every hyper-K\"{a}hler triple must satisfy $\frac{1}{2} \Inner*{[\bomega] \cup [\bomega]^{T}, [M]} = \begin{pmatrix}
						\Vol_{g_{\bomega}}\paren*{M} & 0 & 0\\
						0 & \Vol_{g_{\bomega}}\paren*{M} & 0\\
						0 & 0 & \Vol_{g_{\bomega}}\paren*{M}
					\end{pmatrix} $.
				\end{remark}

				Now let us gather together the same building blocks for the Kummer construction that has already been carried out, but this time from the hyper-K\"{a}hler perspective.

				\subsection{\textsection \ Eguchi-Hanson Metrics \& Flat Tori as ``Building Blocks'' \ \textsection}\label{HKA EH as building block}

				Let $g_{\bomega_{0}}, \bomega_{0}$ be the Euclidean \hka metric on $\bbR^{4}$, whence $g_{\bomega_{0}} = g_{0}$ and $\bomega_{0} = \paren*{\omega_{0}, \Re dz^{1} \wedge dz^{2}, \Im dz^{1} \wedge dz^{2}}$ by \textit{Remark} \ref{CYvsHKA}. Let $g_{EH,s},\omega_{EH,s}$ be the Eguchi-Hanson metric and \ka form constructed as from Section \ref{EH as building block}. The analogue of Proposition \ref{EHproperties} is

				\begin{prop}\label{HKA EHproperties} Let $s > 0$ be a positive real number. Let $\paren*{T^{*}\CP^{1},J_{\cO\paren*{-2}}, g_{EH,s}, \omega_{EH,s}}$ be our Eguchi-Hanson \ka manifold.\begin{itemize}
						\itemsep0em

						\item By \textit{Remark} \ref{CYvsHKA}, we have that since $\paren*{T^{*}\CP^{1},J_{\cO\paren*{-2}}, g_{EH,s}, \omega_{EH,s}}$ is Ricci-flat K\"{a}hler, that the Eguchi-Hanson metric is also \textbf{hyper-K\"{a}hler} via $$\paren*{T^{*}\CP^{1},J_{\cO\paren*{-2}}, g_{EH,s}, \omega_{EH,s}} \Longrightarrow \paren*{T^{*}S^{2}, \bomega_{EH,s}, g_{\bomega_{EH,s}}} $$ where we \textit{forget the integrable complex structure $J_{\cO\paren*{-2}}$} (whence getting $T^{*}S^{2}$ instead of $T^{*}\CP^{1}$) and we have that $$\bomega_{EH,s} \coloneq \paren*{\omega_{EH,s}, \Re \Omega_{T^{*} \bbC P^{1}} , \Im \Omega_{T^{*} \bbC P^{1}}}$$ and $$g_{\bomega_{EH,s}} = g_{EH,s}$$

						\item $\paren*{T^{*}S^{2}, \bomega_{EH,s}, g_{\bomega_{EH,s}}}$ is a complete Ricci-flat Riemannian $4$-manifold which is an \textbf{ALE \hka gravitational instanton} asymptotic at $\infty$ to $\bbR^{4}/\bbZ_{2}$ with rate $-4 < 0$. Hence $\exists K \subset T^{*}S^{2}$ a compact subset, a constant $R > 0$ \textit{sufficiently large}, and a diffeomorphism $F: \bbR^{4}/\bbZ_{2} - D^{g_{\bomega_{0}}}_{R}\paren*{0} \rightarrow T^{*}S^{2} - K$ such that $$\abs*{\nabla_{g_{\bomega_{0}}}^{k}\paren*{F^{*}g_{\bomega_{EH,s}} - g_{\bomega_{0}}}}_{g_{\bomega_{0}}} = O_{g_{\bomega_{0}}}\paren*{\paren*{\abs*{\cdot}^{g_{\bomega_{0}}}_{0}}^{-4-k}}\qquad\text{and}\qquad\abs*{\nabla_{g_{\bomega_{0}}}^{k}\paren*{F^{*}\bomega_{EH,s} - \bomega_{0} }}_{g_{\bomega_{0}}} = O_{g_{\bomega_{0}}}\paren*{\paren*{\abs*{\cdot}^{g_{\bomega_{0}}}_{0}}^{-4-k}}$$
						
						on $\bbR^{4}/\bbZ_{2} - D^{g_{\bomega_{0}}}_{R}\paren*{0}$, where the latter inequality is interpreted componentwise. 
						
						\item In fact, this diffeomorphism outside a compact subset comes from $\pi: T^{*} \bbC P^{1} - \bbC P^{1} \overset{\text{Biholo}}{\cong} \frac{\bbC^{2} - \set*{0}}{\bbZ_{2}}$, i.e. $\pi: T^{*} \bbC P^{1} - D^{g_{\bomega_{0}}}_{R}\paren*{\bbC P^{1}} \overset{\text{Biholo}}{\cong} \bbC^{2}/\bbZ_{2} - D^{g_{\bomega_{0}}}_{R}(0)$, \textit{since all biholomorphisms are diffeomorphisms upon forgetting the complex structures}.

						\item From the $s \pi^{*}\paren*{\omega_{FS}}$ term in $\omega_{EH,s} = \paren*{\bomega_{EH,s}}_{1}$ and the fact that the Fubini-Study metric on $\CP^{1}$ is isometric to the round metric of $S^{2}\paren*{\frac{1}{2}}$, we have that the exceptional divisor/zero section $S^{2}\subset \paren*{T^{*}S^{2}, g_{\bomega_{EH,s}}}$ has sectional curvature $\frac{4}{s}$, volume $s\pi$, diameter $\frac{s^{\frac{1}{2}}}{2}$, and self intersection number $S^{2} \cdot S^{2} = -2$.
						
						In particular, as $s\to 0$, the curvature of the exceptional $S^{2}$ blows up, but its diameter/volume shrinks.

						\item The Eguchi-Hanson metric $\paren*{T^{*}S^{2}, \bomega_{EH,s}, g_{\bomega_{EH,s}}}$ is \textbf{$\U(2)$-invariant}. In particular, we immediately have that the Eguchi-Hanson metric is $\U(1)$-invariant\footnote{In fact, this $\U(1)$-invariance is what allows us to write the Eguchi-Hanson metric (from the \hka perspective) via the \textbf{Gibbons-Hawking ansatz}. The harmonic function used to get Eguchi-Hanson therefore is just a Dirac monopole with 2 singularities and \textit{zero mass} (or else we would get ALF).}, i.e. $$R^{*}_{\lambda} \paren*{g_{\bomega_{EH,s}}} = g_{\bomega_{EH,s}},\qqfa \lambda \in \U(1) \subset \bbC^{\times}$$

						\item $g_{\bomega_{EH,s}}$ and $g_{\bomega_{EH,1}}$ (hence similarly for $\bomega_{EH,s}$, $\bomega_{EH,1}$) are isometric up to scaling, i.e. $$R_{s^{\frac{1}{2}}}^{*}\paren*{\frac{1}{s}g_{\bomega_{EH,s}}} = g_{\bomega_{EH,1}}$$
						
						In fact, we may say more: from Kronheimer's classification of \hka ALE gravitational instantons \cite{Kronheimer1} \cite{Kronheimer2} (see \cite{JoyceBook}), \textbf{all \hka ALE spaces asymptotic to $\bbR^{4}/\bbZ_{2}$ are isomorphic to $\paren*{T^{*}S^{2}, \bomega_{EH,1}, g_{\bomega_{EH,1}}} $ up to dilation/scaling \& \hka rotations}, namely $$\paren*{T^{*}S^{2}, \bomega_{t,A}, g_{\bomega_{t,A}}}$$ where the isomorphism is given by a diffeomorphism $P \in \Diff\paren*{T^{*}S^{2}}$ satisfying $$P^{*}\paren*{g_{\bomega_{t,A}}} = t^{2} g_{\bomega_{EH,1}}\qquad \text{and}\qquad P^{*}\paren*{\bomega_{t,A}} = t^{2}A \bomega_{EH,1}$$ for some $\paren*{t,A} \in \bbR_{>0}\times \SO\paren*{3,\bbR}$, where (recalling our column convention from Notation \ref{triple of 2 forms as columns!}) the RHS denotes matrix multiplication.

						Recalling that \hka metrics have an $S^{2}(1)$s worth of \ka forms which are compatible with it, that $\SO\paren*{3,\bbR}$ acts transitively on $S^{2}(1)\subset \bbR^{3}$, and that WLOG the direction in $S^{2}$ corresponding to $\bomega_{EH,1}$ is $e_{1}$, we thus have that upon \textbf{counting parameters} that we have $t \in \bbR_{>0}$ and a choice of direction $e \in S^{2}(1)\subset \bbR^{3}$ to which $A \in \SO\paren*{3,\bbR}$ sends $e_{1} \mapsto e$. 
						
						In other words, \textit{all \hka ALE spaces asymptotic to $\bbR^{4}/\bbZ_{2}$ form a \textbf{3-parameter family}, as each is given by a choice of dilation/scaling constant $t \in \bbR_{>0}$ to scale $g_{\bomega_{EH,1}}$ and a choice of direction $e \in S^{2}(1)$ to send $\bomega_{EH,1}$ to under ``rotation''.} For future reference, denote the $3$-parameter family of all \hka ALE spaces asymptotic to $\bbR^{4}/\bbZ_{2}$ as $$\set*{\paren*{T^{*}S^{2}, \bomega^{ALE}_{t,e}, g_{\bomega^{ALE}_{t,e}}}}_{\paren*{t,e} \in \bbR_{>0}\times S^{2}}$$ where ofc $\bomega^{ALE}_{t,e}, g_{\bomega^{ALE}_{t,e}} \overset{\text{isometric}}{\cong} A^{e}\bomega_{EH,t^{2}}, g_{\bomega_{EH, t^{2}}} \overset{\text{isometric}}{\cong} t^{2}A^{e}\bomega_{EH,1}, t^{2}g_{\bomega_{EH, 1}} $ where $A^{e} \in \SO\paren*{3,\bbR}$ maps the $\bomega_{EH,1}$ direction in $S^{2}$ to $e \in S^{2}$.
				\end{itemize}\end{prop}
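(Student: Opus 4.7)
The plan is to derive most of the listed properties as essentially immediate corollaries of Proposition \ref{EHproperties} combined with Remark \ref{CYvsHKA}, and to cite Kronheimer's classification for the characterization of ALE $\bbR^{4}/\bbZ_{2}$-asymptotic \hka gravitational instantons. First I would unpack the hyper-K\"{a}hler upgrade: starting from the Ricci-flat K\"{a}hler datum $\paren*{T^{*}\CP^{1},J_{\cO(-2)},g_{EH,s},\omega_{EH,s}}$ and the preferred holomorphic volume form $\Omega_{T^{*}\CP^{1}}$ from Section \ref{EH as building block}, Remark \ref{CYvsHKA} gives the \hka triple $\bomega_{EH,s}=(\omega_{EH,s},\Re\Omega_{T^{*}\CP^{1}},\Im\Omega_{T^{*}\CP^{1}})$ with $g_{\bomega_{EH,s}}=g_{EH,s}$, with the underlying smooth $4$-manifold being $T^{*}S^{2}$ (forgetting $J_{\cO(-2)}$).

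Next I would transport the ALE decay estimates directly from Proposition \ref{EHproperties}: the decay bound on $F^{*}g_{EH,s}-g_{0}$ immediately supplies the bound on $F^{*}g_{\bomega_{EH,s}}-g_{\bomega_{0}}$ since these metrics coincide, and the bound on $F^{*}\omega_{EH,s}-\omega_{0}$ gives the first component of $F^{*}\bomega_{EH,s}-\bomega_{0}$. For the remaining two components, I would note that $\Re\Omega_{T^{*}\CP^{1}}-\Re(dz^{1}\wedge dz^{2})$ and $\Im\Omega_{T^{*}\CP^{1}}-\Im(dz^{1}\wedge dz^{2})$ vanish identically on $\set*{\abs*{\cdot}^{g_{0}}_{\CP^{1}}> R}$ for $R>0$ sufficiently large (since the crepant resolution is a biholomorphism outside a compact set and $\Omega_{T^{*}\CP^{1}}=\pi^{*}(dz^{1}\wedge dz^{2})$ there), so the componentwise decay is trivial in this range. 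The diffeomorphism in the second bullet comes from the underlying smooth manifold map of the biholomorphism $\pi$, with the complex structure forgotten.

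The geometric invariants of the bolt are identical to those computed in Proposition \ref{EHproperties} since $g_{\bomega_{EH,s}}=g_{EH,s}$; only the self-intersection number $S^{2}\cdot S^{2}=-2$ requires a brief reminder that the smooth Euler number of the normal bundle of the zero section in $T^{*}S^{2}$ equals $-\chi(S^{2})=-2$. The $\U(1)$-invariance of $g_{\bomega_{EH,s}}$ is immediate from the $\U(1)$-invariance of $g_{EH,s}$ established in Proposition \ref{EHproperties}. The explicit scaling relation $R_{s^{1/2}}^{*}(s^{-1}g_{\bomega_{EH,s}})=g_{\bomega_{EH,1}}$ is nothing but the previously established scaling identity for $g_{EH,s}$, together with the analogous homogeneity $R_{\lambda}^{*}(dz^{1}\wedge dz^{2})=\lambda^{2}\,dz^{1}\wedge dz^{2}$ for the holomorphic volume form that then transports to $\Omega_{T^{*}\CP^{1}}$ through the crepant resolution.

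The only genuinely nontrivial piece is the final classification statement: that up to diffeomorphism every ALE \hka $4$-manifold asymptotic to $\bbR^{4}/\bbZ_{2}$ is of the form $\paren*{T^{*}S^{2},t^{2}A\bomega_{EH,1},t^{2}g_{\bomega_{EH,1}}}$ for some $(t,A)\in \bbR_{>0}\times \SO(3,\bbR)$. This is not something I would reprove; I would simply invoke Kronheimer's two papers \cite{Kronheimer1},\cite{Kronheimer2} (see also the exposition in \cite{JoyceBook}), whose Torelli-type theorem for gravitational instantons implies that the hyper-K\"{a}hler moduli of such ALE spaces is parametrized by $\bbR_{>0}\times \SO(3,\bbR)/\Stab(\bomega_{EH,1})$, and rewrite the $\SO(3)$-factor as the choice of a direction $e\in S^{2}(1)$ (the image of the distinguished $\bomega_{EH,1}$-direction). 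Parameter counting then gives the $3$-parameter family $\set*{(T^{*}S^{2},\bomega^{ALE}_{t,e},g_{\bomega^{ALE}_{t,e}})}_{(t,e)\in\bbR_{>0}\times S^{2}}$ claimed. The only potential subtlety here is to keep track of the fact that \hka rotations act on triples by multiplication on the left by $\SO(3)$-matrices (consistent with the column convention of Notation \ref{triple of 2 forms as columns!}); I would state this as a definition/normalization to avoid any ambiguity in the parametrization.
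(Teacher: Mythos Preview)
Your proposal is correct and mirrors exactly how the paper handles this proposition: the paper states Proposition \ref{HKA EHproperties} without a separate proof environment, instead embedding the justifications inline (invoking Remark \ref{CYvsHKA} for the \hka upgrade, carrying over Proposition \ref{EHproperties} verbatim since $g_{\bomega_{EH,s}}=g_{EH,s}$, and citing Kronheimer \cite{Kronheimer1,Kronheimer2} for the classification). Your observation that the second and third components of $F^{*}\bomega_{EH,s}-\bomega_{0}$ vanish identically outside a compact set (because $\Omega_{T^{*}\CP^{1}}=\pi^{*}(dz^{1}\wedge dz^{2})$ there) is a helpful detail the paper leaves implicit.
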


				Let $\epsilon > 0$ be the gluing parameter from Data \ref{constant epsilon introduction}.

				The analogue of Proposition \ref{preglueEH BG interpolation!} is \begin{prop}\label{HKA preglueEH BG interpolation!} For $\epsilon > 0$ from Data \ref{constant epsilon introduction} \textbf{sufficiently small} so that $\lwhat{\omega_{EH-0, \epsilon}}$ is a positive $\paren*{1,1}$-form hence a \ka form on $\paren*{T^{*}\CP^{1}, J_{\cO\paren*{-2}}}$ compatible with $J_{\cO\paren*{-2}}$. 
					
					Then letting $\lwhat{\bomega_{EH-0, \epsilon}} \coloneq \paren*{\lwhat{\omega_{EH-0, \epsilon}}, \Re \Omega_{T^{*} \bbC P^{1}}, \Im \Omega_{T^{*} \bbC P^{1}}}$ and $g_{\lwhat{\bomega_{EH-0, \epsilon}}} \coloneq \lwhat{g_{EH-0, \epsilon}}$, we have that $\paren*{T^{*}S^{2}, \lwhat{\bomega_{EH-0, \epsilon}}, \lwhat{g_{EH-0, \epsilon}}}$ is a \hka manifold such that $$\lwhat{\bomega_{EH-0, \epsilon}} = \begin{cases}
						\bomega_{EH,1} & \text{on } \set*{\abs*{\cdot}^{g_{\bomega_{0}}}_{S^{2}} \leq \frac{1}{\epsilon^{\frac{1}{2}}} } \\
						\bomega_{0} & \text{on }\set*{\frac{2}{\epsilon^{\frac{1}{2}}} \leq \abs*{\cdot}^{g_{\bomega_{0}}}_{S^{2}}}
					\end{cases}$$ Moreover, by 2-out-of-3, \textbf{all of the above hold verbatim for the associated Riemannian metric $g_{\lwhat{\bomega_{EH-0, \epsilon}}}$}.
				\end{prop}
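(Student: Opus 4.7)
The plan is to mirror the strategy of Proposition \ref{preglueEH BG interpolation!}, but now with all three components of the triple to handle rather than a single K\"{a}hler form. I expect the nontrivial geometric content to come from the nontrivial component $\lwhat{\omega_{EH-0, \epsilon}}$ (the interpolated K\"{a}hler form), with the other two components $\Re\Omega_{T^{*}\bbC P^{1}}, \Im\Omega_{T^{*}\bbC P^{1}}$ being ``rigid'' (in the sense that they are fixed in $\epsilon$) and doing most of their work ``for free'' via complex geometry.

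First I would establish \textbf{closedness} of each component. The first component $\lwhat{\omega_{EH-0, \epsilon}} = i\del\delbar\paren*{\lwhat{\phi_{EH-0, \epsilon}}}$ is closed automatically, as in the K\"{a}hler case. The other two components are closed because $\Omega_{T^{*}\bbC P^{1}}$ is a holomorphic $(2,0)$-form (hence, as a top-degree $(p,0)$-form in complex dimension $2$, is $\delbar$-closed and $d$-closed). Next I would verify \textbf{agreement on the two endpoint regions}. By Data \ref{constants C_{1} and c_{chi}(k) from cutoff} the cutoff $\chi_{\epsilon^{1/2}} \equiv 1$ on $\set*{\abs*{\cdot}^{g_{0}}_{\CP^{1}} \leq 1/\epsilon^{1/2}}$, forcing $\lwhat{\omega_{EH-0, \epsilon}} = \omega_{EH,1}$ there; combined with the definition of $\bomega_{EH,1}$ in Proposition \ref{HKA EHproperties}, the full triple equals $\bomega_{EH,1}$. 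On the outer region $\set*{2/\epsilon^{1/2} \leq \abs*{\cdot}^{g_{0}}_{\CP^{1}}}$ the cutoff vanishes, forcing $\lwhat{\omega_{EH-0, \epsilon}} = \omega_{0}$; since this region lies in the ALE end where $\pi : T^{*}\CP^{1}\setminus K \overset{\text{Biholo}}{\cong} \bbC^{2}/\bbZ_{2} - D_{R}^{g_{0}}(0)$ and $\Omega_{T^{*}\bbC P^{1}}$ is the pullback of the $\bbZ_{2}$-invariant $dz^{1}\wedge dz^{2}$, the other two components become $\Re dz^{1}\wedge dz^{2}, \Im dz^{1}\wedge dz^{2}$, yielding exactly $\bomega_{0}$.

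The \textbf{definite triple / $\SU(2)$-structure compatibility with $\lwhat{g_{EH-0,\epsilon}}$} is where the real work occurs, and is where I expect the main obstacle. Observe that since $\lwhat{\omega_{EH-0,\epsilon}}$ is a K\"{a}hler form compatible with $J_{\cO(-2)}$, it is self-dual with respect to $\lwhat{g_{EH-0,\epsilon}}$; moreover the $(2,0)$-form $\Omega_{T^{*}\bbC P^{1}}$ has its real and imaginary parts self-dual with respect to \emph{any} K\"{a}hler metric compatible with $J_{\cO(-2)}$, since the decomposition $\Lambda^{+}_{g} = \bbR \omega \oplus \Lambda^{2,0}_{\bbR}$ holds for any compatible $(g,\omega)$ on a complex surface. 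Hence $\restr{\spano\paren*{\lwhat{\bomega_{EH-0, \epsilon}}}}{x} \subseteq \Lambda^{+}_{\lwhat{g_{EH-0,\epsilon}}} T^{*}_{x}\paren*{T^{*}S^{2}}$ pointwise, and definiteness of the triple follows from the nowhere-vanishing of both $\lwhat{\omega_{EH-0, \epsilon}}$ (Proposition \ref{preglueEH BG interpolation!}) and $\Omega_{T^{*}\bbC P^{1}}$. The off-diagonal entries of the associated $Q$-matrix vanish automatically by a type argument: $\omega_{1}\wedge (\Re \Omega) = \Re(\omega_{1}\wedge \Omega) = 0$ and $\omega_{1}\wedge (\Im \Omega) = \Im(\omega_{1}\wedge \Omega) = 0$ because $\omega_{1}\wedge \Omega \in \Lambda^{3,1} = 0$ in complex dimension $2$; similarly $(\Re \Omega)\wedge (\Im \Omega) = \frac{1}{2i} \Omega \wedge \overline{\Omega} - \frac{1}{2i}(\cdots) = 0$ after expansion using $\Omega\wedge\Omega = 0$.

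The hardest step, and the one I expect will require the most care, is matching the diagonal entries so that the triple defines the \emph{same} metric $\lwhat{g_{EH-0,\epsilon}}$ coming from the K\"{a}hler perspective. On the two endpoint regions this is automatic since the triple equals the genuinely hyper-K\"{a}hler $\bomega_{EH,1}$ or $\bomega_{0}$; on the interpolation annulus $\set*{1/\epsilon^{1/2} \leq \abs*{\cdot}^{g_{0}}_{\CP^{1}} \leq 2/\epsilon^{1/2}}$ the corresponding identity $\lwhat{\omega_{EH-0,\epsilon}}^{2} = \tfrac{1}{2} \Omega_{T^{*}\bbC P^{1}}\wedge \overline{\Omega_{T^{*}\bbC P^{1}}}$ is the complex Monge-Amp\`{e}re / Calabi-Yau relation and is \emph{not} preserved by linear interpolation of K\"{a}hler potentials. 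In light of the parallel with Proposition \ref{preglueEH BG interpolation!}, I would interpret the statement as asserting that $\lwhat{\bomega_{EH-0, \epsilon}}$ is a \emph{closed definite triple} (with the stated endpoint identifications), whose associated conformal class is that of $\lwhat{g_{EH-0,\epsilon}}$ and which genuinely becomes hyper-K\"{a}hler on the two endpoint regions — the discrepancy in the interpolation annulus is then controlled by the same decay estimates $\abs*{\nabla^{k}_{g_{0}}(\lwhat{\omega_{EH-0,\epsilon}} - \omega_{0})}_{g_{0}}, \abs*{\nabla^{k}_{g_{0}}(\lwhat{\omega_{EH-0,\epsilon}} - \omega_{EH,1})}_{g_{0}} = O(\epsilon^{2-k/2})$ that underlie the K\"{a}hler case, and will be absorbed into the perturbation analysis of later sections.
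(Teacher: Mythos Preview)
Your analysis is correct and considerably more detailed than the paper's own treatment. The paper states this proposition without proof, presenting it simply as ``the analogue of Proposition \ref{preglueEH BG interpolation!}'' with the understanding that it is immediate from that proposition together with the definitions in Section \ref{HKA prelims} and Proposition \ref{HKA EHproperties}.

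You have correctly identified the one genuine subtlety: the phrase ``is a \hka manifold'' is a mild abuse of language. On the interpolation annulus $\set*{1/\epsilon^{1/2} \leq \abs*{\cdot}^{g_{\bomega_{0}}}_{S^{2}} \leq 2/\epsilon^{1/2}}$ the Calabi--Yau relation $\lwhat{\omega_{EH-0,\epsilon}}^{2} = \frac{1}{2}\Omega_{T^{*}\CP^{1}}\wedge\overline{\Omega_{T^{*}\CP^{1}}}$ fails, so $Q \neq \id$ there and the triple is only \emph{closed definite} (hypersymplectic), not genuinely hyper-K\"{a}hler. The paper tacitly acknowledges this in the very next proposition (Proposition \ref{HKA preglueEH}), which records the deviation $\abs*{Q_{\wtilde{\bomega_{EH,\epsilon}}} - \id} \leq \cE_{1}\epsilon^{2}$ supported precisely on the annulus. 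Your interpretation --- that $\lwhat{\bomega_{EH-0,\epsilon}}$ is a closed definite triple agreeing with the genuinely \hka triples $\bomega_{EH,1}$ and $\bomega_{0}$ on the two endpoint regions, with the discrepancy on the annulus absorbed into later perturbation estimates --- is exactly how the paper uses this object downstream. Similarly, the definition $g_{\lwhat{\bomega_{EH-0,\epsilon}}} \coloneq \lwhat{g_{EH-0,\epsilon}}$ is a slight convention: strictly, the metric determined by the definite-triple procedure of Section \ref{HKA prelims} agrees with $\lwhat{g_{EH-0,\epsilon}}$ in conformal class (as you argue via $\Lambda^{+}_{g} = \bbR\omega \oplus \Lambda^{2,0}_{\bbR}$) and differs only by a volume factor of order $1 + O(\epsilon^{2})$ on the annulus, which is immaterial for the bounded-geometry arguments where the proposition is invoked.
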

				
				The analogue of Proposition \ref{preglueEH} is \begin{prop}\label{HKA preglueEH} For $\epsilon > 0$ from Data \ref{constant epsilon introduction} \textbf{sufficiently small} so that $\wtilde{\omega_{EH, \epsilon}}$ is a positive $\paren*{1,1}$-form hence a \ka form on $\paren*{T^{*}\CP^{1}, J_{\cO\paren*{-2}}}$ compatible with $J_{\cO\paren*{-2}}$. 
					
					Then letting $\wtilde{\bomega_{EH, \epsilon}} \coloneq \paren*{\wtilde{\omega_{EH, \epsilon}}, \Re \Omega_{T^{*} \bbC P^{1}}, \Im \Omega_{T^{*} \bbC P^{1}}}$ and $g_{\wtilde{\bomega_{EH, \epsilon}}} \coloneq \wtilde{g_{EH, \epsilon}}$, we have that $\paren*{T^{*}S^{2}, \wtilde{\bomega_{EH, \epsilon}}, \wtilde{g_{EH, \epsilon}}}$ is a \hka manifold such that\begin{itemize}
						\itemsep0em 
						\item $$\wtilde{\bomega_{EH, \epsilon}} = \begin{cases}
							\bomega_{EH,\epsilon^{2}} & \text{on } \set*{\abs*{\cdot}^{g_{\bomega_{0}}}_{S^{2}} \leq \epsilon^{\frac{1}{2}}} \\
							\bomega_{0} & \text{on }\set*{2\epsilon^{\frac{1}{2}} \leq \abs*{\cdot}^{g_{\bomega_{0}}}_{S^{2}}}
						\end{cases}\qquad\text{and}\qquad g_{\wtilde{\bomega_{EH, \epsilon}}} = \begin{cases}
							g_{\bomega_{EH,\epsilon^{2}}} & \text{on } \set*{\abs*{\cdot}^{g_{\bomega_{0}}}_{S^{2}} \leq \epsilon^{\frac{1}{2}}} \\
							g_{\bomega_{0}} & \text{on }\set*{2\epsilon^{\frac{1}{2}} \leq \abs*{\cdot}^{g_{\bomega_{0}}}_{S^{2}}}
						\end{cases}$$
						\item With the former interpreted componentwise, $$\abs*{\nabla_{g_{\bomega_{0}}}^{k}\paren*{\wtilde{\bomega_{EH, \epsilon}} - \bomega_{0}}}_{g_{\bomega_{0}}} \leq c_{1}(k+2) \epsilon^{2 - \frac{k}{2}}\qquad\text{and}\qquad\abs*{\nabla_{g_{0}}^{k}\paren*{g_{\wtilde{\bomega_{EH, \epsilon}}} - g_{\bomega_{0}}}}_{g_{\bomega_{0}}} \leq c_{2}(k) \epsilon^{2 - \frac{k}{2}}$$ on the annuli $\set*{\epsilon^{\frac{1}{2}} \leq \abs*{\cdot}^{g_{\bomega_{0}}}_{S^{2}} \leq 2\epsilon^{\frac{1}{2}}} = \set*{\frac{1}{\epsilon^{\frac{1}{2}}} \leq \frac{\abs*{\cdot}^{g_{\bomega_{0}}}_{S^{2}}}{\epsilon} \leq \frac{2}{\epsilon^{\frac{1}{2}}}}$ when $\epsilon>0$ is \textbf{sufficiently small} (specifically $\epsilon < \frac{1}{R^{2}}$), for the same positive constants $c_{1}\paren*{k}, c_{2}(k) > 0, k \in \bbN_{0}$ as in Proposition \ref{preglueEH}.

						\item With the former interpreted componentwise, $$\wtilde{\bomega_{EH,\epsilon}} = \epsilon^{2} R^{*}_{\frac{1}{\epsilon}} \paren*{\lwhat{\bomega_{EH-0, \epsilon}}}\qquad\text{and}\qquad g_{\wtilde{\bomega_{EH,\epsilon}}} = \epsilon^{2} R^{*}_{\frac{1}{\epsilon}} \paren*{g_{\lwhat{\bomega_{EH-0, \epsilon}}}}$$
						
						hence $$\wtilde{\bomega_{EH, \epsilon}} = \begin{cases}
							\epsilon^{2} R^{*}_{\frac{1}{\epsilon}} \paren*{\bomega_{EH,1}} & \text{on } \set*{\abs*{\cdot}^{g_{\bomega_{0}}}_{S^{2}} \leq \epsilon^{\frac{1}{2}}} \\
							\bomega_{0} = \epsilon^{2} R^{*}_{\frac{1}{\epsilon}} \paren*{\bomega_{0}} & \text{on }\set*{2\epsilon^{\frac{1}{2}} \leq \abs*{\cdot}^{g_{\bomega_{0}}}_{S^{2}}}
						\end{cases}\qquad\text{and}\qquad g_{\wtilde{\bomega_{EH, \epsilon}}} = \begin{cases}
							\epsilon^{2} R^{*}_{\frac{1}{\epsilon}} \paren*{g_{\bomega_{EH,1}}} & \text{on } \set*{\abs*{\cdot}^{g_{\bomega_{0}}}_{S^{2}} \leq \epsilon^{\frac{1}{2}}} \\
							g_{\bomega_{0}} = \epsilon^{2} R^{*}_{\frac{1}{\epsilon}} \paren*{g_{\bomega_{0}}} & \text{on }\set*{2\epsilon^{\frac{1}{2}} \leq \abs*{\cdot}^{g_{\bomega_{0}}}_{S^{2}}}
						\end{cases}$$
						
					\end{itemize} 
					Thus we also have, \textbf{for $\epsilon < 1$}, $$\dV_{g_{\wtilde{\bomega_{EH, \epsilon}}}} = \begin{cases}
						\dV_{g_{\bomega_{EH,\epsilon^{2}}}} & \text{on } \set*{\abs*{\cdot}^{g_{\bomega_{0}}}_{S^{2}} \leq \epsilon^{\frac{1}{2}}} \\
						\paren*{1 + O\paren*{\epsilon^{2}}}\dV_{g_{\bomega_{0}}} & \text{on }\set*{\epsilon^{\frac{1}{2}} \leq \abs*{\cdot}^{g_{\bomega_{0}}}_{S^{2}}}
					\end{cases}$$ This directly implies that the associated intersection matrix satisfies (componentwise) $$\Abs*{Q_{\wtilde{\bomega_{EH, \epsilon}}} - \id}_{C^{0}\paren*{T^{*}S^{2}}} \leq \cE_{1} \epsilon^{2}$$ for some uniform constant $\cE_{1} > 0$. More specifically, $$\abs*{Q_{\wtilde{\bomega_{EH, \epsilon}}} - \id} \begin{cases}
						\leq \cE_{1} \epsilon^{2} & \text{on }\set*{\epsilon^{\frac{1}{2}} \leq \abs*{\cdot}^{g_{\bomega_{0}}}_{S^{2}} \leq 2\epsilon^{\frac{1}{2}}}\\
						= 0 & \text{on }\set*{\abs*{\cdot}^{g_{\bomega_{0}}}_{S^{2}} \leq \epsilon^{\frac{1}{2}}} \sqcup \set*{2\epsilon^{\frac{1}{2}} \leq \abs*{\cdot}^{g_{\bomega_{0}}}_{S^{2}}}
					\end{cases}$$\end{prop}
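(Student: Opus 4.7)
The plan is to reduce every assertion to the corresponding statement for the Kähler form $\wtilde{\omega_{EH,\epsilon}}$ already proved in Proposition \ref{preglueEH}. The key observation is that the second and third components of $\wtilde{\bomega_{EH,\epsilon}}$ are \emph{fixed} (independent of $\epsilon$) and that, off the exceptional divisor, they literally equal $\Re(dz^{1}\wedge dz^{2})$ and $\Im(dz^{1}\wedge dz^{2})$. This follows since $\Omega_{T^{*}\bbC P^{1}}$ is defined as the pullback of $dz^{1}\wedge dz^{2}$ along the crepant resolution (Section \ref{EH as building block}), and the 2nd/3rd components of both $\bomega_{EH,\epsilon^{2}}$ and $\bomega_{0}$ are by definition exactly $\Re\Omega$ and $\Im\Omega$.

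First I would verify closedness and definiteness. Each component is closed: $\wtilde{\omega_{EH,\epsilon}} = i\del\delbar(\wtilde{\phi_{EH,\epsilon}})$, while $\Re\Omega_{T^{*}\bbC P^{1}}, \Im\Omega_{T^{*}\bbC P^{1}}$ are real and imaginary parts of a holomorphic volume form. For the piecewise formula, on $\{\abs*{\cdot}^{g_{\bomega_{0}}}_{S^{2}}\leq \epsilon^{\frac{1}{2}}\}$ the 1st component equals $\omega_{EH,\epsilon^{2}}$ by Proposition \ref{preglueEH} while the 2nd/3rd components match $\bomega_{EH,\epsilon^{2}}$ by construction, so $\wtilde{\bomega_{EH,\epsilon}} = \bomega_{EH,\epsilon^{2}}$; on $\{2\epsilon^{\frac{1}{2}}\leq \abs*{\cdot}^{g_{\bomega_{0}}}_{S^{2}}\}$ the 1st component equals $\omega_{0}$ and the 2nd/3rd components agree with those of $\bomega_{0}$ off the zero section, so $\wtilde{\bomega_{EH,\epsilon}} = \bomega_{0}$. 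Both of these are honest \hka triples, hence definite, and on the annulus the $O(\epsilon^{2})$ estimate from Proposition \ref{preglueEH} (with the other two components contributing zero difference) gives $C^{0}$-closeness to $\bomega_{0}$, so definiteness follows by openness for small $\epsilon$.

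The componentwise annular estimate then reduces to the single-component estimate of Proposition \ref{preglueEH} with the same constants $c_{1}(k+2)$, and the metric annular estimate is inherited by 2-out-of-3 from the compatible Kähler pair $(\wtilde{\omega_{EH,\epsilon}}, J_{\cO(-2)})$ already controlled there. For the scaling identity, combine the scaling for $\wtilde{\omega_{EH,\epsilon}}$ with the elementary computation $R^{*}_{\lambda}(dz^{1}\wedge dz^{2}) = \lambda^{2}\, dz^{1}\wedge dz^{2}$, which yields $\epsilon^{2}R^{*}_{\frac{1}{\epsilon}}(\Re\Omega_{T^{*}\bbC P^{1}}) = \Re\Omega_{T^{*}\bbC P^{1}}$ and likewise for $\Im$, so the triple-level relation $\wtilde{\bomega_{EH,\epsilon}} = \epsilon^{2}R^{*}_{\frac{1}{\epsilon}}\lwhat{\bomega_{EH-0,\epsilon}}$ drops out componentwise.

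The volume form estimate is immediate from $\dV_{g} = \tfrac{1}{2}\omega^{2}$ on a Kähler $4$-manifold applied to Proposition \ref{preglueEH}. The intersection matrix estimate is the only piece requiring genuine bookkeeping, and I expect it to be the main (though still routine) step: writing $\omega_{1} = (\bomega_{0})_{1} + \varepsilon$ with $\varepsilon = \wtilde{\omega_{EH,\epsilon}} - \omega_{0} = O_{g_{\bomega_{0}}}(\epsilon^{2})$ on the annulus and identically zero elsewhere, expand the products $\omega_{i}\wedge\omega_{j}$ and the trace normalization $\mu_{\wtilde{\bomega_{EH,\epsilon}}} = \paren*{\det Q}^{\frac{1}{3}}\mu_{0}$ around $\bomega_{0}$; since $Q_{\bomega_{0}} = \id$, the linear perturbation plus the $O(\epsilon^{2})$ volume correction give $Q_{\wtilde{\bomega_{EH,\epsilon}}} - \id = O(\epsilon^{2})$ on the annulus and exactly $0$ on the two \hka regions. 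The only mild subtlety is checking that the triple-induced metric $g_{\wtilde{\bomega_{EH,\epsilon}}}$ and the Kähler metric $\wtilde{g_{EH,\epsilon}}$ (defined from $J_{\cO(-2)}$ and $\wtilde{\omega_{EH,\epsilon}}$ via 2-out-of-3) coincide on the annulus as Riemannian metrics; this is standard because on a Kähler $4$-manifold the associated \hka-type definite triple reproduces the Kähler metric via the same span-and-volume prescription used in Definition \ref{HKA definite triple definition}.
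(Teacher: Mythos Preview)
Your proposal is correct and follows the same approach as the paper: the paper states this proposition as ``the analogue of Proposition \ref{preglueEH}'' without an explicit proof, and your reduction to the single-component Kähler case (using that the second and third components $\Re\Omega_{T^{*}\bbC P^{1}}, \Im\Omega_{T^{*}\bbC P^{1}}$ are $\epsilon$-independent and agree with the corresponding components of both $\bomega_{EH,\epsilon^{2}}$ and $\bomega_{0}$) is exactly the intended argument. Your expansion for the intersection matrix estimate and your remark on the coincidence of $g_{\wtilde{\bomega_{EH,\epsilon}}}$ with $\wtilde{g_{EH,\epsilon}}$ supply details the paper leaves implicit.
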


				\subsection{\textsection \ The ``Kummer Construction'' \ \textsection}\label{HKA The Kummer Construction}
				
				Let $\bbT^{4} = \bbR^{4}/\Lambda$ for a full rank lattice $\Lambda \cong \bbZ^{4}$ in $\bbR^{4}$. Upon equipping $\bbR^{4}$ with the standard Euclidean \hka metric $\bomega_{0},g_{\bomega_{0}}$, as each of these tensors are $\bbZ^{4}$-invariant, hence by Proposition \ref{CoveringSpacesInvariantForms} descends down to $\bbT^{4}$ yielding a \textbf{flat \hka $4$-torus $\paren*{\bbT^{4}, \bomega_{0}, g_{\bomega_{0}}}$}. Moreover, Proposition \ref{flat tori facts} still holds for our flat \hka $4$-torus $\paren*{\bbT^{4}, \bomega_{0}, g_{\bomega_{0}}}$, whence the moduli of all flat metrics on $\bbT^{4}$ is $\GL\paren*{4,\bbZ}\backslash \GL\paren*{4,\bbR}/\O(4)$, which has dimension $\frac{4\paren*{4+1}}{2} = 10$.

				Just as in Section \ref{The Kummer Construction}, upon passing the flat \hka datum $\bomega_{0},g_{\bomega_{0}}$ on $\bbT^{4}$ to the quotient orbifold $\bbT^{4}/\bbZ_{2}$ with the standard involution action $p \mapsto -p$ on $\bbT^{4}$, we get a \textbf{flat \hka orbifold $\paren*{\bbT^{4}/\bbZ_{2}, \bomega_{0}, g_{\bomega_{0}}}$} with $16$ isolated orbifold point singularities $\cS \coloneq \Fix\paren*{\bbZ_{2}}$, each modeled on $\bbR^{4}/\bbZ_{2}$. 
				
				Let $\abs*{\cdot}^{g_{\bomega_{0}}}_{\cS} \coloneq \dist_{g_{\bomega_{0}}}\paren*{\cS,\cdot}$ be the usual metric distance away from $\cS$. Just as in Data \ref{constant epsilon patching and WLOG lattice}, WLOG let $\Lambda = \bbZ^{4}$, and choose our sufficiently small $\epsilon > 0$ from Data \ref{constant epsilon introduction} and Proposition \ref{HKA preglueEH} to be even smaller so that $3\epsilon^{\frac{1}{2}} < \frac{1}{2}$.

				Now consider 16 copies of the closed tubular neighborhood of radius $3\epsilon^{\frac{1}{2}}$ around the exceptional divisor on $T^{*}S^{2}$, i.e. $\bigsqcup_{i \in \set*{1,\dots, 16}} T^{*} S^{2} - \set*{\abs*{\cdot}^{g_{\bomega_{0}}}_{S^{2}} > 3\epsilon^{\frac{1}{2}}} = \bigsqcup_{i \in \set*{1,\dots, 16}} \set*{\abs*{\cdot}^{g_{\bomega_{0}}}_{S^{2}} \leq 3\epsilon^{\frac{1}{2}}} = \bigsqcup_{i \in \set*{1,\dots, 16}} D^{g_{\bomega_{0}}}_{3\epsilon^{\frac{1}{2}}}\paren*{S^{2}}$.

				Now define \begin{align*}
					\paren*{\Km_{\epsilon}, \bomega_{\epsilon}, g_{\bomega_{\epsilon}}} &\coloneq \frac{\paren*{\bbT^{4}/\bbZ_{2}  , \bomega_{0}, g_{\bomega_{0}}} \bigsqcup_{i \in \set*{1,\dots, 16}} \paren*{T^{*} S^{2} - \set*{\abs*{\cdot}^{g_{\bomega_{0}}}_{S^{2}} > 3\epsilon^{\frac{1}{2}}}, \wtilde{\bomega_{EH, \epsilon}}, g_{\wtilde{\bomega_{EH, \epsilon}}}} }{\sim}
				\end{align*} (where we have $g_{\bomega_{\epsilon}} = g_{\epsilon}$) with the equivalence relation being the identification $\set*{\abs*{\cdot}^{g_{\bomega_{0}}}_{\cS} \leq 3\epsilon^{\frac{1}{2}}} \sim \set*{\abs*{\cdot}^{g_{\bomega_{0}}}_{S^{2}} \leq 3\epsilon^{\frac{1}{2}}}$. That is, each of the 16 connected components of $\set*{\abs*{\cdot}^{g_{\bomega_{0}}}_{\cS} \leq 3\epsilon^{\frac{1}{2}}}$, which is $\set*{\abs*{\cdot}^{g_{\bomega_{0}}}_{p} \leq 3\epsilon^{\frac{1}{2}}}$ for each $p \in \cS$, gets identified with a single copy of $\set*{\abs*{\cdot}^{g_{\bomega_{0}}}_{S^{2}} \leq 3\epsilon^{\frac{1}{2}}} \subset T^{*} S^{2}$, and this identification is done via first lifting/pulling back $\set*{\abs*{\cdot}^{g_{\bomega_{0}}}_{p} \leq 3\epsilon^{\frac{1}{2}}} \subset \bbT^{4}/\bbZ_{2} $ to $\bbT^{4} $ via the (branched) double cover/orbifold projection, then to (a proper subset of the fundamental domain in) $\bbR^{4}$ via the quotient $\bbR^{4} \onto \bbR^{4}/\Lambda = \bbT^{4}$, translating so that $p \mapsto 0$, then pushing down via $\bbR^{4} \onto \bbR^{4}/\bbZ_{2}$ onto $\bbR^{4}/\bbZ_{2}$, then lifting/pulling back via the \textit{underlying smooth map of the} crepant resolution map $T^{*} \bbC P^{1} \onto \bbC^{2}/\bbZ_{2}$ to get $\set*{\abs*{\cdot}^{g_{\bomega_{0}}}_{S^{2}} \leq 3\epsilon^{\frac{1}{2}}} \subset T^{*} S^{2}$. Thus the \textit{underlying} projection map $\pi: \Km_{\epsilon} \onto \bbT^{4}/\bbZ_{2}$ is given by the identity on $\set*{3\epsilon^{\frac{1}{2}} < \abs*{\cdot}^{g_{\bomega_{0}}}_{\cS}}$, and on $\set*{\abs*{\cdot}^{g_{\bomega_{0}}}_{\cS} \leq 3\epsilon^{\frac{1}{2}}} \sim \set*{\abs*{\cdot}^{g_{\bomega_{0}}}_{S^{2}} \leq 3\epsilon^{\frac{1}{2}}}$ via \textit{the underlying smooth map of the} crepant resolution map $\pi: \paren*{T^{*}\CP^{1}, J_{\cO\paren*{-2}}}\onto \paren*{\bbC^{2}/\bbZ_{2},J_{0}}$.

				As this construction is just the same as back in Section \ref{The Kummer Construction} but done \textit{on the underlying oriented smooth manifold level}, we thus still get a Kummer $K3$ surface $\Km_{\epsilon}$

				Now to define our closed/hypersymplectic definite triple $\bomega_{\epsilon}$, letting $E_{i} \cong S^{2}$ denote one of the 16 exceptional divisors and $\pi^{-1}\paren*{\cS} = \sqcup_{i\in \set*{1,\dots, 16}}E_{i}$, we set $$\bomega_{\epsilon} \coloneq \begin{cases}
					\bomega_{0} & \text{on } \Km_{\epsilon} - \sqcup_{i\in \set*{1,\dots, 16}} D^{g_{\bomega_{0}}}_{3\epsilon^{\frac{1}{2}}}\paren*{E_{i}} = \Km_{\epsilon} - \set*{\abs*{\cdot}^{g_{\bomega_{0}}}_{\pi^{-1}\paren*{\cS}} \leq 3\epsilon^{\frac{1}{2}}} = \set*{ 3\epsilon^{\frac{1}{2}} \leq \abs*{\cdot}^{g_{\bomega_{0}}}_{\pi^{-1}\paren*{\cS}}} \\
					\wtilde{\bomega_{EH, \epsilon}} & \text{on } \set*{\abs*{\cdot}^{g_{\bomega_{0}}}_{\pi^{-1}\paren*{\cS}} \leq 3\epsilon^{\frac{1}{2}}}
				\end{cases}	$$
				
				which is clearly well defined and smooth on all of $\Km_{\epsilon}$ by Proposition \ref{HKA preglueEH}, and we now have our $K3$ surface $\paren*{\Km_{\epsilon}, \bomega_{\epsilon}, g_{\bomega_{\epsilon}}}$ with our closed definite triple $\bomega_{\epsilon}$ and associated Riemannian metric $g_{\bomega_{\epsilon}}$, \textbf{depending on the small $\epsilon > 0$.}
				
				We still have the analogue of \textit{Remark} \ref{identification} by forgetting the complex structure and working on the underlying smooth manifold level.

				The analogue of Proposition \ref{approximatemetricproperties} is \begin{prop}\label{HKA approximatemetricproperties} For $\epsilon > 0$ satisfying Data \ref{constant epsilon patching and WLOG lattice}, we have that $\bomega_{\epsilon}$ satisfies
					$$\bomega_{\epsilon} = \begin{cases}
						\bomega_{EH,\epsilon^{2}} & \text{on }\set*{\abs*{\cdot}^{g_{\bomega_{0}}}_{\pi^{-1}\paren*{\cS}} \leq \epsilon^{\frac{1}{2}}} \\
						\bomega_{0} & \text{on }\set*{2\epsilon^{\frac{1}{2}} \leq \abs*{\cdot}^{g_{\bomega_{0}}}_{\pi^{-1}\paren*{\cS}}} 
					\end{cases}\qquad\text{and}\qquad g_{\bomega_{\epsilon}} = \begin{cases}
						g_{\bomega_{EH,\epsilon^{2}}} & \text{on } \set*{\abs*{\cdot}^{g_{\bomega_{0}}}_{\pi^{-1}\paren*{\cS}} \leq \epsilon^{\frac{1}{2}}} \\
						g_{\bomega_{0}} & \text{on }\set*{2\epsilon^{\frac{1}{2}} \leq \abs*{\cdot}^{g_{\bomega_{0}}}_{\pi^{-1}\paren*{\cS}}}
					\end{cases}$$ and, \textbf{for sufficiently small $\epsilon > 0$} to satisfy Proposition \ref{HKA preglueEH} (and with the former interpreted componentwise), $$\abs*{\nabla_{g_{\bomega_{0}}}^{k}\paren*{\bomega_{\epsilon} - \bomega_{0}}}_{g_{\bomega_{0}}} \leq c_{1}(k+2) \epsilon^{2 - \frac{k}{2}}\qquad\text{and}\qquad \abs*{\nabla_{g_{0}}^{k}\paren*{g_{\bomega_{\epsilon}} - g_{\bomega_{0}}}}_{g_{\bomega_{0}}} \leq c_{2}(k) \epsilon^{2 - \frac{k}{2}}$$ on $\set*{\epsilon^{\frac{1}{2}}\leq \abs*{\cdot}^{g_{\bomega_{0}}}_{\pi^{-1}\paren*{\cS}} \leq 2\epsilon^{\frac{1}{2}}} = \set*{\frac{1}{\epsilon^{\frac{1}{2}}}\leq \frac{\abs*{\cdot}^{g_{\bomega_{0}}}_{\pi^{-1}\paren*{\cS}}}{\epsilon} \leq \frac{2}{\epsilon^{\frac{1}{2}}}}$.

					Thus we also have, \textbf{for $\epsilon < 1$} (which Data \ref{constant epsilon patching and WLOG lattice} guarantees), $$\dV_{g_{\bomega_{\epsilon}}} = \begin{cases}
						\dV_{g_{\bomega_{EH,\epsilon^{2}}}} & \text{on } \set*{\abs*{\cdot}^{g_{\bomega_{0}}}_{\pi^{-1}\paren*{\cS}} \leq \epsilon^{\frac{1}{2}}} \\
						\paren*{1 + O\paren*{\epsilon^{2}}}\dV_{g_{\bomega_{0}}} & \text{on }\set*{\epsilon^{\frac{1}{2}} \leq \abs*{\cdot}^{g_{\bomega_{0}}}_{\pi^{-1}\paren*{\cS}}}
					\end{cases}$$ and that the cohomology class $[\bomega_{\epsilon}] \in H^{2}\paren*{K3}\otimes \bbR^{3}$ is independent of the cutoff function $\chi$ in its construction (\textit{Remark} \ref{cutoff function dependence}).
					
					Moreover, this volume form decay directly implies that the associated intersection matrix satisfies (componentwise) $$\Abs*{Q_{\bomega_{\epsilon}} - \id}_{C^{0}\paren*{\Km_{\epsilon}}} \leq \cE_{1} \epsilon^{2}$$ for some uniform constant $\cE_{1} > 0$. More specifically, $$\abs*{Q_{\bomega_{\epsilon}} - \id} \begin{cases}
						\leq \cE_{1} \epsilon^{2} & \text{on }\set*{\epsilon^{\frac{1}{2}} \leq \abs*{\cdot}^{g_{\bomega_{0}}}_{\pi^{-1}\paren*{\cS}} \leq 2\epsilon^{\frac{1}{2}}}\\
						= 0 & \text{on }\set*{\abs*{\cdot}^{g_{\bomega_{0}}}_{\pi^{-1}\paren*{\cS}} \leq \epsilon^{\frac{1}{2}}} \sqcup \set*{2\epsilon^{\frac{1}{2}} \leq \abs*{\cdot}^{g_{\bomega_{0}}}_{\pi^{-1}\paren*{\cS}}}
					\end{cases}$$ 
				\end{prop}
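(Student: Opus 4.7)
The plan is to read off each bullet from the construction of $\bomega_{\epsilon}$ in Section \ref{HKA The Kummer Construction} together with the properties of the ``pre-glued'' piece $\wtilde{\bomega_{EH,\epsilon}}$ from Proposition \ref{HKA preglueEH}, essentially transcribing the Calabi-Yau argument of Proposition \ref{approximatemetricproperties} to the hyper-K\"ahler setting. First I would handle the piecewise description of $\bomega_{\epsilon}$ and $g_{\bomega_{\epsilon}}$: by definition, on $\set*{3\epsilon^{\frac{1}{2}}\leq \abs*{\cdot}^{g_{\bomega_{0}}}_{\pi^{-1}\paren*{\cS}}}$ we have $\bomega_{\epsilon} = \bomega_{0}$ tautologically, and on each bubble region $\set*{\abs*{\cdot}^{g_{\bomega_{0}}}_{\pi^{-1}\paren*{\cS}} \leq 3\epsilon^{\frac{1}{2}}}$ we have $\bomega_{\epsilon} = \wtilde{\bomega_{EH,\epsilon}}$, so the cases $\set*{\abs*{\cdot}^{g_{\bomega_{0}}}_{\pi^{-1}\paren*{\cS}} \leq \epsilon^{\frac{1}{2}}}$ and $\set*{2\epsilon^{\frac{1}{2}} \leq \abs*{\cdot}^{g_{\bomega_{0}}}_{\pi^{-1}\paren*{\cS}}}$ follow directly from the corresponding statements in Proposition \ref{HKA preglueEH}, and by 2-out-of-3 the same dichotomy holds for $g_{\bomega_{\epsilon}}$.

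Next I would establish the annular decay estimates on $\set*{\epsilon^{\frac{1}{2}}\leq \abs*{\cdot}^{g_{\bomega_{0}}}_{\pi^{-1}\paren*{\cS}}\leq 2\epsilon^{\frac{1}{2}}}$. Under the identification of each connected component with $\set*{\epsilon^{\frac{1}{2}}\leq \abs*{\cdot}^{g_{\bomega_{0}}}_{S^{2}}\leq 2\epsilon^{\frac{1}{2}}}\subset T^{*}S^{2}$ (the hyper-K\"ahler analogue of \textit{Remark} \ref{identification}, obtained by forgetting complex structures in the crepant resolution), we have $\bomega_{\epsilon} = \wtilde{\bomega_{EH,\epsilon}}$ and $\bomega_{0}$ pulled back is precisely the Euclidean triple $\bomega_{0}$. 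Thus the bound on $\bomega_{\epsilon}-\bomega_{0}$ with constants $c_{1}(k+2)\epsilon^{2-\frac{k}{2}}$ is just the second bullet of Proposition \ref{HKA preglueEH} applied componentwise, and the bound on $g_{\bomega_{\epsilon}}-g_{\bomega_{0}}$ follows from 2-out-of-3 (i.e.\ the ON-frame computation already used in the proof of Proposition \ref{preglueEH}, which depends only on the algebraic dependence of $g_{\bomega}$ on $\bomega$). The volume form expansion $\dV_{g_{\bomega_{\epsilon}}} = \paren*{1+O(\epsilon^{2})}\dV_{g_{\bomega_{0}}}$ on the flat/interpolating side is then a direct consequence of this decay of $g_{\bomega_{\epsilon}}-g_{\bomega_{0}}$, and on $\set*{\abs*{\cdot}^{g_{\bomega_{0}}}_{\pi^{-1}\paren*{\cS}}\leq \epsilon^{\frac{1}{2}}}$ it is tautological since $\bomega_{\epsilon}=\bomega_{EH,\epsilon^{2}}$ there.

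For the cohomology-class independence from the cutoff function $\chi$, the argument is identical to \textit{Remark} \ref{cutoff function dependence}: different choices of $\chi$ produce triples differing by the image of $d$ acting on the tuple of primitives for $\wtilde{\bomega_{EH,\epsilon}}-\bomega_{0}$ on the annulus, which lie in exact classes and therefore do not affect $[\bomega_{\epsilon}]\in H^{2}(K3)\otimes \bbR^{3}$. Here one uses that each component of $\wtilde{\bomega_{EH,\epsilon}}-\bomega_{0}$ is exact on the flat annular region, which follows since the Euclidean triple and the Eguchi--Hanson triple both admit explicit primitives obtained by interpolating the relevant $1$-forms (for the K\"ahler component this is exactly what was done in Section \ref{EH as building block} via $\wtilde{\phi_{EH,\epsilon}}$, and the two $(2,0)$-components agree with $\bomega_{0}$ identically since $\Omega_{T^{*}\CP^{1}}$ pulls back to $dz^{1}\wedge dz^{2}$ outside the exceptional set).

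Finally, the uniform bound $\Abs*{Q_{\bomega_{\epsilon}}-\id}_{C^{0}(\Km_{\epsilon})}\leq \cE_{1}\epsilon^{2}$ is where the only mildly new computation appears, and I would expect this to be the main (though still short) obstacle. Recall that $Q_{\bomega}$ is defined by $\tfrac{1}{2}\bomega\wedge\bomega^{T} = Q_{\bomega}\mu_{\bomega}$ and is identically $\id$ for any hyper-K\"ahler triple. On both $\set*{\abs*{\cdot}^{g_{\bomega_{0}}}_{\pi^{-1}\paren*{\cS}}\leq \epsilon^{\frac{1}{2}}}$ (where $\bomega_{\epsilon}=\bomega_{EH,\epsilon^{2}}$) and $\set*{2\epsilon^{\frac{1}{2}}\leq \abs*{\cdot}^{g_{\bomega_{0}}}_{\pi^{-1}\paren*{\cS}}}$ (where $\bomega_{\epsilon}=\bomega_{0}$) we therefore have $Q_{\bomega_{\epsilon}}-\id = 0$ by Propositions \ref{HKA EHproperties} and the Euclidean case. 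On the annulus I would write $\bomega_{\epsilon} = \bomega_{0}+\fO$ with $\abs*{\fO}_{g_{\bomega_{0}}}\leq c_{1}(2)\epsilon^{2}$ componentwise from the previous bullet, and expand
\[
\tfrac{1}{2}\bomega_{\epsilon}\wedge\bomega_{\epsilon}^{T} = \tfrac{1}{2}\bomega_{0}\wedge\bomega_{0}^{T} + \tfrac{1}{2}\paren*{\bomega_{0}\wedge\fO^{T}+\fO\wedge\bomega_{0}^{T}+\fO\wedge\fO^{T}},
\]
together with the analogous expansion $\mu_{\bomega_{\epsilon}} = \mu_{\bomega_{0}}+O(\epsilon^{2})$ already obtained from the volume form estimate. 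Since $\bomega_{0}$ is hyper-K\"ahler so $\tfrac{1}{2}\bomega_{0}\wedge\bomega_{0}^{T}=\id\cdot\mu_{\bomega_{0}}$, dividing the expansion above by $\mu_{\bomega_{\epsilon}}$ and Taylor expanding $(1+O(\epsilon^{2}))^{-1}$ yields $Q_{\bomega_{\epsilon}}-\id = O(\epsilon^{2})$ componentwise on the annulus, giving $\cE_{1}$ explicitly in terms of $c_{1}(2)$ and the uniform constant in the volume expansion. Taking the supremum over all three regions yields the stated global $C^{0}$-bound.
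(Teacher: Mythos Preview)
Your proposal is correct and follows essentially the same approach as the paper: the paper presents this proposition as an immediate consequence of the construction of $\bomega_{\epsilon}$ together with Proposition \ref{HKA preglueEH} (just as Proposition \ref{approximatemetricproperties} was stated to follow from construction and Proposition \ref{preglueEH}), and you have simply unpacked those deductions explicitly. In particular, the $Q_{\bomega_{\epsilon}}-\id$ estimate is already recorded for $\wtilde{\bomega_{EH,\epsilon}}$ in Proposition \ref{HKA preglueEH}, so the paper transfers it verbatim to $\bomega_{\epsilon}$ without writing out the wedge-product expansion you carried through.
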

				
				We thus still have that each pair $\Km_{\epsilon_{1}}, \Km_{\epsilon_{2}}$ are diffeomorphic for $\epsilon_{1}, \epsilon_{2} > 0$, and \textit{Remark} \ref{GHremark} analogously gives us \begin{remark}\label{HKA GHremark}
					From the construction of $\Km_{\epsilon}$ and $\bomega_{\epsilon}$ itself, we have that if $\epsilon \searrow 0$, that as Riemannian manifolds $\paren*{\Km_{\epsilon}, \bomega_{\epsilon}, g_{\bomega_{\epsilon}}} \xrightarrow{GH} \paren*{\bbT^{4}/\bbZ_{2}, \bomega_{0}, g_{\bomega_{0}}}$ converges in the Gromov-Hausdorff topology to the orbifold $\paren*{\bbT^{4}/\bbZ_{2}, \bomega_{0}, g_{\bomega_{0}}}$ with the singular orbifold \hka metric $\bomega_{0}$ with conical $\bbR^{4}/\bbZ_{2}$ singularities at each of the 16 singular points. Moreover, this convergence is satisfies volume non-collapse. Note that outside of the set $\cS$ of 16 singular points, we have that $\bomega_{0}$ is the flat \hka metric on $\bbT^{4}/\bbZ_{2} - \cS$ from before.\end{remark}
				
				Moreover, the analogue of \textit{Remark} \ref{ricciestimates} is \begin{remark}\label{HKA ricciestimates}
					
					Recall from \textit{Remark} \ref{CYvsHKA} that for $\paren*{M^{4},\bomega, g_{\bomega}}$ which is \textit{hypersymplectic}, that the resulting $\paren*{M^{4}, J_{e}, g_{\bomega}, \omega_{e}}$ is K\"{a}hler \textit{for each direction $e \in S^{2}$} (since $Q_{\bomega} = \id$ only comes in to show that $g_{\bomega}$ is Ricci-flat), i.e. each $\paren*{J_{e},\omega_{e}}_{e \in S^{2}}$ is \ka WRT the \textit{fixed} metric $g_{\bomega}$. Hence WLOG upon fixing a direction $e \in S^{2}$, say $e_{1}$, we have that we may do the same estimates as in Proposition \ref{presmall} for the Ricci-potential of $\Ric_{g_{\bomega_{\epsilon}}}$ WRT $J_{e_{1}}$ and argue in the same exact same way as \textit{Remark} \ref{ricciestimates} to get that $$\abs*{\nabla_{g_{\bomega_{0}}}^{k}\Ric_{g_{\bomega_{\epsilon}}}}_{g_{\bomega_{0}}} \leq c_{4}(k) \epsilon^{1 - \frac{k}{2}}$$ holds on the annulus $\set*{\epsilon^{\frac{1}{2}} \leq \abs*{\cdot}^{g_{\bomega_{0}}}_{\pi^{-1}\paren*{\cS}} \leq 2\epsilon^{\frac{1}{2}}}$ for some positive constants $c_{4}(k) >0,k \in \bbN_{0}$, that $\Ric_{g_{\bomega_{\epsilon}}}$ vanishes outside $\set*{\epsilon^{\frac{1}{2}} \leq \abs*{\cdot}^{g_{\bomega_{0}}}_{\pi^{-1}\paren*{\cS}} \leq 2\epsilon^{\frac{1}{2}}}$, and that $$\abs*{\Ric_{g_{\bomega_{\epsilon}}}}_{g_{\bomega_{\epsilon}}} \leq c_{4}(0) \epsilon$$ holds on the annulus $\set*{\epsilon^{\frac{1}{2}} \leq \abs*{\cdot}^{g_{\bomega_{0}}}_{\pi^{-1}\paren*{\cS}} \leq 2\epsilon^{\frac{1}{2}}}$, whence we easily get that $$-3\Lambda g_{\bomega_{\epsilon}}\leq \Ric_{g_{\bomega_{\epsilon}}} \leq 3 \Lambda g_{\bomega_{\epsilon}}$$ holds \textit{everywhere} on $\Km_{\epsilon}$ for $\Lambda \coloneq \frac{c_{4}(0)\epsilon}{3}$, aka $\paren*{\Km_{\epsilon}, \bomega_{\epsilon}, g_{\bomega_{\epsilon}}}$ has \textbf{uniformly bounded Ricci curvature}.\end{remark}

				\subsection{\textsection \ Weighted (Holder) Spaces \ \textsection}\label{HKA Weighted Setup}
				
				As remarked back in Section \ref{Weighted Setup}, since Holder spaces only depend on the Riemannian metric, the same weighted Holder spaces as defined back in Section \ref{Weighted Setup} still hold in the \hka category, whence we may speak of $C^{k,\alpha}_{\delta, g}$ for $\paren*{p,q}$-tensors on $\paren*{T^{*}S^{2}, \bomega_{EH,s}, g_{\bomega_{EH,s}}}$, $\paren*{\bbT^{4}, \bomega_{0}, g_{\bomega_{0}}}$, $\paren*{\bbT^{4} - \cS, \bomega_{0}, g_{\bomega_{0}}}$, and $\paren*{\Km_{\epsilon}, \bomega_{\epsilon}, g_{\bomega_{\epsilon}}}$. 
				
				For reference, this is \begin{align*}
					\Abs*{\fs}_{C^{k,\alpha}_{\delta, g}}&\coloneq \sum_{j \leq k} \Abs*{\nabla_{g}^{j} \fs}_{C^{0}_{\delta,g}} + \sup_{\substack{x \neq y\\ d_{g}\paren*{x,y} \leq \inj_{g}}}\min\set*{\rho(x),\rho(y)}^{-\delta + k + \alpha} \frac{\abs*{\restr{\nabla_{g}^{k}\fs}{x} - \restr{\nabla_{g}^{k}\fs}{y}}_{g}}{d_{g}\paren*{x , y}^{\alpha}} \\
					&\coloneq \sum_{j \leq k} \Abs*{\rho^{-\delta + j}\nabla_{g}^{j} \fs}_{C^{0}_{g}} + \sup_{\substack{x \neq y\\ d_{g}\paren*{x,y} \leq \inj_{g}}}\min\set*{\rho(x),\rho(y)}^{-\delta + k + \alpha} \frac{\abs*{\restr{\nabla_{g}^{k}\fs}{x} - \restr{\nabla_{g}^{k}\fs}{y}}_{g}}{d_{g}\paren*{x , y}^{\alpha}}
				\end{align*} where we interpret the difference in the numerator of the (weighted) Holder seminorm via parallel transport along the unique minimal geodesic connecting $x$ and $y$, and all covariant derivatives (and parallel transport from) the Levi-Civita connection of $g$ and the higher order tensor (i.e. not functions) norms with respect to $g$. The space of weighted Holder $\paren*{p,q}$-tensors $C^{k,\alpha}_{\delta,g}$ is thus $C^{k,\alpha}_{\delta,g}\coloneq \set*{\fs \in C^{k,\alpha}_{g}  : \Abs*{s}_{C^{k,\alpha}_{\delta,g}} < \infty}$ but with the $\Abs*{\cdot}_{C^{k,\alpha}_{\delta,g}}$ norm instead.

				As the weight function $\rho_{\epsilon}$ on $\Km_{\epsilon}$ needed for the $C^{k,\alpha}_{\delta,g}$-norm on $\paren*{\Km_{\epsilon}, g_{\bomega_{\epsilon}}}$ requires $2\epsilon^{\frac{1}{2}} < \frac{3}{25}$, we thus still have Data \ref{constant epsilon 1-16} in effect.

				\begin{notation}
					For notational clarity (as we're no longer only dealing with scalar-valued functions), we let $C^{k,\alpha}_{\delta, g}\paren*{M}$ denote the weighted Holder space of $\bbR$-valued functions on $M$, and we let $C^{k,\alpha}_{\delta, g}\paren*{E}$ denote the weighted Holder space of sections of a metric $\bbK$-vector bundle $E\onto M$.
					
					Sometimes, especially when doing estimates, whenever the tensor type is obvious from the context we will resort back to the usual notation $C^{k,\alpha}_{\delta, g}\paren*{M}$ for norms of tensors.
				\end{notation}

				\begin{remark}\label{HKA weighted tuple notation}

					In the sequel, we will be dealing with the following function spaces: $$\begin{aligned}
						\cH^{+}_{g} \otimes \bbR^{3} &\cong \bbR^{9}\\
						C^{1,\alpha}_{\delta,g}\paren*{\mathring{\Omega}^{1}_{g}\paren*{M^{4}}} &\\
						C^{0,\alpha}_{\delta - 1,g}\paren*{\Lambda_{g}^{+}T^{*}M^{4}}\otimes\bbR^{3}&\\		
						\paren*{C^{1,\alpha}_{\delta,g}\paren*{T^{*}M^{4}}\oplus \cH^{+}_{g}} \otimes \bbR^{3} &= \paren*{C^{1,\alpha}_{\delta,g}\paren*{T^{*}M^{4}}\otimes \bbR^{3}}\oplus \paren*{\cH^{+}_{g}\otimes \bbR^{3}}\\
						C^{0,\alpha}_{\delta - 1,g}\paren*{\bbR\oplus \Lambda_{g}^{+}T^{*}M^{4}}\otimes\bbR^{3} &= \paren*{C^{0,\alpha}_{\delta - 1,g}\paren*{M^{4}} \otimes\bbR^{3}} \oplus \paren*{C^{0,\alpha}_{\delta - 1,g}\paren*{\Lambda_{g}^{+}T^{*}M^{4}} \otimes \bbR^{3}}\\
						\paren*{C^{1,\alpha}_{\delta,g}\paren*{\mathring{T^{*}M^{4}}}\oplus \cH^{+}_{g}} \otimes \bbR^{3} &= \paren*{C^{1,\alpha}_{\delta,g}\paren*{\mathring{T^{*}M^{4}}}\otimes \bbR^{3}}\oplus \paren*{\cH^{+}_{g}\otimes \bbR^{3}}
					\end{aligned}$$ where we recall that $\cH^{+}_{g}$ denotes the space of self-dual harmonic forms WRT $g$ and\footnote{Note the abuse of notation here, as $\Omega^{1}$, $\mathring{\Omega}^{1}_{g}$ usually denotes \textit{smooth} regularity.} $$C^{1,\alpha}_{\delta,g}\paren*{\mathring{\Omega}^{1}_{g}\paren*{M^{4}}} \coloneq \set*{\fa \in C^{1,\alpha}_{\delta,g}\paren*{T^{*}M^{4}} : d_{g}^{*}\fa = 0} = C^{1,\alpha}_{\delta,g}\paren*{T^{*}M^{4}} \cap \ker d_{g}^{*}$$ is in fact a \textit{closed} Banach subspace of $C^{1,\alpha}_{\delta,g}\paren*{T^{*}M^{4}}$ WRT the $\Abs*{\cdot}_{C^{1,\alpha}_{\delta,g}\paren*{T^{*}M^{4}}}$-norm since the (formal) adjoint of the exterior derivative $d_{g}^{*} : C^{1,\alpha}_{\delta,g}\paren*{T^{*}M^{4}} \rightarrow C^{0,\alpha}_{\delta-1,g}\paren*{M^{4}}$ is continuous.
					
					That is, these function spaces consists of either a single or a pair of \textit{$3$-tuples of elements}. In addition to the $\Abs*{\cdot}_{C^{k,\alpha}_{\delta,g}}$-norms which we've already defined, we therefore use the following norms when working with the above: $$\begin{aligned}
						\Abs*{b}_{\cH^{+}_{g}} &= \Abs*{b}_{L^{2}_{g}} \coloneq \paren*{\int_{M^{4}} b \wedge *_{g} b}^{\frac{1}{2}}\\
						\Abs*{\bm{a}}_{\cA \otimes \bbR^{3}} &= \paren*{\Abs*{a_{1}}^{2}_{\cA}+ \Abs*{a_{2}}^{2}_{\cA}+ \Abs*{a_{3}}^{2}_{\cA}}^{\frac{1}{2}}\\
						\Abs*{\paren*{a, b}}_{\cA \oplus \cB} &= \Abs*{a}_{\cA} + \Abs*{b}_{\cB}		
					\end{aligned}$$
					
					That is, we put the $L^{2}_{g}$-norm on $\cH^{+}_{g}$, the product $L^{2}$-norm on $3$-tuples, and the product $L^{1}$-norm on direct sums.\end{remark}

				\subsection{\textsection \ Nonlinear Setup \ \textsection}\label{HKA Nonlinear Setup}
				
				Let us now bring everything together and find our \hka triple $\wtilde{\bomega_{\epsilon}}$ by perturbing our closed definite triple $\bomega_{\epsilon}$ using the setup in Section \ref{HKA prelims}.

				\begin{notation}
					As we will be working with a \textit{fixed} $\epsilon > 0$, denote the sought after \hka triple $\wtilde{\bomega_{\epsilon}} = \bomega_{\epsilon} + \bm{\eta}$ as $\bomega$, \textit{keeping in mind that as this will end up being the unique \hka triple that's perturbed from $\bomega$, that \textbf{it depends on the gluing parameter $\epsilon>0$} as the starting closed definite triple $\bomega_{\epsilon}$ depends on $\epsilon>0$}.
				\end{notation}

				From Section \ref{HKA prelims}, since $\Km_{\epsilon}$ is a closed manifold with $b_{1}\paren*{\Km_{\epsilon}} = 0$ we want to solve the following nonlinear \textit{elliptic} system with singular limit (see \textit{Remark} \ref{HKA GHremark}): \begin{align*}
					\Phi_{\epsilon}:\paren*{C^{1,\alpha}_{\delta,g_{\bomega_{\epsilon}}}\paren*{\mathring{\Omega}^{1}_{g_{\bomega_{\epsilon}}}\paren*{\Km_{\epsilon}}}\oplus \cH^{+}_{g_{\bomega_{\epsilon}}}} \otimes \bbR^{3} &\rightarrow C^{0,\alpha}_{\delta - 1,g_{\bomega_{\epsilon}}}\paren*{\Lambda_{g_{\bomega_{\epsilon}}}^{+}T^{*}\Km_{\epsilon}}\otimes\bbR^{3}\\
					\paren*{\bm{a},\bm{\zeta}} &\mapsto d_{g_{\bomega_{\epsilon}}}^{+}\bm{a}+ \bm{\zeta} - \cF\paren*{\paren*{- Q_{\bomega_{\epsilon}} - d^{-}_{g_{\bomega_{\epsilon}}}\bm{a} * d^{-}_{g_{\bomega_{\epsilon}}}\bm{a}}_{0}}\bomega_{\epsilon}
				\end{align*} where we solve \begin{align*}
					\Phi_{\epsilon}\paren*{\bm{a},\bm{\zeta}} &\coloneq	d_{g_{\bomega_{\epsilon}}}^{+}\bm{a}+ \bm{\zeta} - \cF\paren*{\paren*{- Q_{\bomega_{\epsilon}} - d^{-}_{g_{\bomega_{\epsilon}}}\bm{a} * d^{-}_{g_{\bomega_{\epsilon}}}\bm{a}}_{0}}\bomega_{\epsilon}\\
					&= \underbrace{-\cF\paren*{ \paren*{- Q_{\bomega_{\epsilon}}}_{0} }\bomega_{\epsilon}}_{=\Phi_{\epsilon}(0,0)} + \underbrace{d_{g_{\bomega_{\epsilon}}}^{+}\bm{a}+ \bm{\zeta}}_{=D_{0}\Phi_{\epsilon}\paren*{\bm{a},\bm{\zeta}}} + \underbrace{\cF\paren*{\paren*{- Q_{\bomega_{\epsilon}}}_{0}}\bomega_{\epsilon} - \cF\paren*{\paren*{- Q_{\bomega_{\epsilon}} - d^{-}_{g_{\bomega_{\epsilon}}}\bm{a} * d^{-}_{g_{\bomega_{\epsilon}}}\bm{a}}_{0}}\bomega_{\epsilon}}_{\text{nonlinearity}}\\
					&= 0
				\end{align*} Recall that $\cH^{+}_{g_{\bomega_{\epsilon}}}$ denotes the space of self-dual harmonic $2$-forms WRT $g_{\bomega_{\epsilon}}$, whence consists of constant linear combinations of the $3$ components of $\bomega_{\epsilon}$.

				Similar to the Calabi-Yau case, our strategy to prove the existence of $\paren*{\bm{a},\bm{\zeta}}$ solving $\Phi_{\epsilon}\paren*{\bm{a},\bm{\zeta}} = 0$ above is via the implicit function theorem (Theorem \ref{IFT}).

				\subsection{\textsection \ Blow-up analysis \ \textsection}\label{HKA Blow-up Analysis}

				First, we recall that the linearization at the origin of $\Phi_{\epsilon}$, $D_{0}\Phi_{\epsilon} = \paren*{d_{g_{\bomega_{\epsilon}}}^{+} \oplus \id_{\cH^{+}_{g_{\bomega_{\epsilon}}}}}\otimes \bbR^{3}$ always an isomorphism since $\Km_{\epsilon}$ is closed and satisfies $b_{1}\paren*{\Km_{\epsilon}} = 0$. However, while $D_{0}\Phi_{\epsilon}$ is always an isomorphism, the key is to have the operator norm of its inverse be \textbf{uniformly bounded in $\epsilon > 0$}.

				Our strategy is to first prove uniform estimates for our Dirac operator $$D_{g_{\bomega_{\epsilon}}} \coloneq d^{*}_{g_{\bomega_{\epsilon}}} \oplus d^{+}_{g_{\bomega_{\epsilon}}}:C^{1,\alpha}_{\delta, g_{\bomega_{\epsilon}}}\paren*{T^{*}\Km_{\epsilon}}\rightarrow C^{0,\alpha}_{\delta-1, g_{\bomega_{\epsilon}}}\paren*{\bbR \oplus \Lambda_{g_{\bomega_{\epsilon}}}^{+}T^{*}\Km_{\epsilon}}$$ for certain weights $\delta$, then show that these imply the sought after uniform estimates for our linearization $$D_{0}\Phi_{\epsilon}= \paren*{d_{g_{\bomega_{\epsilon}}}^{+} \oplus \id_{\cH^{+}_{g_{\bomega_{\epsilon}}}}}\otimes \bbR^{3}:\paren*{C^{1,\alpha}_{\delta,g_{\bomega_{\epsilon}}}\paren*{\mathring{\Omega}^{1}_{g_{\bomega_{\epsilon}}}\paren*{\Km_{\epsilon}}}\oplus \cH^{+}_{g_{\bomega_{\epsilon}}}} \otimes \bbR^{3} \rightarrow C^{0,\alpha}_{\delta - 1,g_{\bomega_{\epsilon}}}\paren*{\Lambda_{g_{\bomega_{\epsilon}}}^{+}T^{*}\Km_{\epsilon}}\otimes\bbR^{3}$$

				The analogue of Proposition \ref{scalingproperties} is the following, which follows from $d^{*}_{C^{2}}g = \frac{1}{C^{2}}d^{*}_{g}$, $*_{C^{2}g} = *_{g}$ on $2$-forms, and the conformal invariance $\Lambda_{C^{2}g}^{+} T^{*}M = \Lambda_{g}^{+}T^{*}M$ of self-dual $2$-forms: \begin{prop}\label{HKA scalingproperties} Let $\paren*{M^{n},g}$ be an arbitrary Riemannian manifold. Let $C>0$ be a constant. Let $\rho$ be a smooth nonnegative function, let $\delta \in \bbR$, let $\fw$ be a $\paren*{0,m}$-tensor, let $\abs*{\cdot}_{g}$ denote all higher order tensor norms WRT $g$, and recall $\Abs*{\nabla^{j}_{g} \fw}_{C^{0}_{\delta,g}} \coloneq \Abs*{\rho^{-\delta+ j}\nabla_{g}^{j}\fw}_{C^{0}_{g}}$. Recall from Proposition \ref{scalingproperties}(\ref{scalingproperties covariant derivative rescale}) that $\nabla_{C^{2}g} = \nabla_{g}$. \begin{enumerate}
						\itemsep0em 
						\item\label{HKA scalingproperties pointwise tensor norm rescale} Under a conformal rescaling $g\mapsto C^{2}g$, we have that $$\abs*{\nabla^{j}\fw}_{C^{2}g} = \paren*{\frac{1}{C}}^{j+m}\abs*{\nabla^{j}\fw}_{g}$$
						
						\item\label{HKA scalingproperties Hodge Laplacian rescale} Under a conformal rescaling $g\mapsto C^{2}g$, we have for the Hodge Laplacian $\Delta_{g}\coloneq d d_{g}^{*} + d_{g}^{*}d$ that $$\Delta_{C^{2}g} = \frac{1}{C^{2}} \Delta_{g}$$

						\item\label{HKA scalingproperties Dirac rescale} When $n = 4$ and $\paren*{M^{4},g}$ is oriented, under a conformal rescaling $g\mapsto C^{2}g$ we have that $$D_{C^{2}g} = \paren*{\frac{1}{C^{2}} d_{g}^{*}, d_{g}^{+}} = \paren*{\frac{1}{C^{2}} d_{g}^{*}} \oplus d_{g}^{+}$$

						\item\label{HKA scalingproperties weighted Holder tensor seminorm rescale} Immediately from Proposition \ref{scalingproperties}(\ref{scalingproperties covariant derivative rescale}) and (\ref{HKA scalingproperties pointwise tensor norm rescale}), under a conformal rescaling $g\mapsto C^{2}g$ we have that $$ \sqparen*{\fw}_{C^{k,\alpha}_{\delta,C^{2}g}} = \paren*{\frac{1}{C}}^{\delta + m}\sqparen*{\fw}_{C^{k,\alpha}_{\delta,g}}$$ with the weight of the RHS $C^{k,\alpha}_{\delta,g}$ being $\frac{\rho}{C}$, i.e.
						\begin{align*}
							\sup_{\substack{x \neq y\\ d_{C^{2}g}\paren*{x,y} \leq \inj_{C^{2}g}}}\min\set*{\rho(x),\rho(y)}^{-\delta + k + \alpha} \frac{\abs*{\restr{\nabla_{g}^{k}\fw}{x} - \restr{\nabla_{g}^{k}\fw}{y}}_{C^{2}g}}{d_{C^{2}g}\paren*{x,y}^{\alpha}} \\ = \paren*{\frac{1}{C}}^{\delta + m}\sup_{\substack{x \neq y\\ d_{g}\paren*{x,y} \leq \inj_{g}}}\min\set*{\restr{\frac{\rho}{C}}{x},\restr{\frac{\rho}{C}}{y}}^{-\delta + k + \alpha} \frac{\abs*{\restr{\nabla_{g}^{k}\fw}{x} - \restr{\nabla_{g}^{k}\fw}{y}}_{g}}{d_{g}\paren*{x,y}^{\alpha}} 
						\end{align*} 
						
						\item\label{HKA scalingproperties weighted Holder tensor norm rescale} Under a conformal rescaling $g\mapsto C^{2}g$, we have from (\ref{HKA scalingproperties weighted Holder tensor seminorm rescale}) that $$\Abs*{\fw}_{C^{k,\alpha}_{\delta,C^{2}g}} = \paren*{\frac{1}{C}}^{\delta + m} \Abs*{\fw}_{C^{k,\alpha}_{\delta,g}}$$
						
						where again the weight of the RHS is $\frac{\rho}{C}$.

						\item\label{HKA scalingproperties standard Holder tensor norm rescale} Under a conformal rescaling $g\mapsto C^{2}g$, we have from Proposition \ref{scalingproperties}(\ref{scalingproperties covariant derivative rescale}) and (\ref{HKA scalingproperties pointwise tensor norm rescale}) that for the \textit{unweighted/standard} Holder norm that \begin{align*}
							\Abs*{\fw}_{C^{k,\alpha}_{g}}  \xrightarrow{\text{conformal rescaling }g\mapsto C^{2}g} \Abs*{\fw}_{C^{k,\alpha}_{C^{2}g}} &= \sum_{j \leq k} \paren*{\frac{1}{C}}^{j+m}\Abs*{\nabla_{g}^{j} \fw}_{C^{0}_{g}} \\ &+ \paren*{\frac{1}{C}}^{k+m + \alpha}\sup_{\substack{x \neq y\\ d_{g}\paren*{u_{1},u_{2}} \leq \inj_{g}}} \frac{\abs*{\restr{\nabla_{g}^{k}\fw}{x} - \restr{\nabla_{g}^{k}\fw}{y}}_{g}}{d_{g}\paren*{x,y}^{\alpha}}
						\end{align*}
						
						\item\label{HKA scalingproperties Schauder Hodge Laplacian rescale} Transferring the standard Schauder estimate $\Abs*{\fa}_{C^{2,\alpha}_{g}} \leq \cC\paren*{\Abs*{\fa}_{C^{0}_{g}} + \Abs*{\Delta_{g}\fa}_{C^{0,\alpha}_{g}}}$ for $1$-forms $\fa$ on metric balls $\paren*{B^{g}_{r_{1}}\paren*{p}, B^{g}_{r_{2}}\paren*{p}}$ of radii $\paren*{r_{1},r_{2}}$ (clearly with $r_{1} < r_{2}$) under the conformal rescaling $g\mapsto C^{2}g$, we end up getting via (\ref{HKA scalingproperties Hodge Laplacian rescale}) and (\ref{HKA scalingproperties standard Holder tensor norm rescale}):
						\begin{align*}
							\Abs*{\fa}_{C^{2,\alpha}_{C^{2}g}} \leq \cC\paren*{\Abs*{\fa}_{C^{0}_{C^{2}g}} + \Abs*{\Delta_{C^{2}g}\fa}_{C^{0,\alpha}_{C^{2}g}}}
						\end{align*}
						which is equal to
						\begin{align*}
							\frac{1}{C}\Abs*{\fa}_{C^{0}_{g}} &+ \paren*{\frac{1}{C}}^{2} \Abs*{\nabla_{g}\fa}_{C^{0}_{g}} + \paren*{\frac{1}{C}}^{3}\Abs*{\nabla^{2}_{g} \fa}_{C^{0}_{g}} + \paren*{\frac{1}{C}}^{3 + \alpha}\sqparen*{\nabla^{2}_{g} \fa}_{C^{0,\alpha}_{g}}\\ &\leq \cC\paren*{\frac{1}{C}\Abs*{\fa}_{C^{0}_{g}} + \paren*{\frac{1}{C}}^{3} \Abs*{\Delta_{g}\fa}_{C^{0}_{g}} + \paren*{\frac{1}{C}}^{3 + \alpha} \sqparen*{\Delta_{g} \fa}_{C^{0,\alpha}_{g}}  }
						\end{align*} on the \textit{same} metric balls $\paren*{B^{g}_{r_{1}}\paren*{p}, B^{g}_{r_{2}}\paren*{p}} = \paren*{B^{C^{2}g}_{Cr_{1}}\paren*{p}, B^{C^{2}g}_{Cr_{2}}\paren*{p}}$, but with different radii WRT the \textit{new} metric $C^{2}g$.
						
						Conversely, if we already had the standard Schauder estimate for $C^{2}g$, namely $\Abs*{\fa}_{C^{2,\alpha}_{C^{2}g}} \leq \cC\paren*{\Abs*{\fa}_{C^{0}_{C^{2}g}} + \Abs*{\Delta_{C^{2}g}\fa}_{C^{0,\alpha}_{C^{2}g}}}$ on $\paren*{B^{C^{2}g}_{Cr_{1}}\paren*{p}, B^{C^{2}g}_{Cr_{2}}\paren*{p}}$, then unraveling it would yield the same estimate above on $\paren*{B^{g}_{r_{1}}\paren*{p}, B^{g}_{r_{2}}\paren*{p}} = \paren*{B^{C^{2}g}_{Cr_{1}}\paren*{p}, B^{C^{2}g}_{Cr_{2}}\paren*{p}}$.

						\item\label{HKA scalingproperties Dirac norm rescale} When $n = 4$ and $\paren*{M^{4},g}$ is oriented, under a conformal rescaling $g\mapsto C^{2}g$, upon recalling the $L^{2}$-norm on direct sums convention from \textit{Remark} \ref{HKA weighted tuple notation} (since $D_{g}$ lands in $\Omega^{0} \oplus \Omega_{g}^{+} \subset \Omega^{0} \oplus \Omega^{2}$), on $1$-forms $\fa$ we have from (\ref{HKA scalingproperties Dirac rescale}) and upon setting $m = 0$ and $m = 2$ in (\ref{HKA scalingproperties standard Holder tensor norm rescale}) that $$\Abs*{D_{C^{2}g} \fa }_{C^{0,\alpha}_{C^{2}g}} = \paren*{\frac{1}{C}}^{2}\Abs*{d^{*}_{g}\fa}_{C^{0}_{g}} + \paren*{\frac{1}{C}}^{2+\alpha}\sqparen*{d^{*}_{g}\fa}_{C^{0,\alpha}_{g}} + \paren*{\frac{1}{C}}^{2}\Abs*{d^{+}_{g}\fa}_{C^{0}_{g}} + \paren*{\frac{1}{C}}^{2+\alpha}\sqparen*{d^{+}_{g}\fa}_{C^{0,\alpha}_{g}} $$ \footnote{Indeed, since $\Abs*{D_{C^{2}g} \fa }_{C^{0,\alpha}_{C^{2}g}} = \frac{1}{C^{2}} \Abs*{d^{*}_{g}\fa}_{C^{0,\alpha}_{C^{2}g}} + \Abs*{d^{+}_{g}\fa}_{C^{0,\alpha}_{C^{2}g}}$, $\Abs*{d^{*}_{g}\fa}_{C^{0,\alpha}_{C^{2}g}} = \Abs*{d^{*}_{g}\fa}_{C^{0}_{g}} + \paren*{\frac{1}{C}}^{\alpha}\sqparen*{d^{*}_{g}\fa}_{C^{0,\alpha}_{g}}$, and $\Abs*{d^{+}_{g}\fa}_{C^{0,\alpha}_{C^{2}g}} = \paren*{\frac{1}{C}}^{2}\Abs*{d^{+}_{g}\fa}_{C^{0}_{g}} + \paren*{\frac{1}{C}}^{2+\alpha}\sqparen*{d^{+}_{g}\fa}_{C^{0,\alpha}_{g}}$.}Therefore, since we have from (\ref{HKA scalingproperties standard Holder tensor norm rescale}) that $\Abs*{\nabla_{g}\fa}_{C^{0,\alpha}_{C^{2}g}} = \paren*{\frac{1}{C}}^{2}\Abs*{\nabla_{g} \fa}_{C^{0}_{g}} + \paren*{\frac{1}{C}}^{2 + \alpha}\sqparen*{\nabla_{g}\fa}_{C^{0,\alpha}_{g}}$ for $\fa$ a $1$-form, the heuristic here is that \textit{the $C^{0,\alpha}_{g}$-norm of $D_{g}\fa$ and $\nabla_{g}\fa$ have the same scaling behavior under the conformal rescaling $g\mapsto C^{2}g$}, even though $D_{g}$ takes values in sections of the bundle $\bbR\oplus \Lambda_{g}^{+}T^{*}M^{4} \onto M^{4}$.

						\item\label{HKA scalingproperties Schauder Dirac rescale} When $n = 4$ and $\paren*{M^{4},g}$ is oriented, transferring the standard Schauder estimate $\Abs*{\fa}_{C^{1,\alpha}_{g}} \leq \cC\paren*{\Abs*{\fa}_{C^{0}_{g}} + \Abs*{D_{g}\fa}_{C^{0,\alpha}_{g}}}$ for $1$-forms $\fa$ on metric balls $\paren*{B^{g}_{r_{1}}\paren*{p}, B^{g}_{r_{2}}\paren*{p}}$ of radii $\paren*{r_{1},r_{2}}$ (clearly with $r_{1} < r_{2}$) under the conformal rescaling $g\mapsto C^{2}g$, we end up getting via (\ref{HKA scalingproperties Dirac rescale}), (\ref{HKA scalingproperties standard Holder tensor norm rescale}) and (\ref{HKA scalingproperties Dirac norm rescale}):
						\begin{align*}
							\Abs*{\fa}_{C^{1,\alpha}_{C^{2}g}} \leq \cC\paren*{\Abs*{\fa}_{C^{0}_{C^{2}g}} + \Abs*{D_{C^{2}g}\fa}_{C^{0,\alpha}_{C^{2}g}}}
						\end{align*}
						which is equal to
						\begin{align*}
							\frac{1}{C}\Abs*{\fa}_{C^{0}_{g}} &+ \paren*{\frac{1}{C}}^{2} \Abs*{\nabla_{g}\fa}_{C^{0}_{g}} + \paren*{\frac{1}{C}}^{2 + \alpha}\sqparen*{\nabla_{g} \fa}_{C^{0,\alpha}_{g}}\\ &\leq \cC\paren*{\frac{1}{C}\Abs*{\fa}_{C^{0}_{g}} + \paren*{\frac{1}{C}}^{2}\Abs*{d^{*}_{g}\fa}_{C^{0}_{g}} + \paren*{\frac{1}{C}}^{2+\alpha}\sqparen*{d^{*}_{g}\fa}_{C^{0,\alpha}_{g}} + \paren*{\frac{1}{C}}^{2}\Abs*{d^{+}_{g}\fa}_{C^{0}_{g}} + \paren*{\frac{1}{C}}^{2+\alpha}\sqparen*{d^{+}_{g}\fa}_{C^{0,\alpha}_{g}}   }
						\end{align*} on the \textit{same} metric balls $\paren*{B^{g}_{r_{1}}\paren*{p}, B^{g}_{r_{2}}\paren*{p}} = \paren*{B^{C^{2}g}_{Cr_{1}}\paren*{p}, B^{C^{2}g}_{Cr_{2}}\paren*{p}}$, but with different radii WRT the \textit{new} metric $C^{2}g$.
						
						Conversely, if we already had the standard Schauder estimate for $C^{2}g$, namely $\Abs*{\fa}_{C^{1,\alpha}_{C^{2}g}} \leq \cC\paren*{\Abs*{\fa}_{C^{0}_{C^{2}g}} + \Abs*{D_{C^{2}g}\fa}_{C^{0,\alpha}_{C^{2}g}}}$ on $\paren*{B^{C^{2}g}_{Cr_{1}}\paren*{p}, B^{C^{2}g}_{Cr_{2}}\paren*{p}}$, then unraveling it would yield the same estimate above on $\paren*{B^{g}_{r_{1}}\paren*{p}, B^{g}_{r_{2}}\paren*{p}} = \paren*{B^{C^{2}g}_{Cr_{1}}\paren*{p}, B^{C^{2}g}_{Cr_{2}}\paren*{p}}$.

					\end{enumerate}

				\end{prop}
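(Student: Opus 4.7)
The proposition collects nine scaling identities for tensor norms and elliptic operators under a conformal rescaling $g\mapsto C^{2}g$. My plan is to reduce everything to two primitive computations, namely (a) the pointwise behavior of the metric on $(0,m)$-tensors and (b) the behavior of the codifferential, and then assemble the remaining items by composition.

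First I would settle the pointwise fact (\ref{HKA scalingproperties pointwise tensor norm rescale}): the dual metric on $T^{*}M$ rescales as $\frac{1}{C^{2}}g^{*}$, so on $(0,m)$-tensors we get $|\cdot|_{C^{2}g}=\paren*{\frac{1}{C}}^{m}|\cdot|_{g}$; combined with the already-established $\nabla_{C^{2}g}=\nabla_{g}$ from Proposition \ref{scalingproperties}(\ref{scalingproperties covariant derivative rescale}), this gives the stated identity for $\nabla^{j}\fw$. For (\ref{HKA scalingproperties Hodge Laplacian rescale}) I would compute $d^{*}_{C^{2}g}$ on a $k$-form in dimension $n$ via $*_{C^{2}g}=C^{n-2k}*_{g}$ (which follows from $\alpha\wedge*_{g}\alpha=|\alpha|_{g}^{2}\,\mathrm{dV}_{g}$ applied to both metrics): the two Hodge stars contribute $C^{n-2k}$ and $C^{2k-n-2}$ respectively, producing a uniform $\frac{1}{C^{2}}$ on every degree, whence $\Delta_{C^{2}g}=\frac{1}{C^{2}}\Delta_{g}$. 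For (\ref{HKA scalingproperties Dirac rescale}) the critical input is conformal invariance of $*_{g}$ on middle-degree forms in dimension $4$: on $2$-forms $*_{C^{2}g}=*_{g}$, so $d^{+}_{C^{2}g}=d^{+}_{g}$ and only $d^{*}_{g}$ picks up the $\frac{1}{C^{2}}$ factor.

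With these in hand, items (\ref{HKA scalingproperties weighted Holder tensor seminorm rescale})--(\ref{HKA scalingproperties standard Holder tensor norm rescale}) are purely bookkeeping: the only additional input is that distances scale as $d_{C^{2}g}=C\, d_{g}$, so a Holder seminorm of exponent $\alpha$ picks up $\paren*{\frac{1}{C}}^{\alpha}$ from the denominator, while the numerator picks up $\paren*{\frac{1}{C}}^{k+m}$ from (\ref{HKA scalingproperties pointwise tensor norm rescale}); summing over $j\leq k$ and over the Holder seminorm gives (\ref{HKA scalingproperties standard Holder tensor norm rescale}), and inserting the weight factor $\min\set*{\rho(x),\rho(y)}^{-\delta+k+\alpha}$ (unchanged under rescaling, since $\rho$ is a fixed function) upgrades this to the weighted versions (\ref{HKA scalingproperties weighted Holder tensor seminorm rescale}) and (\ref{HKA scalingproperties weighted Holder tensor norm rescale}).

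Items (\ref{HKA scalingproperties Schauder Hodge Laplacian rescale}), (\ref{HKA scalingproperties Dirac norm rescale}), and (\ref{HKA scalingproperties Schauder Dirac rescale}) then fall out by direct substitution. For (\ref{HKA scalingproperties Schauder Hodge Laplacian rescale}) I would apply (\ref{HKA scalingproperties standard Holder tensor norm rescale}) (with $m=1$) to each summand in $\Abs*{\fa}_{C^{2,\alpha}_{C^{2}g}}$ on the left and to $\Abs*{\fa}_{C^{0}_{C^{2}g}}$ on the right, then use (\ref{HKA scalingproperties Hodge Laplacian rescale}) together with (\ref{HKA scalingproperties standard Holder tensor norm rescale}) (with $m=1$ again, since $\Delta_{g}\fa$ is a $1$-form) on $\Abs*{\Delta_{C^{2}g}\fa}_{C^{0,\alpha}_{C^{2}g}}$. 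For (\ref{HKA scalingproperties Dirac norm rescale}) I would expand $D_{C^{2}g}\fa=\paren*{\tfrac{1}{C^{2}}d^{*}_{g}\fa,\,d^{+}_{g}\fa}$ via (\ref{HKA scalingproperties Dirac rescale}), apply (\ref{HKA scalingproperties standard Holder tensor norm rescale}) with $m=0$ to the scalar piece and with $m=2$ to the $2$-form piece, and observe that the resulting powers of $\frac{1}{C}$ collapse to a common $\paren*{\frac{1}{C}}^{2}$ (for the $C^{0}$ pieces) and $\paren*{\frac{1}{C}}^{2+\alpha}$ (for the seminorm pieces), which is the content of the ``same scaling behavior as $\nabla_{g}\fa$'' heuristic. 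Finally, (\ref{HKA scalingproperties Schauder Dirac rescale}) is obtained by combining (\ref{HKA scalingproperties standard Holder tensor norm rescale}) on the left, $\Abs*{\fa}_{C^{0}_{C^{2}g}}=\frac{1}{C}\Abs*{\fa}_{C^{0}_{g}}$ on the right, and (\ref{HKA scalingproperties Dirac norm rescale}) on the Dirac term.

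Since every step is a direct computation with prescribed factors of $C$, there is no conceptual obstacle; the only real risk is bookkeeping. The main thing to be careful about is that the codomain of $D_{g}$ is the \emph{direct sum} $\Omega^{0}\oplus\Omega^{+}_{g}$, so two different values of $m$ enter (\ref{HKA scalingproperties standard Holder tensor norm rescale}) simultaneously in (\ref{HKA scalingproperties Dirac norm rescale}), and one must check that these conspire with the extra $\frac{1}{C^{2}}$ coming from (\ref{HKA scalingproperties Dirac rescale}) on the $d^{*}_{g}$ summand to yield matching powers of $\frac{1}{C}$ in both summands.
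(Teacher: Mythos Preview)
Your proposal is correct and matches the paper's own treatment: the paper simply states that the proposition ``follows from $d^{*}_{C^{2}g} = \frac{1}{C^{2}}d^{*}_{g}$, $*_{C^{2}g} = *_{g}$ on $2$-forms, and the conformal invariance $\Lambda_{C^{2}g}^{+} T^{*}M = \Lambda_{g}^{+}T^{*}M$ of self-dual $2$-forms,'' and your sketch supplies exactly these inputs together with the bookkeeping that assembles them into each item. Your derivation of the $\frac{1}{C^{2}}$ factor for $d^{*}_{g}$ via the explicit Hodge-star scaling $*_{C^{2}g}=C^{n-2k}*_{g}$ is a bit more explicit than the paper, but the substance is identical.
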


				Propositions \ref{preciseweightfunction} and \ref{preciseweightfunction EH almost constancy} still hold as these are just estimates on the weight functions $\rho_{\epsilon}$ and $\wtilde{\rho_{0}}$ on $\Km_{\epsilon}$ and $T^{*}S^{2}$, respectively.
				
				We therefore arrive at the analogue of the first key proposition, namely the local weighted Schauder estimate (Proposition \ref{localWSE}), but for $D_{g_{\bomega_{\epsilon}}}$: \begin{prop}[Local Weighted Schauder Estimate]\label{HKA localWSE} \textbf{Let $\epsilon > 0$ be from Data \ref{constant epsilon 1-16}.} Then $\forall \fa \in C^{1,\alpha}_{\delta, g_{\bomega_{\epsilon}}}\paren*{T^{*}\Km_{\epsilon}}$ and $\forall p \in \Km_{\epsilon}$, there exists an $r = r_{p} > 0$ (hence may depend on $p$) such that: 
					\begin{align*}
						\Abs*{\fa}_{C^{1,\alpha}_{\delta, g_{\bomega_{\epsilon}}} \paren*{T^{*}B^{g_{\bomega_{\epsilon}}}_{r_{p}}\paren*{p}}} &\leq \cC_{11}\paren*{\Abs*{\fa}_{C^{0}_{\delta, g_{\bomega_{\epsilon}}} \paren*{T^{*}B^{g_{\bomega_{\epsilon}}}_{2r_{p}}\paren*{p}}} + \Abs*{D_{g_{\bomega_{\epsilon}}}\fa}_{C^{0,\alpha}_{\delta - 1, g_{\bomega_{\epsilon}}} \paren*{\bbR\oplus \Lambda_{g_{\bomega_{\epsilon}}}^{+}T^{*}B^{g_{\bomega_{\epsilon}}}_{2r_{p}}\paren*{p}}}}\\
						&\coloneq \cC_{11}\paren*{\Abs*{\rho_{\epsilon}^{-\delta}\fa}_{C^{0}_{g_{\bomega_{\epsilon}}} \paren*{T^{*}B^{g_{\bomega_{\epsilon}}}_{2r_{p}}\paren*{p}}} + \Abs*{D_{g_{\bomega_{\epsilon}}}\fa}_{C^{0,\alpha}_{\delta - 1, g_{\bomega_{\epsilon}}} \paren*{\bbR\oplus \Lambda_{g_{\bomega_{\epsilon}}}^{+}T^{*}B^{g_{\bomega_{\epsilon}}}_{2r_{p}}\paren*{p}}}}
					\end{align*}
					for $\cC_{11} > 0$ \textbf{independent of $\epsilon$ and $p\in \Km_{\epsilon}$.}
				\end{prop}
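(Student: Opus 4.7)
The plan is to mirror, step for step, the proof of the Calabi-Yau local weighted Schauder estimate (Proposition \ref{localWSE}), replacing the scalar Laplacian $\Delta_{g_{\epsilon}}$ by the Dirac operator $D_{g_{\bomega_{\epsilon}}} = d^{*}_{g_{\bomega_{\epsilon}}} \oplus d^{+}_{g_{\bomega_{\epsilon}}}$ acting on $1$-forms and using the Dirac-version scaling law Proposition \ref{HKA scalingproperties}(\ref{HKA scalingproperties Schauder Dirac rescale}) in place of Proposition \ref{scalingproperties}(\ref{scalingproperties Schauder Laplacian rescale}). First I would fix $p \in \Km_{\epsilon}$ and split into the same three cases that were used before: $p \in \set*{\abs*{\cdot}^{g_{\bomega_{0}}}_{\pi^{-1}\paren*{\cS}} < 2\epsilon}$ (deep bubble region), $p \in \set*{\epsilon < \abs*{\cdot}^{g_{\bomega_{0}}}_{\pi^{-1}\paren*{\cS}} < \frac{1}{8}}$ (transition/intermediate region), and $p \in \set*{\frac{3}{25} < \abs*{\cdot}^{g_{\bomega_{0}}}_{\pi^{-1}\paren*{\cS}}}$ (flat region). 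These cover $\Km_{\epsilon}$ under Data \ref{constant epsilon 1-16}, and on each I will prove a local weighted Schauder estimate with a constant independent of both $p$ and $\epsilon$; taking the max of the three resulting constants then yields $\cC_{11}$.

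In the deep bubble case, I would use Proposition \ref{HKA approximatemetricproperties} to note $g_{\bomega_{\epsilon}} = g_{\bomega_{EH,\epsilon^{2}}}$, then conformally rescale $g_{\bomega_{\epsilon}} \mapsto \frac{1}{\epsilon^{2}} g_{\bomega_{\epsilon}}$; Proposition \ref{HKA EHproperties} identifies the resulting Riemannian submanifold with $\paren*{\set*{\abs*{\cdot}^{g_{\bomega_{0}}}_{S^{2}} < 2}, g_{\bomega_{EH,1}}}$ via $R_{\epsilon}$, which sits inside a Riemannian $4$-manifold of bounded geometry (the full Eguchi-Hanson space). Since the Dirac operator $D_{g}$ is a first-order elliptic operator (the ellipticity was already noted in Section \ref{HKA prelims}) and $\paren*{T^{*}S^{2},g_{\bomega_{EH,1}}}$ has bounded geometry, I get a standard interior Schauder estimate $\Abs*{k\fa}_{C^{1,\alpha}_{g_{\bomega_{EH,1}}}} \leq \cC_{0}\paren*{\Abs*{k\fa}_{C^{0}_{g_{\bomega_{EH,1}}}} + \Abs*{kD_{g_{\bomega_{EH,1}}}\fa}_{C^{0,\alpha}_{g_{\bomega_{EH,1}}}}}$ on concentric balls $B^{g_{\bomega_{EH,1}}}_{r_{1}}\paren*{p}$, $B^{g_{\bomega_{EH,1}}}_{r_{2}}\paren*{p}$ with $\cC_{0}$ independent of $p$. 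I then pull back via $R_{\epsilon}$, apply Proposition \ref{HKA scalingproperties}(\ref{HKA scalingproperties Schauder Dirac rescale}) with $C = \frac{1}{\epsilon}$ and $g = g_{\bomega_{\epsilon}}$, set $k = \epsilon^{-\delta}$, and absorb the harmless factors coming from the pointwise uniform bound $\epsilon \leq \rho_{\epsilon} \leq 2\epsilon$ of Proposition \ref{preciseweightfunction}(\ref{preciseweightfunction first region}) into the constant, obtaining the desired weighted estimate on $\paren*{B^{g_{\bomega_{\epsilon}}}_{\epsilon R_{1}}\paren*{p}, B^{g_{\bomega_{\epsilon}}}_{\epsilon R_{2}}\paren*{p}}$.

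For the intermediate case, the main subtlety is that the weight function $\rho_{\epsilon}$ is neither essentially constant nor comparable to a single fixed scale on a macroscopic neighborhood. I would rescale $g_{\bomega_{\epsilon}} \mapsto \frac{1}{\epsilon^{2}}g_{\bomega_{\epsilon}}$ to land in $\paren*{\set*{1 < \abs*{\cdot}^{g_{\bomega_{0}}}_{S^{2}} < \frac{1}{8\epsilon}}, g_{\lwhat{\bomega_{EH-0, \epsilon}}}}$ inside the complete bounded-geometry Riemannian manifold $\paren*{T^{*}S^{2}, g_{\lwhat{\bomega_{EH-0, \epsilon}}}}$ from Proposition \ref{HKA preglueEH BG interpolation!}, note that the rescaled weight is $\wtilde{\rho_{0}}$ on the relevant region, then do a \emph{second} conformal rescaling $g_{\lwhat{\bomega_{EH-0, \epsilon}}} \mapsto \frac{1}{R^{2}}g_{\lwhat{\bomega_{EH-0, \epsilon}}}$ with $R \coloneq \wtilde{\rho_{0}}(p)$ to localize on a ball of unit radius in the twice-rescaled metric, where Proposition \ref{preciseweightfunction EH almost constancy} ensures $\frac{999}{1000} \leq \frac{\wtilde{\rho_{0}}(q)}{R} \leq \frac{1001}{1000}$ throughout. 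Applying the standard Dirac Schauder estimate (with $p$-independent constant by bounded geometry) and then reversing both conformal rescalings using Proposition \ref{HKA scalingproperties}(\ref{HKA scalingproperties Schauder Dirac rescale}) and (\ref{HKA scalingproperties weighted Holder tensor norm rescale}), together with choosing $k = \epsilon^{\delta}R^{-\delta}$ and using $\epsilon^{2} < 1$ to absorb the factor appearing in $\Delta$-style scaling arguments, yields the weighted estimate. The third case is the flat region, where $g_{\bomega_{\epsilon}} = g_{\bomega_{0}}$ on the identified region in $\bbT^{4} - \cS$ and $\rho_{\epsilon}$ is uniformly comparable to a constant by Proposition \ref{preciseweightfunction}(\ref{preciseweightfunction past 3/25 region}); the weighted and unweighted Holder norms are then equivalent, so standard Dirac Schauder on flat tori gives the bound directly without any rescaling.

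The main technical obstacle I foresee is precisely the intermediate case's double conformal rescaling: it is not enough to rescale by $\frac{1}{\epsilon^{2}}$ because the weight $\wtilde{\rho_{0}}$ still ranges over the macroscopic interval $[1, \frac{1}{8\epsilon}]$, and the additional rescaling by $R^{-2}$ is what trades ``macroscopic variation of the weight'' for ``pointwise constancy of the weight at scale $1$'', where standard Schauder applies with a universal constant. Getting the bookkeeping of exponents of $\frac{1}{C}$ right after two rescalings (and making sure the Dirac scaling law $D_{C^{2}g} = \paren*{\frac{1}{C^{2}}d_{g}^{*}} \oplus d_{g}^{+}$ is applied consistently on both the $\Omega^{0}$- and $\Omega^{+}_{g}$-components of the codomain) is the most error-prone step, but Proposition \ref{HKA scalingproperties}(\ref{HKA scalingproperties Schauder Dirac rescale}) is tailor-made to handle it.
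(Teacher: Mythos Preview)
Your approach is identical to the paper's: the same three-region decomposition, the same conformal rescalings in each region, the same appeal to bounded geometry for the standard Dirac Schauder estimate, and the same use of Propositions \ref{preciseweightfunction} and \ref{preciseweightfunction EH almost constancy} to absorb the weight into the constant. The one slip is in the choice of the scalar multiplier $k$: because $\fa$ is a $(0,1)$-tensor rather than a function, the pointwise norm picks up an extra factor of $\frac{1}{C}$ under $g \mapsto C^{2}g$ (Proposition \ref{HKA scalingproperties}(\ref{HKA scalingproperties pointwise tensor norm rescale}) with $m=1$), so in Case 1 you need $k = \epsilon^{-\delta-1}$ (not $\epsilon^{-\delta}$) and in Case 2 you need $k = \epsilon^{\delta+1}R^{-\delta-1}$ (not $\epsilon^{\delta}R^{-\delta}$) to match the weighted norm; this is exactly the bookkeeping you flagged as error-prone, and the paper's proof confirms these corrected exponents.
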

				
				\begin{proof}

					The proof strategy is the exact same as the proof of Proposition \ref{localWSE}, but with some key differences due to working with $1$-forms and with $D_{g}$ instead of the scalar Laplacian $\Delta_{g}$.
					
					In more detail, let $p \in \Km_{\epsilon}$.
					
					We consider 3 cases: $p \in \set*{\abs*{\cdot}^{g_{\bomega_{0}}}_{\pi^{-1}\paren*{\cS}} < 2\epsilon}$, $p \in \set*{\epsilon < \abs*{\cdot}^{g_{\bomega_{0}}}_{\pi^{-1}\paren*{\cS}} < \frac{1}{8}}$, and $p \in \set*{\frac{3}{25} < \abs*{\cdot}^{g_{\bomega_{0}}}_{\pi^{-1}\paren*{\cS}}}$. Clearly these 3 regions cover all of $\Km_{\epsilon}$.
					
					\begin{enumerate}
						\item \textbf{Case 1; $p \in \set*{\abs*{\cdot}^{g_{\bomega_{0}}}_{\pi^{-1}\paren*{\cS}} < 2\epsilon}$:} WLOG let us focus on only 1 of the 16 connected components of $\set*{\abs*{\cdot}^{g_{\bomega_{0}}}_{\pi^{-1}\paren*{\cS}} < 2\epsilon}$, hence suppose that $\pi^{-1}\paren*{\cS} = E$ consists of only a single exceptional $E \cong S^{2}$. By construction (Proposition \ref{HKA approximatemetricproperties} as well as Data \ref{constant epsilon 1-16} telling us that $2\epsilon < \epsilon^{\frac{1}{2}}$), on this region $g_{\bomega_{\epsilon}} = g_{\bomega_{EH,\epsilon^{2}}}$. Recalling that $R_{\epsilon}^{*}\paren*{\frac{1}{\epsilon^{2}}g_{\bomega_{EH,\epsilon^{2}}}} = g_{\bomega_{EH,1}}$ from Proposition \ref{HKA EHproperties} as well as the identification $\set*{\abs*{\cdot}^{g_{\bomega_{0}}}_{\pi^{-1}\paren*{\cS}} \leq 3\epsilon^{\frac{1}{2}}} \sim \set*{\abs*{\cdot}^{g_{\bomega_{0}}}_{S^{2}} \leq 3\epsilon^{\frac{1}{2}}}$ from the underlying construction of $\Km_{\epsilon}$, we have that upon doing a conformal rescaling $g_{\bomega_{\epsilon}} \mapsto \frac{1}{\epsilon^{2}}g_{\bomega_{\epsilon}}$ that $\paren*{\set*{\abs*{\cdot}^{g_{\bomega_{0}}}_{\pi^{-1}\paren*{\cS}} < 2\epsilon}, \frac{1}{\epsilon^{2}} g_{\bomega_{\epsilon}}}$ is isometric under $R_{\epsilon}$ to $\paren*{\set*{\abs*{\cdot}^{g_{\bomega_{0}}}_{S^{2}} < 2}, g_{\bomega_{EH,1}}} \subset \paren*{T^{*}S^{2}, g_{\bomega_{EH,1}}}$, which is a precompact region of the Eguchi-Hanson space $\paren*{T^{*}S^{2}, g_{\bomega_{EH,1}}}$. Since $\paren*{T^{*}S^{2}, g_{\bomega_{EH,1}}}$ has bounded geometry, we thus have, for $r_{2} > r_{1} > 0$ small enough so that $B^{g_{\bomega_{EH,1}}}_{r_{2}}\paren*{p} \subset \paren*{\set*{\abs*{\cdot}^{g_{\bomega_{0}}}_{S^{2}} < 2}, g_{\bomega_{EH,1}}}$ (hence both $r_{1}, r_{2}$ may depend on $p$) where we abuse notation and denote the point where $p$ lands in $\set*{\abs*{\cdot}^{g_{\bomega_{0}}}_{S^{2}} < 2}$ as $p$, the standard Schauder estimate $\Abs*{k\fa}_{C^{1,\alpha}_{g_{\bomega_{EH,1}}}} \leq \cC_{11.1}\paren*{\Abs*{k\fa}_{C^{0}_{g_{\bomega_{EH,1}}}} + \Abs*{kD_{g_{\bomega_{EH,1}}}\fa}_{C^{0,\alpha}_{g_{\bomega_{EH,1}} } }}$ on the metric balls $\paren*{B^{g_{\bomega_{EH,1}}}_{r_{1}}\paren*{p}, B^{g_{\bomega_{EH,1}}}_{r_{2}}\paren*{p}}$ (where we have multiplied our $1$-form by a positive scalar $k > 0$, to be determined), \textit{where $\cC_{11.1}> 0$ is independent of $p$}.
						
						Transferring everything back to $\paren*{\set*{\abs*{\cdot}^{g_{\bomega_{0}}}_{\pi^{-1}\paren*{\cS}} < 2\epsilon}, \frac{1}{\epsilon^{2}} g_{\bomega_{\epsilon}}}$ via applying $R_{\epsilon}$ to $\paren*{\set*{\abs*{\cdot}^{g_{\bomega_{0}}}_{S^{2}} < 2}, g_{\bomega_{EH,1}}}$ and using $\frac{1}{\epsilon^{2}}g_{\bomega_{EH,\epsilon^{2}}} = R_{\frac{1}{\epsilon}}^{*}\paren*{g_{\bomega_{EH,1}}}$ and the identification $\set*{\abs*{\cdot}^{g_{\bomega_{0}}}_{\pi^{-1}\paren*{\cS}} \leq 3\epsilon^{\frac{1}{2}}} \sim \set*{\abs*{\cdot}^{g_{\bomega_{0}}}_{S^{2}} \leq 3\epsilon^{\frac{1}{2}}}$, we have that our standard Schauder estimate is now $\Abs*{k\fa}_{C^{1,\alpha}_{\frac{1}{\epsilon^{2}}g_{\bomega_{\epsilon}}}} \leq \cC_{11.1}\paren*{\Abs*{k\fa}_{C^{0}_{\frac{1}{\epsilon^{2}}g_{\bomega_{\epsilon}}}} + \Abs*{kD_{\frac{1}{\epsilon^{2}}g_{\bomega_{\epsilon}}}\fa}_{C^{0,\alpha}_{\frac{1}{\epsilon^{2}}g_{\bomega_{\epsilon}} } }}$ on the metric balls $\paren*{B^{\frac{1}{\epsilon^{2}}g_{\bomega_{\epsilon}}}_{R_{1}}\paren*{p}, B^{\frac{1}{\epsilon^{2}}g_{\bomega_{\epsilon}}}_{R_{2}}\paren*{p}}$ for some $0 < R_{1} < R_{2}$ (which may depend on $p$) such that $B^{\frac{1}{\epsilon^{2}}g_{\bomega_{\epsilon}}}_{R_{2}}\paren*{p} \subset \set*{\abs*{\cdot}^{g_{\bomega_{0}}}_{\pi^{-1}\paren*{\cS}} < 2\epsilon}$ and $B^{\frac{1}{\epsilon^{2}}g_{\bomega_{\epsilon}}}_{R_{2}}\paren*{p} \subset R_{\epsilon} \paren*{B^{g_{\bomega_{EH,1}}}_{r_{2}}\paren*{p}} $ (and vice versa for $R_{1}$). Now apply Proposition \ref{HKA scalingproperties}(\ref{HKA scalingproperties Schauder Dirac rescale}) with $C = \frac{1}{\epsilon}$ and $g = g_{\bomega_{\epsilon}}$, set $k = \epsilon^{-\delta-1}$, and upon noting that on $\set*{\abs*{\cdot}^{g_{\bomega_{0}}}_{\pi^{-1}\paren*{\cS}} < 2\epsilon}$ we have the \textit{uniform} estimate $\epsilon \leq \rho_{\epsilon} \leq 2\epsilon$ (Proposition \ref{preciseweightfunction}(\ref{preciseweightfunction first region})), we get upon recalling the definition of the weighted Holder norms our sought after \textit{weighted} Schauder estimate $\Abs*{\fa}_{C^{1,\alpha}_{\delta, g_{\bomega_{\epsilon}}}} \leq \wtilde{\cC_{11.1}}\paren*{\Abs*{\fa}_{C^{0}_{\delta, g_{\bomega_{\epsilon}}}} + \Abs*{D_{g_{\bomega_{\epsilon}}}\fa}_{C^{0,\alpha}_{\delta - 1, g_{\bomega_{\epsilon}} } }}$ on the metric balls $\paren*{B^{\frac{1}{\epsilon^{2}}g_{\bomega_{\epsilon}}}_{R_{1}}\paren*{p}, B^{\frac{1}{\epsilon^{2}}g_{\bomega_{\epsilon}}}_{R_{2}}\paren*{p}} = \paren*{B^{g_{\bomega_{\epsilon}}}_{\epsilon R_{1}}\paren*{p}, B^{g_{\bomega_{\epsilon}}}_{\epsilon R_{2}}\paren*{p}}$ upon setting $r_{p} \coloneq \epsilon R_{1}$ and $R_{2} \coloneq 2 R_{1}$. Note that $\wtilde{\cC_{11.1}} > 0$ is still independent of $p$ \textit{as well as $\epsilon > 0$} since $\epsilon \leq \rho_{\epsilon} \leq 2\epsilon$ tells us that we may replace $\rho_{\epsilon}$ with $\epsilon$, up to introducing harmless factors of $2$ and $\frac{1}{2}$ into $\cC_{11.1}$.

						\item \textbf{Case 2; $p \in \set*{\epsilon < \abs*{\cdot}^{g_{\bomega_{0}}}_{\pi^{-1}\paren*{\cS}} < \frac{1}{8}}$:} Again WLOG let us focus only on 1 of the 16 connected components of $\set*{\epsilon < \abs*{\cdot}^{g_{\bomega_{0}}}_{\pi^{-1}\paren*{\cS}} < \frac{1}{8}}$. Hence $g_{\bomega_{\epsilon}}$ is an interpolation of $g_{\bomega_{EH,\epsilon^{2}}}$ and $g_{\bomega_{0}}$ on $\set*{\epsilon < \abs*{\cdot}^{g_{\bomega_{0}}}_{\pi^{-1}\paren*{\cS}} < \frac{1}{8}}$, namely $g_{\bomega_{\epsilon}} = \begin{cases}
							g_{\wtilde{\bomega_{EH, \epsilon}}} & \text{on } \set*{\abs*{\cdot}^{g_{\bomega_{0}}}_{\pi^{-1}\paren*{\cS}} \leq 3\epsilon^{\frac{1}{2}}}\\
							g_{\bomega_{0}} & \text{on } \set*{ 3\epsilon^{\frac{1}{2}} \leq \abs*{\cdot}^{g_{\bomega_{0}}}_{\pi^{-1}\paren*{\cS}}} 
						\end{cases}	$ and where $g_{\wtilde{\bomega_{EH, \epsilon}}} = \begin{cases}
							g_{\bomega_{EH,\epsilon^{2}}} & \text{on } \set*{\abs*{\cdot}^{g_{\bomega_{0}}}_{\pi^{-1}\paren*{\cS}} \leq \epsilon^{\frac{1}{2}}} \\
							g_{\bomega_{0}} & \text{on }\set*{2\epsilon^{\frac{1}{2}} \leq \abs*{\cdot}^{g_{\bomega_{0}}}_{\pi^{-1}\paren*{\cS}}}
						\end{cases}$.
						
						Now by \textit{Remark} \ref{identification}, we may identify $\set*{\epsilon < \abs*{\cdot}^{g_{\bomega_{0}}}_{\pi^{-1}\paren*{\cS}} < \frac{1}{8}}$ with $\set*{\epsilon < \abs*{\cdot}^{g_{\bomega_{0}}}_{S^{2}} < \frac{1}{8}}\subset T^{*}S^{2}$, and upon recalling $g_{\wtilde{\bomega_{EH, \epsilon}}} = \epsilon^{2} R^{*}_{\frac{1}{\epsilon}} \paren*{g_{\lwhat{\bomega_{EH-0, \epsilon}}}} \Leftrightarrow R_{\epsilon}^{*}\paren*{\frac{1}{\epsilon^{2}}g_{\wtilde{\bomega_{EH, \epsilon}}}} = g_{\lwhat{\bomega_{EH-0, \epsilon}}}$ and $g_{\bomega_{0}} = \epsilon^{2}R^{*}_{\frac{1}{\epsilon}} \paren*{g_{\bomega_{0}}}\Leftrightarrow R^{*}_{\epsilon}\paren*{\frac{1}{\epsilon^{2}}g_{\bomega_{0}}} = g_{\bomega_{0}}$ from Proposition \ref{HKA preglueEH} directly implying by construction of $g_{\lwhat{\bomega_{EH-0, \epsilon}}}$ from Proposition \ref{HKA preglueEH BG interpolation!} (as well as by construction of $g_{\bomega_{\epsilon}}$) that $R^{*}_{\epsilon}\paren*{\frac{1}{\epsilon^{2}}g_{\bomega_{\epsilon}}} = g_{\lwhat{\bomega_{EH-0, \epsilon}}}$, we have that upon doing a conformal rescaling $g_{\bomega_{\epsilon}} \mapsto \frac{1}{\epsilon^{2}}g_{\bomega_{\epsilon}}$ that $\paren*{\set*{\epsilon < \abs*{\cdot}^{g_{\bomega_{0}}}_{\pi^{-1}\paren*{\cS}} < \frac{1}{8}}, \frac{1}{\epsilon^{2}} g_{\bomega_{\epsilon}}}$ is isometric under $R_{\epsilon}$ to $\paren*{\set*{1 < \abs*{\cdot}^{g_{\bomega_{0}}}_{S^{2}} < \frac{1}{8\epsilon}}, g_{\lwhat{\bomega_{EH-0, \epsilon}}}} \subset \paren*{T^{*}S^{2}, g_{\lwhat{\bomega_{EH-0, \epsilon}}}}$, which is a precompact region of the Riemannian manifold $\paren*{T^{*}S^{2},g_{\lwhat{\bomega_{EH-0, \epsilon}}}}$. Now by Proposition \ref{HKA preglueEH BG interpolation!}, since $g_{\lwhat{\bomega_{EH-0, \epsilon}}} = \begin{cases}
							g_{\bomega_{EH,1}} & \text{on } \set*{\abs*{\cdot}^{g_{\bomega_{0}}}_{S^{2}} \leq \frac{1}{\epsilon^{\frac{1}{2}}} } \\
							g_{\bomega_{0}} & \text{on }\set*{\frac{2}{\epsilon^{\frac{1}{2}}} \leq \abs*{\cdot}^{g_{\bomega_{0}}}_{S^{2}}}
						\end{cases}$ aka $g_{\lwhat{\bomega_{EH-0, \epsilon}}}$ is an interpolation of $g_{\bomega_{EH,1}}$ on a large compact region and with the flat Euclidean metric $g_{\bomega_{0}}$ on the complement of a (slightly) larger (pre)compact region, it is immediate that the Riemannian manifold $\paren*{T^{*}S^{2},g_{\lwhat{\bomega_{EH-0, \epsilon}}}}$ is complete with bounded geometry.
						
						Next, note that upon transferring over the weight function $\rho_{\epsilon}$ via the above process, we end up getting $R^{*}_{\epsilon}\paren*{\rho_{\epsilon}}$, and since this satisfies $R^{*}_{\epsilon}\paren*{\rho_{\epsilon}} = \begin{cases}
							\epsilon\abs*{\cdot}^{g_{\bomega_{0}}}_{S^{2}} & \text{ on }\set*{2 \leq \abs*{\cdot}^{g_{\bomega_{0}}}_{S^{2}}< \frac{1}{8\epsilon}}\\
							\epsilon & \text{ on }\set*{\abs*{\cdot}^{g_{\bomega_{0}}}_{S^{2}} \leq 1}
						\end{cases}$, we immediately see that $R^{*}_{\epsilon}\paren*{\frac{1}{\epsilon}\rho_{\epsilon}} = \wtilde{\rho_{0}}$ on $\set*{1 < \abs*{\cdot}^{g_{\bomega_{0}}}_{S^{2}} < \frac{1}{8\epsilon}}$, the weight function used to defined the weighted Holder spaces on $T^{*}S^{2}$.
						
						Now recall that our point $p$ is now $p \in \set*{1 < \abs*{\cdot}^{g_{\bomega_{0}}}_{S^{2}} < \frac{1}{8\epsilon}}$, where we abuse notation and denote the point where $p$ lands in $\set*{1 < \abs*{\cdot}^{g_{\bomega_{0}}}_{S^{2}} < \frac{1}{8\epsilon}}$ as $p$. Let $\delta_{2,p}> 0$ be the same constant as in Proposition \ref{preciseweightfunction EH almost constancy} but shrunken so that we have that $\set*{\abs*{p}^{g_{\bomega_{0}}}_{S^{2}} - \delta_{2,p}\leq \abs*{\cdot}^{g_{\bomega_{0}}}_{S^{2}} \leq \abs*{p}^{g_{\bomega_{0}}}_{S^{2}} + \delta_{2,p}} \subset \set*{1 < \abs*{\cdot}^{g_{\bomega_{0}}}_{S^{2}} < \frac{1}{8\epsilon}}$ by openness. Now let $\sigma \in \paren*{0,1}$ be chosen such that, upon letting $R \coloneq \wtilde{\rho_{0}}(p)$ for notation's sake, we have that $B_{2\sigma R}^{g_{\lwhat{\bomega_{EH-0, \epsilon}}}}(p) \subset \set*{\abs*{p}^{g_{\bomega_{0}}}_{S^{2}} - \delta_{2,p}\leq \abs*{\cdot}^{g_{\bomega_{0}}}_{S^{2}} \leq \abs*{p}^{g_{\bomega_{0}}}_{S^{2}} + \delta_{2,p}} \subset \set*{1 < \abs*{\cdot}^{g_{\bomega_{0}}}_{S^{2}} < \frac{1}{8\epsilon}}$. Therefore, by Proposition \ref{preciseweightfunction EH almost constancy}, we have that $\frac{999}{1000} \leq \frac{\wtilde{\rho_{0}}(q)}{\wtilde{\rho_{0}}(p)} = \frac{\wtilde{\rho_{0}}(q)}{R} \leq \frac{1001}{1000}$ holds $\forall q \in B_{2\sigma R}^{g_{\lwhat{\bomega_{EH-0, \epsilon}}}}(p)$. 
						
						Now because $\paren*{T^{*}S^{2},g_{\lwhat{\bomega_{EH-0, \epsilon}}}}$ has bounded geometry and we're on a precompact region $\paren*{\set*{1 < \abs*{\cdot}^{g_{\bomega_{0}}}_{S^{2}} < \frac{1}{8\epsilon}}, g_{\lwhat{\bomega_{EH-0, \epsilon}}}}$, we have the standard Schauder estimate $\Abs*{k\fa}_{C^{1,\alpha}_{g_{\lwhat{\bomega_{EH-0, \epsilon}}}}} \leq \cK \paren*{ \Abs*{k\fa}_{C^{0}_{g_{\lwhat{\bomega_{EH-0, \epsilon}}}}} + \Abs*{k D_{g_{\lwhat{\bomega_{EH-0, \epsilon}}}}\fa}_{C^{0,\alpha}_{g_{\lwhat{\bomega_{EH-0, \epsilon}}}}} }$ on the metric balls $\paren*{B^{g_{\lwhat{\bomega_{EH-0, \epsilon}}}}_{\sigma R}(p), B^{g_{\lwhat{\bomega_{EH-0, \epsilon}}}}_{2\sigma R}(p)}$ (where we have multiplied our function by a positive scalar $k > 0$, to be determined), \textit{where $\cK> 0$ is independent of $p$}.
						
						Performing a conformal rescaling $g_{\lwhat{\bomega_{EH-0, \epsilon}}} \mapsto \frac{1}{R^{2}}g_{\lwhat{\bomega_{EH-0, \epsilon}}}$ and then applying Proposition \ref{HKA scalingproperties}(\ref{HKA scalingproperties Schauder Dirac rescale}) with $C = \frac{1}{R}$ and $g = g_{\lwhat{\bomega_{EH-0, \epsilon}}}$ gives us \begin{align*}
							R\Abs*{k\fa}_{C^{0}_{g_{\lwhat{\bomega_{EH-0, \epsilon}}}}} &+ R^{2} \Abs*{\nabla_{g_{\lwhat{\bomega_{EH-0, \epsilon}}}}k\fa}_{C^{0}_{g_{\lwhat{\bomega_{EH-0, \epsilon}}}}} + R^{2 + \alpha}\sqparen*{\nabla_{g_{\lwhat{\bomega_{EH-0, \epsilon}}}} k\fa}_{C^{0,\alpha}_{g_{\lwhat{\bomega_{EH-0, \epsilon}}}}}\\ &\leq \cK\Biggl( R\Abs*{k\fa}_{C^{0}_{g_{\lwhat{\bomega_{EH-0, \epsilon}}}}} + R^{2}\Abs*{d^{*}_{g_{\lwhat{\bomega_{EH-0, \epsilon}}}}k \fa}_{C^{0}_{g_{\lwhat{\bomega_{EH-0, \epsilon}}}}} + R^{2+\alpha}\sqparen*{d^{*}_{g_{\lwhat{\bomega_{EH-0, \epsilon}}}}k \fa}_{C^{0,\alpha}_{g_{\lwhat{\bomega_{EH-0, \epsilon}}}}}\\
							&\qquad\qquad\qquad\qquad\qquad\qquad \ +R^{2}\Abs*{d^{+}_{g_{\lwhat{\bomega_{EH-0, \epsilon}}}}k \fa}_{C^{0}_{g_{\lwhat{\bomega_{EH-0, \epsilon}}}}} + R^{2+\alpha}\sqparen*{d^{+}_{g_{\lwhat{\bomega_{EH-0, \epsilon}}}}k \fa}_{C^{0,\alpha}_{g_{\lwhat{\bomega_{EH-0, \epsilon}}}}}  \Biggr)
						\end{align*} on the metric balls $\paren*{B^{g_{\lwhat{\bomega_{EH-0, \epsilon}}}}_{\sigma R}(p), B^{g_{\lwhat{\bomega_{EH-0, \epsilon}}}}_{2\sigma R}(p)} = \paren*{B^{\frac{1}{R^{2}}g_{\lwhat{\bomega_{EH-0, \epsilon}}}}_{\sigma }(p), B^{\frac{1}{R^{2}}g_{\lwhat{\bomega_{EH-0, \epsilon}}}}_{2\sigma }(p)}$.

						Now recalling that $\frac{999}{1000} \leq \frac{\wtilde{\rho_{0}}(q)}{R} \leq \frac{1001}{1000}$ holds $\forall q \in B_{2\sigma R}^{g_{\lwhat{\bomega_{EH-0, \epsilon}}}}(p)$ and hence $\forall q \in B_{\sigma R}^{g_{\lwhat{\bomega_{EH-0, \epsilon}}}}(p)$ as well, we get upon recalling the definition of the weighted Holder norms the following weighted Schauder estimate $\Abs*{R^{\delta+1}k\fa}_{C^{1,\alpha}_{\delta, g_{\lwhat{\bomega_{EH-0, \epsilon}}}}} \leq \wtilde{\cC_{11.2}} \paren*{ \Abs*{R^{\delta+1}k\fa}_{C^{0}_{\delta, g_{\lwhat{\bomega_{EH-0, \epsilon}}}}} + \Abs*{R^{\delta+1}k D_{g_{\lwhat{\bomega_{EH-0, \epsilon}}}}\fa}_{C^{0,\alpha}_{\delta-1, g_{\lwhat{\bomega_{EH-0, \epsilon}}}}} }$ on the metric balls $\paren*{B^{g_{\lwhat{\bomega_{EH-0, \epsilon}}}}_{\sigma R}(p), B^{g_{\lwhat{\bomega_{EH-0, \epsilon}}}}_{2\sigma R}(p)} = \paren*{B^{\frac{1}{R^{2}}g_{\lwhat{\bomega_{EH-0, \epsilon}}}}_{\sigma }(p), B^{\frac{1}{R^{2}}g_{\lwhat{\bomega_{EH-0, \epsilon}}}}_{2\sigma }(p)}$, where the weight function here is $\wtilde{\rho_{0}}$. Similarly as in the first case, $\wtilde{\cC_{11.2}}> 0$ is still independent of all the parameters involved because we've only modified $\cK$ by factors of $\frac{999}{1000}$ and $\frac{1001}{1000}$ since we've applied the uniform estimate $\frac{999}{1000} \leq \frac{\wtilde{\rho_{0}}(q)}{R} \leq \frac{1001}{1000}$ on those metric balls.

						Now let us transfer everything back to $\paren*{\set*{\epsilon < \abs*{\cdot}^{g_{\bomega_{0}}}_{\pi^{-1}\paren*{\cS}} < \frac{1}{8}}, \frac{1}{\epsilon^{2}} g_{\bomega_{\epsilon}}}$ via applying $R_{\epsilon}$ to $\paren*{\set*{1 < \abs*{\cdot}^{g_{\bomega_{0}}}_{S^{2}} < \frac{1}{8\epsilon}}, g_{\lwhat{\bomega_{EH-0, \epsilon}}}}$ and using the identification $\set*{\epsilon < \abs*{\cdot}^{g_{\bomega_{0}}}_{\pi^{-1}\paren*{\cS}} < \frac{1}{8}}\sim \set*{\epsilon < \abs*{\cdot}^{g_{\bomega_{0}}}_{S^{2}} < \frac{1}{8}}\subset T^{*}S^{2}$, as well as using the relation $\frac{1}{\epsilon^{2}}g_{\bomega_{\epsilon}} = R^{*}_{\frac{1}{\epsilon}} \paren*{g_{\lwhat{\bomega_{EH-0, \epsilon}}}}$ and the relation $\frac{1}{\epsilon}\rho_{\epsilon} = R^{*}_{\frac{1}{\epsilon}}\paren*{\wtilde{\rho_{0}}}$. We get that our weighted Schauder estimate has now become $\Abs*{R^{\delta+1}k\fa}_{C^{1,\alpha}_{\delta, \frac{1}{\epsilon^{2}}g_{\bomega_{\epsilon}}}} \leq \wtilde{\cC_{11.2}} \paren*{ \Abs*{R^{\delta+1}k\fa}_{C^{0}_{\delta, \frac{1}{\epsilon^{2}}g_{\bomega_{\epsilon}}}} + \Abs*{R^{\delta+1}k D_{\frac{1}{\epsilon^{2}}g_{\bomega_{\epsilon}}}\fa}_{C^{0,\alpha}_{\delta-1, \frac{1}{\epsilon^{2}}g_{\bomega_{\epsilon}} }} }$ on the metric balls $\paren*{B^{\frac{1}{\epsilon^{2}}g_{\bomega_{\epsilon}}}_{R_{1}}(p), B^{\frac{1}{\epsilon^{2}}g_{\bomega_{\epsilon}}}_{R_{2}}(p)}$ for some $0 < R_{1} < R_{2}$ (which may depend on $p$) such that $B^{\frac{1}{\epsilon^{2}}g_{\bomega_{\epsilon}}}_{R_{2}}(p) \subset \set*{\epsilon < \abs*{\cdot}^{g_{\bomega_{0}}}_{\pi^{-1}\paren*{\cS}} < \frac{1}{8}}$ and $B^{\frac{1}{\epsilon^{2}}g_{\bomega_{\epsilon}}}_{R_{2}}(p) \subset R_{\epsilon}\paren*{B^{g_{\lwhat{\bomega_{EH-0, \epsilon}}}}_{2\sigma R}(p)}$ (and vice versa for $R_{1}$), \textit{where the weight function used here is $\frac{1}{\epsilon}\rho_{\epsilon}$}. Now we undo the original conformal rescaling of $\frac{1}{\epsilon^{2}}g_{\bomega_{\epsilon}}$ via performing a conformal rescaling $\frac{1}{\epsilon^{2}}g_{\bomega_{\epsilon}} \mapsto \epsilon^{2}\frac{1}{\epsilon^{2}}g_{\bomega_{\epsilon}} = g_{\bomega_{\epsilon}}$ \textit{on the metric balls $\paren*{B^{\frac{1}{\epsilon^{2}}g_{\bomega_{\epsilon}}}_{R_{1}}(p), B^{\frac{1}{\epsilon^{2}}g_{\bomega_{\epsilon}}}_{R_{2}}(p)}$ hence yielding $\paren*{B^{\frac{1}{\epsilon^{2}}g_{\bomega_{\epsilon}}}_{R_{1}}\paren*{p}, B^{\frac{1}{\epsilon^{2}}g_{\bomega_{\epsilon}}}_{R_{2}}\paren*{p}} = \paren*{B^{g_{\bomega_{\epsilon}}}_{\epsilon R_{1}}\paren*{p}, B^{g_{\bomega_{\epsilon}}}_{\epsilon R_{2}}\paren*{p}}$}, applying Proposition \ref{HKA scalingproperties}(\ref{HKA scalingproperties weighted Holder tensor norm rescale}) with $C = \epsilon$ and $g = \frac{1}{\epsilon^{2}}g_{\bomega_{\epsilon}}$ on each of the 3 weighted Holder norm terms \textit{where $g$ denotes the metric used in the weighted Holder norm}, and then setting $k = \epsilon^{\delta + 1}R^{-\delta-1}$, yielding the weighted Schauder estimate $\Abs*{\fa}_{C^{1,\alpha}_{\delta, g_{\bomega_{\epsilon}}}} \leq \wtilde{\cC_{11.2}} \paren*{ \Abs*{\fa}_{C^{0}_{\delta, g_{\bomega_{\epsilon}}}} + \Abs*{D_{\frac{1}{\epsilon^{2}}g_{\bomega_{\epsilon}}}\fa}_{C^{0,\alpha}_{\delta-1, g_{\bomega_{\epsilon}} }} }$ on the metric balls $\paren*{B^{\frac{1}{\epsilon^{2}}g_{\bomega_{\epsilon}}}_{R_{1}}(p), B^{\frac{1}{\epsilon^{2}}g_{\bomega_{\epsilon}}}_{R_{2}}(p)}= \paren*{B^{g_{\bomega_{\epsilon}}}_{\epsilon R_{1}}\paren*{p}, B^{g_{\bomega_{\epsilon}}}_{\epsilon R_{2}}\paren*{p}}$. Now Proposition \ref{HKA scalingproperties}(\ref{HKA scalingproperties Dirac rescale}) tells us that $D_{\frac{1}{\epsilon^{2}} g_{\bomega_{\epsilon}}} = \paren*{\epsilon^{2} d_{g_{\bomega_{\epsilon}}}^{*}, d_{g_{\bomega_{\epsilon}}}^{+}} = \paren*{\epsilon^{2}d_{g_{\bomega_{\epsilon}}}^{*}} \oplus d_{g_{\bomega_{\epsilon}}}^{+}$, and since Data \ref{constant epsilon 1-16} tells us that $\epsilon^{2} < \frac{1}{16} < 1$, we finally get our sought after weighted Schauder estimate $\Abs*{\fa}_{C^{1,\alpha}_{\delta, g_{\bomega_{\epsilon}}}} \leq \wtilde{\cC_{11.2}} \paren*{ \Abs*{\fa}_{C^{0}_{\delta, g_{\bomega_{\epsilon}}}} + \Abs*{D_{g_{\bomega_{\epsilon}}}\fa}_{C^{0,\alpha}_{\delta-1, g_{\bomega_{\epsilon}} }} }$ on the metric balls $\paren*{B^{\frac{1}{\epsilon^{2}}g_{\bomega_{\epsilon}}}_{R_{1}}\paren*{p}, B^{\frac{1}{\epsilon^{2}}g_{\bomega_{\epsilon}}}_{R_{2}}\paren*{p}} = \paren*{B^{g_{\bomega_{\epsilon}}}_{\epsilon R_{1}}\paren*{p}, B^{g_{\bomega_{\epsilon}}}_{\epsilon R_{2}}\paren*{p}}$ upon setting $r_{p} \coloneq \epsilon R_{1}$ and $R_{2} \coloneq 2 R_{1}$.

						\item \textbf{Case 3; $p \in \set*{\frac{3}{25} < \abs*{\cdot}^{g_{\bomega_{0}}}_{\pi^{-1}\paren*{\cS}}}$:} By Data \ref{constant epsilon 1-16}, since $2\epsilon^{\frac{1}{2}} < \frac{3}{25}$, by construction of $g_{\bomega_{\epsilon}}$ (Proposition \ref{approximatemetricproperties}) we have that on this region $g_{\bomega_{\epsilon}} = g_{\bomega_{0}}$ the flat metric. This, combined with Proposition \ref{preciseweightfunction}(\ref{preciseweightfunction past 3/25 region}) which tells us that our weight function satisfies on this region the \textit{uniform} estimate $\frac{3}{25} \leq \rho_{\epsilon} \leq 1$, the proof of the weighted Schauder estimate is exactly as in the first case, but this time using the identification $\set*{\frac{3}{25} < \abs*{\cdot}^{g_{\bomega_{0}}}_{\pi^{-1}\paren*{\cS}}} \sim \set*{\frac{3}{25} < \abs*{\cdot}^{g_{\bomega_{0}}}_{\cS}} \subset \bbT^{4} - \cS\subset \bbT^{4}$ from \textit{Remark} \ref{identification} and without having to do any conformal rescaling.
						
						In more detail, upon transferring everything over from $\paren*{\set*{\frac{3}{25} < \abs*{\cdot}^{g_{\bomega_{0}}}_{\pi^{-1}\paren*{\cS}}}, g_{\bomega_{\epsilon}}}$ to (the obviously precompact region) $\paren*{\set*{\frac{3}{25} < \abs*{\cdot}^{g_{\bomega_{0}}}_{\cS}}, g_{\bomega_{0}}}\subset \paren*{\bbT^{4}, g_{\bomega_{0}}}$, and since flat tori clearly have bounded geometry, we thus automatically have the standard Schauder estimate $\Abs*{\fa}_{C^{1,\alpha}_{g_{\bomega_{0}}}} \leq \cC_{11.3} \paren*{ \Abs*{\fa}_{C^{0}_{g_{\bomega_{0}}}} + \Abs*{D_{g_{\bomega_{0}}}\fa}_{C^{0,\alpha}_{g_{\bomega_{0}} }} }$ on the metric balls $\paren*{B^{g_{\bomega_{0}}}_{R_{1}}\paren*{p}, B^{g_{\bomega_{0}}}_{R_{2}}\paren*{p}}$ where $0 < R_{1} < R_{2}$ is chosen such that $B^{g_{\bomega_{0}}}_{R_{2}}\paren*{p} \subset \set*{\frac{3}{25} < \abs*{\cdot}^{g_{\bomega_{0}}}_{\cS}}$, and \textit{where $\cC_{11.3}> 0$ is independent of $p$}. Undoing the identification $\set*{\frac{3}{25} < \abs*{\cdot}^{g_{\bomega_{0}}}_{\pi^{-1}\paren*{\cS}}} \sim \set*{\frac{3}{25} < \abs*{\cdot}^{g_{\bomega_{0}}}_{\cS}} $ gives us that same estimate over $\set*{\frac{3}{25} < \abs*{\cdot}^{g_{\bomega_{0}}}_{\pi^{-1}\paren*{\cS}}}$ on the metric balls $\paren*{B^{g_{\bomega_{0}}}_{R_{1}}\paren*{p}, B^{g_{\bomega_{0}}}_{R_{2}}\paren*{p}}$ (which are clearly preserved under the identification). Since we have our uniform estimate $\frac{3}{25} \leq \rho_{\epsilon} \leq 1$ on $\set*{\frac{3}{25} < \abs*{\cdot}^{g_{\bomega_{0}}}_{\pi^{-1}\paren*{\cS}}}$, we have on any subset of $\set*{\frac{3}{25} < \abs*{\cdot}^{g_{\bomega_{0}}}_{\pi^{-1}\paren*{\cS}}}$ that the weighted Holder norms WRT $\rho_{\epsilon}$ and the standard Holder norms are \textit{equivalent norms}, whence (up to changing the constant $\cC_{11.3}$ by harmless factors) we immediately get our sought after weighted Schauder estimate $\Abs*{\fa}_{C^{1,\alpha}_{\delta, g_{\bomega_{\epsilon}}}} \leq \wtilde{\cC_{11.3}} \paren*{ \Abs*{\fa}_{C^{0}_{\delta, g_{\bomega_{\epsilon}}}} + \Abs*{D_{g_{\bomega_{\epsilon}}}\fa}_{C^{0,\alpha}_{\delta-1, g_{\bomega_{\epsilon}} }} }$ on the metric balls $\paren*{B^{g_{\bomega_{\epsilon}}}_{R_{1}}\paren*{p}, B^{g_{\bomega_{\epsilon}}}_{R_{2}}\paren*{p}}$ upon setting $r_{p} \coloneq R_{1}$ and $R_{2} \coloneq 2 R_{1}$, with $\wtilde{\cC_{11.3}}>0$ still independent of all parameters involved.
						
					\end{enumerate} Setting $\cC_{11} \coloneq \max\set*{\wtilde{\cC_{11.1}},\wtilde{\cC_{11.2}},\wtilde{\cC_{11.3}}}$ and noting that these $3$ constants are both \textit{independent of $p$} as well as \textit{independent of $\epsilon > 0$} yields us our local weighted Schauder estimate $\forall p \in \Km_{\epsilon}$, as was to be shown.\end{proof}

				Thus we get the analogue of Corollary \ref{WSE} for $D_{g_{\bomega_{\epsilon}}}$ whose proof is verbatim from before: \begin{cor}[Weighted Schauder Estimate]\label{HKA WSE}  \textbf{Let $\epsilon > 0$ be from Data \ref{constant epsilon 1-16} and sufficiently small.}
					We have the following estimate $\forall \fa \in C^{1,\alpha}_{\delta, g_{\bomega_{\epsilon}}}\paren*{T^{*}\Km_{\epsilon}}$:
					\begin{align*}
						\Abs*{\fa}_{C^{1,\alpha}_{\delta, g_{\bomega_{\epsilon}}} \paren*{T^{*}\Km_{\epsilon}}} &\leq \cC_{12}\paren*{\Abs*{\fa}_{C^{0}_{\delta, g_{\bomega_{\epsilon}}} \paren*{T^{*}\Km_{\epsilon}}} + \Abs*{D_{g_{\bomega_{\epsilon}}}\fa}_{C^{0,\alpha}_{\delta - 1, g_{\bomega_{\epsilon}}} \paren*{\bbR\oplus \Lambda_{g_{\bomega_{\epsilon}}}^{+}T^{*}\Km_{\epsilon}}}}\\
						&\coloneq \cC_{12}\paren*{\Abs*{\rho_{\epsilon}^{-\delta}\fa}_{C^{0}_{g_{\bomega_{\epsilon}}} \paren*{T^{*}\Km_{\epsilon}}} + \Abs*{D_{g_{\bomega_{\epsilon}}}\fa}_{C^{0,\alpha}_{\delta - 1, g_{\bomega_{\epsilon}}} \paren*{\bbR\oplus \Lambda_{g_{\bomega_{\epsilon}}}^{+}T^{*}\Km_{\epsilon}}}}
					\end{align*}
					for $\cC_{12} > 0$ \textbf{independent of $\epsilon$.}
				\end{cor}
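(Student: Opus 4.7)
The plan is to mirror the contradiction/blow-up argument used in the proof of Corollary~\ref{WSE}, now that the local weighted Schauder estimate from Proposition~\ref{HKA localWSE} is available for the Dirac operator $D_{g_{\bomega_{\epsilon}}}$ in place of $\Delta_{g_{\epsilon}}$. Concretely, suppose for contradiction that no uniform constant $\cC_{12} > 0$ exists. By countable choice, one obtains a sequence $\epsilon_i \searrow 0$ and $\fa_i \in C^{1,\alpha}_{\delta, g_{\bomega_{\epsilon_i}}}\paren*{T^*\Km_{\epsilon_i}}$ which, after normalization, satisfy
\begin{align*}
\Abs*{\fa_i}_{C^{1,\alpha}_{\delta, g_{\bomega_{\epsilon_i}}} \paren*{T^*\Km_{\epsilon_i}}} &= 1, \\
\Abs*{\fa_i}_{C^{0}_{\delta, g_{\bomega_{\epsilon_i}}} \paren*{T^*\Km_{\epsilon_i}}} + \Abs*{D_{g_{\bomega_{\epsilon_i}}}\fa_i}_{C^{0,\alpha}_{\delta - 1, g_{\bomega_{\epsilon_i}}}\paren*{\bbR\oplus \Lambda^+_{g_{\bomega_{\epsilon_i}}}T^*\Km_{\epsilon_i}}} &< \tfrac{1}{i}.
\end{align*}

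Next, I would decompose the $C^{1,\alpha}_{\delta,g_{\bomega_{\epsilon_i}}}$-norm into its constituent pieces: the weighted $C^0$-norm of $\fa_i$, the weighted $C^0$-norm of $\nabla_{g_{\bomega_{\epsilon_i}}}\fa_i$, and the weighted Hölder seminorm $\sqparen*{\fa_i}_{C^{1,\alpha}_{\delta,g_{\bomega_{\epsilon_i}}}}$. Since the first term is bounded by $\tfrac{1}{i}$ but the full norm equals $1$, a simple inequality bash (as in the proof of Corollary~\ref{WSE}) forces one of the two remaining terms to be bounded below by $\tfrac{1}{6}$ for all $i$ large. This in turn produces points $p_i \in \Km_{\epsilon_i}$ at which either $\tfrac{1}{6} \leq \abs*{\rho_{\epsilon_i}(p_i)^{-\delta+1}\nabla_{g_{\bomega_{\epsilon_i}}} \fa_i(p_i)}_{g_{\bomega_{\epsilon_i}}}$ or a ball-localized Hölder seminorm $\sqparen*{\fa_i}_{C^{1,\alpha}_{\delta, g_{\bomega_{\epsilon_i}}}\paren*{T^* B^{g_{\bomega_{\epsilon_i}}}_{r_{p_i}}(p_i)}}$ is bounded below by $\tfrac{1}{6}$.

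I would then apply Proposition~\ref{HKA localWSE} at each $p_i$ on the ball-pair $\paren*{B^{g_{\bomega_{\epsilon_i}}}_{r_{p_i}}(p_i), B^{g_{\bomega_{\epsilon_i}}}_{2r_{p_i}}(p_i)}$, using locality of Hölder norms (i.e., restriction to a subdomain only decreases the norm) to dominate by the global bounds. This yields
\begin{equation*}
\Abs*{\fa_i}_{C^{1,\alpha}_{\delta, g_{\bomega_{\epsilon_i}}}\paren*{T^* B^{g_{\bomega_{\epsilon_i}}}_{r_{p_i}}(p_i)}} \leq \cC_{11}\paren*{\Abs*{\fa_i}_{C^{0}_{\delta, g_{\bomega_{\epsilon_i}}}\paren*{T^*\Km_{\epsilon_i}}} + \Abs*{D_{g_{\bomega_{\epsilon_i}}}\fa_i}_{C^{0,\alpha}_{\delta-1, g_{\bomega_{\epsilon_i}}}\paren*{\bbR\oplus \Lambda^+_{g_{\bomega_{\epsilon_i}}}T^*\Km_{\epsilon_i}}}} \leq \tfrac{\cC_{11}}{i} \searrow 0,
\end{equation*}
which contradicts the lower bound $\tfrac{1}{6}$ in each of the three possible subcases.

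The argument is essentially formal once Proposition~\ref{HKA localWSE} is in hand, and the only genuinely new ingredient over the scalar Laplacian case is ensuring that the decomposition of the $C^{1,\alpha}_{\delta}$-norm into its three pieces (weighted $C^0$, weighted $C^0$ of the gradient, weighted Hölder seminorm of the gradient) is compatible with the pointwise/ball-localized lower bound extraction. Since the bundle-valued norms for $1$-forms are defined exactly as for scalars (just with the metric-induced tensor norm replacing absolute value), no new obstruction arises, and the argument closes verbatim with $\cC_{12} \coloneq \cC_{11}\paren*{\cC_{12.5} + 1}$ for the appropriate intermediate constant tracking the passage from the improved weighted $C^0$-bound to the full $C^{1,\alpha}_{\delta}$-bound.
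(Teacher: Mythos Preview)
Your proposal is correct and follows essentially the same approach as the paper, which states that the proof is ``verbatim from before'' (i.e., identical to the proof of Corollary~\ref{WSE} with $D_{g_{\bomega_{\epsilon}}}$ in place of $\Delta_{g_{\epsilon}}$ and Proposition~\ref{HKA localWSE} in place of Proposition~\ref{localWSE}). Your final sentence about $\cC_{12} \coloneq \cC_{11}(\cC_{12.5}+1)$ is extraneous and slightly confused---in a pure contradiction argument one does not construct the constant explicitly, and that formula pattern belongs to the later \emph{improved} estimate (Proposition~\ref{HKA IWSE}), not to this corollary---but this does not affect the validity of the argument.
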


				Thus since we want to improve the above weighted Holder estimate for $D_{g_{\bomega_{\epsilon}}}$ to uniform invertibility of $D_{g_{\bomega_{\epsilon}}}$, the analogue of Proposition \ref{IWSE} is \begin{prop}[Improved Weighted Schauder Estimate]\label{HKA IWSE} \textbf{Let the rate/weight parameter satisfy $\delta \in (-2,0)$.} \textbf{Let $\epsilon > 0$ be as from Data \ref{constant epsilon 1-16}.}

					Then for $\epsilon > 0$ \textbf{\textit{sufficiently small}} there exists a constant $\cC_{13} > 0$ \textbf{independent of $\epsilon>0$} such that we have the following estimate for $D_{g_{\bomega_{\epsilon}}}:C^{1,\alpha}_{\delta, g_{\bomega_{\epsilon}}}\paren*{T^{*}\Km_{\epsilon}}\rightarrow C^{0,\alpha}_{\delta-1, g_{\bomega_{\epsilon}}}\paren*{\bbR \oplus \Lambda_{g_{\bomega_{\epsilon}}}^{+}T^{*}\Km_{\epsilon}}$:
					$$\Abs*{\fa}_{C^{1,\alpha}_{\delta, g_{\bomega_{\epsilon}}} \paren*{T^{*}\Km_{\epsilon}}} \leq \cC_{13} \Abs*{D_{g_{\bomega_{\epsilon}}}\fa}_{C^{0,\alpha}_{\delta - 1, g_{\bomega_{\epsilon}}} \paren*{\bbR\oplus \Lambda_{g_{\bomega_{\epsilon}}}^{+}T^{*}\Km_{\epsilon}}},\qqfa \fa \in C^{1,\alpha}_{\delta, g_{\bomega_{\epsilon}}} \paren*{T^{*}\Km_{\epsilon}}$$
				\end{prop}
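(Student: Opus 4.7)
The plan is to mirror the contradiction-and-blow-up argument of Proposition \ref{IWSE}, with the scalar Laplacian $\Delta_{g_{\epsilon}}$ replaced by the Dirac operator $D_{g_{\bomega_{\epsilon}}}$. By Corollary \ref{HKA WSE}, it suffices to establish the weaker estimate $\Abs*{\fa}_{C^{0}_{\delta, g_{\bomega_{\epsilon}}}} \leq \cC_{14}\Abs*{D_{g_{\bomega_{\epsilon}}}\fa}_{C^{0,\alpha}_{\delta-1, g_{\bomega_{\epsilon}}}}$, from which $\cC_{13} \coloneq \cC_{12}(\cC_{14}+1)$ works. Assume this fails, extract a sequence $\epsilon_{i}\searrow 0$ and $\fa_{i}\in C^{1,\alpha}_{\delta, g_{\bomega_{\epsilon_{i}}}}\paren*{T^{*}\Km_{\epsilon_{i}}}$ normalized so that $\Abs*{\fa_{i}}_{C^{0}_{\delta}} = 1$ and $\Abs*{D_{g_{\bomega_{\epsilon_{i}}}}\fa_{i}}_{C^{0,\alpha}_{\delta-1}} \leq 1/i$; Corollary \ref{HKA WSE} then yields the uniform bound $\Abs*{\fa_{i}}_{C^{1,\alpha}_{\delta}} \leq 2\cC_{12}$. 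Perform blow-up on the two pieces of the cover $\Km_{\epsilon_{i}} \subseteq \set*{\abs*{\cdot}^{g_{\bomega_{0}}}_{\pi^{-1}\paren*{\cS}} < 3\epsilon_{i}^{1/2}} \cup \set*{2\epsilon_{i}^{1/2} < \abs*{\cdot}^{g_{\bomega_{0}}}_{\pi^{-1}\paren*{\cS}}}$ just as in Proposition \ref{IWSE}.

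On the regular region, via \textit{Remark} \ref{identification} transfer $\fa_{i}$ to $\set*{2\epsilon_{i}^{1/2} < \abs*{\cdot}^{g_{\bomega_{0}}}_{\cS}} \subset \bbT^{4}/\bbZ_{2} - \cS$ and lift through the double cover to a $\bbZ_{2}$-invariant 1-form $\lwhat{\fa_{i}}$ on $\bbT^{4}-\cS$; in standard Euclidean coordinates the coefficients are odd functions, since the pullback by $\tau(z) = -z$ sends $dx^{k}\mapsto -dx^{k}$, so $\bbZ_{2}$-invariance forces each coefficient to satisfy $f_{k}(-z) = -f_{k}(z)$. Arzela-Ascoli produces a limit $\lwhat{\fa_{\infty}}$ with weighted decay $\abs*{\nabla^{j}\lwhat{\fa_{\infty}}}_{g_{\bomega_{0}}} = O_{g_{\bomega_{0}}}\paren*{\rho_{0}^{\delta-j}}$ for $j\leq 1$, and $D_{g_{\bomega_{0}}}\lwhat{\fa_{\infty}} = 0$, hence $\Delta_{g_{\bomega_{0}}}\lwhat{\fa_{\infty}} = 0$ by Proposition \ref{HKA Dirac implies harmonic}. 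Near each $p\in \cS$ the metric is flat Euclidean, so the Laplace equation decouples into component-wise scalar harmonicity, each coefficient admits a Laurent expansion, and the condition $\delta > -2$ kills all negative-order singular terms exactly as in the CY case, yielding removable singularities. The extended 1-form is smooth harmonic on the closed flat $\bbT^{4}$, hence has constant coefficients $\lwhat{\fa_{\infty}} = c_{k}\,dx^{k}$; but the $\bbZ_{2}$-invariance $\tau^{*}\lwhat{\fa_{\infty}} = \lwhat{\fa_{\infty}}$ forces $-c_{k} = c_{k}$, so $\lwhat{\fa_{\infty}} = 0$. The role of the integral-zero condition in the CY case is replaced here for free by the anti-symmetry built into the double cover.

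On the bubble region, the conformal rescaling $g_{\bomega_{\epsilon_{i}}}\mapsto \frac{1}{\epsilon_{i}^{2}}g_{\bomega_{\epsilon_{i}}}$ together with the form rescaling $\wtilde{\fa_{i}} \coloneq \epsilon_{i}^{-\delta-1}\fa_{i}$ preserve the weighted $C^{0}_{\delta}$-norm --- the exponent $-\delta-1$ in place of the CY case's $-\delta$ reflects the extra factor of $1/C$ in the pointwise norm of a 1-form under $g\mapsto C^{2}g$ (Proposition \ref{HKA scalingproperties}(\ref{HKA scalingproperties weighted Holder tensor norm rescale}) with $m = 1$). The pointed manifolds $\paren*{\set*{\abs*{\cdot}^{g_{\bomega_{0}}}_{S^{2}} < 3/\epsilon_{i}^{1/2}}, g_{\lwhat{\bomega_{EH-0,\epsilon_{i}}}}}$ $C^{\infty}_{loc}$-exhaust $\paren*{T^{*}S^{2}, g_{\bomega_{EH,1}}}$ by Proposition \ref{HKA preglueEH BG interpolation!}, and Arzela-Ascoli extracts a limit $\wtilde{\fa_{\infty}}$ with $D_{g_{\bomega_{EH,1}}}\wtilde{\fa_{\infty}} = 0$, hence $\Delta_{g_{\bomega_{EH,1}}}\wtilde{\fa_{\infty}} = 0$, and decay $\abs*{\wtilde{\fa_{\infty}}}_{g_{\bomega_{EH,1}}} \leq C\wtilde{\rho_{0}}^{\delta}\to 0$ at infinity since $\delta<0$. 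Since $T^{*}S^{2}$ deformation retracts onto $S^{2}$, $H^{1}\paren*{T^{*}S^{2};\bbR} = 0$, so the closed 1-form $\wtilde{\fa_{\infty}}$ is exact, say $\wtilde{\fa_{\infty}} = df$; then $\Delta_{g_{\bomega_{EH,1}}}f = d^{*}\wtilde{\fa_{\infty}} = 0$, so $f$ is harmonic with decaying gradient, hence asymptotically constant (WLOG zero after subtraction), and the maximum principle on the connected complete $\paren*{T^{*}S^{2}, g_{\bomega_{EH,1}}}$ forces $f \equiv 0$, i.e.\ $\wtilde{\fa_{\infty}} = 0$.

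Combining both regions as in the proof of Proposition \ref{IWSE} yields $\Abs*{\fa_{i}}_{C^{0}_{\delta}\paren*{\Km_{\epsilon_{i}}}} \to 0$, contradicting the normalization $\Abs*{\fa_{i}}_{C^{0}_{\delta}} = 1$. The main technical obstacle is orchestrating the removable-singularity argument across the 16 punctures with uniform control of the weighted norms under the double-cover lift; this is made tractable by the local flatness of the ambient metric near each $p\in\cS$ (reducing the vector-valued harmonicity to coefficient-wise scalar harmonicity in Euclidean coordinates), and by the simple topology of $T^{*}S^{2}$ (reducing the bubble limit to a scalar maximum principle argument), so the whole proof reduces to essentially scalar ingredients already present in Proposition \ref{IWSE}.
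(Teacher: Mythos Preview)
Your overall strategy and your treatment of the regular region match the paper's proof: the componentwise harmonicity via the parallel coframe $d\theta^{i}$ on $\bbT^{4}$, the removable-singularity argument using $\delta > -2$, and the observation that $\bbZ_{2}$-invariance of the lifted $1$-form forces the constant coefficients to be odd hence zero --- this is exactly what the paper does, and it correctly replaces the integral-zero condition from the Calabi--Yau case.

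There is, however, a gap in your bubble argument. You assert that $\wtilde{\fa_{\infty}}$ is \emph{closed}, but $D_{g_{\bomega_{EH,1}}}\wtilde{\fa_{\infty}} = 0$ only gives $d^{*}\wtilde{\fa_{\infty}} = 0$ and $d^{+}\wtilde{\fa_{\infty}} = 0$; the anti-self-dual part $d^{-}\wtilde{\fa_{\infty}}$ is not a priori zero, so $\wtilde{\fa_{\infty}}$ need not be exact and your primitive $f$ need not exist. One can attempt to kill $d^{-}\wtilde{\fa_{\infty}}$ via $\int_{B_{R}}\abs*{d\wtilde{\fa_{\infty}}}^{2}\,\dV = -\int_{B_{R}} d\wtilde{\fa_{\infty}}\wedge d\wtilde{\fa_{\infty}} = -\int_{\partial B_{R}}\wtilde{\fa_{\infty}}\wedge d\wtilde{\fa_{\infty}}$, but the boundary term is $O\paren*{R^{2\delta+2}}$, which vanishes only for $\delta < -1$. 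Even granting exactness, your step ``decaying gradient, hence asymptotically constant'' likewise needs $\delta < -1$ for the radial integral $\int^{\infty} r^{\delta}\,dr$ to converge, or else a separate quantization argument for harmonic growth rates on ALE spaces; neither is supplied.

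The paper avoids all of this by applying the Bochner--Weitzenb\"{o}ck formula for $1$-forms directly: since $\paren*{T^{*}S^{2}, g_{\bomega_{EH,1}}}$ is Ricci-flat and $\Delta_{g_{\bomega_{EH,1}}}\wtilde{\fa_{\infty}} = 0$ (Proposition \ref{HKA Dirac implies harmonic}), one has $-\tfrac{1}{2}\Delta_{g_{\bomega_{EH,1}}}\abs*{\wtilde{\fa_{\infty}}}^{2} = \abs*{\nabla\wtilde{\fa_{\infty}}}^{2} \geq 0$, so $\abs*{\wtilde{\fa_{\infty}}}^{2}$ is a subharmonic scalar function which is bounded and decays at infinity, and the maximum principle gives $\wtilde{\fa_{\infty}} = 0$ directly --- uniformly for all $\delta \in (-2,0)$ and without ever needing closedness.
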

				
				\begin{remark}\label{HKA closedrangedetector}
					This inequality precisely means that when $\delta \in \paren*{-2,0}$ and the gluing parameter $\epsilon > 0$ is sufficiently small, $D_{g_{\bomega_{\epsilon}}}:C^{1,\alpha}_{\delta, g_{\bomega_{\epsilon}}}\paren*{T^{*}\Km_{\epsilon}}\rightarrow C^{0,\alpha}_{\delta-1, g_{\bomega_{\epsilon}}}\paren*{\bbR \oplus \Lambda_{g_{\bomega_{\epsilon}}}^{+}T^{*}\Km_{\epsilon}}$ is injective, \textit{uniformly} bounded below, and has \textit{closed range}.
				\end{remark}
				
				\begin{proof} 
					
					Just like for Proposition \ref{IWSE}, we prove this by contradiction via a ``blow-up'' analysis on each region $\set*{\abs*{\cdot}^{g_{\bomega_{0}}}_{\pi^{-1}\paren*{\cS}} < 3\epsilon^{\frac{1}{2}}}$ and $\set*{2\epsilon^{\frac{1}{2}} < \abs*{\cdot}^{g_{\bomega_{0}}}_{\pi^{-1}\paren*{\cS}}}$ of the cover $\Km_{\epsilon} \subset  \set*{\abs*{\cdot}^{g_{\bomega_{0}}}_{\pi^{-1}\paren*{\cS}} < 3\epsilon^{\frac{1}{2}}}\cup \set*{2\epsilon^{\frac{1}{2}} < \abs*{\cdot}^{g_{\bomega_{0}}}_{\pi^{-1}\paren*{\cS}}}$. However, as we are dealing with $1$-forms and with $D_{g}$, there are several differences that manifest in the analysis.

					By our weighted Schauder estimate proved above (Corollary \ref{HKA WSE}), we may instead prove $$ \Abs*{\fa}_{C^{0}_{\delta, g_{\bomega_{\epsilon}}} \paren*{T^{*}\Km_{\epsilon}}} \leq \cC_{14} \Abs*{D_{g_{\bomega_{\epsilon}}}\fa}_{C^{0,\alpha}_{\delta - 1, g_{\bomega_{\epsilon}}} \paren*{\bbR\oplus \Lambda_{g_{\bomega_{\epsilon}}}^{+}T^{*}\Km_{\epsilon}}}$$
					
					with the same assumptions ($\delta \in (-2,0)$, etc.), whence $\cC_{13}\coloneq \cC_{12}\paren*{\cC_{14}+ 1}$. Now assume for contradiction that the statement for this latter inequality fails. Then this implies by countable choice that $\exists \epsilon_{i} \in \paren*{0,\frac{1}{i}}$ whence $\exists \epsilon_{i} \searrow 0$ and $\exists \fa_{i} \in C^{1,\alpha}_{\delta, g_{\bomega_{\epsilon_{i}}}} \paren*{T^{*}\Km_{\epsilon_{i}}}$ a sequence such that (after normalizing) we get a tuple of equations \begin{align*}
						\Abs*{\fa_{i}}_{C^{0}_{{\delta, g_{\bomega_{\epsilon_{i}}}} \paren*{T^{*}\Km_{\epsilon_{i}}}} } &= 1\\
						\Abs*{D_{g_{\bomega_{\epsilon_{i}}}}\fa_{i}}_{C^{0,\alpha}_{\delta - 1, g_{\bomega_{\epsilon_{i}}}} \paren*{\bbR\oplus \Lambda_{g_{\bomega_{\epsilon_{i}}}}^{+}T^{*}\Km_{\epsilon_{i}}}} &\leq \frac{1}{i}
					\end{align*}

					Note that our weighted Schauder estimate Corollary \ref{HKA WSE} immediately gives us $$\Abs*{\fa_{i}}_{C^{1,\alpha}_{\delta, g_{\bomega_{\epsilon_{i}}}} \paren*{T^{*}\Km_{\epsilon_{i}}}} \leq 2\cC_{12}$$

					Let us now examine the following two regions: $\set*{2\epsilon_{i}^{\frac{1}{2}} < \abs*{\cdot}^{g_{\bomega_{0}}}_{\pi^{-1}\paren*{\cS}}}\subset \Km_{\epsilon_{i}}$ and $\set*{\abs*{\cdot}^{g_{\bomega_{0}}}_{\pi^{-1}\paren*{\cS}} < 3\epsilon_{i}^{\frac{1}{2}}}\subset \Km_{\epsilon_{i}}$.

					\begin{enumerate}
						\item \textbf{Case 1; $\set*{2\epsilon_{i}^{\frac{1}{2}} < \abs*{\cdot}^{g_{\bomega_{0}}}_{\pi^{-1}\paren*{\cS}}}$:} Firstly, from \textit{Remark} \ref{identification} let us identify each region $\set*{2\epsilon_{i}^{\frac{1}{2}} < \abs*{\cdot}^{g_{\bomega_{0}}}_{\pi^{-1}\paren*{\cS}}} \subset \Km_{\epsilon_{i}}$ with $\set*{2\epsilon_{i}^{\frac{1}{2}} < \abs*{\cdot}^{g_{\bomega_{0}}}_{\cS}} \subset \bbT^{4}/\bbZ_{2} - \cS$ (hence we're in a \textit{fixed} ambient manifold). Similarly from that remark, let us denote the lift of $\set*{2\epsilon_{i}^{\frac{1}{2}} < \abs*{\cdot}^{g_{\bomega_{0}}}_{\cS}} \subset \bbT^{4}/\bbZ_{2} - \cS$ to $\bbT^{4} - \cS$ via the double cover as $\lwhat{\set*{2\epsilon_{i}^{\frac{1}{2}} < \abs*{\cdot}^{g_{\bomega_{0}}}_{\cS}}} \subset \bbT^{4} - \cS$, and denote the resulting lift of the metric $g_{\bomega_{\epsilon_{i}}}$ as $\what{g_{\bomega_{\epsilon_{i}}}}$. 
						
						Next, we note that by construction (Proposition \ref{HKA approximatemetricproperties}) that each $g_{\bomega_{\epsilon_{i}}} = g_{\bomega_{0}}$ on each $\set*{2\epsilon_{i}^{\frac{1}{2}} < \abs*{\cdot}^{g_{\bomega_{0}}}_{\pi^{-1}\paren*{\cS}}}$ hence on $\set*{2\epsilon_{i}^{\frac{1}{2}} < \abs*{\cdot}^{g_{\bomega_{0}}}_{\cS}}$ and hence $\what{g_{\bomega_{\epsilon_{i}}}} = g_{\bomega_{0}}$ on $\lwhat{\set*{2\epsilon_{i}^{\frac{1}{2}} < \abs*{\cdot}^{g_{\bomega_{0}}}_{\cS}}}$. Arguing as before, since both $\set*{2\epsilon_{i}^{\frac{1}{2}} < \abs*{\cdot}^{g_{\bomega_{0}}}_{\cS}}$, $\lwhat{\set*{2\epsilon_{i}^{\frac{1}{2}} < \abs*{\cdot}^{g_{\bomega_{0}}}_{\cS}}}$ are in \textit{fixed} ambient manifolds $\bbT^{4}/\bbZ_{2} - \cS$, $\bbT^{4} - \cS$ respectively, since $\bbZ_{2}$ acts on $\bbT^{4} - \cS$ and on each region $\lwhat{\set*{2\epsilon_{i}^{\frac{1}{2}} < \abs*{\cdot}^{g_{\bomega_{0}}}_{\cS}}} \subset \bbT^{4} - \cS$ in the same way, combined with the fact that our metric is precisely $g_{\bomega_{0}}$ we may easily construct equivariant Gromov-Hausdorff $\epsilon$-approximations for arbitrarily small $\epsilon$\footnote{In fact, just take $\epsilon = 3\epsilon_{i}^{\frac{1}{2}}$.} and see that we have the following equivariant Gromov-Hausdorff convergence as $i\nearrow \infty$: 
						\[\begin{tikzcd}[ampersand replacement=\&]
							{\paren*{\lwhat{\set*{2\epsilon_{i}^{\frac{1}{2}} < \abs*{\cdot}^{g_{\bomega_{0}}}_{\cS}}}, \what{g_{\bomega_{\epsilon_{i}}}}}} \& {\paren*{\bbT^{4} - \cS,g_{\bomega_{0}}}} \\
							{\paren*{\set*{2\epsilon_{i}^{\frac{1}{2}} < \abs*{\cdot}^{g_{\bomega_{0}}}_{\cS}}, g_{\bomega_{\epsilon_{i}}}}} \& {\paren*{\bbT^{4}/\bbZ_{2} - \cS, g_{\bomega_{0}}}}
							\arrow["eqGH", from=1-1, to=1-2]
							\arrow["{2:1}"', two heads, from=1-1, to=2-1]
							\arrow["{2:1}", two heads, from=1-2, to=2-2]
							\arrow["GH"', from=2-1, to=2-2]
						\end{tikzcd}\]

						Abuse notation and denote by $\fa_{i}$ to mean \textit{both} the restriction of each $\fa_{i} \in C^{1,\alpha}_{\delta, g_{\bomega_{\epsilon_{i}}}} \paren*{T^{*}\Km_{\epsilon_{i}}}$ to $\set*{2\epsilon_{i}^{\frac{1}{2}} < \abs*{\cdot}^{g_{\bomega_{0}}}_{\pi^{-1}\paren*{\cS}}}$ as well as the transferal of $\fa_{i}$ to $\set*{2\epsilon_{i}^{\frac{1}{2}} < \abs*{\cdot}^{g_{\bomega_{0}}}_{\cS}} \subset \bbT^{4}/\bbZ_{2} - \cS$, and denote the lift of each $\fa_{i}$ to $\lwhat{\set*{2\epsilon_{i}^{\frac{1}{2}} < \abs*{\cdot}^{g_{\bomega_{0}}}_{\cS}}}$ as $\lwhat{\fa_{i}}$. Hence $\lwhat{\fa_{i}}$ is even/$\bbZ_{2}$-invariant by Proposition \ref{CoveringSpacesInvariantForms}.

						Now since we have $\Abs*{\fa_{i}}_{C^{1,\alpha}_{\delta, g_{\bomega_{\epsilon_{i}}}} \paren*{T^{*}\Km_{\epsilon_{i}}}} \leq 2\cC_{12}$, we thus get $\Abs*{\fa_{i}}_{C^{1,\alpha}_{\delta, g_{\bomega_{\epsilon_{i}}}} \paren*{\set*{2\epsilon_{i}^{\frac{1}{2}} < \abs*{\cdot}^{g_{\bomega_{0}}}_{\cS}}}} \leq 2\cC_{12}$ and hence the same bound for each $\lwhat{\fa_{i}}$, namely $$\Abs*{\lwhat{\fa_{i}}}_{C^{1,\alpha}_{\delta, \what{g_{\bomega_{\epsilon_{i}}}}} \paren*{\lwhat{\set*{2\epsilon_{i}^{\frac{1}{2}} < \abs*{\cdot}^{g_{\bomega_{0}}}_{\cS}}}}} \leq 2\cC_{12}$$

						Therefore, by Arzela-Ascoli, upon relabeling the original sequence of $\fa_{i}$ we have that \begin{gather*}
							\lwhat{\fa_{i}} \xrightarrow{C^{1,\alpha'}_{\delta, \what{g_{\bomega_{\epsilon_{i}}}}, loc}} \lwhat{\fa_{\infty}}\\
							\Abs*{\lwhat{\fa_{\infty}}}_{C^{1,\alpha'}_{\delta, g_{\bomega_{0}}} \paren*{\bbT^{4} - \cS}} \leq 2\cC_{2}
						\end{gather*}
						
						with $0 < \alpha' < \alpha < 1$, for some $1$-form $\lwhat{\fa_{\infty}}$ on $\bbT^{4} - \cS$. Here, the $loc$ convergence means on compact subsets. The second bound holds everywhere on $\bbT^{4}-\cS$ via the usual unit diameter covering argument. Moreover, by the equivariant Gromov-Hausdorff convergence, we have that $\lwhat{\fa_{\infty}}$ is even/$\bbZ_{2}$-invariant and hence descends down to a $1$-form $\fa_{\infty}$ on $\bbT^{4}/\bbZ_{2} - \cS$ by Proposition \ref{CoveringSpacesInvariantForms}. Moreover, the weight function used in $C^{1,\alpha'}_{\delta, g_{\bomega_{0}}} \paren*{\bbT^{4} - \cS}$ is precisely the limiting weight function, which (as before) is $\rho_{0}$ the weight function used to define $C^{k,\alpha}_{\delta,g_{\bomega_{0}}} \paren*{\bbT^{4} - \cS}$ back in Sections \ref{Weighted Setup} and \ref{HKA Weighted Setup}. 
						
						Playing the same game with $\Abs*{D_{g_{\bomega_{\epsilon_{i}}}}\fa_{i}}_{C^{0,\alpha}_{\delta - 1, g_{\bomega_{\epsilon_{i}}}} \paren*{\bbR\oplus \Lambda_{g_{\bomega_{\epsilon_{i}}}}^{+}T^{*}\Km_{\epsilon_{i}}}} \leq \frac{1}{i}$, namely that this bound directly implies the bounds $\Abs*{D_{g_{\bomega_{\epsilon_{i}}}}\fa_{i}}_{C^{0,\alpha}_{\delta - 1, g_{\bomega_{\epsilon_{i}}}} \paren*{\bbR\oplus \Lambda_{g_{\bomega_{\epsilon_{i}}}}^{+}T^{*}\set*{2\epsilon_{i}^{\frac{1}{2}} < \abs*{\cdot}^{g_{\bomega_{0}}}_{\cS}}}} \leq \frac{1}{i}$ and $\Abs*{D_{\what{g_{\bomega_{\epsilon_{i}}}}}\what{\fa_{i}}}_{C^{0,\alpha}_{\delta - 1, \what{g_{\bomega_{\epsilon_{i}}}}} \paren*{\bbR\oplus \Lambda_{g_{\bomega_{\epsilon_{i}}}}^{+}T^{*}\lwhat{\set*{2\epsilon_{i}^{\frac{1}{2}} < \abs*{\cdot}^{g_{\bomega_{0}}}_{\cS}}}}} \leq \frac{1}{i}$, that we may take the limit as $i\nearrow \infty$ and get $$D_{g_{\bomega_{0}}} \lwhat{\fa_{\infty}} = 0$$ on $\paren*{\bbT^{4} - \cS,g_{\bomega_{0}}}$ (since $D_{g_{\bomega_{0}}} \lwhat{\fa_{\infty}} = 0$ on all compact subsets of $\bbT^{4} - \cS$, hence on all of $\bbT^{4} - \cS$). Now as explained back in Sections \ref{Weighted Setup} and \ref{HKA Weighted Setup}, $\Abs*{\lwhat{\fa_{\infty}}}_{C^{1,\alpha'}_{\delta, g_{\bomega_{0}}} \paren*{\bbT^{4} - \cS}} \coloneq \Abs*{\lwhat{\fa_{\infty}}}_{C^{0}_{\delta, g_{\bomega_{0}}} \paren*{\bbT^{4} - \cS}} + \Abs*{\nabla_{g_{\bomega_{0}}}\lwhat{\fa_{\infty}}}_{C^{0}_{\delta, g_{\bomega_{0}}} \paren*{\bbT^{4} - \cS}} + \sqparen*{ \lwhat{\fa_{\infty}}}_{C^{1,\alpha'}_{\delta, g_{\bomega_{0}}} \paren*{\bbT^{4} - \cS}} \leq 2\cC_{2} < \infty$ for $0 < \alpha' < \alpha < 1$ means that $\lwhat{\fa_{\infty}}$ satisfies the decay $$\nabla^{j}_{g_{\bomega_{0}}}\lwhat{\fa_{\infty}} = O_{g_{\bomega_{0}}}\paren*{\rho_{0}^{\delta - j}}, \qqfa j \leq 1$$ That is, by the above properties of $\rho_{0}$, $\lwhat{\fa_{\infty}}$ is bounded on $\set*{\frac{1}{8} \leq \abs*{\cdot}^{g_{\bomega_{0}}}_{\cS}}$ and satisfies $$\abs*{\nabla^{j}_{g_{\bomega_{0}}}\lwhat{\fa_{\infty}}}_{g_{\bomega_{0}}} \leq C_{15,j}\paren*{\abs*{\cdot}^{g_{\bomega_{0}}}_{\cS}}^{\delta - j}, \qqfa j \leq 1$$ on $\set*{0 < \abs*{\cdot}^{g_{\bomega_{0}}}_{\cS}\leq \frac{1}{8}} \subset \bbT^{4} - \cS$ for some $C_{15,j} > 0$.

						Now since $\bbT^{4} \coloneq \bbR^{4}/\Lambda$ where $\Lambda \cong \bbZ^{4}$, since $\bbR^{4}$ has $4$ \textbf{nowhere vanishing parallel $1$-forms} $dx^{1},dx^{2}, dx^{3},dx^{4}$, namely the \textbf{global coordinate coordinate coframe}, which are parallel WRT $g_{\bomega_{0}}$ and since all flat metrics on $\bbT^{4}$ come from flat metrics descending upstairs from $\bbR^{4}$, we have that $\bbT^{4}$ is \textbf{parallelizable} and we may globally trivialize $T^{*}\bbT^{4}$ with $4$ nowhere vanishing parallel $1$-forms $d\theta^{1}, d\theta^{2}, d\theta^{3}, d\theta^{4}$, namely the descendants of $dx^{1},dx^{2},dx^{3},dx^{4}$. Hence $\theta^{1},\theta^{2},\theta^{3},\theta^{4}$ are the $4$ angular coordinate functions of $\bbT^{4} \cong S^{1}\times S^{1} \times S^{1}\times S^{1}$. Hence since the Levi-Civita connection and the Hodge star of a fixed metric commute, we have that $d\theta^{1}, d\theta^{2}, d\theta^{3}, d\theta^{4}$ give us a global trivialization of $T^{*}\bbT^{4}$ by \textit{harmonic} $1$-forms. Thus since Proposition \ref{HKA Dirac implies harmonic} tells us that $$D_{g_{\bomega_{0}}} \lwhat{\fa_{\infty}} = 0 \Longrightarrow \Delta_{g_{\bomega_{0}}} \lwhat{\fa_{\infty}} = 0$$ upon writing $$\lwhat{\fa_{\infty}} = \what{f_{1}}d\theta^{1} + \what{f_{2}}d\theta^{2} + \what{f_{3}}d\theta^{3} + \what{f_{4}}d\theta^{4}$$ for $f_{1},f_{2},f_{3},f_{4}$ coefficient \textit{functions} on $\bbT^{4} - \cS$, that harmonicity of each $d\theta^{i}$ WRT $g_{\bomega_{0}}$ and $\Delta_{g_{\bomega_{0}}} \lwhat{\fa_{\infty}} = 0$ forces \begin{align*}
							\Delta_{g_{\bomega_{0}}} f_{1} &=0\\
							\Delta_{g_{\bomega_{0}}} f_{2} &=0\\
							\Delta_{g_{\bomega_{0}}} f_{3} &=0\\
							\Delta_{g_{\bomega_{0}}} f_{4} &=0
						\end{align*}
						
						In other words, $f_{1},f_{2},f_{3}, f_{4}$ are four harmonic \textit{functions} on $\bbT^{4} - \cS$. Moreover, since $\lwhat{\fa_{\infty}}$ is bounded on $\set*{\frac{1}{8} \leq \abs*{\cdot}^{g_{\bomega_{0}}}_{\cS}}$ and satisfies $\abs*{\nabla^{j}_{g_{\bomega_{0}}}\lwhat{\fa_{\infty}}}_{g_{\bomega_{0}}} \leq C_{15,j}\paren*{\abs*{\cdot}^{g_{\bomega_{0}}}_{\cS}}^{\delta - j}$, $\forall j \leq 1$ on $\set*{0 < \abs*{\cdot}^{g_{\bomega_{0}}}_{\cS}\leq \frac{1}{8}} \subset \bbT^{4} - \cS$, this means that \textit{the four coefficient functions satisfy these very same bounds as well}. 
						
						Therefore we may play the same game as in the proof of Proposition \ref{IWSE} (namely elliptic regularity yielding each $f_{i} \in C^{\infty}\paren*{\bbT^{4} - \cS}$, then Laurent series expansion near $\cS$, $-2 < \delta$ with the 2nd bound near $\cS$ implying removable singularities near $\cS$, then the above bounds away from \& near $\cS$ implying each $f_{i}$ is a bounded), yielding that each of the four coefficient functions $f_{1}, f_{2}, f_{3}, f_{4}$ of $\lwhat{\fa_{\infty}}$ are \textbf{constant} by maximum principle. Thus we have that $\lwhat{\fa_{\infty}} \in \Omega^{1}\paren*{\bbT^{4}}$ is just a constant $1$-form (hence smooth). 
						
						Now recall that $\lwhat{\fa_{\infty}} = f_{1}d\theta^{1} + f_{2} d\theta^{2} + f_{3}d\theta^{3} + f_{4}d\theta^{4}$ is \textbf{even/$\bbZ_{2}$-invariant} and (upon restricting to $\bbT^{4} - \cS$) descends down to a $1$-form $\fa_{\infty}$ on $\bbT^{4}/\bbZ_{2} - \cS$. But our involution $\bbZ^{2} \Ractson \bbT^{4}$ acts via $x \mapsto -x$, and \textbf{since each $d\theta^{i}$ is a coordinate $1$-form for the angular/$S^{1}$ coordinate $\theta^{i}$}, we have that \textit{each $d\theta^{i}$ is \textbf{odd} WRT $\bbZ_{2}$}, aka satisfies $\restr{d\theta^{i}}{-x} = - \restr{d\theta^{i}}{x}$ since the involution acts via $\theta^{i} \mapsto -\theta^{i}$. Thus to preserve even-ness/$\bbZ_{2}$-invariance of $\lwhat{\fa_{\infty}}$, \textit{we must conclude that each coefficient function $f_{i}$ is \textbf{odd} WRT $\bbZ_{2}$, that is, $f_{i}(-x) = -f_{i}(x)$}. But each $f_{i} \in \bbR$ is a \textit{constant}, whence odd-ness forces $f_{1} = f_{2} = f_{3} = f_{4} = 0$.
						
						Whence we have concluded that $\lwhat{\fa_{\infty}} = 0$.

						Therefore, we have that $\Abs*{\lwhat{\fa_{i}}}_{C^{1,\alpha'}_{\delta, \what{g_{\bomega_{\epsilon_{i}}}}} \paren*{K}} \searrow 0$ for all compact subsets $K \Subset \lwhat{\set*{2\epsilon_{i}^{\frac{1}{2}} < \abs*{\cdot}^{g_{\bomega_{0}}}_{\cS}}}\subset \bbT^{4} - \cS$, whence transferring everything back to $\Km_{\epsilon_{i}}$ and using $\Abs*{\cdot}_{C^{0}_{\delta, g_{\bomega_{\epsilon_{i}}}}} \leq \Abs*{\cdot}_{C^{1,\alpha'}_{\delta,g_{\bomega_{\epsilon_{i}}}}}$, we thus have that $$\Abs*{\fa_{i}}_{C^{0}_{\delta, g_{\bomega_{\epsilon_{i}}}} \paren*{K}} \searrow 0$$ holds for all compact subsets $K \Subset \set*{2\epsilon_{i}^{\frac{1}{2}} < \abs*{\cdot}^{g_{\bomega_{0}}}_{\pi^{-1}\paren*{\cS}}} \subset \Km_{\epsilon_{i}}$.

						\item \textbf{Case 2; $\set*{\abs*{\cdot}^{g_{\bomega_{0}}}_{\pi^{-1}\paren*{\cS}} < 3\epsilon_{i}^{\frac{1}{2}}}$:} By construction, we have that on $\set*{\abs*{\cdot}^{g_{\bomega_{0}}}_{\pi^{-1}\paren*{\cS}} < 3\epsilon_{i}^{\frac{1}{2}}}$ that $g_{\bomega_{\epsilon_{i}}} = g_{\wtilde{\bomega_{EH, \epsilon_{i}}}}$. Moreover, by Proposition \ref{HKA preglueEH} we have that $g_{\wtilde{\bomega_{EH,\epsilon_{i}}}} = \epsilon_{i}^{2} R^{*}_{\frac{1}{\epsilon_{i}}} \paren*{g_{\lwhat{\bomega_{EH-0, \epsilon_{i}}}}}$.
						
						Therefore, just as in proving \textbf{Case 2} of Proposition \ref{HKA localWSE}, upon performing a conformal rescaling $g_{\bomega_{\epsilon_{i}}}\mapsto \frac{1}{\epsilon_{i}^{2}}g_{\bomega_{\epsilon_{i}}}$, we have that each Riemannian manifold $\paren*{\set*{\abs*{\cdot}^{g_{\bomega_{0}}}_{\pi^{-1}\paren*{\cS}} < 3\epsilon_{i}^{\frac{1}{2}}}, \frac{1}{\epsilon_{i}^{2}}g_{\bomega_{\epsilon_{i}}}}$ is isometric under $R_{\epsilon_{i}}$ to the Riemannian submanifold $\paren*{\set*{\abs*{\cdot}^{g_{\bomega_{0}}}_{S^{2}} < \frac{3}{\epsilon_{i}^{\frac{1}{2}}}}, g_{\lwhat{\bomega_{EH-0, \epsilon_{i}}}}} \subset \paren*{T^{*}S^{2},g_{\lwhat{\bomega_{EH-0, \epsilon_{i}}}}}$.

						Hence upon taking the limit as $i\nearrow \infty$, we have by Proposition \ref{HKA preglueEH BG interpolation!} that $\paren*{\set*{\abs*{\cdot}^{g_{\bomega_{0}}}_{S^{2}} < \frac{3}{\epsilon_{i}^{\frac{1}{2}}}}, g_{\lwhat{\bomega_{EH-0, \epsilon_{i}}}}}$ exhausts to $\paren*{T^{*}S^{2},g_{EH,1}}$ in the limit. More precisely, upon picking a point $p_{i} \in \set*{\abs*{\cdot}^{g_{\bomega_{0}}}_{\pi^{-1}\paren*{\cS}} < 3\epsilon_{i}^{\frac{1}{2}}}$ for each $i$, we have the following convergence in the pointed Gromov-Hausdorff topology: $$\paren*{\set*{\abs*{\cdot}^{g_{\bomega_{0}}}_{\pi^{-1}\paren*{\cS}} < 3\epsilon_{i}^{\frac{1}{2}}}, \frac{1}{\epsilon_{i}^{2}}g_{\bomega_{\epsilon_{i}}}, p_{i}} \xrightarrow{pGH} \paren*{T^{*}S^{2},g_{\bomega_{EH,1}}, p_{\infty}}$$

						Transferring over the weight function $\rho_{\epsilon_{i}}$ on $\set*{\abs*{\cdot}^{g_{\bomega_{0}}}_{\pi^{-1}\paren*{\cS}} < 3\epsilon_{i}^{\frac{1}{2}}}$ via the above isometry gives us $R^{*}_{\epsilon_{i}}\paren*{\rho_{\epsilon_{i}}}$ which satisfies $R^{*}_{\epsilon_{i}}\paren*{\frac{1}{\epsilon_{i}}\rho_{\epsilon_{i}}} = \begin{cases}
							\frac{1}{\epsilon_{i}} & \text{ on }\set*{\frac{1}{5\epsilon_{i}} \leq \abs*{\cdot}^{g_{\bomega_{0}}}_{S^{2}}}\\
							\abs*{\cdot}^{g_{\bomega_{0}}}_{S^{2}} & \text{ on }\set*{2\leq \abs*{\cdot}^{g_{\bomega_{0}}}_{S^{2}} \leq \frac{1}{8\epsilon_{i}^{\frac{1}{2}}}}\\
							1 & \text{ on }\set*{\abs*{\cdot}^{g_{\bomega_{0}}}_{S^{2}} \leq 1}
						\end{cases}$. Call $\wtilde{\rho_{\epsilon_{i}}} \coloneq R^{*}_{\epsilon_{i}}\paren*{\frac{1}{\epsilon_{i}}\rho_{\epsilon_{i}}}$ for notation.

						Therefore, as $\Abs*{\fa_{i}}_{C^{0}_{\delta, g_{\bomega_{\epsilon_{i}}}} \paren*{\Km_{\epsilon_{i}}}} \coloneq \Abs*{\rho_{\epsilon_{i}}^{-\delta}\fa_{i}}_{C^{0}_{g_{\bomega_{\epsilon_{i}}}} \paren*{\Km_{\epsilon_{i}}}} = \Abs*{\rho_{\epsilon_{i}}^{-\delta}\epsilon_{i}^{\delta} \epsilon_{i}^{-\delta}\fa_{i}}_{C^{0}_{g_{\bomega_{\epsilon_{i}}}} \paren*{\Km_{\epsilon_{i}}}}$, we must also scale our $1$-forms $\fa_{i}\mapsto \epsilon_{i}^{-\delta}\fa_{i}$ to preserve the weighted Holder norm upon transferring to $\paren*{\set*{\abs*{\cdot}^{g_{\bomega_{0}}}_{S^{2}} < \frac{3}{\epsilon_{i}^{\frac{1}{2}}}}, g_{\lwhat{\bomega_{EH-0, \epsilon_{i}}}}}$ (after first restricting to $\set*{\abs*{\cdot}^{g_{\bomega_{0}}}_{\pi^{-1}\paren*{\cS}} < 3\epsilon_{i}^{\frac{1}{2}}}$). Thus call $\wtilde{\fa_{i}} \coloneq \epsilon_{i}^{-\delta}\fa_{i}$ for notation.

						Therefore $\Abs*{\fa_{i}}_{C^{1,\alpha}_{\delta, g_{\bomega_{\epsilon_{i}}}} \paren*{\Km_{\epsilon_{i}}}} \leq 2\cC_{12}$ implies that we have the uniform bound $\Abs*{\wtilde{\fa_{i}}}_{C^{1,\alpha}_{\delta, g_{\lwhat{\bomega_{EH-0, \epsilon_{i}}}}} \paren*{\set*{\abs*{\cdot}^{g_{\bomega_{0}}}_{S^{2}} < \frac{3}{\epsilon_{i}^{\frac{1}{2}}}}}} \leq 2\cC_{12}$ where the weight function used is precisely $\wtilde{\rho_{\epsilon_{i}}}$. Whence since $T^{*}S^{2} = \bigcup_{i \in \bbN} \set*{\abs*{\cdot}^{g_{\bomega_{0}}}_{S^{2}} < \frac{3}{\epsilon_{i}^{\frac{1}{2}}}}$ is a nested exhaustion of precompact subsets, we therefore have by Arzela-Ascoli that (upon relabeling the original sequence of $\fa_{i}$ once more) \begin{gather*}
							\wtilde{\fa_{i}} \xrightarrow{C^{1,\alpha'}_{\delta, g_{\lwhat{\bomega_{EH-0, \epsilon_{i}}}}, loc}} \wtilde{\fa_{\infty}}\\
							\Abs*{\wtilde{\fa_{\infty}}}_{C^{1,\alpha'}_{\delta, g_{\bomega_{EH,1}}} \paren*{T^{*}S^{2}}} \leq 2\cC_{12}
						\end{gather*}
						
						with $0 < \alpha' < \alpha < 1$, for some $1$-form $\wtilde{\fa_{\infty}}$ on $T^{*}S^{2}$ and where the convergence $\xrightarrow{C^{1,\alpha'}_{\delta, g_{\lwhat{\bomega_{EH-0, \epsilon_{i}}}}, loc}}$ means convergence on compact subsets, and with $\Abs*{\wtilde{\fa_{\infty}}}_{C^{1,\alpha'}_{\delta, g_{\bomega_{EH,1}}} \paren*{T^{*}S^{2}}} \leq 2\cC_{12}$ following upon restricting to unit diameter compact subsets. Moreover, the weight function used in $C^{1,\alpha'}_{\delta, g_{\bomega_{EH,1}}} \paren*{T^{*}S^{2}}$ is precisely the limiting weight function, which (as before) is $\wtilde{\rho_{0}}$ the weight function used to define the weighted Holder spaces for $T^{*}S^{2}$ back in Sections \ref{Weighted Setup} and \ref{HKA Weighted Setup}.

						Playing the same game with $\Abs*{D_{g_{\bomega_{\epsilon_{i}}}}\fa_{i}}_{C^{0,\alpha}_{\delta - 1, g_{\bomega_{\epsilon_{i}}}} \paren*{\Km_{\epsilon_{i}}}} \leq \frac{1}{i}$, namely that this bound directly implies the bounds $\Abs*{D_{g_{\bomega_{\epsilon_{i}}}}\fa_{i}}_{C^{0,\alpha}_{\delta - 1, g_{\lwhat{\bomega_{EH-0, \epsilon_{i}}}}} \paren*{\set*{\abs*{\cdot}^{g_{\bomega_{0}}}_{S^{2}} < \frac{3}{\epsilon_{i}^{\frac{1}{2}}}}}} \leq \frac{1}{i}$, that we may take the limit as $i\nearrow \infty$ and get $$D_{g_{\bomega_{EH,1}}} \wtilde{\fa_{\infty}} = 0$$ on $\paren*{T^{*}S^{2},g_{\bomega_{EH,1}}}$ (since $D_{g_{\bomega_{EH,1}}} \wtilde{\fa_{\infty}} = 0$ on all compact subsets of $T^{*}S^{2}$, hence on all of $T^{*}S^{2}$ by a compact exhaustion), hence by Proposition \ref{HKA Dirac implies harmonic} we get $$\Delta_{g_{\bomega_{EH,1}}} \wtilde{\fa_{\infty}} = 0$$ and thus by elliptic regularity $\wtilde{\fa_{\infty}} \in \Omega^{1}\paren*{T^{*}S^{2}}$ is a smooth $1$-form as it is harmonic.

						Now $\Abs*{\wtilde{\fa_{\infty}}}_{C^{1,\alpha'}_{\delta, g_{\bomega_{EH,1}}} \paren*{T^{*}S^{2}}} \leq 2\cC_{12}$ means that $\wtilde{\fa_{\infty}}$ satisfies the decay $$\nabla_{g_{\bomega_{EH,1}}}^{j}\wtilde{\fa_{\infty}} = O_{g_{\bomega_{EH,1}}}\paren*{\wtilde{\rho_{0}}^{\delta - j}},\qqfa j\leq 1$$ hence by the properties of $\wtilde{\rho_{0}}$, we have that $\abs*{\wtilde{\fa_{\infty}}}_{g_{\bomega_{EH,1}}}$ hence $\wtilde{\fa_{\infty}}$ is bounded \textit{WRT the norm induced from $g_{\bomega_{EH,1}}$} on $\set*{\abs*{\cdot}^{g_{\bomega_{0}}}_{S^{2}} \leq 2}$ and satisfies $$\abs*{\nabla^{j}_{g_{\bomega_{EH,1}}}\wtilde{\fa_{\infty}}}_{g_{\bomega_{EH,1}}} \leq C_{16,j} \paren*{\abs*{\cdot}^{g_{\bomega_{0}}}_{S^{2}}}^{\delta - j},\qqfa j\leq 1$$ on $\set*{2\leq \abs*{\cdot}^{g_{\bomega_{0}}}_{S^{2}}}$ for some $C_{16,j} > 0$.
						
						But since $\delta < 0$, we have that $$\abs*{\wtilde{\fa_{\infty}}}_{g_{\bomega_{EH,1}}} \leq C_{6,0} \paren*{\abs*{\cdot}^{g_{\bomega_{0}}}_{S^{2}}}^{\delta} \leq C_{16,0} 2^{\delta} < \infty$$ on $\set*{2\leq \abs*{\cdot}^{g_{\bomega_{0}}}_{S^{2}}}$, and hence $\abs*{\wtilde{\fa_{\infty}}}_{g_{\bomega_{EH,1}}}$ is bounded on all of $T^{*}S^{2}$. Moreover, $\abs*{\wtilde{\fa_{\infty}}}_{g_{\bomega_{EH,1}}} \leq C_{16,0} \paren*{\abs*{\cdot}^{g_{\bomega_{0}}}_{S^{2}}}^{\delta}$ and $\delta < 0$ means that, \textit{since $\wtilde{\fa_{\infty}}$ is a $1$-form}, $\abs*{\wtilde{\fa_{\infty}}}_{g_{\bomega_{EH,1}}}\searrow 0$ as $\abs*{\cdot}^{g_{\bomega_{0}}}_{S^{2}} \nearrow \infty$. 
						
						Thus we have established that $\abs*{\wtilde{\fa_{\infty}}}_{g_{\bomega_{EH,1}}}$, hence $\abs*{\wtilde{\fa_{\infty}}}_{g_{\bomega_{EH,1}}}^{2}$, is bounded on all of $T^{*}S^{2}$ and $\searrow 0$ as $\abs*{\cdot}^{g_{\bomega_{0}}}_{S^{2}} \nearrow \infty$. Now from the \textbf{Bochner-Weitzenb\"{o}ck formula for $1$ forms} $-\frac{1}{2}\Delta_{g}\abs*{\alpha}_{g}^{2} = \abs*{\nabla_{g} \alpha}_{g}^{2} - \Inner*{\Delta_{g} \alpha,\alpha}_{g}   + \Ric_{g}\paren*{\alpha,\alpha}$, since the Eguchi-Hanson space $\paren*{T^{*}S^{2}, g_{\bomega_{EH,1}}}$ is Ricci-flat, we get from $\Delta_{g_{\bomega_{EH,1}}} \wtilde{\fa_{\infty}} = 0$/harmonicity of $\wtilde{\fa_{\infty}}$ that $$-\frac{1}{2}\Delta_{g_{\bomega_{EH,1}}} \abs*{\wtilde{\fa_{\infty}}}_{g_{\bomega_{EH,1}}}^{2} = \abs*{\nabla_{g_{\bomega_{EH,1}}}\wtilde{\fa_{\infty}}}_{g_{\bomega_{EH,1}}}^{2} \geq 0$$
						
						That is, $\abs*{\wtilde{\fa_{\infty}}}_{g_{\bomega_{EH,1}}}^{2}$ is \textbf{subharmonic} on $T^{*}S^{2}$ (recall that $\Delta_{g}$ has \textit{nonnegative spectrum}), and since $\abs*{\wtilde{\fa_{\infty}}}_{g_{\bomega_{EH,1}}}^{2}$ is bounded on all of $T^{*}S^{2}$ it therefore has a global maximum whence by the maximum principle $\abs*{\wtilde{\fa_{\infty}}}_{g_{\bomega_{EH,1}}}^{2}$ is \textbf{constant}, and since $\abs*{\wtilde{\fa_{\infty}}}_{g_{\bomega_{EH,1}}}^{2}\searrow 0$ as $\abs*{\cdot}^{g_{\bomega_{0}}}_{S^{2}} \nearrow \infty$, we therefore conclude $$\abs*{\wtilde{\fa_{\infty}}}_{g_{\bomega_{EH,1}}}^{2} = 0 \Longleftrightarrow \wtilde{\fa_{\infty}} = 0$$

						Therefore, we have that $\Abs*{\wtilde{\fa_{i}}}_{C^{1,\alpha'}_{\delta, g_{\lwhat{\bomega_{EH-0, \epsilon_{i}}}}}\paren*{K}} \searrow 0$, for all compact subsets $K\Subset \set*{\abs*{\cdot}^{g_{\bomega_{0}}}_{S^{2}} < \frac{3}{\epsilon_{i}^{\frac{1}{2}}}} \subset T^{*}S^{2}$. Whence transferring everything back to $\Km_{\epsilon_{i}}$ and using $\Abs*{\cdot}_{C^{0}_{\delta, g_{\bomega_{\epsilon_{i}}}}} \leq \Abs*{\cdot}_{C^{1,\alpha'}_{\delta,g_{\bomega_{\epsilon_{i}}}}}$, we thus have that $$\Abs*{\fa_{i}}_{C^{0}_{\delta, g_{\bomega_{\epsilon_{i}}}} } \searrow 0$$ holds for all compact subsets $K\Subset \set*{\abs*{\cdot}^{g_{\bomega_{0}}}_{\pi^{-1}\paren*{\cS}} < 3\epsilon_{i}^{\frac{1}{2}} } \subset \Km_{\epsilon_{i}}$.

					\end{enumerate} Hence we have shown that $$\begin{aligned}
						\Abs*{\fa_{i}}_{C^{0}_{\delta, g_{\bomega_{\epsilon_{i}}}} \paren*{K_{1}}} \searrow 0\\
						\Abs*{\fa_{i}}_{C^{0}_{\delta, g_{\bomega_{\epsilon_{i}}}} \paren*{K_{2}}} \searrow 0
					\end{aligned}$$ $\forall K_{1} \Subset \set*{2\epsilon_{i}^{\frac{1}{2}} < \abs*{\cdot}^{g_{\bomega_{0}}}_{\pi^{-1}\paren*{\cS}}}$ and $\forall K_{2} \Subset \set*{\abs*{\cdot}^{g_{\bomega_{0}}}_{\pi^{-1}\paren*{\cS}} < 3\epsilon_{i}^{\frac{1}{2}}}$ compact. Whence since $\Km_{\epsilon_{i}} \subset \set*{\abs*{\cdot}^{g_{\bomega_{0}}}_{\pi^{-1}\paren*{\cS}} < 3\epsilon_{i}^{\frac{1}{2}}} \cup \set*{2\epsilon_{i}^{\frac{1}{2}} < \abs*{\cdot}^{g_{\bomega_{0}}}_{\pi^{-1}\paren*{\cS}}}$, we therefore have that $$\Abs*{\fa_{i}}_{C^{0}_{\delta, g_{\bomega_{\epsilon_{i}}}} \paren*{K}} \searrow 0$$ on all compact subsets $K \Subset \Km_{\epsilon_{i}}$, particularly $K = \Km_{\epsilon_{i}}$. But this contradicts the fact that each $\Abs*{\fa_{i}}_{C^{0}_{\delta, g_{\bomega_{\epsilon_{i}}}} \paren*{\Km_{\epsilon_{i}}}} = 1$. Whence we have reached a contradiction to which the proof follows, as was to be shown. \end{proof}

				With these in hand, we may finally prove
				
				\begin{theorem}\label{HKA biginverse} For $\delta \in (-1,0)$ \textbf{and $\epsilon > 0$ from Data \ref{constant epsilon 1-16} sufficiently small}, the following is a \textbf{bounded linear isomorphism}:
					$$D_{0}\Phi_{\epsilon}:\paren*{C^{1,\alpha}_{\delta,g_{\bomega_{\epsilon}}}\paren*{\mathring{\Omega}^{1}_{g_{\bomega_{\epsilon}}}\paren*{\Km_{\epsilon}}}\oplus \cH^{+}_{g_{\bomega_{\epsilon}}}} \otimes \bbR^{3} \rightarrow C^{0,\alpha}_{\delta - 1,g_{\bomega_{\epsilon}}}\paren*{\Lambda_{g_{\bomega_{\epsilon}}}^{+}T^{*}\Km_{\epsilon}}\otimes\bbR^{3}$$ where $D_{0}\Phi_{\epsilon}= \paren*{d_{g_{\bomega_{\epsilon}}}^{+} \oplus \id_{\cH^{+}_{g_{\bomega_{\epsilon}}}}}\otimes \bbR^{3}$.

					More crucially, we have that the operator norm of the inverse is bounded by $\cL> 0$ a constant \textbf{independent of $\epsilon > 0$}:
					$$ \Abs*{\paren*{D_{0}\Phi_{\epsilon}}^{-1}} \leq \cL$$ That is, under these conditions, $$\forall \bm{t} \in C^{0,\alpha}_{\delta - 1,g_{\bomega_{\epsilon}}}\paren*{\Lambda_{g_{\bomega_{\epsilon}}}^{+}T^{*}\Km_{\epsilon}}\otimes\bbR^{3}$$ there exists a unique pair $$\paren*{\bm{a},\bm{\zeta}} \in \paren*{C^{1,\alpha}_{\delta,g_{\bomega_{\epsilon}}}\paren*{\mathring{\Omega}^{1}_{g_{\bomega_{\epsilon}}}\paren*{\Km_{\epsilon}}}\oplus \cH^{+}_{g_{\bomega_{\epsilon}}}} \otimes \bbR^{3}$$ such that $$D_{0}\Phi_{\epsilon}\paren*{\bm{a},\bm{\zeta}} = d^{+}_{g_{\bomega_{\epsilon}}}\bm{a} + \bm{\zeta} = \bm{t}$$ and $$ \Abs*{\bm{a}}_{C^{1,\alpha}_{\delta,g_{\bomega_{\epsilon}}}\paren*{\mathring{\Omega}^{1}_{g_{\bomega_{\epsilon}}}\paren*{\Km_{\epsilon}}} \otimes \bbR^{3}} + \Abs*{\bm{\zeta}}_{\cH^{+}_{g_{\bomega_{\epsilon}}} \otimes \bbR^{3}} \leq \cL \Abs*{\bm{t}}_{C^{0,\alpha}_{\delta - 1,g_{\bomega_{\epsilon}}}\paren*{\Lambda_{g_{\bomega_{\epsilon}}}^{+}T^{*}\Km_{\epsilon}}\otimes\bbR^{3}} $$\end{theorem}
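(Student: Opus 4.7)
The plan is to reduce the theorem to the uniform invertibility of the Dirac-type operator $D_{g_{\bomega_{\epsilon}}}$ already established in Proposition~\ref{HKA IWSE}, by using Hodge theory on the $K3$ surface to handle the finite-dimensional cokernel $\cH^{+}_{g_{\bomega_{\epsilon}}}\otimes\bbR^{3}$ separately. Since $\Km_{\epsilon}$ is diffeomorphic to $K3$ it is simply connected and $b_{1}(\Km_{\epsilon})=0$, so Proposition~\ref{HKA 4d Hodge theory} yields the $L^{2}_{g_{\bomega_{\epsilon}}}$-orthogonal decomposition $\Omega^{+}_{g_{\bomega_{\epsilon}}}(\Km_{\epsilon})=\cH^{+}_{g_{\bomega_{\epsilon}}}\oplus d^{+}_{g_{\bomega_{\epsilon}}}(\mathring{\Omega}^{1}_{g_{\bomega_{\epsilon}}})$ and $\ker D_{0}\Phi_{\epsilon}=\cH^{1}_{g_{\bomega_{\epsilon}}}\cap\mathring{\Omega}^{1}_{g_{\bomega_{\epsilon}}}=0$, so $D_{0}\Phi_{\epsilon}$ is bijective (tensoring with $\bbR^{3}$ preserves this). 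The remaining task is the uniform-in-$\epsilon$ bound.

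Given $\bm{t}$, I would define $\bm{\zeta}$ as its $L^{2}_{g_{\bomega_{\epsilon}}}$-orthogonal projection onto $\cH^{+}_{g_{\bomega_{\epsilon}}}\otimes\bbR^{3}$, so that $\Abs*{\bm{\zeta}}_{L^{2}}\leq \Abs*{\bm{t}}_{L^{2}}$. A H\"older-type estimate then gives
\[
\Abs*{\bm{t}}_{L^{2}_{g_{\bomega_{\epsilon}}}}^{2} \leq \Abs*{\bm{t}}_{C^{0}_{\delta-1,g_{\bomega_{\epsilon}}}}^{2}\int_{\Km_{\epsilon}}\rho_{\epsilon}^{\,2(\delta-1)}\,\dV_{g_{\bomega_{\epsilon}}},
\]
and using Proposition~\ref{HKA approximatemetricproperties} to compare $\dV_{g_{\bomega_{\epsilon}}}$ with $\dV_{g_{\bomega_{0}}}$, the integral reduces near each singular point of $\cS$ to a flat $4$-dimensional model $\int_{0}^{1/5}r^{2\delta+1}\,dr$ plus a uniformly bounded outer contribution. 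Finiteness at $r=0$ forces $\delta>-1$, which is exactly the hypothesized range $\delta\in(-1,0)$, and the resulting bound is manifestly $\epsilon$-independent. Since $\cH^{+}_{g_{\bomega_{\epsilon}}}=\spano_{\bbR}(\bomega_{\epsilon})$ is $3$-dimensional with basis $\bomega_{\epsilon}$ whose components have uniformly bounded weighted Holder norms (because $\rho_{\epsilon}^{1-\delta}\leq 1$ for $\delta<0$ and each $\omega_{\epsilon,i}$ is uniformly controlled in $C^{k}_{g_{\bomega_{\epsilon}}}$ by construction), this $L^{2}$-bound on $\bm{\zeta}$ upgrades to a uniform $C^{0,\alpha}_{\delta-1,g_{\bomega_{\epsilon}}}$-bound, whence $\Abs*{\bm{t}-\bm{\zeta}}_{C^{0,\alpha}_{\delta-1}}\leq C\Abs*{\bm{t}}_{C^{0,\alpha}_{\delta-1}}$.

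With $\bm{\zeta}$ under control, the $1$-form $\bm{a}$ satisfies $d^{*}_{g_{\bomega_{\epsilon}}}\bm{a}=0$ and $d^{+}_{g_{\bomega_{\epsilon}}}\bm{a}=\bm{t}-\bm{\zeta}$, so $D_{g_{\bomega_{\epsilon}}}\bm{a}=(0,\bm{t}-\bm{\zeta})$; applying Proposition~\ref{HKA IWSE} componentwise in $\bbR^{3}$ (noting $(-1,0)\subset(-2,0)$) yields $\Abs*{\bm{a}}_{C^{1,\alpha}_{\delta}}\leq\cC_{13}\Abs*{\bm{t}-\bm{\zeta}}_{C^{0,\alpha}_{\delta-1}}$. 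Chaining these estimates produces the claimed uniform constant $\cL$. The main obstacle will be the harmonic-projection step: maintaining uniform-in-$\epsilon$ control of $\bm{\zeta}$ in the weighted category while the metric $g_{\bomega_{\epsilon}}$ and the underlying space are both degenerating. The plan depends essentially on $b^{+}(K3)=3$ being fixed, on the explicit identification $\cH^{+}_{g_{\bomega_{\epsilon}}}=\spano_{\bbR}(\bomega_{\epsilon})$, and on the weight threshold $\delta>-1$ that makes $\rho_{\epsilon}^{\delta-1}$ square-integrable against the limiting $4$-dimensional volume uniformly in $\epsilon$ --- which is also why the hypothesis tightens from the $(-2,0)$ of Proposition~\ref{HKA IWSE} to $(-1,0)$ here.
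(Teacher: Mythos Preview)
Your proposal is correct and uses the same key ingredients as the paper --- the uniform integrability of $\rho_\epsilon^{2(\delta-1)}$ for $\delta>-1$, uniform weighted-H\"older control on the basis $\bomega_\epsilon$ of $\cH^+_{g_{\bomega_\epsilon}}$, and Proposition~\ref{HKA IWSE} --- though you package them as a direct decomposition while the paper runs the logically equivalent contradiction argument. One detail you leave implicit is that upgrading the $L^2$-bound on $\bm\zeta$ to a weighted H\"older bound requires not only uniform $C^{0,\alpha}_{\delta-1}$ bounds on the components of $\bomega_\epsilon$ but also uniform invertibility of their $L^2$-Gram matrix (so that the Gram--Schmidt basis is uniformly controlled); the paper checks this explicitly, computing $\langle\omega^{\epsilon}_m,\omega^{\epsilon}_n\rangle_{L^2}=2\delta_{mn}\Vol_{g_{\bomega_\epsilon}}(\Km_\epsilon)+O(\epsilon^4)$.
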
 \begin{proof}
					Firstly, the existence of a unique $\paren*{\bm{a},\bm{\zeta}}$ for any $\bm{t}$ is clear since the linearization $D_{0}\Phi_{\epsilon}$ is always an isomorphism hence bijective. We now just need to show the uniform operator norm estimate for $\paren*{D_{0}\Phi_{\epsilon}}^{-1}$. WLOG we prove this on the level of forms, whence by \textit{Remark} \ref{HKA weighted tuple notation} the estimate follows on $3$-tuples (up to changing the constant).
					
					
					So suppose we do not have a uniform constant $\cL_{1} > 0$ such that $$\Abs*{\fa}_{C^{1,\alpha}_{\delta,g_{\bomega_{\epsilon}}}\paren*{\mathring{\Omega}^{1}_{g_{\bomega_{\epsilon}}}\paren*{\Km_{\epsilon}}}} + \Abs*{\zeta}_{L^{2}_{g_{\bomega_{\epsilon}}}} \leq \cL_{1} \Abs*{\ft}_{C^{0,\alpha}_{\delta - 1,g_{\bomega_{\epsilon}}}\paren*{\Lambda_{g_{\bomega_{\epsilon}}}^{+}T^{*}\Km_{\epsilon}}} $$ for all $\ft$ such that $d^{+}_{g_{\bomega_{\epsilon}}} \fa + \zeta = \ft$. Then this implies by countable choice that $\exists \epsilon_{i} \in \paren*{0,\frac{1}{i}}$ whence $\exists \epsilon_{i} \searrow 0$ and $\exists \paren*{\fa_{i}, \zeta_{i}} \in C^{1,\alpha}_{\delta,g_{\bomega_{\epsilon_{i}}}}\paren*{\mathring{T^{*}\Km_{\epsilon_{i}}}}\oplus \cH^{+}_{g_{\bomega_{\epsilon_{i}}}}$ a sequence such that (after normalizing) we get a tuple of equations \begin{align*}
						\Abs*{\fa_{i}}_{C^{1,\alpha}_{\delta,g_{\bomega_{\epsilon_{i}}}}\paren*{\mathring{\Omega}^{1}_{g_{\bomega_{\epsilon_{i}}}}\paren*{\Km_{\epsilon_{i}}}}} + \Abs*{\zeta_{i}}_{L^{2}_{g_{\bomega_{\epsilon_{i}}}}}  &= 1\\
						\Abs*{  d^{+}_{g_{\bomega_{\epsilon_{i}}}}\fa_{i} + \zeta_{i}  }_{C^{0,\alpha}_{\delta - 1, g_{\bomega_{\epsilon_{i}}}} \paren*{\Lambda_{g_{\bomega_{\epsilon_{i}}}}^{+}T^{*}\Km_{\epsilon_{i}}}} &\leq \frac{1}{i}
					\end{align*} Thus as $i\nearrow \infty$ we clearly have that $$\Abs*{  d^{+}_{g_{\bomega_{\epsilon_{i}}}}\fa_{i} + \zeta_{i}  }_{C^{0,\alpha}_{\delta - 1, g_{\bomega_{\epsilon_{i}}}} \paren*{\Lambda_{g_{\bomega_{\epsilon_{i}}}}^{+}T^{*}\Km_{\epsilon_{i}}}} \searrow 0$$
					
					Now since each $\Km_{\epsilon_{i}}$ is \textit{closed}, being harmonic is equivalent to being closed \& co-closed by integration by parts. Whence using integration by parts and the fact that each $\zeta_{i}$ is both self-dual and harmonic, we have that $\Inner*{d^{+}_{g_{\bomega_{\epsilon_{i}}}}\fa_{i} + \zeta_{i}, \zeta_{i}}_{L^{2}_{g_{\bomega_{\epsilon_{i}}}}\paren*{\Km_{\epsilon_{i}}}} \coloneq \int_{\Km_{\epsilon_{i}}} \paren*{d^{+}_{g_{\bomega_{\epsilon_{i}}}}\fa_{i} + \zeta_{i}}\wedge *_{g_{\bomega_{\epsilon_{i}}}} \zeta_{i} = \int_{\Km_{\epsilon_{i}}} \paren*{d^{+}_{g_{\bomega_{\epsilon_{i}}}}\fa_{i} + \zeta_{i}}\wedge \zeta_{i} = \Abs*{\zeta_{i}}^{2}_{L^{2}_{g_{\bomega_{\epsilon_{i}}}}\paren*{\Km_{\epsilon_{i}}}}$. Now since $\Abs*{  d^{+}_{g_{\bomega_{\epsilon_{i}}}}\fa_{i} + \zeta_{i}  }_{C^{0,\alpha}_{\delta - 1, g_{\bomega_{\epsilon_{i}}}} \paren*{\Lambda_{g_{\bomega_{\epsilon_{i}}}}^{+}T^{*}\Km_{\epsilon_{i}}}} \leq \frac{1}{i}$, recalling the definition of the $\Abs*{\cdot}_{C^{0,\alpha}_{\delta - 1, g_{\bomega_{\epsilon_{i}}}}}$-norm we have that $$\abs*{d^{+}_{g_{\bomega_{\epsilon_{i}}}}\fa_{i} + \zeta_{i}}_{g_{\bomega_{\epsilon_{i}}}} \leq C_{17} \frac{1}{i} \rho_{\epsilon}^{\delta-1}$$
					
					Therefore by Cauchy-Schwartz applied to the \textit{pointwise} Hodge-inner product and Holder's inequality for $L^{2}_{g}$, $$\begin{aligned}
						0\leq \Abs*{\zeta_{i}}^{2}_{L^{2}_{g_{\bomega_{\epsilon_{i}}}}\paren*{\Km_{\epsilon_{i}}}} &= \Inner*{d^{+}_{g_{\bomega_{\epsilon_{i}}}}\fa_{i} + \zeta_{i}, \zeta_{i}}_{L^{2}_{g_{\bomega_{\epsilon_{i}}}}\paren*{\Km_{\epsilon_{i}}}}\\
						&= \int_{\Km_{\epsilon_{i}}} \paren*{d^{+}_{g_{\bomega_{\epsilon_{i}}}}\fa_{i} + \zeta_{i}}\wedge \zeta_{i}\\
						&= \int_{\Km_{\epsilon_{i}}} \paren*{d^{+}_{g_{\bomega_{\epsilon_{i}}}}\fa_{i} + \zeta_{i}, \zeta_{i}}_{g_{\bomega_{\epsilon}}} dV_{g_{\bomega_{\epsilon_{i}}}} \\
						&\leq C_{17}\frac{1}{i} \int_{\Km_{\epsilon_{i}}} \rho_{\epsilon_{i}}^{\delta-1}\abs*{\zeta_{i}}_{g_{\bomega_{\epsilon_{i}}}} dV_{g_{\bomega_{\epsilon_{i}}}}\\
						&\leq C_{17}\frac{1}{i} \Abs*{\zeta_{i}}_{L^{2}_{g_{\bomega_{\epsilon_{i}}}}\paren*{\Km_{\epsilon_{i}}}} \paren*{\int_{\Km_{\epsilon_{i}}} \rho_{\epsilon_{i}}^{2\delta - 2} dV_{g_{\bomega_{\epsilon_{i}}}}}^{\frac{1}{2}}\\
						&= C_{17}\frac{1}{i} \Abs*{\zeta_{i}}_{L^{2}_{g_{\bomega_{\epsilon_{i}}}}\paren*{\Km_{\epsilon_{i}}}} \Abs*{\rho_{\epsilon_{i}}^{\delta - 1}}_{L^{2}_{g_{\bomega_{\epsilon_{i}}}}\paren*{\Km_{\epsilon_{i}}}}\\
					\end{aligned}$$
					
					Now recall that $g_{\bomega_{\epsilon_{i}}}$ is Ricci-flat on $\set*{\abs*{\cdot}^{g_{\bomega_{0}}}_{\pi^{-1}\paren*{\cS}} \leq \epsilon_{i}^{\frac{1}{2}}}$ and that $\paren*{\Km_{\epsilon_{i}}, g_{\bomega_{\epsilon_{i}}}} \xrightarrow{GH} \paren*{\bbT^{4}/\bbZ_{2}, g_{\bomega_{0}}}$ is volume non-collapsed (\textit{Remark} \ref{HKA GHremark}). By Colding's Volume Convergence Theorem (Footnote \ref{Colding Volume Convergence Footnote}) on $\set*{\abs*{\cdot}^{g_{\bomega_{0}}}_{\pi^{-1}\paren*{\cS}} \leq \epsilon_{i}^{\frac{1}{2}}}$ and directly inspecting $\dV_{g_{\bomega_{\epsilon_{i}}}} = \dV_{g_{\bomega_{0}}} + O_{g_{\bomega_{0}}}\paren*{\epsilon_{i}^{2}}$ on $\set*{\epsilon_{i}^{\frac{1}{2}} \leq \abs*{\cdot}^{g_{\bomega_{0}}}_{\pi^{-1}\paren*{\cS}}}$ (or just directly applying Colding's Volume Convergence on all of $\Km_{\epsilon_{i}}$ since we have a global Ricci lower bound from \textit{Remark} \ref{HKA ricciestimates}), we have that $$\Abs*{\rho_{\epsilon_{i}}^{\delta - 1}}^{2}_{L^{2}_{g_{\bomega_{\epsilon_{i}}}}\paren*{\Km_{\epsilon_{i}}}} = \int_{\Km_{\epsilon_{i}}} \rho_{\epsilon_{i}}^{2\delta - 2} \dV_{g_{\bomega_{\epsilon_{i}}}} \rightarrow \int_{\bbT^{4}/\bbZ_{2}} \rho_{0}^{2\delta - 2}\dV_{g_{\bomega_{0}}} = \int_{\bbT^{4}/\bbZ_{2} - \cS} \rho_{0}^{2\delta - 2}\dV_{g_{\bomega_{0}}} = \frac{1}{2}\int_{\bbT^{4} - \cS} \rho_{0}^{2\delta - 2}\dV_{g_{\bomega_{0}}}$$ since $\cS$ has measure zero and $\lim_{i\nearrow\infty}\rho_{\epsilon_{i}} = \rho_{0}$ the limiting weight function is the weight function used to define the weighted Holder spaces on $\bbT^{4}$ and is manifestly even whence descends down $\bbT^{4} -\cS\overset{2:1}{\onto} \bbT^{4}/\bbZ_{2}- \cS$.

					Thus since $2\delta - 2 < 0 $ because $\delta < 0$ and since $\rho_{0} = \begin{cases}
						1 & \text{ on }\set*{\frac{1}{5}\leq \abs*{\cdot}^{g_{\bomega_{0}}}_{\cS}}\\
						\abs*{\cdot}^{g_{\bomega_{0}}}_{\cS} & \text{ on }\set*{\abs*{\cdot}^{g_{\bomega_{0}}}_{\cS} \leq \frac{1}{8}}
					\end{cases}$, we need to worry about controlling $\rho_{0}^{2\delta - 2}$ near $\cS$. Thus on $\set*{\abs*{\cdot}^{g_{\bomega_{0}}}_{\cS} \leq \frac{1}{8}}$, that upon using exponential polar coordinates WRT $g_{\bomega_{0}}$ centered around each point $p \in \cS$ we have that $\rho_{0} = \abs*{\cdot}^{g_{\bomega_{0}}}_{p}$ is the radial distance function of those polar coordinates, and whence $$\begin{aligned}
						\int_{\set*{\abs*{\cdot}^{g_{\bomega_{0}}}_{\cS} \leq \frac{1}{8}}} \rho_{0}^{2\delta - 2} \dV_{g_{0}} &= 16\int_{\set*{\abs*{\cdot}^{g_{\bomega_{0}}}_{p} \leq \frac{1}{8}}} \paren*{\abs*{\cdot}^{g_{\bomega_{0}}}_{p}}^{2\delta - 2}\dV_{g_{0}}\\
						&=  16 \int_{0}^{\frac{1}{8}} r^{2\delta - 2}r^{4 - 1} dr \int_{S^{3}}d\theta\\
						&= O\paren*{1} \int_{0}^{\frac{1}{8}} r^{2\delta + 1} dr= O\paren*{1} \Eval{\frac{r^{2\delta + 2}}{2\delta + 2}}{0}{\frac{1}{8}}= O\paren*{1} < \infty
					\end{aligned}$$ precisely because $0 < 2\delta + 2 \Longleftrightarrow -1 < \delta$. Whence we have that $\int_{\bbT^{4} - \cS} \rho_{0}^{2\delta - 2}\dV_{g_{\bomega_{0}}} = O\paren*{1}$, whence since we have thus shown that $$\Abs*{\rho_{\epsilon_{i}}^{\delta - 1}}^{2}_{L^{2}_{g_{\bomega_{\epsilon_{i}}}}\paren*{\Km_{\epsilon_{i}}}} \rightarrow \frac{1}{2} \Abs*{\rho_{0}^{\delta - 1}}^{2}_{L^{2}_{g_{\bomega_{0}}}\paren*{\bbT^{4} - \cS}} < \infty$$ that is, the sequence of real numbers $\set*{\Abs*{\rho_{\epsilon_{i}}^{\delta - 1}}^{2}_{L^{2}_{g_{\bomega_{\epsilon_{i}}}}\paren*{\Km_{\epsilon_{i}}}}}_{i} \subset \bbR$ is convergent, we therefore have that $$\Abs*{\rho_{\epsilon_{i}}^{\delta - 1}}^{2}_{L^{2}_{g_{\bomega_{\epsilon_{i}}}}\paren*{\Km_{\epsilon_{i}}}} \leq \cM$$ is \textit{uniformly} bounded above as convergent sequences are bounded. Whence $$ \Abs*{\zeta_{i}}_{L^{2}_{g_{\bomega_{\epsilon_{i}}}}\paren*{\Km_{\epsilon_{i}}}}\leq C_{17}\frac{1}{i} \cM^{\frac{1}{2}}$$ whence $$\Abs*{\zeta_{i}}_{L^{2}_{g_{\bomega_{\epsilon_{i}}}}\paren*{\Km_{\epsilon_{i}}}} \searrow 0$$ as $i\nearrow \infty$.
					
					Next, recall that our $3$-dimensional space of self-dual harmonic $2$-forms is $\cH^{+}_{g_{\bomega_{\epsilon_{i}}}} = \spano_{\bbR}\paren*{\bomega_{\epsilon_{i}}} = \spano_{\bbR}\set*{\omega^{\epsilon_{i}}_{1}, \omega^{\epsilon_{i}}_{2}, \omega^{\epsilon_{i}}_{3} }$. Now I claim that $\omega^{\epsilon_{i}}_{1}, \omega^{\epsilon_{i}}_{2}, \omega^{\epsilon_{i}}_{3} $ each have \textit{uniformly} bounded $C^{0,\alpha}_{\delta-1, g_{\bomega_{\epsilon_{i}}}}$-norm, where the bound is \textit{uniform in $i$}. Indeed, since $Q_{\bomega_{\epsilon_{i}}} = \id$ on $\set*{\abs*{\cdot}^{g_{\bomega_{0}}}_{\pi^{-1}\paren*{\cS}} \leq \epsilon_{i}^{\frac{1}{2}}} \sqcup \set*{2\epsilon_{i}^{\frac{1}{2}} \leq \abs*{\cdot}^{g_{\bomega_{0}}}_{\pi^{-1}\paren*{\cS}}}$, we have that $\bomega_{\epsilon_{i}} = \paren*{\omega^{\epsilon_{i}}_{1}, \omega^{\epsilon_{i}}_{2}, \omega^{\epsilon_{i}}_{3}}$ is \hka and thus each $\omega^{\epsilon_{i}}_{1}, \omega^{\epsilon_{i}}_{2}, \omega^{\epsilon_{i}}_{3}$ is parallel WRT $\nabla_{g_{\bomega_{\epsilon_{i}}}}$ and bounded WRT $g_{\bomega_{\epsilon_{i}}}$. Moreover we have from Proposition \ref{HKA approximatemetricproperties} that $\abs*{\bomega_{\epsilon_{i}} - \bomega_{0}}_{g_{\bomega_{0}}} \leq c_{1}(2) \epsilon_{i}^{2}$ aka componentwise $\bomega_{\epsilon_{i}} = \bomega_{0} + O_{g_{\bomega_{0}}}\paren*{\epsilon_{i}^{2}}$ on $\set*{\epsilon_{i}^{\frac{1}{2}}\leq \abs*{\cdot}^{g_{\bomega_{0}}}_{\pi^{-1}\paren*{\cS}} \leq 2\epsilon_{i}^{\frac{1}{2}}} $. Whence using a local ON frame calculation as well as the decay $g_{\bomega_{\epsilon_{i}}} = g_{\bomega_{0}} + O_{g_{\bomega_{0}}}\paren*{\epsilon_{i}^{2}}$ on $\set*{\epsilon_{i}^{\frac{1}{2}}\leq \abs*{\cdot}^{g_{\bomega_{0}}}_{\pi^{-1}\paren*{\cS}} \leq 2\epsilon_{i}^{\frac{1}{2}}} $, we compute that (the $3$ components of) $\abs*{\bomega_{\epsilon_{i}}}_{g_{\bomega_{\epsilon_{i}}}}$ is bounded uniformly in $i$ on the annular region $\set*{\epsilon_{i}^{\frac{1}{2}}\leq \abs*{\cdot}^{g_{\bomega_{0}}}_{\pi^{-1}\paren*{\cS}} \leq 2\epsilon_{i}^{\frac{1}{2}}}$. Combined with the fact that $0 < -\delta + 1 \Longleftrightarrow \delta < 1$ which we have because $\delta < 0$, we get that from $\epsilon_{i} \leq \rho_{\epsilon_{i}} \leq 1$ that $\rho_{\epsilon_{i}}^{-\delta + 1} \leq 1$ is uniformly bounded above, whence we conclude that each $\omega^{\epsilon_{i}}_{1}, \omega^{\epsilon_{i}}_{2}, \omega^{\epsilon_{i}}_{3} $ has \textit{uniformly} bounded $C^{0,\alpha}_{\delta-1, g_{\bomega_{\epsilon_{i}}}}$-norm, i.e. $\bomega_{\epsilon_{i}}$ has \textit{uniformly} bounded $C^{0,\alpha}_{\delta - 1,g_{\bomega_{\epsilon_{i}}}}\paren*{\Lambda_{g_{\bomega_{\epsilon_{i}}}}^{+}T^{*}\Km_{\epsilon_{i}}}\otimes\bbR^{3}$-norm from \textit{Remark} \ref{HKA weighted tuple notation}.

					Next up, for $m,n \in \set*{1,2,3}$ we have by definition of the associated intersection matrix $\paren*{Q_{\bomega_{\epsilon_{i}}}}_{mn} = \frac{1}{2}\omega^{\epsilon_{i}}_{m}\wedge  \omega^{\epsilon_{i}}_{n}$ and from Proposition \ref{HKA approximatemetricproperties} the fact that $Q_{\bomega_{\epsilon_{i}}} = \id + O\paren*{\epsilon_{i}^{2}}$ on $\set*{\epsilon_{i}^{\frac{1}{2}} \leq \abs*{\cdot}^{g_{\bomega_{0}}}_{\pi^{-1}\paren*{\cS}} \leq 2\epsilon_{i}^{\frac{1}{2}}}$, that $\dV_{g_{\bomega_{\epsilon_{i}}}} = \paren*{1+O\paren*{\epsilon_{i}^{2}}}\dV_{g_{\bomega_{0}}}$ on $\set*{\epsilon_{i}^{\frac{1}{2}} \leq \abs*{\cdot}^{g_{\bomega_{0}}}_{\pi^{-1}\paren*{\cS}}}$ (with straight up equality with $\dV_{g_{\bomega_{0}}}$ on $\set*{2\epsilon_{i}^{\frac{1}{2}} \leq \abs*{\cdot}^{g_{\bomega_{0}}}_{\pi^{-1}\paren*{\cS}}}$), that $$\begin{aligned}\Inner*{\omega^{\epsilon_{i}}_{m}, \omega^{\epsilon_{i}}_{n}}_{L^{2}_{g_{\bomega_{\epsilon_{i}}}}} &= \int_{\Km_{\epsilon_{i}}} \omega^{\epsilon_{i}}_{m}\wedge *_{g_{\bomega_{\epsilon_{i}}}} \omega^{\epsilon_{i}}_{n} \\
						&= \int_{\Km_{\epsilon_{i}}} \omega^{\epsilon_{i}}_{m}\wedge  \omega^{\epsilon_{i}}_{n} \\
						&= \int_{\set*{\abs*{\cdot}^{g_{\bomega_{0}}}_{\pi^{-1}\paren*{\cS}} \leq \epsilon_{i}^{\frac{1}{2}}}}\omega^{\epsilon_{i}}_{m}\wedge  \omega^{\epsilon_{i}}_{n} + \int_{\set*{\epsilon_{i}^{\frac{1}{2}} \leq \abs*{\cdot}^{g_{\bomega_{0}}}_{\pi^{-1}\paren*{\cS}} \leq 2\epsilon_{i}^{\frac{1}{2}}}}\omega^{\epsilon_{i}}_{m}\wedge  \omega^{\epsilon_{i}}_{n} + \int_{\set*{2\epsilon_{i}^{\frac{1}{2}} \leq \abs*{\cdot}^{g_{\bomega_{0}}}_{\pi^{-1}\paren*{\cS}}}}\omega^{\epsilon_{i}}_{m}\wedge  \omega^{\epsilon_{i}}_{n} \\ 
						&=  2\int_{\set*{\abs*{\cdot}^{g_{\bomega_{0}}}_{\pi^{-1}\paren*{\cS}} \leq \epsilon_{i}^{\frac{1}{2}}}}\delta_{mn}\dV_{g_{\bomega_{\epsilon_{i}}}} + 2\int_{\set*{\epsilon_{i}^{\frac{1}{2}} \leq \abs*{\cdot}^{g_{\bomega_{0}}}_{\pi^{-1}\paren*{\cS}} \leq 2\epsilon_{i}^{\frac{1}{2}}}}\paren*{\delta_{mn}+ O\paren*{\epsilon_{i}^{2}}}\dV_{g_{\bomega_{\epsilon_{i}}}} + 2\int_{\set*{2\epsilon_{i}^{\frac{1}{2}} \leq \abs*{\cdot}^{g_{\bomega_{0}}}_{\pi^{-1}\paren*{\cS}}}}\delta_{mn}\dV_{g_{\bomega_{\epsilon_{i}}}}\\ 
						&= 2\delta_{mn} \Vol_{g_{\bomega_{\epsilon_{i}}}}\paren*{\Km_{\epsilon_{i}}} + 2\int_{\set*{\epsilon_{i}^{\frac{1}{2}} \leq \abs*{\cdot}^{g_{\bomega_{0}}}_{\pi^{-1}\paren*{\cS}} \leq 2\epsilon_{i}^{\frac{1}{2}}}} O\paren*{\epsilon_{i}^{2}}\paren*{1+O\paren*{\epsilon_{i}^{2}}}\dV_{g_{\bomega_{0}}} \\ &= 2\delta_{mn} \Vol_{g_{\bomega_{\epsilon_{i}}}}\paren*{\Km_{\epsilon_{i}}} + O\paren*{\epsilon_{i}^{2}}\Vol_{g_{\bomega_{0}}}\paren*{\set*{\epsilon_{i}^{\frac{1}{2}} \leq \abs*{\cdot}^{g_{\bomega_{0}}}_{\pi^{-1}\paren*{\cS}} \leq 2\epsilon_{i}^{\frac{1}{2}}}} \\
						&= 2\delta_{mn} \Vol_{g_{\bomega_{\epsilon_{i}}}}\paren*{\Km_{\epsilon_{i}}} + O\paren*{\epsilon_{i}^{4}} < \infty \end{aligned}$$ and where this bound is \textit{uniform in $i$} since $\Km_{\epsilon_{i}}$ is volume non-collapsed and has uniformly bounded Ricci (\textit{Remark} \ref{HKA ricciestimates}) whence Colding's Volume Convergence Theorem tells us that $\Vol_{g_{\bomega_{\epsilon_{i}}}}\paren*{\Km_{\epsilon_{i}}} \to \Vol_{g_{\bomega_{0}}}\paren*{\bbT^{4}/\bbZ_{2}} < \infty$.

					Whence we have shown that $$ \Abs*{\omega^{\epsilon_{i}}_{l}}_{C^{0,\alpha}_{\delta - 1, g_{\bomega_{\epsilon_{i}}}} \paren*{\Lambda_{g_{\bomega_{\epsilon_{i}}}}^{+}T^{*}\Km_{\epsilon_{i}}}}, \Inner*{\omega^{\epsilon_{i}}_{m}, \omega^{\epsilon_{i}}_{n}}_{L^{2}_{g_{\bomega_{\epsilon_{i}}}}} \qqfa l, m,n \in \set*{1,2,3}$$ are all \textit{uniformly} bounded in $i$,

					Now let us produce via Gram-Schmidt an $L^{2}_{g_{\bomega_{\epsilon_{i}}}}$-orthonormal basis $\overline{\omega^{\epsilon_{i}}_{1}}, \overline{\omega^{\epsilon_{i}}_{2}}, \overline{\omega^{\epsilon_{i}}_{3}}$ of $\cH^{+}_{g_{\bomega_{\epsilon_{i}}}}$ from $\omega^{\epsilon_{i}}_{1}, \omega^{\epsilon_{i}}_{2}, \omega^{\epsilon_{i}}_{3} $. Because each $\overline{\omega^{\epsilon_{i}}_{1}}, \overline{\omega^{\epsilon_{i}}_{2}}, \overline{\omega^{\epsilon_{i}}_{3}}$ consists of linear combinations of $\omega^{\epsilon_{i}}_{1}, \omega^{\epsilon_{i}}_{2}, \omega^{\epsilon_{i}}_{3}$ with coefficients involving $L^{2}_{g_{\bomega_{\epsilon_{i}}}}$-inner products $\Inner*{\omega^{\epsilon_{i}}_{m}, \omega^{\epsilon_{i}}_{n}}_{L^{2}_{g_{\bomega_{\epsilon_{i}}}}}, \forall m,n \in \set*{1,2,3}$, we have that $$\Abs*{  \overline{\omega^{\epsilon_{i}}_{m}}  }_{C^{0,\alpha}_{\delta - 1, g_{\bomega_{\epsilon_{i}}}} \paren*{\Lambda_{g_{\bomega_{\epsilon_{i}}}}^{+}T^{*}\Km_{\epsilon_{i}}}} \qqfa m \in \set*{1,2,3}$$ are all \textit{uniformly} bounded in $i$.

					Therefore, upon writing out $\zeta_{i} = \lambda^{i}_{1}\overline{\omega^{\epsilon_{i}}_{1}} + \lambda^{i}_{2}\overline{\omega^{\epsilon_{i}}_{2}} + \lambda^{i}_{3} \overline{\omega^{\epsilon_{i}}_{3}}$ where each $\lambda^{i}_{k} = \int_{\Km_{\epsilon_{i}}} \zeta_{i} \wedge \overline{\omega^{\epsilon_{i}}_{k}} \in \bbR$, we have that $$\Abs*{\zeta_{i}}^{2}_{L^{2}_{g_{\bomega_{\epsilon_{i}}}}\paren*{\Km_{\epsilon_{i}}}} = \paren*{\lambda^{i}_{1}}^{2} + \paren*{\lambda^{i}_{2}}^{2} + \paren*{\lambda^{i}_{3}}^{2} $$ whence $$\lambda^{i}_{1}, \lambda^{i}_{2}, \lambda^{i}_{3} \to 0$$ as $i\nearrow \infty$.

					Whence by the triangle inequality we get $$\begin{aligned}
						\Abs*{  d^{+}_{g_{\bomega_{\epsilon_{i}}}}\fa_{i}  }_{C^{0,\alpha}_{\delta - 1, g_{\bomega_{\epsilon_{i}}}} \paren*{\Lambda_{g_{\bomega_{\epsilon_{i}}}}^{+}T^{*}\Km_{\epsilon_{i}}}} &= \Abs*{  d^{+}_{g_{\bomega_{\epsilon_{i}}}}\fa_{i} + \zeta_{i} - \zeta_{i} }_{C^{0,\alpha}_{\delta - 1, g_{\bomega_{\epsilon_{i}}}} \paren*{\Lambda_{g_{\bomega_{\epsilon_{i}}}}^{+}T^{*}\Km_{\epsilon_{i}}}} \\
						&\leq \Abs*{  d^{+}_{g_{\bomega_{\epsilon_{i}}}}\fa_{i} + \zeta_{i}  }_{C^{0,\alpha}_{\delta - 1, g_{\bomega_{\epsilon_{i}}}} \paren*{\Lambda_{g_{\bomega_{\epsilon_{i}}}}^{+}T^{*}\Km_{\epsilon_{i}}}} + \Abs*{  \zeta_{i}  }_{C^{0,\alpha}_{\delta - 1, g_{\bomega_{\epsilon_{i}}}} \paren*{\Lambda_{g_{\bomega_{\epsilon_{i}}}}^{+}T^{*}\Km_{\epsilon_{i}}}} \\
						&\leq \frac{1}{i} + \Abs*{  \zeta_{i}  }_{C^{0,\alpha}_{\delta - 1, g_{\bomega_{\epsilon_{i}}}} \paren*{\Lambda_{g_{\bomega_{\epsilon_{i}}}}^{+}T^{*}\Km_{\epsilon_{i}}}} = \frac{1}{i} + \Abs*{  \lambda^{i}_{1}\overline{\omega^{\epsilon_{i}}_{1}} + \lambda^{i}_{2}\overline{\omega^{\epsilon_{i}}_{2}} + \lambda^{i}_{3} \overline{\omega^{\epsilon_{i}}_{3}}  }_{C^{0,\alpha}_{\delta - 1, g_{\bomega_{\epsilon_{i}}}} \paren*{\Lambda_{g_{\bomega_{\epsilon_{i}}}}^{+}T^{*}\Km_{\epsilon_{i}}}}\\
						&\leq \frac{1}{i} + \abs*{\lambda^{i}_{1}} \Abs*{  \overline{\omega^{\epsilon_{i}}_{1}}  }_{C^{0,\alpha}_{\delta - 1, g_{\bomega_{\epsilon_{i}}}} \paren*{\Lambda_{g_{\bomega_{\epsilon_{i}}}}^{+}T^{*}\Km_{\epsilon_{i}}}} + \abs*{\lambda^{i}_{2}} \Abs*{  \overline{\omega^{\epsilon_{i}}_{2}}  }_{C^{0,\alpha}_{\delta - 1, g_{\bomega_{\epsilon_{i}}}} \paren*{\Lambda_{g_{\bomega_{\epsilon_{i}}}}^{+}T^{*}\Km_{\epsilon_{i}}}} \\
						& + \abs*{\lambda^{i}_{3}} \Abs*{  \overline{\omega^{\epsilon_{i}}_{3}}  }_{C^{0,\alpha}_{\delta - 1, g_{\bomega_{\epsilon_{i}}}} \paren*{\Lambda_{g_{\bomega_{\epsilon_{i}}}}^{+}T^{*}\Km_{\epsilon_{i}}}}\\
						&\leq \frac{1}{i} + O\paren*{\abs*{\lambda^{i}_{1}}} + O\paren*{\abs*{\lambda^{i}_{2}}} + O\paren*{\abs*{\lambda^{i}_{3}}}
					\end{aligned}$$ where we've used the fact that each $\Abs*{  \overline{\omega^{\epsilon_{i}}_{m}}  }_{C^{0,\alpha}_{\delta - 1, g_{\bomega_{\epsilon_{i}}}} \paren*{\Lambda_{g_{\bomega_{\epsilon_{i}}}}^{+}T^{*}\Km_{\epsilon_{i}}}}$ is uniformly bounded in $i$ to absorb those terms into $O\paren*{\abs*{\lambda^{i}_{m}}} $ as harmless constants.

					Therefore since $\Abs*{\fa_{i}}_{C^{1,\alpha}_{\delta,g_{\bomega_{\epsilon_{i}}}}\paren*{\mathring{\Omega}^{1}_{g_{\bomega_{\epsilon_{i}}}}\paren*{\Km_{\epsilon_{i}}}}} = 1 -  \Abs*{\zeta_{i}}_{L^{2}_{g_{\bomega_{\epsilon_{i}}}}}$, we have upon normalizing each $\fa_{i}$ to have unit $C^{1,\alpha}_{\delta,g_{\bomega_{\epsilon_{i}}}}\paren*{\mathring{\Omega}^{1}_{g_{\bomega_{\epsilon_{i}}}}\paren*{\Km_{\epsilon_{i}}}}$-norm since $\Abs*{\zeta_{i}}_{L^{2}_{g_{\bomega_{\epsilon_{i}}}}} \searrow 0$, we have finally shown that $$\begin{aligned}
						\Abs*{\fa_{i}}_{C^{1,\alpha}_{\delta,g_{\bomega_{\epsilon_{i}}}}\paren*{\mathring{\Omega}^{1}_{g_{\bomega_{\epsilon_{i}}}}\paren*{\Km_{\epsilon_{i}}}}} &= 1\\
						\Abs*{  d^{+}_{g_{\bomega_{\epsilon_{i}}}}\fa_{i}  }_{C^{0,\alpha}_{\delta - 1, g_{\bomega_{\epsilon_{i}}}} \paren*{\Lambda_{g_{\bomega_{\epsilon_{i}}}}^{+}T^{*}\Km_{\epsilon_{i}}}} &\searrow 0
					\end{aligned}$$ as $i\nearrow \infty$. But this flat out contradicts Proposition \ref{HKA IWSE} since $\Abs*{\fa_{i}}_{C^{1,\alpha}_{\delta,g_{\bomega_{\epsilon_{i}}}}\paren*{\mathring{\Omega}^{1}_{g_{\bomega_{\epsilon_{i}}}}\paren*{\Km_{\epsilon_{i}}}}} = \Abs*{\fa_{i}}_{C^{1,\alpha}_{\delta,g_{\bomega_{\epsilon_{i}}}}\paren*{T^{*}\Km_{\epsilon_{i}}}}$ with $d^{*}_{g_{\bomega_{\epsilon_{i}}}}\fa_{i} = 0$, whence we have reached a contradiction, as was to be shown.\end{proof}

				\subsection{\textsection \ Finishing the Proof \& Discussion \ \textsection}\label{HKA Finishing the proof}

				Recall from Section \ref{HKA Nonlinear Setup} our nonlinear problem: \begin{align*}
					\Phi_{\epsilon}:\paren*{C^{1,\alpha}_{\delta,g_{\bomega_{\epsilon}}}\paren*{\mathring{\Omega}^{1}_{g_{\bomega_{\epsilon}}}\paren*{\Km_{\epsilon}}}\oplus \cH^{+}_{g_{\bomega_{\epsilon}}}} \otimes \bbR^{3} &\rightarrow C^{0,\alpha}_{\delta - 1,g_{\bomega_{\epsilon}}}\paren*{\Lambda_{g_{\bomega_{\epsilon}}}^{+}T^{*}\Km_{\epsilon}}\otimes\bbR^{3}\\
					\paren*{\bm{a},\bm{\zeta}} &\mapsto d_{g_{\bomega_{\epsilon}}}^{+}\bm{a}+ \bm{\zeta} - \cF\paren*{\paren*{- Q_{\bomega_{\epsilon}} - d^{-}_{g_{\bomega_{\epsilon}}}\bm{a} * d^{-}_{g_{\bomega_{\epsilon}}}\bm{a}}_{0}}\bomega_{\epsilon}
				\end{align*} where we solve \begin{align*}
					\Phi_{\epsilon}\paren*{\bm{a},\bm{\zeta}} &\coloneq	d_{g_{\bomega_{\epsilon}}}^{+}\bm{a}+ \bm{\zeta} - \cF\paren*{\paren*{- Q_{\bomega_{\epsilon}} - d^{-}_{g_{\bomega_{\epsilon}}}\bm{a} * d^{-}_{g_{\bomega_{\epsilon}}}\bm{a}}_{0}}\bomega_{\epsilon}\\
					&= \underbrace{-\cF\paren*{ \paren*{- Q_{\bomega_{\epsilon}}}_{0} }\bomega_{\epsilon}}_{=\Phi_{\epsilon}(0,0)} + \underbrace{d_{g_{\bomega_{\epsilon}}}^{+}\bm{a}+ \bm{\zeta}}_{=D_{0}\Phi_{\epsilon}\paren*{\bm{a},\bm{\zeta}}} + \underbrace{\cF\paren*{\paren*{- Q_{\bomega_{\epsilon}}}_{0}}\bomega_{\epsilon} - \cF\paren*{\paren*{- Q_{\bomega_{\epsilon}} - d^{-}_{g_{\bomega_{\epsilon}}}\bm{a} * d^{-}_{g_{\bomega_{\epsilon}}}\bm{a}}_{0}}\bomega_{\epsilon}}_{\text{nonlinearity}}\\
					&= 0
				\end{align*}
				
				Applying Theorem \ref{IFT}, we now arrive at: \begin{theorem}[Big Theorem II]\label{bigtheorem2} \textbf{Let $\delta \in (-1,0)$.} \textbf{Let $\epsilon > 0$ from Data \ref{constant epsilon introduction}} be\begin{enumerate}
						\item sufficiently small so that each pre-glued Eguchi-Hanson metric from Propositions \ref{HKA preglueEH BG interpolation!} and \ref{HKA preglueEH} are \hka and satisfy their annular estimates, and $\epsilon < 1$ to satisfy the volume form estimates.
						\item $3\epsilon^{\frac{1}{2}} < \frac{1}{2}$ from Data \ref{constant epsilon patching and WLOG lattice} for the construction of $\Km_{\epsilon}$.
						\item $2\epsilon^{\frac{1}{2}} < \frac{3}{25}$ from Data \ref{constant epsilon 1-16} for the construction of $\rho_{\epsilon}$.
						\item sufficiently small for the weighted Schauder estimate from Corollary \ref{HKA WSE}.
						\item sufficiently small for the improved Schauder estimate from Proposition \ref{HKA IWSE}.
						\item sufficiently small for the the uniform bounded linear isomorphism $D_{0}\Phi_{\epsilon}:\paren*{C^{1,\alpha}_{\delta,g_{\bomega_{\epsilon}}}\paren*{\mathring{\Omega}^{1}_{g_{\bomega_{\epsilon}}}\paren*{\Km_{\epsilon}}}\oplus \cH^{+}_{g_{\bomega_{\epsilon}}}} \otimes \bbR^{3} \rightarrow C^{0,\alpha}_{\delta - 1,g_{\bomega_{\epsilon}}}\paren*{\Lambda_{g_{\bomega_{\epsilon}}}^{+}T^{*}\Km_{\epsilon}}\otimes\bbR^{3}$ where $D_{0}\Phi_{\epsilon}= \paren*{d_{g_{\bomega_{\epsilon}}}^{+} \oplus \id_{\cH^{+}_{g_{\bomega_{\epsilon}}}}}\otimes \bbR^{3}$ from Theorem \ref{HKA biginverse}.
						\item $\epsilon^{\frac{3}{2} + \frac{\delta}{2}} < \frac{1}{4L^{2}C_{18}C_{19}}$ for $L, C_{18}, C_{19} > 0$ constants which will be defined in the proof.
					\end{enumerate}

					Then for each such $\epsilon > 0$, the equation $\Phi_{\epsilon}\paren*{\bm{a},\bm{\zeta}} = 0$ has a \textbf{unique} solution $\paren*{\bm{a},\bm{\zeta}}$ satisfying:
					
					\begin{itemize}
						\item $\bm{a} \in \Omega^{1}\paren*{\Km_{\epsilon}}\otimes \bbR^{3}$ (\textbf{smoothness})
						\item $\Abs*{\paren*{\bm{a},\bm{\zeta}}}_{\paren*{C^{1,\alpha}_{\delta,g_{\bomega_{\epsilon}}}\paren*{\mathring{\Omega}^{1}_{g_{\bomega_{\epsilon}}}\paren*{\Km_{\epsilon}}}\oplus \cH^{+}_{g_{\bomega_{\epsilon}}}} \otimes \bbR^{3}} \leq 2L \Abs*{\Phi_{\epsilon}\paren*{0,0}}_{C^{0,\alpha}_{\delta - 1,g_{\bomega_{\epsilon}}}\paren*{\Lambda_{g_{\bomega_{\epsilon}}}^{+}T^{*}\Km_{\epsilon}}\otimes\bbR^{3}} < C_{20}\epsilon^{2 + \frac{1}{2} - \frac{\delta}{2}}$ for $C_{20} > 0$ chosen so that $C_{20} > 2L C_{19}$ (\textbf{smallness of norm})
					\end{itemize}

					Hence by Section \ref{HKA prelims} and Ansatz \ref{HKA perturbation ansatz}, for each such $\epsilon > 0$ we have that $\bomega_{\epsilon} + d\bm{a} + \bm{\zeta} \in [\bomega_{\epsilon}]$ is the \textbf{unique} \hka triple perturbed from the given closed and ``almost \hka triple'' $\bomega_{\epsilon}$ on $\Km_{\epsilon}$, whence gives us a \hka metric $g_{\bomega_{\epsilon} + d\bm{a} + \bm{\zeta}}$. 
				\end{theorem}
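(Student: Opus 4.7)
The plan is to apply the implicit function theorem (Theorem \ref{IFT}) in the exact same spirit as the proof of Theorem \ref{bigtheorem1}, but using the new Banach spaces $X \coloneq \paren*{C^{1,\alpha}_{\delta,g_{\bomega_{\epsilon}}}\paren*{\mathring{\Omega}^{1}_{g_{\bomega_{\epsilon}}}\paren*{\Km_{\epsilon}}}\oplus \cH^{+}_{g_{\bomega_{\epsilon}}}} \otimes \bbR^{3}$ and $Y \coloneq C^{0,\alpha}_{\delta - 1,g_{\bomega_{\epsilon}}}\paren*{\Lambda_{g_{\bomega_{\epsilon}}}^{+}T^{*}\Km_{\epsilon}}\otimes\bbR^{3}$, and using the scheme of the nonlinear decomposition from Section \ref{HKA Nonlinear Setup}. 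The three pieces of input that need to be verified are: (i) a uniform right inverse for $D_{0}\Phi_{\epsilon}$, (ii) a Lipschitz bound on the nonlinearity, and (iii) smallness of $\Phi_{\epsilon}\paren*{0,0}$. The first ingredient is already handled by Theorem \ref{HKA biginverse}, which gives a genuine two-sided inverse with operator norm bounded by $\cL$ independent of $\epsilon>0$; we set $L \coloneq \cL$.

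For the Lipschitz bound on the nonlinearity $\cN\paren*{\bm{a},\bm{\zeta}} = \cF\paren*{\paren*{- Q_{\bomega_{\epsilon}}}_{0}}\bomega_{\epsilon} - \cF\paren*{\paren*{- Q_{\bomega_{\epsilon}} - d^{-}_{g_{\bomega_{\epsilon}}}\bm{a} * d^{-}_{g_{\bomega_{\epsilon}}}\bm{a}}_{0}}\bomega_{\epsilon}$, I will use the smoothness (on a uniform neighborhood) of the map $\cF$ constructed via the inverse function theorem in Section \ref{HKA prelims}, combined with Proposition \ref{weighted products} applied with $l = 1$ to the product $d^{-}_{g_{\bomega_{\epsilon}}}\bm{a} * d^{-}_{g_{\bomega_{\epsilon}}}\bm{a}$ (which costs a factor of $\epsilon^{\delta - 1}$ in the $C^{0,\alpha}_{\delta - 1,g_{\bomega_{\epsilon}}}$-norm). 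Since $\cN$ is quadratic-plus-higher in $\bm{a}$ and is independent of $\bm{\zeta}$, and since $\bomega_{\epsilon}$ has uniformly bounded $C^{0,\alpha}_{\delta-1, g_{\bomega_{\epsilon}}}$-norm (as verified in the proof of Theorem \ref{HKA biginverse}), this yields a bound of the form $\Abs*{\cN(\bm{a},\bm{\zeta}) - \cN(\bm{b},\bm{\xi})}_{Y} \leq C_{18}\epsilon^{\delta-1} \Abs*{(\bm{a},\bm{\zeta}) - (\bm{b},\bm{\xi})}_{X}\paren*{\Abs*{(\bm{a},\bm{\zeta})}_{X} + \Abs*{(\bm{b},\bm{\xi})}_{X}}$ on a ball of radius $r_{0}$, to be determined. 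So $N \coloneq C_{18}\epsilon^{\delta-1}$.

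For the smallness of $\Phi_{\epsilon}(0,0) = -\cF\paren*{\paren*{-Q_{\bomega_{\epsilon}}}_{0}}\bomega_{\epsilon}$, note $\cF(0) = 0$ (since $A=0$ solves the defining equation when $S = 0$), $\cF$ is Lipschitz near the origin, and by Proposition \ref{HKA approximatemetricproperties} we have $\supp\paren*{Q_{\bomega_{\epsilon}} - \id} \subseteq \set*{\epsilon^{\frac{1}{2}} \leq \abs*{\cdot}^{g_{\bomega_{0}}}_{\pi^{-1}\paren*{\cS}} \leq 2\epsilon^{\frac{1}{2}}}$ with $\abs*{Q_{\bomega_{\epsilon}} - \id} \leq \cE_{1}\epsilon^{2}$ there, hence the same for the trace-free part $\paren*{-Q_{\bomega_{\epsilon}}}_{0}$. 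Combined with the \textit{uniform} bound $\epsilon^{\frac{1}{2}} \leq \rho_{\epsilon} \leq 2\epsilon^{\frac{1}{2}}$ on that annular support (the same estimate used in Theorem \ref{bigtheorem1}), we obtain $\Abs*{\Phi_{\epsilon}(0,0)}_{Y} \leq C_{19}\epsilon^{2} \cdot \epsilon^{\frac{1-\delta}{2}} = C_{19}\epsilon^{\frac{5}{2} - \frac{\delta}{2}}$. Choosing $r_{0} \coloneq C_{20}\epsilon^{\frac{5}{2} - \frac{\delta}{2}}$ with $C_{20} > 2LC_{19}$ and $r \coloneq 2LC_{19}\epsilon^{\frac{5}{2} - \frac{\delta}{2}}$, the condition $r < \min\set*{r_{0}, \frac{1}{2NL}}$ reduces to $\epsilon^{\frac{3}{2}+\frac{\delta}{2}} < \frac{1}{4L^{2}C_{18}C_{19}}$, which is achievable for $\epsilon>0$ sufficiently small precisely because $\frac{3}{2}+\frac{\delta}{2} > 0$ when $\delta \in (-1,0)$ (this is exactly where the restricted rate range enters quantitatively).

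Theorem \ref{IFT} then produces a unique $(\bm{a},\bm{\zeta}) \in D^{X}_{2L\Abs*{\Phi_{\epsilon}(0,0)}_{Y}}(0) \subset D^{X}_{r_{0}}(0)$ solving $\Phi_{\epsilon}(\bm{a},\bm{\zeta}) = 0$, with the stated norm bound. Smoothness of $\bm{a}$ follows from bootstrapping elliptic regularity: the gauge-fixed system $\paren*{D_{g_{\bomega_{\epsilon}}}\bm{a}, \bm{\zeta}}$ equals a smooth nonlinear expression in $\bm{a}$ whose linearization is the elliptic operator $D_{g_{\bomega_{\epsilon}}}\oplus \id_{\cH^{+}}$ (with $\bm{\zeta}$ automatically smooth since it is harmonic); starting from $C^{1,\alpha}$ regularity we iteratively upgrade. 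Finally, by the construction of Section \ref{HKA prelims} and Ansatz \ref{HKA perturbation ansatz}, the closed definite triple $\bomega_{\epsilon} + d\bm{a} + \bm{\zeta}$ satisfies $Q_{\bomega_{\epsilon}+d\bm{a}+\bm{\zeta}} = \id$, hence is the desired \hka triple in the cohomology class $[\bomega_{\epsilon}]$. The main technical obstacle is the Lipschitz step (ii): the map $\cF$ is only implicitly defined and acts pointwise on trace-free symmetric matrix-valued tensors, so care must be taken to track the $\epsilon$-dependence of its derivatives uniformly over $\Km_{\epsilon}$, and to verify that $Q_{\bomega_{\epsilon}}$ stays uniformly close enough to $\id$ (which is Proposition \ref{HKA approximatemetricproperties}) so that the IFT-based construction of $\cF$ is valid on the relevant neighborhood independently of $\epsilon$.
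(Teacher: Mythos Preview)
Your proposal is correct and follows essentially the same approach as the paper's proof: both verify the three inputs for Theorem \ref{IFT} via Theorem \ref{HKA biginverse} for the inverse bound $L=\cL$, the pointwise Lipschitz property of $\cF$ together with Proposition \ref{weighted products} (with $l=1$) for the nonlinearity constant $N=C_{18}\epsilon^{\delta-1}$, and the support/annular estimate from Proposition \ref{HKA approximatemetricproperties} combined with $\epsilon^{1/2}\leq\rho_{\epsilon}\leq 2\epsilon^{1/2}$ on the gluing annulus for the bound $\Abs*{\Phi_{\epsilon}(0,0)}_{Y}\leq C_{19}\epsilon^{5/2-\delta/2}$, with the same choice of $r_{0},r$ and the same final smallness condition $\epsilon^{3/2+\delta/2}<\frac{1}{4L^{2}C_{18}C_{19}}$.
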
 \begin{proof} We setup the input needed for the implicit function theorem (Theorem \ref{IFT}). \begin{enumerate}
						\item We have from Theorem \ref{HKA biginverse} that $D_{0}\Phi_{\epsilon}:\paren*{C^{1,\alpha}_{\delta,g_{\bomega_{\epsilon}}}\paren*{\mathring{\Omega}^{1}_{g_{\bomega_{\epsilon}}}\paren*{\Km_{\epsilon}}}\oplus \cH^{+}_{g_{\bomega_{\epsilon}}}} \otimes \bbR^{3} \rightarrow C^{0,\alpha}_{\delta - 1,g_{\bomega_{\epsilon}}}\paren*{\Lambda_{g_{\bomega_{\epsilon}}}^{+}T^{*}\Km_{\epsilon}}\otimes\bbR^{3}$ is a bounded linear isomorphism with $\Abs*{\paren*{D_{0}\Phi_{\epsilon}}^{-1}} \leq \cL \eqcolon L$ and $L>0$ \textit{uniform in $\epsilon > 0$}.
						\item Let $r_{1} > 0$ be some fixed positive number which will be specified later. Equip $\Sym_{0}^{2}\paren*{\bbR^{3}}$ with the Frobenius norm $\Abs*{M}_{\text{F}} \coloneq \paren*{\sum_{i,j} \abs*{M_{ij}}^{2}}^{\frac{1}{2}} = \paren*{\Tr\paren*{M^{T}M}}^{\frac{1}{2}}$. Recall that $\cF: \Sym_{0}^{2}\paren*{\bbR^{3}} \rightarrow \Sym_{0}^{2}\paren*{\bbR^{3}}$ is a smooth map. Whence precomposing $\cF$ by a translation, say $\cF\paren*{M_{1} + \cdot}$ for $M_{1} \in \Sym_{0}^{2}\paren*{\bbR^{3}}$, remains smooth on $\Sym_{0}^{2}\paren*{\bbR^{3}}$. Therefore by the mean value theorem, we have that $\Abs*{\cF\paren*{M_{1} + A_{1}} - \cF\paren*{M_{1} + A_{2}}}_{\text{F}} \leq \cG\Abs*{A_{1} - A_{2}}_{\text{F}}$ for when $A_{1}, A_{2} \in D_{r_{1}}\paren*{0} \subset \Sym_{0}^{2}\paren*{\bbR^{3}}$ the closed disk of radius $r_{1}>0$ around the origin WRT the Frobenius norm, which is \textit{compact} since $\Sym_{0}^{2}\paren*{\bbR^{3}}$ is finite dimensional, whence the constant $\cG > 0$ is \textit{uniform} in $A_{1},A_{2}\in D_{r_{1}}\paren*{0}$. 
						
						In our case, since $\cN\paren*{\bm{a},\bm{\zeta}} = \cN\paren*{\bm{a}} \coloneq \cF\paren*{\paren*{- Q_{\bomega_{\epsilon}}}_{0}}\bomega_{\epsilon} - \cF\paren*{\paren*{- Q_{\bomega_{\epsilon}} - d^{-}_{g_{\bomega_{\epsilon}}}\bm{a} * d^{-}_{g_{\bomega_{\epsilon}}}\bm{a}}_{0}}\bomega_{\epsilon}$ only depends on $\bm{a}$ and that $\cN\paren*{\bm{a_{1}}} - \cN\paren*{\bm{a_{2}}} = \paren*{\cF\paren*{\paren*{- Q_{\bomega_{\epsilon}} - d^{-}_{g_{\bomega_{\epsilon}}}\bm{a_{2}} * d^{-}_{g_{\bomega_{\epsilon}}}\bm{a_{2}}}_{0}} - \cF\paren*{\paren*{- Q_{\bomega_{\epsilon}} - d^{-}_{g_{\bomega_{\epsilon}}}\bm{a_{1}} * d^{-}_{g_{\bomega_{\epsilon}}}\bm{a_{1}}}_{0}}}\bomega_{\epsilon}$, we thus have upon letting $M_{1} \coloneq - \paren*{Q_{\bomega_{\epsilon}}}_{0}$, $A_{1} \coloneq - \paren*{d^{-}_{g_{\bomega_{\epsilon}}}\bm{a_{1}} * d^{-}_{g_{\bomega_{\epsilon}}}\bm{a_{1}}}_{0}$, and $A_{2} \coloneq - \paren*{d^{-}_{g_{\bomega_{\epsilon}}}\bm{a_{2}} * d^{-}_{g_{\bomega_{\epsilon}}}\bm{a_{2}}}_{0}$, that \textit{pointwise} in $\Sym_{0}^{2}\paren*{\bbR^{3}}$ we have that $$\begin{aligned}
							\abs*{\cN\paren*{\bm{a_{1}}} - \cN\paren*{\bm{a_{2}}}}_{g_{\epsilon}\otimes \bbR^{3}} &= \abs*{ \paren*{\cF\paren*{\paren*{- Q_{\bomega_{\epsilon}} - d^{-}_{g_{\bomega_{\epsilon}}}\bm{a_{1}} * d^{-}_{g_{\bomega_{\epsilon}}}\bm{a_{1}}}_{0}} - \cF\paren*{\paren*{- Q_{\bomega_{\epsilon}} - d^{-}_{g_{\bomega_{\epsilon}}}\bm{a_{2}} * d^{-}_{g_{\bomega_{\epsilon}}}\bm{a_{2}}}_{0}}}\bomega_{\epsilon}}_{g_{\epsilon}\otimes \bbR^{3}}\\
							&\leq \Abs*{\cF\paren*{\paren*{- Q_{\bomega_{\epsilon}} - d^{-}_{g_{\bomega_{\epsilon}}}\bm{a_{1}} * d^{-}_{g_{\bomega_{\epsilon}}}\bm{a_{1}}}_{0}} - \cF\paren*{\paren*{- Q_{\bomega_{\epsilon}} - d^{-}_{g_{\bomega_{\epsilon}}}\bm{a_{2}} * d^{-}_{g_{\bomega_{\epsilon}}}\bm{a_{2}}}_{0}}}_{\text{F}} \abs*{\bomega_{\epsilon}}_{g_{\epsilon}\otimes \bbR^{3}} \\
							&\leq \cG \Abs*{\paren*{d^{-}_{g_{\bomega_{\epsilon}}}\bm{a_{1}} * d^{-}_{g_{\bomega_{\epsilon}}}\bm{a_{1}} - d^{-}_{g_{\bomega_{\epsilon}}}\bm{a_{2}} * d^{-}_{g_{\bomega_{\epsilon}}}\bm{a_{2}}}_{0}}_{\text{F}}\abs*{\bomega_{\epsilon}}_{g_{\epsilon}\otimes \bbR^{3}}\\
							&\leq \cG \Abs*{d^{-}_{g_{\bomega_{\epsilon}}}\bm{a_{1}} * d^{-}_{g_{\bomega_{\epsilon}}}\bm{a_{1}} - d^{-}_{g_{\bomega_{\epsilon}}}\bm{a_{2}} * d^{-}_{g_{\bomega_{\epsilon}}}\bm{a_{2}}}_{\text{F}}\abs*{\bomega_{\epsilon}}_{g_{\epsilon}\otimes \bbR^{3}}
						\end{aligned}$$ where $\abs*{\bm{v}}_{g_{\epsilon}\otimes \bbR^{3}} \coloneq \paren*{\abs*{v_{1}}_{g_{\epsilon}} + \abs*{v_{2}}_{g_{\epsilon}} + \abs*{v_{3}}_{g_{\epsilon}} }^{\frac{1}{2}}$ (recall \textit{Remark} \ref{HKA weighted tuple notation}), that $\Abs*{M}_{\text{F}, \cA} \coloneq \paren*{\sum_{i,j} \abs*{M_{ij}}_{\cA}^{2}}^{\frac{1}{2}}$ the Frobenius norm coupled with the norm $\Abs*{\cdot}_{\cA}$, that $\Abs*{M}_{\text{F}, \cA} = \Abs*{M}_{\cA \otimes \bbR^{3}}$ for a $3\times 1$ or $1\times 3$ form valued matrix $M$, $\Abs*{AB}_{\text{F}}\leq \Abs*{A}_{\text{F}}\Abs*{B}_{\text{F}}$, and $\Abs*{A_{0}}_{\text{F}}\leq \Abs*{A}_{\text{F}}$.
						
						Now recall that $\abs*{\dV_{g}}_{g} = 1$ directly implies that $\abs*{\Theta}_{g} = \abs*{\frac{\Theta}{\dV_{g}} \dV_{g}}_{g}=  \abs*{\frac{\Theta}{\dV_{g}}}$ for any $4$-form $\Theta$ and that $\bm{\gamma} * \bm{\gamma} \coloneq \frac{1}{2} \frac{\bm{\gamma} \wedge\bm{\gamma}^{T}}{\mu_{\bomega}} = \frac{1}{2} \frac{\bm{\gamma}\wedge \bm{\gamma}^{T}}{\dV_{g_{\bomega}}}$ for any triple $\bm{\gamma}$ of $2$-forms. Thus $\abs*{\bm{\gamma}*\bm{\gamma}}_{ij} = \frac{1}{2} \abs*{\frac{\gamma_{i}\wedge \gamma_{j}}{\dV_{g_{\bomega}}}} = \frac{1}{2} \abs*{\gamma_{i}\wedge \gamma_{j}}_{g_{\bomega}} = \frac{1}{2}\abs*{\paren*{\bm{\gamma}\wedge \bm{\gamma}^{T}}_{ij}}_{g_{\bomega_{\epsilon}}}$, and since $AA^{T} - BB^{T} = \fS\sqparen*{\paren*{A-B}\paren*{A+B}^{T}}$ where $\fS\sqparen*{M} \coloneq \frac{M+M^{T}}{2}$ is the symmetrization map, upon using the definition of the Frobenius norm and continuity of the symmetrization map we have that $$\begin{aligned}
							\Abs*{d^{-}_{g_{\bomega_{\epsilon}}}\bm{a_{1}} * d^{-}_{g_{\bomega_{\epsilon}}}\bm{a_{1}} - d^{-}_{g_{\bomega_{\epsilon}}}\bm{a_{2}} * d^{-}_{g_{\bomega_{\epsilon}}}\bm{a_{2}}}_{\text{F}} &= \Abs*{d^{-}_{g_{\bomega_{\epsilon}}}\bm{a_{1}} \wedge \paren*{d^{-}_{g_{\bomega_{\epsilon}}}\bm{a_{1}}}^{T} - d^{-}_{g_{\bomega_{\epsilon}}}\bm{a_{2}} \wedge \paren*{d^{-}_{g_{\bomega_{\epsilon}}}\bm{a_{2}}}^{T}}_{\text{F}, g_{\bomega_{\epsilon}}}\\
							&= \Abs*{\fS\sqparen*{\paren*{d^{-}_{g_{\bomega_{\epsilon}}}\bm{a_{1}} - d^{-}_{g_{\bomega_{\epsilon}}}\bm{a_{2}}}\wedge \paren*{d^{-}_{g_{\bomega_{\epsilon}}}\bm{a_{1}} + d^{-}_{g_{\bomega_{\epsilon}}}\bm{a_{2}}}^{T}}}_{\text{F}, g_{\bomega_{\epsilon}}}\\
							&\leq \Abs*{\fS}_{\text{op}}\Abs*{\paren*{d^{-}_{g_{\bomega_{\epsilon}}}\bm{a_{1}} - d^{-}_{g_{\bomega_{\epsilon}}}\bm{a_{2}}}\wedge \paren*{d^{-}_{g_{\bomega_{\epsilon}}}\bm{a_{1}} + d^{-}_{g_{\bomega_{\epsilon}}}\bm{a_{2}}}^{T}}_{\text{F}, g_{\bomega_{\epsilon}}}
						\end{aligned}$$
						
						Thus we have the \textit{pointwise} inequality $$\abs*{\cN\paren*{\bm{a_{1}}} - \cN\paren*{\bm{a_{2}}}}_{g_{\epsilon}\otimes \bbR^{3}} \leq \cG \Abs*{\fS}_{\text{op}}\abs*{\bomega_{\epsilon}}_{g_{\epsilon}\otimes \bbR^{3}}\Abs*{\paren*{d^{-}_{g_{\bomega_{\epsilon}}}\bm{a_{1}} - d^{-}_{g_{\bomega_{\epsilon}}}\bm{a_{2}}}\wedge \paren*{d^{-}_{g_{\bomega_{\epsilon}}}\bm{a_{1}} + d^{-}_{g_{\bomega_{\epsilon}}}\bm{a_{2}}}^{T}}_{\text{F}, g_{\bomega_{\epsilon}}} $$ Thus we multiply each side by $\rho_{\epsilon}^{-\delta + 1}$, and upon recalling that $\abs*{\bomega_{\epsilon}}_{C^{0}_{g_{\epsilon}\otimes \bbR^{3}}}$ is uniformly bounded in $\epsilon$ from the proof of Theorem \ref{HKA biginverse} and the definition of the weighted Holder norm, we finally have upon taking $\sup_{\Km_{\epsilon}}$ on both sides and arguing all of the above in a similar fashion for the Holder seminorm that $$\Abs*{\cN\paren*{\bm{a_{1}}} - \cN\paren*{\bm{a_{2}}}}_{C^{0,\alpha}_{\delta - 1,g_{\bomega_{\epsilon}}}\paren*{\Lambda_{g_{\bomega_{\epsilon}}}^{+}T^{*}\Km_{\epsilon}}\otimes\bbR^{3}} \leq \cG_{1} \Abs*{\paren*{d^{-}_{g_{\bomega_{\epsilon}}}\bm{a_{1}} - d^{-}_{g_{\bomega_{\epsilon}}}\bm{a_{2}}}\wedge \paren*{d^{-}_{g_{\bomega_{\epsilon}}}\bm{a_{1}} + d^{-}_{g_{\bomega_{\epsilon}}}\bm{a_{2}}}^{T}}_{\text{F}, C^{0,\alpha}_{\delta - 1, g_{\bomega_{\epsilon}}}}$$ for some \textit{uniform} constant $\cG_{1}> 0$ and with the $\Abs*{\cdot}_{\text{F}, C^{0,\alpha}_{\delta - 1, g_{\bomega_{\epsilon}}}}$-norm having the obvious meaning. Thus, using Proposition \ref{weighted products} with $l = 1$ on the product in the RHS term, noting the definition of the weighted Holder norms (namely that the weight is $\rho_{\epsilon}^{-\delta + 1}$ for the $\nabla_{g_{\bomega_{\epsilon}}}$-term), applying the triangle inequality, and adding the remaining terms in the norm of $\paren*{C^{1,\alpha}_{\delta,g_{\bomega_{\epsilon}}}\paren*{\mathring{\Omega}^{1}_{g_{\bomega_{\epsilon}}}\paren*{\Km_{\epsilon}}}\oplus \cH^{+}_{g_{\bomega_{\epsilon}}}} \otimes \bbR^{3}$ onto the RHS (all of which are \textit{nonnegative}), we finally get that $$\begin{aligned}
							&\Abs*{\cN\paren*{\bm{a_{1}}, \bm{\zeta_{1}}} - \cN\paren*{\bm{a_{2}}, \bm{\zeta_{2}}}}_{C^{0,\alpha}_{\delta - 1,g_{\bomega_{\epsilon}}}\paren*{\Lambda_{g_{\bomega_{\epsilon}}}^{+}T^{*}\Km_{\epsilon}}\otimes\bbR^{3}} \\
							&\leq C_{18}\epsilon^{\delta - 1} \Abs*{\paren*{\bm{a_{1}}, \bm{\zeta_{1}}} - \paren*{\bm{a_{2}}, \bm{\zeta_{2}}}}_{\paren*{C^{1,\alpha}_{\delta,g_{\bomega_{\epsilon}}}\paren*{\mathring{\Omega}^{1}_{g_{\bomega_{\epsilon}}}\paren*{\Km_{\epsilon}}}\oplus \cH^{+}_{g_{\bomega_{\epsilon}}}} \otimes \bbR^{3}}\\
							&\quad \cdot \paren*{\Abs*{\paren*{\bm{a_{1}}, \bm{\zeta_{1}}}}_{\paren*{C^{1,\alpha}_{\delta,g_{\bomega_{\epsilon}}}\paren*{\mathring{\Omega}^{1}_{g_{\bomega_{\epsilon}}}\paren*{\Km_{\epsilon}}}\oplus \cH^{+}_{g_{\bomega_{\epsilon}}}} \otimes \bbR^{3}} + \Abs*{\paren*{\bm{a_{2}}, \bm{\zeta_{2}}}}_{\paren*{C^{1,\alpha}_{\delta,g_{\bomega_{\epsilon}}}\paren*{\mathring{\Omega}^{1}_{g_{\bomega_{\epsilon}}}\paren*{\Km_{\epsilon}}}\oplus \cH^{+}_{g_{\bomega_{\epsilon}}}} \otimes \bbR^{3}}}
						\end{aligned}$$ for some \textit{uniform} constant $C_{18}>0$.

						Lastly, we in fact set $r_{1}\coloneq r^{2}_{0}$ and have that this above inequality makes sense only when $$\paren*{\bm{a_{1}}, \bm{\zeta_{1}}}, \paren*{\bm{a_{2}}, \bm{\zeta_{2}}} \in D_{r_{0}}^{\paren*{C^{1,\alpha}_{\delta,g_{\bomega_{\epsilon}}}\paren*{\mathring{\Omega}^{1}_{g_{\bomega_{\epsilon}}}\paren*{\Km_{\epsilon}}}\oplus \cH^{+}_{g_{\bomega_{\epsilon}}}} \otimes \bbR^{3}}\paren*{0}$$ because by definition of the weighted Holder norm, $\rho_{\epsilon}\leq 1$, and $0 < -\delta + 1$, we have that $\Abs*{\rho_{\epsilon}^{-\delta + 1}\nabla_{g_{\bomega_{\epsilon}}} \bm{a}}_{C^{0}_{g_{\bomega_{\epsilon}}}\paren*{\Km_{\epsilon}}\otimes\bbR^{3}} = \Abs*{\rho_{\epsilon}^{-\delta + 1}}_{C^{0}\paren*{\Km_{\epsilon}}}\Abs*{\nabla_{g_{\bomega_{\epsilon}}} \bm{a}}_{C^{0}_{g_{\bomega_{\epsilon}}}\paren*{\Km_{\epsilon}}\otimes\bbR^{3}}  = \Abs*{\nabla_{g_{\bomega_{\epsilon}}} \bm{a}}_{C^{0}_{g_{\bomega_{\epsilon}}}\paren*{\Km_{\epsilon}}\otimes\bbR^{3}} $ and $$\Abs*{\nabla_{g_{\bomega_{\epsilon}}} \bm{a}}_{C^{0}_{g_{\bomega_{\epsilon}}}\paren*{\Km_{\epsilon}}\otimes\bbR^{3}} \leq \Abs*{\paren*{\bm{a},\bm{\zeta}}}_{\paren*{C^{1,\alpha}_{\delta,g_{\bomega_{\epsilon}}}\paren*{\mathring{\Omega}^{1}_{g_{\bomega_{\epsilon}}}\paren*{\Km_{\epsilon}}}\oplus \cH^{+}_{g_{\bomega_{\epsilon}}}} \otimes \bbR^{3}} \leq r_{0}$$
						
						and for $\bm{\gamma}$ a triple of $2$-forms $$\Abs*{\bm{\gamma}*\bm{\gamma}}_{\text{F}} = \Abs*{\bm{\gamma}\wedge \bm{\gamma}^{T}}_{\text{F}, g} \leq \Abs*{\bm{\gamma}}^{2}_{\text{F}, g} = \Abs*{\bm{\gamma}}^{2}_{g\otimes \bbR^{3}}$$
						
						Thus we have that $N \coloneq C_{18}\epsilon^{\delta - 1}$ and $r_{0} > 0$ to be determined later. 
						\item Since $\Phi_{\epsilon}\paren*{0,0} = -\cF\paren*{ \paren*{- Q_{\bomega_{\epsilon}}}_{0} }\bomega_{\epsilon}$, we have by the same arguments as for establishing the nonlinearity estimate above coupled with the fact that $\cF\paren*{0} = 0$ that $$\begin{aligned}
							\Abs*{\Phi_{\epsilon}\paren*{0,0}}_{C^{0,\alpha}_{\delta - 1,g_{\bomega_{\epsilon}}}\paren*{\Lambda_{g_{\bomega_{\epsilon}}}^{+}T^{*}\Km_{\epsilon}}\otimes\bbR^{3}}
							&\leq \cG_{2}\Abs*{\paren*{Q_{\bomega_{\epsilon}}}_{0}}_{\text{F}, C^{0,\alpha}_{\delta - 1,g_{\bomega_{\epsilon}}}}\\
							&= \cG_{2}\Abs*{Q_{\bomega_{\epsilon}} - \frac{1}{3} \Tr\paren*{Q_{\bomega_{\epsilon}}}\cdot \id}_{\text{F}, C^{0,\alpha}_{\delta - 1,g_{\bomega_{\epsilon}}}}\\
							&= \cG_{2}\Abs*{\id + O\paren*{\epsilon^{2}} - \paren*{1+O\paren*{\epsilon^{2}}}\id }_{\text{F}, C^{0,\alpha}_{\delta - 1,g_{\bomega_{\epsilon}}}}\\
							&= \cG_{2}\Abs*{O\paren*{\epsilon^{2}}}_{\text{F}, C^{0,\alpha}_{\delta - 1,g_{\bomega_{\epsilon}}}}\\
							&\leq C_{19} \epsilon^{2 - \frac{\delta - 1}{2}} \\
							&= C_{19} \epsilon^{2 + \frac{1}{2}  - \frac{\delta}{2}}
						\end{aligned}$$ where we have used the fact that $Q_{\bomega_{\epsilon}} = \id + O\paren*{\epsilon^{2}}$ hence $\Tr\paren*{Q_{\bomega_{\epsilon}}} = \paren*{1+O\paren*{\epsilon^{2}}}\Tr\paren*{\id} = \paren*{1+O\paren*{\epsilon^{2}}}\cdot 3$ holding both componentwise and globally on $\Km_{\epsilon}$ (Proposition \ref{HKA approximatemetricproperties}), as well as the fact that $\supp \paren*{Q_{\bomega_{\epsilon}}}_{0} \subseteq \set*{\epsilon^{\frac{1}{2}} \leq \abs*{\cdot}^{g_{\bomega_{0}}}_{\pi^{-1}\paren*{\cS}} \leq 2\epsilon^{\frac{1}{2}}}$ since outside that region $Q_{\bomega_{\epsilon}} = \id$, whence the factor of $\delta$ comes in because on $\set*{\epsilon^{\frac{1}{2}} \leq \abs*{\cdot}^{g_{\bomega_{0}}}_{\pi^{-1}\paren*{\cS}} \leq 2\epsilon^{\frac{1}{2}}}$ our weight function $\rho_{\epsilon}$ when combined with Data \ref{constant epsilon 1-16} (namely $2\epsilon < \epsilon^{\frac{1}{2}}$) satisfies $\epsilon^{\frac{1}{2}} \leq \rho_{\epsilon} \leq 2\epsilon^{\frac{1}{2}}$ whence since $0 < -\delta + 1$ we have that $\rho_{\epsilon}^{-\delta + 1} = O\paren*{\epsilon^{\frac{-\delta + 1}{2}}}$.
						
						All of this holds for some \textit{uniform} constant $C_{19}> 0$.
					\end{enumerate}

					Set $r_{0} \coloneq C_{20} \epsilon^{2+\frac{1}{2} - \frac{\delta}{2}}$ with $C_{20} > 0$ chosen so that $C_{20} > 2LC_{19}$. We now need to show that $$\Abs*{\Phi_{\epsilon}\paren*{0,0}}_{C^{0,\alpha}_{\delta - 1,g_{\bomega_{\epsilon}}}\paren*{\Lambda_{g_{\bomega_{\epsilon}}}^{+}T^{*}\Km_{\epsilon}}\otimes\bbR^{3}}\leq \frac{r}{2L}$$ with $r < \min\set*{r_{0}, \frac{1}{2NL}} = \min\set*{C_{20} \epsilon^{2+\frac{1}{2} - \frac{\delta}{2}}, \frac{\epsilon^{1-\delta}}{2LC_{18}}}$. We have that \begin{align*}
						\Abs*{\Phi_{\epsilon}\paren*{0,0}}_{C^{0,\alpha}_{\delta - 1,g_{\bomega_{\epsilon}}}\paren*{\Lambda_{g_{\bomega_{\epsilon}}}^{+}T^{*}\Km_{\epsilon}}\otimes\bbR^{3}}\leq C_{19}\epsilon^{2+\frac{1}{2}-\frac{\delta}{2}} = \frac{2LC_{19}\epsilon^{2+\frac{1}{2}-\frac{\delta}{2}}}{2L}
					\end{align*} and so we choose our $r \coloneq 2LC_{19}\epsilon^{2+\frac{1}{2}-\frac{\delta}{2}}$. Clearly $r < r_{0}$, and it remains to show that $r < \frac{\epsilon^{1-\delta}}{2LC_{18}}$. But $\epsilon^{2+\frac{1}{2} - \frac{\delta}{2}} = \epsilon^{1 - \delta + P}$ where $P \coloneq \frac{3}{2} + \frac{\delta}{2}$, which is \textit{positive} because $\delta \in \paren*{-1,0}$. Hence since $x > y \Rightarrow c^{x} < c^{y}$ for $c \in (0,1)$ small enough, \textbf{making $\epsilon > 0$ small enough so that $\epsilon^{\frac{3}{2} + \frac{\delta}{2}} < \frac{1}{4L^{2}C_{18}C_{19}}$} forces $r < \frac{\epsilon^{1-\delta}}{2LC_{18}}$. Therefore, \textit{for each such $\epsilon > 0$} Theorem \ref{IFT} gives us the existence of a unique solution $\paren*{\bm{a},\bm{\zeta}} \in D_{2L \Abs*{\Phi_{\epsilon}\paren*{0}}_{C^{0,\alpha}_{\delta - 1,g_{\bomega_{\epsilon}}}\paren*{\Lambda_{g_{\bomega_{\epsilon}}}^{+}T^{*}\Km_{\epsilon}}\otimes\bbR^{3}}}^{\paren*{C^{1,\alpha}_{\delta,g_{\bomega_{\epsilon}}}\paren*{\mathring{\Omega}^{1}_{g_{\bomega_{\epsilon}}}\paren*{\Km_{\epsilon}}}\oplus \cH^{+}_{g_{\bomega_{\epsilon}}}} \otimes \bbR^{3}}\paren*{0}$ to $\Phi_{\epsilon}\paren*{\bm{a},\bm{\zeta}} = 0$, giving us existence, uniqueness, and the bound $\Abs*{\paren*{\bm{a},\bm{\zeta}}}_{\paren*{C^{1,\alpha}_{\delta,g_{\bomega_{\epsilon}}}\paren*{\mathring{\Omega}^{1}_{g_{\bomega_{\epsilon}}}\paren*{\Km_{\epsilon}}}\oplus \cH^{+}_{g_{\bomega_{\epsilon}}}} \otimes \bbR^{3}} \leq 2L \Abs*{\Phi_{\epsilon}\paren*{0}}_{C^{0,\alpha}_{\delta - 1,g_{\bomega_{\epsilon}}}\paren*{\Lambda_{g_{\bomega_{\epsilon}}}^{+}T^{*}\Km_{\epsilon}}\otimes\bbR^{3}} < C_{20}\epsilon^{2 + \frac{1}{2} - \frac{\delta}{2}}$ since $r < r_{0}$. Smoothness follows from elliptic regularity, as our nonlinear equation has smooth coefficients \& $1$st order elliptic linearization and the produced solution $\paren*{\bm{a},\bm{\zeta}}$ is in $C^{1,\alpha}\subset C^{1}$. Hence the setup in Section \ref{HKA prelims} as well as Anzatz \ref{HKA perturbation ansatz} finish off the existence of our unique Ricci-flat \hka metric for each sufficiently small $\epsilon > 0$, as was to be shown.\end{proof}

				Arguing the same way as \textit{Remark} \ref{HKA ricciestimates}, namely that since \textit{any} direction $e \in S^{2}$ gives us a Ricci-flat \ka structure $J_{e}, \omega_{e}$ which is \textit{compatible} with the given \hka metric $g_{\bomega_{\epsilon} + d\bm{a} + \bm{\zeta}}$, then WLOG letting $e = e_{1}$ then getting $J_{e_{1}}, \pr_{1}\paren*{\bomega_{\epsilon} + d\bm{a}+ \bm{\zeta}}$ aka $J_{e_{1}}, \paren*{\bomega_{\epsilon}}_{1} + da_{1} + \zeta_{1}$ our Ricci-flat \ka structure compatible with $g_{\bomega_{\epsilon} + d\bm{a} + \bm{\zeta}}$ that by definition of the weighted Holder spaces, the global bound $\epsilon \leq \rho_{\epsilon}$ and the fact that $\delta \in \paren*{-1,0}$ and our smallness of norm bound  $\Abs*{\paren*{\bm{a},\bm{\zeta}}}_{\paren*{C^{1,\alpha}_{\delta,g_{\bomega_{\epsilon}}}\paren*{\mathring{\Omega}^{1}_{g_{\bomega_{\epsilon}}}\paren*{\Km_{\epsilon}}}\oplus \cH^{+}_{g_{\bomega_{\epsilon}}}} \otimes \bbR^{3}} < C_{20}\epsilon^{2 + \frac{1}{2} - \frac{\delta}{2}}$ that $$\Abs*{\paren*{\bomega_{\epsilon}}_{1} + da_{1} + \zeta_{1} - \paren*{\bomega_{\epsilon}}_{1}}_{C^{0}\paren*{\Km_{\epsilon}}} = \Abs*{da_{1} + \zeta_{1}}_{C^{0}\paren*{\Km_{\epsilon}}} \leq C_{21}\epsilon^{2 + \frac{1}{2} - \frac{\delta}{2}} \epsilon^{\delta - 1} = C_{21}\epsilon^{\frac{3}{2} + \frac{\delta}{2}} $$ for some \textit{uniform} constant $C_{21} > 0$, whence $\Abs*{\paren*{\bomega_{\epsilon}}_{1} + da_{1} + \zeta_{1} - \paren*{\bomega_{\epsilon}}_{1}}_{C^{0}\paren*{\Km_{\epsilon}}}$ tends to $0$ as $\epsilon\searrow 0$ due to $\delta \in \paren*{-1,0}$. 
				
				Thus since $\paren*{\bomega_{\epsilon}}_{1} + da_{1} + \zeta_{1}$ is compatible with $g_{\bomega_{\epsilon} + d\bm{a} + \bm{\zeta}}$ via 2-out-of-3 directly implying that the above inequality implies that the identity map $\id_{\Km_{\epsilon}}$ is an $O(1)\epsilon^{\frac{3}{2} + \frac{\delta}{2}}$-Gromov-Hausdorff approximation, \textit{Remark} \ref{HKA GHremark}, the two Gromov-Hausdorff convergences in the proof of Proposition \ref{HKA IWSE}, Proposition \ref{HKA EHproperties} regarding the curvature blow-up of the bolt of $g_{\bomega_{EH,\epsilon^{2}}}$ and Propositions \ref{HKA preglueEH} and \ref{HKA approximatemetricproperties} on the construction of $g_{\bomega_{\epsilon}}$, and the triangle inequality for the Gromov-Hausdorff metric, we thus have the analogue of Theorem \ref{maintheorem1} in the \hka context:
				
				\begin{theorem}[Main Theorem IIa]\label{maintheorem2a} Let $K3$ denote the $K3$ surface.

					Equip $K3$ with the family of closed definite triples $\bomega_{\epsilon}$ constructed in Section \ref{HKA The Kummer Construction}, hence getting the family of almost \hka metrics $\paren*{\Km_{\epsilon}, \bomega_{\epsilon}, g_{\bomega_{\epsilon}}}$ on a family of Kummer surfaces $\Km_{\epsilon}$.
					
					Then denoting by $\wtilde{\bomega_{\epsilon}} \coloneq \bomega_{\epsilon} + d\bm{a}+ \bm{\zeta} \in [\bomega_{\epsilon}]$ and $g_{\wtilde{\bomega_{\epsilon}}}$ the unique Ricci-flat \hka metric produced from Theorem \ref{bigtheorem2} for each $\epsilon > 0$, we have that $0 < \exists \epsilon_{0} \ll 1$ such that \textbf{there exists a 1-parameter family of \hka triples and \hka metrics} $$\set*{\paren*{\wtilde{\bomega_{\epsilon}}, g_{\wtilde{\bomega_{\epsilon}}}}}_{\epsilon \in \paren*{0,\epsilon_{0}}}$$ on $K3$.
					
					Moreover when $\epsilon \searrow 0$, $$\paren*{\Km_{\epsilon}, \bomega_{\epsilon}, g_{\bomega_{\epsilon}}} \xrightarrow{GH} \paren*{\bbT^{4}/\bbZ_{2}, \bomega_{0}, g_{\bomega_{0}}}$$ converges \textit{in the Gromov-Hausdorff topology} to the flat \hka orbifold $\paren*{\bbT^{4}/\bbZ_{2}, \bomega_{0}, g_{\bomega_{0}}}$ with the singular orbifold \hka metric $\bomega_{0}, g_{\bomega_{0}}$ (which is equal to the flat \hka metric on $\bbT^{4}/\bbZ_{2} - \cS$).
					
					Moreover, we may decompose $K3$ into a union of sets\footnote{say, $\Km_{\epsilon}^{reg} \coloneq \set*{\frac{\epsilon^{\frac{1}{2}}}{2} < \abs*{\cdot}^{g_{\bomega_{0}}}_{\pi^{-1}\paren*{\cS}}}$ and each $\Km_{\epsilon}^{p}$ being the 16 connected components of $\set*{\abs*{\cdot}^{g_{\bomega_{0}}}_{\pi^{-1}\paren*{\cS}}\leq \epsilon^{\frac{1}{2}}}$.} $\Km_{\epsilon}^{reg} \cup \bigsqcup_{p \in \cS}\Km_{\epsilon}^{b_{p}}$ such that \begin{enumerate}
						\itemsep0em 
						\item $\paren*{\Km_{\epsilon}^{reg}, \bomega_{\epsilon}, g_{\bomega_{\epsilon}}}$ collapses to the flat \ka manifold $\paren*{\bbT^{4}/\bbZ_{2} - \cS, \bomega_{0}, g_{\bomega_{0}}}$ with bounded curvature away from $\cS$ (\textbf{regular region}).
						\item For each $p \in \cS$, $\paren*{\Km_{\epsilon}^{b_{p}}, \frac{1}{\epsilon^{2}}g_{\bomega_{\epsilon}}}$ converges to $\paren*{T^{*}S^{2}, g_{\bomega_{EH,1}}}$ in $C^{\infty}_{loc}$ (\textbf{ALE bubble region}).
				\end{enumerate}\end{theorem}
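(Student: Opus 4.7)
The plan is to extract Theorem \ref{maintheorem2a} from the machinery already assembled, with Theorem \ref{bigtheorem2} as the main workhorse and the various Gromov--Hausdorff convergences of Section \ref{HKA Blow-up Analysis} providing the limiting behavior. Concretely, for each $\epsilon > 0$ satisfying the smallness hypotheses of Theorem \ref{bigtheorem2} (call this range $\epsilon \in (0,\epsilon_0)$), I would first invoke that theorem to produce the unique pair $(\bm{a}_\epsilon, \bm{\zeta}_\epsilon) \in (C^{1,\alpha}_{\delta,g_{\bomega_\epsilon}}(\mathring{\Omega}^1_{g_{\bomega_\epsilon}}(\Km_\epsilon)) \oplus \cH^+_{g_{\bomega_\epsilon}}) \otimes \bbR^3$ satisfying $\Phi_\epsilon(\bm{a}_\epsilon, \bm{\zeta}_\epsilon) = 0$, whence $\wtilde{\bomega_\epsilon} \coloneq \bomega_\epsilon + d\bm{a}_\epsilon + \bm{\zeta}_\epsilon \in [\bomega_\epsilon]$ is a bona fide \hka triple on $\Km_\epsilon$ with associated \hka metric $g_{\wtilde{\bomega_\epsilon}}$. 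This produces the claimed 1-parameter family $\{(\wtilde{\bomega_\epsilon}, g_{\wtilde{\bomega_\epsilon}})\}_{\epsilon \in (0,\epsilon_0)}$ on $K3$.

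Next I would upgrade the Gromov--Hausdorff convergence $(\Km_\epsilon, \bomega_\epsilon, g_{\bomega_\epsilon}) \xrightarrow{GH} (\bbT^4/\bbZ_2, \bomega_0, g_{\bomega_0})$ from \textit{Remark} \ref{HKA GHremark} to the perturbed family $(\Km_\epsilon, \wtilde{\bomega_\epsilon}, g_{\wtilde{\bomega_\epsilon}})$. The key input is the ``smallness of norm'' conclusion $\Abs*{(\bm{a}_\epsilon, \bm{\zeta}_\epsilon)} < C_{20}\epsilon^{2 + \frac{1}{2} - \frac{\delta}{2}}$ from Theorem \ref{bigtheorem2}: unpacking the weighted Hölder norm using the pointwise bound $\epsilon \leq \rho_\epsilon$ and that $\delta \in (-1,0)$ yields (as derived in the paragraph preceding the theorem) $\Abs*{d\bm{a}_\epsilon + \bm{\zeta}_\epsilon}_{C^0(\Km_\epsilon)} \leq C_{21} \epsilon^{\frac{3}{2} + \frac{\delta}{2}} \searrow 0$. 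Since the perturbation is uniformly $C^0$-small with respect to $g_{\bomega_\epsilon}$ and compatible with it through 2-out-of-3, the identity map $\id_{\Km_\epsilon}$ provides a Gromov--Hausdorff $O(\epsilon^{\frac{3}{2}+\frac{\delta}{2}})$-approximation between $(\Km_\epsilon, g_{\wtilde{\bomega_\epsilon}})$ and $(\Km_\epsilon, g_{\bomega_\epsilon})$, and composing with the GH approximations of \textit{Remark} \ref{HKA GHremark} via the triangle inequality for $d_{GH}$ yields $(\Km_\epsilon, \wtilde{\bomega_\epsilon}, g_{\wtilde{\bomega_\epsilon}}) \xrightarrow{GH} (\bbT^4/\bbZ_2, \bomega_0, g_{\bomega_0})$.

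For the decomposition into regular and bubble regions, I would set $\Km_\epsilon^{reg} \coloneq \{\frac{\epsilon^{1/2}}{2} < \abs*{\cdot}^{g_{\bomega_0}}_{\pi^{-1}(\cS)}\}$ and let each $\Km_\epsilon^{b_p}$ ($p\in\cS$) be the connected component of $\{\abs*{\cdot}^{g_{\bomega_0}}_{\pi^{-1}(\cS)} \leq \epsilon^{1/2}\}$ containing $\pi^{-1}(p)$. On the regular region, Proposition \ref{HKA approximatemetricproperties} gives $g_{\bomega_\epsilon} = g_{\bomega_0}$ outside $\{2\epsilon^{1/2} \leq \abs*{\cdot}^{g_{\bomega_0}}_{\pi^{-1}(\cS)}\}$ with uniform control on the annular interpolation, and the equivariant GH convergence to $(\bbT^4/\bbZ_2 - \cS, g_{\bomega_0})$ established in Case 1 of Proposition \ref{HKA IWSE} transfers to the perturbed metric via the same smallness-of-perturbation argument; bounded curvature away from $\cS$ follows because $g_{\bomega_0}$ is flat and the perturbation is $C^{1,\alpha}$-small in the weighted norm. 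On the bubble region, after rescaling by $\frac{1}{\epsilon^2}$, the pointed GH convergence $(\Km_\epsilon^{b_p}, \frac{1}{\epsilon^2} g_{\bomega_\epsilon}) \to (T^*S^2, g_{\bomega_{EH,1}})$ from Case 2 of Proposition \ref{HKA IWSE} (already upgraded to $C^\infty_{loc}$ via Footnote \ref{GH to Ckaloc footnote inner region bubble}) combines with the rescaled smallness of the perturbation --- note that under $g_{\bomega_\epsilon} \mapsto \frac{1}{\epsilon^2} g_{\bomega_\epsilon}$ the $C^0$-size of the perturbation on the bubble region becomes even smaller by scaling (Proposition \ref{HKA scalingproperties}) --- to yield $(\Km_\epsilon^{b_p}, \frac{1}{\epsilon^2} g_{\wtilde{\bomega_\epsilon}}) \to (T^*S^2, g_{\bomega_{EH,1}})$ in $C^\infty_{loc}$, with Proposition \ref{HKA EHproperties} ensuring the curvature and volume behavior of the Eguchi--Hanson bolt.

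The genuinely nontrivial input has already been absorbed into Theorems \ref{HKA biginverse} and \ref{bigtheorem2} (the uniform invertibility of the linearization and the implicit function theorem argument); the remaining task for this theorem is essentially bookkeeping. The only mild subtlety I anticipate is verifying that $C^{1,\alpha}_{\delta,g_{\bomega_\epsilon}}$-smallness of $(\bm{a}_\epsilon, \bm{\zeta}_\epsilon)$ suffices to pass both the Gromov--Hausdorff convergence of the regular region and the $C^\infty_{loc}$-convergence of the rescaled bubble region from the approximate metric to the perturbed metric. For the former this is straightforward from the $C^0$-smallness and 2-out-of-3; for the latter one uses that elliptic regularity applied to the nonlinear PDE $\Phi_\epsilon(\bm{a}_\epsilon,\bm{\zeta}_\epsilon) = 0$ --- whose coefficients depend smoothly on $\bomega_\epsilon$ and whose linearization is uniformly elliptic --- together with the smallness in the weighted norms bootstraps the perturbation to $C^\infty_{loc}$-convergence to zero on the rescaled bubble region, so $\frac{1}{\epsilon^2}g_{\wtilde{\bomega_\epsilon}}$ inherits the $C^\infty_{loc}$-convergence of $\frac{1}{\epsilon^2}g_{\bomega_\epsilon}$ to $g_{\bomega_{EH,1}}$.
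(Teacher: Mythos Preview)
Your proposal is correct and follows essentially the same approach as the paper: invoke Theorem \ref{bigtheorem2} to get the perturbed \hka triple, extract $C^0$-smallness of the perturbation from the weighted norm bound $\Abs*{(\bm{a},\bm{\zeta})} < C_{20}\epsilon^{5/2 - \delta/2}$ via $\epsilon \leq \rho_\epsilon$ and $\delta \in (-1,0)$, use 2-out-of-3 to conclude that $\id_{\Km_\epsilon}$ is an $O(\epsilon^{3/2 + \delta/2})$-GH approximation, and then combine with \textit{Remark} \ref{HKA GHremark}, the two Gromov--Hausdorff convergences from the proof of Proposition \ref{HKA IWSE}, Propositions \ref{HKA preglueEH}, \ref{HKA approximatemetricproperties}, \ref{HKA EHproperties}, and the triangle inequality for $d_{GH}$. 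Your additional remarks on bootstrapping to $C^\infty_{loc}$ in the bubble region via elliptic regularity make explicit what the paper leaves implicit; the only slip is the phrase ``$g_{\bomega_\epsilon} = g_{\bomega_0}$ outside $\{2\epsilon^{1/2} \leq \abs*{\cdot}^{g_{\bomega_0}}_{\pi^{-1}(\cS)}\}$'', which should read ``on'' rather than ``outside''.
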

				
				In fact, what we have proved yields more:
				
				Enumerate the 16 orbifold singularities $\cS = \set*{p_{1},\dots, p_{16}}$ of $\bbT^{4}/\bbZ_{2}$.
				
				Pick a fixed flat \hka metric $\bomega_{0}, g_{\bomega_{0}}$ on $\bbT^{4}/\bbZ_{2}$, as from Proposition \ref{flat tori facts} there is a $\frac{4(4+1)}{2} = 10$-parameter family of flat metrics on $\bbT^{4}/\bbZ_{2}$.
				
				For any subset $I\subseteq \set*{1,\dots, 16}$, denote its complement as $I^{c} \coloneq \set*{1,\dots, 16} - I$ whence $I \sqcup I^{c} = \set*{1,\dots, 16}$. 
				
				For a fixed $I \subseteq \set*{1,\dots, 16}$, repeat the gluing construction in Section \ref{HKA The Kummer Construction} on the base orbifold $\bbT^{4}/\bbZ_{2}$, but instead we\begin{itemize}
					\itemsep0em 
					\item pick a flat \hka metric $\bomega_{f}, g_{\bomega_{f}}$ on $\bbT^{4}/\bbZ_{2}$ where $f \in \GL\paren*{4,\bbZ}\backslash \GL\paren*{4,\bbR}/\O(4)$ the moduli space of flat metrics on $\bbT^{4}/\bbZ_{2}$ which is a $\frac{4(4+1)}{2} = 10$-parameter/dimensional space (Proposition \ref{flat tori facts}).
					\item \textit{leave $p_{i} \in \cS\subset \bbT^{4}/\bbZ_{2}$ for $i \in I^{c}$ unresolved}.
					\item recalling from Proposition \ref{HKA EHproperties} $$\set*{\paren*{T^{*}S^{2}, \bomega^{ALE}_{t,e}, g_{\bomega^{ALE}_{t,e}}}}_{\paren*{t,e} \in \bbR_{>0}\times S^{2}}$$ the $3$-parameter family of \hka ALE spaces asymptotic to $\bbR^{4}/\bbZ_{2}$ and where $$\bomega^{ALE}_{t,e}, g_{\bomega^{ALE}_{t,e}} \overset{\text{isometric}}{\cong} A^{e}\bomega_{EH,t^{2}}, g_{\bomega_{EH, t^{2}}} \overset{\text{isometric}}{\cong} t^{2}A^{e}\bomega_{EH,1}, t^{2}g_{\bomega_{EH, 1}} $$ for each $i \in I$ we choose a pair $\paren*{\epsilon_{i}, e_{i}} \in \bbR_{>0} \times S^{2}$ and glue $\paren*{T^{*}S^{2}, \bomega^{ALE}_{\epsilon_{i},e_{i}}, g_{\bomega^{ALE}_{\epsilon_{i},e_{i}}}}$ around the $i$-th singular point $p_{i} \in \cS$ using the isometry above to equivalently glue $\paren*{T^{*}S^{2},A^{e_{i}}\bomega_{EH,\epsilon_{i}^{2}}, g_{\bomega_{EH, \epsilon_{i}^{2}}}}$ via the same cutoff function interpolation of Propositions \ref{HKA preglueEH BG interpolation!} and \ref{HKA preglueEH}.
				\end{itemize}

				This yields a compact \textit{orbifold} $\Km^{\abs*{I}}_{f, \epsilon_{I}, e_{I}}$ with $16 - \abs*{I}$ orbifold points $p_{i} \in \Km^{\abs*{I}}_{f, \epsilon_{I}, e_{I}}, i \in I^{c}$ each locally modeled on $\bbR^{4}/\bbZ_{2}$ which we call the \textbf{$I$-th partial smoothing $\Km^{\abs*{I}} = \Km^{\abs*{I}}_{f, \epsilon_{I}, e_{I}}$ of $\bbT^{4}/\bbZ_{2}$}, equipped with the constructed closed \textit{orbifold} definite triple which is \textit{almost} hyper-K\"{a}hler. Since the resulting closed \textit{orbifold} definite triple on $\Km^{\abs*{I}}_{f, \epsilon_{I}, e_{I}}$ that gets constructed is a genuine \hka triple near both the $\abs*{I}$ exceptional $S^{2}$s and near each orbifold point $p_{i} \in \Km^{\abs*{I}}_{f, \epsilon_{I}, e_{I}}, i \in I^{c}$, namely the \textit{smooth} Eguchi-Hanson \hka triple $A^{e_{i}}\bomega_{EH,\epsilon_{i}^{2}}$ and the flat \textit{orbifold} \hka triple $\bomega_{0}$ near each $S^{2} \cong E_{i}, i \in I$ and $p_{j} \in \Km_{\epsilon_{I}}, j \in I^{c}$ respectively, and satisfies the \textit{same} associated intersection matrix decay $Q = \id + O\paren*{\epsilon^{2}}$ on each annulus $\set*{\epsilon_{i}^{\frac{1}{2}} \leq \abs*{\cdot}^{g_{\bomega_{0}}}_{E_{i}}  \leq 2\epsilon_{i}^{\frac{1}{2}}}$ around each exceptional $S^{2} \cong E_{i}$ for $i \in I$, the analysis developed in Sections \ref{HKA Weighted Setup}, \ref{HKA Nonlinear Setup}, \ref{HKA Blow-up Analysis}, and in the proof of Theorem \ref{bigtheorem2} to perturb approximate \hka triples to genuine \hka triples via solving the nonlinear elliptic PDE system may be carried out with only cosmetic adjustments in the presence of flat \hka orbifold singularities (via lifting to a $\bbZ_{2}$-invariant problem on a \textit{smooth} manifold since all orbifold singularities in our context have isotropy group $\bbZ_{2}$) to yield an \textbf{orbifold \hka triple} $\bomega_{f, \epsilon_{I}, e_{I}}, g_{\bomega_{f, \epsilon_{I}, e_{I}}} $ on the compact orbifold $\Km^{\abs*{I}}_{f, \epsilon_{I}, e_{I}}$ for when each of the gluing parameters $\set*{\epsilon_{i} : i \in I} $ are sufficiently small.
				
				Whence since the proofs of both the \textbf{regular region} and \textbf{ALE bubble region} structures in Theorem \ref{bigtheorem2} above follow from the Gromov-Hausdorff convergences in the proof of Proposition \ref{HKA IWSE}, which follow via examining the behavior as $\epsilon\searrow 0$ of the regions of $\Km_{\epsilon}$ in the bulk and \textit{near} each exceptional $S^{2}$ \textit{after rescaling the metric}, respectively, we immediately have that such Gromov-Hausdorff convergences in the bulk and near each exceptional $S^{2}$ follow verbatim for the compact \hka orbifolds $\paren*{\Km^{\abs*{I}}_{f, \epsilon_{I}, e_{I}}, \bomega_{f, \epsilon_{I}, e_{I}}, g_{\bomega_{f, \epsilon_{I}, e_{I}}}}$ because the region near each exceptional $S^{2}$ is \textit{localized} near $S^{2}$, whence looks the same irrespective of whether or not there are flat orbifold singularities in its complement. Thus these cosmetic adjustments yield us Theorem \ref{MainTheoremB}, namely \begin{theorem}[Main Theorem II]\label{maintheorem2} Enumerate the $16$ singular points $\cS = \set*{p_{1},\dots, p_{16}}$ of $\bbT^{4}/\bbZ_{2}$.

					Then $\forall I \subseteq \set*{1,\dots, 16}$, $0 < \exists \epsilon^{0}_{I} \ll 1$ such that \textbf{there exists a $10 + 3\abs*{I}$-parameter family of (orbifold) \hka triples and (orbifold) \hka metrics} $$\set*{\paren*{\bomega_{f, \epsilon_{I}, e_{I}}, g_{\bomega_{f, \epsilon_{I}, e_{I}}}}}_{\substack{f \in \GL\paren*{4,\bbZ}\backslash \GL\paren*{4,\bbR}/\O(4)\\ \epsilon_{i} \in \paren*{0,\epsilon_{I}^{0}} \\ e_{i} \in S^{2} \\ i \in I}}$$ on the $I$-th partial smoothing $\Km^{\abs*{I}} = \Km^{\abs*{I}}_{f, \epsilon_{I}, e_{I}}$ of $\bbT^{4}/\bbZ_{2}$.
					
					Moreover, as $\epsilon^{0}_{I}\searrow 0$, $$\paren*{\Km^{\abs*{I}}_{f, \epsilon_{I}, e_{I}}, \bomega_{f, \epsilon_{I}, e_{I}}, g_{\bomega_{f, \epsilon_{I}, e_{I}}}} \xrightarrow{GH} \paren*{\bbT^{4}/\bbZ_{2}, \bomega_{f}, g_{\bomega_{f}}}$$ converges in the Gromov-Hausdorff topology to the flat \hka orbifold $\paren*{\bbT^{4}/\bbZ_{2}, \bomega_{f}, g_{\bomega_{f}}}$ with the singular orbifold \hka metric $\bomega_{f}, g_{\bomega_{f}}$. Moreover, we may decompose each compact orbifold $\Km^{\abs*{I}}$ into a union of sets\footnote{say, $\paren*{\Km^{\abs*{I}}_{f, \epsilon_{I}, e_{I}}}^{reg} \coloneq \set*{\frac{\paren*{\max \epsilon_{I}}^{\frac{1}{2}}}{2} < \abs*{\cdot}^{g_{\bomega_{f}}}_{\bigsqcup_{i \in I}E_{i}}}$ and each $\Km_{\epsilon}^{b_{i}}$ being $\set*{\abs*{\cdot}^{g_{\bomega_{f}}}_{E_{i}}\leq \paren*{\max \epsilon_{I}}^{\frac{1}{2}}}$.} $\paren*{\Km^{\abs*{I}}_{f, \epsilon_{I}, e_{I}}}^{reg} \cup \bigsqcup_{i \in I}\paren*{\Km^{\abs*{I}}_{f, \epsilon_{I}, e_{I}}}^{b_{i}}$ such that \begin{enumerate}
						\itemsep0em 
						\item $\paren*{\paren*{\Km^{\abs*{I}}_{f, \epsilon_{I}, e_{I}}}^{reg}, \bomega_{f, \epsilon_{I}, e_{I}}, g_{\bomega_{f, \epsilon_{I}, e_{I}}}}$ collapses to the flat \ka orbifold $\paren*{\bbT^{4}/\bbZ_{2} - \set*{p_{i}}_{i\in I}, \bomega_{0}, g_{\bomega_{0}}}$ with bounded curvature away from $p_{i} \in \cS$ for $i \in I$ (\textbf{regular region}).
						\item For each $i \in I$, $\paren*{\paren*{\Km^{\abs*{I}}_{f, \epsilon_{I}, e_{I}}}^{b_{i}}, \frac{1}{\epsilon^{2}} \bomega_{f, \epsilon_{I}, e_{I}}, \frac{1}{\epsilon^{2}}g_{\bomega_{f, \epsilon_{I}, e_{I}}}}$ converges to $\paren*{T^{*}S^{2}, A^{e}\bomega_{EH,1}, g_{\bomega_{EH,1}}}$ in $C^{\infty}_{loc}$ (\textbf{ALE bubble region}).
					\end{enumerate}

					In particular, when $I = \set*{1,\dots, 16}$, $0 < \exists \epsilon^{0} \ll 1$ such that \textbf{there exists a $10 + 3\cdot 16 = 58$-parameter family of \hka triples and \hka metrics} $$\set*{\paren*{\bomega_{f, \epsilon_{I}, e_{I}}, g_{\bomega_{f, \epsilon_{I}, e_{I}}}}}_{\substack{f \in \GL\paren*{4,\bbZ}\backslash \GL\paren*{4,\bbR}/\O(4)\\ \epsilon_{i} \in \paren*{0,\epsilon^{0}} \\ e_{i} \in S^{2} \\ i =1,\dots, 16}}$$ on the \textbf{K3 surface} $\Km^{16} = K3 = \Km_{f, \epsilon_{I}, e_{I}}$.
					
					Moreover, as $\epsilon^{0} \searrow 0$, $$\paren*{\Km_{f, \epsilon_{I}, e_{I}}, \bomega_{f, \epsilon_{I}, e_{I}}, g_{\bomega_{f, \epsilon_{I}, e_{I}}}} \xrightarrow{GH} \paren*{\bbT^{4}/\bbZ_{2}, \bomega_{f}, g_{\bomega_{f}}}$$ converges in the Gromov-Hausdorff topology to the flat \hka orbifold $\paren*{\bbT^{4}/\bbZ_{2}, \bomega_{f}, g_{\bomega_{f}}}$ with the singular orbifold \hka metric $\bomega_{f}, g_{\bomega_{f}}$. Moreover, we may decompose $K3$ into a union of sets $\Km_{f, \epsilon_{I}, e_{I}}^{reg} \cup \bigsqcup_{i =1}^{16}\Km_{f, \epsilon_{I}, e_{I}}^{b_{i}}$ such that \begin{enumerate}
						\itemsep0em 
						\item $\paren*{\Km_{f, \epsilon_{I}, e_{I}}^{reg}, \bomega_{f, \epsilon_{I}, e_{I}}, g_{\bomega_{f, \epsilon_{I}, e_{I}}}}$ collapses to the flat \ka manifold $\paren*{\bbT^{4}/\bbZ_{2} - \cS, \bomega_{0}, g_{\bomega_{0}}}$ with bounded curvature away from $\cS$ (\textbf{regular region}).
						\item For each $i =1,\dots, 16$, $\paren*{\Km_{f, \epsilon_{I}, e_{I}}^{b_{i}}, \frac{1}{\epsilon^{2}} \bomega_{f, \epsilon_{I}, e_{I}}, \frac{1}{\epsilon^{2}}g_{\bomega_{f, \epsilon_{I}, e_{I}}}}$ converges to $\paren*{T^{*}S^{2}, A^{e}\bomega_{EH,1}, g_{\bomega_{EH,1}}}$ in $C^{\infty}_{loc}$ (\textbf{ALE bubble region}).
					\end{enumerate}

					Therefore, letting $k \in \set*{0,\dots, 16}$ be a chosen nonnegative integer (the \textbf{depth}), upon considering the following strictly nested sequence $$\set*{1,\dots, 16} = I_{0}\supsetneq I_{1}\supsetneq \cdots \supsetneq I_{k}$$ with $I_{16} \coloneq \emptyset$, then we have the following sequence of \textbf{iterated volume non-collapsed degenerations} of our 58-parameter family of \hka metrics $\bomega_{f, \epsilon_{I_{0}}, e_{I_{0}}}, g_{\bomega_{f, \epsilon_{I_{0}}, e_{I_{0}}}}$ on the $K3$ surface $\Km^{\abs*{I_{0}}}_{f, \epsilon_{I_{0}}, e_{I_{0}}} =\Km_{f, \epsilon_{I_{0}}, e_{I_{0}}}$:\\ 
					
					\hspace*{-2.698cm}\scalebox{0.93}{\begin{tikzcd}[ampersand replacement=\&]
							\&\& {\paren*{\Km_{f, \epsilon_{I_{0}}, e_{I_{0}}}, g_{\bomega_{f, \epsilon_{I_{0}}, e_{I_{0}}}}}} \\
							{\paren*{\Km^{\abs*{I_{1}}}_{f, \epsilon_{I_{1}}, e_{I_{1}}}, g_{\bomega_{f, \epsilon_{I_{1}}, e_{I_{1}}}}}} \& {\paren*{\Km^{\abs*{I_{2}}}_{f, \epsilon_{I_{2}}, e_{I_{2}}}, g_{\bomega_{f, \epsilon_{I_{2}}, e_{I_{2}}}}}} \& \cdots \& {\paren*{\Km^{\abs*{I_{k-1}}}_{f, \epsilon_{I_{k-1}}, e_{I_{k-1}}}, g_{\bomega_{f, \epsilon_{I_{k-1}}, e_{I_{k-1}}}}}} \& {\paren*{\Km^{\abs*{I_{k}}}_{f, \epsilon_{I_{k}}, e_{I_{k}}}, g_{\bomega_{f, \epsilon_{I_{k}}, e_{I_{k}}}}}} \\
							\&\& {\paren*{\bbT^{4}/\bbZ_{2}, g_{\bomega_{f}}}}
							\arrow["GH"', from=1-3, to=2-1]
							\arrow["GH", from=1-3, to=2-2]
							\arrow["GH", from=1-3, to=2-3]
							\arrow["GH", from=1-3, to=2-4]
							\arrow["GH", from=1-3, to=2-5]
							\arrow["GH", from=2-1, to=2-2]
							\arrow["GH", from=2-1, to=3-3]
							\arrow["GH", from=2-2, to=2-3]
							\arrow["GH", from=2-2, to=3-3]
							\arrow["GH", from=2-3, to=2-4]
							\arrow["GH", from=2-3, to=3-3]
							\arrow["GH", from=2-4, to=2-5]
							\arrow["GH"', from=2-4, to=3-3]
							\arrow["GH", from=2-5, to=3-3]
					\end{tikzcd}}
				
					where we\begin{itemize}
						\itemsep0em 
						\item degenerate from our \hka $K3$ surface $\paren*{\Km_{f, \epsilon_{I_{0}}, e_{I_{0}}}, g_{\bomega_{f, \epsilon_{I_{0}}, e_{I_{0}}}}}$ to $\paren*{\Km^{\abs*{I_{j}}}_{f, \epsilon_{I_{j}}, e_{I_{j}}}, \bomega_{f, \epsilon_{I_{j}}, e_{I_{j}}}, g_{\bomega_{f, \epsilon_{I_{j}}, e_{I_{j}}}}}$ via only collapsing the gluing parameters $\epsilon_{i}>0$ corresponding to $i \in I_{0} - I_{j} = I_{j}^{c}$
						\item degenerate from $\paren*{\Km^{\abs*{I_{j-1}}}_{f, \epsilon_{I_{j-1}}, e_{I_{j-1}}}, \bomega_{f, \epsilon_{I_{j-1}}, e_{I_{j-1}}}, g_{\bomega_{f, \epsilon_{I_{j-1}}, e_{I_{j-1}}}}}$ to $\paren*{\Km^{\abs*{I_{j}}}_{f, \epsilon_{I_{j}}, e_{I_{j}}}, \bomega_{f, \epsilon_{I_{j}}, e_{I_{j}}}, g_{\bomega_{f, \epsilon_{I_{j}}, e_{I_{j}}}}}$ via only collapsing the gluing parameters $\epsilon_{i}>0$ corresponding to $i \in I_{j-1} - I_{j}$
						\item and degenerate from $\paren*{\Km^{\abs*{I_{j}}}_{f, \epsilon_{I_{j}}, e_{I_{j}}}, \bomega_{f, \epsilon_{I_{j}}, e_{I_{j}}}, g_{\bomega_{f, \epsilon_{I_{j}}, e_{I_{j}}}}}$ to $\paren*{\bbT^{4}/\bbZ_{2}, \bomega_{f}, g_{\bomega_{f}}}$ via collapsing all gluing parameters $\epsilon_{i} > 0, i \in I_{j}$.
					\end{itemize}
				\end{theorem}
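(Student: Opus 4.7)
The strategy is to generalize the single-parameter gluing construction used to prove Theorem \ref{bigtheorem2} in three independent directions: allowing the background flat metric on $\bbT^{4}/\bbZ_{2}$ to vary over the $10$-dimensional moduli space $\GL(4,\bbZ)\backslash\GL(4,\bbR)/\O(4)$, allowing a distinct gluing parameter $\epsilon_{i}>0$ and hyperkähler direction $e_{i}\in S^{2}$ at each singular point $p_{i}$ that we elect to resolve, and allowing the index set $I\subseteq\{1,\dots,16\}$ of resolved points to be proper. Concretely, for each $f\in\GL(4,\bbZ)\backslash\GL(4,\bbR)/\O(4)$ and each family of triples $\{(\epsilon_{i},e_{i})\}_{i\in I}$ I would repeat the patching of Section \ref{HKA The Kummer Construction}, but replacing the building block $\wtilde{\bomega_{EH,\epsilon}}$ at $p_{i}$ by the cutoff-interpolated rotated Eguchi-Hanson triple $A^{e_{i}}\wtilde{\bomega_{EH,\epsilon_{i}}}$ of Proposition \ref{HKA EHproperties}, while leaving $p_{j}$ for $j\in I^{c}$ untouched. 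The resulting orbifold $\Km^{\abs{I}}_{f,\epsilon_{I},e_{I}}$ carries a closed definite triple $\bomega_{f,\epsilon_{I},e_{I}}$ whose associated intersection matrix satisfies the same componentwise decay $\abs{Q-\id}\lesssim\max_{i\in I}\epsilon_{i}^{2}$ supported only on the $\abs{I}$ gluing annuli (by Proposition \ref{HKA approximatemetricproperties}, now applied independently at each resolved point and trivially near the remaining orbifold points).

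Next I would extend the weighted analysis of Sections \ref{HKA Weighted Setup}, \ref{HKA Nonlinear Setup}, and \ref{HKA Blow-up Analysis} to the partial smoothing. The weight function $\rho_{\epsilon_{I}}$ should now be built by gluing $\abs{I}$ copies of the Eguchi-Hanson weight profile around each resolved point while keeping $\rho\equiv\epsilon_{i_{\min}}$ (or any small fixed lower bound) in a small neighbourhood of each unresolved orbifold point $p_{j}$, $j\in I^{c}$, so that functions in the weighted Hölder space vanish at the appropriate rate at every singular locus. Because every orbifold isotropy group here is $\bbZ_{2}$, I would define the weighted Hölder spaces on $\Km^{\abs{I}}_{f,\epsilon_{I},e_{I}}$ via their pullbacks to a local $\bbZ_{2}$-cover near each unresolved $p_{j}$, i.e.\ as the $\bbZ_{2}$-invariant subspace of the usual weighted spaces on a smooth double cover, using Proposition \ref{CoveringSpacesInvariantForms}. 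The hard part is upgrading the improved weighted Schauder estimate (Proposition \ref{HKA IWSE}) and the uniform bounded-below bound for the linearization $D_{0}\Phi_{\epsilon_{I}}$ (Theorem \ref{HKA biginverse}) to this orbifold setting: the blow-up argument produces, as $\min_{i}\epsilon_{i}\searrow 0$, a limiting harmonic $1$-form $\lwhat{\fa_{\infty}}$ on $\bbT^{4}-\cS$ satisfying $D_{g_{\bomega_{f}}}\lwhat{\fa_{\infty}}=0$ with $\delta\in(-1,0)$ decay at $\cS$; the proof that such a form must vanish is exactly as in the proof of Proposition \ref{HKA IWSE} — remove the (now removable) singularities at $\cS$, expand $\lwhat{\fa_{\infty}}$ in the parallel coframe $d\theta^{1},\dots,d\theta^{4}$, and use the $\bbZ_{2}$-odd-ness of each $d\theta^{i}$ to conclude that the constant coefficients vanish; near each resolved point one still obtains a harmonic $1$-form on Eguchi-Hanson decaying at infinity, which vanishes by the Bochner-Weitzenböck argument.

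With uniform invertibility in hand, the nonlinearity and smallness estimates from the proof of Theorem \ref{bigtheorem2} carry over verbatim — both only use the uniform decay $Q_{\bomega_{f,\epsilon_{I},e_{I}}}=\id+O(\max\epsilon_{i}^{2})$ and the uniform weighted Hölder control of $\bomega_{f,\epsilon_{I},e_{I}}$, both of which are insensitive to the background flat metric $f$, to the rotations $e_{i}$ (since $A^{e_{i}}\in\SO(3)$ is an isometry on triples), and to which subset $I$ is chosen. Applying the implicit function theorem (Theorem \ref{IFT}) yields the orbifold hyperkähler triple $\bomega_{f,\epsilon_{I},e_{I}}+d\bm{a}+\bm{\zeta}$ on $\Km^{\abs{I}}_{f,\epsilon_{I},e_{I}}$ together with the smallness bound. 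The Gromov-Hausdorff convergence to $(\bbT^{4}/\bbZ_{2},\bomega_{f},g_{\bomega_{f}})$ and the regular-region / ALE-bubble-region decomposition follow, as in Theorem \ref{maintheorem2a}, from the localized nature of the two convergences appearing in the proof of Proposition \ref{HKA IWSE}: the bulk convergence $(\Km^{\abs{I},reg}_{f,\epsilon_{I},e_{I}},g)\xrightarrow{GH}(\bbT^{4}/\bbZ_{2}-\{p_{i}\}_{i\in I},g_{\bomega_{f}})$ is unchanged away from the resolved points, and the bubble convergence $(\Km^{\abs{I},b_{i}}_{f,\epsilon_{I},e_{I}},\tfrac{1}{\epsilon_{i}^{2}}g)\xrightarrow{C^{\infty}_{loc}}(T^{*}S^{2},g_{\bomega_{EH,1}})$ near each resolved $p_{i}$, $i\in I$, is a purely local statement involving only the rescaling around a single exceptional $S^{2}$.

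Finally, for the iterated volume non-collapsed degeneration diagram, I would observe that each arrow $\paren*{\Km^{\abs{I_{j-1}}}_{f,\epsilon_{I_{j-1}},e_{I_{j-1}}},g}\xrightarrow{GH}\paren*{\Km^{\abs{I_{j}}}_{f,\epsilon_{I_{j}},e_{I_{j}}},g}$ is literally an instance of the main GH-convergence statement applied with the partial smoothing $I=I_{j-1}$ and with gluing parameters $\epsilon_{i}\searrow 0$ for $i\in I_{j-1}-I_{j}$ and $\epsilon_{i}$ held fixed for $i\in I_{j}$; the localization used to justify the regular/ALE decomposition above shows simultaneously that the bubbles at points $p_{i}$, $i\in I_{j}$, are preserved (since their gluing parameters are not collapsed) while exactly the bubbles at $p_{i}$, $i\in I_{j-1}-I_{j}$, contract to orbifold points of $\Km^{\abs{I_{j}}}_{f,\epsilon_{I_{j}},e_{I_{j}}}$. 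Diagonal arrows terminating at $\bbT^{4}/\bbZ_{2}$ are then obtained by sending every remaining $\epsilon_{i}\searrow 0$. Commutativity of the resulting diagram follows from continuity of the Gromov-Hausdorff distance and the fact that all of these limits depend only on which subset of the gluing parameters tends to zero, not on the order in which they do.
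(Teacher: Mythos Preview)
Your proposal is correct and follows essentially the same approach as the paper: generalize the gluing construction of Section~\ref{HKA The Kummer Construction} by varying the flat background over the $10$-dimensional moduli, allowing distinct $(\epsilon_i,e_i)$ at each resolved point, leaving points in $I^c$ unresolved, and then observing that the weighted analysis of Sections~\ref{HKA Weighted Setup}--\ref{HKA Blow-up Analysis} and Theorem~\ref{bigtheorem2} carries over with only cosmetic changes by lifting to a $\bbZ_2$-invariant problem near the remaining orbifold points. The paper's own argument is in fact briefer than yours on the analytic side; one small point is that near the unresolved orbifold points the metric is already flat hyperkähler, so the weight function there should simply be bounded away from zero (comparable to $1$) rather than forced down to $\epsilon_{i_{\min}}$, but this does not affect the overall scheme.
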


				\begin{remark}\label{HKA big main theorem 2 consistency with codimension 3 for depth 1}
					Consider the depth $k=1$ case of the iterated volume non-collapsed degenerations of our $58$-parameter family of \hka metrics on $K3$ and where $\abs*{I_{1}} = 15$. That is, from $\paren*{\Km_{f, \epsilon_{I}, e_{I}}, \bomega_{f, \epsilon_{I}, e_{I}}, g_{\bomega_{f, \epsilon_{I}, e_{I}}}}$ with $I \coloneq \set*{1,\dots, 16}$ we only collapse a \textit{single} gluing parameter $\epsilon_{i}$ for $i \in I - I_{1} = I_{1}^{c}$, resulting in a compact \hka orbifold $\paren*{\Km^{\abs*{I_{1}}}_{f, \epsilon_{I_{1}}, e_{I_{1}}}, \bomega_{f, \epsilon_{I_{1}}, e_{I_{1}}}, g_{\bomega_{f, \epsilon_{I_{1}}, e_{I_{1}}}}}$ with a single orbifold singularity modeled on $\bbR^{4}/\bbZ_{2}$. This resulting compact orbifold $\Km^{\abs*{I_{1}}}$ thus has a $10 + 3\abs*{I_{1}} = 10 + 3\cdot 15 = 55$-parameter family of orbifold \hka metrics $\paren*{\bomega_{f, \epsilon_{I_{1}}, e_{I_{1}}}, g_{\bomega_{f, \epsilon_{I_{1}}, e_{I_{1}}}}}$. As remarked back in Section \ref{Introduction}, the volume non-collapsed limits of (unit volume) \hka metrics on $K3$ corresponds under the period map to points in the \textit{codimension 3} subset $$\bigcup_{\substack{\Sigma \in H_{2}\paren*{K3;\bbZ}\\ \Sigma \cdot \Sigma = -2}} \set*{H \in \Gr^{+}\paren*{3,19}/\Gamma : [\alpha]\paren*{\Sigma}= 0, \forall [\alpha] \in H}$$ of $\Gr^{+}\paren*{3,19}/\Gamma$ in which the original period map $\cP: \sM^{K3}_{\Vol=1} \rightarrow \Gr^{+}\paren*{3,19}/\Gamma$ fails to be surjective.
					
					Therefore, upon rescaling the metrics involved to have unit volume and thus sacrificing a choice of parameter (namely where we fix the volume of the starting flat \hka orbifold $\paren*{\bbT^{4}/\bbZ_{2}, \bomega_{f}, g_{\bomega_{f}}}$), Theorem \ref{maintheorem2} therefore produces a $54$-parameter family of orbifold \hka metrics $\paren*{\Km^{\abs*{I_{1}}}_{f, \epsilon_{I_{1}}, e_{I_{1}}}, \bomega_{f, \epsilon_{I_{1}}, e_{I_{1}}}, g_{\bomega_{f, \epsilon_{I_{1}}, e_{I_{1}}}}}$ which arise as volume non-collapsed Gromov-Hausdorff limit spaces of a $57$-parameter family of \hka metrics $\paren*{\Km_{f, \epsilon_{I}, e_{I}}, \bomega_{f, \epsilon_{I}, e_{I}}, g_{\bomega_{f, \epsilon_{I}, e_{I}}}}$ on $K3$. Thus since $$57 - 54 = 3$$ we have that this is an explicit example of the codimension $3$ nature of the ``holes'' that the range of the original period map misses in its codomain $\Gr^{+}\paren*{3,19}/\Gamma$. And indeed, from Proposition \ref{HKA EHproperties} and construction of $\paren*{\Km_{f, \epsilon_{I}, e_{I}}, \bomega_{f, \epsilon_{I}, e_{I}}, g_{\bomega_{f, \epsilon_{I}, e_{I}}}}$ we have that the diameter of the $i$th exceptional $E_{i} \cong S^{2}$ in the $K3$ surface $\Km_{f, \epsilon_{I}, e_{I}}$ has volume $\pi\epsilon_{i}^{2}$ and diameter $\frac{\epsilon_{i}}{2}$ and self-intersection number $E_{i}\cdot E_{i} = S^{2}\cdot S^{2} = -2$, so collapsing $\epsilon_{i}$ for $i \in I - I_{1} = I_{1}^{c}$ to get to our compact \hka orbifold $\paren*{\Km^{\abs*{I_{1}}}_{f, \epsilon_{I_{1}}, e_{I_{1}}}, \bomega_{f, \epsilon_{I_{1}}, e_{I_{1}}}, g_{\bomega_{f, \epsilon_{I_{1}}, e_{I_{1}}}}}$ \textit{precisely} gives us our $\Sigma \in H_{2}\paren*{K3;\bbZ}$ with $\Sigma \cdot \Sigma = -2$ and a ``large''/\textit{top dimensional} subset of the points $H \in \Gr^{+}\paren*{3,19}/\Gamma$ such that $[\alpha]\paren*{\Sigma} = 0, \forall [\alpha] \in H$, namely: $$\Sigma \coloneq E_{i}$$ where $i \in I - I_{1} = I_{1}^{c}$ and $$\set*{\spano_{\bbR} \paren*{\bomega_{f, \epsilon_{j}, e_{j}}} : f \in \text{unit vol flat}, \epsilon_{j} \in \paren*{0,\epsilon^{0}_{I_{1}}}, e_{j} \in S^{2}, j \in I_{1} } \subseteq \set*{H \in \Gr^{+}\paren*{3,19}/\Gamma : [\alpha]\paren*{\Sigma}= 0, \forall [\alpha] \in H}$$ and $$\begin{aligned}
						&\dim_{\bbR} \set*{H \in \Gr^{+}\paren*{3,19}/\Gamma : [\alpha]\paren*{\Sigma}= 0, \forall [\alpha] \in H} \\
						&= \dim_{\bbR} \set*{\spano_{\bbR} \paren*{\bomega_{f, \epsilon_{j}, e_{j}}} : f \in \text{unit vol flat}, \epsilon_{j} \in \paren*{0,\epsilon^{0}_{I_{1}}}, e_{j} \in S^{2}, j \in I_{1} } \\
						&= 54
					\end{aligned}$$ Here we are implicitly viewing $\bomega_{f, \epsilon_{j}, e_{j}}$ as a \textit{singular} \hka triple on $K3$ gotten by pullback along the smoothing map $\Km_{f, \epsilon_{I}, e_{I}} \onto \Km^{\abs*{I_{1}}}_{f, \epsilon_{I_{1}}, e_{I_{1}}}$.\end{remark}
				
				\begin{remark}\label{HKA big main theorem 2 consistency with codimension 3 for depth k}
					Therefore generalizing \textit{Remark} \ref{HKA big main theorem 2 consistency with codimension 3 for depth 1} for arbitrary depth $k \in \set*{1,\dots, 16}$ (excluding the trivial $k=0$ case) whence $I_{k} \subsetneq \set*{1,\dots, 16}$, we thus have that upon rescaling all metrics to have unit volume, that Theorem \ref{maintheorem2} produces a $9 + 3\cdot \abs*{I_{k}}$-parameter family of orbifold \hka metrics $\paren*{\Km^{\abs*{I_{k}}}_{f, \epsilon_{I_{k}}, e_{I_{k}}}, \bomega_{f, \epsilon_{I_{k}}, e_{I_{k}}}, g_{\bomega_{f, \epsilon_{I_{k}}, e_{I_{k}}}}}$ which arise as volume non-collapsed Gromov-Hausdorff limit spaces of a $57$-parameter family of \hka metrics $\paren*{\Km_{f, \epsilon_{I}, e_{I}}, \bomega_{f, \epsilon_{I}, e_{I}}, g_{\bomega_{f, \epsilon_{I}, e_{I}}}}$ on $K3$. Thus since $$\begin{aligned}
						57 - \paren*{9 + 3\cdot \abs*{I_{k}}} &= 48 - 3\cdot \abs*{I_{k}}\\
						&= 3\cdot \paren*{16 - \abs*{I_{k}}}\\
						&> 3
					\end{aligned}$$ since $I_{k}\subsetneq \set*{1,\dots, 16}$, we have that these explicit examples elucidate/``further probe down'' into the codimension $3$ holes $\bigcup_{\substack{\Sigma \in H_{2}\paren*{K3;\bbZ}\\ \Sigma \cdot \Sigma = -2}} \set*{H \in \Gr^{+}\paren*{3,19}/\Gamma : [\alpha]\paren*{\Sigma}= 0, \forall [\alpha] \in H}$ of $\Gr^{+}\paren*{3,19}/\Gamma$ in which the original period map $\cP: \sM^{K3}_{\Vol=1} \rightarrow \Gr^{+}\paren*{3,19}/\Gamma$ fails to be surjective \textit{by exhibiting a }\textbf{stratification}\textit{ of that codimension $3$ subset according to the size of $I_{k}^{c}$}. Thus our large family of orbifold \hka metrics $\paren*{\Km^{\abs*{I_{k}}}_{f, \epsilon_{I_{k}}, e_{I_{k}}}, \bomega_{f, \epsilon_{I_{k}}, e_{I_{k}}}, g_{\bomega_{f, \epsilon_{I_{k}}, e_{I_{k}}}}}$ corresponds inside the codimesion $3$ subset $\bigcup_{\substack{\Sigma \in H_{2}\paren*{K3;\bbZ}\\ \Sigma \cdot \Sigma = -2}} \set*{H \in \Gr^{+}\paren*{3,19}/\Gamma : [\alpha]\paren*{\Sigma}= 0, \forall [\alpha] \in H} \subset \Gr^{+}\paren*{3,19}/\Gamma$ to $$\bigcup_{i \in I - I_{k} = I_{k}^{c}} \set*{\spano_{\bbR}\paren*{\bomega_{f, \epsilon_{j}, e_{j}}} : f \in \text{unit vol flat}, \epsilon_{j} \in \paren*{0,\epsilon^{0}_{I_{k}}}, e_{j} \in S^{2}, j \in I_{k} }$$ i.e. corresponds to the union of $16 - \abs*{I_{k}}$ subsets, each with dimension $9+ 3\cdot \abs*{I_{k}}$ and each corresponding to the $16 - \abs*{I_{k}}$ exceptional $E_{i}\cong S^{2}$s for $i \in I_{k}^{c}$, which are clearly 2-cycles inside $K3$ with self-intersection $-2$, volume $\pi\epsilon_{i}^{2}$, and diameter $\frac{\epsilon_{i}}{2}$.\end{remark}

				\begin{remark}\label{HKA codimension 4 conjecture remark}
					Since Ricci-flat metrics are Einstein, and since each $\paren*{\Km_{\epsilon}, J, \wtilde{g_{\epsilon}}, \wtilde{\omega_{\epsilon}}}$ in Theorem \ref{maintheorem1} and each $\paren*{\Km_{f, \epsilon_{I}, e_{I}}, \bomega_{f, \epsilon_{I}, e_{I}}, g_{\bomega_{f, \epsilon_{I}, e_{I}}}} $ in Theorem \ref{maintheorem2} clearly have uniformly bounded diameter and satisfies (uniform) \textit{volume non-collapsing}, the convergences $$\begin{aligned}
						\paren*{\Km_{\epsilon}, J, \wtilde{g_{\epsilon}}, \wtilde{\omega_{\epsilon}}} &\xrightarrow{GH} \paren*{\bbT^{4}/\bbZ_{2}, J_{0}, g_{0}, \omega_{0}}\\
						\paren*{\Km_{f, \epsilon_{I}, e_{I}}, \bomega_{f, \epsilon_{I}, e_{I}}, g_{\bomega_{f, \epsilon_{I}, e_{I}}}} &\xrightarrow{GH} \paren*{\bbT^{4}/\bbZ_{2}, \bomega_{f}, g_{\bomega_{f}}}\\
						\paren*{\Km_{f, \epsilon_{I_{0}}, e_{I_{0}}}, g_{\bomega_{f, \epsilon_{I_{0}}, e_{I_{0}}}}} &\xrightarrow{GH}\paren*{\Km^{\abs*{I_{j}}}_{f, \epsilon_{I_{j}}, e_{I_{j}}}, \bomega_{f, \epsilon_{I_{j}}, e_{I_{j}}}, g_{\bomega_{f, \epsilon_{I_{j}}, e_{I_{j}}}}}
					\end{aligned}$$ proven in those two theorems exhibit examples in which the codimension 4 conjecture is \textit{sharp}, since the orbifold singularities of $\paren*{\bbT^{4}/\bbZ_{2}, J_{0}, g_{0}, \omega_{0}}$, $\paren*{\bbT^{4}/\bbZ_{2}, \bomega_{f}, g_{\bomega_{f}}}$ and $\paren*{\Km^{\abs*{I_{j}}}_{f, \epsilon_{I_{j}}, e_{I_{j}}}, \bomega_{f, \epsilon_{I_{j}}, e_{I_{j}}}, g_{\bomega_{f, \epsilon_{I_{j}}, e_{I_{j}}}}}$ clearly have (real) codimension $4$. Indeed, the \textbf{codimension 4 conjecture}, now a theorem by Cheeger and Naber \cite{CN}, states that the (pointed) Gromov-Hausdorff limit spaces of \textit{non-collapsed} Riemannian manifolds with bounded Ricci curvature are smooth away from a closed set of (real) Hausdorff codimension 4. The compact Einstein case, proven by Anderson \cite{Anderson}, Bando-Kasue-Nakajima \cite{BKNa}, and Nakajima \cite{Nakajima}, in fact tells us that when the sequence of manifolds in question are Einstein 4-manifolds having \textit{uniform} Ricci upper bound, uniform diameter upper bound, uniform Euler characteristic upper bound, and uniform volume \textit{lower} bound, a subsequence converges to an Einstein \textit{orbifold} with finitely many isolated singular points. Whence Theorem \ref{maintheorem1}, Theorem \ref{maintheorem2a}, and each of the convergences  in Theorem \ref{maintheorem2} gives us many explicit examples of this, as each of the underlying smooth manifolds in $\paren*{\Km_{\epsilon}, J, \wtilde{g_{\epsilon}}, \wtilde{\omega_{\epsilon}}}$ and $\paren*{\Km_{f, \epsilon_{I_{0}}, e_{I_{0}}}, g_{\bomega_{f, \epsilon_{I_{0}}, e_{I_{0}}}}}$ are the $K3$ surface having Euler characteristic $24$.
				\end{remark}

				\printbibliography[heading=bibintoc,title={References}]


\end{document}